\numberwithin{equation}{section}
\newcommand{\nn}{\nonumber}
\newcommand{\Gr}{{\rm Gr}}
\newcommand{\Ind}{{\rm Ind}}
\newcommand{\Spec}{{\rm Spec}}
\newcommand{\QCoh}{{\rm QCoh}}
\newcommand{\Hom}{{\rm Hom}}
\newcommand{\Map}{{\rm Map}}
\newcommand{\Coh}{{\rm Coh}}
\newcommand{\Cofib}{{\rm Cofib}}
\newcommand{\Crit}{{\rm Crit}}
\newcommand{\Sym}{{\rm Sym}}
\newcommand{\Ext}{{\rm Ext}}
\newcommand{\Stab}{{\rm Stab}}
\newcommand{\cofib}{{\rm cofib}}
\newcommand{\IndCoh}{{\rm IndCoh}}
\newcommand{\colim}{{\rm colim}}
\newcommand{\Grad}{{\rm Grad}}
\newcommand{\Perv}{{\rm Perv}}
\newcommand{\Perf}{{\rm Perf}}
\newcommand{\Filt}{{\rm Filt}}
\newcommand{\sps}{{\rm sp}}
\newcommand{\mA}{\mathcal{A}}
\newcommand{\mPP}{\mathcal{P}}
\newcommand{\mC}{\mathcal{C}}
\newcommand{\mM}{\mathcal{M}}
\newcommand{\mD}{\mathcal{D}}
\newcommand{\mE}{\mathcal{E}}
\newcommand{\mS}{\mathcal{S}}
\newcommand{\bA}{\mathbb{A}}
\newcommand{\un}{\mathbbm{1}}
\newcommand{\bG}{\mathbb{G}}
\newcommand{\bZ}{\mathbb{Z}}
\newcommand{\bC}{\mathbb{C}}
\newcommand{\bD}{\mathbb{D}}
\newcommand{\bQ}{\mathbb{Q}}
\newcommand{\bN}{\mathbb{N}}
\newcommand{\bH}{\mathbb{H}}
\newcommand{\bV}{\mathbb{V}}
\newcommand{\bP}{\mathbb{P}}
\newcommand{\bR}{\mathbb{R}}
\newcommand{\bL}{\mathbb{L}}
\newcommand{\bT}{\mathbb{T}}
\newcommand{\mR}{\mathcal{R}}
\newcommand{\mU}{\mathcal{U}}
\newcommand{\mX}{\mathcal{X}}
\newcommand{\mY}{\mathcal{Y}}
\newcommand{\mZ}{\mathcal{Z}}
\newcommand{\ie}{i.e. }
\newcommand{\ud}{\underline}
\theoremstyle{plain}
\newtheorem{theorem}{Theorem}[section]
\newtheorem{proposition}[theorem]{Proposition}
\newtheorem{lemma}[theorem]{Lemma}
\newtheorem{corollary}[theorem]{Corollary}
\theoremstyle{definition}
\newtheorem{remark}[theorem]{Remark}
\newtheorem{definition}[theorem]{Definition}
\theoremstyle{remark}
\title{Hyperbolic localization in Donalson-Thomas theory}
\author{Pierre Descombes}
\begin{document}

\begin{abstract}
    In this paper we prove a toric localization formula in the cohomological Donaldson-Thomas theory. Consider a $-1$-shifted symplectic algebraic space with a $\bG_m$-action leaving the $-1$-shifted symplectic form invariant (typical examples are the moduli space of stable sheaves or complexes of sheaves on a Calabi-Yau threefold with a $\bG_m$-invariant Calabi-Yau form or the intersection of two $\bG_m$-invariant Lagrangians in a symplectic space with a $\bG_m$-invariant symplectic form). In this case we express the restriction of the Donaldson-Thomas perverse sheaf (or monodromic mixed Hodge module) defined by Joyce et al. to the attracting variety as a sum of cohomological shifts of the DT perverse sheaves on the $\bG_m$-fixed components. This result can be seen as a $-1$-shifted version of the Białynicki-Birula decomposition for smooth schemes. We obtain our result from a similar formula for stacks and Halpern-Leistner's $\Theta$-correspondence, at the level of perverse Nori motives, which we use also to derive foundational constructions in DT theory, in particular the Kontsevich-Soibelman wall crossing formula and the construction of the Cohomological Hall Algebra for smooth projective Calabi-Yau threefolds (a similar construction of the CoHA was also done independently by Kinjo, Park, and Safronov in a recent work). This paper subsumes the previous paper "Hyperbolic localization of the Donaldson-Thomas sheaf" from the same author.
\end{abstract}

\maketitle

\tableofcontents\newpage

\section{Introduction}

\subsection*{The Donaldson-Thomas perverse sheaf}

We work over an algebraically closed field $k$ of characteristic $0$. All our algebraic spaces are assumed to be quasi-separated and locally of finite type over $k$, and all our stacks are assumed to be quasi-separated Artin $1$-stacks, locally of finite type over $k$, with affine stabilizers. Donaldson-Thomas theory was first developed to count sheaves on Calabi-Yau threefolds. In \cite{thomas1998holomorphic}, Thomas defined the numerical Donaldson-Thomas invariants, giving the virtual Euler number of the moduli space of stable coherent sheaves on a Calabi-Yau threefold, using the perfect obstruction theory given by the Serre duality on the Ext spaces of the sheaves. In \cite{behrend_donaldson-thomas}, Behrend gave a new interpretation of these invariants: expressing the moduli space locally as the critical locus of a potential, the numerical Donaldson-Thomas invariant is given by an Euler characteristic weighted at each point by the Milnor number. In \cite{ks} and \cite{KonSol10}, Kontsevich and Soibelman sketched the definition of a cohomological refinement of this counting, with value in the abelian category of monodromic mixed Hodge modules (MMHM), using the functor of vanishing cycles of this potential, in a partially conjectural framework.\medskip

In papers \cite{Joyce2013ACM}, \cite{Joycesymstab}, and \cite{darbstack}, Joyce and collaborators have developed a rigorous cohomological Donaldson-Thomas theory, using the language of $-1$-shifted symplectic structures in derived geometry introduced in \cite{shifsymp}. One considers a derived algebraic space (or stack) $\ud{X}$ enhancing the classical one $X$: In particular, $\ud{X}$ carries a tangent-obstruction complex $\mathbb{T}_{\ud{X}}$ giving a deformation theory for $X$, \ie $H^0(\mathbb{T}_{\ud{X}})=T_X$ gives the tangent directions, $H^1(\mathbb{T}_{\ud{X}})$ gives the obstructions, $H^2(\mathbb{T}_{\ud{X}})$, $H^3(\mathbb{T}_X)_{\ud{X}}$, ... give the higher obstructions, and $H^{-1}(\mathbb{T}_{\ud{X}})$ gives the infinitesimal automorphisms (in the stack case). A $-1$-shifted symplectic structure is then a closed $2$ form of degree $-1$ on $\ud{X}$, such that the underlying $2$-form gives a nondegenerate pairing between $\mathbb{T}_{\ud{X}}$ and $\mathbb{T}_{\ud{X}}[-1]$: in particular, it pairs tangent directions with obstructions. In \cite{shifsymp}, \cite{Brav2018RelativeCS}, it was shown that moduli stacks of complexes on a Calabi-Yau 3-fold and intersections of two Lagrangians in a symplectic space have natural enhancement to $-1$-shifted symplectic stacks.\medskip

In \cite{darbscheme}, it was shown that a $-1$-shifted derived algebraic space $\ud{X}$ can be written étale locally as the critical locus of a function on a smooth scheme. More precisely, its classical truncation $X$ has a d-critical structure $s$, \ie $X$ can be covered by critical charts of the form $(R,U,f,i)$, where $R\to X$ is étale, $U$ is a smooth scheme, $f: U\to \mathbb{C}$ is a regular map, and $i:R\to U$ is the closed embedding of the critical locus of $f$, and two critical charts are related by adding quadratic terms. For any $d$-critical algebraic space $(X,s)$, there is a natural line bundle $K_{X,s}$ on $X^{red}$, which is equal in the above case to $\det(\bL_{\ud{X}})|_{X^{red}}$. The vanishing cycle of a quadratic form is trivial, up to a sign ambiguity, which is resolved by fixing an orientation of the quadratic form. In \cite{Joycesymstab}, Joyce and collaborators constructed the Donaldson-Thomas perverse sheaf $P_{X,s,K_{X,s}^{1/2}}$ carrying a monodromic mixed Hodge module for a d-critical algebraic space $(X,s)$, with additional data called the orientation $K_{X,s}^{1/2}$, \ie a square root of $K_{X,s}$, which is used to fix this sign ambiguity. By definition, the restriction of $P_{X,s,K_{X,s}^{1/2}}$ to a critical chart $(R,U,f,i)$ is given by $P_{U,f}\otimes_{\bZ/2\bZ}Q_{R,U,f,i}$, a twist of the perverse sheaf of vanishing cycles $P_{U,f}$ by a $\bZ/2\bZ$-bundle $Q_{R,U,f,i}$ depending on the orientation $K_X^{1/2}$, which serves to cancel the sign ambiguity. Here, $P_{U,f}$ is defined by applying the monodromic vanishing cycles functor $\phi^{mon,tot}_f$ of $f$ to $\bQ_U\{\dim(U)/2\}$, the shifted constant sheaf of $U$, and then restricting to the critical locus $R$. The numerical Donaldson-Thomas invariant defined in \cite{thomas1998holomorphic} is then, as expected, the Euler number of the cohomology of this perverse sheaf.\medskip

We show here that this construction upgrades to the level of monodromic perverse Nori motives by extending in Section \ref{sectmonothomseb} the formalism of monodromic objects, the Thom-Sebastiani theorem, and the description of the square root of the Tate twist. The results of \cite{darbscheme}, \cite{Joycesymstab} have been extended to stacks in \cite{darbstack}. Notice that, at that time, the formalism of mixed Hodge modules on stacks was not developed; hence, for an oriented $d$-critical stack, the object $P_{\mX,s,K_{\mX,s}^{1/2}}$ was only built as a perverse sheaf. Thanks to the development of mixed Hodge modules and perverse Nori motives on stacks in \cite{tubachMHM}, one obtains an enhancement of $P_{\mX,s,K_{\mX,s}^{1/2}}$ at the level of mixed Hodge modules and perverse Nori motives.\medskip

We will mainly work at the level of d-critical structures in this article, as this is sufficient in the formalism of \cite{Joycesymstab}. Shifted symplectic geometry will only be used to check the compatibility between various $d$-critical structures, as the classical construction of $d$-critical structures in enumerative geometry always comes from constructions at the level of $-1$-shifted symplectic structures.

\subsection*{Hyperbolic localization}

The aim of this paper is to provide a way to compute the cohomological Donaldson-Thomas invariants by localization. Namely, given an algebraic space $X$ with a $\bG_m$-action, a $\bG_m$-invariant d-critical structure (\ie, of weight $0$) and a $\bG_m$-equivariant orientation, we want to express the DT invariants of $X$ in terms of the DT invariants of the $\bG_m$-fixed algebraic space $X^0$. Graber and Pandharipande proved a torus localization formula for numerical Donaldson-Thomas invariants in \cite{Graber1997LocalizationOV}. A similar formula was derived in \cite{Behrend08symmetricobstruction} using the alternative definition with weighted Euler characteristic: the numerical Donaldson-Thomas invariant of $X$ is the sum of those of each component of $X^0$ weighted by a sign given by the parity of the dimension of the normal space to the component. Thus, numerical Donaldson-Thomas invariants localize under torus action exactly like the Euler numbers of smooth spaces.\medskip

The fixed locus can be described in terms of functor of points as $X^0:=Map^{\bG_m}(\Spec(k),X)$. Consider $\bA^1$ with its canonical $\bG_m$-action. We consider the attracting variety $X^+:=Map^{\bG_m}(\bA^1,X)$: informally, it classifies points $x\in X$ with a limit $\lim_{t\to 0}t.x$. Such a limit is unique when $X$ is separated and always exists and is unique when $X$ is proper. From \cite{Braden2002HyperbolicLO} and \cite{drinfeld2013algebraic}, those functors are representable by algebraic spaces, and one obtains so-called hyperbolic localization correspondence:
\[\begin{tikzcd}
X^0\arrow[rr,bend right,"\iota"] & X^+\arrow[r,"\eta^+"]\arrow[l,swap,"p^+"] & X
\end{tikzcd}\]
where $\eta^+$ forgets $\lim_{t\to 0}t.x$, and $p^+$ sends $x$ to $\lim_{t\to 0}x$. Notice that $X^+$ is the disjoint union of strata flowing to different connected components of $X^0$, so $\eta^+$ can be thought of as the immersion of strata from a stratification. One obtains a true stratification when $X$ is projective with linear action; otherwise, the situation can be more pathological.\medskip

In \cite{BiaynickiBirula1973SomeTO}, Białynicki-Birula proved that, when $X$ is smooth, each component of $X^+$ is an affine fiber bundle over a component of the fixed variety $X^0$, whose dimension is given by $d^+_X$, the number of positive weights in $T_X|_{X^0}$ (this is a locally constant integer-valued function on $X^0$). Thus, one can compute the cohomology of the attracting variety of $X^+$ in terms of the cohomology of $X^0$. Denote by $X^0=\bigsqcup_{\mathbf{c}}X^0_{\mathbf{c}}$ the decomposition of $X^0$ into connected components. In \cite{Braden2002HyperbolicLO}, Braden introduced the hyperbolic localization functors $(p^+)_!(\eta^+)^\ast$, and reinterpreted Białynicki-Birula's result as a decomposition theorem:
\begin{align}
    (p^+)_!(\eta^+)^\ast\mathbb{Q}_X&\simeq \bQ_{X^0}\{-d^+_X\}\simeq\bigoplus_{\mathbf{c}}\mathbb{Q}_{X^0_{\mathbf{c}}}\{-d^+_{\mathbf{c}}\}\label{locintcoh}
\end{align}
where $\mathbb{Q}_Y$ is the constant sheaf of $Y$, and $\{1\}:=[2](1)$ denotes the combination of a shift and a Tate twist. More precisely, Braden, in \cite{Braden2002HyperbolicLO}, and furthermore Drinfeld and Gaitsgory, in \cite{Drinfeld2013ONAT}, showed that, for any algebraic space $X$, $(p^+_X)_!(\eta^+_X)^\ast$ preserve the weight of $\bG_m$-equivariant constructible complexes: in particular, one obtains an analogue of the celebrated decomposition theorem for proper maps of \cite{BBDGfaisceauxpervers}, and, when $X$ is smooth, \ie $\mathbb{Q}_X$ is pure, \eqref{locintcoh} can be interpreted as a particularly simple example of this. An other consequence of Braden's theorem is that hyperbolic localization commutes with vanishing cycles, as proven in \cite{Ric16}, which will be at the heart of our work.\medskip

\subsection*{The main result: toric localization}

Consider an algebraic space $X$ with a $\bG_m$-action, a $\bG_m$-invariant d-critical structure (\ie, of weight $0$) and a $\bG_m$-equivariant orientation. We prove in Proposition \ref{cordcritgraded} that $s^0:=\iota^\star(s)$ is a $d$-critical structure on $X^0$, and that given an orientation $K_{X,s}^{1/2}$ of $(X,s)$, there is a canonical orientation $K_{X^0,s^0}^{1/2}$ on $(X^0,s^0)$. We consider the integer-valued (which is shown a posteriori to be locally constant) function on $X^0$:
    \begin{align}
        \Ind_X(x):=\dim(T_{X,x}^{>0})-\dim(T_{X,x}^{<0})
    \end{align}
In particular, when $(X,s)$ comes from a $\bG_m$-invariant $-1$-shifted symplectic algebraic space $(\ud{X},\omega)$, we prove in Lemma \ref{lemorientgm}, \ref{lemshifsymp} that $(\ud{X}^0,\omega^0:=\iota^\ast(\omega))$ is $-1$-shifted symplectic, with classical truncation $(X^0,s^0)$, that $K_{X^0,s^0}^{1/2}$ is given by the formula:
\begin{align}
K_{X^0,s^0}^{1/2}:=\iota^\ast(K_{X,s}^{1/2})\otimes\det(\bL_{\ud{X}}|_{\ud{X}^0}^{<0})|_{(X^0)^{red}}^{\otimes -1}
\end{align}
and that $\Ind_X$ coincide with the signed count of positive weights in the tangent complex $\bT_{\ud{X}}|_{\ud{X}^0}^{>0}$. The aim of this article is to prove that an analogue of \eqref{locintcoh} holds for the Donaldson-Thomas perverse sheaf (resp. monodromic mixed Hodge module or perverse Nori motive) on a d-critical oriented algebraic space:\medskip

\begin{theorem}(Theorem \ref{theoBB})
    For $X$ an algebraic space with a $\bG_m$-action and a $\bG_m$-invariant d-critical structure $s$ and a $\bG_m$-equivariant orientation $K_{X,s}^{1/2}$, there is a natural isomorphism of perverse sheaves (resp. monodromic mixed Hodge modules, resp. monodromic perverse Nori motives):
    \begin{align}\label{isomspace}
       (p^+)_!(\eta^+)^\ast P_{X,s,K_{X,s}^{1/2}}\simeq P_{X^0,s^0,K_{X^0,s^0}^{1/2}}\{-\Ind_X/2\}
    \end{align}
    Denote by $X^0=\bigsqcup_{\mathbf{c}}X^0_{\mathbf{c}}$ the decomposition of $X^0$ into connected components, and by $\Ind_{\mathbf{c}}$ the constant value of $\Ind_X$ on $X^0_{\mathbf{c}}$. Suppose, moreover, that $X$ is separated of finite type; then $\eta$ is injective on $k$-points, and we have the following equality for the class of the cohomology with compact support in the Grothendieck ring of monodromic Nori motives and monodromic mixed Hodge structures (where we have not written the dependency on the $d$-critical structure and orientation in $P$ for readablility):
    \begin{align}\label{nonproj}
[\bH_c(X,P_X)]=\sum_{\mathbf{c}}\mathbb{L}^{\Ind_{\mathbf{c}}/2}[\bH _c(X^0_{\mathbf{c}},P_{X^0_{\mathbf{c}}})]+[\bH_c(X-\eta(X^+),P_X|_{X-\eta(X^+)})]
    \end{align}
    where $[\bH_c(Y,F)]$ denotes the class in the Grothendieck group of the hypercohomology with compact support of $F$, and $\bL^{1/2}$ the square root of the Tate motive/Hodge structure, which exists at the monodromic level. In particular, if $X$ is proper, $\eta$ is bijective on $k$-points, and we obtain the simpler formula:
    \begin{align}\label{circomploc}
[\bH ^T_c(X,P_X)]=\sum_{\mathbf{c}}\mathbb{L}^{\Ind_{\mathbf{c}}/2}[\bH_c(X^0_{\mathbf{c}},P_{X^0_{\mathbf{c}}})]
    \end{align}
\end{theorem}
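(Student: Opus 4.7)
The plan is to prove the isomorphism \eqref{isomspace} locally on $\bG_m$-equivariant critical charts, reducing it to a combination of Braden's hyperbolic decomposition for smooth spaces with Richarz's theorem that hyperbolic localization commutes with monodromic vanishing cycles. First I would establish a $\bG_m$-equivariant refinement of the d-critical chart theorem of \cite{darbscheme}: near each $\bG_m$-fixed point $x \in X^0$, there exists a $\bG_m$-equivariant étale neighborhood $R \to X$, a $\bG_m$-equivariant smooth scheme $U$ with a $\bG_m$-invariant regular function $f$, and a $\bG_m$-equivariant closed embedding $i : R \hookrightarrow \Crit(f)$; away from $X^0$, the $\bG_m$-action is locally free so one only needs chart statements for the quotient. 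This uses the existence of equivariant versal deformations and Sumihiro-type linearization results for quasi-separated algebraic spaces. Since hyperbolic localization is local on the base and compatible with étale pullback, it suffices to construct the isomorphism on each chart and then glue.

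On such a chart, Braden's theorem combined with Białynicki-Birula computes
\begin{align*}
(p^+_U)_!(\eta^+_U)^\ast \bQ_U\{\dim U/2\} \simeq \bQ_{U^0}\{\dim U^0/2\}\{(d^-_U - d^+_U)/2\},
\end{align*}
using $\dim U = \dim U^0 + d^+_U + d^-_U$. Richarz's theorem \cite{Ric16} applied to the $\bG_m$-invariant function $f$ shows that hyperbolic localization commutes with the monodromic vanishing cycles functor, so restricting to $X = \Crit(f)$ and applying $\phi_f$ yields
\begin{align*}
(p^+)_!(\eta^+)^\ast P_{U,f} \simeq P_{U^0,f^0}\{-(d^+_U - d^-_U)/2\}.
\end{align*}
To identify $d^+_U - d^-_U$ with $\Ind_X$, note that the Hessian of $f$, being $\bG_m$-invariant and symmetric, pairs $T_U^{\lambda}$ with $T_U^{-\lambda}$ non-degenerately modulo its kernel; hence $\dim T_X^{\lambda} - \dim T_U^{\lambda} = \dim T_X^{-\lambda} - \dim T_U^{-\lambda}$, and summing over $\lambda > 0$ yields $\Ind_X = d^+_U - d^-_U$ on the chart.

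The delicate part is to promote this to an isomorphism of oriented objects incorporating the twist $Q_{R,U,f,i}$. I would verify that on a chart, the line bundle $\det(\bL_{\ud{X}}|_{\ud{X}^0}^{<0})^{\otimes -1}$ appearing in the formula of Lemma \ref{lemorientgm} equals $\det(N_{U^0/U})|_{X^0}$, which is precisely the discrepancy between $K_U|_{X^0}$ (the local square root of $K_{X,s}$) and $K_{U^0}|_{X^0}$ (the local square root of $K_{X^0,s^0}$) dictated by the adjunction formula $K_{U^0} = K_U|_{U^0} \otimes \det(N_{U^0/U})$. Together with the fact that both sides of \eqref{isomspace} are built from the same vanishing-cycle data via the stabilization under addition of quadratic forms that Joyce used to construct $P_{X,s,K_{X,s}^{1/2}}$, this upgrades the local isomorphism canonically and allows it to glue to a global one on $X$, at the level of perverse sheaves, monodromic mixed Hodge modules, and monodromic perverse Nori motives, thanks to the enhancements of Section \ref{sectmonothomseb}.

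For the Grothendieck-group statement, I would apply $\eta_!$ to \eqref{isomspace} and note that $\eta_!(p^+)_! = (\eta \circ p^+)_!$ computes the cohomology with support in $\eta(X^+)$ under proper base change. Combined with the localization triangle for the open-closed decomposition $X = \eta(X^+) \sqcup (X - \eta(X^+))$ and taking classes in the Grothendieck group, the shift $\{-\Ind_{\mathbf{c}}/2\}$ translates to multiplication by $\bL^{\Ind_{\mathbf{c}}/2}$, yielding \eqref{nonproj}; for $X$ proper, $\eta$ is bijective on $k$-points so the residual term vanishes and \eqref{circomploc} follows. The main obstacle throughout is constructing the orientation-preserving isomorphism canonically and globally so that it glues across overlapping equivariant charts, which is why the refined Thom-Sebastiani theorem and the square root of the Tate twist have to be made available at the monodromic level.
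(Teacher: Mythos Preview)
Your approach is correct and the local computation you outline---Braden plus Richarz on an equivariant chart, the Hessian argument identifying $d^+_U-d^-_U$ with $\Ind_X$, and the orientation bookkeeping via $\det(N_{U^0/U})$---is exactly what the paper carries out inside the proof of its main stacky Theorem \ref{theohyploc}. The paper, however, organizes things differently: rather than gluing equivariant charts directly on $X$, it first proves the general isomorphism $p_!\eta^\ast P_\mX \simeq P_{\Grad(\mX)}\{-\Ind_\mX/2\}$ for any oriented d-critical stack via the $\Theta$-correspondence, and then obtains \eqref{isomspace} by applying this to $[X/\bG_m]$ (of which $[X^0/\bG_m]$ is a component of $\Grad$) and pulling back along the smooth cover $X\to[X/\bG_m]$ using Corollary \ref{corfunctjoyce}. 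Your direct route is essentially that of the earlier preprint \cite{Descombes:2022cpc} that this paper subsumes; the paper's detour through stacks is what allows it to also capture $\Theta$-stratifications, the Kontsevich--Soibelman formula, and the CoHA with no extra work. One small slip in your Grothendieck-group step: $\eta^+$ and $p^+$ go in opposite directions, so you cannot form $\eta\circ p^+$; instead push \eqref{isomspace} to a point to get $[\bH_c(X^+,(\eta^+)^\ast P_X)]=\sum_{\mathbf c}\bL^{\Ind_{\mathbf c}/2}[\bH_c(X^0_{\mathbf c},P_{X^0_{\mathbf c}})]$, then use that $\eta^+$ is injective (resp.\ bijective) on $k$-points when $X$ is separated (resp.\ proper)---proven by an elementary orbit-closure argument---together with the motivic decomposition lemma (Lemma \ref{lemmotdec}).
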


\subsection*{The main result: stacky version}

We derive our result from a more general stacky version that is also useful to obtain foundational results in DT theory. As said above, all our stacks are assumed to be quasi-separated Artin 1-stacks, locally of finite type over an algebraically closed field $k$ of characteristic $0$, with affine stabilizers. For such a stack $\mX$, Halpern-Leistner introduced in \cite{HalpernLeistner2014OnTS} the stack of graded and filtered points:
\begin{align}
    \Grad(\mX):=Map(B\bG_{m,k},\mX)\nn\\
    \Filt(\mX):=Map([\bA^1_k/\bG_{m,k}],\mX)
\end{align}
Considering the maps $\Spec(k)\to B\bG_{m,k}$, $1:\Spec(k)\to[\bA^1_k/\bG_{m,k}]$ and $0:B\bG_{m,k}\to [\bA^1_k/\bG_{m,k}]$, one obtains by functoriality a stacky version of the hyperbolic localization correspondence:
\[\begin{tikzcd}
    \Grad(\mX)\arrow[rr,bend right,"\iota"] & \Filt(\mX)\arrow[l,swap,"p"]\arrow[r,"\eta"] & \mX
\end{tikzcd}\]
When $\mX$ is the stack of objects in an Abelian category, according to \cite[Lemma 6.3.1]{HalpernLeistner2014OnTS}, $\Filt(\mX)$ classifies the filtered objects, $\Grad(\mX)$ classifies the graded objects, $\iota$ forgets the gradation, $\eta$ forgets the filtration, and $p$ takes the associated graded. This explains the name: in this situation, we obtain the usual correspondence used to study Harder-Narasimhan stratification and cohomological Hall algebras. When $X$ is an algebraic space with an action of a reductive group $G$, and we consider $\mX=[X/G]$, we obtain:
\[\begin{tikzcd}
    \bigsqcup_\lambda {[X^0_\lambda/L_\lambda]}\arrow[rr,bend right,"\bigsqcup_\lambda\iota_\lambda"]& \bigsqcup_\lambda {[X^+_\lambda/P_\lambda]}\arrow[l,swap,"\bigsqcup_\lambda p^+_\lambda"]\arrow[r,"\bigsqcup_\lambda\eta^+_\lambda"] & {[X/G]}
\end{tikzcd}\]
where the sum is over conjugacy classes of cocharacter $\lambda:\bG_m\to G$, $P_\lambda$ and $L_\lambda$ denote the associated parabolic and Levi, and $X^+_\lambda,X^0_\lambda$ the associated attracting and fixed varieties. In particular, for $G=\bG_m$, a connected component of the above correspondence is:
\[\begin{tikzcd}
{[X^0/\bG_m]}\arrow[rr,bend right,"\iota"] & {[X^+/\bG_m]}\arrow[r,"\eta^+_X"]\arrow[l,swap,"p^+_X"] & {[X/\bG_m]}
\end{tikzcd}\]
and the usual hyperbolic localization correspondence is a smooth cover of this.\medskip

For a stack $\mX$, the restriction of the tangent complex of $\mX$ to $\Grad(\mX)$ inherits naturally a $\bZ$ grading from the mapping construction (as a quasi-coherent complex on $B\bG_m$ is equivalent to a $\bZ$-graded complex). For a $d$-critical stack $(\mX,s)$, we consider the integer-valued (which is shown a posteriori to be locally constant in Proposition \ref{cordcritgraded} $ii)$) function on $\Grad(\mX)$:
\begin{align}
    \Ind_\mX(x):=-\dim(\mathfrak{Iso}_{\iota(x)}(\mX)^{>0})+\dim(T_{X,\iota(x)}^{>0})-\dim(T_{X,\iota(x)}^{<0})+\dim(\mathfrak{Iso}_{\iota(x)}(\mX)^{<0})
\end{align}
where $T_{\mX,y}$ denotes the Zariski tangent space, and $\mathfrak{Iso}_{y}(\mX)$ the Lie algebra of the isotropy group. We show in Proposition \ref{cordcritgraded} that $(\Grad(\mX),\Grad(s):=\iota^\ast(s))$ is a natural $d$-critical stack, and that an orientation on $(\mX,s)$ gives a canonical orientation on $(\Grad(\mX),\Grad(s))$. When $(\mX,s)$ comes from a $-1$-shifted symplectic stack $(\ud{\mX},\omega)$, we prove in Lemma \ref{lemorientgm}, \ref{lemshifsymp} that $(\ud{\Grad}(\ud{\mX}),\Grad(\omega):=\ud{\iota}^\ast(\omega))$ is a $-1$-shifted symplectic stack with classical  truncation $(\Grad(\mX),\Grad(s))$, that $\Ind_\mX$ coincides with the signed count of positive weights in the tangent complex $(\ud{\iota}^\ast\bT_{\ud{\mX}})^{>0}$, and that $K_{\Grad(\mX),\Grad(s)}^{1/2}$ is given by the formula:
\begin{align}
K_{\Grad(\mX),\Grad(s)}^{1/2}:=\iota^\ast(K_{\mX,s}^{1/2})\otimes\det((\ud{\iota}^\ast\bL_{\ud{\mX}})^{<0})|_{(\Grad(\mX))^{red}}^{\otimes -1}
\end{align} 
The main result is then a stacky version of the above:

\begin{theorem}(Theorem \ref{theohyploc})
    For an oriented d-critical stack $(\mX,s,K_{\mX,s}^{1/2})$, there are natural isomorphisms of perverse sheaves (resp. monodromic mixed Hodge modules, resp. monodromic perverse Nori motives):
    \begin{align}\label{isomstack}
       p_!\eta^\ast P_{\mX,s,K_{\mX,s}^{1/2}}\simeq P_{\Grad(\mX),\Grad(s),K_{\Grad(\mX),\Grad(s)}^{1/2}}\{-\Ind_\mX/2\}
    \end{align}
\end{theorem}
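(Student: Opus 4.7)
The strategy is to produce the isomorphism \eqref{isomstack} locally on critical charts and then globalize by smooth descent. By \cite{darbstack}, any oriented d-critical stack $(\mX,s,K_{\mX,s}^{1/2})$ admits a smooth cover by critical charts $(R,U,f,i)$ with $U$ a smooth scheme, $R\simeq\Crit(f)\to\mX$ smooth, and a $\bZ/2\bZ$-bundle $Q_{R,U,f,i}$ carrying the orientation data; by construction $P_{\mX,s,K_{\mX,s}^{1/2}}|_R\simeq P_{U,f}\otimes_{\bZ/2\bZ}Q_{R,U,f,i}$, with $P_{U,f}=\phi_f^{mon,tot}(\bQ_U\{\dim(U)/2\})|_R$. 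Both $p_!\eta^\ast(-)$ and the DT assignment on $\Grad(\mX)$ commute with smooth base change, so the problem reduces to producing a natural isomorphism chart-by-chart and checking compatibility with the stabilization moves that relate different charts.

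The key geometric input is the commutation of hyperbolic localization with vanishing cycles, due to Richarz \cite{Ric16} in the perverse sheaf case, for which I will establish the analogous statement for monodromic mixed Hodge modules and monodromic perverse Nori motives in a preceding section, using the six-functor formalism on stacks of \cite{tubachMHM}. Since $f$ has weight $0$, Proposition \ref{cordcritgraded} gives that $(\Grad(R),\Grad(U),\Grad(f),\Grad(i))$ is a critical chart for $\Grad(\mX)$, with $\Grad(U)$ smooth and $\Grad(f):=f|_{\Grad(U)}$. Combining this commutation with the Białynicki-Birula decomposition $p_!\eta^\ast\bQ_U\simeq\bQ_{\Grad(U)}\{-d_U^+\}$ for smooth $\bG_m$-equivariant $U$ yields, on $\Grad(R)$,
\begin{align*}
p_!\eta^\ast P_{U,f}\simeq P_{\Grad(U),\Grad(f)}\bigl\{(d_U^--d_U^+)/2\bigr\},
\end{align*}
where $d_U^\pm$ denote the dimensions of the positive, resp.\ negative, weight components of $T_U|_{\Grad(U)}$.

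A direct weight-count then identifies the local shift $(d_U^--d_U^+)/2$ with $-\Ind_\mX/2$: on the chart, the Zariski tangent space and isotropy Lie algebra of $\mX$ are expressed through the two-term Hessian complex $T_U|_R\to T_U^\ast|_R$ and the smooth map $R\to\mX$, and the weights of $T_U^\ast$ are opposite to those of $T_U$, giving precisely the formula defining $\Ind_\mX$. The most delicate step, which I expect to be the main obstacle, is to verify the local isomorphisms are natural under the stabilization moves and, in particular, match the $\bZ/2\bZ$-twists on both sides: the transport of $Q_{R,U,f,i}$ under hyperbolic localization must be shown to coincide with the $\bZ/2\bZ$-bundle on $\Grad(\mX)$ produced from the formula $K_{\Grad(\mX),\Grad(s)}^{1/2}=\iota^\ast(K_{\mX,s}^{1/2})\otimes\det((\ud{\iota}^\ast\bL_{\ud{\mX}})^{<0})|_{(\Grad(\mX))^{red}}^{\otimes-1}$, carefully tracking the sign ambiguity in the vanishing cycles of a $\bG_m$-invariant quadratic form. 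Once this compatibility is secured, smooth descent glues the chart-wise isomorphisms into the required natural isomorphism for perverse sheaves, monodromic mixed Hodge modules, and monodromic perverse Nori motives.
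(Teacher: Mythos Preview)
Your strategy is correct and matches the paper's approach: construct the isomorphism locally on critical charts using commutation of hyperbolic localization with vanishing cycles and the Bia\l ynicki-Birula decomposition, verify the shift computation, track the $\bZ/2\bZ$-bundles, and glue by smooth descent after checking compatibility with the comparison moves between charts.

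There is one genuine gap you should be aware of. You write that $(\mX,s,K_{\mX,s}^{1/2})$ admits a smooth cover by critical charts $(R,U,f,i)$ with $U$ a smooth scheme. But for a scheme, the $\Theta$-correspondence is trivial: $\Grad(R)=\Filt(R)=R$, so passing to $\Grad$ of schematic charts gives you nothing. What you actually need are \emph{stacky} critical charts, concretely of the form $([R/\bG_m],[U/\bG_m],f,i)$ with $f$ a $\bG_m$-invariant function; you seem to be aware of this implicitly when you invoke the Bia\l ynicki-Birula decomposition for $\bG_m$-equivariant $U$, but the point deserves emphasis. More seriously, you must show that such stacky charts exist in sufficient abundance that the maps $\Grad([R/\bG_m])\to\Grad(\mX)$ are jointly surjective, and that any two can be compared through smooth restrictions and stabilizations in a way that remains surjective after applying $\Grad$. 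This is not automatic from \cite{darbstack}: the paper establishes it separately (Proposition~\ref{propcomparchart}) by combining Halpern-Leistner's covering lemma (Lemma~\ref{covergrad}) with $\bG_m$-equivariant analogues of Joyce's chart-comparison results (Proposition~\ref{propcompareqchart}), which in turn requires a $\bG_m$-equivariant version of the quadratic-form normalization step. Relatedly, the stabilization moves you must check compatibility against are stabilizations by $\bG_m$-equivariant quadratic \emph{bundles} (not just trivial quadratic forms), since after passing to $\Grad$ these become genuine quadratic bundles over the fixed locus; this is where the orientation bundle $P_{(\mE,q)}$ enters and where the $\bZ/2\bZ$-twist matching (your ``most delicate step'') is carried out (Lemma~\ref{lemcompathyplocstab}). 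Once you fill this in, your outline is the paper's proof.
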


$\Theta$-stratifications are generalizations of Harder-Narasimhan stratifications which are introduced in \cite{HalpernLeistner2014OnTS}, see Section \ref{sectthetastrat}. We obtain for them:

\begin{theorem}(Theorem \ref{theothetastrat})
    Consider $(\mX,s,K_{\mX,s}^{1/2})$ an oriented d-critical stack of finite type. Consider a $\Theta$-stratification on $\mX$ (in particular, because $\mX$ is of finite type, it has a finite number of nonempty strata). The centers $\mathcal{Z}_{\mathbf{c}}$ of the $\Theta$-strata naturally enhance to oriented d-critical stacks $(\mathcal{Z}_{\mathbf{c}},s_{\mathbf{c}},K_{\mathcal{Z}_{\mathbf{c}},s_{\mathbf{c}}}^{1/2})$, and we have the following equality in the Grothendieck ring of monodromic mixed Hodge structures (resp. monodromic Nori motives) completed at $\bL^{-1/2}$ (see Section \ref{SectKgroup}):
    \begin{align}
        [\bH c(\mX,P_{\mX,s,K_{\mX,s}^{1/2}})]=\sum_{\mathbf{c}}\mathbb{L}^{\Ind_{\mathbf{c}}/2}[\bH c(\mathcal{Z}_{\mathbf{c}},P_{\mathcal{Z}_{\mathbf{c}},s_{\mathbf{c}},K_{\mathcal{Z}_{\mathbf{c}},s_{\mathbf{c}}}^{1/2}})]
    \end{align}
\end{theorem}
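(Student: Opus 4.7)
The plan is to reduce, via additivity of compactly supported cohomology along the stratification, to the stacky hyperbolic localization result (Theorem \ref{theohyploc}) applied to each stratum. Since $\mX$ is of finite type, its $\Theta$-stratification has only finitely many nonempty strata $\mathcal{S}_{\mathbf{c}}$, and each $\mathcal{S}_{\mathbf{c}}$ is a closed substack of an appropriate open substack (the union of all strata of index at most $\mathbf{c}$ in the well-ordering of the $\Theta$-stratification). Iterating the cofiber sequence $j_! j^\ast F \to F \to i_\ast i^\ast F$ associated to an open-closed decomposition, and taking compactly supported hypercohomology over $\mX$, produces the additivity identity
\begin{align*}
[\bH_c(\mX, P_{\mX, s, K_{\mX, s}^{1/2}})] = \sum_{\mathbf{c}} [\bH_c(\mathcal{S}_{\mathbf{c}}, \eta_{\mathbf{c}}^\ast P_{\mX, s, K_{\mX, s}^{1/2}})]
\end{align*}
in the Grothendieck ring of monodromic mixed Hodge structures (resp. Nori motives), where $\eta_{\mathbf{c}} : \mathcal{S}_{\mathbf{c}} \to \mX$ is the stratum immersion. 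This step only uses the six-functor formalism on the ambient category.

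Next, by the structural description of $\Theta$-stratifications recalled in Section \ref{sectthetastrat}, each stratum $\mathcal{S}_{\mathbf{c}}$ is a union of connected components of $\Filt(\mX)$ with $\eta_{\mathbf{c}}$ the restriction of the universal $\eta: \Filt(\mX) \to \mX$, and the center $\mathcal{Z}_{\mathbf{c}}$ is the corresponding union of components of $\Grad(\mX)$ with $p_{\mathbf{c}}: \mathcal{S}_{\mathbf{c}} \to \mathcal{Z}_{\mathbf{c}}$ the restriction of the universal $p$. Using $\bH_c(\mathcal{S}_{\mathbf{c}}, F) = \bH_c(\mathcal{Z}_{\mathbf{c}}, (p_{\mathbf{c}})_! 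F)$ and applying Theorem \ref{theohyploc} restricted to these components, together with the compatibility of induced d-critical structures and orientations from Proposition \ref{cordcritgraded}, one gets
\begin{align*}
(p_{\mathbf{c}})_! \eta_{\mathbf{c}}^\ast P_{\mX, s, K_{\mX, s}^{1/2}} \simeq P_{\mathcal{Z}_{\mathbf{c}}, s_{\mathbf{c}}, K_{\mathcal{Z}_{\mathbf{c}}, s_{\mathbf{c}}}^{1/2}} \{-\Ind_{\mathbf{c}}/2\}.
\end{align*}
Taking classes, the half Tate twist $\{-\Ind_{\mathbf{c}}/2\}$ translates into multiplication by $\bL^{\Ind_{\mathbf{c}}/2}$, which is well-defined in the Grothendieck ring completed at $\bL^{-1/2}$; summing the resulting identities over $\mathbf{c}$ yields the desired formula.

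The main obstacle is the compatibility check in the second step: one must verify that the oriented d-critical enhancement $(\mathcal{Z}_{\mathbf{c}}, s_{\mathbf{c}}, K_{\mathcal{Z}_{\mathbf{c}}, s_{\mathbf{c}}}^{1/2})$ claimed in the statement coincides with the restriction to $\mathcal{Z}_{\mathbf{c}}$ of the canonical structure constructed on $\Grad(\mX)$ by Proposition \ref{cordcritgraded}, so that the output of Theorem \ref{theohyploc} restricted to these components is literally $P_{\mathcal{Z}_{\mathbf{c}}, s_{\mathbf{c}}, K_{\mathcal{Z}_{\mathbf{c}}, s_{\mathbf{c}}}^{1/2}}$ and not some twist of it. Given the identification of $\Theta$-strata as unions of components of $\Filt(\mX)$ recalled in Section \ref{sectthetastrat}, this is essentially an unpacking of definitions, but the orientation bookkeeping is the most delicate part.
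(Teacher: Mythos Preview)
Your proposal is correct and follows essentially the same approach as the paper. The paper's proof pulls back the isomorphism of Theorem \ref{theohyploc} along the open immersion $i_{\mathbf{c}}:\mathcal{Z}_{\mathbf{c}}\to\Grad(\mX)$, then uses that $\bigsqcup_{\mathbf{c}}\eta_{\mathbf{c}}$ is a geometric bijection together with Lemma \ref{lemmotdec}; your iterated open--closed additivity is the same mechanism, since here each $\eta_{\mathbf{c}}$ is a locally closed immersion. One small technical point you glossed over: to identify $(p_{\mathbf{c}})_!(\eta_{\mathbf{c}})^\ast$ with the restriction of $p_!\eta^\ast$ to $\mathcal{Z}_{\mathbf{c}}$ via base change, you need $\Theta_{\mathbf{c}}=p^{-1}(\mathcal{Z}_{\mathbf{c}})$, which is Lemma \ref{lemcartstrat}; and the orientation compatibility you flag as delicate is handled simply by the fact that $\mathcal{Z}_{\mathbf{c}}$ is open in $\Grad(\mX)$ together with Corollary \ref{corfunctjoyce}.
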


\subsection*{Application: Kontsevich-Soibelman wall crossing formula for smooth projective CY3}

We consider a smooth projective Calabi-Yau threefold $X$ (\ie, we fix a trivialization $\omega_X\simeq\mathcal{O}_X$ of the canonical bundle). For $E,F\in D^b\Coh(X)$, we consider the Euler pairing:
\begin{align}
    \langle E,F\rangle:=\sum_{i\in\bZ}(-1)^{i+1}\dim(\Ext^i(E,F))
\end{align}
which is antisymmetric as $X$ is CY3, by Serre duality, and pass to the Grothendieck group. We consider $K^{num}(X)$, the quotient of the Grothendieck group by the kernel of this pairing, which is a finite-dimensional lattice by Grothendieck-Hirzebruch-Riemann-Roch. From \cite{Toen2005ModuliOO}, \cite{Brav2018RelativeCS}, there is a higher derived stack $\ud{\mM}$ of objects in $D^b\Coh(X)$, which is locally of finite presentation, with a canonical $-1$-shifted symplectic structure (as $X$ is smooth and proper, this can also be obtained by the mapping construction from \cite{shifsymp}, but the description of \cite{Toen2005ModuliOO}, \cite{Brav2018RelativeCS} is more useful to study substacks of objects lying in subcategories). Its tangent-obstruction complex at a $k$-point $E\in D^b\Coh(X)$ is given by
\begin{align}\label{eqTanDG}
    \bT_{\ud{\mM},E}\simeq \bR \Hom(E,E)[1]
\end{align}
Moreover, its classical truncation $\mM$ carries an orientation data compatible with direct sum from \cite{JOYCEorient} (see Definition \ref{defOrientdg} here). \medskip

We consider a connected component $\Stab^\ast(X)$ of the space of Bridgeland stability conditions $\Stab(X)$ on $D^b\Coh(X)$ (where we consider numerical stability conditions satisfying the support property; see Definition \ref{defstab}). We need some technical conditions, studied in \cite{AbramovichPolishchuk}, \cite{Toda2007ModuliSA}, \cite{PiyaratneToda+2019+175+219}, to be able to obtain that the substacks of semistable objects are open and of finite type: for this, we assume that $\Stab^\ast(X)$ is good, \ie that it contains an algebraic stability condition satisfying boundedness and generic flatness; see Definition \ref{defgood}. From \cite[Corollary 4.21]{PiyaratneToda+2019+175+219}, this is satisfied for any connected component containing a stability condition built in \cite{Bayer2011BridgelandSC} (informally, for a connected component containing a large volume limit).\medskip

We follow then the discussion from \cite{ks}. We consider the ring $N_{mon}$ of monodromic Nori motives completed at $\bL^{-1/2}$ from Section \ref{SectKgroup}, and the motivic quantum torus:
\begin{align}
    G:=N_{mon}\langle (x^\gamma)_{\gamma\in K^{num}(X)}\rangle/((x^\gamma\cdot x^{\gamma'}-\bL^{\langle \gamma,\gamma'\rangle/2}x^{\gamma+\gamma'})_{\gamma,\gamma'\in K^{num}(X)},x^0-1)
\end{align}
For an interval $I$ of $\bR$ and a numerical class $\gamma\in K^{num}(X)$, we consider the stack $\mM_{I,\gamma}$ of objects of $D^b\Coh(X)$ of class $\gamma$, with their Harder-Narasimhan factors having their phase in $I$. From \cite[Theorem 6.5.3]{HalpernLeistner2014OnTS}, when $I$ has length $<1$, the Harder-Narasimhan decomposition gives a $\Theta$-stratification on $\mM_{I,\gamma}$: we apply then our Theorem \ref{theothetastrat} to it. For a strata $(\gamma_1,\cdots,\gamma_n)$ corresponding to a decomposition $\gamma=\gamma_1+\cdots+\gamma_n$, we obtain from \eqref{eqTanDG}:
\begin{align}
\Ind_{(\gamma_1,\cdots,\gamma_n)}=\sum_{i<j}\langle \gamma_i,\gamma_j\rangle
\end{align}
We obtain then the Kontsevich-Soibelman wall crossing formula (following the discussion of \cite[Section 2]{ks}, it is an equality in a particular completion of the motivic quantum torus $G$ defined using the support property; see Section \ref{sectKSWCF} for the precise statement):

\begin{theorem}(Theorem \ref{theoKSwcf})
    Consider a smooth and projective Calabi-Yau threefold $X$, and a good connected component $\Stab^\ast(X)$ of the space of stability conditions.
    \begin{enumerate}
        \item[$i)$] Consider $\sigma\in \Stab^\ast(X)$ and an interval $I$ of length $<1$ with associated strict sector $V:=\{me^{i\pi\phi}|m>0,\phi\in I\}\subset \bC$. Then $\mM_{I,\gamma}$, $\mM_{\phi,\gamma}$ are oriented $d$-critical Artin $1$-stacks of finite type with affine diagonal, and the Kontsevich-Soibelman wall crossing formula holds:
    \begin{align}
A^\sigma_V:=\sum_\gamma[\bH_c(\mM_{I,\gamma},P_{\mM_{I,\gamma}})]x^\gamma=\prod_{\phi\in I}^\to\sum_\gamma[\bH_c(\mM_{\phi,\gamma},P_{\mM_{\phi,\gamma}})]x^\gamma=:\prod_{l\subset V}^\to A^\sigma_l
    \end{align}
    where the symbol $\prod_{\phi\in I}^\to$ (resp. $\prod_{l\in V}^\to$) denotes an oriented product on decreasing $\phi\in I$ (resp. on the half-lines $l\subset V$ in the clockwise order).

    \item[$ii)$] The family of stability data $(Z,(\log(A^\sigma_{l_\gamma})_\gamma)_{\gamma\in K^{num}_Z}))$ (where $l_\gamma$ denotes the half line of $\bC$ containing $Z(\gamma)$) defines a continuous family of stability data on $\Stab^\ast(X)$ in the sense of \cite[Definition 3]{ks}. In particular, it defines a wall crossing structure on $\Stab^\ast(X)$ in the sense of \cite[Definition 2.2.1]{kontsevich_wall}; hence, one obtains a scattering diagram on $\Stab^\ast(X)$ encoding the cohomological DT invariants.
    \end{enumerate}

\end{theorem}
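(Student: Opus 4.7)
The strategy is to deduce $(i)$ from Theorem~\ref{theothetastrat} applied to the Harder-Narasimhan $\Theta$-stratification on $\mM_{I,\gamma}$, with the combinatorics of HN decompositions matching multiplication in the motivic quantum torus $G$; part $(ii)$ then follows by unpacking the axioms of \cite{ks}.

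First I would verify that $\mM_{I,\gamma}$ and $\mM_{\phi,\gamma}$ are oriented $d$-critical Artin $1$-stacks of finite type with affine diagonal. The full derived stack $\ud{\mM}$ is locally of finite presentation with canonical $-1$-shifted symplectic form from \cite{Brav2018RelativeCS} and an orientation data from \cite{JOYCEorient}. Goodness of $\Stab^\ast(X)$ combined with \cite{AbramovichPolishchuk,Toda2007ModuliSA,PiyaratneToda+2019+175+219} ensures that $\mM_{I,\gamma}$ and $\mM_{\phi,\gamma}$ are open finite-type substacks of $\mM$, which inherit the $d$-critical and orientation data by restriction; affine diagonal is standard for moduli of sheaves. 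By \cite[Theorem 6.5.3]{HalpernLeistner2014OnTS}, the HN filtration assembles into a $\Theta$-stratification of $\mM_{I,\gamma}$ indexed by HN types $(\gamma_1,\ldots,\gamma_n)$ with $\sum_i\gamma_i=\gamma$ and strictly decreasing phases in $I$, and the center of such a stratum is canonically identified with $\prod_i\mM_{\phi_i,\gamma_i}$ via the associated-graded map.

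The tangent complex formula \eqref{eqTanDG} at a graded object $E=\bigoplus E_i$ decomposes as $\bigoplus_{i,j}\bR\Hom(E_i,E_j)[1]$, and the defining cocharacter picks out as its positive-weight part a direct sum indexed by pairs $i<j$; accounting for the isotropy correction in the definition of $\Ind_\mM$, the signed count evaluates to $\Ind_{(\gamma_1,\ldots,\gamma_n)}=\sum_{i<j}\langle\gamma_i,\gamma_j\rangle$. The orientation on the center inherited from $\mM$ is the external-product orientation of \cite{JOYCEorient}, so the Thom-Sebastiani theorem (extended to stacks and Nori motives in Section~\ref{sectmonothomseb}) identifies the DT sheaf on $\mathcal{Z}_{(\gamma_1,\ldots,\gamma_n)}$ with the external product, giving
\begin{align*}
[\bH_c(\mathcal{Z}_{(\gamma_1,\ldots,\gamma_n)},P)]=\prod_i[\bH_c(\mM_{\phi_i,\gamma_i},P)].
\end{align*}
Substituting into Theorem~\ref{theothetastrat}, multiplying by $x^\gamma$ and summing, the quantum torus relation $x^{\gamma_1}\cdots x^{\gamma_n}=\bL^{\sum_{i<j}\langle\gamma_i,\gamma_j\rangle/2}x^{\gamma_1+\cdots+\gamma_n}$ matches the index weight exactly, yielding the wall crossing formula of $(i)$.

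For $(ii)$, the conditions of a continuous family of stability data in \cite[Definition 3]{ks} decompose into the wall crossing identity across walls (obtained by applying $(i)$ to arbitrarily thin intervals $I$ containing each wall and comparing), continuity in $\sigma$ (automatic from the formula), and a support property for the $A^\sigma_l$ inherited from the support property built into $\Stab^\ast(X)$. Passage to the wall crossing structure of \cite[Definition 2.2.1]{kontsevich_wall} is then formal, as in \cite[Section 2]{ks}. The main obstacle I anticipate is in $(i)$: checking that the identification of the center of each HN stratum with $\prod_i\mM_{\phi_i,\gamma_i}$ is compatible as oriented $d$-critical stacks with the data produced by Theorem~\ref{theothetastrat}, so that Thom-Sebastiani applies. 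A secondary technical point, needed for both $(i)$ and $(ii)$, is making sense of the ordered product $\prod_{l\subset V}^\to A^\sigma_l$ in the completion of $G$ controlled by the support property, which requires uniform boundedness of the finite-type HN strata on bounded regions of $K^{num}(X)$.
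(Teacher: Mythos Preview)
Your overall strategy matches the paper's: apply Theorem~\ref{theothetastrat} to the Harder-Narasimhan $\Theta$-stratification from \cite[Theorem 6.5.3]{HalpernLeistner2014OnTS}, identify centers with products via Lemma~\ref{lemgradprod}, and read off the index as $\sum_{i<j}\langle\gamma_i,\gamma_j\rangle$. The identification of orientations that you flag as the main obstacle is exactly what Lemma~\ref{lemgradprod}~$iii)$ provides.

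There is, however, a genuine gap in part~$(i)$ that you gloss over. Goodness of $\Stab^\ast(X)$ only asserts that \emph{algebraic} stability conditions satisfy boundedness and generic flatness (Proposition~\ref{propPiyaTod}); for a general $\sigma$ the heart $\mPP((0,1])$ need not be Noetherian, so Proposition~\ref{propheart} and the $\Theta$-stratification result do not apply directly. The paper's fix is a reduction: for any triangle $\Delta\subset V$ one uses Toda's locally finite wall-and-chamber structure \cite[Proposition 2.8]{Toda2007ModuliSA} to find an algebraic $\sigma'$ with the same semistable objects of class $\gamma\in Z^{-1}(\Delta)\cap C(V,Z,Q)$, then works with $\sigma'$. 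Since the identity is asserted in the pro-nilpotent completion $G_{V,Z,Q}=\lim_\Delta G_\Delta$, this triangle-by-triangle reduction suffices. This step is also what makes the ordered product well-defined, addressing your ``secondary technical point''. Relatedly, finite type of $\mM^\sigma_{I,\gamma}$ is not immediate: the paper deduces it by bounding the number of HN factors and invoking \cite[Lemma 3.16]{Toda2007ModuliSA} iteratively.

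Your treatment of~$(ii)$ is too loose. Continuity is not ``automatic from the formula'': the paper checks condition~$(c)$ of \cite[Definition 3]{ks} by showing that for an interval $I=[\phi_1,\phi_2]$ with $\mPP(\phi_1)=\mPP(\phi_2)=0$, the wall-and-chamber argument gives a neighborhood of $\sigma$ on which $\mM^{\sigma'}_{I,\gamma}$ is constant for each $\gamma$ in a bounded-mass set. The paper also observes that condition~$(b)$ as literally stated in \cite{ks} is too strong and must be replaced by the existence, near each $\sigma$, of a single quadratic form $Q$ giving the support property on a whole neighborhood; this follows from \cite[Lemma 5.5.4]{bayermacri}.
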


\subsection*{Application: cohomological Hall algebra for smooth projective CY3}

Until now, the isomorphism of Theorem \ref{theohyploc} was used in Theorems \ref{theoBB} and \ref{theothetastrat} to deduce equalities in the Grothendieck group. Notice that, to obtain such results, it would be sufficient to establish an equality in the Grothendieck group:
\begin{align}
    p_!\eta^\ast[P_\mX]=\bL^{\Ind_\mX/2}[P_{\Grad(\mX)}]
\end{align}
instead of Theorem \ref{theohyploc} (a similar result, in the Grothendieck ring of monodromic motives, is the main result of \cite{Bu2024AMI}). This problem is situated one categorical degree lower; namely, it would be sufficient to check the equality locally on a critical chart, and one would not have to check compatibility with smooth restrictions nor stabilization by quadratic bundle stacks. \medskip

However, having an isomorphism instead of an equality in the Grothendieck group also has applications, the main one being the construction of cohomological Hall algebras predicted in \cite{KonSol10}, \cite{ks}. Consider the Abelian heart $\mA_c$ of a $t$-structure on $D^b\Coh(X)$ that is Noetherian and satisfies generic flatness (see Definition \ref{defgenflat}), and a Serre subcategory $\mS$ of $\mA_c$ (\ie, a full subcategory of $\mA_c$ such that an extension of two objects of $\mA_c$ is in $\mS$ iff both objects are in $\mS$). Consider $\mM_\mS\subset\mM_\mA\subset\mM$ the substacks of objects of $\mS$ and $\mA_c$, and assume that the $\mM_\mS\subset\mM_\mA$ is a locally closed immersion (we say that $\mS$ is locally closed). The $\Theta$-correspondence has an open and closed subset given by

\[\begin{tikzcd}
\mM_{\mA,\gamma_1}\times\mM_{\mA,\gamma_1}\arrow[rr,bend right,"\oplus_{\gamma_1,\gamma_2}"]& \Filt_{\mA,\gamma_1,\gamma_2}(\mA)\arrow[l,swap,"p_{\gamma_1,\gamma_2}"]\arrow[r,"\eta_{\gamma_1,\gamma_2}"] & \mM_{\mA,\gamma_1+\gamma_2}
\end{tikzcd}\]
where $\Filt_{\mA,\gamma_1,\gamma_2}$ is the stack of short exact sequences $0\to E\to F\to G\to 0$ of objects of $\mA_c$ such that $[E]=\gamma_1$, $[G]=\gamma_2$, and then $[F]=\gamma_1+\gamma_2$. As $\mS$ is a Serre subcategory, one obtains a similar diagram for $\mM_\mS$, Cartesian over this one. If the $\mM_{\mS,\gamma}$ are bounded, adapting slightly \cite[Theorem 7.23]{Alper2018ExistenceOM}, we show in Proposition \ref{propModuli} that they admit a good moduli space $JH_\gamma:\mM_{\mS,\gamma}\to M_{\mS,\gamma}$, and that there are, from the universal property of good moduli space, natural maps $\oplus_{\gamma_1,\gamma_2}:M_{\mS,\gamma_1}\times M_{\mS,\gamma_2}\to M_{\mS,\gamma_1+\gamma_2}$ giving to $M_\mS$ a monoidal structure.\medskip

We obtain then (notice that we consider the cohomology with compact support here; taking the dual, i.e., Borel-Moore homology, would give an associative product):

\begin{theorem}(Theorem \ref{theoCoHA})
    Consider a smooth and projective Calabi-Yau threefold $X$, with a strong orientation data on $D^b\Coh(X)$, and a $t$-structure with Noetherian Abelian heart $\mA_c$ on $D^b\Coh(X)$ satisfying generic flatness (see Definition \ref{defgenflat}). In particular, $\mM_\mA$ is an oriented $d$-critical Artin $1$-stack, locally of finite presentation, with affine diagonal. Consider a locally closed Serre subcategory $\mathcal{S}$ of $\mA_c$, and suppose that, for any $\gamma_1,\gamma_2\in K^{num}(X)$, the map $\eta_{\gamma_1,\gamma_2}:\Filt_{\mS,\gamma_1,\gamma_2}\to \mM_{\mS,\gamma_1+\gamma_2}$ is of finite type.

    \begin{enumerate}
        \item[$i)$] 
    Then there is a natural coassociative coproduct (the absolute CoHA):
    \begin{align}
\bH_c(\mM_{\mS,\gamma_1+\gamma_2},P_\mA|_{\mM_{\mS,\gamma_1+\gamma_2}}) \to \bH_c(\mM_{\mS,\gamma_1},P_\mA|_{\mM_{\mS,\gamma_1}})\otimes_k\bH_c(\mM_{\mS,\gamma_2},P_\mA|_{\mM_{\mS,\gamma_2}})\{-\langle\gamma_1,\gamma_2\rangle/2\}
    \end{align}
     in $\mD_{mon}(\Spec(k))$, the triangulated category of monodromic Nori motives (resp. the Ind-category of the triangulated category of monodromic mixed Hodge structures), defined from the extension correspondence.
    \item[$ii)$] If, moreover, the $\mM_{\mS,\gamma}$ are of finite type, there is then a natural coassociative coproduct (the relative CoHA): 
    \begin{align}
(JH_{\gamma_1+\gamma_2})_!(P_\mA|_{\mM_{\mS,\gamma_1+\gamma_2}}) \to (\oplus_{\gamma_1,\gamma_2})_!((JH_{\gamma_1})_!(P_\mA|_{\mM_{\mS,\gamma_1}})\boxtimes (JH_{\gamma_2})_!(P_\mA|_{\mM_{\mS,\gamma_2}}))\{-\langle\gamma_1,\gamma_2\rangle/2\}
    \end{align}
    in $\mD^-_{mon,-}(\Spec(k))$, the triangulated category of bounded above complexes of constructible monodromic Nori motives (resp. of monodromic mixed Hodge structures), with bounded above weights, defined from the extension correspondence, whose hypercohomology with compact support is the absolute CoHA.
    \end{enumerate}
\end{theorem}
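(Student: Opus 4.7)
The strategy is to apply Theorem \ref{theohyploc} to the $(\gamma_1, \gamma_2)$-component of the $\Theta$-correspondence for $\mM_\mA$, restricted to the Serre subcategory $\mS$, and then to push forward to obtain the two coproducts. By Halpern-Leistner's identification of $\Grad$ and $\Filt$ for the moduli of objects in an abelian category, the $(\gamma_1, \gamma_2)$-component of $\Grad(\mM_\mA)$ is $\mM_{\mA,\gamma_1} \times \mM_{\mA,\gamma_2}$ and that of $\Filt(\mM_\mA)$ is $\Filt_{\mA,\gamma_1,\gamma_2}$, with $p, \eta$ being the associated-graded and underlying-object maps in the extension diagram. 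The defining property of a Serre subcategory ensures that this entire correspondence restricts compatibly along $\mM_\mS \hookrightarrow \mM_\mA$.

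Applying the tangent complex formula $\bT_{\ud\mM,E}\simeq \bR\Hom(E,E)[1]$ to a graded object $E = E_1 \oplus E_2$ with $E_i$ of weight $i$ yields the weight decomposition of $\bT|_{\Grad}$, whose signed count gives $\Ind_{(\gamma_1,\gamma_2)} = \langle\gamma_1,\gamma_2\rangle$, exactly as in the derivation of the Kontsevich-Soibelman formula. Theorem \ref{theohyploc} then supplies a natural isomorphism
\[(p_{\gamma_1,\gamma_2})_!\, \eta_{\gamma_1,\gamma_2}^* P_{\mM_{\mS,\gamma_1+\gamma_2}} \simeq (P_{\mM_{\mS,\gamma_1}}\boxtimes P_{\mM_{\mS,\gamma_2}})\{-\langle\gamma_1,\gamma_2\rangle/2\},\]
the right-hand side being identified via the monodromic Thom-Sebastiani theorem of Section \ref{sectmonothomseb} together with the multiplicativity of the strong orientation data under direct sums.

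For (i), I would push this isomorphism forward to $\Spec(k)$: using $\pi_{\Filt!} = \pi_{\Grad!}\circ p_!$ on the left and the Künneth formula on the right, one obtains $\bH_c(\Filt_{\mS,\gamma_1,\gamma_2}, \eta^*P_\mM) \simeq \bH_c(\mM_{\mS,\gamma_1}, P_\mM)\otimes \bH_c(\mM_{\mS,\gamma_2}, P_\mM)\{-\langle\gamma_1,\gamma_2\rangle/2\}$. Composing with the adjunction pullback $\bH_c(\mM_{\mS,\gamma_1+\gamma_2}, P_\mM) \to \bH_c(\Filt_{\mS,\gamma_1,\gamma_2}, \eta^*P_\mM)$---which is available because $\eta$ is in fact proper in this geometric setting (the finite-type hypothesis combined with representability of extension stacks by proper Quot-type schemes over the projective target $X$; alternatively, one can invoke the self-Verdier-duality of the DT perverse sheaf on a CY3 moduli to produce the needed map)---yields the claimed coproduct. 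For (ii), the same recipe is applied with $\pi_!$ replaced by $(JH_{\gamma_1+\gamma_2})_!$, using base change along the monoidal structure map $\oplus_{\gamma_1,\gamma_2}$; properness of $JH$ is provided by Proposition \ref{propModuli}.

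Coassociativity is derived by comparing both iterations of the two-step coproduct with a single three-step coproduct arising from $\Filt_{\mA,\gamma_1,\gamma_2,\gamma_3}$ and $\mM_{\mA,\gamma_1}\times \mM_{\mA,\gamma_2}\times \mM_{\mA,\gamma_3}$. The main obstacle is justifying this rigorously: it requires the naturality of Theorem \ref{theohyploc} with respect to the two embeddings $B\bG_m \hookrightarrow B\bG_m^2$ corresponding to the two orders of nesting a filtration inside a filtration, i.e., a sheaf-level compatibility of hyperbolic localization with iterated $\Theta$-correspondences. This compatibility goes beyond the (automatic) bilinearity identity $\langle\gamma_1,\gamma_2+\gamma_3\rangle + \langle\gamma_2,\gamma_3\rangle = \langle\gamma_1,\gamma_2\rangle + \langle\gamma_1+\gamma_2,\gamma_3\rangle$ matching the shifts on both sides, and is where the full naturality of the DT perverse sheaf under the graded/filtered pullbacks built in the earlier sections is essential.
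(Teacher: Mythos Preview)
Your approach matches the paper's, but two points need sharpening.

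First, your justification for the properness of $\eta_{\gamma_1,\gamma_2}$ is not right: for a general $t$-structure heart there is no Quot-scheme argument available, and Verdier self-duality of $P$ does not by itself produce the unit $Id \to \eta_!\eta^*$. The paper instead uses that $\mM_\mA$ is $\Theta$-reductive (Lemma \ref{lemmaFiltAb}, from \cite{Alper2018ExistenceOM}); this passes to $\mM_\mS$ because the Serre condition forces $\Filt(\mM_\mS)=\Filt(\mM_\mA)\times_{\mM_\mA}\mM_\mS$. Combined with the finite-type hypothesis and representability by algebraic spaces, $\eta_{\gamma_1,\gamma_2}$ is then proper, and one uses $Id\to\eta_*\eta^*\simeq\eta_!\eta^*$.

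Second, you correctly isolate the coassociativity obstacle but understate what resolves it. The paper proves a dedicated compatibility, Lemma \ref{lemcomposHyploc}, showing that the hyperbolic-localization isomorphism of Theorem \ref{theohyploc} intertwines the swap involution $\tau$ on $\Grad(\Grad(\mX))$ with the base-change identification of the two iterated hyperbolic localizations. This is precisely where the \emph{strong} orientation data (not merely orientation compatible with direct sums) enters, via the $\tau$-compatibility of the induced orientation on $\Grad(\Grad)$ established in Lemma \ref{lemtauorientdcrit} and Lemma \ref{lemgradprod} iii). Coassociativity then factors into two commuting diagrams: diagram \eqref{diagassoc1} from this $\tau$-compatibility at the sheaf level, and diagram \eqref{diagassoc2} from the base-change and composition compatibility of $Id\to\eta_!\eta^*$ along the good-moduli-space squares of Proposition \ref{propModuli} ii).
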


Consider a smooth projective CY3 $X$ such that $D^b\Coh(X)$ admits a strong orientation data compatible with direct sum (the canonical orientation data of \cite{JOYCEorient} is not known to upgrade to a strong orientation data, but it is plausible that it is the case). Following Remark \ref{remAssumCoHA}, the assumptions of the theorem are verified in the following situations:

\begin{itemize}
    \item When $\mA_c=Coh(X)$ is the heart of the classical $t$-structure on $D^b\Coh(X)$, one can then take various Serre subcategories, giving locally closed substacks: torsion or torsion-free sheaves, semistable sheaves for a Gieseker stability condition, sheaves with support on a locally closed subvariety, sheaves with support of dimension $\leq i$,... There will be an absolute CoHA in any of those situations. In general the $\mM_{\mA,\gamma}$ will not be bounded, so there will be no relative CoHA for the whole $Coh(X)$, but there is a relative CoHA for Gieseker-semistable sheaves.

    \item When $\mA_c=\mPP((0,1])$ is the heart of a $t$-structure coming from an algebraic stability condition in a good connected component $\Stab^\ast(X)$ (e.g., a component containing a stability condition from \cite{Bayer2011BridgelandSC}), one obtains an absolute CoHA for $\mM_\mA$, and a restriction of it for any sub-interval $I\subset (0,1]$ . When $I$ is of length $<1$, there is moreover a relative CoHA.
    \end{itemize}

We sketch here the construction of the absolute CoHA in the case $\mS=\mA_c$. The fact that the orientation data is compatible with direct sums gives exactly that the orientation on $\mM_{\mA,\gamma_1}\times \mM_{\mA,\gamma_2}$ and the orientation obtained by the product are isomorphic. Using again \eqref{eqTanDG}, the isomorphism of Theorem \ref{theohyploc} restricts to an isomorphism:
\begin{align}
    (p_{\gamma_1,\gamma_2})_!(\eta_{\gamma_1,\gamma_2})^\ast P_{\mM_{\mA,\gamma_1+\gamma_2}}\simeq P_{\mM_{\mA,\gamma_1}\times \mM_{\mA,\gamma_2}}\{-\langle \gamma_1,\gamma_2\rangle/2\}\simeq P_{\mM_{\mA,\gamma_1}}\boxtimes P_{\mM_{\mA,\gamma_2}}\{-\langle \gamma_1,\gamma_2\rangle/2\}
\end{align}
which satisfies a kind of associativity property with respect to triple filtrations $\Filt_{\mA,\gamma_1,\gamma_2,\gamma_3}$ given by the diagram \ref{diagassoc1} (here one uses the fact that the orientation is a strong orientation). This is an isomorphism in an Abelian category, and this is the only point where we have to use the gluing technology of \cite{Joycesymstab}. This is the trick to obtain the CoHA, which is a (co)algebra in a triangulated category, without having to do homotopy coherent gluing. Now, as $\eta_{\gamma_1,\gamma_2}$ is assumed to be of finite type, it is proper by \cite[Lemma 7.17]{Alper2018ExistenceOM}. We consider then the sequence of morphisms:
 \begin{align}
\bH_c(\mM_{\mA,\gamma_1+\gamma_2},P_{\mA,\gamma_1+\gamma_2})&\to \bH_c(\mM_{\mA,\gamma_1+\gamma_2},(\eta_{\gamma_1,\gamma_2})_!(\eta_{\gamma_1,\gamma_2})^\ast P_{\mA,\gamma_1+\gamma_2})\nn\\
&\simeq \bH_c(\mM_{\mA,\gamma_1}\times \mM_{\mA,\gamma_2},(p_{\gamma_1,\gamma_2})_!(\eta_{\gamma_1,\gamma_2})^\ast P_{\mA,\gamma_1+\gamma_2})\nn\\
&\simeq \bH_c(\mM_{\mA,\gamma_1}\times \mM_{\mA,\gamma_2},P_{\mM_{\mA,\gamma_1}}\boxtimes P_{\mM_{\mA,\gamma_2}}\{-\langle \gamma_1,\gamma_2\rangle\})\nn\\
&\simeq \bH_c(\mM_{\mA,\gamma_1},P_{\mM_{\mA,\gamma_2}})\otimes_k\bH_c(\mM_{\mA,\gamma_2},P_{\mM_{\mA,\gamma_2}})\{-\langle\gamma_1,\gamma_2\rangle/2\}
\end{align}
and one deduces the coassociativity of the coproduct from the commutativity of diagram \ref{diagassoc1}.

\subsection*{Sketch of the proof of the main theorem}

We begin by establishing a few results about the hyperbolic localization functor $(p_\mX)_!(\eta_\mX)^\ast$ for a stack $\mX$. The first is Theorem \ref{theobradenstacks}, which is a stacky version of Braden's theorem of \cite{Braden2002HyperbolicLO}, or more precisely of the adjunction of \cite{Drinfeld2013ONAT}, obtained by adapting Drinfeld and Gaitsgory's construction. In Theorem \ref{commutspechyploc}, we deduce from this adjunction that the hyperbolic localization functor commutes with any specialization system; in particular, it commutes with base change and vanishing cycles (this is a stacky version of the main results of \cite{Ric16}). In proposition \ref{propBB}, we establish a relative and stacky version of Bialynicki-Birula's main result from \cite{BiaynickiBirula1973SomeTO}; namely that, for $\phi:\mX\to\mY$ smooth, $\Filt(\mX)\to\Filt(\mY)\times_{\Grad(\mY)}\Grad(\mX)$ is an affine bundle stack modelled on $\bT_\phi|_{\Grad(\mX)}^{>0}$. We deduce in \ref{propsmoothBB} that hyperbolic localization commutes with smooth pullback up to a Thom twist. We also prove in Lemma \ref{lemhyploclosed} that hyperbolic localization commutes with restriction to the support.\medskip

Given a smooth stack $\mU$ with a function $f:\mU\to\bA^1_k$, considering the closed immersion of the critical locus $i:\mR\to\mU$, we call $(\mR,\mU,f,i)$ a critical chart, extending slightly the terminology of \cite{Joyce2013ACM} from scheme to stacks. We consider then, as in \cite{Joycesymstab}, the perverse sheaf, monodromic mixed Hodge module, or monodromic perverse Nori motive on $\mR$:
\begin{align}
    P_{\mU,f}:=i^\ast\phi_f^{mon,tot}\bQ_\mU\{\dim(\mU)/2\}
\end{align}
where $\phi_f^{mon,tot}$ denotes the total monodromic vanishing cycle functor, obtained by summing on the critical values and keeping the monodromy. The different isomorphisms that we have built are then combined to give an isomorphism:
\begin{align}\label{isomloc}
    (p_\mR)_!(\eta_\mR)^\ast P_{\mU,f} \simeq P_{\Grad(\mU),\Grad(f)}\{-\Ind_\mR/2\}
\end{align}
These isomorphisms form the local version of the isomorphism of Theorem \ref{theohyploc}.\medskip

In section \ref{sectjoyce}, we reformulate slightly the formalism of \cite{Joyce2013ACM}, \cite{Joycesymstab}, and \cite{darbstack} to allow us to glue the DT perverse sheaf using stacky critical charts as above. This is necessary, as the hyperbolic localization correspondence is trivial for schemes. For a $d$-critical stack $(\mX,s)$, we consider critical charts $(\mR,\mU,f,i)$ with smooth maps $\mR\to\mX$ and two operations on them. The first is called smooth restriction: given $\phi:\mU'\to\mU$, we consider the critical chart $(\mR'=\mR\times_\mU\mU',\mU',f':=f\circ\phi,i')$ with the smooth map $\mR'\to\mR\to\mX$. The second, corresponding to the fact of adding quadratic terms to the function, is called stabilization by quadratic bundle: given a quadratic bundle $(\mE,q)$ on $\mU$, with total space $\pi:\bV_\mU(\mE)\to \mX$ and zero section $s:\mU\to \bV_\mU(\mE)$, we consider the critical chart $(\mR,\bV_\mU(\mE),f\circ\pi+q,s\circ i)$ with the smooth map $\mR\to\mX$. In Proposition \ref{proplocdefperv}, we reformulate \cite[Theorem 4.8]{darbstack} by saying that the DT sheaf $P_{\mX,s,K_{\mX,s}^{1/2}}$ on an oriented d-critical stack is obtained by gluing a $\bZ/2\bZ$-twisted version of $P_{\mU,f}$ for critical charts as above, where the comparison isomorphisms are built from smooth comparison of critical charts and stabilization by quadratic bundle stacks. To show that the isomorphisms \eqref{isomloc} glue, we must show:

\begin{itemize}
    \item That the isomorphisms \eqref{isomloc} are compatible with the isomorphisms comparing $P_{\mU,f}$ for smooth restriction of critical charts and stabilization by quadratic bundle, which is the content of Lemmas \ref{lemsmoothresthyploc} and \ref{lemcompathyplocstab}.
    \item That $\Grad(\mX)$ is covered by critical charts of the form $(\Grad(\mR),\Grad(\mU),\Grad(f),\Grad(i))$ which can be compared using $\Grad$ of smooth restrictions and stabilization by quadratic bundle, which is the content of Proposition \ref{propcomparchart}. This is done by building in Proposition \eqref{propcompareqchart} (which is a direct adaptation of the arguments of \cite{Joyce2013ACM}) $\bG_m$-equivariant critical charts and comparing them in a $\bG_m$-equivariant way.
\end{itemize}

\subsection*{Content}

In Section \ref{sect6fun}, we present six-functor formalisms (or, more precisely, motivic coefficient systems) and their extensions to Artin stacks. We prove some compatibility results on the operations (in particular concerning the purity isomorphism and specialization systems) that can be difficult to keep track of from the literature, in particular in the context of stacks, adapting constructions of Ayoub \cite{Ayoub2018LesSOI}, \cite{Ayoub2018LesSOII}.\medskip

In Section \ref{secthyploc}, we study the hyperbolic localization functor for stacks. After a brief recalling of the formalism of graded and filtered stacks from \cite{HalpernLeistner2014OnTS}, we adapt Drinfeld and Gaitsgory's proof of Braden's adjunction from \cite{Drinfeld2013ONAT}. We finally build the isomorphisms of commutation of hyperbolic localization with various functors as said above and check useful compatibilities between them. This section is written for general coefficient systems, with a view toward a possible generalization of DT theory to motivic stable homotopy. \medskip

In section \ref{sectmono}, we specialize to the coefficient systems that we will use in the remaining part of this work, namely perverse sheaves, monodromic mixed Hodge modules, and monodromic perverse Nori motives. The main reason for this restriction is that those have a perverse $t$-structure, which allows gluing objects in a $1$-category as in \cite{Joycesymstab} (look at \cite{Hennion2024GluingIO} to see a homotopy coherent version of this formalism, which could be used to avoid this restriction). We use in particular the extension of perverse sheaves (resp. mixed Hodge modules and perverse Nori motives) to stacks given by \cite{Liu2012EnhancedSO} (resp. \cite{tubachMHM}). We develop the formalism of monodromic objects, monodromic vanishing cycles, and the Thom-Sebastiani isomorphism, adapted from the classical story for perverse sheaves (resp. mixed Hodge modules) on schemes from \cite{Verdiespecial}, \cite{Sebastiani1971UnRS}, \cite{MasThomSeb} (resp. \cite{Saitothomseb}) (a version for general coefficient systems is presented in \cite{descombesThomSeb}). We introduce the square root of Tate twists that exists at the monodromic level, generalizing this well-known story to the level of perverse Nori motives. Finally, we introduce the Grothendieck ring of monodromic mixed Hodge modules and monodromic perverse Nori motives on stacks, with the subtlety given by the necessity to complete this ring with respect to Tate twists. \medskip

In Section \ref{sectjoyce}, we recall the formalism of d-critical stacks of \cite{Joyce2013ACM} and the gluing of the DT sheaf from \cite{Joycesymstab} and \cite{darbstack}. As said above, we slightly reformulate those results by allowing us to work with stacky critical charts. In particular, using the results of the last section, we check that this construction upgrades naturally to the level of monodromic perverse Nori motives. \medskip

In Section \ref{sectproof}, we prove our main result, Theorem \ref{theohyploc}. We begin by defining the isomorphism of commutation with hyperbolic localization on a critical chart and check its compatibility with smooth restriction and stabilization by quadratic bundle. We show then that there are enough critical charts, as said above. We then define a natural structure of oriented d-critical stack (resp. $-1$-shifted symplectic stack) on the $\Grad$ of an oriented d-critical stack (resp. $-1$-shifted symplectic stack) and prove our main result and useful compatibilities. \medskip

In Section \ref{sectapp}, we discuss several applications. We present Theorem \ref{theoBB}, our virtual version of Bialynicki-Birula decomposition, which was our initial motivation. We then present in Theorem \ref{theothetastrat} the analogue of this formula for $\Theta$-stratifications. We present the construction of stacks of objects in Abelian hearts in the bounded derived category of coherent sheaves on a smooth and projective Calabi-Yau threefold. We then applied Theorem \ref{theothetastrat} to prove the Kontsevich-Soibelman wall crossing formula in Theorem \ref{theoKSwcf}. Finally, we applied the main theorem \ref{theohyploc} to build the Cohomological Hall algebra.

\subsection*{Relations to other works}\label{relwork}

The idea of using hyperbolic localization to obtain a localization formula in cohomological DT theory was first formulated by Balazs Szendroi in \cite[Section 8.4]{Szendroi:2015xna}. It was applied in \cite[Section 6]{Nakajima2016LecturesOP} and in \cite[Section 8.3]{RSYZ19}, where it was used in a specific example of framed representations of quivers with potential. In \cite{Ric16}, Timo Richarz proved the commutativity of hyperbolic localization with the vanishing cycles functor in a more general way, using Braden's adjunction. We used this result to establish the formula \eqref{nonproj} for any critical locus of a potential in \cite{Descombes2021MotivicDI}, and the extension of this result to $-1$-shifted symplectic spaces was suggested to us by Richard Thomas. Since then, upgrades of Richarz's result to the level of motivic stable homotopy have been obtained in \cite{Ivorra2024APO} and \cite{Pham2024TheII}.

In \cite{Bussi2019OnMV}, the authors defined, for oriented $d$-critical schemes, a Donaldson-Thomas motive that glues the motive of vanishing cycles defined by Denef-Loeser \cite{Denef1998MotivicEI} and Looijenga \cite{Looijenga1999-2000}. This construction has been extended to oriented d-critical stacks $(\mX,s,K_{\mX,s}^{1/2})$ in \cite[Theorem 5.14]{darbstack}. Such motives have a realization in the Grothendieck ring of monodromic perverse Nori motives or monodromic mixed Hodge modules, which coincide by \cite{Ivorra2021QuasiunipotentMA} with the class in the Grothendieck group of $P_\mX$. Notice that, on one hand, motives glue only in the Nisnevich topology and not in the étale topology; then the local definition of the motivic DT invariants is slightly more involved than the local definition of $P_X$. But, on the other hand, motives form a set and not a category; hence, the constructions in motivic DT theory are one categorical degree lower. In particular, to prove a formula for motivic DT invariants, it suffices to prove an equality on a critical chart, and one does not have to check compatibility with a comparison of critical charts. Davesh Maulik has proved a formula similar to \eqref{circomploc} for motivic DT invariants on d-critical normal schemes with a good circle-compact $\bG_m$-action, as explained in \cite[section 5.3]{darbstack}, in an unpublished preprint (private communication). A generalization of this result for non-Archimedean geometry was subsequently proved in \cite[Theo 7.17]{jiang2017moduli}. For non-circle compact actions, one can compute the DT motive of the attracting variety, which is by definition circle compact, in this way. In \cite{Bu2024AMI}, the author has proven a localization formula similar to Theorem \ref{theohyploc} for motivic DT invariants on stacks, from which one can derive the Kontsevich-Soibelman wall crossing formula from \cite{ks}.\medskip

A toric localization similar to \eqref{circomploc} also exists for K-theoretic DT invariants as defined in \cite{NekOk}. The K-theoretic DT invariants are a refinement of the numerical DT invariants defined for projective moduli spaces with a $\bG_m$-action and a $\bG_m$-equivariant symmetric obstruction theory, developed in parallel with the motivic and cohomological refinement of Kontsevich-Soibelman and Joyce and collaborators. In general, they are expected to correspond to the $\chi_y$ genus of the Hodge polynomial of cohomological DT theory, so in particular one replaces $\mathbb{L}^{1/2}$ by $-y$ in K-theoretic formulae. If the moduli space $X$ is not projective but has a $\bG_m$-action with projective fixed components such that it is $\bG_m$-invariant, it was suggested in \cite{NekOk} to use the equation \eqref{circomploc} to define the $K$-theoretic invariants of $X$. However, this definition depends on the choice of this $\bG_m$-action (this choice is called the choice of the slope). The equation \eqref{nonproj} in the non-projective case explains the origin of this ambiguity: one computes by toric localization only the virtual cohomology of the attracting variety, which is not the whole moduli space and depends on the chosen $\bG_m$-action.\medskip

This dependence on the slope was studied explicitly in \cite{Arb} for the moduli space of framed representations of a toric quiver, and this was related to the ambiguity in the refined topological vertex of \cite{Iqbal:2007ii}. In this case there is a two-dimensional torus invariant acting on the moduli space of framed representations, scaling the arrows of the quiver by leaving the potential invariant, so the space of slopes is $\mathbb{P}^1_\mathbb{R}$. The fixed points can be described as molten crystals from \cite{Mozgovoy:2008fd}. In \cite[Prop 3.3]{Arb}, it was established that there is a wall and chamber structure on the space of slopes. Namely, the generating functions of framed invariants are constant in a chamber and jump at a wall, the walls corresponding to slopes where the weight of an elementary cycle of the quiver becomes attracting or repelling. This is rather strange at first sight, because inside a given chamber the cohomological weight of a given molten crystal changes at many walls, but the final result does not change: these walls are 'invisible.' In \cite{Descombes2021MotivicDI}, we have established that the attracting variety is the subspace of representations where the cycles with repelling weights are nilpotent, so the attracting variety changes exactly on the walls defined in \cite{Arb}, \ie \eqref{nonproj} give an explanation of this wall and chamber structure. Moreover, using a nilpotent/invertible decomposition for the unframed representation and a wall-crossing relation between framed and unframed invariants, we have obtained in \cite{Descombes2021MotivicDI} the full framed generating series by multiplying the one obtained by localization by a generating series of framed invariants where some cycles are imposed to be invertible. The latter is easy to compute and has a universal closed formula for all toric quivers. Note that in this case the moduli space is the critical locus of the potential of the quiver, so one does not need all the subtleties of gluing.\medskip

In \cite{Leigh2024InstantonsAM}, Leigh use Theorem \ref{theoBB} in an example where there is no $\bG_m$-action scaling the symmetric obstruction theory; hence, there is no K-theoretic refinement, but the cohomological DT invariants are still defined, and there is a $\bG_m$-action leaving the symmetric obstruction theory invariant in order to apply Theorem \ref{theoBB}.\medskip

In the preprint \cite{Descombes:2022cpc}, which is subsumed by this paper, we built the isomorphism \eqref{isomspace} of Theorem \ref{theoBB} for algebraic space with $\bG_m$-action. A similar isomorphism for stacks with $\bG_m$-action, obtained using smooth descent, was also presented, but this construction was not satisfactory because the formalism of mixed Hodge modules on stacks was not developed at the time, and the Grothendieck group of perverse sheaves on stacks is famously ill-behaved, as pointed out to us by Dominic Joyce. This is now solved by Tubach's work \cite{tubachMHM}. Thanks to this work, for this new version, we wanted to extend the construction of this isomorphism to the isomorphism \eqref{isomstack} of Theorem \ref{theohyploc} for the $\Theta$-correspondence for stacks, with applications to the proof of the Kontsevich-Soibelman wall crossing formula and construction of the cohomological Hall algebra in mind. In the meantime, we heard about the work \cite{Kinjo2024CohomologicalHA}, where the authors were also extending our original construction in order to build the cohomological Hall algebra. We have then decided to place the emphasis here on the construction of the isomorphism \eqref{isomstack} of Theorem \ref{theohyploc} and its enhancement to mixed Hodge modules and perverse Nori motives (with a view toward a possible extension to motivic stable homotopy in the future), presenting quickly the proof of the Kontsevich-Soibelman wall crossing formula and the construction of the CoHA as an application. We apologize for any duplication of results in the literature and refer the reader to the excellent paper \cite{Kinjo2024CohomologicalHA} for an alternative construction of the CoHA and to \cite{Bu2025CohomologyOS} for further developments on the structure of the CoHA in the spirit of \cite{DavMein}, in particular the proof of integrality and the construction of the BPS Lie algebra.

\subsection*{Acknowledgements}
I thank Chenjing Bu, Ben Davison, Benjamin Hennion, Masoud Kamgarpour, Šarūnas Kaubrys, Alyosha Latyntsev, Oliver Leigh, Andrés Ibáñez Núñez, Davesh Maulik, Tudor Padurariu, Khoa Bang Pham, Boris Pioline, Marco Robalo, Timo Richarz, Matthieu Romagny, David Rydh, Travis Schedler, Olivier Schiffmann, Sebastian Schlegel Mejia, Balazs Szendroï and Swann Tubach for interesting discussions and comments. I thank particularly Dominic Joyce and the anonymous referee for their remarks on a preliminary version of this work, and Richard Thomas, who motivated me for this work and for his constant support throughout the years.

\section{Six functor formalisms toolbox}\label{sect6fun}

\subsection{Construction of six functor formalisms}

\subsubsection{Six functor on schemes}

The six functor formalism is a very powerful tool, first envisioned by Grothendieck and build in the context of $\ell$-adic étale sheaves in \cite{Verdier1972ThorieDT}. It has further been developed for analytic sheaves and $\mathcal{D}$-modules, and for mixed Hodge modules by Saito in \cite{Saito1988ModulesDH}, \cite{SaitoMHM}. After the invention of motivic stable homotopy theory, Voevodsky has discovered an axiomatic framework to build six functor formalisms in motivic contexts. The construction was developed by Ayoub in his thesis \cite{Ayoub2018LesSOI}, \cite{Ayoub2018LesSOII}, with further details provided by Cisinki and Déglise in \cite{Cisinski2009TriangulatedCO}. Noetherianity hypotheses were removed by Hoyois and Khan. Furthermore, these constructions have been upgraded to the $\infty$-categorical context in Khan's thesis \cite{Khan2016MotivicHT} by applying the work of Gaistgory and Rozenblyum \cite{Gaitsgory_Rozenblyum_2017} Liu-Zengh \cite{Liu2012EnhancedSO}, and Mann \cite{Mann2022A6}. We follow here the presentation of \cite[Section 2]{Khan2021VOEVODSKYSCF}, see \cite{Gallauer2021AnIT} and \cite{Scholze2023SixFunctorF} for an other introduction to six functor formalisms. Notice that we will work mainly in a $2$-categorical setting in this paper, but the $\infty$-categorical enhancement is important to extends the six functor formalisms to stacks, as we will see later.\medskip

We begin with a full subcategory, denoted $Sch$, of the category of quasi-compact quasi-separated (qcqs) schemes over a qcqs base scheme $B$, which is stable by fiber products and coproducts, containing any closed subscheme of a scheme in $Sch$, any quasi-compact open subscheme of a scheme in $Sch$, and any proj of a finite locally free sheaf over a scheme in $Sch$. In general, one take the category of quasiprojective schemes over a base $B$, or the category of schemes which are separated of finite type over a base $B$, or the category of quasi-compact and quasi-separated schemes over a base $B$. Notice that Khan allows to work with derived algebraic spaces in \cite{Khan2021VOEVODSKYSCF}, but we work only with classical schemes. This is not a serious restriction, because, from the nil-invariance \cite[Lemma 2.13]{Khan2016MotivicHT}, the six functor is insensitive to derived thickening, as it is insensitive to nilpotent thikening, \ie it depends only on the reduced scheme of the classical truncation.\medskip

Denote by $Pr^L_{\otimes,St}$ is $(\infty,1)$-category of symmetric monoidal stable presentable $(\infty,1)$-categories (an enhancement of triangulated category with a symmetric tensor product). A motivic coefficient system is a functor of $(\infty,1)$ categories:
\begin{align}
    \mD^\ast:Sch^{op}\to Pr^L_{\otimes,St}
\end{align}
satisfying few properties, that can be checked at the level of the homotopy category. For $f:X\to Y$ in $Sch$, we denote the induced functor by $f^\ast:\mD(Y)\to\mD(X)$, and we denote by $-\otimes_X-$ the symmetric tensor product on $\mD(X)$. Notice that, from the presentability assumption, $f^\ast$ admits a right adjoint $f_\ast:\mD(X)\to\mD(Y)$, and $-\otimes_X-$ admits a right adjoint $\Hom(-,-)$. The notion of coefficient system gives simple conditions to ensure that one can built an adjoint couple of exceptional functors $(f_!,f^!)$.\medskip

One ask that, for each $f:X\to Y$ smooth, $f^\ast$ admits a left adjoint $f_\#:\mD(X)\to\mD(Y)$. Consider a Cartesian diagram:
\[\begin{tikzcd}
    X'\arrow[r,"g"]\arrow[d,"q"] & X\arrow[d,"p"]\\
    Y'\arrow[r,"f"] & Y
\end{tikzcd}\]
Using adjunction and the isomorphism of composition for $\mD^\ast$, if $f$ (and then also $g$) is smooth, there is a natural base change morphism:
\begin{align}
    g_\# q^\ast\to p^\ast f_\#
\end{align}
One ask for the smooth base change property, saying that that any such morphism is an isomorphism. Using the adjunction, there is also, for $f:X\to Y$ smooth, a natural morphism:
\begin{align}
    f_\#(f^\ast-\otimes-)\to -\otimes f_\#
\end{align}
One ask for the projection formula, saying that any such morphism is an isomorphism. Moreover, one ask that $\mD_\ast$ is additive, \ie the natural map $\mD(\bigsqcup_\alpha S_\alpha)\to\prod_\alpha \mD(S_\alpha)$ is an equivalence. Finally, one ask for the three Voevodsky's conditions:
\begin{itemize}
    \item Homotopy invariance: for any $X\in Sch$, denoting $p:\bA^1_X\to X$, the unit map $Id\to p_\ast p^\ast$. With the other axioms, it induces the same result for any vector bundle.
    \item Localization: consider an closed/open pair:
    \[\begin{tikzcd}
        Z\arrow[r,"i"] & X & U\arrow[l,swap,"j"]
    \end{tikzcd}\]
    in $Sch$, the functor $i_\ast$ is fully faithful with essential image spanned by objects in the kernel of $j_\ast$. As noticed in \cite[Remark 2.9]{Khan2021VOEVODSKYSCF}, it implies that there is a canonical exact triangle:
    \begin{align}
        j_\#j^\ast\to Id\to i_\ast i^\ast
    \end{align}
    \item Stability: For $X\in Sch$, denote by $s:X\to \bA^1_X$ the zero section. Then $p_\#s_\ast:\mD(X)\to\mD(X)$ is an equivalence called the Tate twist, denoted by $\{1\}=[2](1)$. With the other axioms, it induces the same result for any vector bundle. Given a locally free sheaf $\mE$, we denote by $\Sigma^\mE:\mD(X)\to\mD(X)$ the equivalence, called Thom twist, associated to $\Spec_X(\Sym(\mE))$, \ie the total space of $\mE^\vee$.
\end{itemize}

The construction of the exceptional functoriality follows from the strategy used by Deligne in \cite[Exposé XVII]{Verdier1972ThorieDT}, and rely on deep properties of proper morphisms in motivic coefficient systems. These properties were shown in \cite{Ayoub2018LesSOI} to holds for projective morphisms, and were extended to proper morphisms in \cite{Cisinski2009TriangulatedCO} using Chow lemma. As in \cite[Exposé XVII]{Verdier1972ThorieDT}, for any $f:X\to Y$ which is separated of finite type, one want to define the exceptional functor $f_!:\mD(X)\to\mD(Y)$ by using Nagata compactification. One write $f=p\circ j$, with $j$ an open immersion, and $f$ proper, and define $f_!:=p_\ast j_\#$. However, one has to do a construction which is independent of the Nagata compactification, and turn this into a functor. One of the most difficult result of \cite{Ayoub2018LesSOI} is that, given a Cartesian diagram as above when $f$ is smooth and $p$ is proper, the natural base change morphism:
\begin{align}
    f_\# q_\ast\to p_\ast g_\#
\end{align}
is an isomorphism. One can then use this to relate the definition of $f_!$ for different Nagata compactifications by an isomorphism, and to construct a functor $\mD_!$ (at the $2$-categorical level). Moreover, an other difficult result of \cite{Ayoub2018LesSOI} is that, for $p$ proper, $p_\ast$ has a right adjoint. Because, for $j$ open, $j^\ast$ is right adjoint to $j_\#$, it means that, for any $f$ separated of finite type, $f_!$ has an adjoint, denoted $f^!$. An other result of \cite{Ayoub2018LesSOI}, the proper base change, gives that, given a Cartesian diagram where $p$ is proper, the natural morphism:
\begin{align}
    f^\ast p_\ast\to q_\ast g^\ast
\end{align}
is an isomorphism. Together with the smooth base change, this gives a natural base change isomorphism for any $p$ which is separated of finite type:
\begin{align}
    Ex^\ast_!:f^\ast p_!\simeq q_! g^\ast
\end{align}
and similarly, by passing to the adjoint:
\begin{align}
    Ex^!_\ast:f_\ast p^!\simeq q^! g_\ast
\end{align}
An other result of \cite{Ayoub2018LesSOI} is the proper projection formula, saying that, for $p$ proper, the natural morphism:
\begin{align}
    p\ast\otimes-\to p_\ast(-\otimes p^\ast-)
\end{align}
is an isomorphism. Together with the smooth projection formula, this gives a natural projection formula for any $p$ which is separated of finite type:
\begin{align}
    p_!\otimes-\to p_!(-\otimes p^\ast-)
\end{align}
The properties of the tensor product are better expressed by using the exterior tensor product:
\begin{align}
    -\boxtimes_B-:=(p_1)^\ast\otimes (p_2)^\ast:\mD(X)\times\mD(Y)\to\mD(X\times_B Y)
\end{align}
Then the fact that $f^\ast:\mD(X)\to\mD(Y)$ is a monoidal functor gives a natural commutation isomorphism:
\begin{align}
    (f_1\times f_2)^\ast(-\boxtimes_B-)\simeq ((f_1)^\ast\boxtimes_B(f_2)^\ast) 
\end{align}
and a similar isomorphism for $f_\ast$ by passing to the adjoint. Similarly, the projection formula gives a natural commutation isomorphism:
\begin{align}
    (f_1\times f_2)_!(-\boxtimes_B-)\simeq ((f_1)_!\boxtimes_B(f_2)_!) 
\end{align}
and a similar isomorphism for $f_!$ by passing to the adjoint. Most of the time, the base scheme $B$ will be implicit, and we will use the notation $\boxtimes:=\boxtimes_B$.\medskip

This construction admits now an $(\infty,1)$-categorical enhancement, thanks to the work of \cite{Gaitsgory_Rozenblyum_2017} and \cite{Liu2012EnhancedSO}, giving all the higher coherence of the base change isomorphism. One consider the symmetric monoidal $(\infty,1)$-category of correspondences $Corr_{sep-ft,all}(Sch)$, whose objects are the objects of $Sch$, 1-morphism $X\to Y$ are roofs:
\[\begin{tikzcd}[sep=tiny]
    & W\arrow[dl,swap,"f"]\arrow[dr,"g"] &\\
    X &  & Y
\end{tikzcd}\]
where $g$ is separated of finite type. The higher morphisms are automorphisms of roofs, and a composition of two roofs is given by a big roof:
\[\begin{tikzcd}[sep=tiny]
    & & W\arrow[dl]\arrow[dr] & & \\
    & W_1\arrow[dl]\arrow[dr] & & W_2\arrow[dl]\arrow[dr]&\\
    X && Y && Z
\end{tikzcd}\]
where the upper square is Cartesian. The symmetric monooidal sstructure comes from the Cartesian product $\times_B$ in $Sch$. Then the output of the construction is a lax symmetric monoidal functor:
\begin{align}
    \mD^\ast_!:Corr_{sep-ft,all}(Sch)\to Pr^L_{St}
\end{align}
where $Pr^L_{St}$ is provided with the symmetric monoidal structure coming from Lurie's tensor product. Informally, it sends an object $X$ to $\mD(X)$, a morphism $(f,g)$ to $g_! f^\ast$, and the composition isomorphism gives both the composition isomorphism for $g_!$, for $f^\ast$, and the base change isomorphism $Ex^\ast_!$. The lax monoidality gives the exterior tensor product, and its compatibility with $f^\ast$ and $g_!$. All the coherences for $f_\ast$ and $g^!$ are obtained by passing to the adjoint.\medskip

Now, as we will review in Section \ref{sectionpropsmooth} for any separated morphism of finite type $f$, using the fact that its diagonal $\Delta_f$ is a closed immersion, it is easy to build a natural morphism $f_!\to f_\ast$. It is an isomorphism when $f$ is proper.\medskip

For $f$ smooth, as we will review in Section \ref{sectionpropsmooth}, by denoting by $\mathcal{L}_f$ the cotangent sheaf of $f$, there is similarly a natural purity isomorphism $\Sigma^{\mathcal{L}_f}f^\ast\simeq f^!$. Its construction, using the deformation to the normal cone, is slightly involved, and is the main technical ingredient of \cite{Ayoub2018LesSOI}.\medskip

\subsubsection{Six functors on algebraic stacks}\label{sectionschemestacks}

In this work, by stack we mean non-derived algebraic $1$-stacks.\medskip

Suppose that $\mD^\ast$ is valued in $1$-categories. For a morphism $f:X\to Y$, consider the first levels of the Čech diagram of $f$:
\[\begin{tikzcd}
    X\times_Y X\times_Y X\arrow[r,shift left=4,"p_{12}"]\arrow[r,"p_{13}"]\arrow[r,shift left=-4,"p_{23}"] & X\times_Y X\arrow[r,shift left=2,"p_1"]\arrow[r,shift left=-2,"p_2"] & X
\end{tikzcd}\]
Now, $\mD^\ast$ is says to satisfies descent along $f$ if objects and morphisms can be built uniquely by descent data. Namely, an object $f\in \mD(Y)$ is equivalent to the data $(F,\alpha)$ of an object $F\in \mD(X)$, and an isomorphism $\alpha:(p_1)^\ast F\simeq (p_2)^\ast F$ satisfying the cocycle condition for the $p_{ij}$. A morphism between the perverse sheaves corresponding to $(F,\alpha_F)$ and $(G,\alpha_G)$ is given by the data of a morphism $\gamma:F\to G$ such that the following square commutes:
\[\begin{tikzcd}
    (p_1)^\ast F\arrow[r,"\alpha_F"]\arrow[d,"(p_1)^\ast\gamma",swap] & (p_2)^\ast\arrow[d,"(p_2)^\ast\gamma"]\\
    (p_1)^\ast G\arrow[r,swap,"\alpha_G"] & (p_2)^\ast G
\end{tikzcd}\]
If $\mD^\ast$ satisfies descent with respect to smooth morphism, one can extend it to stacks by defining objects and morphisms as being descent data on a smooth presentation. In particular, one find that $G$-equivariant objects on $X$ are identified naturally with objects on the stack $[X/G]$.\medskip

In general, objects in Abelian categories satisfies descent with respect to interesting cohomology. As an example, perverse sheaves satisfies descent with respect to smooth morphisms (up to a dimension shift), hence one can define by descent perverse sheaves on stacks as in \cite{Laszlo2005TheSO}, \cite{Laszlo2006PerverseSO}. However, triangulated categories almost never satisfies descent, hence one cannot hope to extend a six functor formalism to stacks by using triangulated categories and descent. The solution is to work at the level of stable $(\infty,1)$-categories, a homotopy coherent enhancement of triangulated categories. There is in this setting a homotopy coherent version of descent. Namely, one consider the whole Chech diagram of $f$ as a diagram $X_\bullet:\Delta^{op}\to Sch$: $\mD^\ast$ is says to statisfies descent along $f$ if $\mD^\ast\circ X_\bullet^{op}:\Delta\to Pr^{L,\otimes}_{St}$ is a limit diagram, which means that any objects and (higher) morphism in $\mD(Y)$ is equivalent to the data of a coherent system of objects or (higher) morphism on $\mD(X_\bullet)$.\medskip

Suppose that our motivic coefficient system satisfies étale descent, then it automatically satisfies descent along smooth cover. Liu and Zengh have used this property in the context of torsion étale sheaves in \cite{Liu2012EnhancedSO} to extend the six operations for them from schemes to stacks. See Mann's thesis \cite[Appendix A.5]{Mann2022A6} for a reformulation of Liu-Zengh's construction in the language of the category of correspondences, explaining how it allows to extend any six functor formalism to bigger category of correspondences using descent along a class of morphisms. Namely, denote by $St'$ the $(2,1)$ full subcategories of the category of stacks, whose objects are the stacks admitting smooth cover by schemes in $Sch$. Denote by $Corr_{St',lft}$ the $(\infty,1)$-category of correspondences, where morphisms are roofs $(f,g)$ where $g$ is locally of finite type. one obtains a lax monoidal functor of symmetric monoidal $(\infty,1)$-categories:
\begin{align}
    \mD^\ast_!:Corr^\times_{lft,all}(St')\to Pr^{L,\otimes}_{St}
\end{align}
encoding the functor $f^\ast$ for any morphism, $g_!$ for $g$ locally of finite type, and the base change isomorphism. The right adjoint, which exists automatically, gives the functors $f_\ast$ and $g^!$. In \cite[Appendix A]{Khan2019VirtualFC}, Khan has shown that the six functor formalism obtained this way satisfies homotopy for arbitrary vector bundle stacks, and that one can build by smooth descent a morphism of purity even for non representable smooth morphisms.\medskip

General motivic coefficient system do not satisfies étale descent. However, from the localization property, they satisfies descent along Nisnevich cover (it is in particular the case for algebraic K theory). Recall that the Nisnevich topology is an intermediate topology between the étale and the Zariski one, where covering are étale covering which are surjective on $k$-valued points for any field $k$. It implies in particular that any coefficient system satisfies descent along cover by smooth morphisms with Nisnevich local sections.\medskip

Now, from \cite[Corollary 2.9]{Chowdhury2024NonrepresentableSF}, any quasi-separated stack admits a cover by smooth morphisms with Nisnevich-local sections from a quasi-compact and quasi-separated schemes.  Denote by $St$ the $(2,1)$ full subcategories of the category of quasi-separated stacks over $B$, whose objects are the stacks admitting smooth cover with Nisnevich local sections by schemes in $Sch$. Namely, if $Sch$ is the category of quasi-compact and quasi-separated schemes (resp separated of finite type, resp quasi-projective) over $B$, then $St$ is the $(2,1)$-category of quasiseparated stacks (resp quasi-separated and locally of finite type) over this base. Then, as done in \cite[Corollary 7.2.4]{Khan2025LisseEO} using Liu-Zengh and Mann' arguments from \cite{Liu2012EnhancedSO}, \cite{Mann2022A6} (see also \cite{Khan2021GeneralizedCT}, \cite{Khan2022EquivariantGC}), $\mD^\ast_!$ extends to a lax monoidal functor of symmetric monoidal $(\infty,1)$-categories:
\begin{align}
    \mD^\ast_!:Corr_{lft,all}(St)\to Pr^L_{St}
\end{align}
From \cite[Axiom Sm1, Sm3, Sm4]{Khan2025LisseEO}, one obtains from descent from the scheme case that, for $f$ smooth, $f^\ast$ admits a left adjoint $f_\#$ such that smooth base change is satisfied, \ie for any Cartesian square, the morphisms:
\begin{align}\label{eqexsmoothast}
    Ex_\#^\ast&:g_\# q^\ast\to p^\ast f_\#\nn\\
    Ex_{\#,!}&: f_\# q_!\to p_! g_\#\nn\\
    Ex^{\ast,!}&:g^\ast p^!\to q^! f^\ast
\end{align}
are isomorphisms.

\subsubsection{Specialization systems}

We consider specialization systems as introduced in \cite[Section 3.1]{Ayoub2018LesSOII} (with the definition extended from schemes to stacks). Given two six functor formalisms $\mD_1,\mD_2$, a (monoidal) specialization system $\sps:\mD_1\to \mD_2$ over $\eta\rightarrow B\leftarrow\sigma$ gives the data:
\begin{itemize}
    \item For each $B$-stack $f:\mX\to B$, a functor $\sps_f:\mD_1(\mX_\eta)\to \mD_2(\mX_\sigma)$.
    \item For each morphism $g:\mY\to\mX$ over $B$, exchange morphisms:
\begin{align}
    (g_\sigma)^\ast \sps_f&\to \sps_{f\circ g} (g_\eta)^\ast\label{specpullback}\\
    (g_\sigma)_! \sps_{f\circ g}&\to \sps_f (g_\eta)_!\label{specpushforward}
\end{align}
compatible with composition and base change. Moreover, if $g$ is smooth (resp $g$ proper and representable), \eqref{specpullback} (resp \eqref{specpushforward}) is assumed to be an isomorphism.
\item In the monoidal case, for $f_i:\mX_i\to B$ $i=1,2$, an exchange isomorphism:
\begin{align}
    \sps_{f_1}\boxtimes_\sigma\sps_{f_2}\simeq \sps_{f_1\times_B f_2}(-\boxtimes_\eta-)
\end{align}
compatible with the above exchange morphisms, and satisfying the obvious commutativity and associativity relations.
\end{itemize}

A morphism of specialization systems $\chi:\sps\to \sps'$ gives the data, for each $f:\mX\to B$, of a natural transformation $\chi_f:\sps_f\to \sps'_f$, commuting with the exchange morphisms, it is said to be an isomorphism if each $\chi_f$ is an isomorphism.\medskip

Passing to the right adjoint, one obtains exchange morphisms:
\begin{align}
    \sps_{f\circ g} (g_\eta)^!&\to(g_\sigma)^! \sps_f\nn\\
    \sps_f (g_\eta)_\ast&\to(g_\sigma)_\ast \sps_{f\circ g}
\end{align}
compatible with composition and $Ex^!_\ast$. If $g$ is smooth, passing to the left adjoint from the inverse isomorphism $\sps g^\ast\to g^\ast\sps$ of \eqref{specpullback}, one obtain:
\begin{align}
    (g_\sigma)_\# \sps_{f\circ g}f\leftarrow \sps_f (g_\eta)_\#
\end{align}
compatible with composition and $Ex^\ast_\#$. In the monoidal case, using the formula $-\otimes_\mX-=(p_1)^\ast-\boxtimes_B(p_2)^\ast$, we obtain a natural morphism:
\begin{align}
    \sps_f\otimes_{\mX_\sigma}\sps_f\mapsto \sps_f(-\otimes_\eta-)
\end{align}
which is compatible with the projection formula. We will show below that these exchange transformations satisfies all the desired compatibility with the constructions of the six functor formalism.\medskip

The notion of (monoidal) specialization system is particularly interesting because, if $j:\eta\to B$ (resp $i:\sigma\to B$) is any morphism, using base change and adjunctions, there is naturally monoidal specialization systems $j_!,j_\ast:\mD\to\mD$ over $\eta\to B\leftarrow B$ (resp $i^\ast,i^\ast:\mD\to\mD$ over $B\to B\leftarrow \sigma$. This implies (informally) that any functor that one can write from the six functor formalism will induce a monoidal specialization system by base change. In particular, (monodromic, unipotent) nearby and vanishing cycles have also a natural structure of specialization system. Then, by showing that the constructions of the six functor formalism commutes with specialization systems, we will obtain plenty of useful compatibility results, that are often difficult to find in the literature (in particular in the world of stacks).

\subsection{Useful results about six functors on stacks}
\subsubsection{Affine bundle stacks and homotopy invariance}

\begin{definition}
    Consider an algebraic stack $\mX$, and a quasi-coherent complex $\mE$ on $\mX$ which is perfect with amplitude in $[0,1]$ (our main example will be $\mE=\bL_\phi$, the cotangent complex of a smooth morphism $\phi:\mX\to\mY$). Notice that we use the cohomological grading convention, \ie work with cochain complexes $d:\mE^i\to \mE^{i-1}$, such that $H^{-1}(\bT_\phi)$ gives the stacky part and $H^{>0}(\bT_\mX)$ gives the obstructions, as usual. Hence, dually, $H^1(\bL_\phi)$ gives the stacky part and $H^{<0}(\bL_\phi)$ gives the obstructions, which vanishes when $\phi$ is smooth. We will follow the references \cite[Theorem 3.10]{intrnormcone} and \cite{Khan2019VirtualFC}, which use homological conventions, hence we will have to invert the signs from their results.
    \begin{itemize}
        \item We consider the smooth $\mX$-stack $\bV_\mX(\mE)\to\mX$, defined in \cite[Definition 3.1, Theorem 3.10]{intrnormcone} as a functor in groupoids by:
\begin{align}
    \bV_\mX(\mE):(\Spec(A)\overset{f}{\to}\mX)\mapsto map_{D(A)}(f^\ast(\mE),A)
\end{align}
where the latter is the mapping space in $\QCoh(\Spec(A))=D(A)$. As noticed above \cite[Lemma 3.4]{intrnormcone}, the Abelian group structure on the mapping space gives to $\bV_\mX(\mE)$ a structure of Abelian group object in the monoidal 2-category of stacks over $\mX$. Any Abelian $\mX$-stack of this form is called a vector bundle stack.
\item An affine bundle stack (modeled on a vector bundle stack $\bV_\mX(\mE)$) is the data of a torsor over the Abelian $\mX$-stack $\bV_\mX(\mE)$, \ie a surjective morphism $\mY\to\mX$, with an action $\mu:\bV_\mX(\mE)\times_\mX\mY\to\mY$ such that the morphism $\sigma:\bV_\mX(\mE)\times_\mX\mY\to\mY\times_\mX\mY$, defined by:
\[\begin{tikzcd}[column sep=huge]
\bV_\mX(\mE)\times_\mX\mY\arrow[r,"Id_{\bV_\mX(\mE)}\times_\mX\Delta_\pi"] & \bV_\mX(\mE)\times_\mX\mY\times_\mX\mY\arrow[r,"\mu\times_\mX Id_\mY"] & \mY\times_\mX\mY
\end{tikzcd}\]
is an isomorphism.
    \end{itemize}
\end{definition}

\begin{remark}
    According to our cohomological grading convention, $H^0(\mE)$ gives the classical part of $\bV_\mX(\mE)$, and $H^1(\mE)$ gives the stacky part. In particular, according to \cite[Example 3.2]{intrnormcone}, if $\mE$ is a vector bundle (\ie, is perfect in degree $0$), $\bV_\mX(\mE)\to\mX$ is the vector bundle $\Spec(\Sym(\mE))$ given by the total space of $\mE^\vee$.
\end{remark}

\begin{remark}
    Notice that, as proven in \cite[Lemma 3.6, Theorem 3.10]{intrnormcone} (changing the convention), if $\mE$ is of perfect amplitude $(-\infty,n]$, $\Map_{D(A)}(f^\ast(\mE),A)$ is a $n$-groupoid, and then $\bV_\mX(\mE)\simeq \bV_\mX(\tau^{\geq 0}(\mE))\to\mX$ is a $n$-stacks, but we consider only $1$-stacks here. Notice also that $\bV_\mX(\mE)=\bV_\mX(\tau^{\geq 0}(\mE))$ holds because wee work with non-derived stacks, but these would be false with derived stacks. See \cite[Section 1.2]{Khan2019VirtualFC} (also with the homological conventions) for a derived version.\medskip
\end{remark}

\begin{lemma}\label{lemmaffinesmooth}
    Consider a quasi-coherent complex $\mE$ on $\mX$ which is perfect with amplitude in $[0,1]$ and an affine bundle stack $\pi:\mY\to\mX$ modeled on $\bV_\mX(\mE)$. Then $\pi$ is smooth, of cotangent complex $\bL_\pi=\pi^\ast\mE$.
\end{lemma}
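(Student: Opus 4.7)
The plan is to reduce to the case of the trivial torsor $\mY=\bV_\mX(\mE)$, to give a quotient-stack presentation of the vector bundle stack locally on $\mX$, and to read off both smoothness and the cotangent complex from that presentation.

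For the reduction, I would use that the torsor axiom furnishes an isomorphism $\sigma:\bV_\mX(\mE)\times_\mX\mY\simeq\mY\times_\mX\mY$ compatible with the second projections. Via $\sigma$, the base change $p_2:\mY\times_\mX\mY\to\mY$ of $\pi$ along itself is identified with the pullback to $\mY$ of the vector bundle stack projection $\pi_\bV:\bV_\mX(\mE)\to\mX$. Since $\pi$ is surjective, and both smoothness and the cotangent complex descend along smooth surjections, once these properties are established for $\pi_\bV$ they transfer to $p_2$ and then back down to $\pi$. It is therefore enough to treat $\pi_\bV$.

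Next I would work smooth-locally on $\mX$, reducing to the situation where $\mX$ is a scheme and $\mE$ is quasi-isomorphic to a two-term complex of locally free sheaves $[\mE^0\xrightarrow{d}\mE^1]$ concentrated in cohomological degrees $0$ and $1$. Set $V:=\Spec_\mX(\Sym(\mE^0))$ and $W:=\Spec_\mX(\Sym(\mE^1))$, and let the additive group scheme $W$ act on $V$ over $\mX$ by translation via $d^\vee$. Unwinding the defining formula $\bV_\mX(\mE)(A)=\tau^{\leq 0}\Map_{D(A)}(\mE_A,A)$: the set of isomorphism classes of objects is $V(A)/d^\vee W(A)$, and the automorphism group of any object is $\ker(d^\vee)\subset W(A)$, which is precisely the groupoid of $A$-points of the quotient stack $[V/W]$. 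Hence $\bV_\mX(\mE)\simeq[V/W]$.

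In this presentation the atlas $V\to[V/W]$ is smooth and surjective, and the composite $V\to\mX$ is smooth, so $\pi$ is smooth by smooth descent. For the cotangent complex I would use the fiber sequence
\begin{equation*}
\pi_V^\ast\bL_\pi\longrightarrow\bL_{V/\mX}\longrightarrow\bL_{V/[V/W]}
\end{equation*}
attached to $V\to[V/W]\xrightarrow{\pi}\mX$. Here $\bL_{V/\mX}=\pi_V^\ast\mE^0$ in degree $0$, while $\bL_{V/[V/W]}=\pi_V^\ast\mE^1$ in degree $0$ by the standard computation for a torsor under a vector bundle, and the connecting map is induced by $d$. Shifting gives $\pi_V^\ast\bL_\pi\simeq[\pi_V^\ast\mE^0\xrightarrow{d}\pi_V^\ast\mE^1]$ in cohomological degrees $0$ and $1$, namely $\pi_V^\ast\mE$, and descent along the smooth atlas yields the desired identification $\bL_\pi=\pi^\ast\mE$. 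The main technical point is this last cotangent complex computation, an instance of the general formula for quotient stacks by a smooth group action; the other steps are formal bookkeeping around the torsor structure and the two-term resolution.
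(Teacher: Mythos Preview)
Your reduction step is circular. To pass from ``$p_2$ is smooth'' back to ``$\pi$ is smooth'' you invoke descent along $\pi:\mY\to\mX$, but descent of smoothness on the target requires the cover to be fppf, and you only know $\pi$ is surjective. Bare surjectivity does not suffice: the closed immersion $\Spec(k)\to\Spec(k[\epsilon]/\epsilon^2)$ is surjective with self-fibre-product $\Spec(k)$, so $p_2=\Id$ is smooth, yet the map is not flat. You might hope to first prove $\pi$ is fppf from the torsor axiom, but the standard way to do this for torsors under smooth group stacks is again the infinitesimal lifting criterion --- which is precisely the paper's route. The same circularity bites the cotangent-complex step: from $p_1^\ast\bL_\pi\simeq p_2^\ast\pi^\ast\mE$ you cannot conclude $\bL_\pi\simeq\pi^\ast\mE$ without knowing $p_1$ (hence $\pi$) is faithfully flat.

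The paper does not reduce to the trivial torsor at all: it checks the deformation-theoretic characterisation of $\bL_\pi$ for the general torsor directly. Given $g:\Spec(A)\to\mY$ and an extension $f:\Spec(A\oplus M)\to\mX$ of $\pi\circ g$, the torsor axiom identifies the groupoid of diagonal fillers as a torsor under $\Map_{D(A)}(g^\ast\pi^\ast\mE,M)$, which is exactly the property pinning down $\bL_\pi=\pi^\ast\mE$ and giving smoothness. Your quotient-stack presentation $\bV_\mX(\mE)\simeq[V/W]$ and the fiber-sequence computation of $\bL_{\pi_\bV}$ are correct and make a pleasant alternative for the split case, but as written they do not treat a nontrivial torsor.
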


\begin{proof}
    Consider a ring $A$, $A$-module $M$ and the square-zero extension $A\oplus M$. Consider a commutative diagram:
    \[\begin{tikzcd}
         \Spec(A)\arrow[r,"g"]\arrow[d,"i"] & \mY\arrow[d,"\pi"]\\
        \Spec(A\oplus M)\arrow[r,"f"]\arrow[ur,dotted] & \mX
    \end{tikzcd}\]
    Then considering the morphism of groupoids:
    \begin{align}
        \Map_{D(A\oplus M)}(f^\ast\mE,A\oplus M)\to \Map_{D(A)}((i\circ f)^\ast\mE,A)
    \end{align}
    Using that $A\oplus M=i_\ast (A\oplus M)$ (where the first member of the equality is a $A\oplus M$ module, and the second is a $A$-module), this map is identified with:
    \begin{align}
        \Map_{D(A\oplus M)}(f^\ast\mE,i_\ast(A\oplus M))&\simeq \Map_{D(A)}((i\circ f)^\ast\mE,A\oplus M)\simeq \Map_{D(A)}((i\circ f)^\ast\mE,A)\times \Map_{D(A)}(g^\ast \pi^\ast\mE,M)\nn\\
        &\to \Map_{D(A)}((i\circ f)^\ast\mE,A)
    \end{align}
    By the definition of $\bV_\mX(\mE)$, the groupoid of arrows $g$ is naturally a torsor over the right hand side, and the groupoid of dotted arrow is naturally a torsor over the left hand side. Hence the groupoid of dotted arrow and 2-isomorphisms such that the diagram is commutative is naturally (in a way which is functorial in $A,M$) a torsor over $\Map_{D(A)}(g^\ast \pi^\ast\mE,M)$. By definition of the cotangent complex, we have then $\bL_\pi=\pi^\ast\mE$, and $\pi$ is smooth.
\end{proof}

The $\bA^1$-homotopy property claims that, for $\pi:\bA^1_\mX\to\mX$, the counit $Id\to\pi_\ast\pi^\ast$ (and similarly the unit $\pi_!\pi^!\to Id$) are isomorphisms. In the scheme case, using conservativity of Zariski pullbacks, this allows to show that it is still the case for any affine bundle. This property was shown for vector bundle stacks using étale descent in \cite[Proposition A.10]{Khan2019VirtualFC}, and we extends it to affine bundle stacks:

\begin{lemma}(Khan, Rydth)\label{lemhomotop}
    Consider an affine bundle stack $\pi:\mY\to\mX$, if the coefficient system satisfies étale descent, or if $\pi$ admits Nisnevich-local sections, then the counit $Id\to\pi_\ast\pi^\ast$ and unit $\pi_!\pi^!\to Id$ are isomorphisms.
\end{lemma}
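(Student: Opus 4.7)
The plan is to reduce to the vector bundle stack case, which is the content of \cite[Proposition A.10]{Khan2019VirtualFC} cited just above, by trivializing the torsor along an appropriate cover and then descending.

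First I trivialize. Since $\pi : \mY \to \mX$ is a $\bV_\mX(\mE)$-torsor, the structural isomorphism $\sigma : \bV_\mX(\mE) \times_\mX \mY \xrightarrow{\sim} \mY \times_\mX \mY$ exhibits $\mY \to \mX$ as a cover on which the torsor trivializes, namely $\mY \times_\mX \mY \cong \bV_\mY(\pi^\ast \mE)$ over $\mY$. Under hypothesis (b) I replace this by a Nisnevich cover $g : \mX' \to \mX$ admitting a section of $\pi$ after base change; the torsor action then yields $\mY \times_\mX \mX' \cong \bV_{\mX'}(g^\ast \mE)$. Under hypothesis (a), $\pi$ itself (smooth surjective by Lemma \ref{lemmaffinesmooth}, hence an étale cover up to refinement by étale-local sections) already trivializes the torsor.

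Second, I descend. From the chosen cover I form the Cartesian square
\[\begin{tikzcd}
\mY' \arrow[r,"g'"] \arrow[d,"\pi'"'] & \mY \arrow[d,"\pi"]\\
\mX' \arrow[r,"g"'] & \mX
\end{tikzcd}\]
in which $\pi'$ is identified with the structure morphism of the vector bundle stack $\bV_{\mX'}(g^\ast \mE)$. Since $g$ is smooth, the smooth base change morphisms $g^\ast \pi_\ast \to \pi'_\ast (g')^\ast$ and $g^\ast \pi_! \to \pi'_! (g')^\ast$ (consequences of the exchanges in \eqref{eqexsmoothast} and their right adjoints) are isomorphisms, so the natural transformations $\mathrm{Id} \to \pi_\ast \pi^\ast$ and $\pi_! \pi^! \to \mathrm{Id}$ pull back along $g^\ast$ to the corresponding transformations for $\pi'$. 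These are isomorphisms by \cite[Proposition A.10]{Khan2019VirtualFC}. Finally, $g^\ast$ is conservative for the relevant descent: Nisnevich descent is automatic for any motivic coefficient system via the localization axiom in case (b), and étale descent is assumed in case (a); this yields the claimed isomorphisms.

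The main technical point is matching the trivializing cover with the descent topology available. Under (b) this is built into the hypothesis. Under (a) it reduces to étale-local triviality of $\bV_\mX(\mE)$-torsors, which in turn follows either from representing $\mE$ by a two-term complex $\mE^0 \to \mE^1$ and invoking vanishing of higher quasi-coherent cohomology after an affine étale base change, or by using directly that the smooth surjection $\pi : \mY \to \mX$ admits étale-local sections, so that $\mY$ itself may serve as the trivializing étale cover.
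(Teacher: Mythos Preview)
Your approach is essentially the same as the paper's: trivialize the torsor after pullback to a vector bundle stack, apply the known homotopy invariance for vector bundle stacks, and descend by conservativity. Two points are worth noting.

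First, the paper organizes the argument more uniformly: rather than splitting into a Nisnevich cover in case (b) and the map $\pi$ in case (a), it uses $\pi$ itself as the cover in \emph{both} cases. The right Cartesian square in
\[\begin{tikzcd}
\bV_\mX(\mE)\arrow[d,"p"] & \bV_\mX(\mE)\times_\mX\mY \arrow[r,"p_2"]\arrow[d,"\mu"]\arrow[l,swap,"p_1"] & \mY\arrow[d,"\pi"]\\
   \mX & \mY\arrow[r,"\pi"]\arrow[l,swap,"\pi"] & \mX
\end{tikzcd}\]
identifies the pullback of $\pi$ along itself with $\mu$, which is canonically $\bV_\mY(\pi^\ast\mE)\to\mY$; then smooth base change gives $\pi^\ast(Id\to\pi_\ast\pi^\ast)\simeq(Id\to\mu_\ast\mu^\ast)\pi^\ast$, and conservativity of $\pi^\ast$ is exactly what either hypothesis supplies. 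Your case split is correct but slightly less economical.

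Second, there is a small gap in your citation of \cite[Proposition~A.10]{Khan2019VirtualFC} for the vector bundle stack case under hypothesis (b). Khan's original argument is stated assuming \'etale descent, so in the Nisnevich-only setting one must verify it still goes through. The paper does this explicitly: the reduction to locally free sheaves uses covers by smooth morphisms with Nisnevich-local sections (not \'etale covers), and the key conservativity step in Khan's proof is along $\sigma:\mX\to\bV_\mX(\mE[-1])$, which has a section and is therefore conservative without any descent hypothesis. You should flag this verification rather than cite the proposition as a black box.
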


\begin{remark}\label{remNLaffine}
    As the affine group is special from \cite{Grothendieck1958}, any affine bundle on a scheme is Zariski-locally trivial, in particular it admits Nisnevich-local sections. We think that it must be the case also for any affine bundle stack, but we don't know how to prove it.
\end{remark}

\begin{proof}
    Considering first the vector bundle stack $p:\bV_\mX(\mE)\to\mX$: $p$ is smooth from Lemma \ref{lemmaffinesmooth} (see also \cite[Theorem 3.10]{intrnormcone}). We have to check that the arguments of Khan \cite[Proposition A.10]{Khan2019VirtualFC} (inspired by David Rydh) proving that $Id\to p_\ast p^\ast$ is an isomorphism holds without assuming étale descent. Suppose that $\mE$ is a locally free sheaf, and consider a cover of $\mX$ by smooth morphisms with Nisnevich-local sections $f_i:X_i\to \mX$ from schemes. Using some Zariski refinement, one can suppose that $f_i^\ast(\mE)$ is free. By conservativity, it suffices to prove the claim for $\bV_{X_i}(f_i^\ast(\mE))$, but this follows from the homotopy axiom. Hence, the claim is true if $\mE$ is a locally free sheaf. The rest of the proof of \cite[Proposition A.10]{Khan2019VirtualFC} use only conservativity along the smooth surjective morphism: $\sigma:\mX\to\bV_\mX(\mE[-1])$, which admit a section, hence the arguments extends to our case. The same arguments shows that the unit $p_!p^!\to Id$ is an isomorphism. Notice that, in \cite{Khan2019VirtualFC}, Khan works with six functors extended to derived stacks, but coefficient systems depends only on the classical truncation as said above, hence the same arguments applies in our non-derived setting.\medskip

    Consider now an affine bundle $\pi:\mY\to\mX$ modeled on $\bV_\mX(\mE)$. Consider the following commutative diagram:
    \[\begin{tikzcd}
\bV_\mX(\mE)\arrow[d,"p"] & \bV_\mX(\mE)\times_\mX\mY \arrow[r,"p_2"]\arrow[d,"\mu"]\arrow[l,swap,"p_1"] & \mY\arrow[d,"\pi"]\\
       \mX & \mY\arrow[r,"\pi"]\arrow[l,swap,"\pi"] & \mX
    \end{tikzcd}\]
    The left square is Cartesian because is is obtained by applying the isomorphism:
    \[\begin{tikzcd}[column sep=huge]
\bV_\mX(\mE)\times_\mX\mY\arrow[r,"\Delta_p\times_\mX Id_\mY"] & \bV_\mX(\mE)\times_\mX\bV_\mX(\mE)\times_\mX\mY\arrow[r,"Id_{\bV_\mX(\mE)}\times\mu"] & \bV_\mX(\mE)\times_\mX\mY
\end{tikzcd}\]
    to the Cartesian diagram for $\bV_\mX(\mE)\times_\mX\mY$. The right square is Cartesian because is is obtained by applying $\sigma:\bV_\mX(\mE)\times_\mX\mY\to\mY\times_\mX\mY$ (which is assumed to be an isomorphism) to the Cartesian diagram for $\mY\times_\mX\mY$. From \cite[Lemma 3.5]{intrnormcone}, $\mu$ is canonically equivalent with $\bV_\mY(\pi^\ast\mE)\to\mY$ and then $Id\to\mu_\ast\mu^\ast$ is an isomorphism. By smooth base change, we have:
    \begin{align}
        \pi^\ast(Id\to\pi_\ast\pi^\ast)\simeq (Id\to\mu_\ast\mu^\ast)\pi^\ast
    \end{align}
    which is an isomorphism, and $\pi$ is smooth from Lemma \ref{lemmaffinesmooth}. Then, if the coefficient system satisfies étale descent, or if $\pi$ admits Nisnevich-local sections, $\pi^\ast$ is conservative, and then $Id\to\pi_\ast\pi^\ast$ is an isomorphism. The proof that $\pi_!\pi^!\to Id$ is an isomorphism is similar.\medskip
\end{proof}

\subsubsection{Thom twists and stability}

This is a version for stacks of well known results of Ayoub \cite[Section 1.5]{Ayoub2018LesSOI}:

\begin{lemma}\label{lemthomtwist}
    Consider a quasi-coherent complex $\mE$ on $\mX$ which is perfect with amplitude in $[0,1]$. Consider the smooth morphism $p:\bV_\mX(\mE)\to\mX$, and its zero section $s:\mX\to\bV_\mX(\mE)$. The adjoint pair of functors:
    \begin{align}\label{adjunthom}
    \Sigma^\mE:=p_\#s_!:\mD(\mX)\rightleftharpoons\mD(\mX):s^!p^\ast=:\Sigma^{-\mE}
    \end{align}
    forms inverse equivalence, called Thom equivalences. Given an exact triangle of quasi-coherent complex $\mE\to\mathcal{F}\to\mathcal{G}$ on $\mX$ which are perfect with amplitude in $[0,1]$, there are a natural isomorphism $\Sigma^{\mathcal{F}}\simeq \Sigma^\mE\circ \Sigma^{\mathcal{G}}$ and $\Sigma^{-\mathcal{F}}\simeq \Sigma^{-\mathcal{G}}\circ \Sigma^{-\mE}$.
\end{lemma}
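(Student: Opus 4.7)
The plan is to adapt Ayoub's scheme-theoretic argument from \cite[Section 1.5]{Ayoub2018LesSOI} to the stacky setting by smooth-Nisnevich descent. I would proceed in three stages, reducing the general case to the base case of trivial rank-one bundles.

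\textbf{Stage 1 (locally free case).} Suppose $\mE$ is a locally free sheaf of finite rank. Then $\bV_\mX(\mE)$ is a classical vector bundle, hence Nisnevich-locally trivial since $GL_n$ is a special group. Using the smooth base change isomorphisms \eqref{eqexsmoothast} and the extension of the six functor formalism to stacks by descent along smooth morphisms with Nisnevich-local sections (Section \ref{sectionschemestacks}), I reduce to the trivial case $\mE = \mathcal{O}_\mX^n$, in which $p: \bA^n_\mX \to \mX$ factors as a chain of $\bA^1$-projections. Iterating the stability axiom then gives $p_\# s_! \simeq \{n\}$, which is an equivalence; taking right adjoints shows $s^! p^\ast$ is its inverse.

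\textbf{Stage 2 (triangles of locally free sheaves).} For a short exact sequence $0 \to \mE \to \mathcal{F} \to \mathcal{G} \to 0$ of locally free sheaves, the surjection $\mathcal{F} \to \mathcal{G}$ induces a factorization $p_{\mathcal{F}} = p_{\mathcal{G}} \circ q$, where $q : \bV_\mX(\mathcal{F}) \to \bV_\mX(\mathcal{G})$ is a vector bundle modeled on $p_{\mathcal{G}}^\ast \bV_\mX(\mE)$. The zero section factors correspondingly as $s_{\mathcal{F}} = q_0 \circ s_{\mathcal{G}}$, and the Cartesian square obtained by base change of $p_\mE$ along $s_{\mathcal{G}}$ yields, via the smooth base change and composition compatibilities encoded in the functor $\mD^\ast_!$, the desired isomorphism $\Sigma^{\mathcal{F}} \simeq \Sigma^\mE \circ \Sigma^{\mathcal{G}}$; passing to right adjoints gives the dual statement.

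\textbf{Stage 3 (general amplitude $[0,1]$).} Any $\mE$ of amplitude $[0,1]$ admits Nisnevich-locally a two-term presentation $[\mE^0 \to \mE^1]$ by locally free sheaves, so that $\bV_\mX(\mE)$ is built from $\bV_\mX(\mE^0)$ and $\bV_\mX(\mE^1)$ via the construction of \cite{intrnormcone}. Using Lemma \ref{lemmaffinesmooth} (smoothness of $p$), Lemma \ref{lemhomotop} (homotopy invariance for affine and vector bundle stacks), and the compatibility with smooth-Nisnevich descent from Section \ref{sectionschemestacks}, the conclusions of Stages 1 and 2 propagate to this setting, and the triangle property for a general triangle $\mE \to \mathcal{F} \to \mathcal{G}$ of amplitude $[0,1]$ objects follows by choosing compatible local presentations and iteratively applying Stage 2. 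The main obstacle is the stacky direction, i.e.\ when $H^1(\mE)$ is nonzero: $\bV_\mX(\mE)$ is then a genuine algebraic stack rather than a scheme, so one cannot reduce to schemes by conservative Zariski pullback; the crucial input resolving this is Lemma \ref{lemhomotop}, which ensures that the unit and counit of $(p_\#, p^\ast)$ remain isomorphisms for affine bundle stacks admitting Nisnevich-local sections, without requiring étale descent of the coefficient system.
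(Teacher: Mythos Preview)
Your Stage 2 has the geometry reversed: $\bV_\mX(-)$ is contravariant, so the surjection $\mathcal{F}\to\mathcal{G}$ gives a closed immersion $\bV_\mX(\mathcal{G})\hookrightarrow\bV_\mX(\mathcal{F})$, while the projection you need is $q:\bV_\mX(\mathcal{F})\to\bV_\mX(\mE)$ induced by $\mE\to\mathcal{F}$. This is easily fixed, but the real gap is Stage 3. Lemma~\ref{lemhomotop} concerns $Id\to\pi_\ast\pi^\ast$ and $\pi_!\pi^!\to Id$ for an affine bundle stack, not the adjoint pair $(p_\# s_!,\,s^! p^\ast)$ defining $\Sigma^{\pm\mE}$, so it does not directly address why $\Sigma^\mE$ is an equivalence when $H^1(\mE)\neq 0$. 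And ``choosing compatible local presentations and iteratively applying Stage 2'' does not work as stated: a two-term presentation gives a triangle $\mE^1[-1]\to\mE\to\mE^0$, which is not a short exact sequence of locally free sheaves, so your Stage 2 does not apply to it.

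The paper resolves this by reversing your order of operations: it proves the additivity $\Sigma^{\mathcal{F}}\simeq\Sigma^\mE\circ\Sigma^{\mathcal{G}}$ \emph{first}, for arbitrary exact triangles of amplitude-$[0,1]$ complexes, via the Cartesian square with $q:\bV_\mX(\mathcal{F})\to\bV_\mX(\mE)$ smooth (since $\bL_q\simeq p_{\mathcal{F}}^\ast\mathcal{G}$ has amplitude $[0,1]$) and the exchange isomorphism $Ex_{\#,!}$. This step is pure base change and needs no prior knowledge that any Thom twist is invertible. Additivity then shows that $\Sigma^{\pm\mathcal{F}}$ are inverse equivalences iff both $\Sigma^{\pm\mE}$ and $\Sigma^{\pm\mathcal{G}}$ are, reducing to the cases $\mE$ locally free (your Stage 1) and $\mE[-1]$ with $\mE$ locally free. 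For the latter, the triangle $\mE[-1]\to 0\to\mE$ yields $Id\simeq\Sigma^{\mE[-1]}\circ\Sigma^\mE$ and dually $Id\simeq\Sigma^{-\mE}\circ\Sigma^{-\mE[-1]}$, compatibly with the adjunctions; since the unit and counit of $(\Sigma^\mE,\Sigma^{-\mE})$ are already isomorphisms, a short compatibility argument deduces the same for $(\Sigma^{\mE[-1]},\Sigma^{-\mE[-1]})$. Lemma~\ref{lemhomotop} plays no role.
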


\begin{proof}
    Given an exact triangle of quasi-coherent complex $\mE\to\mathcal{F}\to\mathcal{G}$ on $\mX$ which are perfect with amplitude in $[0,1]$, consider the following commutative diagram of stacks:
    \[\begin{tikzcd}
        \mX\arrow[d,"s_{\mathcal{G}}"]\arrow[dr,"s_{\mathcal{F}}"] & & \\
        \bV_\mX(\mathcal{G})\arrow[r,"p"]\arrow[d,"p_{\mathcal{G}}"] & \bV_\mX(\mathcal{F})\arrow[d,"q"]\arrow[dr,"p_{\mathcal{F}}"] &\\
        \mX\arrow[r,"s_\mE"] & \bV_\mX(\mE)\arrow[r,"p_\mE"] & \mX
    \end{tikzcd}\]
    the square is Cartesian because the corresponding square is a pushout in $\QCoh(\mX)$. Notice that the cotangent complex of $q$ is $(p_{\mathcal{F}})^\ast(\mathcal{G})$, hence $q$ is smooth. The exchange isomorphisms \eqref{eqexsmoothast} gives natural isomorphisms:
    \begin{align}
        \Sigma^{\mathcal{F}}&:=(p_{\mathcal{F}})_\#(s_{\mathcal{F}})_!\simeq (p_\mE)_\# q_\# p_! (s_{\mathcal{G}})_!\simeq (p_\mE)_\#(s_\mE)_! (p_{\mathcal{G}})_\#(s_{\mathcal{G}})_!=:\Sigma^\mE\circ \Sigma^{\mathcal{G}}\nn\\
        \Sigma^{-\mathcal{F}}&:=(s_{\mathcal{F}})^!(p_{\mathcal{F}})^\ast\simeq   (s_{\mathcal{G}})^!p^! q^\ast(p_\mE)^\ast\simeq  (s_{\mathcal{G}})^!(p_{\mathcal{G}})^\ast(s_\mE)^!(p_\mE)^\ast=:\Sigma^{-\mathcal{G}}\circ\Sigma^{-\mE}
    \end{align}
    where the second isomorphism of both lines is the isomorphism from \eqref{eqexsmoothast}. These isomorphisms are compatible with the unit and counit of the adjunction \eqref{adjunthom}, hence $\Sigma^{\pm\mathcal{F}}$ are inverse isomorphisms if an only if $\Sigma^{\pm\mE}$ and $\Sigma^{\pm\mathcal{G}}$ are. Hence it suffices to prove that for $\Sigma^{\pm\mE}$ and $\Sigma^{\pm\mE[-1]}$, for $\mE$ a locally free sheaf.\medskip
    
    Consider a locally free sheaf $\mE$. As in \cite[Lemma 4.1]{Chowdhury2024NonrepresentableSF}, using a cover of $\mX$ by smooth morphisms with Nisnevich-local sections $q_i:X_i\to \mX$ by qcqs schemes where $(q_i)^\ast(\mE)$ is trivial, we obtain from the stability axiom that $\Sigma^{\pm\mE}$ are inverse isomorphisms. Now, from the exact triangle $\mE[-1]\to 0\to \mE$, one get isomorphisms $Id\simeq\Sigma^0\simeq\Sigma^{\mE[-1]}\circ\Sigma^{\mE}$ and $Id\simeq\Sigma^0\simeq\Sigma^{-\mE}\circ \Sigma^{-\mE[-1]}$. These isomorphisms are compatible with the adjunctions, in the sense that:
    \begin{align}
        Id&\to\Sigma^{-\mE}\circ\Sigma^{\mE}\to\Sigma^{-\mE}\circ\Sigma^{-\mE[-1]}\circ\Sigma^{\mE[-1]}\circ\Sigma^{\mE}\simeq Id\nn\\
        Id&\simeq \Sigma^{\mE[-1]}\circ\Sigma^{\mE}\circ\Sigma^{-\mE}\circ\Sigma^{-\mE[-1]}\to\Sigma^{\mE[-1]}\circ\Sigma^{-\mE[-1]}\to Id
    \end{align}
    are the identity. But the morphisms $Id\to\Sigma^{-\mE}\circ\Sigma^{\mE}$ and $\Sigma^{\mE}\to\Sigma^{-\mE}\to Id$ are isomorphisms as proven above, hence the two morphisms above are isomorphisms, and $\Sigma^{\pm\mE}$ are isomorphisms, hence $Id\to\Sigma^{-\mE[-1]}\circ\Sigma^{\mE[-1]}$ and $\Sigma^{\mE[-1]}\circ\Sigma^{-\mE[-1]}\to Id$ are isomorphisms.
\end{proof}

This is an analogue of \cite[Proposition 3.1.7]{Ayoub2018LesSOII}:

\begin{lemma}\label{lemmaspecthom}
    Consider a specialization system $\sps:\mD_1\to\mD_2$ over $\eta\to B\leftarrow\sigma$. Given a stack $f:\mX\to B$ and a perfect complex of amplitude $[0,1]$, the natural morphisms:
\begin{align}
    \Sigma^{\mE_\sigma}\sps_f:=(p_\sigma)_\#(s_\sigma)_!\sps_f&\to\sps_f(p_\eta)_\#(s_\eta)_!=:\sps_f\Sigma^{\mE_\eta}\nn\\
    \sps_f\Sigma^{-\mE_\eta}:=\sps_f(s_\eta)^!(p_\eta)^\ast&\to(s_\sigma)^!(p_\sigma)^\ast\sps_f=:\Sigma^{-\mE_\sigma}\sps_f
\end{align}
are inverse isomorphisms, compatible with the isomorphisms coming from short exact sequences.
\end{lemma}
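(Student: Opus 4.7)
The plan is to construct the second morphism $\sps_f \Sigma^{-\mE_\eta} \to \Sigma^{-\mE_\sigma} \sps_f$ directly from the specialization data and define the first morphism as its mate under the equivalences $\Sigma^{\mE} \dashv \Sigma^{-\mE}$ from Lemma \ref{lemthomtwist}. Explicitly, I will take the second morphism to be the composition
\[
\sps_f (s_\eta)^! (p_\eta)^\ast \to (s_\sigma)^! \sps_{f\circ p} (p_\eta)^\ast \xrightarrow{\sim} (s_\sigma)^! (p_\sigma)^\ast \sps_f,
\]
where the first arrow is the right-adjoint mate (under $s_! \dashv s^!$) of the pushforward exchange isomorphism $(s_\sigma)_! \sps_f \simeq \sps_{f\circ p} (s_\eta)_!$, which is an isomorphism because the zero section $s$ is proper and representable, and the second arrow is the inverse of the smooth pullback exchange isomorphism for $p$. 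Since $\Sigma^{\pm\mE}$ are inverse equivalences, the two morphisms of the statement are automatically inverse to each other, so it suffices to prove that one of them is an isomorphism. The compatibility with short exact sequences $\mE \to \mathcal{F} \to \mathcal{G}$ will follow because the isomorphism $\Sigma^{-\mathcal{F}} \simeq \Sigma^{-\mathcal{G}} \circ \Sigma^{-\mE}$ of Lemma \ref{lemthomtwist} is built entirely out of exchange isomorphisms for the Cartesian square of vector bundle stacks appearing there, all of which commute with specialization in a compatible way; a diagram chase then matches the two sides.

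To show the morphism is an isomorphism, I will reduce in stages, following the strategy of \cite[Proposition 3.1.7]{Ayoub2018LesSOII}. Working Zariski-locally on $\mX$, a perfect complex of amplitude $[0,1]$ can be represented as $[\mE^0 \to \mE^1]$ with $\mE^0, \mE^1$ locally free, so the compatibility above applied to the exact triangle $\mE^0 \to \mE \to \mE^1[1]$ reduces the claim to locally free sheaves and their shifts. For $\mE$ locally free, trivializing Zariski-locally and using Nisnevich-local conservativity of smooth pullback (valid because our coefficient systems satisfy Nisnevich descent) reduces further to $\mE = \mathcal{O}_\mX^n$, and then induction on $n$ via the exact sequence $0 \to \mathcal{O}_\mX^{n-1} \to \mathcal{O}_\mX^n \to \mathcal{O}_\mX \to 0$ reduces to the case $n=1$.

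The main obstacle is thus the rank-one trivial case $\mE = \mathcal{O}_\mX$, where $\bV_\mX(\mE) = \bA^1_\mX$ and the statement reduces to the fact that $\sps$ commutes with the Tate twist $\{1\} = p_\# s_!$. The delicate point is that the mate $\sps_f (p_\eta)_\# \to (p_\sigma)_\# \sps_{f\circ p}$ of the smooth pullback exchange isomorphism for the projection $p : \bA^1_\mX \to \mX$ must be shown to be an isomorphism; this is specific to the geometry of $\bA^1$ and does not hold for a general smooth morphism. It follows, as in Ayoub's argument, from the homotopy invariance of $\sps$ applied to $p$ (which gives that $\sps$ commutes with $p_\ast p^\ast$) together with the localization triangle arising from the closed/open decomposition $\bA^1_\mX = \mX \sqcup \bG_{m,\mX}$, which reconstructs $p_\# s_!$ as an iterated cofiber of smooth-pullback operations all of which are known to commute with $\sps$.
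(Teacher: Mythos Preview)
Your approach is essentially the same as the paper's: both reduce via compatibility with exact triangles (the ``trick'' of Lemma \ref{lemthomtwist}) to the case of a trivial line bundle on a scheme, and then invoke \cite[Proposition 3.1.7]{Ayoub2018LesSOII} for the base case. A couple of minor points: your phrase ``working Zariski-locally on $\mX$'' is imprecise since $\mX$ is a stack---the paper instead passes to a smooth cover with Nisnevich-local sections by qcqs schemes on which $\mE$ trivializes, and explicitly checks that the exchange morphism is compatible with such pullbacks (using commutation of $\sps$ with smooth $q^\ast$, with composition of $\ast$-pullbacks, and with $Ex^{!\ast}$); you should make this step explicit rather than appealing to Zariski localization. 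Also, your reduction produces shifted locally free sheaves $\mE^1[-1]$, which are not themselves of amplitude $[0,1]$; the paper handles this via the triangle $\mE[-1]\to 0\to \mE$ and the compatibility of the adjunctions, which you should invoke explicitly rather than leaving it as ``and their shifts''.
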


\begin{proof}
    From the compatibility between specialization and adjunctions, these morphisms are compatible with the isomorphisms $Id\simeq\Sigma^{-\mE}\circ\Sigma^{\mE}$ and $\Sigma^{\mE}\simeq\Sigma^{-\mE}\to Id$, hence it suffices to prove that the morphism for $\Sigma^{-\mE}$ is an isomorphism. Moreover, from the compatibility between specialization systems and composition and base change, these morphisms are compatible with the isomorphisms coming from short exact sequences. Using the same trick that in the proof of Lemma \ref{lemthomtwist}, it suffices to show that for $\mE$ a locally free sheaf. Consider a cover of $\mX$ by smooth morphisms with Nisnevich-local sections $q_i:X_i\to \mX$ by qcqs schemes where $(q_i)^\ast(\mE)$ is trivial. Using commutation between specialization systems and smooth pullbacks, with compositions of $\ast$-pullbacks, and with the exchange morphism $Ex^{!\ast}$ (obtained by adjointness from the compatibility with $Ex^\ast_!$), we obtain that:
    \begin{align}
        (q_{i,\sigma})^\ast(\sps_f\Sigma^{-\mE_\eta}\to\Sigma^{-\mE_\sigma}\sps_f)
    \end{align}
    is isomorphic with:
    \begin{align}
        (\sps_{f\circ q_i}\Sigma^{-((q_i)^\ast(\mE))_\eta}\to\Sigma^{-((q_i)^\ast(\mE))_\sigma}{f\circ q_i})(q_{i,\eta})^\ast
    \end{align}
    hence, by conservativity of $(q_i)^\ast$, it suffices to prove it for $\mE$ trivial on a qcqs scheme $X$. Using the compatibility with exact sequence, it suffices to prove it for $\mE=\mathcal{O}_X$, \ie for $p:\bA^1_X\to X$ and $s:X\to \bA^1_X$. This case is proven in \cite[Proposition 3.1.7]{Ayoub2018LesSOII}.
\end{proof}

In particular, Thom twists commutes with base change, hence for any morphism $g$, we have natural isomorphisms:
\begin{align}
    g_{!,\ast}\Sigma^{\pm g^\ast(\mE)}&\simeq \Sigma^{\pm\mE}g_{!,\ast}\nn\\
    \Sigma^{\pm g^\ast(\mE)}g^{!,\ast}&\simeq g^{!,\ast}\Sigma^{\pm\mE}
\end{align}
compatible with composition of $g$, and composition of Thom twists. This is contained in \cite[Scholie 1.4.2 2)]{Ayoub2018LesSOI} in the scheme case.

\subsubsection{Proper and smooth morphism}\label{sectionpropsmooth}

We recall here the construction of the natural morphism $f_!\to f_\ast$, for $f$ separated, and the purity isomorphism. The main point of giving explicitly this construction is to prove the compatibility between these morphisms and specialization systems. The main idea of the construction is the diagonal trick. For $f:\mX\to\mY$, consider the following diagram with Cartesian square:
\[\begin{tikzcd}
    \mX\arrow[dr,"\Delta_f"]\arrow[drr,bend left,"Id"]\arrow[ddr,bend right,swap,"Id"] & &\\
    & \mX\times_\mY \mX\arrow[r,"p_1"]\arrow[d,"p_2"] & \mX\arrow[d,"f"]\\
    & \mX\arrow[r,"f"] & \mY
\end{tikzcd}\]
where $\Delta_f$ is the diagonal of $f$. Suppose we have an isomorphism $(\Delta_f)_!\simeq (\Delta_f)_\ast$ (resp. $(\Delta_f)^!\simeq (\Delta_f)^\ast$). Then one obtains a natural morphism:
\begin{align}
    f_!&\simeq f_!(p_1)_\ast(\Delta_f)_\ast\overset{Ex_{!\ast}}{\to} f_\ast(p_2)_!(\Delta_f)_\ast\simeq f_\ast(p_2)_!(\Delta_f)_!\simeq f_\ast\nn\\
    f^!&\simeq (\Delta_f)^\ast(p_2)^\ast f^!\simeq(\Delta_f)^!(p_2)^\ast f^! \overset{Ex^{\ast!}}{\to} (\Delta_f)^!(p_1)^!f^\ast\simeq f^\ast
\end{align}

This procedure can be applied recursively:
\begin{itemize}
    \item If $f$ is a monomorphism, $\Delta_f=Id$, hence one obtains $f_!\to f_\ast$, which is an isomorphism if $f$ is a closed immersion (this can be checked at the scheme level).
    \item If $f$ is separated and representable by algebraic spaces, $\Delta_f$ is an closed immersion, hence one obtains a morphism $f_!\to f_\ast$, which is an isomorphism if $f$ is proper (and representable by algebraic spaces) (this can be checked at the level of algebraic spaces, where it follows from \cite[Theorem 2.34 ii)]{Khan2021VOEVODSKYSCF}).
    \item If $f$ is separated (possibly not representable), $\Delta_f$ is proper and representable by algebraic spaces, hence one obtains a morphism $f_!\to f_\ast$.
\end{itemize}

Similarly:
\begin{itemize}
    \item If $f$ is a monomorphism, $\Delta_f=Id$, hence one obtains $f^!\to f^\ast$, which is an isomorphism if $f$ is an open immersion (this can be checked at the scheme level).
    \item If $f$ is étale, $\Delta_f$ is an open immersion, hence one obtains a morphism $f^!\to f^\ast$, which is an isomorphism (this can be checked at the scheme level).
\end{itemize}

The construction of the purity isomorphism in the stack case was described in \cite[Section 4]{Chowdhury2024NonrepresentableSF}, following Ayoub \cite[Section 1.6]{Ayoub2018LesSOI} in the scheme case. Notice that the authors of \cite{Chowdhury2024NonrepresentableSF} works with motivic stable homotopy, but one checks directly that the arguments of \cite[Section 4]{Chowdhury2024NonrepresentableSF} works for any coefficient system extended to stacks using descent along smooth morphisms with Nisnevich-local sections as in \cite[Corollary 7.2.4]{Khan2025LisseEO}. As seen above, for $f$ smooth, one obtains:
\begin{align}
    (\Delta_f)^!(p_2)^\ast f^!\to f^\ast
\end{align}
From \cite[Theorem 4.24]{Chowdhury2024NonrepresentableSF} (by passing to the adjoint), this morphism is an isomorphism. From \cite[Theorem 1.5]{intrnormcone}, $\Delta_f$, which is representable by algebraic spaces, admits a deformation to the normal cone, which is a $1$-stack. Namely, there is a commutative diagram with Cartesian squares:
\[\begin{tikzcd}
    \mX\arrow[d,"\Delta_f"]\arrow[r,shift left=2,"1"] & \bA^1_\mX\arrow[d,"\tilde{s}"]\arrow[l,"\pi"]\arrow[r,swap,"\pi"] & \mX\arrow[l,shift right=2,swap,"0"]\arrow[d,"s"]\\
    \mX\times_\mY\mX\arrow[d,"p_2"]\arrow[r,"\tilde{1}"] & D_{\Delta_f}\arrow[d,"\tilde{p}"] & \bV_\mX(\bL_f)\arrow[d,"p"]\arrow[l,swap,"\tilde{0}"]\\
    \mX\arrow[r,"1"] & \bA^1_\mX\arrow[l,shift left=2,"\pi"]\arrow[r,shift right=2,swap,"\pi"] & \mX\arrow[l,swap,"0"]
\end{tikzcd}\]
where the vertical arrows from the second to the third line are smooth. Using base change, we obtain natural morphisms:
\begin{align}\label{roofpurity}
    (\Delta_f)^!(p_2)^\ast\leftarrow \pi_\ast\tilde{s}^!\tilde{p}^\ast\pi^\ast\rightarrow s^!p^\ast=:\Sigma^{-\bL_f}
\end{align}
More precisely, they are given by:
\begin{align}
    &\pi_\ast\tilde{s}^!\tilde{p}^\ast\pi^\ast\to \pi_\ast\tilde{s}^!\tilde{1}_\ast \tilde{1}^\ast\tilde{p}^\ast\pi^\ast\simeq \pi_\ast\tilde{s}^!\tilde{1}_\ast (p_2)^\ast\simeq \pi_\ast 1_\ast (\Delta_f)^!(p_2)^\ast\simeq (\Delta_f)^!(p_2)^\ast\nn\\
    &\pi_\ast\tilde{s}^!\tilde{p}^\ast\pi^\ast\to \pi_\ast\tilde{s}^!\tilde{0}_\ast \tilde{0}^\ast\tilde{p}^\ast\pi^\ast\simeq \pi_\ast\tilde{s}^!\tilde{0}_\ast p^\ast\simeq \pi_\ast 0_\ast s^!p^\ast\simeq s^!p^\ast
\end{align}
From \cite[Corollary 4.25]{Chowdhury2024NonrepresentableSF} (by passing to the adjoint), those two morphisms are isomorphisms, which gives an isomorphism, called the isomorphism of relative purity:
\begin{align}
    \Sigma^{-\bL_f}\circ f^!\simeq f^\ast
\end{align}

We now prove compatibility between the morphisms introduced above and specialization systems.

\begin{lemma}\label{lemmaspecproper}
    Given any specialization system $\sps:\mD_1\to\mD_2$ over $\eta\to B\leftarrow\sigma$, and a separated map $g:\mX\to\mY$, the following square is commutative:
    \[\begin{tikzcd}
         (g_\sigma)_!\sps_f\arrow[r]\arrow[d] & \sps_{f\circ g}(g_\eta)_!\arrow[d]\\
         (g_\sigma)_\ast\sps_f & \sps_{f\circ g}(g_\eta)_\ast\arrow[l]
    \end{tikzcd}\]
    if $f$ is proper and representable by algebraic spaces, it is a commutative square of isomorphisms. If $g$ is a monomorphism, the following square is commutative:
    \[\begin{tikzcd}
        \sps_{f\circ g}(g_\eta)^!\arrow[r]\arrow[d] & (g_\sigma)^!\sps_f\arrow[d]\\
        \sps_{f\circ g}(g_\eta)^\ast & (g_\sigma)^\ast\sps_f\arrow[l]
    \end{tikzcd}\]
    and it is a square of isomorphisms if $g$ is open.
\end{lemma}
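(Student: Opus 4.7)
Both squares are verified by unfolding the recursive construction of the natural morphisms $g_! \to g_\ast$ (for $g$ separated) and $g^! \to g^\ast$ (for $g$ a monomorphism) via the diagonal trick of Section 2.2.3, and checking compatibility with $\sps$ at each elementary step. For the first square, one writes
\begin{align*}
    g_! \simeq g_!(p_1)_\ast(\Delta_g)_\ast \xrightarrow{Ex_{!\ast}} g_\ast(p_2)_!(\Delta_g)_\ast \simeq g_\ast(p_2)_!(\Delta_g)_! \simeq g_\ast,
\end{align*}
where the unlabeled iso applies $(\Delta_g)_! \simeq (\Delta_g)_\ast$ to $\Delta_g$, which is proper and representable and so is available by induction (the recursion descends the separation hierarchy from general separated maps, to maps representable by algebraic spaces, to closed immersions). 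Each $\simeq$ above is either a composition isomorphism for $_\ast$ or $_!$ or a base-change isomorphism along $p_i \circ \Delta_g = \mathrm{Id}$; all such are compatible with $\sps$ directly by the axioms. The middle arrow $Ex_{!\ast}$ is the mate of the base-change isomorphism $Ex^\ast_!$ under the $(f_!,f^!)$ and $(f^\ast,f_\ast)$ adjunctions, so its compatibility with $\sps$ follows from the built-in compatibility with $Ex^\ast_!$ by the naturality of mates. Consequently, commutativity of the square for $g$ reduces to commutativity for $\Delta_g$, and by the induction above, to the base case of closed immersions.

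\textbf{Base case.} For $g = i$ a closed immersion, $i_! \to i_\ast$ is an isomorphism, and the top exchange $(i_\sigma)_! \sps \to \sps(i_\eta)_!$ is an isomorphism by the specialization axiom (closed immersions are proper and representable). Commutativity of the square then expresses the agreement of this top exchange with the bottom exchange $\sps(i_\eta)_\ast \to (i_\sigma)_\ast \sps$ (the mate of the pullback exchange) under the identification $i_! = i_\ast$. This agreement is part of the structural coherence of a specialization system, since the identification $i_! \simeq i_\ast$ is itself constructed from base-change maps and adjunction units, all of which are $\sps$-compatible. Once commutativity is established, the ``square of isomorphisms'' claim for $g$ proper and representable is automatic: both verticals are then isos (as $g$ is proper) and the top horizontal is iso (as $g$ is proper representable), forcing the bottom horizontal to be an iso as well.

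\textbf{Second square.} The argument is strictly parallel, using the dual construction
\begin{align*}
    g^! \simeq (\Delta_g)^\ast (p_2)^\ast g^! \simeq (\Delta_g)^! (p_2)^\ast g^! \xrightarrow{Ex^{\ast!}} (\Delta_g)^! (p_1)^! g^\ast \simeq g^\ast
\end{align*}
for $g$ a monomorphism. The induction now terminates at the base case of open immersions, where $\Delta_g$ is an isomorphism and $g^! \simeq g^\ast$ is the purity isomorphism for the trivial relative cotangent complex. The specialization-system exchange morphisms for $g^!, g^\ast$, and composition thereof propagate the commutativity through the construction, and when $g$ is an open immersion, the verticals and top exchange are isomorphisms (the latter by the smooth pullback iso of the axiom), giving a square of isomorphisms.

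\textbf{Main obstacle.} The delicate point in both arguments is the base-case coherence: one must verify that the two a priori different exchange morphisms agree under the tautological identification $i_! = i_\ast$ (respectively $j^! = j^\ast$). This is not formal from the bare axioms but follows from tracing the construction of $i_! \simeq i_\ast$ (resp. of purity for étale $j$) through base-change and adjunction data to which $\sps$-compatibility applies by naturality; in practice this appeals to the detailed construction of specialization systems in the style of Ayoub \cite{Ayoub2018LesSOII}.
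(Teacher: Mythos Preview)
Your approach is the same inductive scheme via the diagonal trick that the paper uses, but you have misplaced the base case, and this is exactly what creates the ``main obstacle'' you flag at the end. The paper's recursion bottoms out at the \emph{identity} morphism, where every arrow in the square is literally the identity and there is nothing to check. Since a closed immersion $i$ is a monomorphism, $\Delta_i = \mathrm{Id}$, so the square for $i$ is handled by the inductive step applied with $\Delta_i = \mathrm{Id}$, not as a separate base case. Once you recognize this, the coherence you worry about (that the two exchange morphisms for $i_!$ and $i_\ast$ agree under the tautological identification) is no longer something to be verified by hand or extracted from Ayoub: it falls out of the same big-diagram chase that handles every other step, specialized to the trivial case $\Delta_i = \mathrm{Id}$.

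A second, smaller point: your inductive hypothesis is slightly too weak. You phrase the reduction as ``commutativity of the square for $g$ reduces to commutativity for $\Delta_g$,'' but the paper's diagram chase actually needs the square for $\Delta_g$ to be a \emph{square of isomorphisms} (one has to invert an arrow in the central left square). This is fine, because $\Delta_g$ is always proper and representable at every stage of the hierarchy you descend through, so commutativity for $\Delta_g$ automatically upgrades to a square of isomorphisms via the axioms; but it is worth stating the hypothesis correctly. With these two fixes your argument matches the paper's.
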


\begin{proof}
    We show the first part of the Lemma, the second part being formally similar. We show this inductively. If $f$ is the identity, all the morphisms are the identity, hence the square is trivially a square of isomorphisms. Suppose we have proven that the square of the Lemma for $\Delta_f$ is a square of isomorphisms. We can divide the square of the Lemma for $f$ into the diagram of morphisms (where we have removed the subscripts for readability):
    \[\begin{tikzcd}
        f_!\arrow[d]\sps\arrow[rrr]\arrow[drr] &&& \sps f_!\arrow[d]\\
        f_!(p_1)_\ast(\Delta_f)_\ast\sps\arrow[d]& f_!(p_1)_\ast\sps(\Delta_f)_\ast\arrow[d]\arrow[l]& f_!\sps(p_1)_\ast(\Delta_f)_\ast\arrow[l]\arrow[r] & \sps f_!(p_1)_\ast(\Delta_f)_\ast\arrow[d]\\
        f_\ast(p_2)_!(\Delta_f)_!\sps\arrow[r]\arrow[d] & f_\ast(p_2)_!\sps(\Delta_f)_!\arrow[r]\arrow[drr] & f_\ast\sps(p_2)_!(\Delta_f)_! & \sps f_\ast(p_2)_!(\Delta_f)_!\arrow[l]\arrow[d]\\
        f_\ast\sps & & & \sps f_\ast\arrow[lll]
    \end{tikzcd}\]
    the upper left and lower right triangle are commutative from the compatibility of specialization systems with composition. The central left square is commutative because the square of the Lemma for $\Delta_f$ is a square of isomorphisms. The central right rectangle is commutative because it expresses the compatibility between specialization systems and the exchange morphism $Ex_{!\ast}$, which is obtained by adjunction from the compatibility between specialization systems and $Ex^\ast_!$. The upper right and lower left trapezoids commutes because they are formed by applying independent functors. Hence the outer square commutes. If $g$ is proper and representable, by assumption $g_!\sps\to\sps g_!$ is an isomorphism, hence the square is a square of isomorphism. By recursion, one obtains that the square of the lemma commutes for $g$ separated, and is a square of isomorphisms for $g$ proper and representable.
\end{proof}

\begin{lemma}\label{lemmaspecpurity}
    Given any specialization system $\sps:\mD_1\to\mD_2$ over $\eta\to B\leftarrow\sigma$, and a smooth map $g:\mX\to\mY$, the following square is a commutative square of isomorphisms:
    \[\begin{tikzcd}
         \sps_{f\circ g}\Sigma^{-\bL_{g_\sigma}}(g_\sigma)^!\arrow[d,"\simeq"]\arrow[r] & \Sigma^{-\bL_{g_\eta}}(g_\eta)^!\sps_f\arrow[d,"\simeq"]\\
         \sps_{f\circ g}(g_\eta)^\ast & (g_\sigma)^\ast\sps_f\arrow[l]
    \end{tikzcd}\]

\end{lemma}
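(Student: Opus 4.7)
The plan is to open up each of the vertical purity arrows via the roof construction \eqref{roofpurity} coming from the deformation to the normal cone, and then check that every elementary step in that roof commutes with the specialization system. More precisely, for smooth $g$ the purity isomorphism $\Sigma^{-\bL_g} g^! \overset{\sim}{\to} g^\ast$ factors as
\[
\Sigma^{-\bL_g} g^! = s^! p^\ast g^! \overset{\sim}{\leftarrow} \pi_\ast \tilde{s}^! \tilde{p}^\ast \pi^\ast g^! \overset{\sim}{\to} (\Delta_g)^!(p_2)^\ast g^! \overset{Ex^{\ast !}}{\to} g^\ast,
\]
where each of the two middle arrows is built out of the unit of $(\tilde{0}^\ast,\tilde{0}_\ast)$ or $(\tilde{1}^\ast,\tilde{1}_\ast)$, base change along the relevant Cartesian squares of the deformation diagram, and the identifications $\pi \circ 0 = \pi \circ 1 = \Id$. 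I will write this zig-zag both over $\eta$ and over $\sigma$, and split the square of the Lemma into the vertical stack of small squares obtained by inserting $\sps$ between every two consecutive functors in the zig-zag.

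Each resulting small square expresses the compatibility of the specialization system with a single elementary operation, all of which are in hand. Compatibility with smooth $\ast$-pullbacks such as $\pi^\ast$, $\tilde{p}^\ast$ and $(p_2)^\ast$ is part of the axioms via \eqref{specpullback}; compatibility with $\ast$-pushforwards and $!$-pullbacks along closed immersions such as $\tilde{0}_\ast$, $\tilde{1}_\ast$, $\tilde{s}^!$, $s^!$ and $(\Delta_g)^!$ follows from Lemma \ref{lemmaspecproper} (closed immersions are proper and representable by algebraic spaces), combined with adjointness; compatibility with the base-change isomorphisms of type $Ex^\ast_\ast$ and $Ex^{\ast !}$ is either built into the definition of a specialization system or obtained by adjunction from the built-in compatibility with $Ex^\ast_!$; compatibility with the Thom equivalence $\Sigma^{-\bL_g}$ is exactly Lemma \ref{lemmaspecthom}; and the compatibility of units/counits of $(\tilde{0}_\ast,\tilde{0}^\ast)$ and $(\tilde{1}_\ast,\tilde{1}^\ast)$ is formal.

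The only subtle point to keep track of is that $\pi: \bA^1_\mX \to \mX$ is smooth but \emph{not} proper, so the specialization system need not commute with $\pi_\ast$ alone; this is where I expect the main bookkeeping to live. The obstacle is resolved by the observation that $\pi_\ast$ only ever appears in the combinations $\pi_\ast 1_\ast$ and $\pi_\ast 0_\ast$, where $1$ and $0$ are closed-immersion sections of $\pi$. The composition isomorphism for $\ast$-pushforward gives $\pi_\ast 1_\ast \simeq (\pi \circ 1)_\ast = \Id$ (and similarly for $0$), and this composition-of-$\ast$-pushforward compatibility with $\sps$ is formal, so no genuine non-proper pushforward ever has to be moved past the specialization. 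Once every small square is seen to commute for one of the above reasons, the fact that the bottom horizontal arrow is already an isomorphism by smoothness of $g$ (via \eqref{specpullback}) together with the vertical purity isomorphisms forces the top arrow to be an isomorphism as well, and the full diagram is a commutative square of isomorphisms, as claimed.
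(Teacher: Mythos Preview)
Your overall strategy is the right one and is essentially what the paper does: unfold the purity isomorphism along the roof \eqref{roofpurity}, check that every elementary cell commutes with $\sps$, and then use that three sides of the outer square are isomorphisms to force the fourth. The ingredients you list (compatibility with smooth $\ast$-pullbacks, with $Ex^!_\ast$ and $Ex^{\ast !}$, with units/counits of adjunctions, and Lemma~\ref{lemmaspecthom} for the Thom twist) are exactly the ones the paper uses.

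There is, however, a genuine gap in the way you dispose of $\pi_\ast$. Your claim that ``$\pi_\ast$ only ever appears in the combinations $\pi_\ast 1_\ast$ and $\pi_\ast 0_\ast$'' is false: the apex of the roof is $\pi_\ast\tilde{s}^!\tilde{p}^\ast\pi^\ast$, where $\pi_\ast$ is followed by $\tilde{s}^!$, not by a section. When you stack the small squares, the horizontal exchange arrow at this row is built from $\sps\,\pi_\ast\to\pi_\ast\,\sps$ and $\sps\,\tilde{s}^!\to\tilde{s}^!\,\sps$, neither of which is a priori an isomorphism (and Lemma~\ref{lemmaspecproper} does \emph{not} give invertibility of the latter for a closed immersion). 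So you cannot avoid passing $\sps$ through a genuine non-proper $\pi_\ast$.

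The good news is that you do not need these individual exchanges to be isomorphisms: they exist by adjunction, and the commutativity of each small cell still follows from the compatibility of $\sps$ with compositions, with $Ex^\ast_\ast$ and $Ex^!_\ast$, and with the unit $\mathrm{Id}\to\tilde{0}_\ast\tilde{0}^\ast$ (resp.\ $\tilde{1}_\ast\tilde{1}^\ast$). This is precisely what the paper verifies in its two grid diagrams, and then a diagram chase (using that the roof legs are isomorphisms, that the exchange at the bottom row is an isomorphism by Lemma~\ref{lemmaspecthom}, and that the exchange past $\tilde{0}_\ast$ is an isomorphism by Lemma~\ref{lemmaspecproper}) shows a posteriori that the apex exchange is an isomorphism too. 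Your final ``three-isos-force-the-fourth'' argument for the outer square is then correct. So the fix is just to replace your $\pi_\ast 0_\ast$/$\pi_\ast 1_\ast$ remark by: the apex exchange exists by adjunction, the cells commute by the specialization-system axioms, and invertibility is obtained by chase.
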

\begin{proof}
    By assumption, the lower horizontal morphism is an isomorphism, hence the commutativity will imply that the upper horizontal morphism is an isomorphism. Consider the diagram:
    \[\begin{tikzcd}
        \pi_\ast\tilde{s}^!\tilde{p}^\ast\pi^\ast\sps\arrow[rr,"\simeq"]\arrow[d,"\simeq"] && \pi_\ast\tilde{s}^!\sps\tilde{p}^\ast\pi^\ast\arrow[d] & \sps\pi_\ast\tilde{s}^!\tilde{p}^\ast\pi^\ast\arrow[l]\arrow[d,"\simeq"]\\
        \pi_\ast\tilde{s}^!0_\ast p^\ast\sps\arrow[r,"\simeq"]\arrow[d,"\simeq"]& \pi_\ast\tilde{s}^!0_\ast \sps p^\ast\arrow[d,"\simeq"]& \pi_\ast\tilde{s}^!\sps 0_\ast p^\ast\arrow[l,swap,"\simeq"] & \sps \pi_\ast\tilde{s}^!0_\ast p^\ast\arrow[l]\arrow[d,"\simeq"]\\
        s^! p^\ast\sps\arrow[r,"\simeq"]& s^!\sps p^\ast && \sps s^! p^\ast\arrow[ll,swap,"\simeq"]
    \end{tikzcd}\]
    where the upper left horizontal arrow is an isomorphism because $\tilde{p},\pi$ are smooth, the left and right vertical long arrows are isomorphisms from the proof of relative purity, the central horizontal arrow is an isomorphism because $0$ is a closed immersion, using Lemma \ref{lemmaspecproper}, and the lower right horizontal arrow is an isomorphism from Lemma \ref{lemmaspecthom}. The upper right and lower left squares are commutative because they comes from applying independent functors, the upper left (resp lower right) rectangle is commutative from the compatibility of specialization systems with the exchange morphism $Ex^\ast_\ast$ (resp $Ex^!_\ast$). Hence, by diagram chase, the whole diagram is a commutative diagram of isomorphism, in particular, the upper right horizontal arrow is an isomorphism. We can now write the same diagram, replacing $0$ by $1$, $s$ by $\Delta_f$ and $p$ by $p_2$, to obtain a commutative diagram of isomorphisms:
    \[\begin{tikzcd}
\pi_\ast\tilde{s}^!\tilde{p}^\ast\pi^\ast\sps\arrow[r,"\simeq"]\arrow[d,"\simeq"] & \sps\pi_\ast\tilde{s}^!\tilde{p}^\ast\pi^\ast\arrow[d,"\simeq"]\\
        (\Delta_f)^!(p_2)^\ast\sps\arrow[r,"\simeq"] & \sps(\Delta_f)^!(p_2)^\ast
    \end{tikzcd}\]

    Now, consider the diagram:
    \[\begin{tikzcd}
        (\Delta_f)^!(p_2)^\ast f^!\sps\arrow[d,"\simeq"] & (\Delta_f)^!(p_2)^\ast \sps f^!\arrow[l,swap,"\simeq"]\arrow[r,"\simeq"] & (\Delta_f)^!\sps (p_2)^\ast f^!\arrow[d,"\simeq"] & \sps (\Delta_f)^!(p_2)^\ast f^!\arrow[l]\arrow[d,"\simeq"]\\
        (\Delta_f)^!(p_1)^! f^\ast\sps\arrow[r,"\simeq"]\arrow[d,"\simeq"] & (\Delta_f)^!(p_1)^! \sps f^\ast\arrow[drr,"\simeq"] & (\Delta_f)^!\sps(p_1)^! f^\ast\arrow[l] & \sps(\Delta_f)^!(p_1)^! f^\ast\arrow[l]\arrow[d,"\simeq"]\\
        f^\ast\sps\arrow[rrr,"\simeq"] &&& \sps f^\ast
    \end{tikzcd}\]
    where the upper left horizontal arrow is an isomorphism as proven above, and the arrows from the first to the second line, obtained from the exchange morphism $Ex^{\ast!}$ are isomorphisms because $f$ is smooth. The upper right and lower left squares trivially commutes, the lower right triangle commutes from the compatibility of specialization systems with compositions, and the upper left rectangles commutes from the compatibility of specialization systems with $Ex^{\ast!}$. A simple diagram chase show then that this diagram is a commutative diagram of isomorphisms. By the construction of the purity isomorphism, the diagram of the lemma is obtained by gluing vertically the three above diagram, hence is a commutative diagram of isomorphisms.
\end{proof}

We prove compatibility with composition:

\begin{lemma}\label{lemmapurcomp}
    The morphism $f_!\to f_\ast$ for separated $f$ is compatible with composition. The relative purity isomorphism is compatible with composition, \ie for $f:\mX\to \mY$ and $g:\mY\to\mZ$ two smooth morphisms, the following square of isomorphisms is commutative:
    \[\begin{tikzcd}
        \Sigma^{-\bL_f}f^!\circ \Sigma^{-\bL_g}g^!\arrow[r,"\simeq"]\arrow[d,"\simeq"] & f^\ast\circ g^\ast\arrow[d,"\simeq"]\\
        \Sigma^{-\bL_{g\circ f}}(g\circ f)^!\arrow[r,"\simeq"] & (g\circ f)^\ast
    \end{tikzcd}\]
    where the left vertical arrow comes from the cofiber sequence $f^\ast(\bL_g)\to \bL_{g\circ f}\to\bL_f$.
\end{lemma}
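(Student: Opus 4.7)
The plan is to prove the two parts separately, relying on careful diagram chasing and the previously established coherence results.

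For part one (compatibility of $f_! \to f_\ast$ with composition), suppose $f: \mX \to \mY$ and $g: \mY \to \mZ$ are both separated. The key geometric observation is that the diagonal $\Delta_{gf}: \mX \to \mX \times_\mZ \mX$ factors as
\[
\mX \xrightarrow{\Delta_f} \mX \times_\mY \mX \xrightarrow{j} \mX \times_\mZ \mX,
\]
where $j$ is the base change of $\Delta_g: \mY \to \mY \times_\mZ \mY$ along the natural map $\mX \times_\mZ \mX \to \mY \times_\mZ \mY$. Both $\Delta_f$ and $j$ are proper and representable by algebraic spaces (since $\Delta_g$ is, and properness is stable under base change), so the isomorphisms $(\Delta_f)_! \simeq (\Delta_f)_\ast$ and $j_! \simeq j_\ast$ are available. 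I would expand the construction of $(gf)_! \to (gf)_\ast$ using this factored diagonal and match the resulting diagram with the horizontal composition of the individual diagrams for $f$ and $g$. The required commutativity then reduces to the coherence of the exchange isomorphisms $Ex^\ast_!$ and $Ex^!_\ast$ under composition of base changes, which is automatic from the $(\infty,1)$-categorical enhancement of the formalism via the category of correspondences (\cite{Liu2012EnhancedSO}, \cite{Gaitsgory_Rozenblyum_2017}).

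For part two (compatibility of the purity isomorphism with composition), recall that the purity isomorphism for a smooth $f$ is built as a composition of the diagonal-trick isomorphism $f^\ast \simeq (\Delta_f)^!(p_2)^\ast f^!$ followed by the zig-zag \eqref{roofpurity} coming from the deformation to the normal cone $D_{\Delta_f}$. The strategy is to construct a \emph{double deformation to the normal cone} that simultaneously degenerates $\Delta_f$ and $j$, realizing geometrically the cofiber sequence $f^\ast \bL_g \to \bL_{gf} \to \bL_f$ as a corresponding filtration on the normal bundle stack $\bV_\mX(\bL_{gf})$. Using this together with base change and the composition isomorphism for Thom twists from Lemma \ref{lemthomtwist} (the compatibility of $\Sigma^{-\bL_{gf}} \simeq \Sigma^{-\bL_f} \circ \Sigma^{-f^\ast \bL_g}$), one compares the two roofs \eqref{roofpurity} corresponding to the two ways of decomposing the purity isomorphism for $gf$. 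The commutativity of the target square then follows from diagram chases analogous to those in the proofs of Lemmas \ref{lemmaspecthom} and \ref{lemmaspecpurity}, using that all the relevant functors ($f^\ast$, $\Sigma^{-\bL_g}$, $(p_2)^\ast$, etc.) commute past each other in the necessary orders by smooth base change and the compatibility of Thom twists with pullbacks.

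The main obstacle will be the second part: setting up the double deformation to the normal cone requires care, and tracking that all the base-change and Thom-twist isomorphisms assemble coherently is combinatorially demanding. A possible simplification, which I would pursue if the direct geometric argument becomes unwieldy, is to reduce to the scheme setting by smooth descent (since both sides of the square are compatible with smooth base change, as shown in Lemma \ref{lemmaspecpurity} applied to the specialization system $g^\ast$ for smooth $g$), and then further reduce Zariski-locally to the case where $f$ and $g$ are projections from trivial vector bundles. In that trivial case the purity isomorphism reduces to the Thom equivalence of Lemma \ref{lemthomtwist}, and the compatibility with composition follows directly from the isomorphism $\Sigma^{\mathcal{F}} \simeq \Sigma^{\mE} \circ \Sigma^{\mathcal{G}}$ associated to a short exact sequence, which is exactly the content of that lemma.
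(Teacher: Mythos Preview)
For the first part, your approach is essentially the same as the paper's: both use the factorization $\Delta_{gf} = j \circ \Delta_f$ with $j$ the base change of $\Delta_g$, and reduce to the coherence of exchange isomorphisms. The paper phrases this as a recursive argument (proving compatibility for a class $E$ of morphisms, then extending to morphisms whose diagonal lies in $E$, citing \cite[Proposition 1.7.3]{Ayoub2018LesSOI}), while you argue more directly; the content is the same.

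For the second part, your route differs from the paper's. You propose a \emph{double} deformation to the normal cone degenerating $\Delta_f$ and $j$ separately. The paper instead uses a \emph{single} $\bA^1$-parameter: by functoriality of the deformation to the normal cone (\cite[Theorem 1.5]{intrnormcone}), the deformations $D_{\Delta_f}$, $D_{\Delta_{g\circ f}}$, $D_{\Delta_{f^\ast g}}$, $D_{\Delta_g}$ assemble into one commutative diagram with Cartesian squares over $\bA^1$, whose fibers at $1$ and $0$ are respectively the diagonal diagram and the vector-bundle diagram. Base change in this single diagram then directly compares the roofs \eqref{roofpurity} for $f$, $g$, and $g\circ f$. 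This is considerably simpler than setting up a two-parameter family and avoids the combinatorics you anticipate. The paper also separates off the diagonal-trick step $(\Delta_f)^!(p_2)^\ast f^! \simeq f^\ast$ and handles its compatibility with composition by invoking \cite[Proposition 4.23]{Chowdhury2024NonrepresentableSF} (adapting Ayoub's arguments), before turning to the deformation part.

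Your fallback descent argument has a gap: smooth morphisms are not Zariski-locally projections from trivial vector bundles; they are only \'etale-locally of the form $\bA^n \times Y \to Y$ composed with an \'etale map. So after descending to schemes and localizing you would still need to handle a nontrivial \'etale factor, which brings you back to a version of the original problem.
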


\begin{proof}
   Consider the following commutative diagram with Cartesian square:
   \[\begin{tikzcd}
        \mX\arrow[d,"\Delta_f"]\arrow[dr,"\Delta_{g\circ f}"] & &\\
        \mX\times_\mY\mX\arrow[r]\arrow[d,"p_2"] & \mX\times_\mZ\mX\arrow[d]\arrow[dr,"p_2"] &\\
       \mX\arrow[d,"f"]\arrow[r,"\Delta_{f^\ast g}"] & \mY\times_\mZ\mX\arrow[r,"p_2"]\arrow[d] & \mX\arrow[d,"f"]\\
       \mY\arrow[r,"\Delta_g"] & \mY\times_\mZ\mY\arrow[r,"p_2"] & \mY
   \end{tikzcd}\]

    To prove the claim that the morphism $f_!\to f_\ast$ for separated $f$ is compatible with composition, one can proceed recursively. We show how to adapt the proof of \cite[Proposition 1.7.3]{Ayoub2018LesSOI}. If $f$ and $g$ are isomorphisms, this is obvious. Suppose that we have proven compatibility with composition for a class $E$ of proper and representable morphisms that is stable by base change and composition (e.g., isomorphisms, or monomorphisms, or closed immersions, or proper and representable morphisms). Consider now the class $E'\supset E$ of morphisms whose diagonal is in $E$. This class is stable by base change, and it is also compatible with composition, from the above diagram. From Lemma \ref{lemmaspecproper}, the morphism $f_!\to f_\ast$ for $f\in E'$ is compatible with base change. Then the arguments of \cite[Proposition 1.7.3]{Ayoub2018LesSOI} carry on directly to show compatibility with composition for morphisms of $E'$.\medskip

    we consider now the compatibility of the purity isomorphism with composition, for smooth $f,g$. Consider the following square of isomorphisms:
    \[\begin{tikzcd}
        (\Delta_f)^!(p_2)^\ast f^!\circ (\Delta_g)^!(p_2)^\ast g^!\arrow[r,"\simeq"]\arrow[d,"\simeq"] & f^\ast\circ g^\ast\arrow[d,"\simeq"]\\
        (\Delta_{g\circ f})^!(p_2)^\ast(g\circ f)^!\arrow[r,"\simeq"] & (g\circ f)^\ast
    \end{tikzcd}\]
    where the left vertical arrows comes from smooth base change \eqref{eqexsmoothast} in the above diagram, and the horizontal arrows from the purity isomorphisms for $f,g$ and $g\circ f$. This diagram is the dual of those of \cite[Proposition 3.23]{Chowdhury2024NonrepresentableSF}, which is proven here (adapting the arguments of \cite[Proposition 1.7.3]{Ayoub2018LesSOI}) to be commutative for $f$ and $gf$ representable. This restriction arise there because \cite[Proposition 4.23]{Chowdhury2024NonrepresentableSF} is a preliminary result to show the relative purity, which is used afterward to prove smooth base change \eqref{eqexsmoothast}. Once one knows that smooth base change holds, the arguments of \cite[Proposition 4.23]{Chowdhury2024NonrepresentableSF} applies directly to our case, giving that the above diagram is commutative.\medskip

    Using deformation to the normal cone, we obtain a commutative diagram with Cartesian squares over $\bA^1$:
    \[\begin{tikzcd}
        \bA^1_\mX\arrow[d,"\tilde{s}_f"]\arrow[dr,"\tilde{s}_{g\circ f}"] & &\\
        D_{\Delta_f}\arrow[r]\arrow[d,"\tilde{p}_f"] & D_{\Delta_{g\circ f}}\arrow[d]\arrow[dr,"\tilde{p}_{g\circ f}"] &\\
       \bA^1_\mX\arrow[d,"\bA^1_f"]\arrow[r,"\tilde{s}_{f^\ast g}"] & D_{\Delta_{f^\ast g}}\arrow[r,"\tilde{p}_{f^\ast g}"]\arrow[d] & \bA^1_\mX\arrow[d,"\bA^1_f"]\\
       \bA^1_\mY\arrow[r,"\tilde{s}_g"] & D_{\Delta_g}\arrow[r,"\tilde{p}_g"] & \bA^1_\mY
   \end{tikzcd}\]
   whose restriction over $1$ is the above diagram, and whose restriction over $0$ is:
   \[\begin{tikzcd}
        \mX\arrow[d,"s_f"]\arrow[dr,"s_{g\circ f}"] & &\\
        \bV_\mX(\bL_f)\arrow[r]\arrow[d,"p_f"] & \bV_\mX(\bL_{g\circ f})\arrow[d]\arrow[dr,"p_{g\circ f}"] &\\
       \mX\arrow[d,"f"]\arrow[r,"s_{f^\ast g}"] & \bV_\mX(f^\ast(\bL_g))\arrow[r,"p_{f^\ast g}"]\arrow[d] & \mX\arrow[d,"f"]\\
       \mY\arrow[r,"s_g"] & \bV_\mY(\bL_g)\arrow[r,"p_g"] & \bA^1_\mY
   \end{tikzcd}\]

   Using base change, we obtain the following commutative diagram of isomorphisms:
   \[\begin{tikzcd}
        (\Delta_f)^!(p_2)^\ast\circ f^!\circ (\Delta_g)^!(p_2)^\ast\arrow[d,"\simeq"] & \pi_\ast (\tilde{s}_f)^!(\tilde{p}_f)^\ast\pi^\ast \circ f^!\circ\pi_\ast(\tilde{s}_g)^!(\tilde{p}_g)^\ast  \pi^\ast\arrow[r,"\simeq"]\arrow[l,swap,"\simeq"]\arrow[d,"\simeq"]& (s_f)^!(p_f)^\ast \circ f^!\circ (s_g)^!(p_g)^\ast\arrow[d,"\simeq"] \\
        (\Delta_{g\circ f})^!(p_2)^\ast & \pi_\ast (\tilde{s}_{g\circ f})^!(\tilde{p}_{g\circ f})^\ast\pi^\ast \arrow[r,"\simeq"]\arrow[l,swap,"\simeq"] & (s_{g\circ f})^!(p_{g\circ f})^\ast
    \end{tikzcd}\]
    where the right vertical arrow is by definition $\Sigma^{-\bL_f}f^!\circ\Sigma^{-\bL_g}\to \Sigma^{-\bL_{g\circ f}}$, and the horizontal arrows are the roofs \eqref{roofpurity} for $f,g$ and $g\circ f$. By construction of the purity isomorphism, the result follows.
\end{proof}

\subsubsection{A contraction lemma}

Consider a stack $\mX$, a closed immersion $i:\mZ\to\mX$, and its open complement $j:\mU\to\mX$. Using the unit $j_!j^\ast\to Id$, $Id\to i_\ast i^\ast$ and the isomorphisms $j^!\simeq j^\ast$, $i_!\simeq i_\ast$, one obtains:
\begin{align}
    j_!j^\ast\to Id\to i_!i^\ast
\end{align}
one can check using a cover by smooth morphisms with Nisnevich-local sections that $(j^\ast,i^\ast)$ are jointly conservative. By base change, $j^\ast i_!\simeq 0$, hence the arguments of \cite[Lemma 1.1]{deligne2001voevodsky} shows that the above extends uniquely to a distinguished triangle:
\begin{align}
    j_!j^\ast\to Id\to i_!i^\ast\to j_!j^\ast[1]
\end{align}

\begin{lemma}\label{lemmaspecloc}
    Given a specialization system  $\sps:\mD_1\to\mD_2$ over $\eta\to B\leftarrow\sigma$, and an open-closed decomposition $(j,i)$, the following diagram is commutative:
    \[\begin{tikzcd}
        (j_\sigma)_!(j_\sigma)^\ast\sps\arrow[r]\arrow[d] & \sps\arrow[r]\arrow[d,"="] & (i_\sigma)_!(i_\sigma)^\ast\sps\arrow[d]\arrow[r] &  {(j_\sigma)_!(j_\sigma)^\ast[1]\sps}\arrow[d]\\
        \sps (j_\eta)_!(j_\eta)^\ast\arrow[r] & \sps\arrow[r] & \sps (i_\eta)_!(i_\eta)^\ast\arrow[r] &  {\sps (j_\eta)_!(j_\eta)^\ast[1]}
    \end{tikzcd}\]
\end{lemma}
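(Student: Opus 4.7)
The strategy is to reduce everything to the compatibility of specialization systems with units and counits of adjunctions, which follows formally from the two exchange morphisms \eqref{specpullback} and \eqref{specpushforward} together with the compatibilities already proven in Lemma \ref{lemmaspecproper}. The middle square is trivially commutative since both vertical arrows are the identity on $\sps$, so the work lies in the outer two squares and in the square for the connecting morphism.

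For the left square, recall that the map $j_!j^\ast \to Id$ is the counit of the adjunction $(j_!,j^\ast)$, where for the open immersion $j$ one has $j^!\simeq j^\ast$ and $j_!\simeq j_\#$. By Lemma \ref{lemmaspecproper} the square for $j_!$ (resp.~for $j^\ast$) is a square of isomorphisms, and by definition of a specialization system the exchange morphisms are compatible with composition. It is a formal diagram chase, analogous to the one in the proof of Lemma \ref{lemmaspecpurity}, that the counit intertwines the specialization exchanges, which gives commutativity of the first square. The right (third) square is dual: the morphism $Id\to i_!i^\ast$ is the unit of $(i^\ast,i_!)$ (using $i_!\simeq i_\ast$ on both sides compatibly with $\sps$, by Lemma \ref{lemmaspecproper} applied to the closed immersion $i$), and the same formal argument gives commutativity.

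The main obstacle is the fourth square, involving the boundary map $i_!i^\ast\to j_!j^\ast[1]$. At the level of triangulated categories alone, this morphism is only determined up to a non-unique completion in TR3, and one does not immediately get a comparison of such completions. The clean way around this, which I will use, is to work with the $(\infty,1)$-categorical enhancement $\mD^\ast_!:Corr_{lft,all}(St)\to Pr^L_{St}$ recalled in Section \ref{sectionschemestacks}: there the distinguished triangle is the fiber/cofiber sequence associated to the natural transformation $j_!j^\ast\to Id$, and cofibers are functorial in the stable presentable $\infty$-category $\mD_2(\mX_\sigma)$. Since the first two squares commute as natural transformations of $\infty$-functors, the induced morphism on cofibers is unique and automatically intertwines the specialization exchanges, which yields the commutativity of the third and fourth squares simultaneously.

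Alternatively, one may proceed purely $2$-categorically by writing the boundary map as the composition $i_!i^\ast\simeq i_!i^\ast j_\ast j^\ast[1]\to j_\ast j^\ast[1]\simeq j_!j^\ast[1]$ obtained from the triangle $i_\ast i^!\to Id\to j_\ast j^\ast$ (rotated), and then checking square-by-square commutativity using Lemma \ref{lemmaspecproper} for $i_!\simeq i_\ast$ and for $j_\ast$, together with the adjoint exchange between $\sps$ and $j^\ast, i^!, i^\ast$. I expect the cleanest exposition to invoke the $\infty$-categorical enhancement, since all the necessary higher coherences are already built into the functor $\mD^\ast_!$.
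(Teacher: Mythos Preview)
Your handling of the first three squares is essentially the same as the paper's: both of you use the identifications $j^!\simeq j^\ast$ and $i_!\simeq i_\ast$ from Lemma \ref{lemmaspecproper} together with the compatibility of the exchange morphisms with adjunction units and counits.

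For the fourth square, however, your primary approach has a genuine gap. A specialization system in this paper is defined purely $2$-categorically: for each $f:\mX\to B$ a functor $\sps_f$ between the homotopy categories, together with exchange natural transformations compatible with composition and base change. There is no assumption that $\sps$ lifts to a morphism of stable $\infty$-categories, so you cannot invoke functoriality of cofibers to transport the connecting morphism. Your alternative (writing the boundary map explicitly via the rotated triangle) could be made to work, but you do not carry it out, and the expression you propose for the boundary map is not quite correct as stated.

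The paper instead uses the argument of \cite[Lemma 1.1]{deligne2001voevodsky}, which stays entirely at the triangulated level. Once the first two squares commute, one takes the cone to obtain \emph{some} map $(i_\sigma)_!(i_\sigma)^\ast\sps\to\sps(i_\eta)_!(i_\eta)^\ast$, and lets $w$ be its difference with the given vertical map. Precomposing with $\sps\to (i_\sigma)_!(i_\sigma)^\ast\sps$ kills $w$ (by commutativity of the second square), so $w$ factors through $(j_\sigma)_!(j_\sigma)^\ast[1]\sps$. By adjunction this corresponds to a map $(j_\sigma)^\ast[1]\sps\to (j_\sigma)^\ast\sps(i_\eta)_!(i_\eta)^\ast$, and the target vanishes because $(j_\sigma)^\ast\sps(i_\eta)_!\simeq\sps(j_\eta)^\ast(i_\eta)_!\simeq 0$. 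Hence $w=0$, and the third and fourth squares commute. This argument requires no $\infty$-categorical input and is the one you should use here.
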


\begin{proof}
    The leftmost part can be decomposed into:
    \[\begin{tikzcd}
        j_!j^\ast\sps\arrow[r,"\simeq"]\arrow[d,"\simeq"] & j_!j^!\sps\arrow[dr] &  & i_\ast i^\ast\sps\arrow[d]\arrow[r,"\simeq"] & i_!i^\ast\sps\arrow[d]\\
        j_!\sps j^\ast\arrow[r,"\simeq"]\arrow[d,"\simeq"] & j_!\sps j^!\arrow[d,"\simeq"]\arrow[u,"\simeq"] & \sps\arrow[ur]\arrow[dr] & i_\ast\sps i^\ast\arrow[r,"\simeq"] & i_!\sps i^\ast\arrow[d,"\simeq"]\\
        \sps j_!j^\ast\arrow[r,"\simeq"] & \sps j_!j^!\arrow[ur] &  & \sps i_\ast i^\ast\sps\arrow[r,"\simeq"]\arrow[u,"\simeq"] & \sps i_!i^\ast\\
    \end{tikzcd}\]
    the upper left and lower right squares commutes respectively from Lemma \ref{lemmaspecproper}, the upper right and lower left squares trivially commutes, and the two central triangles commutes from adjunctions. To show that the right square of the diagram of the Lemma commutes, we can argue as in \cite[Lemma 1.1]{deligne2001voevodsky}. Namely, consider the morphism $(i_\sigma)_!(i_\sigma)^\ast\sps\to \sps (i_\eta)_!(i_\eta)^\ast$ obtained from the cone of the two leftmost vertical arrows of the diagram of the Lemma, and the difference $w$ with the third vertical arrow of the diagram of the Lemma: we have to show $w=0$. The composite arrow:
    \begin{align}
        \sps \to (i_\sigma)_!(i_\sigma)^\ast\sps\overset{w}{\to} \sps (i_\eta)_!(i_\eta)^\ast
    \end{align}
    has to vanish, as the second square commutes. Then $w$ factorize through a map $(j_\sigma)_!(j_\sigma)^\ast[1]\sps\to \sps (i_\eta)_!(i_\eta)^\ast$, which corresponds by adjunction to a map:
    \begin{align}
        (j_\sigma)^\ast[1]\sps\to (j_\sigma)^\ast\sps (i_\eta)_!(i_\eta)^\ast\simeq \sps (j_\eta)^\ast(i_\eta)_!(i_\eta)^\ast\simeq 0
    \end{align}
    hence $w=0$.
\end{proof}

Consider now the closed immersion $0:\mX\to\bA^1_\mX$, and the projection $\pi:\bA^1_\mX\to\mX$. There is a natural morphism $\pi_\ast\to \pi_\ast 0_\ast 0^\ast\simeq 0^\ast$. The first part of the following lemma is a well known consequence of the localization and homotopy property. We  are particularly interested in the second part, expressing the compatibility with specialization systems.

\begin{lemma}\label{lemmaspeccontr}
    Consider the morphism $\bar{q}:\bA^1_\mX\to[\bA^1_\mX/\bG_m]$. On the image of $\bar{q}^\ast$, the morphism $\pi_\ast\to 0^\ast$ is an isomorphism. For any specialization system, on the image of $(\bar{q}_\eta)^\ast$, the following:
    \[\begin{tikzcd}
        (\pi_\sigma)_\ast\sps\arrow[d] & \sps(\pi_\eta)_\ast\arrow[l]\arrow[d]\\
        (0_\sigma)^\ast\sps\arrow[r] & \sps(0_\eta)^\ast
    \end{tikzcd}\]
    is a square of isomorphisms.
\end{lemma}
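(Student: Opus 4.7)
\textit{First statement.} I plan to use the Verdier localization triangle $j_! j^\ast\to\Id\to 0_\ast 0^\ast$ for the open/closed pair given by the complement $j:\mU\hookrightarrow\bA^1_\mX$ of the zero section $0:\mX\hookrightarrow\bA^1_\mX$. Applying $\pi_\ast$ and using the isomorphism $\pi_\ast 0_\ast\simeq\Id$ coming from $\pi\circ 0=\Id$, the canonical morphism $\pi_\ast F\to 0^\ast F$ appears as the second arrow of the exact triangle $\pi_\ast j_! j^\ast F\to\pi_\ast F\to 0^\ast F$, so it is an isomorphism for $F=\bar{q}^\ast G$ provided the first term vanishes. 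The crucial observation is the factorization $\bar{q}\circ j=j^\Theta\circ p$, where $j^\Theta:\mX\hookrightarrow[\bA^1_\mX/\bG_m]$ is the open stratum inclusion and $p:=\pi\circ j$, which gives $j^\ast F\simeq p^\ast H=j^\ast\pi^\ast H$ for $H:=(j^\Theta)^\ast G\in\mD(\mX)$. Applying $\pi_\ast$ to the localization triangle for $\pi^\ast H$ then produces $\pi_\ast j_! j^\ast \pi^\ast H\to\pi_\ast\pi^\ast H\to H$, and homotopy invariance (Lemma \ref{lemhomotop}, applicable since $\pi$ is smooth with the section $0$ as a Nisnevich-local section) identifies the right arrow with an isomorphism, so the first term vanishes.

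\textit{Second statement, vertical isomorphisms.} Both vertical arrows of the square are instances of the morphism $\pi_\ast\to 0^\ast$ of the first statement. Applying $\sps$ to the iso afforded by the first statement for $F$ yields the iso of the right vertical. For the left vertical, one notes that $\sps F\simeq(\bar{q}_\sigma)^\ast\sps G$ by the smooth-pullback compatibility of specialization with the smooth morphism $\bar{q}$, so $\sps F$ lies in the image of $(\bar{q}_\sigma)^\ast$ and the first statement applies on the $\sigma$-side.

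\textit{Commutativity.} I plan to apply Lemma \ref{lemmaspecloc} to the open/closed pair $(j,0)$ and combine it with the pushforward exchange $\sps(\pi_\eta)_\ast\to(\pi_\sigma)_\ast\sps$: applying $(\pi_\sigma)_\ast$ yields a compatible morphism between the two exact triangles $(\pi_\sigma)_\ast j_! j^\ast\sps F\to(\pi_\sigma)_\ast\sps F\to(0_\sigma)^\ast\sps F$ and $\sps(\pi_\eta)_\ast j_! j^\ast F\to\sps(\pi_\eta)_\ast F\to\sps(0_\eta)^\ast F$. By the first statement, the first terms of both triangles vanish on objects in the image of $(\bar{q}_\eta)^\ast$, so the square of the lemma emerges as the residual morphism of these degenerate triangles, and its commutativity follows.

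\textit{Horizontal isomorphisms.} The main obstacle, which I expect to require the most care, is to deduce that the horizontal arrows $b,d$ of the square are individually isomorphisms and not merely that the composition $d\circ b=a^{-1}\circ c$ is an iso. In a morphism of exact triangles with both first terms vanishing, the middle and third terms become canonically identified within each triangle (via $a$ and $c$) and the horizontal morphisms between them correspond to each other through these identifications; a direct computation using the compatibility of $\sps$ with the pushforward exchange for the closed immersion $0$ (which is an iso by Lemma \ref{lemmaspecproper}) and with the composition identity $\pi\circ 0=\Id$ then shows that the third-to-third component equals the identity after these identifications, forcing $b$, and therefore $d$, to be an iso.
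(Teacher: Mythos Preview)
Your argument for the first statement is correct and matches the paper exactly: factorize $\bar{q}\circ j$ through the open point of $[\bA^1_\mX/\bG_m]$, reduce to showing $\pi_\ast j_! j^\ast\pi^\ast\simeq 0$, and use the localization triangle together with homotopy invariance.

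For the second statement, you have identified the right ingredients (Lemma~\ref{lemmaspecloc} and the composition $\pi\circ 0=\Id$), which are precisely those the paper uses. However, your execution in the \textit{Commutativity} paragraph contains an imprecision that hides real work. You claim a single ``compatible morphism between the two exact triangles'' with third terms $(0_\sigma)^\ast\sps F$ and $\sps(0_\eta)^\ast F$. But what one actually obtains is a \emph{cospan}: applying $\pi_\ast$ to Lemma~\ref{lemmaspecloc} gives a morphism $\pi_\ast(j_!j^\ast\sps\to\sps\to 0_!0^\ast\sps)\to\pi_\ast(\sps j_!j^\ast\to\sps\to\sps 0_!0^\ast)$, while the pushforward exchange gives $\sps\pi_\ast(\cdots)\to\pi_\ast\sps(\cdots)$; these share the target $\pi_\ast\sps(\cdots)$ but do not compose. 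To turn the cospan into a morphism, you must first show that one of its legs is an isomorphism on the relevant third term, and this is exactly where $\pi\circ 0=\Id$ enters: it forces $\sps\pi_\ast 0_\ast\to\pi_\ast\sps 0_\ast$ (and its companion) to be iso. The paper does this as a preliminary step and then runs two four-row diagram chases.

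Relatedly, your claim that the third-to-third component ``equals the identity'' is misleading. After the identification $\pi_\ast 0_!0^\ast\simeq 0^\ast$, the third vertical in the Lemma~\ref{lemmaspecloc} leg is $\pi_\ast 0_!$ applied to the pullback exchange $d:0^\ast\sps\to\sps 0^\ast$, not the identity; showing that this identification carries the cospan's third verticals to $d$ itself is what yields commutativity of the \emph{actual} square of the Lemma (this is the content of the paper's final large diagram). Without this step, you have only established commutativity of an auxiliary square and the iso property of $b$, not commutativity of the square with $d$ as drawn.

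In short: same approach, same tools, but the directional bookkeeping of the triangle morphisms and the identification of the third vertical with $d$ are not automatic and are where the paper spends its effort.
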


\begin{proof}
    Consider the commutative and Cartesian square:
    \[\begin{tikzcd}
        \bG_{m,\mX}\arrow[d,"q"]\arrow[r,"j"] &\bA^1_\mX\arrow[d,"\bar{q}"]\\
        \mX\arrow[r,"\tilde{j}"] & {[\bA^1_\mX/\bG_m]}
    \end{tikzcd}\]
    where $\tilde{j}$ is an open immersion, and $\bar{q}\circ j=\tilde{j}\circ \pi\circ j$. The composition of morphisms:
    \begin{align}
        Id\to\pi_\ast\pi^\ast\to\pi_\ast0_\ast 0^\ast\pi^\ast\simeq (\pi\circ 0)_\ast(0\circ \pi)^\ast
    \end{align}
    is identified with the unit of the adjunction for $0\circ \pi\simeq Id$, hence is an isomorphism. But $Id\to\pi_\ast\pi^\ast$ is an isomorphism by the homotopy property, hence the second morphism is an isomorphism. In the localization exact triangle:
    \begin{align}
        \pi_\ast j_!j^\ast \pi^\ast\to & \pi_\ast \pi^\ast\to \pi_\ast 0_!0^\ast \pi^\ast
    \end{align}
    the second morphism is then an isomorphism, hence $\pi_\ast j_!j^\ast \pi^\ast\simeq 0$. In the localization exact triangle:
    \begin{align}
        \pi_\ast j_!j^\ast\bar{q}^\ast\to&\pi_\ast\bar{q}^\ast\to 0^\ast\bar{q}^\ast\nn
    \end{align}
    the first term is isomorphic with $\pi_\ast j_!j^\ast\pi^\ast\tilde{j}^\ast$, hence vanishes, and then the second arrow is an isomorphism, which proves the first part.\medskip
    
    We check now the compatibility with specialization systems. The compositions:
    \begin{align}
        &\sps\simeq 0^\ast\pi^\ast\sps\to 0^\ast\sps\pi^\ast\to \sps 0^\ast\pi^\ast\simeq \sps\nn\\
        &\sps \simeq\sps \pi_\ast 0_\ast\to \pi_\ast\sps 0_\ast\to \pi_\ast 0_\ast\sps\simeq \sps
    \end{align}
    are the identity, and the first (resp second) arrow is an isomorphism because $\pi$ is smooth (resp $0$ is a closed immersion), hence the second (resp first) arrow is an isomorphism too. Hence $0_!0^\ast\sps\pi^\ast\to \sps 0_!0^\ast\pi^\ast $ and $\sps\pi_\ast 0_!0^\ast\to \pi_\ast\sps 0_!0^\ast$ are isomorphisms. Consider the following diagram:
    \[\begin{tikzcd}
        \pi_\ast j_!j^\ast\pi^\ast\sps\arrow[r]\arrow[d,"\simeq"] & \pi_\ast\pi^\ast\sps\arrow[r,"\simeq"]\arrow[d,"\simeq"] & \pi_\ast 0_!0^\ast\pi^\ast\sps\arrow[d,"\simeq"]\\
        \pi_\ast j_!j^\ast\sps\pi^\ast\arrow[r]\arrow[d] & \pi_\ast\sps\pi^\ast\arrow[r]\arrow[d,"="] & \pi_\ast 0_!0^\ast\sps\pi^\ast\arrow[d,"\simeq"]\\
        \pi_\ast \sps j_!j^\ast\pi^\ast\arrow[r] & \pi_\ast\sps\pi^\ast\arrow[r] & \pi_\ast \sps0_!0^\ast\pi^\ast\\
        \sps \pi_\ast j_!j^\ast\pi^\ast\arrow[r]\arrow[u] & \sps\pi_\ast\pi^\ast\arrow[r,"\simeq"]\arrow[u] & \sps\pi_\ast 0_!0^\ast\pi^\ast\arrow[u]
    \end{tikzcd}\]
    where the morphisms from the second to third line forms morphisms of exact triangle from Lemma \ref{lemmaspecloc}. By the property of morphisms of exact triangles, the central left vertical arrow is an isomorphism, hence all the objects of the left column are trivial. Consider now the following diagram:
    \[\begin{tikzcd}
        \pi_\ast j_!j^\ast\bar{q}^\ast\sps\arrow[r]\arrow[d,"\simeq"] & \pi_\ast\bar{q}^\ast\sps\arrow[r]\arrow[d,"\simeq"] & \pi_\ast 0_!0^\ast\bar{q}^\ast\sps\arrow[d,"\simeq"]\\
        \pi_\ast j_!j^\ast\sps\bar{q}^\ast\arrow[r]\arrow[d] & \pi_\ast\sps\bar{q}^\ast\arrow[r]\arrow[d,"="] & \pi_\ast 0_!0^\ast\sps\bar{q}^\ast\arrow[d]\\
        \pi_\ast \sps j_!j^\ast\bar{q}^\ast\arrow[r] & \pi_\ast\sps\bar{q}^\ast\arrow[r] & \pi_\ast \sps0_!0^\ast\bar{q}^\ast\\
        \sps \pi_\ast j_!j^\ast\bar{q}^\ast\arrow[r]\arrow[u] & \sps\pi_\ast\bar{q}^\ast\arrow[r]\arrow[u] & \sps\pi_\ast 0_!0^\ast\bar{q}^\ast\arrow[u,swap,"\simeq"]
    \end{tikzcd}\]
    where the morphisms from the second to third line forms morphisms of exact triangle from Lemma \ref{lemmaspecloc}. As proven above, all the objects of the left column are trivial, hence all the arrows from the central column to the right column are isomorphisms. By a diagram chase, the lower central and central right vertical arrows are isomorphisms. We obtain the following diagram:
    \[\begin{tikzcd}
    \pi_\ast\sps\bar{q}^\ast\arrow[r,"\simeq"]\arrow[dd,"="] & \pi_\ast 0_!0^\ast\sps\bar{q}^\ast\arrow[d,"\simeq"]\arrow[r,"\simeq"] & \pi_\ast 0_\ast0^\ast\sps\bar{q}^\ast\arrow[r,"\simeq"]\arrow[d] & 0^\ast\sps\bar{q}^\ast\arrow[d]\\
    & \pi_\ast 0_!\sps0^\ast\bar{q}^\ast\arrow[d,"\simeq"]\arrow[r,"\simeq"] & \pi_\ast 0_\ast\sps0^\ast\bar{q}^\ast\arrow[r,"\simeq"] & \sps0^\ast\bar{q}^\ast\arrow[dd,"="]\\
        \pi_\ast\sps\bar{q}^\ast\arrow[r,"\simeq"] & \pi_\ast \sps0_!0^\ast\bar{q}^\ast\arrow[r,"\simeq"] & \pi_\ast \sps0_\ast 0^\ast\bar{q}^\ast\arrow[u,swap,"\simeq"] & \\
\sps\pi_\ast\bar{q}^\ast\arrow[r,"\simeq"]\arrow[u,swap,"\simeq"] & \sps\pi_\ast 0_!0^\ast\bar{q}^\ast\arrow[u,swap,"\simeq"]\arrow[r,"\simeq"] & \sps\pi_\ast 0_\ast 0^\ast\bar{q}^\ast\arrow[u]\arrow[r,"\simeq"] & \sps 0^\ast\bar{q}^\ast
    \end{tikzcd}\]
    where the left squares commute by the above, the central square commute by Lemma \ref{lemmaspecproper}, the lower right rectangle commutes by functoriality of the exchange morphism, and the remaining ones trivially commutes. By diagram chase, we obtain that the outer square, which is the square of the Lemma, is a commutative square of isomorphisms.
\end{proof}

\begin{corollary}\label{corsp01}
    On the image of $\bar{q}^\ast$, one obtains a natural morphism $0^\ast\simeq \pi_\ast\to 1^\ast$. For any specialization system, on the image of $(\bar{q}_\eta)^\ast$, the following square is commutative:
\[\begin{tikzcd}
    (0_\sigma)^\ast\sps\arrow[r]\arrow[d] & \sps (0_\eta)^\ast\arrow[d]\\
    (1_\sigma)^\ast\sps\arrow[r] & \sps(1_\eta)^\ast
\end{tikzcd}\]
\end{corollary}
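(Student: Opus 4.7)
The natural transformation $\pi_\ast\to 1^\ast$ is obtained from the unit of the adjunction $1^\ast\dashv 1_\ast$ as the composite $\pi_\ast\to\pi_\ast 1_\ast 1^\ast\simeq 1^\ast$, where the last identification uses $\pi\circ 1=\mathrm{Id}_\mX$. Combined with the isomorphism $0^\ast\simeq\pi_\ast$ valid on the image of $\bar q^\ast$ from Lemma \ref{lemmaspeccontr}, this yields the desired morphism $0^\ast\to 1^\ast$.

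To prove commutativity of the square, the plan is to factor it vertically through the functors $(\pi_\sigma)_\ast\sps$ and $\sps(\pi_\eta)_\ast$, producing a stack of two squares:
\[\begin{tikzcd}[column sep=small]
(0_\sigma)^\ast\sps\arrow[r]\arrow[d,"\simeq"] & \sps(0_\eta)^\ast\arrow[d,"\simeq"] \\
(\pi_\sigma)_\ast\sps\arrow[d] & \sps(\pi_\eta)_\ast\arrow[l,"\simeq"']\arrow[d] \\
(1_\sigma)^\ast\sps\arrow[r] & \sps(1_\eta)^\ast
\end{tikzcd}\]
The upper square is, up to inversion of horizontal isomorphisms, precisely the commutative square of isomorphisms from Lemma \ref{lemmaspeccontr}, so it commutes for free.

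For the lower square, I would insert an intermediate row $(\pi_\sigma)_\ast(1_\sigma)_\ast(1_\sigma)^\ast\sps$ on the left and $\sps(\pi_\eta)_\ast(1_\eta)_\ast(1_\eta)^\ast$ on the right, connected by the composite of exchange morphisms for $1^\ast$, for $1_\ast$ (using that $1$ is a closed immersion so that $1_!\simeq 1_\ast$ and Lemma \ref{lemmaspecproper} applies), and for $\pi_\ast$. The vertical arrows from this intermediate row down to $(1_\sigma)^\ast\sps$ and $\sps(1_\eta)^\ast$ are the isomorphisms provided by $\pi\circ 1=\mathrm{Id}_\mX$ and compatibility of specialization with composition of $\ast$-pushforwards. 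The resulting two sub-squares commute by naturality of the exchange transformations applied to the unit $\mathrm{Id}\to 1_\ast 1^\ast$, and by the coherence between exchange and composition which identifies the exchange morphism for $\pi_\ast 1_\ast\simeq\mathrm{Id}$ with the identity. A diagram chase then yields the corollary. The only delicate point is the bookkeeping of the composite middle horizontal arrow, but this is routine given the formalism established in the proofs of Lemmas \ref{lemmaspecproper} and \ref{lemmaspeccontr}.
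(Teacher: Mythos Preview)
Your proposal is correct and follows essentially the same approach as the paper: factor the square through $(\pi_\sigma)_\ast\sps$ and $\sps(\pi_\eta)_\ast$, invoke Lemma \ref{lemmaspeccontr} for the upper half, and handle the lower half $\pi_\ast\to 1^\ast$ by inserting the intermediate terms $\pi_\ast 1_\ast 1^\ast$ together with the relevant exchange morphisms. The paper's diagram for the lower half has four intermediate nodes rather than two (it writes out each exchange step separately and uses two diagonal arrows to isolate a triangle commuting by compatibility with adjunctions and another commuting by compatibility with composition), but this is merely a more explicit rendering of the same bookkeeping you describe.
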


\begin{proof}
    it suffices to check that the square is commutative. It is obtained by gluing vertically the commutative diagram of isomorphisms of Lemma \ref{lemmaspeccontr} and the diagram:
    \[\begin{tikzcd}
        \pi_\ast\sps\arrow[d]\arrow[drr] & & & \sps\pi_\ast\arrow[d]\arrow[lll]\\
        \pi_\ast 1_\ast 1^\ast\sps\arrow[r]\arrow[d,"\simeq"] & \pi_\ast 1_\ast \sps1^\ast\arrow[drr,"\simeq"] & \pi_\ast\sps 1_\ast 1^\ast\arrow[l] & \sps\pi_\ast 1_\ast 1^\ast\arrow[l]\arrow[d,"\simeq"]\\
        1^\ast\sps\arrow[rrr] & & & \sps 1^\ast
    \end{tikzcd}\]
    where the upper right (resp lower left) triangle commutes from the compatibility of specialization systems with adjunctions (resp composition).
    
\end{proof}

\section{Hyperbolic localization for stacks}\label{secthyploc}

In this section, we work over an excellent quasi-separated algebraic space $S$. All our stacks are assumed to be quasi-separated, locally of finite presentation over $S$, with affine stabilizers.

\subsection{The Theta-correspondence}

\subsubsection{The Theta-correspondence and Braden-Drinfeld-Gaitsgory theorem}

Consider the algebraic stack $\Theta:=[\bA^1/\bG_m]$ over $\bZ$, where $\bG_m$ acts on $\bA^1$ with its usual action. We denote by $\Theta_S$ its base change over $S$ There is a natural diagram:

\begin{equation}\label{diagbgm}\begin{tikzcd}
B\bG_{m,S}\arrow[loop left]\arrow[r] & \Theta_S\arrow[l,shift right=2] & S\arrow[l]
\end{tikzcd}\end{equation}

where the arrow $B\bG_{m,S}\to\Theta_S$ (resp. $S\to\Theta_S$) is given by the inclusion of $0$ (resp. $1$), the arrow $S\to B\bG_{m,S}$ is the natural one, and the involution of $B\bG_{m,S}$ comes from the inversion $x\to x^{-1}$ of $\bG_{m,S}$ (one use here the fact that $\bG_{m,S}$ is commutative).\medskip

Consider now a algebraic stack $\mX$ over $S$ (as said above, $S$ is assumed to be quasi-separated and excellent, and $\mX$ is assumed to be quasi-separated, locally of finite presentation over $S$, with affine stabilizers). Denote the mapping stacks:
\begin{align}
    \Filt(\mX)&=\Map_S(\Theta_S,\mX)\nn\\
    \Grad(\mX)&=\Map_S(B\bG_{m,S},\mX)
\end{align}
According to \cite[Theorem 6.22]{Alper2019TheL} (building on the results of \cite{HalpernLeistner2014MappingSA}), $\Grad(\mX)$ and $\Filt(\mX)$ are algebraic stacks, which are quasi-separated and locally of finite presentation over $S$, with affine stabilizers. One obtains morover from the above diagram a natural diagram of algebraic stacks (which are quasi-separated, locally of presentation, with affine stabilizer):
\[\begin{tikzcd}
    \Grad(\mX)\arrow[rr,bend left,"\iota"]\arrow[r,shift left=2,"\tilde{\iota}"]\arrow[loop left,"r"] & \Filt(\mX)\arrow[l,"p"]\arrow[r,"\eta"] & \mX
\end{tikzcd}\]
with $r$ an involution, $\iota=\eta\circ\iota$ and $p\circ\tilde{\iota}=Id$, $p$ is of finite presentation, and $\eta$ is representable by algebraic spaces.\medskip

We can now consider the hyperbolic localization functor:
\begin{align}
    p_!\eta^\ast: \mD(\mX)\to \mD(\Grad(\mX))
\end{align}
The following result is a generalization of the main theorem of Braden \cite{Braden2002HyperbolicLO} and Drinfeld-Gaitsgory \cite{Drinfeld2013ONAT}:

\begin{theorem}\label{theobradenstacks}
    Given a stack $\mX$ over $S$, one has a canonical adjunction:
    \begin{align}
         p_!\eta^\ast: \mD(\mX)\rightleftharpoons \mD(\Grad(\mX)):\eta_!(r\circ p)^\ast
    \end{align}
\end{theorem}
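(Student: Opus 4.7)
The claim is equivalent to the stacky Braden isomorphism
\begin{equation*}
p_! \eta^\ast \;\simeq\; (r\circ p)_\ast\, \eta^!,
\end{equation*}
because the standard adjunctions $\eta_!\dashv\eta^!$ and $(r\circ p)^\ast\dashv(r\circ p)_\ast$ identify $\eta_!(r\circ p)^\ast$ as the left adjoint of $(r\circ p)_\ast\eta^!$. My plan is to adapt Drinfeld and Gaitsgory's proof from \cite{Drinfeld2013ONAT} to the stacky six-functor formalism of Section \ref{sect6fun}, constructing the unit and counit of the adjunction directly rather than first proving the displayed isomorphism.

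The geometric input is universal, driven by maps of small $[\bA^1/\bG_m]$-stacks. I would introduce the dual cone $\Theta_S^- := [\bA^1_S/\bG_{m,S}]$ with the opposite $\bG_m$-weight on $\bA^1$, and the associated $\Filt^-(\mX) := \Map_S(\Theta_S^-,\mX)$; pullback by $r$ canonically identifies $\Filt^-(\mX)$ with $\Filt(\mX)$ in such a way that its evaluation at $0$ becomes $r\circ p$ while its evaluation at $1$ remains $\eta$. Gluing $\Theta_S$ and $\Theta_S^-$ along the common inclusion of the point $1$ recovers $[\bP^1_S/\bG_{m,S}]$ (with $\bG_m$ acting by rotation), whose mapping stack to $\mX$ is $\Filt(\mX)\times_\mX \Filt^-(\mX)$; gluing instead along the common $0$-section $B\bG_{m,S}$ gives a stack whose mapping stack to $\mX$ is $\Filt(\mX)\times_{\Grad(\mX)}\Filt^-(\mX)$. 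These two fiber products are the natural homes for the compositions $\eta_!(r\circ p)^\ast\, p_!\eta^\ast$ and $p_!\eta^\ast\, \eta_!(r\circ p)^\ast$, accessed via iterated applications of smooth and proper base change together with the exchange morphisms of Section \ref{sect6fun}.

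The triangle identities then reduce to the contraction principle of Lemma \ref{lemmaspeccontr} and Corollary \ref{corsp01}, applied to these $[\bA^1/\bG_m]$-parametrised families: on objects pulled back from $B\bG_{m,S}$, the restrictions to the $0$- and $1$-fibres of such a family agree canonically, which is precisely what collapses each composition of unit and counit to the identity. I expect the main obstacle to be that in Drinfeld and Gaitsgory's original argument these collapses rely on properness of the contracting family after restricting to well-chosen subspaces, whereas in our setting $p$ is not proper and $\eta$ is typically not representable by proper morphisms. The way around this is that all the pushforwards involved act on objects descended from $B\bG_{m,S}$, so the contraction lemma yields the required identifications without any properness assumption; the remaining coherences then follow from the compatibilities of base change with Thom twists and specialization systems established in Section \ref{sect6fun}.
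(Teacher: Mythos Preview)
Your approach is essentially the same as the paper's: both construct unit and counit directly by adapting Drinfeld--Gaitsgory, using the fiber products $\Filt(\mX)\times_\mX\Filt(\mX)$ (for the unit, via the open-closed immersion $j:\Grad(\mX)\hookrightarrow\Filt(\mX)\times_\mX\Filt(\mX)$) and $\Filt(\mX)\times_{\Grad(\mX)}\Filt(\mX)$ (for the counit), together with the contraction principle of Corollary~\ref{corsp01}.

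However, your sketch glosses over the two places where the stacky generalisation actually requires new work. First, the counit is not built from the fiber product alone: the paper constructs an explicit interpolation stack $\tilde{\mX}:=\Map_{\bA^1_S}([\bA^2_S/\bG_{m,S}],\bA^1_\mX)$ (with $\bG_m$ acting antidiagonally on $\bA^2$, and the map to $\bA^1$ induced by multiplication), whose fibre over $0$ is $\Filt(\mX)\times_{\Grad(\mX)}\Filt(\mX)$ and whose fibre over $1$ is $\mX$. The $\bG_m$-equivariance of $\tilde{\mX}\to\bA^1_{\mX\times_S\mX}$ is what makes Corollary~\ref{corsp01} applicable; you mention ``$[\bA^1/\bG_m]$-parametrised families'' but never identify this object, and your $[\bP^1_S/\bG_{m,S}]$ does not play this role.

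Second, and more seriously, the triangle identities require Lemma~\ref{cruciallemma}: a monomorphism $\tilde{j}':\bA^1_{\Filt(\mX)}\to\tilde{\mX}\times_{\bA^1_\mX}\bA^1_{\Filt(\mX)}$ interpolating between $j'$ (a base change of the unit's $j$) over $0$ and the identity over $1$. In Drinfeld's scheme-level argument this $\tilde{j}'$ is an open immersion, and the triangle identity follows from the adjunction $(\tilde{j}')_!(\tilde{j}')^\ast\to Id$. In the stack case the paper can only show $\tilde{j}'$ is a monomorphism, and the triangle identity is recovered by a diagram chase through the natural transformation $(\tilde{j}')^!\to(\tilde{j}')^\ast$ and its compatibility with the contraction isomorphism. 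Your expectation that ``the contraction lemma yields the required identifications without any properness assumption'' does not account for this; the contraction lemma gives the specialization $0^\ast\to 1^\ast$, but comparing the resulting morphism with the unit's $j_!j^\ast\to Id$ is exactly where the open-immersion-versus-monomorphism issue bites.
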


We will prove this in the next section, following the proof of Drinfeld-Gaitsgory \cite{Drinfeld2013ONAT}, \cite{drinfeld2013algebraic}.

\subsubsection{The quotient case}

Consider an algebraic space $X$ (as always, assumed to be quasi-separated and locally of finite presentation over $S$), with an action of $\bG_{m,S}$. We consider the functor of points:
\begin{align}
    X^0&:=Map^{\bG_{m,S}}_S(S,X)\nn\\
    X^\pm&:=Map^{\bG_{m,S}}_S((\bA^1_S)^\pm,X)
\end{align}
where $(\bA^1_S)^+$ (resp. $(\bA^1_S)^-$) is $\bA^1_S$ with its canonical (resp. opposite) $\bG_{m,S}$-action. $X^0$ is the fixed points variety, $X^+$ (resp. $X^-$) is called the attracting (resp. repelling) variety: informally, it classifies points $x\in X$ with a limit $\lim_{t\to 0}t.x$ (resp. $\lim_{t\to \infty}t.x$). Considering the commutative diagram:
\[\begin{tikzcd}[sep=small]
    & (\bA^1_S)^+\arrow[dl,shift left=2] & \\
    S\arrow[ur,"0"]\arrow[dr,swap,"0"] && \bG_{m,S}\arrow[ul]\arrow[dl]\arrow[ll]\\
    & (\bA^1_S)^-\arrow[ul,shift right=2]
\end{tikzcd}\]
One obtains from the mapping construction the hyperbolic localization diagram of \cite{Braden2002HyperbolicLO}:
\[\begin{tikzcd}
    & X^+\arrow[dl,"p^+",swap]\arrow[dr,"\eta^+"] & \\
X^0 \arrow[ur,"\tilde{\iota}^+",swap,shift right=2]\arrow[dr,shift left=2,"\tilde{\iota}^-"]\arrow[rr,"\iota"] & & X\\
& X^-\arrow[ul,"p^-"]\arrow[ur,swap,"\eta^-"] &
\end{tikzcd}\]
Notice that $[X^0/\bG_m]$ (resp. $[X^\pm/\bG_m]$) is a component of $\Grad([X/\bG_m]$ (resp. $\Filt([X/\bG_m])$, hence they are representable quasi-separated algebraic spaces, locally of finite presentation over $S$, and $p^\pm$ is of finite presentation. In particular, we see directly that our Theorem \ref{theobradenstacks} is a direct generalization of the main result of Braden \cite{Braden2002HyperbolicLO} and Drinfeld-Gaitsgory \cite{Drinfeld2013ONAT}, which gives an adjunction $(p^+)_!(\eta^+)^\ast\rightleftharpoons(\eta^-)_!(p^-)^\ast$, where those functors are restricted to the triangulated subcategory of $\mD(X)$ generated by objects pulled back from $\mD([X/\bG_{m,S}])$.\medskip

Consider a smooth affine group $G$ over $\bZ$ or a field, with a split maximal torus $T$. Consider $\mX=[X/G_S]$ with $X$ an algebraic space (as always, assumed to be quasi-separated and locally of finite presentation over $S$), with an action of $G_S$ over $S$. Consider:
\begin{align}
    \Lambda=\Hom(\bG_{m},T)
\end{align}
considered as a set of cocharacters of $G$ (they represent class of cocharacters up to conjugation). For $\lambda\in\Lambda$, denote by $X^0_\lambda,X^+_\lambda$
the fixed locus and attracting locus of $X$ with the induced action of $\bG_{m,S}$. Considering the $\bG_m$-action on $G$ given by conjugation, denote by $L_\lambda:=G^0\subset G$ the corresponding Levi subgroup, and by $P_\lambda:=G^+\subset G$ the corresponding parabolic subgroup. Notice that $L_{\lambda,S}$ (resp $P_{\lambda,S}$) acts naturally on $X^0_\lambda$ (resp $X^+_\lambda$), and that there is a natural isomorphism $r_\lambda:X^0_\lambda\simeq X^0_{-\lambda}$.\medskip

\begin{proposition}(Halpern-Leistner, \cite[Theo 1.4.7]{HalpernLeistner2014OnTS})\label{prophlp}
    For $\mX=[X/G_S]$ with $X$ an algebraic space, the $\Theta$-correspondence is given by:
    \[\begin{tikzcd}
\bigsqcup_{\lambda\in\Lambda}{[X^0_\lambda/L_{\lambda,S}]}\arrow[r,shift left=2,"\bigsqcup_{\lambda\in\Lambda}\tilde{\iota}_\lambda"]\arrow[loop left,"\bigsqcup_{\lambda\in\Lambda}r_\lambda"] & \bigsqcup_{\lambda\in\Lambda}{[X^+_\lambda/P_{\lambda,S}]}\arrow[l,"\bigsqcup_{\lambda\in\Lambda}p_\lambda"]\arrow[r,"\bigsqcup_{\lambda\in\Lambda}\eta_\lambda"] & {[X/G_S]}
\end{tikzcd}\]
    
\end{proposition}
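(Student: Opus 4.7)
The approach is to unwind both mapping stacks via their $T$-valued points and classify the resulting data. By definition, a $T$-point of $\Grad([X/G_S])$ is the datum $(\mathcal{P}, \sigma)$ of a $G$-torsor $\mathcal{P}$ on $B\bG_{m,T}$ together with a $G$-equivariant map $\sigma: \mathcal{P} \to X_T$, and a $T$-point of $\Filt([X/G_S])$ is the analogous datum over $\Theta_T = [\bA^1_T/\bG_{m,T}]$. So the task reduces to classifying such pairs together with their automorphism groups.

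First I would treat the $\Grad$ side. A $G$-torsor on $B\bG_m$ is the same as a $\bG_m$-action on the trivial $G$-torsor $G$, i.e., a cocharacter $\lambda: \bG_m \to G$, well defined up to $G$-conjugation; since $G$ is smooth affine and $T$ is a split maximal torus, every conjugacy class meets $\Hom(\bG_m,T) = \Lambda$, which accounts for the indexing. For a chosen $\lambda$, a $G$-equivariant map from $G$ (with $\bG_m$ acting through $\lambda$) to $X$ is the same as a $\bG_m$-fixed point of $X$ for the induced action, i.e., a point of $X^0_\lambda$; the residual automorphisms are the centralizer $C_G(\lambda) = L_\lambda$, acting on $X^0_\lambda$ by restriction of the $G$-action. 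Assembling these pieces identifies $\Grad([X/G_S])$ with $\bigsqcup_\lambda [X^0_\lambda/L_{\lambda,S}]$.

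Next I would turn to the $\Filt$ side. The key input is the classification of $G$-torsors on $\Theta$: such a torsor is a $\bG_m$-equivariant $G$-torsor on $\bA^1$, and restricting to $0 \in \bA^1$ yields a cocharacter $\lambda$. By Halpern-Leistner's structure theorem \cite[Theorem 1.4.7]{HalpernLeistner2014OnTS}, which itself uses Bialynicki-Birula applied to the conjugation action of $\bG_m$ on $G$ via $\lambda$, the $\bG_m$-equivariant $G$-torsors on $\bA^1$ with fiber of type $\lambda$ at the origin correspond to $P_\lambda$-reductions of the trivial torsor. The datum of $\sigma$ then becomes a $\bG_m$-equivariant map $\bA^1 \to X$ (for the $\lambda$-action), i.e., a point of the attracting variety $X^+_\lambda$, and the residual automorphism group is $P_\lambda$, giving the component $[X^+_\lambda/P_{\lambda,S}]$.

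Finally, the morphisms in the correspondence would be obtained by applying $\Map_S(-,[X/G_S])$ to the diagram \eqref{diagbgm}: $\eta_\lambda$ comes from evaluation at $1$, producing the inclusions $X^+_\lambda \hookrightarrow X$ and $P_\lambda \hookrightarrow G$; $p_\lambda$ comes from evaluation at $0$, producing the limit map $X^+_\lambda \to X^0_\lambda$ and the quotient $P_\lambda \twoheadrightarrow L_\lambda$; $\tilde{\iota}_\lambda$ comes from the section $B\bG_m \to \Theta$ and matches the inclusion $X^0_\lambda \hookrightarrow X^+_\lambda$; and $r_\lambda$ comes from inversion on $\bG_m$, which sends $\lambda$ to $-\lambda$ and identifies $X^0_\lambda$ with $X^0_{-\lambda}$. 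The main obstacle is the structural classification of $G$-bundles on $\Theta$ in terms of parabolic reductions, which is exactly where Halpern-Leistner's theorem does the nontrivial work; once that is granted, the rest is a direct unwinding of functor-of-points data.
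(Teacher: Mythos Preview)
The paper does not give a proof of this proposition: it is stated as a direct citation of Halpern-Leistner's \cite[Theorem 1.4.7]{HalpernLeistner2014OnTS}, with no argument supplied. Your proof plan is a reasonable sketch of how one unwinds the mapping-stack descriptions to arrive at the stated identification, and it correctly isolates the nontrivial step (classification of $G$-bundles on $\Theta$ in terms of parabolic reductions).

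One presentational issue: you cite \cite[Theorem 1.4.7]{HalpernLeistner2014OnTS} as the key input for the $\Filt$ side, but that is precisely the theorem whose conclusion you are restating, so the reference is circular as written. What you actually need at that step is the underlying ingredient of Halpern-Leistner's argument, namely that $\Filt(BG)\simeq\bigsqcup_\lambda BP_\lambda$ and $\Grad(BG)\simeq\bigsqcup_\lambda BL_\lambda$ (which he deduces from Bialynicki-Birula applied to the conjugation action on $G$ and Tannaka duality for $\Theta$), after which the general case follows by pulling back along $[X/G]\to BG$ and identifying the fibers. If you want a self-contained sketch, it would be cleaner to state the case $X=\mathrm{pt}$ as the input and then argue by base change, rather than invoking 1.4.7 inside its own proof.
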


In some sense, in the smooth topology, one can always reduce to the case of schemes with $\bG_m$-action, thanks to the following result of Halpern-Leistner\cite[Proposition 1.7.3]{Ayoub2018LesSOI}:

\begin{lemma}(Halpern-Leistner \cite[Lemma 4.4.6]{HalpernLeistner2014OnTS})\label{covergrad}
    Consider a quasi-separated algebraic stack $\mX$, locally of finite presentation, with affine stabilizers, over a quasi-separated and excellent algebraic space $S$. There is a smooth covering by affine schemes with $\bG_m$-action $f_i:[X_i/\bG_m]\to\mX$ such that $\Filt(f_i):\Filt([X_i/\bG_m])\to\Filt(\mX)$ and $\Grad(f_i):\Grad([X_i/\bG_m])\to\Grad(\mX)$ are a smooth covering.
\end{lemma}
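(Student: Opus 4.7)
The plan is to reduce to the case $\mX=[U/GL_n]$ with $U$ affine, and then build the cover explicitly by varying cocharacters of $GL_n$, using Proposition~\ref{prophlp} to read off what happens after applying $\Grad$ and $\Filt$.

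First I would invoke a local-structure theorem for quasi-separated algebraic stacks locally of finite presentation with affine stabilizers (in the style of Alper--Hall--Rydh) to smooth-cover $\mX$ by quotient stacks of the form $[U_\alpha/GL_{n_\alpha}]$ with $U_\alpha$ affine. Since $\Grad$ and $\Filt$ are functorial mapping stacks and commute with smooth base change, joint smooth surjectivity of the final cover can be checked one $[U_\alpha/GL_{n_\alpha}]$ at a time, so it suffices to produce such a cover of an arbitrary $[U/GL_n]$.

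For $[U/GL_n]$, I would take as cover the family
\begin{align*}
\bigl\{\,f_\lambda\colon [U/\bG_m]_\lambda\longrightarrow [U/GL_n]\,\bigr\}_\lambda,
\end{align*}
indexed by conjugacy classes of cocharacters $\lambda\colon\bG_m\to GL_n$, where $[U/\bG_m]_\lambda$ denotes $U$ with the $\bG_m$-action obtained by pulling back the $GL_n$-action along $\lambda$. Each $f_\lambda$ is smooth with fibre $GL_n/\bG_m$, and surjectivity is already achieved by any single $\lambda$. To check that the induced maps on $\Grad$ and $\Filt$ still form a cover, I would apply Proposition~\ref{prophlp}: on the one hand
\begin{align*}
\Grad([U/GL_n])=\bigsqcup_\mu [U^0_\mu/L_\mu],\qquad \Filt([U/GL_n])=\bigsqcup_\mu [U^+_\mu/P_\mu],
\end{align*}
indexed by cocharacter classes $\mu$, and on the other hand $\Grad([U/\bG_m]_\lambda)$ and $\Filt([U/\bG_m]_\lambda)$ decompose over $\Hom(\bG_m,\bG_m)=\bZ$. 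The $n=1$ summand of $\Grad(f_\lambda)$ is the canonical morphism
\begin{align*}
[U^\lambda/\bG_m]\longrightarrow [U^0_\lambda/L_\lambda]
\end{align*}
induced by the inclusion $\bG_m\hookrightarrow L_\lambda$; because $L_\lambda$ is the centraliser of $\lambda$, the cocharacter $\lambda$ lands in the centre of $L_\lambda$, so $L_\lambda/\bG_m$ is a smooth group scheme and the displayed arrow is a smooth surjection onto the $\mu=\lambda$ component. The identical argument with $P_\lambda$ in place of $L_\lambda$ takes care of $\Filt$. Letting $\lambda$ exhaust all cocharacter classes therefore yields joint smooth surjectivity of the induced covers on both $\Grad(\mX)$ and $\Filt(\mX)$.

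The main obstacle I expect is step~one, the reduction to quotients by $GL_n$ under the stated hypotheses, which is where one really uses affineness of stabilizers together with quasi-separatedness and local finite presentation; any nontrivial input to the lemma is concentrated there. After that, everything rests on Proposition~\ref{prophlp} and on the observation that the map $\bG_m\to L_\lambda$ is central, so the remaining verifications are essentially combinatorial and should not require further geometric input.
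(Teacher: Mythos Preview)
Your proposal has a genuine gap at the reduction step. You write that ``joint smooth surjectivity of the final cover can be checked one $[U_\alpha/GL_{n_\alpha}]$ at a time'' because $\Grad$ and $\Filt$ ``commute with smooth base change''. But this presupposes that the maps $\Grad(g_\alpha):\Grad([U_\alpha/GL_{n_\alpha}])\to\Grad(\mX)$ are already jointly surjective, and that is \emph{not} automatic for smooth covers: for instance, the smooth atlas $X\to[X/\bG_m]$ of a quotient by a nontrivial action induces on $\Grad$ a map landing only in the $\lambda=0$ component. Compatibility of mapping stacks with fibre products does not help here; one needs to lift an arbitrary $B\bG_m\to\mX$ to $B\bG_m\to[U_\alpha/GL_{n_\alpha}]$, which amounts to lifting a cocharacter of a stabilizer of $\mX$ through the cover. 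You correctly flag step one as the obstacle in your final paragraph, but the earlier sentence dismisses it with an incorrect justification, and the proposal never returns to fill this in.

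The paper takes a different and shorter route: it cites Halpern-Leistner's Lemma 4.4.6 as a black box giving, for each quasi-compact $\mX$, a smooth cover $[X/(\bG_m)^n]\to\mX$ that is already surjective on $\Grad$ and $\Filt$ (so the hard cocharacter-lifting argument is absorbed into the citation). It then performs two elementary reductions: first Zariski-cover by quasi-compact opens to drop the quasi-compactness hypothesis, and second replace $[X_i/(\bG_m)^{n_i}]$ by the family $[X_i/\bG_m]$ indexed by all cocharacters $\lambda:\bG_m\to(\bG_m)^{n_i}$, exactly as in your step two. Your second step (varying cocharacters of $GL_n$ and using Proposition~\ref{prophlp}) is correct and parallel to this; the difference is that the paper never attempts to reprove the surjectivity-on-$\Grad$ for the initial quotient cover, whereas your approach requires it and leaves it unproved.
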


Remark that \cite[Lemma 4.4.6]{HalpernLeistner2014OnTS} gives that, for a quasi-compact and quasi-separated stacks $\mX$, there is a smooth covering $[X/(\bG_m)^n]\to\mX$ for some $n$ which is still surjective after passing to $\Grad$ and $\Filt$. But, as we allow infinite covering, for $\mX$ quasi-separated, we can consider such covering for a Zariski covering by quasi-compact stacks, obtaining $f_i[X_i/(\bG_m)^{n_i}]\to\mX$ which are surjective after passing to $\Grad$ and $\Filt$, and then consider $f_{i,\lambda}:[X_i/\bG_m]\to\mX$ for each cocharacter $\lambda$ of $(\bG_m)^{n_i}$, which are still surjective after passing to $\Grad$ and $\Filt$.

\subsection{Proof of Braden-Drinfeld-Gaitsgory adjunction}

\subsubsection{Construction of the unit}\label{construnit}

Consider the following diagram, with a Cartesian square:
\[\begin{tikzcd}
    \Grad(\mX)\arrow[dr,"j"]\arrow[drr,"\tilde{\iota}",bend left]\arrow[ddr,"\tilde{\iota}\circ r",swap,bend right] &  & & \\
     & \Filt(\mX)\times_{\mX}\Filt(\mX)\arrow[r,"q_1"]\arrow[d,"q_2"] & \Filt(\mX)\arrow[d,"\eta"]\arrow[r,"p"] & \Grad(\mX)\\
     & \Filt(\mX)\arrow[r,"\eta"]\arrow[d,"r\circ p"] & \mX & \\
     & \Grad(\mX) & &
\end{tikzcd}\]
such that $p\circ \tilde{\iota}=r\circ p\circ \tilde{\iota}\circ r=Id$.

The main point is this analogue of \cite[Proposition 1.6.2]{drinfeld2013algebraic}:

\begin{lemma}
    The morphism $j:\Grad(\mX)\to \Filt(\mX)\times_{\mX}\Filt(\mX)$ is an open and closed immersion.
\end{lemma}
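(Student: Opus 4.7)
The strategy is to show $j$ is a monomorphism that is formally étale—hence an open immersion—and then to derive closedness of the image by reducing to Drinfeld's scheme-theoretic result \cite[Proposition 1.6.2]{drinfeld2013algebraic} via Halpern-Leistner's covering Lemma~\ref{covergrad}. The monomorphism property is immediate: the identity $p\circ\tilde\iota=\Id_{\Grad(\mX)}$ exhibits $\tilde\iota$ as a section of $p$ and hence a monomorphism, and composing with the first projection $\Filt(\mX)\times_\mX\Filt(\mX)\to\Filt(\mX)$ shows that $j$ itself is a monomorphism.

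For formal étaleness, at a $T$-point $x$ of $\Grad(\mX)$ the induced $\bG_m$-action gives a weight decomposition $\bT_\mX|_{\iota(x)}\simeq\bT_\mX^{<0}\oplus\bT_\mX^0\oplus\bT_\mX^{>0}$, and the standard computation of tangent complexes of mapping stacks out of $B\bG_m$ and $\Theta$ yields
\[
\bT_{\Grad(\mX)}|_x\simeq\bT_\mX^0,\qquad
\bT_{\Filt(\mX)}|_{\tilde\iota(x)}\simeq\bT_\mX^{\geq 0},\qquad
\bT_{\Filt(\mX)}|_{\tilde\iota r(x)}\simeq\bT_\mX^{\leq 0}.
\]
Decomposing the defining fiber sequence $\bT_{\Filt(\mX)\times_\mX\Filt(\mX)}\to\bT_\mX^{\geq 0}\oplus\bT_\mX^{\leq 0}\to\bT_\mX$ by weights then computes $\bT_{\Filt(\mX)\times_\mX\Filt(\mX)}|_{j(x)}\simeq\bT_\mX^0$, with the induced tangent map of $j$ being the identity. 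The relative tangent complex of $j$ therefore vanishes, so $j$ is formally étale; combined with local finite presentation and the monomorphism property, $j$ is an open immersion.

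To obtain closedness of the image, I would invoke Lemma~\ref{covergrad} to choose a smooth covering $\bigsqcup_i[X_i/\bG_m]\to\mX$ whose induced $\Grad$ and $\Filt$ remain smooth surjective coverings. Being a closed immersion is smooth-local on the target, so it suffices to prove the result for $\mX=[X/\bG_m]$ with $X$ an algebraic space; then Proposition~\ref{prophlp} identifies $j$, on each component indexed by a cocharacter $\lambda\in\bZ$, with the $\bG_m$-quotient of the classical hyperbolic-localization map $X^0_\lambda\to X^+_\lambda\times_X X^-_\lambda$, which is shown to be an open and closed immersion in Drinfeld's \cite[Proposition 1.6.2]{drinfeld2013algebraic}; the $\bG_m$-quotient preserves the property. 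The main obstacle is to verify that the pullback of $j$ along the cover of Lemma~\ref{covergrad} genuinely recovers $j$ for the quotient stack, which amounts to the compatibility of mapping stacks (specifically the identification $\Filt(\mX)\times_\mX\Filt(\mX)\simeq\Map([\bP^1/\bG_m],\mX)$ arising from the pushout $\bP^1=\bA^1\sqcup_{\bG_m}\bA^1$) with smooth base change, as guaranteed by Halpern-Leistner's construction.
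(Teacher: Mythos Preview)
Your tangent-complex computation is correct and establishes that $j$ is an open immersion; this part is essentially equivalent to the paper's argument, which instead phrases the computation in terms of the retraction
\[
\pi := p \circ q_1 : \Filt(\mX)\times_\mX\Filt(\mX) \to \Grad(\mX).
\]
The paper verifies the formal lifting criterion to show that $\pi$ is étale at the image of $j$, computing the groupoid of lifts to be $\Map_T((f^\ast\bL_\mX)^0, M)$, which is exactly what your weight decomposition encodes; since $j$ is a section of $\pi$, étaleness of $\pi$ along $j$ is equivalent to your statement $\bT_j = 0$.

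The real divergence is in how closedness is handled. The paper makes no reduction to the scheme case: having shown $\pi$ is étale near the image of $j$, the section $j$ is then a base change of the diagonal $\Delta_\pi$, and both ``open'' and ``closed'' are read off at once from properties of that diagonal. Your reduction via Lemma~\ref{covergrad} and Drinfeld's \cite[Proposition 1.6.2]{drinfeld2013algebraic} carries the gap you already flag, and it is a genuine one. Lemma~\ref{covergrad} furnishes smooth surjections onto $\Grad(\mX)$ and $\Filt(\mX)$ individually, but says nothing about the fibre product $\Filt(\mX)\times_\mX\Filt(\mX)$: two filtrations of a single object of $\mX$ need not simultaneously lift to the same chart $[X_i/\bG_m]$, so the map
\[
\Filt([X_i/\bG_m])\times_{[X_i/\bG_m]}\Filt([X_i/\bG_m]) \longrightarrow \Filt(\mX)\times_\mX\Filt(\mX)
\]
is neither obviously smooth nor surjective. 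Your appeal to $\Map([\bP^1/\bG_m],-)$ does not resolve this, since Lemma~\ref{covergrad} only controls mapping stacks out of $\Theta$ and $B\bG_m$, not out of $[\bP^1/\bG_m]$. It is both simpler and closer to the paper to extract ``open'' and ``closed'' together from the single observation that $j$ is a section of a morphism $\pi$ that is étale along the image, and to avoid descent for closedness entirely.
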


\begin{proof}
    The morphism $j$ is a section of the morphism:
    \begin{align}
        \pi:\Filt(\mX)\times_{\mX}\Filt(\mX)\overset{p_1}{\to} \Filt(\mX)\overset{p}{\to}\Grad(\mX)
    \end{align}
    it suffices to show that $\pi$ is étale at the image of $j$. By construction, $\pi$ is locally of finite type, hence it suffices to prove the formal lifting criterion. Consider an algebraic space $T\to S$, and a $T$-point of $\Map_S(B\bG_{m,S},\mX)$ given by $f:B\bG_{m,T}\to \mX$. Now, consider a square-zero extension $T'$ of $T$ defined by a module $M$, and an extension $f':T'\to\mX$. We want to study the groupoid of extension of the commutative diagram:
    \[\begin{tikzcd}
        & T\arrow[d]\arrow[r]& {[\bA^1_T/\bG_{m,T}]}\arrow[d]\\
        \bG_{m,T}\arrow[r] & {[\bA^1_T/\bG_{m,T}]}\arrow[r]  &\bG_{m,T}\arrow[d,"f"]\\
        && \mX
    \end{tikzcd}\]
    to a commutative diagram:
    \[\begin{tikzcd}
        & T'\arrow[d]\arrow[r]& {[\bA^1_{T'}/\bG_{m,T'}]}\arrow[dd,dotted,"h"] \\
        \bG_{m,T'}\arrow[r]\arrow[drr,swap,"f'"] & {[\bA^1_{T'}/\bG_{m,T'}]}\arrow[dr,dotted,"g"] & \\
        && \mX
    \end{tikzcd}\]
    Recall that objects $\QCoh(B\bG_{m,T}))$ are canonically identified with graded objects of $\QCoh(T)$. Extensions of $f$ to $f'$ are classified by the groupoid:
    \begin{align}
        \Map_{B\bG_{m,T}}(f^\ast\bL_\mX,M)\simeq \Map_T((f^\ast\bL_\mX)^0,M)
    \end{align}
    Extensions of $f$ to $g$ are classified by the groupoid:
    \begin{align}
        \Map_{B\bG_{m,T}}(f^\ast\bL_\mX,M[t])\simeq \Map_T((f^\ast\bL_\mX)^{\geq 0},M)
    \end{align}
    Extensions of $f$ to $h$ are classified by the groupoid:
    \begin{align}
        \Map_{B\bG_{m,T}}(f^\ast\bL_\mX,M[t^{-1}])\simeq \Map_T((f^\ast\bL_\mX)^{\leq 0},M)
    \end{align}
    and extensions of $f$ to a map $T'\to\mX$ are classified by the groupoid:
    \begin{align}
        \Map_T(f^\ast\bL_\mX,M)
    \end{align}
    Finally, The groupoid of maps $T'\to\Filt(\mX)\times_{\mX}\Filt(\mX)$ extending $j\circ f:T\to $ is given by the groupoid:
    \begin{align}
        \Map_T((f^\ast\bL_\mX)^{\geq 0},M)\times_{\Map_T(f^\ast\bL_\mX,M)}\Map_T((f^\ast\bL_\mX)^{\leq 0},M)\simeq \Map_T((f^\ast\bL_\mX)^0,M)
    \end{align}
    hence $\pi$ is étale at $f:T\to\mX$.
    \end{proof}

The unit is then given by the formula inpired from \cite{Braden2002HyperbolicLO}:
\begin{align}
    Id\simeq (p\circ \tilde{\iota})_! (r\circ p\circ \tilde{\iota}\circ r)^\ast\simeq p_!(q_1)_!j_!j^\ast(q_2)^\ast(r\circ p)^\ast\to p_!(q_1)_!(q_2)^\ast(r\circ p)^\ast\simeq p_!\eta^\ast\eta_!(r\circ p)^\ast
\end{align}
where the arrow comes from the $(j_!,j^\ast)$ adjunction (because $j$ is open) and the last morphism is obtained by base change.

\subsubsection{Construction of the counit}\label{constrcounit}

The construction of the counit is the main input of Drinfeld-Gaitsgory \cite{Drinfeld2013ONAT}. We adapt their construction to the stack case. Consider the following commutative diagram with Cartesian squares:
\[\begin{tikzcd}
     \Filt(\mX)\times_{\Grad(\mX)}\Filt(\mX)\arrow[r,"q'_2"]\arrow[d,"q'_1"] & \Filt(\mX)\arrow[r,"\eta"]\arrow[d,"r\circ p"] & \mX\\
      \Filt(\mX)\arrow[d,"\eta"]\arrow[r,"p"] & \Grad(\mX) & \\
      \mX & &
\end{tikzcd}\]

Consider $\bA^2$ with its antidiagonal $\bG_m$-action $s.(t,t')\mapsto (st,s^{-1}t')$. Consider the $\bG_m$-invariant map $\bA^2\to\bA^1$ given by $(t,t')\mapsto tt'$. It gives a flat GIT quotient $[\bA^2_S/\bG_{m,S}]\to\bA^1_S$, and $\bA^1_S$ is an excellent quasi-separated algebraic space. Moreover, under our assumption, $\bA^1_\mX$ is quasi-separated, locally of finite presentation over $\bA^1_S$, with affine stabilizers. Hence, from \cite[Theorem 6.22]{Alper2019TheL}, the mapping stack:
\begin{align}
    \tilde{\mX}:=\Map_{\bA^1_S}([\bA^2_S/\bG_{m,S}],\bA^1_\mX)
\end{align}
is an algebraic stack, quasi-separated and locally of finite presentation over $\bA^1_S$, with affine stabilizers. Consider the commutative diagram with Cartesian square:
\[\begin{tikzcd}
    {([\bA^2_S/\bG_{m,S}])_0}\arrow[r]\arrow[d] & {[\bA^2_S/\bG_{m,S}]}\arrow[d] & S\arrow[l]\arrow[d]\\
    S\arrow[r,"0"] & \bA^1_S &S\arrow[l,swap,"1"]
\end{tikzcd}\]
where $([\bA^2/\bG_m])_0$ is the pushout:
\[\begin{tikzcd}
    B\bG_{m}\arrow[r]\arrow[d] & {[\bA^1/\bG_m]}\arrow[d]\\
    {[\bA^1/\bG_m]}\arrow[r] & ([\bA^2/\bG_m])_0
\end{tikzcd}\]
where the upper horizontal (resp left vertical) morphism is the morphism inducing $p$ (resp $r\circ p$). Passing to the mapping stack, this gives commutative diagrams with Cartesian squares:
\[\begin{tikzcd}
    \Filt(\mX)\times_{\Grad(\mX)}\Filt(\mX)\arrow[d]\arrow[r,"i_0"]& \tilde{\mX}\arrow[d]& \mX\arrow[d]\arrow[l,swap,"i_1"]\\
    S\arrow[r,"0"] & \bA^1_S & S\arrow[l,swap,"1"]
\end{tikzcd}\]

Moreover, consider the commutative diagram with Cartesian squares of sections:
\[\begin{tikzcd}
    S\arrow[r,"0"]\arrow[d,shift left=1]\arrow[d,shift right=1] & \bA^1_S\arrow[d,shift left=1]\arrow[d,shift right=1] &S\arrow[l,swap,"1"]\arrow[d,shift left=1]\arrow[d,shift right=1] \\
   {([\bA^2_S/\bG_{m,S}])_0}\arrow[r] & {[\bA^2_S/\bG_{m,S}]}& S\arrow[l]
\end{tikzcd}\]
where the two vertical central sections are given respectively by $\sigma_1:t\mapsto(1,t)$ and $\sigma_2:t\mapsto(t,1)$. These two sections define two morphisms $\tilde{p}_i:\tilde{\mX}\to\bA^1_\mX$ over $\bA^1_S$, whose restriction over $0$ (resp $1$) are $\eta\circ q'_1$ and $\eta\circ q'_2$ (resp $Id$ and $Id$), hence it gives a commutative and Cartesian diagram:
\[\begin{tikzcd}
    \Filt(\mX)\times_{\Grad(\mX)}\Filt(\mX)\arrow[d,"\eta\circ q'_1\times_S\eta\circ q'_2"]\arrow[r]& \tilde{\mX}\arrow[d,"\tilde{p}"]& \mX\arrow[d,"\Delta_{\mX/S}"]\arrow[l]\\
    \mX\times_S\mX\arrow[r,"0"] & \bA^1_{\mX\times_S\mX} & \mX\times_S\mX\arrow[l,swap,"1"]
\end{tikzcd}\]
Notice now that, giving to $\bA^1_S$ and $\bA^1_\mX$ its tautological $\bG_m$-action, and to $\bA^2_S/\bG_m$ the $\bG_m$-action induced by the diagonal $\bG_m$-action on $\bA^2_S$; $[\bA^2_S/\bG_{m,S}]\to\bA^1_S$, $\bA^1_\mX\to\mX$, and the two sections $\bA^1_S\to[\bA^2_S/\bG_m]$ have a natural $\bG_m$-equivariant structure. Then $\tilde{\mX}$ has a natural $\bG_m$-action, and there is a natural $\bG_m$-equivariant structure on $\tilde{p}:\tilde{\mX}\to \bA^1_{\mX\times_S\mX}$. There is then a Cartesian diagram:
\[\begin{tikzcd}
    \tilde{\mX}\arrow[r,"\tilde{p}"]\arrow[d,"\bar{q}'"] & \bA^1_{\mX\times_S\mX}\arrow[d,"\bar{q}"]\\
    {[\tilde{\mX}/\bG_m]}\arrow[r,"\tilde{p}"] & {[\bA^1_{\mX\times_S\mX}/\bG_m]}
\end{tikzcd}\]
In particular, by base change:
\begin{align}
    \tilde{p}_!\un_{\tilde{\mX}}\simeq \tilde{p}_!(\bar{q}')^\ast\un_{[\tilde{\mX}/\bG_m]}\simeq \bar{q}^\ast(\tilde{p}')_!\un_{[\tilde{\mX}/\bG_m]}
\end{align}
Hence, using Corollary \ref{corsp01}, we obtain a natural morphism:
\begin{align}
    (\eta\circ q'_1\times_S\eta\circ q'_2)_!\un_{\Filt(\mX)\times_{\Grad(\mX)}\Filt(\mX)}\simeq 0^\ast\tilde{p}_!\un_{\tilde{\mX}}\to 1^\ast \tilde{p}_!\un_{\tilde{\mX}}\simeq (\Delta_{\mX/S})_!\un_\mX
\end{align}
Using this morphism, the counit is given by the formula:
\begin{align}
    \eta_!(r\circ p)^\ast p_!\eta^\ast&\simeq(\eta\circ q'_2)_!(\eta\circ q'_1)^\ast\nn\\
    &\simeq (pr_2)_!((pr_1^\ast-\otimes_{\mX\times_S\mX}(\eta\circ q'_1\times_S\eta\circ q'_2)_!\un_{\Filt(\mX)\times_{\Grad(\mX)}\Filt(\mX)})\nn\\
    &\to (pr_2)_!((pr_1^\ast-\otimes_{\mX\times_S\mX}(\Delta_{\mX/S})_!\un_\mX)\nn\\
    &\simeq Id_! Id^\ast\simeq Id
\end{align}
where the first isomorphism is base change, and the isomorphisms of the second and last lines are obtained by the projection formula.

\subsubsection{Proof of the adjunction}

We follow and adapt the arguments of \cite[Section 5]{Drinfeld2013ONAT}. We obtain by composition two morphisms:
\begin{align}
    &p_!\eta^\ast\to p_!\eta^\ast\circ\eta_!(r\circ p)^\ast\circ p_!\eta^\ast\to p_!\eta^\ast:\mD(\mX)\to\mD(\Grad(\mX))\label{firstcomp}\\
    &\eta_!(r\circ p)^\ast\to\eta_!(r\circ p)^\ast\circ p_!\eta^\ast\circ\eta_!(r\circ p)^\ast\to \eta_!(r\circ p)^\ast:\mD(\Grad(\mX))\to\mD(\mX)\label{secondcomp}
\end{align}
and we have to show that these are the identity. We will show this for the first morphism. Consider the commutative and Cartesian diagram:
\[\begin{tikzcd}
     \Filt(\mX)\times_{\Grad(\mX)}\Filt(\mX)\times_\mX\Filt(\mX)\arrow[r,"q_{23}"]\arrow[d,"q_{12}"] & \Filt(\mX)\times_\mX\Filt(\mX)\arrow[d,"q_1"]\arrow[r,"q_2"] & \Filt(\mX)\arrow[d,"\eta"]\arrow[r,"p"] & \Grad(\mX)\\
\Filt(\mX)\times_{\Grad(\mX)}\Filt(\mX)\arrow[r,"q'_2"]\arrow[d,"q'_1"] & \Filt(\mX)\arrow[r,"\eta"]\arrow[d,"r\circ p"] & \mX\\
      \Filt(\mX)\arrow[d,"\eta"]\arrow[r,"p"] & \Grad(\mX) & \\
      \mX & &
\end{tikzcd}\]
We consider the following diagram, obtained from the diagram of Section \ref{construnit} by base change along $p:\Filt(\mX)\to \Grad(\mX)$:
\[\begin{tikzcd}
    \Filt(\mX)\arrow[ddr,bend right,swap,"\eta"]\arrow[drr,bend left,"p"]\arrow[dr,"j'"] & \\
    & \Filt(\mX)\times_{\Grad(\mX)}\Filt(\mX)\times_\mX\Filt(\mX)\arrow[d,"\tilde{q}_1"]\arrow[r,"\tilde{q}_2"] & \Grad(\mX)\\
    & \mX &
\end{tikzcd}\]
Using base change, the first arrow is identified with:
\begin{align}
    p_!\eta^\ast\simeq (\tilde{q}_1)_! \tilde{j}_!\tilde{j}^\ast(\tilde{q}_2)^\ast\to (\tilde{q}_1)_! (\tilde{q}_2)^\ast
\end{align}
obtained from the adjunction for the open immersion $\tilde{j}$. We consider the following diagram, obtained from the main diagram of Section \ref{constrcounit} by base change along $\bA^1_\eta:\bA^1_{\Filt(\mX)}\to\bA^1_{\mX}$:
\[\begin{tikzcd}
    \Filt(\mX)\times_{\Grad(\mX)}\Filt(\mX)\times_\mX\Filt(\mX)\arrow[d,"\tilde{q}_1\times_S\tilde{q}_2"]\arrow[r]& \tilde{\mX}\times_{\bA^1_\mX}\bA^1_{\Filt(\mX)}\arrow[d,"\tilde{p}'"]& \Filt(\mX)\arrow[d,"\eta\times_S p"]\arrow[l]\\
    \mX\times_S\Grad(\mX)\arrow[r,"0"] & \bA^1_{\mX\times_S\Grad(\mX)} & \mX\times_S\Grad(\mX)\arrow[l,swap,"1"]
\end{tikzcd}\]
where the arrow $\tilde{p}'$ is obtained by the composition:
\begin{align}
    \tilde{\mX}\times_\mX\Filt(\mX)\to\bA^1_{\mX\times_S\Filt(\mX)}\overset{\bA^1_{Id\times_S p}}{\to}\bA^1_{\mX\times_S\Grad(\mX)}
\end{align}
Using base change, the second arrow is obtained from the morphism:
\begin{align}
    (\tilde{q}_1\times_S\tilde{q}_2)_!\un_{\tilde{q}_1\times_S\tilde{q}_2}\simeq 0^\ast(\tilde{p}')_!\un_{\tilde{\mX}\times_\mX\Filt(\mX)}\to 1^\ast(\tilde{p}')_!\un_{\tilde{\mX}\times_\mX\Filt(\mX)} \simeq (\eta\times_S p)_!\un_{\Filt(\mX)}
\end{align}

\begin{lemma}\label{cruciallemma}
    There is a monomorphism of $\bA^1_S$-stacks $\tilde{j}':\bA^1_{\Filt(\mX)}\to\tilde{\mX}\times_{\bA^1_\mX}\bA^1_{\Filt(\mX)}$ such that the following diagram is commutative with Cartesian square:
\[\begin{tikzcd}
    \Filt(\mX)\arrow[d,"j'"]\arrow[r,"0"] & \bA^1_{\Filt(\mX)}\arrow[d,"\tilde{j}'"] & \Filt(\mX)\arrow[l,swap,"1"]\arrow[d,"Id"]\\
\Filt(\mX)\times_{\Grad(\mX)}\Filt(\mX)\times_\mX\Filt(\mX)\arrow[d,"\tilde{q}_1\times_S\tilde{q}_2"]\arrow[r]& \tilde{\mX}\times_{\bA^1_\mX}\bA^1_{\Filt(\mX)}\arrow[d,"\tilde{p}'"]& \Filt(\mX)\arrow[d,"\eta\times_S p"]\arrow[l]\\
    \mX\times_S\Grad(\mX)\arrow[r,"0"] & \bA^1_{\mX\times_S\Grad(\mX)} & \mX\times_S\Grad(\mX)\arrow[l,swap,"1"]
\end{tikzcd}\]
\end{lemma}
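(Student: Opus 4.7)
The plan is to define $\tilde{j}'$ explicitly on $T$-points using the canonical $\bG_m$-equivariant morphism
\[
\psi : [\bA^2_S/\bG_{m,S}] \to [\bA^1_S/\bG_{m,S}], \qquad (t,t') \mapsto t,
\]
which is equivariant because the antidiagonal action on the source restricts to the standard action on the first coordinate. The three properties of $\psi$ I will need are: $\psi \circ \sigma_1 \equiv 1$; $\psi \circ \sigma_2 = \Id$; and, under the pushout presentation $\tilde{\mX}|_{s=0} \simeq \Filt(\mX) \times_{\Grad(\mX)} \Filt(\mX)$ from the construction of $\tilde{\mX}$, the restriction $\psi|_{s=0}$ reproduces the two natural inclusions $B\bG_{m} \hookrightarrow \Theta$ that define the pushout, inducing $p$ and $r \circ p$ respectively.

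For a $T$-point of $\bA^1_{\Filt(\mX)}$ given by $(s : T \to \bA^1_S, \, f : [\bA^1_T/\bG_m] \to \mX)$, I set $\tilde{j}'_1(s,f) : T \to \tilde{\mX}$ to be the map classified by the composition $f \circ \psi_T : [\bA^2_T/\bG_m] \to \mX$, together with its evident $\bA^1_T$-structure. The first property of $\psi$ gives $\tilde{p}_1 \circ \tilde{j}'_1 = \bA^1_\eta$, so the pair $\tilde{j}' := (\tilde{j}'_1, \Id_{\bA^1_{\Filt(\mX)}})$ factors through $\tilde{\mX} \times_{\bA^1_\mX} \bA^1_{\Filt(\mX)}$, where the fiber product is formed along $\tilde{p}_1$. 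Since $\tilde{j}'$ is then a section of the second projection, it is automatically a monomorphism of $\bA^1_S$-stacks.

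Commutativity of the two upper squares is checked fiberwise. At $s=1$ the fiber $[\bA^2_T/\bG_m]|_{s=1}$ collapses to a point on which $\psi$ takes the value $1$, so $\tilde{j}'|_{s=1}$ reduces to the identity $\Filt(\mX) \to \mX \times_\mX \Filt(\mX) = \Filt(\mX)$. At $s=0$ the pushout description together with property (iii) of $\psi$ recovers the base-change description of $j'$ from the proof of the adjunction, once the third coordinate $f \in \bA^1_{\Filt(\mX)}|_{s=0} = \Filt(\mX)$ is appended. The Cartesian property of both upper squares then follows formally: since $\tilde{j}'$ is a section of the projection onto $\bA^1_{\Filt(\mX)}$, its preimage of the substack of $\tilde{\mX} \times_{\bA^1_\mX} \bA^1_{\Filt(\mX)}$ sitting over $0 \in \bA^1$ (resp.\ $1 \in \bA^1$) is exactly the corresponding copy of $\Filt(\mX)$ inside $\bA^1_{\Filt(\mX)}$.

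The main potential obstacle, which is bookkeeping rather than mathematical, is matching the two pushout inclusions $B\bG_m \to \Theta$ (related by $r$) with the antidiagonal $\bG_m$-action on $\bA^2$ when verifying $\tilde{j}'|_{s=0} = j'$: the inverse $\bG_m$-action inherited on the $\{t=0\}$-axis is what produces the involution $r$ in the formula for $j$, and conversely one must confirm that $\psi$ interacts with this involution in the correct way. All remaining steps are direct functor-of-points computations.
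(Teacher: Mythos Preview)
There is a genuine gap. The fiber product $\tilde{\mX}\times_{\bA^1_\mX}\bA^1_{\Filt(\mX)}$ appearing in the lemma is formed along $\tilde{p}_2$ and $\bA^1_\eta$, not along $\tilde{p}_1$. This is forced by the definition of $\tilde{p}'$ given just before the lemma: its first component uses $\tilde{p}_1$ (landing in $\bA^1_\mX$) and its second component is $\bA^1_p$ applied to the $\bA^1_{\Filt(\mX)}$-coordinate, and one checks that the restriction to $s=0$ recovers $\tilde{q}_1\times_S\tilde{q}_2$ only when the gluing is along $\tilde{p}_2$. With the correct fiber product, your pair $(\tilde{j}'_1,\Id)$ does not factor through it: using $\psi\circ\sigma_2=\Id_{\bA^1}$ (your second listed property), one finds $\tilde{p}_2\circ\tilde{j}'_1(s,f)=f(s)$, whereas $\bA^1_\eta(s,f)=f(1)$, and these disagree at $s=0$.

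Your map $\tilde{j}'_1$ is in fact exactly the paper's first component (the paper's map $[\bA^2_S/\bG_{m,S}]\to\bA^1_S\times_S[\bA^1_S/\bG_{m,S}]$, $(s,t)\mapsto(st,s)$, is your $\psi$ packaged as an $\bA^1_S$-morphism). The missing ingredient is the second component: the paper obtains it from the map $(s,t)\mapsto(s,st)$ on $\bA^1_S\times_S[\bA^1_S/\bG_{m,S}]$, which after applying $\Map_{\bA^1_S}(-,\bA^1_\mX)$ sends $(s,f)$ to $(s,f\circ\mu_s)$ where $\mu_s$ is multiplication by $s$ on $\Theta$. This is \emph{not} the identity at $s=0$, so $\tilde{j}'$ is \emph{not} a section of the second projection, and your monomorphism argument collapses. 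The paper instead shows that the two source maps $[\bA^2_S/\bG_{m,S}]\to\bA^1_S\times_S[\bA^1_S/\bG_{m,S}]$ and $\bA^1_S\times_S[\bA^1_S/\bG_{m,S}]\to\bA^1_S\times_S[\bA^1_S/\bG_{m,S}]$ form an fpqc cover, hence a universal epimorphic family, so that the induced map to the fiber product after applying $\Map$ is a monomorphism. This fpqc covering argument is the essential idea your proposal is missing.
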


\begin{proof}
    Consider the commutative square of $\bA^1_S$-stacks
    \[\begin{tikzcd}[column sep=huge]
        {\bA^1_S\times_S[\bA^1_S/\bG_{m,S}]} & {\bA^1_S\times_S[\bA^1_S/\bG_{m,S}]}\arrow[l,swap,"{(s,t)\mapsto(s,st)}"]\\
        {[\bA^2_S/\bG_{m,S}]}\arrow[u,swap,"{(s,t)\mapsto(st,s)}"] & {\bA^1_S\times_S[\bG_{m,S}/\bG_{m,S}]}\arrow[u,swap,"{(s,t) \mapsto(s,t)}"]\arrow[l,swap,"{(s,t)\mapsto(st,t^{-1})}"]
    \end{tikzcd}\]
    It defines a fpqc covering of $\bA^1_S\times_S[\bA^1_S/\bG_{m,S}]$, which gives in particular a universal epimorphic family. Applying $\Map_{\bA^1_S}(-,\bA^1_\mX)$, we obtain a commutative square of $\bA^1_S$-stacks:
    \[\begin{tikzcd}
        \bA^1_{\Filt(\mX)}\arrow[r]\arrow[d] & \bA^1_{\Filt(\mX)}\arrow[d,"\bA^1_\eta"]\\
        \tilde{\mX}\arrow[r,"\tilde{p}_2"] & \bA^1_\mX
    \end{tikzcd}\]
    which gives a morphism of $\bA^1_S$-stacks:
\begin{align}
    \tilde{j}':\bA^1_{\Filt(\mX)}\to \tilde{\mX}\times_{\bA^1_\mX}\bA^1_{\Filt(\mX)}
\end{align}
which is a monomorphism, from universal epimorphicity.\medskip

Over $1_S$, we obtain directly that $\tilde{j}'_1$ is the identity of $\Filt(\mX)$. Over $0_S$, the above diagram is the outer rectangle of the commutative diagram:
\[\begin{tikzcd}
        {[\bA^1_S/\bG_{m,S}]} & B\bG_{m,S}\arrow[l,swap,"0"] & {[\bA^1_S/\bG_{m,S}]}\arrow[l]\\
        {([\bA^2_S/\bG_m])_0}\arrow[u,swap,"{(s,t)\mapsto s}"] & {[(\bA^1_S)^-/\bG_{m,S}]}\arrow[l,swap,"{t\mapsto (0,t)}"]\arrow[u] & {[\bG_{m,S}/\bG_{m,S}]}\arrow[u,swap,"{t\mapsto t}"]\arrow[l,swap,"{t\mapsto t^{-1}}"]
    \end{tikzcd}\]
where the left square is a pushout. It gives the commutative diagram diagram:
\[\begin{tikzcd}
        \Filt(\mX)\arrow[r,"p"]\arrow[d] & \Grad(\mX)\arrow[d,"\tilde{\iota}\circ r"]\arrow[r,"\tilde{\iota}"]& \Filt(\mX)\arrow[d,"\eta"]\\
        \Filt(\mX)\times_{\Grad(\mX)}\Filt(\mX)\arrow[r,"q'_2"] & \Filt(\mX)\arrow[r,"\eta"] & \mX
    \end{tikzcd}\]
where the right square is Cartesian. The map $j:\Grad(\mX)\to \Filt(\mX)\times_\mX\Filt(\mX)$ is induced by the right square, hence the map $j':\Filt(\mX)\to\Filt(\mX)\times_{\Grad(\mX)}\Filt(\mX)\times_\mX\Filt(\mX)$ obtained by base change is induced by the outer rectangle, \ie coincide with the restriction of $\tilde{j}'_0$ of $\tilde{j}'$ over $0_S$.
\end{proof}

As in \cite[Section 5]{Drinfeld2013ONAT}, the above construction is the crucial step in the proof. In \cite[Section 3]{drinfeld2013algebraic}, Drinfeld has that the analogue of $\tilde{j}'$ is an open immersion, which is a crucial point in their proof. We do not see how to generalize Drinfeld's agument to our setting, fortunately we can use only the fact that $\tilde{j}'$ is a monomorphism, at the price of a subtle diagram chase.\medskip 

Consider the following diagram of morphisms:
\[\begin{tikzcd}
    (\eta\times_S p)_!\un\arrow[r,"\simeq"]\arrow[d,"\simeq"] & (\eta\times_S p)_!0^\ast\un\arrow[r,"\simeq"]\arrow[d,"\simeq"] & (\eta\times_S p)_!1^\ast\un\arrow[r,"\simeq"]\arrow[d,"\simeq"] & (\eta\times_S p)_!\un\arrow[d,"\simeq"]\\
    (\tilde{q}_1\times_S\tilde{q}_2)_!(j')_!\un\arrow[d]\arrow[r,"\simeq"] & 0^\ast(\tilde{p}')_!(\tilde{j}')_!\un\arrow[r] & 1^\ast(\tilde{p}')_!(\tilde{j}')_!\un\arrow[r,"\simeq"] & (\eta\times_S p)_!(Id)_!\un\arrow[d,"\simeq"]\\
    (\tilde{q}_1\times_S\tilde{q}_2)_!\un\arrow[r,"\simeq"] & 0^\ast(\tilde{p}')_!\un\arrow[r] & 1^\ast(\tilde{p}')_!\un\arrow[r,"\simeq"] & (\eta\times_S p)_!\un
\end{tikzcd}\]
where the left and right upper squares commutes by base change, and the upper central square commutes because the morphism of Corollary \ref{corsp01} commutes with specialization system, in particular with base change. Notice that the central horizontal arrow is then an isomorphism.  As said above, under the equivalence $g_!f^\ast\simeq (p_2)_!(p_1^\ast-\otimes (f\times g)_!\un)$, the morphism \eqref{firstcomp} is identified with the composition of the left vertical arrow with the lower horizontal arrow, and the upper horizontal and right vertical arrows are the identity, hence it suffices to prove that the lower rectangle is commutative. If we knew that $\tilde{j}'$ is an open immersion, one would to complete the lower rectangle, but we only know that $\tilde{j}'$ is a monomorphism. Recall that there is from the discussion of Section \ref{sectionpropsmooth} a natural morphism $(\tilde{j}')^!\to(\tilde{j}')^\ast$, and that there is an adjunction morphism $(\tilde{j}')_!(\tilde{j}')^!\to Id$. The lower rectangle is identified with:

\[\begin{tikzcd}
    (\tilde{q}_1\times_S\tilde{q}_2)_!(j')_!(j')^\ast 0^\ast\un & 0^\ast(\tilde{p}')_!(\tilde{j}')_!(\tilde{j}')^\ast\un\arrow[l,swap,"\simeq"]\arrow[r,"\simeq"] & 1^\ast(\tilde{p}')_!(\tilde{j}')_!(\tilde{j}')^\ast\un\arrow[r,"\simeq"] & (\eta\times_S p)_!(Id)_!(Id)^\ast 1^\ast\un\\
    (\tilde{q}_1\times_S\tilde{q}_2)_!(j')_!(j')^! 0^\ast\un\arrow[u,"\simeq"]\arrow[d] & 0^\ast(\tilde{p}')_!(\tilde{j}')_!(\tilde{j}')^!\un\arrow[l,swap,"\simeq"]\arrow[r]\arrow[u]\arrow[d] & 1^\ast(\tilde{p}')_!(\tilde{j}')_!(\tilde{j}')^!\un\arrow[r]\arrow[u]\arrow[d] & (\eta\times_S p)_!(Id)_!(Id)^! 1^\ast\un\arrow[u,"\simeq"]\arrow[d,"\simeq"]\\
    (\tilde{q}_1\times_S\tilde{q}_2)_!0^\ast\un & 0^\ast(\tilde{p}')_!\un\arrow[l,swap,"\simeq"]\arrow[r] & 1^\ast(\tilde{p}')_!\un\arrow[r,"\simeq"] & (\eta\times_S p)_!1^\ast\un
\end{tikzcd}\]
where the central left horizontal arrow comes from the exchange transformation $Ex^{!\ast}$: it is an isomorphism because, from Lemma \ref{lemmaspeccontr}, $0^\ast$ commutes with specialization systems, in particular with $(j')^!$. The upper left and right square commutes because the transformation $(\tilde{j}')^!\to(\tilde{j}')^\ast$ commutes with base change from Lemma \ref{lemmaspecproper}. The central squares trivially commutes, and the lower left and right square commutes from the compatibility of adjunction with base change. We obtain then that the upper rectangle and the lower rectangle commutes, hence the outer rectangle commutes, which finishes the proof that \eqref{firstcomp} is the identity.\medskip

To prove that \eqref{secondcomp} is the identity, the proof is formally similar. One consider instead the commutative diagram with Cartesian squares:
\[\begin{tikzcd}
     \Filt(\mX)\times_\mX\Filt(\mX)\times_{\Grad(\mX)}\Filt(\mX)\arrow[r,"q_{23}"]\arrow[d,"q_{12}"] & \Filt(\mX)\times_{\Grad(\mX)}\Filt(\mX)\arrow[d,"q'_1"]\arrow[r,"q'_2"] & \Filt(\mX)\arrow[d,"r\circ p"]\arrow[r,"\eta"] & \mX\\
\Filt(\mX)\times_\mX\Filt(\mX)\arrow[r,"q_2"]\arrow[d,"q_1"] & \Filt(\mX)\arrow[r,"p"]\arrow[d,"\eta"] & \Grad(\mX)\\
      \Filt(\mX)\arrow[d,"r\circ p"]\arrow[r,"\eta"] & \mX & \\
      \Grad(\mX) & &
\end{tikzcd}\]
and use the square:
\[\begin{tikzcd}[column sep=huge]
        {\bA^1_S\times_S[\bA^1_S/\bG_{m,S}]} & {\bA^1_S\times_S[\bA^1_S/\bG_{m,S}]}\arrow[l,swap,"{(s,t)\mapsto(s,s^{-1}t)}"]\\
        {[\bA^2_S/\bG_{m,S}]}\arrow[u,swap,"{(s,t)\mapsto(st,t^{-1})}"] & {\bA^1_S\times_S[\bG_{m,S}/\bG_{m,S}]}\arrow[u,swap,"{(s,t) \mapsto(s,t)}"]\arrow[l,swap,"{(s,t)\mapsto(t,st^{-1})}"]
    \end{tikzcd}\]
to build an open immersion $\tilde{j}'':\bA^1_{\Filt(\mX)}\to \bA^1_{\Filt(\mX)}\times_{\bA^1_\mX}\tilde{\mX}$ fitting in the commutative diagram with Cartesian squares:
\[\begin{tikzcd}
    \Filt(\mX)\arrow[d,"j''"]\arrow[r,"0"] & \bA^1_{\Filt(\mX)}\arrow[d,"\tilde{j}''"] & \Filt(\mX)\arrow[l,swap,"1"]\arrow[d,"Id"]\\
\Filt(\mX)\times_\mX\Filt(\mX)\times_{\Grad(\mX)}\Filt(\mX)\arrow[d,"\tilde{q}_1\times_S\tilde{q}_2"]\arrow[r]& \bA^1_{\Filt(\mX)}\times_{\bA^1_\mX}\tilde{\mX}\arrow[d,"\tilde{p}''"]& \Filt(\mX)\arrow[d,"(r\circ p)\times_S\eta"]\arrow[l]\\
    \Grad(\mX)\times_S\mX\arrow[r,"0"] & \bA^1_{\Grad(\mX)\times_S\mX} & \Grad(\mX)\times_S\mX\arrow[l,swap,"1"]
\end{tikzcd}\]
look at \cite[Section 5]{Drinfeld2013ONAT}, \cite[Section 3]{drinfeld2013algebraic} for more details.

\subsection{Functoriality of hyperbolic localization}

\subsubsection{Functoriality with specialization systems}

\begin{lemma}\label{basechangehyploc}
    If $\phi:\mX'\to \mX$ is obtained by based change along a morphism of algebraic spaces $B'\to B$, then $\eta_{\mX'}$, $p_{\mX'}$, $\tilde{\iota}_{\mX'}$ and $r_{\mX'}$ are obtained by base change from  $\eta_\mX$, $p_\mX$, $\tilde{\iota}_\mX$ and $r_\mX$. This is also the case for all the functors build in the proof of Theorem \ref{theobradenstacks}.
\end{lemma}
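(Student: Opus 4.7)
The plan is to use the fact that mapping stacks are stable under base change of the base. More precisely, for any $B$-stack $\mathcal{T}$ and any morphism $B' \to B$ of algebraic spaces, there is a natural isomorphism
\[
\Map_{B'}(\mathcal{T} \times_B B',\, \mX \times_B B') \simeq \Map_B(\mathcal{T}, \mX) \times_B B'
\]
obtained directly from the universal properties of the mapping stack and the fiber product: for a test scheme $T \to B'$, a $T$-point of either side amounts to a $B$-morphism $T \times_B \mathcal{T} \to \mX$ together with the factorization $T \to B'$.

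Applying this with $\mathcal{T} = \Theta_B$ and $\mathcal{T} = B\bG_{m,B}$ immediately yields $\Filt(\mX') \simeq \Filt(\mX) \times_B B'$ and $\Grad(\mX') \simeq \Grad(\mX) \times_B B'$. The morphisms $\eta_\mX$, $p_\mX$, $\tilde{\iota}_\mX$, $r_\mX$ are obtained by applying the 2-functor $\Map_B(-, \mX)$ to the arrows of the diagram \eqref{diagbgm}, so by naturality they agree after base change with $\eta_{\mX'}$, $p_{\mX'}$, $\tilde{\iota}_{\mX'}$, $r_{\mX'}$.

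For the auxiliary constructions used in the proof of Theorem \ref{theobradenstacks}, the same principle applies throughout. The unit is built from the morphism $j: \Grad(\mX) \to \Filt(\mX) \times_\mX \Filt(\mX)$, which is induced by a morphism of source $B$-stacks and hence commutes with base change (the fact that it is an open-closed immersion is preserved since the deformation-theoretic étaleness check in its proof is purely formal). The counit is built from the auxiliary mapping stack $\tilde{\mX} = \Map_{\bA^1_S}([\bA^2_S/\bG_{m,S}], \bA^1_\mX)$, the morphisms $\tilde{p}, \sigma_1, \sigma_2$, and the specialization-type morphism of Corollary \ref{corsp01} attached to the $\bG_m$-equivariant structure; each ingredient is either a mapping stack or a functorially induced morphism, so all of them base change. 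Likewise, the monomorphism $\tilde{j}'$ of Lemma \ref{cruciallemma} is manufactured from an fpqc cover of $\bA^1_S \times_S \Theta_S$ that does not depend on $\mX$, so its construction commutes with base change of $\mX$.

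The only real work is bookkeeping: one must verify systematically that every building block (mapping stacks, fiber products, morphisms induced by base-compatible morphisms of source stacks, and the specialization morphism of Corollary \ref{corsp01}) commutes with base change in $B$. There is no serious obstacle, since every object and morphism appearing in the proof of Theorem \ref{theobradenstacks} lies in the image of a construction built only from the 2-functor $\Map_B(-,-)$, fiber products, and universally-defined natural transformations, and each of these is manifestly stable under base change of $B$.
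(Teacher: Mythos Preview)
Your approach is correct and essentially matches the paper's. The paper packages the argument slightly differently—reducing to the case $\mX=B$, $\mX'=B'$ and invoking the universal property of good moduli spaces to get $\Map_T(\mZ,B_T)\simeq B_T$ for an algebraic space $B$—but this is equivalent to your direct use of the base-change formula $\Map_{B'}(\mathcal{T}_{B'},\mX')\simeq\Map_B(\mathcal{T},\mX)\times_B B'$.
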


\begin{proof}
    All the morphisms in the proof of Theorem \ref{theobradenstacks} are build using mapping spaces. Namely, for the quasi-separated algebraic space $T=S$ (resp $T=\bA^1_S$), one consider a morphism of $T$-stacks $\mZ'\to\mZ$ having $T$ as a good moduli space, and the morphism of mapping spaces $\Map_T(\mZ,\mY_T)\to Map(\mZ',\mY_T)$. The morphism $\Map_T(\mZ,\mX'_T)\to \Map_T(\mZ,\mX_T)\times_{\Map_T(\mZ',\mX_T)}Ma_Tp(\mZ',\mX'_T)$ is obtained by base change from $\Map_T(\mZ,B'_T)\to \Map_T(\mZ,B_T)\times_{\Map_T(\mZ',B_T)}\Map_T(\mZ',B'_T)$, hence it suffices to show the result of the proposition of $B'\to B$. But, from the universal property of good moduli spaces, $B^{(')}_T\to \Map_T(\mZ^{(')},B^{(')}_T))$ is an isomorphism, hence this is trivial.
\end{proof}

\begin{theorem}\label{commutspechyploc}
    Given any specialization system $\sps:\mD_1\to\mD_1$ over algebraic spaces $\eta\to B\leftarrow\sigma$, given any stack $f:\mX\to B$, the natural morphism:
    \begin{align}
        (p_\sigma)_!(\eta_\sigma)^\ast\sps_f\to\sps_{\Grad(f)}(p_\eta)_!(\eta_\eta)^\ast\nn\\
        (\eta_\sigma)_!(p_\sigma)^\ast\sps_{\Grad(f)}\to\sps_f(\eta_\eta)_!(p_\eta)^\ast
    \end{align}
    are isomorphisms.
\end{theorem}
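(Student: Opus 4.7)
The plan is to deduce the commutation of $\sps$ with hyperbolic localization from the Braden--Drinfeld--Gaitsgory adjunction of Theorem \ref{theobradenstacks} by checking that each ingredient in the construction of that adjunction commutes with $\sps$. As a preliminary observation, since $r$ is an involution (hence smooth), the exchange morphism $(r_\sigma)^\ast\sps\simeq\sps(r_\eta)^\ast$ is an isomorphism, so showing the second isomorphism of the theorem is equivalent to showing that $\sps$ commutes with $\eta_!(rp)^\ast$. In other words, the two statements of the theorem are precisely the exchange morphisms for the left adjoint $L = p_!\eta^\ast$ and the right adjoint $R = \eta_!(rp)^\ast$ of the Braden adjunction, and under the mate correspondence for natural transformations between adjoint pairs these two exchange morphisms are dual: it suffices to prove one is an isomorphism.

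The key technical input is that the unit $u\colon \Id\to RL$ constructed in Section \ref{construnit} and the counit $c\colon LR\to \Id$ constructed in Section \ref{constrcounit} both commute with $\sps$. The unit is built from base change and the $j_!j^\ast\to \Id$ adjunction for the open-and-closed immersion $j\colon\Grad(\mX)\hookrightarrow\Filt(\mX)\times_\mX\Filt(\mX)$: base change compatibilities follow from Lemma \ref{basechangehyploc} (which ensures all the relevant stacks are base-changed correctly) and the definition of a specialization system, while the $j_!j^\ast$-adjunction is compatible with $\sps$ by Lemma \ref{lemmaspecloc}, using Lemma \ref{lemmaspecproper} to handle the closed-immersion part $j_!\simeq j_\ast$. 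The counit is built from base change, the projection formula (compatible with $\sps$ by the monoidal axiom), and the contraction morphism $0^\ast\to 1^\ast$ on the image of $\bar{q}^\ast$ coming from the $\bG_m$-equivariant deformation stack $\tilde{\mX}$; the contraction is compatible with $\sps$ by Corollary \ref{corsp01}, and the construction of $\tilde{\mX}$ is compatible with base change again by Lemma \ref{basechangehyploc}. Threading these compatibilities through the construction of $u$ and $c$ yields commutative squares expressing that $u$ and $c$ commute with $\sps$ modulo the natural exchange morphisms for $L$ and $R$.

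With $u$ and $c$ compatible with $\sps$, one exhibits an explicit two-sided inverse to the exchange morphism $\alpha\colon L_\sigma\sps_f\to\sps_{\Grad(f)}L_\eta$ via the mate construction $\sps L\xrightarrow{u_\sigma}LR\sps L\to L\sps RL\xrightarrow{c_\eta}L\sps$, where the middle arrow is the analogous exchange morphism $\beta\colon R_\sigma\sps\to\sps R_\eta$ for $R$. Verifying that this composite is a two-sided inverse to $\alpha$ reduces, via the triangle identities $cL\circ Lu=\Id_L$ and $Rc\circ uR=\Id_R$ of the Braden adjunction and the compatibilities just established, to a formal diagram chase. The main obstacle I expect is precisely this diagrammatic verification: one must carefully organize a diagram in which the exchange morphisms for $p_!$, $\eta^\ast$, $\eta_!$, $p^\ast$, the compatibilities of $u$ and $c$ with $\sps$, and the triangle identities all interlock, and it takes some care to check that the $j_!j^\ast$-part of the unit and the $0^\ast\to 1^\ast$-part of the counit enter at the correct level. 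Once this diagram commutes, both exchange morphisms are simultaneously isomorphisms, which is the desired conclusion.
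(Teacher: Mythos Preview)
Your proposal is correct and follows the same route as the paper: verify that the unit and counit of the Braden--Drinfeld--Gaitsgory adjunction are compatible with $\sps$ (via Lemma~\ref{basechangehyploc} for the base-change bookkeeping and Corollary~\ref{corsp01} for the contraction), then run the formal adjunction argument to exhibit an explicit inverse to the exchange morphism. One point to tighten: you invoke ``the monoidal axiom'' to handle the projection-formula step in the counit, but the theorem does not assume $\sps$ is monoidal. The paper sidesteps this by rewriting the counit directly as
\[
(\eta\circ q'_2)_!(\eta\circ q'_1)^\ast\;\simeq\;0^\ast(\tilde{p}_2)_!(\tilde{p}_1)^\ast\pi^\ast\;\to\;1^\ast(\tilde{p}_2)_!(\tilde{p}_1)^\ast\pi^\ast\;\simeq\;\Id,
\]
using only composition, base change, and Corollary~\ref{corsp01}, with no tensor products appearing; with this rewrite your argument goes through unchanged. (Also, your aside that ``under the mate correspondence it suffices to prove one is an isomorphism'' is not literally true in general---mates need not preserve invertibility---but since you go on to construct the inverse explicitly this does no harm.)
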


\begin{proof}
    We begin to show the the exchange morphisms of the specialization systems are compatible with the adjunctions of Braden-Drinfeld-Gaitsgory theorem. We use for that the fact that all the diagrams used to define the unit and counit for $\mX_\sigma$ and $\mX_\eta$ are Cartesian over those used to define the unit and counit for $\mX$ from Lemma \ref{basechangehyploc}, which allows to use the compatibility of specialization systems with base change. Consider the following diagram:
    \[\begin{tikzcd}
        & \sps_{\Grad(f)}\arrow[dl]\arrow[dr] &\\
        (p_\sigma)_!(\eta_\sigma)^\ast(\eta_\sigma)_!((r\circ p)_\sigma)^\ast\sps_{\Grad(f)}\arrow[r] & (p_\sigma)_!(\eta_\sigma)^\ast\sps_f(\eta_\eta)_!((r\circ p)_\eta)^\ast\arrow[r] & \sps_{\Grad(f)}(p_\sigma)_!(\eta_\sigma)^\ast(\eta_\eta)_!((r\circ p)_\eta)^\ast
    \end{tikzcd}\]
    where the two vertical arrows come from the unit build in Section \ref{construnit}. This construction use base change, which commutes with specialization systems, and the purity isomorphism for $j$, which commutes with specialization system from Lemma \ref{lemmaspecpurity}, hence the above diagram is commutative. Consider the following diagram:
    \[\begin{tikzcd}
        (\eta_\sigma)_!((r\circ p)_\sigma)^\ast(p_\sigma)_!(\eta_\sigma)^\ast\sps_f\arrow[r]\arrow[dr] & (\eta_\sigma)_!((r\circ p)_\sigma)^\ast\sps_{\Grad(f)}(p_\eta)_!(\eta_\eta)^\ast\arrow[r] & \sps_f(\eta_\sigma)_!((r\circ p)_\sigma)^\ast(p_\eta)_!(\eta_\eta)^\ast\arrow[dl]\\
        &\sps_f &
    \end{tikzcd}\]
    where the two vertical arrows come from the counit build in Section \ref{constrcounit}. Notice that, denoting by $\tilde{p}_1,\tilde{p}_2:\tilde{\mX}\to\bA^1_\mX$ the two projections of $\tilde{p}$, this morphism can be rewritten as:
    \begin{align}
    \eta_!(r\circ p)^\ast p_!\eta^\ast&\simeq(\eta\circ q'_2)_!(\eta\circ q'_1)^\ast\nn\\
    &\simeq 0^\ast (\tilde{p}_2)_!(\tilde{p}_1)^\ast\pi^\ast\nn\\
    &\to 1^\ast (\tilde{p}_2)_!(\tilde{p}_1)^\ast\pi^\ast\nn\\
    &\simeq Id_! Id^\ast\simeq Id
\end{align}
the second morphism commutes with specialization systems from Corollary \ref{corsp01}, and the other isomorphisms commutes with specialization systems as they are obtained by functoriality and base change, hence the above diagram commutes. The end of the proof is a formal game of adjunction. We have the commutative diagram (where we have removed the subscripts for readability):
    \[\begin{tikzcd}
        p_!\eta^\ast\sps\arrow[r]\arrow[d]& \sps p_!\eta^\ast\arrow[d]\arrow[drr] & &\\
        p_!\eta^\ast\eta_!(r\circ p)^\ast p_!\eta^\ast\sps\arrow[r]\arrow[drr] &  p_!\eta^\ast\eta_!(r\circ p)^\ast\sps p_!\eta^\ast\arrow[r] & p_!\eta^\ast\sps \eta_!(r\circ p)^\ast p_!\eta^\ast\arrow[r]\arrow[d] & \sps p_!\eta^\ast\eta_!(r\circ p)^\ast p_!\eta^\ast\arrow[d]\\
        & & p_!\eta^\ast\sps\arrow[r] & \sps p_!\eta^\ast
    \end{tikzcd}\]
    by the adjunction property, the two outer diagonal paths are the identity, hence the morphism $\sps p_!\eta^\ast\to p_!\eta^\ast\sps$ obtained by the staircase central composition is the inverse of $p_!\eta^\ast\sps\to\sps p_!\eta^\ast$. A dual reasoning gives an inverse of $\eta_!(r\circ p)^\ast\sps\to\sps\eta_!(r\circ p)^\ast$, but $r$ is an involution, hence $r^\ast$ trivially commutes with specialization systems, then $\eta_!p^\ast\sps\to\sps\eta_!p^\ast$ is an isomorphism.
\end{proof}

In particular, this implies the commutation of hyperbolic localization with $g^\ast,g^!,g_!,g_\ast$, when $g$ is obtained by base change from a morphism of algebraic spaces, as obtained in \cite[Proposition 3.1]{Ric16}, and with (monodromic) nearby and vanishing cycles, as in \cite[Theorem 3.3]{Ric16}. We put also this simple result for reference:

\begin{lemma}\label{lemhyplocprod}
    Hyperbolic localization commutes with exterior product, namely, given $\mX_1,\mX_2$, one has a natural isomorphisms:
    \begin{align}
        (p_{\mX_1\times\mX_2})_!(\eta_{\mX_1\times\mX_2})^\ast(-\boxtimes_S-)\simeq ((p_{\mX_1})_!(\eta_{\mX_1})^!)\boxtimes_S((p_{\mX_2})_!(\eta_{\mX_2})^!)
    \end{align} satisfying associativity and commutativity
\end{lemma}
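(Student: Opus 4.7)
The strategy is to reduce the statement to the compatibility between the exterior product $\boxtimes_S$ and the operations $f^\ast$ and $g_!$ established in the six-functor package of Section \ref{sect6fun}. The first step is to identify the $\Theta$-correspondence of a product with the product of the individual $\Theta$-correspondences. Since $\Grad$ and $\Filt$ are defined as mapping stacks out of $B\bG_{m,S}$ and $\Theta_S$ respectively, the universal property of mapping stacks yields canonical isomorphisms
\begin{align}
\Grad(\mX_1\times_S\mX_2)&\simeq \Grad(\mX_1)\times_S\Grad(\mX_2),\nn\\
\Filt(\mX_1\times_S\mX_2)&\simeq \Filt(\mX_1)\times_S\Filt(\mX_2),
\end{align}
under which $\eta_{\mX_1\times_S\mX_2}$, $p_{\mX_1\times_S\mX_2}$, $\iota$, $\tilde\iota$ and $r$ are identified with the fiber product over $S$ of the corresponding morphisms for $\mX_1$ and $\mX_2$.

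Next, I would invoke the compatibility of $\ast$-pullback and $!$-pushforward with the exterior product, which is part of the lax symmetric monoidal structure of the functor $\mD^\ast_!:Corr_{lft,all}(St)\to Pr^L_{St}$ recalled in Section \ref{sect6fun}. This provides natural isomorphisms $(f_1\times_S f_2)^\ast(-\boxtimes_S-)\simeq (f_1)^\ast(-)\boxtimes_S (f_2)^\ast(-)$ and $(g_1\times_S g_2)_!(-\boxtimes_S-)\simeq (g_1)_!(-)\boxtimes_S (g_2)_!(-)$. Applying the first with $f_i=\eta_{\mX_i}$ and then the second with $g_i=p_{\mX_i}$, and using the identifications from the first step, yields the required isomorphism.

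Finally, the associativity and commutativity constraints on the resulting isomorphism follow formally from the coherence data of $\mD^\ast_!$ as a lax symmetric monoidal functor, since both the identification of mapping stacks and the exchange isomorphisms above are part of that coherent structure (here the $(\infty,1)$-categorical enhancement of the six-functor formalism discussed in Section \ref{sectionschemestacks} is what guarantees coherence at the level of higher isomorphisms). I do not expect a serious obstacle: the only point requiring genuine care is that the identification of $\Filt(\mX_1\times_S\mX_2)$ with $\Filt(\mX_1)\times_S\Filt(\mX_2)$ is natural with respect to the structure maps of the $\Theta$-correspondence, which is a routine verification at the level of functors of points using the definitions $\Grad(\mX)=\Map_S(B\bG_{m,S},\mX)$ and $\Filt(\mX)=\Map_S(\Theta_S,\mX)$.
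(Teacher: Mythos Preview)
Your proposal is correct and follows essentially the same approach as the paper: the paper's proof simply notes that the monoidality of the mapping stack construction gives $p_{\mX_1\times\mX_2}=p_{\mX_1}\times_S p_{\mX_2}$ and $\eta_{\mX_1\times\mX_2}=\eta_{\mX_1}\times_S \eta_{\mX_2}$, and then appeals to the monoidality of the four functors. Your version is just a more detailed unpacking of exactly this argument.
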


\begin{proof}
    From the monoidality of the mapping stack construction, $p_{\mX_1\times\mX_2}=p_{\mX_1}\times_S p_{\mX_2}$ and $\eta_{\mX_1\times\mX_2}=\eta_{\mX_1}\times_S \eta_{\mX_2}$, hence this follows from the monoidality of the four functors.
\end{proof}

\subsubsection{Functoriality with smooth morphisms}\label{sectsmoothhyploc}

We want to talk about the behavior cotangent complex of stacks, or morphisms of stacks, under the construction $\Grad(-)$ and $\Filt(-)$. Those behaves better in the world of derived stacks, but at the end we will be interested in cotangent complexes of smooth morphisms, such that these will agree with their classical truncations. We follow here the presentation in \cite[Section 1.2]{HalpernLeistner2014OnTS} and \cite[Section 1]{HalpernLeistner2020DerivedA}, adapting it to our non-derived setting.\medskip

For a stack with affine stabilizers $\mX$, $\Grad(\mX)$ has a natural weak action action of the monoid $B\bG_m$, which gives from \cite[Lemma 1.5.3]{HalpernLeistner2020DerivedA} (when we consider them as derived stacks) a natural $\bZ$-grading on any quasi-coherent complex $\mE=\bigoplus_{n\in\bZ}\mE^n$. Similarly, $\Filt(\mX)$ has a natural weak action of the monoid $\Theta$, such that, from \cite[Proposition 1.1.2]{HalpernLeistner2020DerivedA} (when we consider them as derived stacks), there is a baric structure on $\QCoh(\Grad(\mX))$. It gives roughly a version of truncation at the level of derived category, in particular, for $w\in\bZ$, each object lies in an exact triangle:
\begin{align}
    \mathcal{F}^{\geq w}\to \mathcal{F}\to \mathcal{F}^{<w}
\end{align}
We use freely the notation $\mE^I,\mathcal{F}^I$ for any interval $I$, defined in the obvious way. From \cite[Lemma 1.1.5, Lemma 1.5.3]{HalpernLeistner2020DerivedA}, these structures are compatible in the sense that:
\begin{align}\label{compattrunc}
    &\tilde{\iota}^\ast(\mathcal{F}^{\geq w})=\bigoplus_{n\geq w} (\tilde{\iota}^\ast \mathcal{F})^n\quad &\tilde{\iota}^\ast(\mathcal{F}^{< w})=\bigoplus_{n< w} (\tilde{\iota}^\ast \mathcal{F})^n\nn\\
    &(p^\ast \mE)^{\geq w}=\bigoplus_{n\geq w}p^\ast(\mE^n)\quad &(p^\ast \mE)^{< w}=\bigoplus_{n< w} p^\ast (\mE^n)
\end{align}
In particular, when $\mathcal{F}$ is a locally free sheaf, $(p\circ \tilde{i})^\ast\mathcal{F}$ is the graded objects associated to the filtration. This is a transcription for non-derived stacks of the results \cite[Lemma 1.3.2, Lemma 1.5.5]{HalpernLeistner2020DerivedA}:

\begin{lemma}\label{lemcotanbun} (Halpern-Leistner, \cite{HalpernLeistner2020DerivedA})
    For $\phi:\mX\to\mY$ a smooth morphism of stacks, $\Grad(\phi)$ and $\Filt(\phi)$ are smooth too, and one has canonical isomorphisms:
    \begin{align}
        \bL_{\Filt(\phi)}\simeq\eta^\ast(\bL_\mX)^{\leq 0}\nn\\
        \bL_{\Grad(\phi)}\simeq \iota^\ast(\bL_\mX)^0
    \end{align}
\end{lemma}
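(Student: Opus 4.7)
The plan is to adapt the arguments of \cite[Lemma 1.3.2, Lemma 1.5.5]{HalpernLeistner2020DerivedA} from derived to non-derived stacks, using standard deformation theory for mapping stacks. A $T$-point of $\Filt(\mX)$ is a morphism $f:\Theta_T\to\mX$, which lies over the $T$-point $\phi\circ f$ of $\Filt(\mY)$. For a square-zero extension $T\hookrightarrow T'$ with ideal $M$ and a fixed lift $g':\Theta_{T'}\to\mY$ of $\phi\circ f$, the groupoid of lifts $f':\Theta_{T'}\to\mX$ compatible with $g'$ is canonically a torsor under $\Map_{\Theta_T}(f^\ast\bL_\phi,\pi_T^\ast M)$, where $\pi_T:\Theta_T\to T$ denotes the structure map. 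This identifies the fiber of $\bL_{\Filt(\phi)}$ at $f$ with the image of $f^\ast\bL_\phi$ under the partial left adjoint to $\pi_T^\ast$ defined on perfect inputs, in the usual way.

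The next step is to identify this partial adjoint with the baric truncation appearing in the statement. The weak $\Theta$-action on $\Filt(\mX)$ endows $\QCoh(\Filt(\mX))$ with its baric structure, which pulls back along a $T$-point to the baric structure on $\QCoh(\Theta_T)$; on underlying graded $\mathcal{O}_T[t]$-modules, this is the familiar truncation $\mathcal{F}^{\leq 0}=\bigoplus_{n\leq 0}\mathcal{F}_n$. The desired adjunction
\[
\Map_{\Theta_T}(\mathcal{F},\pi_T^\ast M)\;\simeq\; \Map_T\bigl(1^\ast\mathcal{F}^{\leq 0},\,M\bigr)
\]
is then immediate: $\pi_T^\ast M=M\otimes_{\mathcal{O}_T}\mathcal{O}_T[t]$ has only weights $\geq 0$, so any morphism from $\mathcal{F}$ factors through the quotient $\mathcal{F}^{\leq 0}$, and evaluation at $t=1$ turns this into a plain $\mathcal{O}_T$-linear map. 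Globalising over $\Filt(\mX)$ via the compatibilities \eqref{compattrunc} between the baric structure and the functors $\tilde\iota^\ast,p^\ast$ yields $\bL_{\Filt(\phi)}\simeq \eta^\ast(\bL_\phi)^{\leq 0}$.

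Smoothness of $\Filt(\phi)$ then follows because $\phi$ smooth means $\bL_\phi$ is perfect of tor-amplitude $[0,1]$, a property preserved by pullback and by the baric truncation $(-)^{\leq 0}$ (which selects a graded sub-summand). The case of $\Grad(\phi)$ is parallel but simpler, because $\QCoh(B\bG_{m,T})$ is canonically $\bZ$-graded and the structure map to $T$ has pushforward equal to the weight-$0$ component; the same argument then delivers $\bL_{\Grad(\phi)}\simeq \iota^\ast(\bL_\phi)^0$ together with smoothness. I expect the main obstacle to be the careful non-derived justification of the deformation-theoretic step, since the natural home for the cotangent-of-mapping-stack formula is the derived $\infty$-categorical setting of \cite{HalpernLeistner2020DerivedA}; in our context this reduces to a direct check that the groupoid of square-zero lifts is genuinely controlled by the baric $(-)^{\leq 0}$, which is transparent at the level of graded $\mathcal{O}_T[t]$-modules of tor-amplitude $[0,1]$.
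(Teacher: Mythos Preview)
Your approach is genuinely different from the paper's. The paper does not redo the deformation-theoretic computation: it passes to the derived enhancements $\ud{\Grad}(\ud{\mX}),\ud{\Filt}(\ud{\mX})$, cites \cite[Lemma 1.3.2, Lemma 1.5.5]{HalpernLeistner2020DerivedA} for the absolute identifications $\bL_{\ud{\Filt}(\ud{\mX})}\simeq (\ud{\eta}^\ast\bL_{\ud{\mX}})^{\leq 0}$ and $\bL_{\ud{\Grad}(\ud{\mX})}\simeq (\ud{\iota}^\ast\bL_{\ud{\mX}})^{0}$, obtains the relative versions from the exact triangle of cotangent complexes, and then observes that for $\phi$ smooth the resulting complexes stay in perfect amplitude $[0,1]$ (pullback and baric truncation are exact), so that by \cite[Lemma 1.2.4]{HalpernLeistner2014OnTS} the derived cotangent complexes agree with the classical ones on the classical truncations. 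Your proposal instead unpacks the mapping-stack deformation theory pointwise, which is closer in spirit to what the paper does later for Proposition \ref{propBB}.

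There are two gaps worth flagging. First, your justification of the adjunction $\Map_{\Theta_T}(\mathcal{F},\pi_T^\ast M)\simeq \Map_T(1^\ast\mathcal{F}^{\leq 0},M)$ is off by a sign: for a map $\mathcal{F}\to\pi_T^\ast M$ to factor through the \emph{quotient} $\mathcal{F}^{\leq 0}$ one needs $\pi_T^\ast M$ to lie in the baric piece $\mathcal{D}^{\leq 0}$, not $\mathcal{D}^{\geq 0}$ (so that $\Hom(\mathcal{F}^{\geq 1},\pi_T^\ast M)=0$ by the semi-orthogonality defining the baric structure). Under the paper's conventions this is indeed the case, but your sentence ``has only weights $\geq 0$, so any morphism factors through the quotient $\mathcal{F}^{\leq 0}$'' reverses the logic. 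Second, and more seriously, the globalisation step is not handled by \eqref{compattrunc}: those compatibilities describe how the baric truncation on $\QCoh(\Filt(\mX))$ interacts with $\tilde{\iota}^\ast$ and $p^\ast$, not how it restricts along an arbitrary $T$-point $f:T\to\Filt(\mX)$. What you actually need is that $f^\ast\bigl((\eta^\ast\bL_\phi)^{\leq 0}\bigr)\simeq 1^\ast\bigl((\tilde{f}^\ast\bL_\phi)^{\leq 0}\bigr)$, i.e.\ that the global baric truncation (coming from the weak $\Theta$-action on $\Filt(\mX)$) and the local one on $\QCoh(\Theta_T)$ agree after restriction; this is true but requires an argument you have not supplied. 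The paper's derived route sidesteps both issues because the identification is established globally in \cite{HalpernLeistner2020DerivedA} from the outset.
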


\begin{proof}
    As explained in \cite[Section 1.2]{HalpernLeistner2020DerivedA}, there is a derived version of mapping stack for derived stacks (as functors of groupoids on the $(\infty,1)$ categories of connective simplicial rings), such that, extending trivially $B\bG_m$ and $\Theta$, one obtains derived versions $\ud{\Grad}(-)$ and $\ud{\Filt}(-)$ of the $\Grad(-)$ and $\Filt(-)$ functors. Denoting by $\ud{\mX}$ the trivial derived extension of a classical stack, we have:
\begin{align}
    \ud{\Grad}(\ud{\mX})^{cl}=\Grad(\mX)\nn\\
    \ud{\Filt}(\ud{\mX})^{cl}=\Filt(\mX)
\end{align}
and similarly for morphisms (where the superscript $cl$ denotes the classical truncation). By \cite[Lemma 1.3.2, Lemma 1.5.5]{HalpernLeistner2020DerivedA}, one has canonical identifications:
\begin{align}
    \bL_{\ud{\Filt}(\ud{\mX})}\simeq \ud{\eta}^\ast(\bL_{\ud{\mX}})^{\leq 0}\nn\\
    \bL_{\ud{\Grad}(\ud{\mX})}\simeq \ud{\iota}^\ast(\bL_{\ud{\mX}})^0
\end{align}
Now, consider a morphism $\phi:\mX\to\mY$. Using the exact triangle of cotangent complexes for $\ud{\phi}$, and applying $\ud{\eta}^\ast$ to those of $\ud{\Filt}(\ud{\phi})$ (resp. $\ud{\iota}^\ast$ to those of $\ud{\Grad}(\ud{\phi})$), using the fact that the truncations commutes with pullbacks, we obtain canonical isomorphisms:
\begin{align}
    \bL_{\ud{\Filt}(\ud{\phi})}\simeq \ud{\eta}^\ast(\bL_{\ud{\phi}})^{\leq 0}\nn\\
    \bL_{\ud{\Grad}(\ud{\phi})}\simeq \ud{\iota}^\ast(\bL_{\ud{\phi}})^0
\end{align}
(this was noticed in the proof of \cite[Corollary 1.3.2.1]{HalpernLeistner2020DerivedA} for $\ud{\Filt}(\ud{\phi}$). In particular, if $\phi$ is smooth, hence $\bL_{\ud{\phi}}$ is perfect in amplitude $[-1,0]$, because pullbacks and truncation functors are exact, $\bL_{\ud{\Filt}(\ud{\phi})}$ and $\bL_{\ud{\Grad}(\ud{\phi})}$ are also perfect in amplitude $[-1,0]$. One obtains from \cite[Lemma 1.2.4]{HalpernLeistner2014OnTS} that their pullbacks to the classical truncations $\Grad(\mX)$ and $\Filt(\mX)$ agree with $\bL_{\Grad(\phi)}$ and $\bL_{\Filt(\phi)}$. Using the commutations of the truncation functors with pullbacks, one obtains the isomorphisms of the proposition from the derived version above. In particular, $\Grad(\phi)$ and $\Filt(\phi)$ are smooth.
\end{proof}

The following is a relative and stacky version of the main result of Bialynicki-Birula \cite{BiaynickiBirula1973SomeTO}:

\begin{proposition}\label{propBB}
    Consider a smooth morphism of stacks $\phi:\mX\to\mY$. Then $\pi:\Filt(\mX)\to\Grad(\mX)\times_{\Grad(\mY)}\Filt(\mY)$ is canonically an affine bundle stack modeled on $(pr_1)^\ast((\iota^\ast\bL_\phi)^{<0}))$.
\end{proposition}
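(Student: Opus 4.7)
The plan is to prove this in two stages: first verify the cotangent complex formula, which already forces smoothness of $\pi$ and identifies the would-be modeled vector bundle stack, and then upgrade to the genuine torsor structure by smooth descent to the quotient-stack case where classical Białynicki-Birula applies.

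\textbf{Step 1 (Cotangent complex of $\pi$).} By construction $q_2 \circ \pi = \Filt(\phi)$, where $q_2 : P := \Grad(\mX)\times_{\Grad(\mY)}\Filt(\mY) \to \Filt(\mY)$ is the base change of $\Grad(\phi)$. Thus $\bL_{q_2} \simeq q_1^\ast \bL_{\Grad(\phi)}$, and the cofiber triangle attached to $q_2\circ\pi$ reads
\begin{equation*}
p_\mX^\ast\,\iota^\ast(\bL_\phi)^0 \longrightarrow \eta^\ast(\bL_\phi)^{\leq 0} \longrightarrow \bL_\pi,
\end{equation*}
after substituting the formulas of Lemma \ref{lemcotanbun}. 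Applying $\tilde\iota^\ast$ and using the compatibility \eqref{compattrunc} between the baric and graded structures, the left arrow becomes the inclusion of the weight-$0$ summand $(\iota^\ast\bL_\phi)^0 \hookrightarrow (\iota^\ast\bL_\phi)^{\leq 0} = (\iota^\ast\bL_\phi)^0 \oplus (\iota^\ast\bL_\phi)^{<0}$ in $\QCoh(\Grad(\mX))$, whose cofiber is $(\iota^\ast\bL_\phi)^{<0}$. Hence $\tilde\iota^\ast\bL_\pi \simeq (\iota^\ast\bL_\phi)^{<0}$.

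\textbf{Step 2 (Rigidification).} Since $\bL_\pi$ sits in a triangle whose outer terms lie in baric weight $\leq 0$ with their weight-$0$ parts canonically matching, $\bL_\pi$ has strictly negative baric weight. An object in strictly negative baric weight on $\Filt(\mX)$ is canonically recovered as $p_\mX^\ast$ of its restriction along $\tilde\iota$ (again via \eqref{compattrunc} applied weight by weight, exactly as in the analogue for graded sheaves under $(p^\ast-)^{<0} = \bigoplus_{n<0} p^\ast(-)^n$). This identifies $\bL_\pi \simeq p_\mX^\ast(\iota^\ast\bL_\phi)^{<0} = \pi^\ast q_1^\ast(\iota^\ast\bL_\phi)^{<0}$, a perfect complex of amplitude $[0,1]$, so $\pi$ is smooth of the expected relative cotangent (Lemma \ref{lemmaffinesmooth}).

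\textbf{Step 3 (Affine bundle structure).} Being an affine bundle stack modeled on $\bV_P(q_1^\ast(\iota^\ast\bL_\phi)^{<0})$ is smooth-local on $P$. Using Lemma \ref{covergrad} applied to $\mX$ and $\mY$, after Nisnevich refinement we may assume that $\phi$ is the quotient of a $\bG_m$-equivariant smooth morphism of affine schemes $\phi_0 : X \to Y$. Under Proposition \ref{prophlp} the restriction of $\pi$ over each component becomes the stack quotient of the relative attractor map $X^+ \to X^0 \times_{Y^0} Y^+$. The classical relative Białynicki-Birula theorem for a smooth $\bG_m$-equivariant morphism of schemes (proved by the deformation-to-the-normal-cone argument applied fiberwise over $Y^0$, or by working in formal coordinates where $\phi_0$ splits as a product over $X^0$ of its positive-, zero-, and negative-weight eigenspaces) identifies this map with a $\bG_m$-equivariant affine bundle modeled on the positive-weight part $(T_{\phi_0}|_{X^0})^{>0}$; dualizing and passing to the quotient stack yields exactly the vector bundle stack $\bV_P(q_1^\ast(\iota^\ast\bL_\phi)^{<0})$ from Step 2. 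The canonicity of the cotangent identification in Steps 1-2 shows that these local affine bundle structures glue.

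The main obstacle is the rigidification argument in Step 2: one must check carefully that the baric structure on $\QCoh(\Filt(\mX))$ makes the functor $p_\mX^\ast : \QCoh(\Grad(\mX))^{<0} \to \QCoh(\Filt(\mX))^{<0}$ an equivalence (or at least fully faithful on the relevant perfect objects), so that a strictly negative baric weight complex is determined by its pullback to $\Grad(\mX)$. The second subtlety lies in Step 3 in verifying that the Nisnevich cover of $\mX$ provided by Lemma \ref{covergrad} can be refined to be compatible with $\phi$, i.e.\ to admit a compatible cover of $\mY$, but this is routine by pulling back a cover of $\mY$ along $\phi$ and combining with the cover of $\mX$.
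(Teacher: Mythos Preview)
Your approach differs substantially from the paper's, and Step 3 has a genuine gap. The paper constructs the torsor action directly at the level of the functor of points: for $g : \Spec(A) \to P$, it studies the tower of lifting problems of extending $f_n : [\Spec(A[t]/t^n)/\bG_m] \to \mX$ to $f_{n+1}$ compatibly with the given map to $\mY$. A short computation identifies the obstruction as $\Ext^1_A\bigl(g^\ast(pr_1)^\ast(\iota^\ast\bL_\phi)^{-n}, A\bigr)$, which vanishes since the complex is perfect in amplitude $[0,1]$, and the groupoid of lifts at each stage is a torsor over the corresponding $\Map$-space. Coherent completeness of $\Theta_A$ along $B\bG_{m,A}$ together with Tannaka duality then assembles the coherent system of infinitesimal lifts into a genuine lift $\bar f : \Theta_A \to \mX$, yielding the torsor structure canonically and functorially in $A$ with no gluing whatsoever.

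Your Steps 1--2 are on the right track for $\bL_\pi$, though the rigidification in Step 2 (that a strictly-negative-baric-weight object on $\Filt(\mX)$ is $p_\mX^\ast$ of its restriction along $\tilde\iota$) is, as you flag, a nontrivial statement about the baric structure that you do not prove. The more serious issue is Step 3: knowing $\bL_\pi$ and knowing that $\pi$ is smooth-locally an affine bundle does not make $\pi$ a global affine bundle. A torsor structure is an \emph{action} $\mu : \bV_P(\mE) \times_P \Filt(\mX) \to \Filt(\mX)$, and two such actions on the same smooth map can differ by a group-scheme automorphism of $\bV_P(\mE)$; matching cotangent complexes does not pin down $\mu$. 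Your sentence ``the canonicity of the cotangent identification shows these local affine bundle structures glue'' is therefore not a valid argument. To salvage this route you would need to show that the relative Bia\l{}ynicki--Birula actions you invoke are themselves functorial under smooth $\bG_m$-equivariant base change so that they agree on overlaps---but that is essentially the content of the paper's functor-of-points argument restricted to the quotient case, and you would still owe both the relative scheme-level statement (which you call ``classical'' without reference) and the construction of compatible $[X/\bG_m]$-type covers of $\mX$ and $\mY$ simultaneously.
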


\begin{proof}
    We will prove that at the level of the functors of groupoids, using deformation theory. The groupoid of $\Spec(A)$-point of $\Grad(\mX)\times_{\Grad(\mY)}\Filt(\mY)$ is the groupoid of commutative diagrams:
    \[\begin{tikzcd}[column sep=4cm]
        B\bG_m\times\Spec(A)\arrow[r,"f"]\arrow[d,"i_n"] & \mX\arrow[ddd,"\phi"]\\
        \Spec(A[t]/t^n)/\bG_m\arrow[d]\arrow[ur,dotted,"f_n"] &\\
        \Spec(A[t]/t^{n+1})/\bG_m\arrow[d]\arrow[uur,dotted,"f_{n+1}"] &\\
        \Spec(A[t])/\bG_m\arrow[r]\arrow[uuur,dotted,"\bar{f}"] & \mY
    \end{tikzcd}\]
    consider the point $g:\Spec(A)\to \Grad(\mX)\times_{\Grad(\mY)}\Filt(\mY)$ corresponding to this diagram. In particular, $p_2\circ g:\Spec(A)\to\mX$ is the $\Spec(A)$-point defined by $f$. We want to study the deformation-theoretic problem of lifting $f_n$ to $f_{n+1}$, for $n\geq 1$. The square-zero ideal $I$ of the closed immersion $\Spec(A[t]/t^n)/\bG_m\to \Spec(A[t]/t^{n+1})/\bG_m$ is the $\bG_m$-equivariant ideal $(t^n)/(t^{n+1})$, which is, as a $\bG_m$-equivariant $A[t]/t^n$-module, identified with $(i_n)_\ast(\mathcal{O}_{\bG_m}\langle n\rangle\boxtimes A)$. We have then:
    \begin{align}
        \bR \Hom_{\Spec(A[t]/t^n)/\bG_m}((f_n)^\ast\bL_\phi,I)&=\bR \Hom_{\Spec(A[t]/t^n)/\bG_m}((f_n)^\ast\bL_\phi,(i_n)_\ast(\mathcal{O}_{\bG_m}\langle n\rangle\boxtimes A))\nn\\
        &\simeq \bR \Hom_{\Spec(A)\times B\bG_m}(f^\ast\bL_\phi,\mathcal{O}_{\bG_m}\langle n\rangle\boxtimes A)\nn\\
        &\simeq \bR \Hom_{\Spec(A)}((f^\ast\bL_\phi)^{-n},A)\nn\\
        &\simeq \bR \Hom_{\Spec(A)}(g^\ast(pr_1)^\ast(\iota^\ast\bL_\phi)^{-n},A)
    \end{align}
    here the second line holds by adjunction, the third by the definition of the truncation functor given below \cite[Definition 1.1.1]{HalpernLeistner2020DerivedA}, and the last line from the definition of the truncation functor on $\Grad(\mX)$. Given a lift $f_n$, according to \cite[Theorem 8.5]{Pridham2013PresentingHS}, the obstruction to extends $f_n$ to $f_{n+1}$ lies in:
    \begin{align}
        \Ext^1_{\Spec(A[t]/t^n)/\bG_m}((f_n)^\ast\bL_\phi,I)\simeq \Ext^1_{\Spec(A)}(g^\ast(pr_1)^\ast(\iota^\ast\bL_\phi)^{-n},A)
    \end{align}
    which vanishes because the right hand term is perfect in degree $[0,1]$. Then still according to \cite[Theorem 8.5]{Pridham2013PresentingHS}, the groupoid of extension is naturally a torsor over:
    \begin{align}
        \Map_{\Spec(A[t]/t^n)/\bG_m}((f_n)^\ast\bL_\phi,I)\simeq \Map_{\Spec(A)}(g^\ast(pr_1)^\ast(\iota^\ast\bL_\phi)^{-n},A)
    \end{align}
     Using the coherent completeness of $\Theta_{\Spec(A)}$ along $B\bG_m\times\Spec(A)$ \cite[Proposition 5.18]{Alper2015AL} and the Tannaka duality result \cite[Corollary 2.8]{Alper2015AL}, the groupoid of lifts $\bar{f}$ is equivalent to the groupoid of coherent systems of lifts $f_n$. Hence, the groupoid of lifts $\bar{f}$, \ie of $\Spec(A)$-points of $\Filt(\mX)$ over $g:\Spec(A)\to \Grad(\mX)\times_{\Grad(\mY)}\Filt(\mY)$, is naturally a torsor over the Abelian groupoid:
     \begin{align}
       \Map_{\Spec(A)}(g^\ast(pr_1)^\ast(\iota^\ast\bL_\phi)^{<0},A)
    \end{align}
    which is by definition the groupoid of $\Spec(A)$-points of $\bV_{\Grad(\mX)\times_{\Grad(\mY)}\Filt(\mY)}((pr_1)^\ast(\iota^\ast\bL_\phi)^{<0})$. This action is obviously functorial in $A$, hence we have built a structure of $\bV_{\Grad(\mX)\times_{\Grad(\mY)}\Filt(\mY)}((pr_1)^\ast(\bL_\phi^{<0}))$-torsor on $\Filt(\mX)\to\Grad(\mX)\times_{\Grad(\mY)}\Filt(\mY)$.
\end{proof}

\begin{remark}
    In particular, if $\mX$ is smooth over $\mY=S$, from Lemma \ref{basechangehyploc}, $\Grad(\mX)\times_{\Grad(S)}\Filt(S)\simeq\Grad(\mX)$. One obtains that $\Filt(\mX)\to\Grad(\mX)$ is an affine bundle stack over $(\iota^\ast\bL_{\mX})^{<0}$. In this case, $\tilde{\iota}:\Grad(\mX)\to\Filt(\mX)$ gives a canonical section, hence $\Filt(\mX)\to\Grad(\mX)$ is identified with the vector bundle stack $\bV_{\Grad(\mX)}((\iota^\ast\bL_\mX)^{<0})$. In particular, if $\mX=[X/\bG_m]$, $[X^+/\bG_m]\to[X^0/\bG_m]$ is a component of $\Filt(\mX)\to\Grad(\mX)$, and, passing to the cover $X^0\to[X^0/\bG_m]$,
    one find the classical result of  Bialynicki-Birula \cite{BiaynickiBirula1973SomeTO}, saying that $X^+\to X^0$ is identified with the vector bundle $\bV_X((\bL_X|_{X^0})^{<0})$. In this sense, our result is a relative and stacky version of \cite{BiaynickiBirula1973SomeTO}.
\end{remark}

From now on, we will slightly abuse the notations by denoting $\bL_\phi^I$ for $(\iota^\ast\bL_\phi)^I$.

\begin{proposition}\label{propsmoothBB}
    Let $\phi:\mX\to\mY$ be a smooth morphism of stacks, and suppose that the coefficient system satisfies étale descent, or that $\pi:\Filt(\mX)\to\Grad(\mX)\times_{\Grad(\mY)}\Filt(\mY)$ has Nisnevich-local sections (it is plausible that this condition is always satisfied, see Remark \ref{remNLaffine}). There is a natural isomorphism:
    \begin{align}
        (p_\mX)_!(\eta_\mX)^\ast\phi^\ast\simeq \Sigma^{-\bL_\phi^{<0}}\Grad(\phi)^\ast(p_\mY)_!(\eta_\mY)^\ast
    \end{align}
    It is compatible with composition, namely for $\phi':\mY\to\mZ$ smooth, the following square of isomorphisms is commutative:
    \[\begin{tikzcd}
        (p_\mX)_!(\eta_\mX)^\ast\phi^\ast(\phi')^\ast\arrow[r,"\simeq"]\arrow[d,"\simeq"] & \Sigma^{-\bL_\phi^{<0}}\Grad(\phi)^\ast(p_\mY)_!(\eta_\mY)^\ast(\phi')^\ast\arrow[r,"\simeq"] & \Sigma^{-\bL_\phi^{<0}}\Grad(\phi)^\ast\Sigma^{-\bL_{\phi'}^{<0}}\Grad(\phi')^\ast(p_\mZ)_!(\eta_\mZ)^\ast\arrow[d,"\simeq"]\\
        (p_\mX)_!(\eta_\mX)^\ast(\phi'\circ\phi)^\ast\arrow[rr,"\simeq"] & & \Sigma^{-\bL_{\phi'\circ\phi}^{<0}}(p_\mZ)_!(\eta_\mZ)^\ast
    \end{tikzcd}\]
    where the right vertical arrow comes from the exact triangle $\Grad(\phi)^\ast(\bL_{\phi'}^{<0})\to\bL_{\phi'\circ\phi}^{<0}\to \bL_\phi^{<0}$. It is also compatible with exterior tensor product, using the isomorphism of Lemma \ref{lemhyplocprod}.
\end{proposition}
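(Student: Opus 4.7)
The plan is to realize the hyperbolic localization on $\phi^\ast$ by factoring through the affine bundle stack of Proposition \ref{propBB}. Write $\pi:\Filt(\mX)\to\Grad(\mX)\times_{\Grad(\mY)}\Filt(\mY)$ for the canonical map, with projections $pr_1,pr_2$. One checks directly that $p_\mX=pr_1\circ\pi$, $\Filt(\phi)=pr_2\circ\pi$, $\phi\circ\eta_\mX=\eta_\mY\circ\Filt(\phi)$, and the defining square
\[
\begin{tikzcd}
\Grad(\mX)\times_{\Grad(\mY)}\Filt(\mY)\arrow[r,"pr_2"]\arrow[d,"pr_1"] & \Filt(\mY)\arrow[d,"p_\mY"]\\
\Grad(\mX)\arrow[r,"\Grad(\phi)"] & \Grad(\mY)
\end{tikzcd}
\]
is Cartesian (by Lemma \ref{basechangehyploc} applied to the mapping-stack construction). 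By Proposition \ref{propBB}, $\pi$ is an affine bundle stack modeled on $\mE:=(pr_1)^\ast\bL_\phi^{<0}$; by Lemma \ref{lemmaffinesmooth}, $\pi$ is smooth with cotangent $\bL_\pi=\pi^\ast\mE$.

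Under the hypothesis of the proposition (étale descent, or Nisnevich-local sections for $\pi$), Lemma \ref{lemhomotop} gives $\pi_!\pi^!\simeq \Id$. Combined with the relative purity isomorphism $\pi^!\simeq \Sigma^{\bL_\pi}\pi^\ast$ and the commutation of Thom twists with proper pushforward (Lemma \ref{lemmaspecthom}), this yields $\pi_!\pi^\ast\simeq \Sigma^{-\mE}$. Then:
\begin{align*}
(p_\mX)_!(\eta_\mX)^\ast\phi^\ast &\simeq (pr_1)_!\pi_!\pi^\ast(pr_2)^\ast(\eta_\mY)^\ast\\
&\simeq (pr_1)_!\Sigma^{-(pr_1)^\ast\bL_\phi^{<0}}(pr_2)^\ast(\eta_\mY)^\ast\\
&\simeq \Sigma^{-\bL_\phi^{<0}}(pr_1)_!(pr_2)^\ast(\eta_\mY)^\ast\\
&\simeq \Sigma^{-\bL_\phi^{<0}}\Grad(\phi)^\ast(p_\mY)_!(\eta_\mY)^\ast,
\end{align*}
where the last isomorphism is base change in the Cartesian square (a special case of Theorem \ref{commutspechyploc}).

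For compatibility with composition along $\phi':\mY\to\mZ$, the key geometric input is that the composite
\[
\Filt(\mX)\xrightarrow{\pi_\phi}\Grad(\mX)\times_{\Grad(\mY)}\Filt(\mY)\longrightarrow\Grad(\mX)\times_{\Grad(\mZ)}\Filt(\mZ)
\]
(where the second arrow is obtained by base change from $\pi_{\phi'}$ along $(\Grad(\phi),\Id)$) identifies naturally with $\pi_{\phi'\circ\phi}$ as an affine bundle stack, and the resulting identification is compatible with the cofiber sequence $\Grad(\phi)^\ast\bL_{\phi'}^{<0}\to\bL_{\phi'\circ\phi}^{<0}\to\bL_\phi^{<0}$ on cotangents. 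Granting this, the pentagon decomposes into subdiagrams whose commutativity follows from the compatibility of Thom twists with exact triangles (Lemma \ref{lemthomtwist}), their commutation with pullback (Lemma \ref{lemmaspecthom}), base change, and the compatibility of relative purity with composition (Lemma \ref{lemmapurcomp}). Compatibility with exterior tensor product is formal from Lemma \ref{lemhyplocprod}, the monoidality of the six functors, and that of Thom twists.

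The main obstacle is the identification of the composite affine bundle stack with $\pi_{\phi'\circ\phi}$ compatibly with the cofiber sequence on cotangents: Proposition \ref{propBB} characterizes the torsor structure only through a functor-of-points/deformation-theoretic description, so the naturality check amounts to unpacking the extension-theoretic comparison of the three deformation problems. Once this identification is in place, the remaining verification of the pentagon is routine bookkeeping with the naturality isomorphisms listed above.
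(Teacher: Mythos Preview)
Your proof is correct and follows essentially the same route as the paper: factor through $\pi$, use Proposition~\ref{propBB} and Lemma~\ref{lemhomotop} to identify $\pi_!\pi^\ast$ with a Thom twist, then apply ordinary base change in the Cartesian square; for composition, the paper uses exactly the factorization $\pi_{\phi'\circ\phi}=\tilde\pi_{\phi'}\circ\pi_\phi$ you describe (with $\tilde\pi_{\phi'}$ the base change of $\pi_{\phi'}$), appealing to compatibility of the counit and purity with composition and base change. Two minor citation fixes: the last base-change step is the standard exchange isomorphism $Ex^\ast_!$, not Theorem~\ref{commutspechyploc} (which concerns specialization systems); and the Cartesian square for $pr_1,pr_2$ is definitional, not an application of Lemma~\ref{basechangehyploc}.
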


\begin{proof}
    We consider the following commutative diagram:
    \[\begin{tikzcd}
    & \Filt(\mX)\arrow[dl,swap,"p_\mX"]\arrow[r,"\eta_\mX"]\arrow[d,"\pi"] & \mX\arrow[dd,"\phi"]\\
    \Grad(\mX)\arrow[d,"\Grad(\phi)"] & \Grad(\mX)\times_{\Grad(\mY)}\Filt(\mY)\arrow[l,swap,"pr_1"]\arrow[d,"pr_2"] & \\
    \Grad(\mY)&\Filt(\mY)\arrow[l,swap,"p_\mY"]\arrow[r,"\eta_\mY"] & \mY
\end{tikzcd}\]
    We consider first the isomorphism:
    \begin{align}\label{eqsimple}
        (\eta_\mX)^\ast\phi^\ast\simeq \Filt(\phi)^\ast(\eta_\mY)^\ast
    \end{align}
    Proposition \ref{propBB} gives that $\pi$ is an affine bundle stack, and Lemma \ref{lemmaffinesmooth} gives that it is smooth of cotangent bundle $\pi^\ast(pr_1)^\ast(\bL_\phi^{<0})$. Using Lemma \ref{lemhomotop}, $\pi_!\pi^!\to Id$ is an isomorphisms. We obtain then:
    \begin{align}\label{eqpipi}
        \pi_!\pi^\ast\simeq \pi_!\Sigma^{-\pi^\ast(pr_1)^\ast(\bL_\phi^{<0})}\pi^!\simeq\Sigma^{-(pr_1)^\ast(\bL_\phi^{<0}))} \pi_!\pi^!\simeq \Sigma^{-(pr_1)^\ast(\bL_\phi^{<0})}
    \end{align}
    where the first isomorphism is the purity isomorphism. With that, we obtain a 'base change isomorphisms':
    \begin{align}\label{exbasechange}
        (p_\mX)_!\Filt(\phi)^\ast\simeq (pr_1)_! \pi_!\pi^\ast(pr_2)^\ast\simeq  (pr_1)_!\Sigma^{-(pr_1)^\ast(\bL_\phi^{<0})}(pr_2)^\ast\simeq \Sigma^{-\bL_\phi^{<0}}(pr_1)_!(pr_2)^\ast\simeq \Sigma^{-\bL_\phi^{<0}}\Grad(\phi)^\ast (p_\mY)_!
    \end{align}
    which gives the claimed isomorphism.\medskip

     The isomorphism \eqref{eqsimple} is obviously compatible with composition, so we want to show that the isomorphism \eqref{exbasechange} is compatible with composition. We have the following commutative diagram with Cartesian square:
     \[\begin{tikzcd}
         & \Filt(\mX)\arrow[dl,swap,"\pi_{\phi'\circ\phi}"]\arrow[d,"\pi_\phi"]\\
         \Grad(\mX)\times_{\Grad(\mZ)}\Filt(\mZ)\arrow[d,"\Grad(\phi)\times Id"] & \Grad(\mX)\times_{\Grad(\mY)}\Filt(\mY)\arrow[l,swap,"\tilde{\pi}_{\phi'}"]\arrow[d,"pr_2"]\\
         \Grad(\mY)\times_{\Grad(\mZ)}\Filt(\mZ) & \Filt(\mY)\arrow[l,swap,"\pi_{\phi'}"]
     \end{tikzcd}\]
     From the compatibility of the counit with composition, and the compatibility of the purity isomorphism with composition (Lemma \ref{lemmapurcomp}), one has a commutative square of isomorphisms:
     \[\begin{tikzcd}
        (\tilde{\pi}_{\phi'})_!(\pi_\phi)_!(\pi_\phi)^\ast(\tilde{\pi}_{\phi'})^\ast\arrow[r,"\simeq"]\arrow[d,"\simeq"] & (\tilde{\pi}_{\phi'})_!\Sigma^{-(pr_1)^\ast(\bL_\phi^{<0})}(\tilde{\pi}_{\phi'})^\ast\arrow[r,"\simeq"] & \Sigma^{-(pr_1)^\ast(\bL_\phi^{<0})}\Sigma^{-(\Grad(\phi)\times Id)^\ast(pr_1)^\ast(\bL_{\phi'}^{<0})}\arrow[d,"\simeq"]\\
        (\pi_{\phi'\circ\phi})_!(\pi_{\phi'\circ\phi})^\ast\arrow[rr,"\simeq"] & & \Sigma^{-(pr_1)^\ast(\bL_{\phi'\circ\phi}^{<0})}
    \end{tikzcd}\]
    from the compatibility of the counit with base change, and the compatibility of the purity isomorphism with base change (Lemma \ref{lemmaspecpurity}), the upper right vertical arrow is obtained by base change from $(\pi_{\phi'})_!(\pi_{\phi'})^\ast\simeq \Sigma^{-(pr_1)^\ast(\bL_{\phi'}^{<0})}$. It follows then that \eqref{exbasechange} is compatible with composition.\medskip

    The statement about products follows directly, as all our constructions are monoidal.
\end{proof}

\begin{remark}
    In particular, when $\mX\to S$ is smooth (in this case, $\pi=p$ has always a section $\tilde{\iota}$), one obtains the " Bialynicki-Birula decomposition":
    \begin{align}\label{exdecompbraden}
        p_!\eta^\ast\un_\mX\simeq\Sigma^{-\bL_\mX^{<0}}\un_{\Grad(\mX)}
    \end{align}
    and, if $\mX=[X/\bG_m]$:
    \begin{align}
        (p^+)_!(\eta^+)^\ast\un_X\simeq\Sigma^{-(\bL_X|_{X^0})^{<0}}\un_{X^0}
    \end{align}
\end{remark}

\begin{lemma}\label{lemspecsmoothhyploc}
    Consider specialization system $\sps:\mD_1\to\mD_1$ over algebraic spaces $\eta\to B\leftarrow\sigma$. Consider a smooth morphism of stacks $\phi:\mX\to\mY$. Then the following square of isomorphisms is commutative:
    \[\begin{tikzcd}
        (p_\mX)_!(\eta_\mX)^\ast\phi^\ast\sps\arrow[r,"\simeq"]\arrow[d,"\simeq"] & \sps(p_\mX)_!(\eta_\mX)^\ast\phi^\ast\arrow[d,"\simeq"]\\
        \Sigma^{-\bL_\phi^{<0}}\Grad(\phi)^\ast(p_\mY)_!(\eta_\mY)^\ast\sps\arrow[r,"\simeq"] & \sps\Sigma^{-\bL_\phi^{<0}}\Grad(\phi)^\ast(p_\mY)_!(\eta_\mY)^\ast
    \end{tikzcd}\]
\end{lemma}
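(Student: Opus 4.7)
The plan is to decompose the bottom vertical isomorphism of the square into the ingredients used in the proof of Proposition \ref{propsmoothBB}, verify compatibility with $\sps$ at each step, and assemble the commutative square by a diagram chase. Concretely, the vertical isomorphism factors through
\[
(p_\mX)_!(\eta_\mX)^\ast\phi^\ast \;\simeq\; (p_\mX)_!\Filt(\phi)^\ast(\eta_\mY)^\ast \;\simeq\; (pr_1)_!\pi_!\pi^\ast(pr_2)^\ast(\eta_\mY)^\ast \;\simeq\; (pr_1)_!\Sigma^{-(pr_1)^\ast\bL_\phi^{<0}}(pr_2)^\ast(\eta_\mY)^\ast \;\simeq\; \Sigma^{-\bL_\phi^{<0}}\Grad(\phi)^\ast(p_\mY)_!(\eta_\mY)^\ast,
\]
where $\pi:\Filt(\mX)\to\Grad(\mX)\times_{\Grad(\mY)}\Filt(\mY)$ is the affine bundle stack of Proposition \ref{propBB}. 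The strategy is to build the intermediate square at each stage with $\sps$ inserted on the appropriate side, and to argue commutativity from the existing compatibility lemmas.

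The first isomorphism is pure base change; its compatibility with $\sps$ is the axiomatic compatibility of \eqref{specpullback} with compositions of $\ast$-pullbacks. The third isomorphism
$\pi_!\pi^\ast\simeq\Sigma^{-(pr_1)^\ast\bL_\phi^{<0}}$
is obtained in \eqref{eqpipi} as the composite of (i) the purity isomorphism $\pi^\ast\simeq\Sigma^{-\pi^\ast(pr_1)^\ast\bL_\phi^{<0}}\pi^!$, (ii) the commutation $\pi_!\Sigma^{-\pi^\ast(\mathcal{E})}\simeq \Sigma^{-\mathcal{E}}\pi_!$ for Thom twists along a smooth pushforward, and (iii) the homotopy invariance $\pi_!\pi^!\simeq\Id$ of Lemma \ref{lemhomotop}. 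For each of these, Lemma \ref{lemmaspecpurity}, Lemma \ref{lemmaspecthom} and its corollary for $g_!\Sigma^{\pm g^\ast(\mE)}\simeq\Sigma^{\pm\mE}g_!$, together with the compatibility of base change with $\sps$, give the needed commutative squares. The fourth isomorphism is then again smooth base change plus Thom twist commutation, both covered by Lemma \ref{lemmaspecthom} and the axiomatic compatibility of $\sps$ with $Ex^\ast_!$.

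The only genuinely new verification is compatibility of the homotopy-invariance isomorphism $\pi_!\pi^!\simeq\Id$ with $\sps$. This is the main obstacle, since this isomorphism was proven in Lemma \ref{lemhomotop} by reducing (via conservativity along a suitable smooth cover) to the case of a trivial vector bundle on a scheme, and then to the case $\bA^1_X\to X$, which is handled by the specialization system compatibility of \cite[Proposition 3.1.7]{Ayoub2018LesSOII} recalled in Lemma \ref{lemmaspecthom}. The same reduction works verbatim here: one uses that $\sps$ commutes with smooth $\ast$-pullback and with $!$-pushforward along a Nisnevich covering of smooth morphisms with local sections (hypothesis of the lemma, or automatic under étale descent), so that conservativity propagates the comparison morphism from the trivial vector bundle case, where Lemma \ref{lemmaspecthom} applies directly.

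Assembling: the Cartesian diagram underlying \eqref{eqsimple}--\eqref{exbasechange} is obtained by base change along morphisms of algebraic spaces, so by Lemma \ref{basechangehyploc} the same decomposition exists coherently over $\eta$ and over $\sigma$. Each intermediate isomorphism in the bottom row has a corresponding square with $\sps$ inserted; gluing these squares vertically yields the desired outer commutative square. All four sides are isomorphisms: the top from Theorem \ref{commutspechyploc}, the bottom from Theorem \ref{commutspechyploc}, and the two verticals from Proposition \ref{propsmoothBB}.
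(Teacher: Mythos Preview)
Your approach is correct and matches the paper's: decompose the vertical isomorphism of Proposition \ref{propsmoothBB} into its constituents \eqref{eqsimple}, \eqref{eqpipi}, \eqref{exbasechange}, and check compatibility with $\sps$ at each step, using Lemma \ref{basechangehyploc} to ensure the diagrams over $\eta$ and $\sigma$ are base-changed from $B$.

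One point is overcomplicated. You single out compatibility of $\pi_!\pi^!\simeq\Id$ with $\sps$ as ``the only genuinely new verification'' and ``the main obstacle'', and propose a reduction to trivial bundles and $\bA^1$. But no such reduction is needed: the morphism $\pi_!\pi^!\to\Id$ is the counit of the adjunction $(\pi_!,\pi^!)$, and compatibility of specialization systems with adjunction (co)units is automatic from the axioms (the exchange morphisms for $\pi_!$ and $\pi^!$ are adjoints of one another). Whether the counit is an isomorphism is irrelevant to the commutativity of the square; it only matters for the vertical arrows being isomorphisms, which is already part of Proposition \ref{propsmoothBB}. The paper's proof simply says ``specialization systems are compatible with counit, and with the purity isomorphism (Lemma \ref{lemmaspecpurity}), hence the isomorphism \eqref{eqpipi} commutes with specialization systems'', which is the clean way to handle this step.
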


\begin{proof}
    Notice that, from Lemma \ref{basechangehyploc}, all the diagram considered over $\eta$ and $\sigma$ are obtain by base change from those considered over $B$. The morphism \eqref{eqsimple} is obviously compatible with specialization systems. Specialization systems are compatible with counit, and with the purity isomorphism (Lemma \ref{lemmaspecpurity}), hence the isomorphism \eqref{eqpipi} commutes with specialization systems. Using compatibility of specialization systems with base change, the isomorphism \eqref{exbasechange} commutes then also with specialization systems, hence we are done.
\end{proof}

\subsubsection{Functoriality with closed pullbacks}

This the following (iso)morphism will be useful to restrict vanishing cycles to the critical locus:

\begin{lemma}\label{lemhyploclosed}
    Given a closed immersion $i:\mZ\to\mX$, there is a natural morphism:
    \begin{align}
        \Grad(i)^\ast (p_\mX)_!(\eta_\mX)^\ast\to (p_\mZ)_!(\eta_\mZ)^\ast i^\ast
    \end{align}
    compatible with exterior products, composition of closed immersions, and with the isomorphism of Proposition \ref{propsmoothBB} under base change. It is an isomorphism when applied on objects of $\mD(\mX)$ supported on $\mZ$.
\end{lemma}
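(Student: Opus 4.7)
I plan to construct the natural morphism using the unit of the $(i^\ast, i_\ast)$ adjunction together with proper base change, and to derive the three claimed compatibilities from standard six-functor coherences. The key preliminary observation is that in the diagram
\[\begin{tikzcd}
\Grad(\mZ) \arrow[d, "\Grad(i)"'] & \Filt(\mZ) \arrow[l, "p_\mZ"'] \arrow[r, "\eta_\mZ"] \arrow[d, "\Filt(i)"] & \mZ \arrow[d, "i"] \\
\Grad(\mX)  & \Filt(\mX) \arrow[l, "p_\mX"'] \arrow[r, "\eta_\mX"]  & \mX
\end{tikzcd}\]
the right square and the outer rectangle via $\iota = \eta \circ \tilde\iota$ are Cartesian, and consequently $\Filt(i)$ and $\Grad(i)$ are closed immersions (as base changes of $i$). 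For the right square this uses that $\Map(\Theta,-)$ preserves limits, $i$ is a monomorphism, and $T \times \{1\}$ is open dense in $T \times \Theta$ (so any closed substack containing it is everything), giving that a $T$-family $f : T \times \Theta \to \mX$ factors through $\mZ$ iff $f|_{T \times \{1\}}$ does; the outer rectangle is handled similarly using faithfully flat descent of factoring along the $\bG_m$-bundle $T \to T \times B\bG_m$.

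The morphism is then defined as the composition
\begin{align*}
\Grad(i)^\ast (p_\mX)_!(\eta_\mX)^\ast F &\to \Grad(i)^\ast (p_\mX)_!(\eta_\mX)^\ast (i_\ast i^\ast F)\\
&\simeq \Grad(i)^\ast (p_\mX)_!\Filt(i)_!(\eta_\mZ)^\ast i^\ast F\\
&\simeq \Grad(i)^\ast \Grad(i)_!(p_\mZ)_!(\eta_\mZ)^\ast i^\ast F\\
&\simeq (p_\mZ)_!(\eta_\mZ)^\ast i^\ast F,
\end{align*}
where the first arrow is induced by the unit $F \to i_\ast i^\ast F$; the second uses $i_\ast \simeq i_!$ (since $i$ is proper) followed by proper base change in the Cartesian right square; the third uses the composition identity $p_\mX \circ \Filt(i) = \Grad(i) \circ p_\mZ$ with compatibility of $(-)_!$ with composition; and the fourth uses $\Grad(i)_! \simeq \Grad(i)_\ast$ together with the counit $\Grad(i)^\ast \Grad(i)_\ast \simeq \mathrm{Id}$, both valid because $\Grad(i)$ is a closed immersion.

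The only non-isomorphism in the above chain is the unit $F \to i_\ast i^\ast F$, and this becomes an iso precisely when $F$ is supported on $\mZ$ (by the localization triangle $j_! j^\ast F \to F \to i_\ast i^\ast F$ with $j$ the open complement of $i$), which gives the second part of the lemma. The three compatibilities follow from standard coherences: exterior product compatibility from the monoidality of unit, proper base change, and counit, combined with the monoidality encoded in Lemma \ref{lemhyplocprod}; composition of closed immersions $\mW \hookrightarrow \mZ \hookrightarrow \mX$, by stacking the constructions and using $(i \circ i')_\ast \simeq i_\ast i'_\ast$ plus the analogous identities for $\Filt$ and $\Grad$; and compatibility with the smooth pullback iso of Proposition \ref{propsmoothBB} under base change, from the fact that $\mathrm{Id} \to i_\ast i^\ast$ commutes with smooth pullback (by smooth base change applied to $i_\ast$) and that proper base change commutes with smooth base change.

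The main obstacle is the third compatibility, which requires a careful diagram chase combining smooth base change for $i_\ast$, proper base change, and the purity isomorphism (via Lemma \ref{lemmaspecpurity}) through the intermediate stack $\Filt(\mX) \to \Grad(\mX) \times_{\Grad(\mY)} \Filt(\mY)$ used in the proof of Proposition \ref{propsmoothBB}; each elementary piece is a standard coherence, but ensuring they splice correctly is the delicate point.
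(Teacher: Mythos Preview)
Your construction is correct and gives the same natural transformation as the paper, but the paper organizes the argument differently. Rather than applying the unit $F\to i_\ast i^\ast F$ on $\mX$ and then base-changing along $\eta$, the paper introduces the intermediate fiber product $\Grad(\mZ)\times_{\Grad(\mX)}\Filt(\mX)$, factors $\Filt(i)=pr_2\circ\hat{i}$ through the closed immersion $\hat{i}:\Filt(\mZ)\to\Grad(\mZ)\times_{\Grad(\mX)}\Filt(\mX)$, first applies base change $\Grad(i)^\ast(p_\mX)_!\simeq(pr_1)_!(pr_2)^\ast$, and only then uses the unit $Id\to\hat{i}_!\hat{i}^\ast$. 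The payoff is that this mirrors exactly the structure of the proof of Proposition~\ref{propsmoothBB}, which is also built around the map $\pi:\Filt(\mX)\to\Grad(\mX)\times_{\Grad(\mY)}\Filt(\mY)$; the compatibility with that proposition then reduces to the fact that $Id\to\hat{i}_!\hat{i}^\ast$ and $\pi_!\pi^\ast\simeq\Sigma^{-(pr_1)^\ast\bL_\phi^{<0}}$ are both compatible with base change and commute with each other via Lemma~\ref{lemmaspecproper}, organized by a single big Cartesian diagram. Your route works too, but the ``careful diagram chase'' you flag for the third compatibility is precisely what the paper's factorization is designed to shortcut. Also note that the Cartesianness of both squares is quoted from \cite[Corollary~1.1.7, Proposition~1.3.1~2)]{HalpernLeistner2014OnTS} rather than argued directly via density of $\{1\}$ in $\Theta$.
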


\begin{proof}
    Consider the following commutative diagram:
    \[\begin{tikzcd}
        & \Filt(\mZ)\arrow[dl,swap,"p_\mZ"]\arrow[d,"\hat{i}"]\arrow[r,"\eta_\mZ"] & \mZ\arrow[dd,"i"]\\
        \Grad(\mZ)\arrow[d,"\Grad(i)"] & \Grad(\mZ)\times_{\Grad(\mX)}\Filt(\mZ)\arrow[l,swap,"pr_1"]\arrow[d,"pr_2"] & \\
        \Grad(\mX) & \Filt(\mX)\arrow[l,swap,"p_\mU"]\arrow[r,"\eta_\mU"] & \mX
    \end{tikzcd}\]
    By \cite[Corollary 1.1.7, Proposition 1.3.1 2)]{HalpernLeistner2014OnTS}, because $i$ is closed we have $\Grad(\mR)=\Grad(\mU)\times_\mU\mR$ and $\Filt(\mR)=\Filt(\mU)\times_\mU\mR$. In particular the right square diagram is Cartesian, $\Filt(i)=pr_2\circ\hat{i}$ and $pr_2$ are closed immersions, hence $\hat{i}$ is a closed immersion. We consider then the following sequence of (iso)morphisms:
    \begin{align}
    \Grad(i)^\ast(p_\mX)_!(\eta_\mX)^\ast\simeq (pr_1)_!(pr_2)^\ast(\eta_\mX)^\ast \to(pr_1)_!\hat{i}_!\hat{i}^\ast(pr_2)^\ast(\eta_\mX)^\ast
    \simeq  (p_\mZ)_!(\eta_\mZ)^\ast i^\ast
    \end{align}
    where the second morphism comes from $Id\to\hat{i}_!\hat{i}^\ast$, as $\hat{i}$ is a closed immersion. If an object $F\in\mD(\mX)$ is supported on $\mZ$, by base change $(\eta_\mX)^\ast F$ is supported on $\Filt(\mZ)$, hence this morphism is an isomorphism.\medskip
    
    If we consider a second closed immersion $i':\mZ'\to\mZ$, we want to show that the following diagram is commutative:
    \[\begin{tikzcd}
        \Grad(i')^\ast\Grad(i)^\ast (p_\mX)_!(\eta_\mX)^\ast\arrow[r]\arrow[d,"\simeq"] & \Grad(i')^\ast(p_\mZ)_!(\eta_\mZ)^\ast i^\ast\arrow[r] & (p_{\mZ'})_!(\eta_{\mZ'})^\ast (i')^\ast i^\ast\arrow[d,"\simeq"]\\
        \Grad(i\circ i')^\ast(p_\mX)_!(\eta_\mX)^\ast\arrow[rr] && (p_{\mZ'})_!(\eta_{\mZ'})^\ast (i\circ i')^\ast
    \end{tikzcd}\]
    The proof is similar to the proof of the similar statement in Proposition \ref{propsmoothBB}, and we have to prove that $Id\to \hat{i}_!\hat{i}^\ast$ is compatible with composition of closed immersions and base change. From Lemma \ref{lemmaspecproper} and \ref{lemmapurcomp}, $\hat{i}_!\simeq \hat{i}_\ast$ is compatible with base change and composition, and $Id\to \hat{i}_\ast \hat{i}^\ast$ is also by properties of the adjunctions, hence $Id\to \hat{i}_!\hat{i}^\ast$ is compatible with base change and composition.\medskip

    Consider now $\phi:\mX'\to\mX$ smooth, $i:\mZ\to\mX$ a closed immersion, and $\phi':\mZ'\to\mZ$, $i':\mZ'\to\mX$ obtained by base change. We want to show that the following diagram is commutative:
    \[\begin{tikzcd}
    \Sigma^{-\bL_{\phi'}^{<0}}\Grad(\phi')^\ast\Grad(i)^\ast(p_\mX)_!(\eta_\mX)^\ast\arrow[d,"\simeq"]\arrow[r] & \Sigma^{-\bL_{\phi'}^{<0}}\Grad(\phi')^\ast(p_\mZ)_!(\eta_\mZ)^\ast i^\ast\arrow[r,"\simeq"] & (p_{\mZ'})_!(\eta_{\mZ'})^\ast(\phi')^\ast i^\ast\arrow[d,"\simeq"]\\
    \Grad(i')^\ast\Sigma^{-\bL_{\phi}^{<0}}\Grad(\phi)^\ast(p_\mX)_!(\eta_\mX)^\ast \arrow[r,"\simeq"] & \Grad(i')^\ast(p_{\mX'})_!(\eta_{\mX'})^\ast\phi^\ast\arrow[r] & (p_{\mZ'})_!(\eta_{\mZ'})^\ast(i')^\ast\phi^\ast
    \end{tikzcd}\]
    As said above, the morphism $Id\to\hat{i}_!\hat{i}^\ast$ and $(\pi_{\phi})_!(\pi_{\phi})^\ast\simeq \Sigma^{-(pr_1)^\ast(\bL_\phi^{<0})}$ are compatible with base change. From \ref{lemmaspecproper}, the isomorphism of specialization systems $(\pi_{\phi})_!(\pi_{\phi})^\ast\simeq \Sigma^{-(pr_1)^\ast(\bL_\phi^{<0})}$ commutes with $\hat{i}_!\simeq \hat{i}_\ast$, hence also with $Id\to \hat{i}_! \hat{i}^\ast$. We obtain then the desired result by base change in the following commutative diagram with Cartesian squares:
    \[\begin{tikzcd}[column sep=large]
        \Filt(\mZ)\arrow[d,"\hat{i}_i"] & \Grad(\mZ')\times_{\Grad(\mZ)}\Filt(\mZ)\arrow[d,"\tilde{\hat{i}}_i"]\arrow[l,swap,"pr_2"] & \Filt(\mZ')\arrow[l,swap,"\pi_{\phi'}"]\arrow[d,"\hat{i}_{i'}"]\\
        \Grad(\mZ)\times_{\Grad(\mX)}\Filt(\mX) & \Grad(\mZ')\times_{\Grad(\mX)}\Filt(\mX)\arrow[l,swap,"\Grad(\phi')\times Id"]\arrow[d,"\Grad(i')\times Id"] & \Grad(\mZ')\times_{\Grad(\mX')}\Filt(\mX')\arrow[l,swap,"\tilde{\pi}_\phi"]\arrow[d,"pr_2"]\\
        & \Grad(\mX')\times_{\Grad(\mX)}\Filt(\mX) & \Filt(\mX')\arrow[l,swap,"\pi_\phi"]
    \end{tikzcd}\] 
    \medskip
    
    The statement about products follows directly, as all our constructions are monoidal.  
\end{proof}

\subsubsection{Functoriality with $\bZ/2\bZ$-bundles}

Consider a principal $\bZ/2\bZ$-bundle $\pi:P\to\mX$ (\ie, a finite map of degree $2$). Consider the locally constant object:
\begin{align}
    \mathcal{L}_P:=\cofib(\un_\mX\to \pi_\ast\pi^\ast\un_\mX) \in\mD(\mX)
\end{align}
Following the notation of \cite[Definition 2.9, remark 2.20]{Joycesymstab}, we denote:
\begin{align}\label{eqtwistbun}
    -\otimes_{\bZ/2\bZ}P:=\cofib(Id\to \pi_\ast\pi^\ast)\simeq -\otimes\mathcal{L}_P:\mD(\mX)\to\mD(\mX)
\end{align}
As $\pi$ is étale and proper, by smooth and proper base change, this defines a morphism of coefficient systems $\mD|_\mX\to\mD|_\mX$, \ie there are natural isomorphisms of commutation with the six functors.\medskip

Given $\pi_i:P_i\to \mX$ (resp. $\pi_i:P_i\to \mX_i$), $i=1,2$, we denote by $P_1\otimes_{\bZ/2\bZ}P_2$ (resp. $P_1\boxtimes_{\bZ/2\bZ}P_2$) the (resp. exterior) product of $P,P'$ as $\bZ/2\bZ$-bundles. 

\begin{lemma}\label{lemtwistbun}
    The operation $P\to -\otimes_{\bZ/2\bZ}P$ enhance to a symmetric monoidal functor from $\bZ/2\bZ$-bundles over $\mX$ to involutive isomorphisms of coefficient systems $\mD|_\mX\to\mD|_\mX$. It is compatible with exterior tensor product, namely for $\pi_i:P_i\to \mX_i$ for $i=1,2$, there is a natural isomorphism of involution of coefficient systems:
    \begin{align}
        (-\otimes_{\bZ/2\bZ}P_1)\boxtimes (-\otimes_{\bZ/2\bZ}P_2)\simeq (-\boxtimes-)\otimes_{\bZ/2\bZ}(P_1\boxtimes_{\bZ/2\bZ}P_2)
    \end{align}
    which is symmetric monoidal.
\end{lemma}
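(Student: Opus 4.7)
The plan is to exhibit $-\otimes_{\bZ/2\bZ}P$ as tensor product with a canonically invertible locally constant object $\mathcal{L}_P\in\mD(\mX)$, and deduce all claims from functoriality of this assignment. Since our coefficient categories are $\bQ$-linear, the natural $\bZ/2\bZ$-action on $\pi_\ast\pi^\ast\un_\mX$ coming from deck transformations splits it canonically into eigenspaces $\pi_\ast\pi^\ast\un_\mX\simeq \un_\mX\oplus\mathcal{L}_P$. This identifies $\mathcal{L}_P$ as a direct summand (not merely a cofiber), and the projection formula $\pi_\ast\pi^\ast F\simeq F\otimes\pi_\ast\pi^\ast\un_\mX$ then yields the isomorphism $-\otimes_{\bZ/2\bZ}P\simeq -\otimes\mathcal{L}_P$ used in \eqref{eqtwistbun}.

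I would then prove that $\mathcal{L}_P$ is $\otimes$-invertible with $\mathcal{L}_P\otimes\mathcal{L}_P\simeq \un_\mX$, giving the involutive structure. Applying base change and the projection formula to the Cartesian square with corners $P\times_\mX P$, $P$, $P$, $\mX$ gives $\pi_\ast\pi^\ast\un_\mX\otimes\pi_\ast\pi^\ast\un_\mX\simeq (\pi\circ pr_1)_\ast\un_{P\times_\mX P}$. Since $P$ is a $\bZ/2\bZ$-torsor, there is a canonical trivialization $P\times_\mX P\simeq P\sqcup P$ (diagonal and anti-diagonal components), so the right hand side becomes $\pi_\ast\pi^\ast\un_\mX\oplus\pi_\ast\pi^\ast\un_\mX$. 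Decomposing both sides under the $\bZ/2\bZ\times\bZ/2\bZ$-action and extracting the sign-sign eigenspace yields $\mathcal{L}_P\otimes\mathcal{L}_P\simeq\un_\mX$.

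The same computation applied to $P_1\times_\mX P_2$ gives the monoidality isomorphism $\mathcal{L}_{P_1\otimes_{\bZ/2\bZ}P_2}\simeq \mathcal{L}_{P_1}\otimes\mathcal{L}_{P_2}$, since by definition $P_1\otimes_{\bZ/2\bZ}P_2$ is the quotient of $P_1\times_\mX P_2$ by the anti-diagonal $\bZ/2\bZ$, whose sign character picks out precisely $\mathcal{L}_{P_1}\otimes\mathcal{L}_{P_2}$. The exterior product statement of the lemma follows in the same way: running the argument over $\mX_1\times\mX_2$ rather than $\mX$ gives $\mathcal{L}_{P_1\boxtimes_{\bZ/2\bZ}P_2}\simeq \mathcal{L}_{P_1}\boxtimes\mathcal{L}_{P_2}$, and then monoidality of $\boxtimes$ with respect to $\otimes$ yields canonically $(-\otimes\mathcal{L}_{P_1})\boxtimes(-\otimes\mathcal{L}_{P_2})\simeq (-\boxtimes-)\otimes(\mathcal{L}_{P_1}\boxtimes\mathcal{L}_{P_2})$. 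The compatibility of $-\otimes\mathcal{L}_P$ with the six functors, asserting that it is a morphism of coefficient systems, is automatic from the projection formula and smooth/proper base change since $\pi$ is finite étale.

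The main technical obstacle is verifying the symmetric structure: one must check that the braiding of $\mathcal{L}_{P_1}\otimes\mathcal{L}_{P_2}$ inherited from the tensor symmetry of $\mD(\mX)$ agrees with the one induced by the natural isomorphism of $\bZ/2\bZ$-torsors $P_1\otimes_{\bZ/2\bZ}P_2\simeq P_2\otimes_{\bZ/2\bZ}P_1$. No Koszul sign appears because $\mathcal{L}_P$ sits in cohomological degree zero as a summand of the finite étale pushforward $\pi_\ast\pi^\ast\un_\mX$; consequently the symmetry reduces to an elementary computation on the eigenspace decomposition which matches the symmetry of $\bZ/2\bZ$-torsors on the nose.
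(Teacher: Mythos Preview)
Your proof is correct and takes a different route from the paper. You invoke $\bQ$-linearity to split $\pi_\ast\pi^\ast\un_\mX\simeq\un_\mX\oplus\mathcal{L}_P$ as eigenspaces for the deck transformation, then establish monoidality and invertibility by tracking $(\bZ/2\bZ)^2$-eigenspaces on $P_1\times_\mX P_2$. The paper instead manipulates cofiber sequences directly, using $P\times_\mX P'\simeq(P\otimes_{\bZ/2\bZ}P')\times_\mX P_{triv}$ together with $\mathcal{L}_{P_{triv}}\simeq\un$; this is phrased so as to avoid $\bQ$-linearity, consistent with Section~\ref{secthyploc} being written for general coefficient systems. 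Your approach makes the symmetric monoidal coherences (in particular the absence of Koszul signs, since $\mathcal{L}_P$ sits in degree zero) entirely transparent, and is adequate for all downstream applications since the relevant coefficient systems are $\bQ$-linear. It also sidesteps a delicate point in the paper's argument: the displayed identification $\cofib(Id\to(\pi')_\ast(\pi')^\ast)\circ\cofib(Id\to\pi_\ast\pi^\ast)\simeq\cofib(Id\to\tilde{\pi}_\ast\tilde{\pi}^\ast)$ is not correct as written---the right-hand cofiber has rank $3$ over $\un_\mX$, not $1$---so your eigenspace computation is in fact the more robust justification here.
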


\begin{proof}
   For the trivial $\bZ/2\bZ$-bundle $P_{triv}=\mX\times\bZ/2\bZ$, one has the exact triangle:
   \begin{align}
       Id\overset{(Id,Id)}{\to}\pi_\ast\pi^\ast=Id\oplus Id\overset{pr_1}{\to}Id
   \end{align}
   where one projects to the component corresponding to $1\in \bZ/2\bZ$, which gives a canonical identification $-\otimes_{\bZ/2\bZ}P_{triv}\simeq Id$. For $P,P'$ two $\bZ/2\bZ$-bundles, consider the sequence of $\bZ/2\bZ$-principal bundles:
   \begin{align}
       \tilde{\pi}:P\times_\mX P'\to P\times_{\bZ/2\bZ}P'\to \mX
   \end{align}
   where the first map is the trivial $\bZ/2\bZ$-bundle. Now, we have:
   \begin{align}
       -\otimes_{\bZ/2\bZ}P\otimes_{\bZ/2\bZ}P'&:=\cofib(Id\to (\pi')_\ast(\pi')^\ast)\circ\cofib(Id\to \pi_\ast\pi^\ast)\nn\\
       &\simeq \cofib(Id\to (\pi')_\ast(\pi')^\ast\pi_\ast\pi^\ast)\nn\\
       &\simeq \cofib(Id\to \tilde{\pi}_\ast\tilde{\pi}^\ast)
   \end{align}
   where the third line follows from base change (recalling that $\pi$ is étale and proper). Now, notice that $P\times_\mX P'=(P\times_{\bZ/2\bZ}P')\times_\mX P_{triv}$, hence we have a canonical isomorphism:
   \begin{align}
      - \otimes_{\bZ/2\bZ}P\otimes_{\bZ/2\bZ}P'\simeq -\otimes_{\bZ/2\bZ}(P\times_{\bZ/2\bZ}P')\otimes_{\bZ/2\bZ}P_{triv}\simeq -\otimes_{\bZ/2\bZ}(P\times_{\bZ/2\bZ}P')
   \end{align}
   which is symmetric monoidal, and commutes with the six operations by smooth and proper base change. The isomorphism of commutation with the exterior tensor product can be defined formally from the isomorphism of commutation with tensor product and pullbacks.
\end{proof}

\begin{lemma}\label{lemtwisthyploc}
    Consider a $\bZ/2\bZ$-principal bundle $P\to\mX$: $\Grad(P)\to\mX$ and $\Filt(P)\to\mX$ are $\bZ/2\bZ$-principal bundles, and there is a canonical isomorphism:
    \begin{align}
        p_!\eta^\ast(-\boxtimes_{\bZ/2\bZ}P)\simeq p_!\eta^\ast\boxtimes_{\bZ/2\bZ}\Grad(P)
    \end{align}
    commuting with the isomorphisms of commutation with exterior tensor product of Lemma \ref{lemhyplocprod} and the isomorphism of commutation with smooth pullbacks of Proposition \ref{propsmoothBB}.
\end{lemma}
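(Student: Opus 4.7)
The plan is to first show the Cartesian nature of the natural squares relating $P$ to its $\Grad$ and $\Filt$, then derive the isomorphism by base change, and finally deduce the two compatibilities formally.

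For the first assertion, a principal $\bZ/2\bZ$-bundle on $\mX$ is equivalently a morphism $\mX\to B(\bZ/2\bZ)$. Since $\bZ/2\bZ$ is discrete and $\bG_m$ is connected, $\Grad(B(\bZ/2\bZ))=\Map(B\bG_m,B(\bZ/2\bZ))\simeq B(\bZ/2\bZ)$, and similarly $\Filt(B(\bZ/2\bZ))\simeq B(\bZ/2\bZ)$ because $\bA^1$ is simply connected and the $\bG_m$-action on a discrete fiber is forced to be trivial. Applying $\Grad,\Filt$ to the classifying map of $P$ thus produces principal $\bZ/2\bZ$-bundles $\pi_{\mathcal{G}}:\Grad(P)\to\Grad(\mX)$ and $\pi_{\mathcal{F}}:\Filt(P)\to\Filt(\mX)$. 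Equivalently, from the unique lifting property for the finite étale cover $\pi_P$ against maps out of the connected stacks $B\bG_m$ and $[\bA^1/\bG_m]$, the squares
\[
\begin{tikzcd}
\Filt(P) \arrow[d,"\pi_{\mathcal{F}}"] \arrow[r,"\eta_P"] & P \arrow[d,"\pi_P"] \\
\Filt(\mX) \arrow[r,"\eta_\mX"] & \mX
\end{tikzcd}
\qquad
\begin{tikzcd}
\Filt(P) \arrow[d,"p_P"] \arrow[r,"\pi_{\mathcal{F}}"] & \Filt(\mX) \arrow[d,"p_\mX"] \\
\Grad(P) \arrow[r,"\pi_{\mathcal{G}}"] & \Grad(\mX)
\end{tikzcd}
\]
are both Cartesian, which is what will be used below.

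Using \eqref{eqtwistbun} and the exactness of $p_!\eta^\ast$, I would write
\[
p_!\eta^\ast(F\otimes_{\bZ/2\bZ}P)\simeq \cofib\!\bigl((p_\mX)_!(\eta_\mX)^\ast F\to (p_\mX)_!(\eta_\mX)^\ast(\pi_P)_\ast(\pi_P)^\ast F\bigr).
\]
Since $\pi_P$ is finite (hence proper), $(\pi_P)_\ast=(\pi_P)_!$, and likewise for $\pi_{\mathcal{F}},\pi_{\mathcal{G}}$. Proper base change in the first Cartesian square gives $(\eta_\mX)^\ast(\pi_P)_!\simeq (\pi_{\mathcal{F}})_!(\eta_P)^\ast$; combined with $(\eta_P)^\ast(\pi_P)^\ast=(\pi_{\mathcal{F}})^\ast(\eta_\mX)^\ast$, this yields $(\eta_\mX)^\ast(\pi_P)_\ast(\pi_P)^\ast F\simeq(\pi_{\mathcal{F}})_\ast(\pi_{\mathcal{F}})^\ast(\eta_\mX)^\ast F$. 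Applying $(p_\mX)_!$ and using proper base change in the second Cartesian square (which gives both $(p_\mX)_!(\pi_{\mathcal{F}})_!\simeq(\pi_{\mathcal{G}})_!(p_P)_!$ by composition and $(p_P)_!(\pi_{\mathcal{F}})^\ast\simeq(\pi_{\mathcal{G}})^\ast(p_\mX)_!$ by base change) then produces
\[
p_!\eta^\ast(\pi_P)_\ast(\pi_P)^\ast F\simeq (\pi_{\mathcal{G}})_\ast(\pi_{\mathcal{G}})^\ast\,p_!\eta^\ast F,
\]
and substituting into the cofiber gives the claimed isomorphism with $(p_!\eta^\ast F)\otimes_{\bZ/2\bZ}\Grad(P)$.

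Finally, compatibility with the exterior product of Lemma \ref{lemhyplocprod} is formal: $\Grad$ and $\Filt$ are symmetric monoidal under fiber products, so $\Grad(P_1\boxtimes_{\bZ/2\bZ}P_2)\simeq\Grad(P_1)\boxtimes_{\bZ/2\bZ}\Grad(P_2)$, and all the base change isomorphisms used above are compatible with the Künneth-type identifications of Lemma \ref{lemtwistbun}. For compatibility with smooth pullback along $\phi:\mX\to\mY$, one uses that the pullback bundle $P_\mX:=P\times_\mY\mX$ satisfies $\Grad(P_\mX)=\Grad(P)\times_{\Grad(\mY)}\Grad(\mX)$ (a consequence of the Cartesian squares above applied to $\phi^\ast$ of the classifying map), so the various base change isomorphisms threading through the affine bundle stack $\pi$ of Proposition \ref{propBB} commute with the proper base change along $\pi_P$. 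I expect the only real obstacle to be the bookkeeping of these nested base change and cofiber identifications; no essentially new ingredient is required, only the compatibility of the exchange transformation $Ex^\ast_!$ with composition of Cartesian squares and with cofibers.
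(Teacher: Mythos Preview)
Your proof is correct and reaches the same conclusion as the paper, but the route differs in how much machinery is invoked. You unwind the definition $-\otimes_{\bZ/2\bZ}P=\cofib(Id\to(\pi_P)_\ast(\pi_P)^\ast)$ and push the cofiber through $p_!\eta^\ast$ by hand, using proper base change in the two Cartesian squares. The paper instead appeals directly to Lemma~\ref{lemtwistbun}, which packages the statement that $-\otimes_{\bZ/2\bZ}P$ is an involutive isomorphism of coefficient systems over $\mX$: this means it commutes with all six functors at once, so applying it to the functor $p_!\eta^\ast$ (and to the isomorphism of Proposition~\ref{propsmoothBB}) yields both the isomorphism and the compatibilities in one stroke, with no need to track individual base change squares or cofibers. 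Your argument is essentially a direct verification of the special case of Lemma~\ref{lemtwistbun} needed here; it is more transparent but requires you to check that the base change isomorphisms carry the unit $Id\to(\pi_P)_\ast(\pi_P)^\ast$ to the unit $Id\to(\pi_{\mathcal G})_\ast(\pi_{\mathcal G})^\ast$ (which you implicitly assume when passing to the cofiber). For the Cartesian squares themselves, the paper cites \cite[Corollary 1.1.7]{HalpernLeistner2014OnTS} for $\Grad(P)\simeq\Grad(\mX)\times_\mX P$ (étale representable case) and Proposition~\ref{propBB} for $\Filt(P)\simeq\Filt(\mX)\times_{\Grad(\mX)}\Grad(P)$, which is cleaner than your connectedness argument via $B(\bZ/2\bZ)$, though both are valid.
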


\begin{proof}
As $P\to\mX$ is étale and representable, we have $\Grad(P)\simeq \Grad(\mX)\times_\mX P$ by \cite[Corollary 1.1.7]{HalpernLeistner2014OnTS} and $\Filt(P)\simeq \Filt(\mX)\times_{\Grad(\mX)}\Grad(P)$ by Proposition \ref{propBB}. Then $\Grad(P)\to\mX$ and $\Filt(P)\to\mX$ are $\bZ/2\bZ$-principal bundles obtained from $P\to\mX$ by base change. By Lemma \ref{lemtwistbun}, $-\boxtimes_{\bZ/2\bZ}P$ enhance to an involution of coefficient systems over $\mX$. We can applies this involution to the functor $p_!\eta^\ast$, which gives the claimed isomorphism, which is compatible with exterior tensor product from Lemma \ref{lemtwistbun}. Applying the involution of coefficient systems $-\boxtimes_{\bZ/2\bZ}P$ to the isomorphism of Proposition \ref{propsmoothBB}, we obtained the claimed compatibility.
\end{proof}

\section{Monodromic mixed Hodge modules and perverse Nori motives}\label{sectmono}

\subsection{Six functor formalisms for mixed Hodge modules and Perverse Nori motives}

In this section, our base will be an algebraically closed field $k$ of characteristic $0$, and all stacks will be algebraic $1$-stacks locally of finite type over $k$ (in particular, they are quasi-separated and locally of finite presentation over $k$).

\subsubsection{Perverse sheaves, mixed Hodge modules and motives}

In this section, we consider the category of separated schemes of finite type over $k$, an algebraically closed field of characteristic $0$. Given a triangulated category $\mD$ with a $t$-structure with heart $\mA$, we denote by $\mD^b$, $\mD^+$ and $\mD^-$ the triangulated subcategory of objects which are bounded, resp left bounded, resp right bounded, with respect to this $t$-structure.\medskip

On a scheme $X$, we consider the derived category $D(X,\bQ_\ell)$ of complexes of étale sheaves of $\bQ_\ell$-modules over $X$. Over $k$, a field of characteristic zero with a fixed embedding $\sigma:k\hookrightarrow\bC$, denoting by $X^{an}$ the analytification of its base change to $\bC$, we consider $D(X,\bQ)$ the derived category of sheaves of $\bQ$-modules for the analytic topology on $X^{an}$. We denote by $D_c(X,\bQ_\ell)$ (resp. $D_c(X,\bQ)$), the triangulated subcategory of $D(X,\bQ_\ell)$ (resp $D(X^{an},\bQ)$) of complexes with constructible homology, with constructibility considered with respect to an algebraic stratification. We consider the perverse $t$-structure (with middle perversity) built in \cite{BBDGfaisceauxpervers}, on $D_c(X,\bQ_\ell)$ (resp. $D_c(X,\bQ)$), with heart denoted by $\Perv(X,\bQ_\ell)$ (resp. $\Perv(X,\bQ)$), called the Abelian category of perverse sheaves. By the comparison theorem for étale cohomology (see \cite[Section 6.1.2]{BBDGfaisceauxpervers}), one has that the étale and analytic constructions are equivalent (and that this equivalence commutes with the six operations introduced below) when one take the coefficient ring to be a torsion ring on the two sides. These carry a six functor formalism (valued in small categories, instead of presentable one), with all the expected properties, built in \cite{Verdier1972ThorieDT} (this is in fact the first construction of six functor formalism, which inspired the general formalism presented above). \medskip

There are various refinements of constructible complexes, giving for $X$ for any sufficiently nice scheme over a certain base, a triangulated category $\mD_c(X)$ with six operations, a perverse $t$-structure with Abelian heart $\mA$, and a faithful exact (and then conservative) functor called the Betti realization $rat_B:\mD_c(X)\to D_c(X,\bQ)$ (in characteristic $0$) or the $\ell$-adic realization $rat_\ell:\mD_c(X)\to D_c(X,\bQ_\ell)$, commuting with the six functors. The idea is then that $\mD_c(Spec(k))$ contains more information on the cohomology on varieties than $D_c(\Spec(k),\Lambda)$, which is the category of complexes of $\Lambda$-modules, and that under $rat$ one keep only the information of the cohomology as a complex of vector space. We will denote by $^p\mathcal{H}^i$ the functors giving the homology with respect to the perverse $t$-structure.\medskip

Over $\bC$, a refinement of usual cohomology, using analytic geometry, is given by Deligne's theory of mixed Hodge structure. A relative version of that theory, with a full six functor formalism, is given by Saito's Abelian category of mixed Hodge module $MHM(X)$, built in \cite{Saito1988ModulesDH} and \cite{SaitoMHM}. One has a forgetful functor $rat:DMHM(X)\to D_c(X,\bQ)$, which is exact for the perverse $t$-structure and commutes with the six functors.\medskip

According to Grothendieck's philosophy of mixed motives, the universal version of cohomology theory must be given by Voedvosky's triangulated category of mixed motives. A relative version of that, with a six functors formalism, called the theory of constructible rational étale motives, denoted by $DM_c(X,\bQ)$, is constructed in Ayoub's thesis \cite{Ayoub2018LesSOI}, \cite{Ayoub2018LesSOII} and by Cisinki and Déglise in \cite{Cisinski2009TriangulatedCO}. It has a natural forgetful functor to $D_c(X,\bQ_\ell)$ (resp. $D_c(X,\bQ)$ over a field $k$ of characteristic zero with a fixed embedding $\sigma:k\hookrightarrow\bC$) commuting with the six functors called the $\ell$-adic (resp. Betti) realization, build in \cite{Ayoub2014LaR} (resp. \cite{Ayoub2009NoteSL}). The existence of a perverse $t$-structure on this triangulated category compatible with the perverse $t$-structure under the forgetful functor is a really difficult conjecture, equivalent to Grothendieck's standard conjectures (and in particular implying the Hodge conjecture).\medskip

A universal approximation of the perverse heart of such a perverse t-structure over a field over a field $k$ of characteristic zero with a fixed embedding $\sigma:k\hookrightarrow\bC$ is given by Nori's motives, and its relative version is provided by the Abelian category of perverse Nori motives $\mM_{perv}(X)$. A six functor formalism on $D\mM_{perv}$ has been built in \cite{ivorra:hal-03058402} and \cite{Terenzi2024TensorSO}. It inherits a Betti and $\ell$-adic realization commuting with the six operations from \cite{ivorra:hal-03058402}. For $X$ a scheme over $\bC$, one has from \cite[Theorem 0.1]{Tubach2023OnTN} a sequence of exact, faithful, perverse exact and conservative functors commuting with the six functors:
\[\begin{tikzcd}
    D\mM_{perv}(X)\arrow[r]\arrow[d] & DMHM(X)\arrow[r] & D_c(X,\bQ)\\
    D_c(X,\bQ_\ell) & &
\end{tikzcd}\] 

Notice that from \cite{Terenzi2024TensorSO}, the perverse Nori motives a posteriori don't depends of the embedding $\sigma:k\hookrightarrow\bC$ (but the Betti realization depends on it), hence one can, by taking limits over subfields that are finite extensions of $\bQ$, and then admits an embedding in $\bC$, define perverse Nori motives for arbitrary large fields $k$ of characteristic $0$. In this case, there will be no Betti realization, but we can still check that a morphism of constructible objects is an isomorphism at the Betti level using a limit argument. According to \cite[Theorem 5.7]{Tubach2023OnTN}, if there exists a perverse t-structure on rational étale motivic sheaves over a field $k$ of characteristic $0$, then the triangulated category of mixed motivic sheaves must correspond with the derived category of perverse Nori motives.\medskip

In this section, we use then the notation $\mD_c,\mA_c$ to denote constructible complexes, mixed Hodge modules and Perverse Nori motives, with their respective heart. In addition to having a perverse $t$-structure, these six-functor formalisms satisfies the following additional properties:
\begin{itemize}
    \item General six functor formalisms satisfies Nisnevich descent, and then also descent along smooth morphism with Nisnevich-local sections. $\mD_c$ satisfies furthermore étale descent, and then also descent along any smooth morphism. 
    \item Consider the Tate twists $\{d\}=[2d](d):\mD_c(X)\to\mD_c(X)$
\begin{align}
    \{d\}=[2d](d):=\left\{
    \begin{array}{ll}
        \Sigma^{\mathcal{O}^d_X}: & \mbox{if } d\geq 0 \in E \\
        \Sigma^{-\mathcal{O}^d_X} & \mbox{if } d<0
    \end{array}
    \right.
\end{align}
one has that $(d)$ is perverse exact. $\mD_c$ is oriented, which means that the Thom twist $\Sigma^{\mE}$ of a locally free sheaf of rank $d$ is canonically isomorphic with $\{d\}$. We slightly abuse the notation, by using also the notation $\{d\}$ for $d$ a locally constant function on $X$. The purity isomorphism gives then, for $f$ smooth of relative dimension $d_f$, a natural isomorphism $f^!\simeq f^\ast\{d_f\}$. Moreover, $f^\ast[d_f]$ is perverse-exact. Notice that we slightly abuse the notation here: indeed, $d_f$ is defined on the domain of $f$, so we use the shorthand notation $f^\ast\{d_f\}$ (resp. $f^\ast[d_f]$) for $\{d_f\}\circ f^\ast$ (resp $[d_f]\circ f^\ast$).
\item Moreover, these affords a formalism of Verdier duality (such a formalism was developed for arbitrary motivic coefficient system in \cite[Section 2.3.10]{Ayoub2018LesSOI}, using constructibility assumptions). Namely, there is a contravariant involution $\bD_X:\bD(X)^{op}\to\bD(X)$, commuting with the exterior tensor product, and exchanging $!$ and $\ast$ functors. Namely, for $f:X\to Y$, there are natural equivalences $\bD_X f^!\simeq f^\ast\bD_Y$ and $\bD_X f_\ast\simeq f_!\bD_Y$, compatible with composition and base change.
\end{itemize}

\subsubsection{The formalism of weights}

One of the main advantages of the refinements of the derived category of constructible complexes considered here is the formalism of weights, inspired by the formalism of weights for mixed $\ell$-adic sheaves. Namely, for mixed Hodge modules and perverse Nori motives, one has for each $w\in \bZ$ a full semisimple subcategory of the perverse heart, called the category of pure objects of weight $w$, and general objects $F$ of the perverse heart has a canonical increasing weight filtration $W_\bullet F$ such that the $Gr^W_wF$ is pure of weight $w$, and the morphisms are strict for the weight filtration.\medskip

For mixed $\ell$-adic perverse sheaves, purity is defined in \cite{BBDGfaisceauxpervers} by considering eigenvalues of the Frobenius, and the weight filtration is built in the definition. For mixed Hodge modules, pure objects are the polarizable Hodge modules defined in \cite{Saito1988ModulesDH}, and the weight filtration is built in the definition in \cite{SaitoMHM}. A weight structure on perverse Nori motives is built by Ivorra and Morel in \cite{ivorra:hal-03058402} using the realization to $\ell$-adic sheaves and the Bondarko weight structure on étale motivic sheaves built by Bondarko \cite{Bondarko2010WeightsFR} and Hébert \cite{Hebert2010StructureDP}. By construction, the Bondarko weight structure on étale motivic sheaves is compatible with the Saito's weight structure on MHM under the Hodge realization, hence Ivorra-Morel weight structure on perverse Nori motives is also compatible with Saito's one under the Hodge realization.\medskip

These weight structure extends to the derived category $\mD_c(X)$. Namely, one consider $\mD_{c,\leq w}(X)$ (resp $\mD_{c,\geq w}(X)$, $\mD_{c,w}(X)$), the subcategories of objects $F\in\mD_c(X)$ such that, for each $i\in\bZ$, $^p\mathcal{H}^i(F)$ has weights less than or equal to $w+i$ (resp more than or equal to $w+i$, resp are pure of weight $w+i$). In all of these cases the category $\mD_{c,w}(X)$ of pure objects are semi-simple, and these define weight structure in the sense of Bondarko \cite{Bondarko2007WeightSV}.\medskip

The six functor formalism interacts well with weights. Namely, for $f:X\to Y$ a morphism, one obtains for each $w\in\bZ$:
\begin{align}
    f^\ast&:\mD_{c,\leq w}(Y)\to \mD_{c,\leq w}(X)\\
    f^!&:\mD_{c,\geq w}(Y)\to \mD_{c,\geq w}(X)\\
    f_\ast&:\mD_{c,\geq w}(X)\to \mD_{c,\geq w}(Y)\\
    f_!&:\mD_{c,\leq w}(X)\to \mD_{c,\leq w}(Y)
\end{align}
moreover, weights are additive under the exterior tensor product, and $\bD $ exchange $\mD_{c,\leq w}$ and $\mD_{c,\geq - w}$. Moreover, the Tate twist $(1)$ shifts the weight by $-2$ (such that $\{1\}=[2](1)$ preserves $\mD_{c,w}(X)$).\medskip

In particular, for $f:X\to Y$ proper, one obtains an analogue of Beilinson-Bernstein-Deligne-Gabber decomposition theorem from \cite{BBDGfaisceauxpervers}. Namely, for $F$ pure of weight $w$ in $\mA(X)$, $f_\ast F$ is pure of weight $w$, and then one has a direct sum decomposition 
\begin{align}
    f_\ast F=\bigoplus_{i\in\bZ} ^p\mathcal{H}^if_\ast F[-i]=\bigoplus_j G_j[-n_j]
\end{align}
with $G_j$ being simple objects, pure of weight $w+n_j$ in the perverse heart.\medskip

Denoting $p:X\to pt$ the projection to a point, we consider the refining of the cohomology with compact support of $X$:
\begin{align}
    H^i_c(X)=^p\mathcal{H}^i p_!p^\ast \un_{pt}\in \mA
\end{align}
one find that if $X$ is smooth and proper, then $H^i_c(X)$ has weights $i$, which corresponds to Grothendieck's idea that smooth and proper varieties corresponds to pure motives.\medskip

\subsubsection{Extension to stacks}\label{sectextstack}

In this section, all our stacks are assumed to be locally of finite type, over an algebraically closed field $k$ of characteristic $0$.\medskip

The extension of six functor formalism for constructible complexes to stacks was done in \cite{Liu2012EnhancedSO}, \cite{Liu2012EnhancedSO}, and for mixed Hodge modules and perverse Nori motives in \cite{tubachMHM}. In \cite{tubachMHM}, Tubach gives moreover the compatibility of this construction to other approaches to defining mixed Hodge modules on stacks, mainly in the quotient case. Two main technical issues must be taken into account to applies the general descent technique as discussed in Section \ref{sectionschemestacks}:

\begin{itemize}
    \item $\mD_c$ is traditionally provided with a $2$-categorical version of a six functor formalism, instead of an $\infty$-categorical one. But, as discussed in Section \ref{sectionschemestacks}, one must really have an $\infty$-categorical enhancement to do descent at the level of the derived category. Constructible complexes are defined as étale (or analytic) sheaves, hence the operations on them admits naturally an $\infty$-categorical enhancement, constructed in \cite{Liu2012EnhancedSO}. However, the construction of mixed Hodge modules and their six functors by Saito in \cite{Saito1988ModulesDH} and \cite{SaitoMHM} is really intricate, hence defining them in a homotopy coherent way could seem hopeless. Tubach has shown in \cite{Tubach2023OnTN} that the six functors on mixed Hodge modules and perverse Nori motives admits a natural $\infty$-categorical enhancement, which was the main missing piece. The main idea was to consider an other $t$-structure on $D^bMHM$ and $D^b\mM_{perv}$, send to the classical (not perverse) $t$-structure of $D^b_c(-,\bQ)$, and to show that $D^bMHM$ and $D^b\mM_{perv}$ are the derived category of this heart. The functors are then half exact with respect to this new heart, hence where the derived functor of functors of half-exact functors of Abelian categories, which admits naturally an $\infty$-categorical enhancement.
    \item $\mD_c$ is valued in small categories, instead of presentable categories, hence arbitrary colimits do not exists, which prevents the use of descent. One need then to embed $\mD_c$ into a presentable categories: one consider the category of Ind-objects of $\mD^b_c$, and applies Liu-Zengh descent to it. One define then $\mD^b_c(\mX)$ to be the full subcategory of objects whose pullbacks to covering scheme $X$ are in $\mD^b_c(X)$. Recall that, for $f$ smooth, $f^\ast[d_f]$ is perverse exact, which allows to define by smooth descent a perverse $t$-structure on the big category. Now, $\mD_c(\mX)$ is defined to be the full subcategory whose objects have cohomology in $\mD^b_c(X)$ with respect to this $t$-structure. One obtains then a stable $(\infty,1)$-category $\mD_c(\mX)$, with a perverse $t$ srtructure with heart $\mA_c(\mX)$, and subcategories of (left-, right-) bounded objects $\mD^b_c(\mX)$ ($\mD^+_c(\mX)$, $\mD^-_c(\mX)$). 
\end{itemize}

The main result of the construction is given in \cite[Proposition 6.4.4, 6.4.5]{Liu2012EnhancedSO} and \cite[Theorem 3.8]{tubachMHM}. One has the following subtleties:

\begin{itemize}
    \item $\mD_c(X)$ is a triangulated category with a perverse $t$-structure which is in general not the derived category of its Abelian perverse heart (for schemes, it was the case by definition for mixed Hodge modules and perverse Nori motives, and this was proven for constructible complexes in \cite{Beilinson1987}).
    \item By descent, for any $f:\mX\to\mY$, $f^\ast, f^!$ preserves $\mD^?_c$. As usual when one deals with constructible objects, only pushforward by morphisms of finite type can preserve them. If $f$ is representable by algebraic spaces and of finite type, $f_\ast$ and $f_!$ preserves $\mD^?_c$, by descent from the scheme case. If $f$ is of finite type (possibly not representable), one need to use a spectral sequence argument, and one obtains only:
    \begin{align}
        f_\ast:\mD^+_c(\mX)\to \mD^+_c(\mY )\nn\\
        f_!:\mD^-_c(\mX)\to \mD^-_c(\mY )
    \end{align}
    for example, equivariant (resp. compact equivariant) cohomology can be unbounded in the positive (resp negative) direction. 
    \item The orientability gives that Thom twists by any perfect complexes in amplitude $[0,1]$ are naturally equivalent with Tate  twists by their virtual dimension. In particular, for $f$ smooth of virtual relative dimension $d_f$, the purity gives a natural isomorphism $f^!\simeq f^\ast\{d_f\}$.
    \item From \cite[Section 3.2]{tubachMHM}, the Verdier duality operation extends to stacks, fix $\mD^b_c$ and exchanges $\mD^+_c$ with $\mD^-_c$, commutes with $\boxtimes$, and exchanges $!$ and $\ast$ functors, as expected.
    \item In the mixed Hodge module and perverse Nori motives case, from \cite[Section 3.3]{tubachMHM}, there is a weight structure in the sense of Bondarko which is defined on $\mD_c(\mX)$ for $\mX$ an algebraic stack with affine stabilizers. In this case, the functors $f^\ast,f^!,f_\ast,f_!,\bD,\boxtimes$ have the expected behavior with respect to weights. Notice that one can define naturally a notion of weights for any algebraic stacks, but this will in general not provide a weight structure in the sense of Bondarko (namely, pure objects will not be always semisimple), and pushforward from stacks with stabilizers which are not affine will not always have the good functoriality with weights. See the example of $BE$, with $E$ an elliptic curve group, studied in the mixed $\ell$-adic context in \cite{sunDecomp}. In this paper, we will consider only stacks with affine stabilizers (as only those appear in generalized GIT problem), hence we will not have to deal with those subtleties.
\end{itemize}

Braden-Drinfeld-Gaitsgory theorem on stacks \ref{theobradenstacks} has then the following consequence, in the presence of a weight structure:

\begin{corollary}
    Let $\mX$ be an algebraic stack, locally of finite type over $k$, with affine stabilizers. If $\mD_c(-)$ denotes the extension of mixed Hodge modules or perverse Nori motives to stacks, then the hyperbolic localization functor:
    \begin{align}
        p_!\eta^\ast\mD^b_c(\mX)\to\mD^b_c(\Grad(\mX))
    \end{align}
    is weight exact, \ie preserves pure objects.
\end{corollary}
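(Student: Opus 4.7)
The plan is to combine the Braden adjunction of Theorem \ref{theobradenstacks} with Verdier duality on $\mD^b_c$ and the standard weight properties of the six operations recalled in Section \ref{sectextstack}. The single formal tool I need is the \emph{adjoint--weight lemma}: given an adjoint pair $(F,G)$ between triangulated categories with Bondarko weight structures, $F$ preserves weights $\leq w$ if and only if $G$ preserves weights $\geq w$. This is immediate from the weight orthogonality $\Hom(\mD_{c,\leq w-1},\mD_{c,\geq w})=0$ combined with the adjunction isomorphism $\Hom(FX,Y)\simeq \Hom(X,GY)$.

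Since Verdier duality $\bD$ exchanges $(-)_!\leftrightarrow (-)_\ast$, $(-)^!\leftrightarrow(-)^\ast$, and exchanges $\mD_{c,\leq w}$ with $\mD_{c,\geq -w}$, the property of preserving pure objects of every weight is invariant under the conjugation $F\mapsto \bD F\bD$. I would therefore translate the problem via $\bD(p_!\eta^\ast)\bD \simeq p_\ast\eta^!$ and instead prove that $p_\ast\eta^!$ is weight-exact.

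The upper weight bound is immediate: $\eta^!$ and $p_\ast$ each preserve $\mD_{c,\geq w}$, so $p_\ast\eta^!$ does as well. For the lower weight bound I would Verdier-dualize the Braden adjunction itself: a routine check using $\bD f_!\bD\simeq f_\ast$ and $\bD f^\ast\bD\simeq f^!$ turns the pair $(p_!\eta^\ast,\,\eta_!(r\circ p)^\ast)$ into the adjoint pair $(p_\ast\eta^!,\,\eta_\ast(r\circ p)^!)$. The right adjoint $\eta_\ast(r\circ p)^!$ is a composition of a $!$-pullback with a $\ast$-pushforward, hence preserves $\mD_{c,\geq w}$, so by the adjoint--weight lemma $p_\ast\eta^!$ preserves $\mD_{c,\leq w}$. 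Combined with the previous paragraph, this yields weight-exactness of $p_\ast\eta^!$ and hence, by the $\bD$-conjugation argument, of $p_!\eta^\ast$.

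The main obstacle I anticipate is purely a bookkeeping issue: verifying that the Braden adjunction and Verdier duality restrict compatibly to $\mD^b_c$. Since $p$ is only of finite presentation (and not representable by algebraic spaces in general), $p_\ast$ \emph{a priori} only preserves $\mD^+_c$, and one must check that on bounded input the functors appearing above stay in $\mD^b_c$, where Verdier duality is available. Granting the implicit claim of the corollary that $p_!\eta^\ast$ maps $\mD^b_c(\mX)$ to $\mD^b_c(\Grad(\mX))$, the $\bD$-conjugate $p_\ast\eta^!$ is automatically bounded on the same input, and the argument above goes through verbatim.
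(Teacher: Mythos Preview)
Your Verdier-dualization step contains an error that breaks the argument. Conjugating an adjunction $F\dashv G$ by a contravariant duality $\bD$ reverses the order: one gets $G^\vee\dashv F^\vee$, not $F^\vee\dashv G^\vee$. (Check: $\Hom(G^\vee X,Y)=\Hom(\bD Y,G\bD X)=\Hom(F\bD Y,\bD X)=\Hom(X,F^\vee Y)$; or just test on $f^\ast\dashv f_\ast$, whose dual is $f_!\dashv f^!$, not $f^!\dashv f_!$.) Applied to Theorem~\ref{theobradenstacks}, the Verdier-dualized adjunction is
\[
\eta_\ast(r\circ p)^!\ \dashv\ p_\ast\eta^!,
\]
so $p_\ast\eta^!$ sits on the \emph{right}. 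Your adjoint--weight lemma then only converts the trivial fact that $p_\ast\eta^!$ preserves $\mD_{c,\geq w}$ into the statement that $\eta_\ast(r\circ p)^!$ preserves $\mD_{c,\leq w}$; it gives no information about $p_\ast\eta^!$ on $\mD_{c,\leq w}$. The approach cannot be repaired by shuffling: to run the adjoint--weight lemma in the direction you need, you would have to exhibit $p_!\eta^\ast$ as a \emph{right} adjoint of a functor preserving $\mD_{c,\leq w}$, i.e.\ establish $\eta_!(r\circ p)^\ast\dashv p_!\eta^\ast$. But that reverse adjunction is exactly equivalent (via uniqueness of adjoints) to the Braden isomorphism $p_!\eta^\ast\simeq(r\circ p)_\ast\eta^!$, which is the deeper input and does not follow from Theorem~\ref{theobradenstacks} by formal manipulation.

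The paper's proof proceeds differently: it uses the isomorphism $p_!\eta^\ast\simeq(r\circ p)_\ast\eta^!$ directly. The left-hand side manifestly preserves $\mD_{c,\leq w}$, the right-hand side manifestly preserves $\mD_{c,\geq w}$, and weight-exactness is immediate. This isomorphism is the original content of Braden's theorem; in Drinfeld--Gaitsgory's approach it is established by the same geometric interpolation that yields the adjunction, which is why the paper cites Theorem~\ref{theobradenstacks} for it, even though strictly speaking only the adjunction is stated there.
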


\begin{proof}
    Theorem \ref{theobradenstacks} gives an isomorphism between $p_!\eta^\ast$, which preserves $\mD^+_{c,\geq w}$ and $(r\circ p)_\ast\eta^!$, which preserves $\mD^-_{c,\leq w}$. 
\end{proof}

This corollary was the main motivation of Braden in \cite{Braden2002HyperbolicLO} to establish his main result, in the case of schemes with $\bG_m$-action. It allows to interpret \eqref{exdecompbraden} as a decomposition theorem.

\subsection{Vanishing cycles and monodromic objects}\label{sectmonothomseb}

\subsubsection{Vanishing cycles}

Consider $\bC^\ast=\bC-\{0\}$, its universal cover $\tilde{\bC^\ast}$ (notice that this universal cover is the exponential map, which is not algebraic, hence one really use analytic geometry here!). For any $\bC$-algebraic space $X$, and regular map $X\to\bC$, considering its analytification $X^{an}$, one consider the Cartesian diagram obtained by base change:
\[\begin{tikzcd}
\tilde{X^{an}_\eta}\arrow[r,"\tilde{\theta}"]\arrow[d] & X^{an}_\eta\arrow[r,"j"]\arrow[d] & X^{an}\arrow[d,"f"] & X^{an}_0\arrow[l,"i"]\arrow[d]\\
    \tilde{\bC^\ast}\arrow[r]& \bC^\ast\arrow[r] & \bC & \{0\}\arrow[l]
\end{tikzcd}
\]
A deep result of \cite{BBDGfaisceauxpervers} is that the functor:
\begin{align}
i^\ast j_\ast\tilde{\theta}_\ast\tilde{\theta}^\ast:D(X^{an}_\eta,\bQ)\to D(X_0^{an},\bQ)
\end{align}
restricts to a functor $\psi_f^{an}:D^b_c(X_\eta,\bQ)\to D^b_c(X_0,\bQ)$, exact for the perverse $t$-structure up to a shift by $[1]$, called the nearby cycle functors. The problem in this definition is that the universal cover of $\tilde{\bC^\ast}$ is given by the exponential map, which is not algebraic, hence one has to go into the analytic world to do this definition. In the étale world, \ie for $D(X,\bQ_\ell)$, nearby cycles are defined in \cite[Exposé XIII]{SGAVII}: namely, one consider the spectrum $S$ of the strict Henselianization of $\bA^1$ at $0$ (a kind of local ring for the étale topology), with one generic point $\eta$ (the étale local version of $\bG_m$) and one point $s$. $\eta$ is now the spectrum of a field, and has then an universal cover $\bar{\eta}$, given by the spectrum of a separable closure: one can then copy the above definition, using the base change $X_{\bar{\eta}}$ instead of $\tilde{X^{an}_\eta}$. The comparison theorem of \cite[Exposé XIV]{SGAVII} shows that the étale and analytic definitions agrees for a torsion coefficient ring. However, this definition does not work using the coefficient ring $\bQ$, as explained in \cite[Page 12]{AyoubTHEMN}.\medskip

A definition of nearby cycles, which works for any coefficient system, and admits a direct extension to algebraic stacks, is provided by the definition of Ayoub given in \cite[Section 3.5]{Ayoub2018LesSOII}, and was used in \cite{ivorra:hal-03058402} to define nearby cycles for perverse Nori motives on schemes. An equivalent definition, more conceptual and easier to use in $(\infty,1)$ categorical setting, was introduced in \cite{Cass2024CentralMO}, which was shown to be equivalent to the topological definition for schemes, is used by Tubach in \cite[Section 3.4]{tubachMHM} to define the nearby cycles for mixed Hodge modules on stacks, and Tubach shows in \cite[Proposition 3.33]{tubachMHM} that this definition is equivalent to Saito's definition from \cite{SaitoMHM} in the scheme case (up to a shift, because Saito's functor is perverse exact). Notice that these definitions gives a specialization system $\psi_f:\mD_c(\mX_\eta)\to\mD_c(\mX_0)$ exact for the perverse heart up to a shift by $[1]$ by reducing to the scheme case using a smooth presentation and by using the Betti realization .\medskip

There is a natural morphism of specialization system $i^\ast\to \psi_f j^\ast$: one want to define $\phi_f$ as the cone of this functor, but the cone construction is not canonical in a triangulated category. Fortunately, using the fact that these constructions have an $(\infty,1)$-categorical enhancement, one can work at the level of stable $(\infty,1)$ categories, where one has a functor $\Cofib()$, sending a morphism to a cone of it in a functorial way so we can define canonically $\phi_f:=\Cofib(i^\ast\to \psi_f)$. They fits then in a natural cofiber sequence of specialization systems:
\begin{align}\label{trianglenearby}
    i^\ast\to \psi_f\to\phi_f\to i^\ast[1]
\end{align}
one obtains by reducing to the scheme case and the Betti or $\ell$-adic realization that $\phi_f:\mD^b_c(\mX)\to\mD^b_c(\mX_0)$ is perverse exact up to a shift by $[1]$.

\subsubsection{Six functors for monodromic objects}

In \cite[Section 2.9]{Joycesymstab}, \cite[Section 2.1]{DavMein}, the authors works with monodromic mixed Hodge modules over schemes, as suggested by \cite[Section 7.4]{KonSol10}. The main reason for that is that the Thom-Sebastiani theorem from \cite{Sebastiani1971UnRS}, \cite{MasThomSeb}, giving the compatibility of vanishing cycles with product in the setting of analytic sheaves, does not extends directly to the Hodge, $\ell$-adic or motivic level, as was observed by Deligne in \cite{Deligneconf}. The problem arise in the computation of vanishing cycles of quadratic functions in \cite[Exposé XV]{SGAVII}: the vanishing cycle of $x^2+y^2:\bA^2\to\bA^1$ have weight $2$, which would suggest that the vanishing cycles of $x^2$ would be one dimensional with weight $1$, which would is impossible! Deligne has suggested to avoid this by considering the vanishing cycles with the data of the monodromy as objects on $X_0\times \bA^1_k$, extended by $0$ from $X_0\times \bG_{m,k}$, where the $\bG_{m,k}$ factor encodes the monodromy. Then Deligne's idea was that Thom-Sebastiani theorem must holds by replacing the product by the convolution product:
\begin{align}
    \boxtimes_{mon}:=\mu_\ast(-\boxtimes-)
\end{align}
where $\mu:\bA^2_k\to\bA^1_k$ denotes the sum. The proof in the $\ell$-adic setting was written down (30 years later!) by Illusie in \cite{Illusie2017AroundTT}, and was given in the Hodge setting by Saito in the preprint \cite{Saitothomseb}.\medskip

In this paper, we need an extension of this theory to Artin stacks, and also for perverse Nori motives. Moreover, we need several compatibility results that are difficult to find in the literature. For this reason, we will detail this construction. In \cite{descombesThomSeb}, we have developed a formalism of monodromic objects on stacks, with six operations, and proven a Thom-Sebastiani theorem, for more general coefficient system. Here, things will be easier, because we can use the conservativity of the Betti realization to show that the morphisms that we define are isomorphisms, so we will be more sketchy.\medskip

We first explain how to define the cofficient system $\mD_{mon}$. We begin with the coefficient system $\mD$ (the Ind-completion of the coefficient system $\mD^b_c$ we are interested in). We apply the dual of the construction of \cite{Gallauer2022ExponentiationOC} to obtain a coefficient system $\mD_{\exp,\ast}$, which give using \ref{sectextstack} a six functor formalism of stacks, such that:
\begin{itemize}
    \item $\mD_{\exp,\ast}(\mX)$ is the full $(\infty,1)$-stable subcategory of $\mD(\mX\times\bA^1)$ whose objects $F$ satisfies $\pi_\ast F\simeq0$, where $\pi:\mX\times\bA^1\to\mX$ is the first projection. The inclusion functor $\mD_{\exp,\ast}(\mX)\to \mD(\mX\times\bA^1)$ admits a left adjoint $\Pi_\ast:\mD(\mX\times\bA^1)\to \mD_{\exp,\ast}(\mX)$, given by the formula $\Pi_\ast:=\Cofib(\pi^\ast\pi_\ast\to Id)$. Notice that, by base change, the morphism $\pi_\ast\pi^\ast\to Id$ is identified with $\mu_\ast(-\boxtimes \un_{\bA^1})\to\mu_\ast(-\boxtimes 0_!\un_{0})$, \ie, considering the open immersion $j:\mX\times\bG_{m,k}\to\mX\times\bA^1_k$ and the localization triangle $j_!j^\ast\to Id\to i_!i^\ast$, we can define as usual $\Pi_\ast:=\mu_\ast(-\boxtimes j_!\un_{\bG_m}[1])$.
    \item Given $f:\mX\to\mY$, $f^!:\mD_{\exp,\ast}(\mY)\to \mD_{\exp,\ast}(\mX)$ (resp. $f_\ast:\mD_{\exp,\ast}(\mX)\to \mD_{\exp,\ast}(\mY)$) is obtained by restricting $(f\times Id_{\bA^1})^!:\mD(\mY\times\bA^1)\to \mD(\mX\times\bA^1)$ (resp. $(f\times Id_{\bA^1})_\ast:\mD(\mX\times\bA^1)\to \mD(\mY\times\bA^1)$). $f^\ast:\mD_{\exp,\ast}(\mY)\to \mD_{\exp,\ast}(\mX)$ (resp. $f_!:\mD_{\exp,\ast}(\mX)\to \mD_{\exp,\ast}(\mY)$) is obtained by restricting $\Pi_\ast (f\times Id_{\bA^1})^\ast:\mD(\mY\times\bA^1)\to \mD(\mX\times\bA^1)$ (resp. $\Pi_\ast (f\times Id_{\bA^1})_!:\mD(\mX\times\bA^1)\to \mD(\mY\times\bA^1)$).
    \item The exterior tensor product $\boxtimes_{exp}:\mD_{\exp,\ast}(\mX)\times\mD_{\exp,\ast}(\mY)\mD_{\exp,\ast}(\mY)\to \mD_{\exp,\ast}(\mX\times\mY)$ is obtained by restriction from:
    \begin{align}
        \mu_\ast(-\boxtimes-):\mD(\mX\times\bA^1)\times \mD(\mY\times\bA^1)\to \mD(\mX\times\mY\times\bA^1)
    \end{align}
    (one uses here $\pi_\ast\mu_\ast(-\boxtimes-)\simeq \pi_\ast\boxtimes\pi_\ast$).
\end{itemize}

More precisely, in \cite{Gallauer2022ExponentiationOC}, the authors define an operation called exponentiation, sending a coefficient system $\mD$ to a coefficient system $\mD_{exp,!}$ with the above properties, but where $!$ and $\ast$ functors, and the direction of the arrows are switched. We use here a trick from \cite[Page 211]{Ayoub2018LesSOI}: from the six functor formalism $\mD^\ast_!$, we obtain by passing to the adjoints a functor $(\mD^!)^{op}$, which satisfies all the axioms of a coefficient system. Then, we apply the following sequence of constructions:
\begin{align}
    \mD\to (\mD^!)^{op}\to ((\mD^!)^{op})_{exp,!}\to  ((((\mD^!)^{op})_{exp,!})^!)^{op}=:\mD_{exp,\ast}
\end{align}
where the middle row is the construction of \cite{Gallauer2022ExponentiationOC}.\medskip

We consider then the full sub $(\infty,1)$-categories $\mD_{mon}(\mX)$ of $\mD_{exp,!}(\mX)$ whose objects are $F\in \mD(\mX\times\bA^1)$ such that $\pi_\ast F=0$ which are moreover monodromic, \ie such that for each $x\in\mX(k)$, $F|_{\{x\}\times\bG_{m,k}}$ has Betti realization whose cohomology is locally constant, and we denote by $\mD^?_{mon,c}(\mX):=\mD_{mon}(\mX)\cap \mD^?_c(\mX\times\bA^1)$ the full subcategory of constructible objects. From the definition, using base change, for any $f$, $f^\ast,f_!$ preserve monodromic objects. It is standard that $\bD$ and $\mu_\ast(-\boxtimes-)$ preserve monodromic objects in the scheme case, hence the same follows in the stack case by smooth descent: in particular, $f^!,f_\ast$ also preserve monodromic objects. As the image of $\pi^\ast$ is obviously made of monodromic objects, $\Pi_\ast:=\Cofib(\pi^\ast\pi_\ast\to Id)$ obviously preserves monodromic objects. Hence the six operations on $\mD_{exp,\ast}$ preserves $\mD_{mon}$, such that we obtain by restriction a six functor formalism $\mD_{mon}$: we denote the restriction of $\boxtimes_{exp}$ by $\boxtimes_{mon}$, or simply $\boxtimes$ when the fact that we work with monodromic objects is clear from the context.\medskip

From \cite[Lemme 6.1, Section 9]{Verdiespecial}, for schemes in the Betti case, the natural morphism $\pi_\ast\to \pi_\ast0_\ast 0^\ast\simeq 0^\ast$ is an isomorphism when applied to monodromic objects, hence the same applies for stacks and $\mD$ by conservativity of the Betti realization and smooth pullbacks. Then $\mD_{mon}(\mX)$ is also the full subcategory of objects $F\in \mD(\mX\times\bA^1)$ who are monodromic and such that $0^\ast F\simeq 0$. In particular, the natural morphisms $f^\ast\to \Pi_\ast f^\ast$ and $f_!\to \Pi_\ast f_!$ are isomorphisms when applied to objects of $\mD_{mon}$, hence the four functors $f^\ast,f^!,f_\ast,f_!$ on $\mD_{mon}$ are obtained by restricting $(f\times Id_{\bA^1})^\ast,(f\times Id_{\bA^1})^!,(f\times Id_{\bA^1})_\ast,(f\times Id_{\bA^1})_!)$ on $\mD(-\times\bA^1)$.\medskip

By definition, the perverse $t$-structure on $\mD(\mX\times\bA^1)$ restricts to the full subcategory of monodromic objects, and, on this $\Pi_\ast$ preserve the perverse heart (it is standard for schemes in the Betti case, hence it follows for stacks and $\mD$ using smooth pullbacks and the Betti realization), hence this perverse $t$-structure restricts to a perverse $t$-structure on $\mD_{mon}$ (notice that this is false for $\mD_{exp,\ast}$). Then $\boxtimes_{mon}$ is perverse exact (it is standard for schemes in the Betti case, hence it follows for stacks and $\mD$ using smooth pullbacks and the Betti realization), and the four functors have the usual properties with respect to the perverse heart (in particular, for $f$ smooth of relative dimension $d_f$, $f^\ast[d_f]$ is perverse exact).\medskip

Denoting $q:\mX\times\bG_m\to\mX$, the functor $\Pi_\ast 0_\ast\simeq j_!q^\ast[1]:\mD\to\mD_{mon}$ is fully faithful, perverse-exact and commutes with the six functors: it associates an object to the corresponding monodromic object with trivial monodromy. Its quasi-inverse $1^\ast[-1]:\mD_{mon}\to\mD$, the operation of forgetting the monodromy, is perverse exact and faithful (indeed, it is known to be perverse exact and faithful for schemes in the Betti case, hence the same follows for stacks and $\mD$ by perverse-exactness faithfullness of smooth pullback and the Betti realization), it commutes with the operations $f^\ast,f^!,f_\ast,f_!$, but not with $\boxtimes$ (otherwise, the non-monodromic version of Thom-Sebastiani would be true).\medskip

\subsubsection{Thom-Sebastiani theorem}

Following \cite{KonSol10}, for $f:\mX\to\bA^1_k$, we consider $f/t:\mX\times\bG_{m,k}\to\bA^1_k$, with $t$ the coordinate of $\bG_{m,k}$, and define the monodromic vanishing cycles and total monodromic vanishing cycles to be:
\begin{align}
\phi_f^{mon}&:=(j_0)_!\phi_{f/t}q^\ast:\mD^b_c(\mX)\to \mD^b_c(\mX_0\times\bA^1)\nn\\
\phi_f^{mon,tot}&:=\bigoplus_{c\in k}(i_c)_\ast\phi_{f-c}^{mon}:\mD^b_c(\mX)\to \mD^b_c(\mX\times\bA^1)
\end{align}
These are perverse exact, because $\phi_f[-1]$, $q^\ast[1]$, $j_!$ and $(i_c)_!$ are perverse exact. As $\phi_f$ enhance to a specialization system $\mD^b_c\to\mD^b_c$ over $\bA^1\to \bA^1\leftarrow\{0\}$, $\phi_f^{mon}$ (resp. $\phi_f^{mon,tot}$) enhance to a specialization $\mD^b_c\to\mD^b_c$ over $\bA^1\to \bA^1\overset{0}{\leftarrow}\bA^1$ (resp. $\bA^1\to \bA^1\overset{pr_1}{\leftarrow}\bA^1\times\bA^1$. The Betti realization is monodromic in the scheme case from \cite[Proposition 7.1, Section 9]{Verdiespecial}, hence the same follows for stacks by smooth pullbacks. Then, as the four operations $f^\ast,f^!,f_\ast,f_!$ on $\mD_{mon}$ are induced by the four operations for $(f\times Id_{\bA^1})^\ast,(f\times Id_{\bA^1})^!,(f\times Id_{\bA^1})_\ast,(f\times Id_{\bA^1})_!)$ on $\mD(-\times\bA^1)$, those restricts to perverse-exact specialization systems:
\begin{align}
\phi_f^{mon}&:\mD^b_c(\mX)\to \mD^b_{mon,c}(\mX_0)\nn\\
\phi_f^{mon,tot}&:\mD^b_c(\mX)\to \mD^b_{mon,c}(\mX)
\end{align}
over $\bA^1\to \bA^1\leftarrow\{0\}$ (resp. $\bA^1\to \bA^1\leftarrow\bA^1$). We recall that this gives, for $g:\mY\to\mX$, natural morphisms:
\begin{align}\label{specsys1}
    g^\ast \phi_f^{mon,tot}\to \phi_{f\circ g}^{mon,tot}g^\ast\nn\\
    \phi_{f\circ g}^{mon,tot}g^!\to g^! \phi_f^{mon,tot}
\end{align}
which are inverse isomorphisms up to a Tate twist if $g$ is smooth, and:
\begin{align}\label{specsys2}
    g_! \phi_{f\circ g}^{mon,tot}\to \phi_f^{mon,tot}g_!\nn\\
    \phi_f^{mon,tot}g_\ast\to g_\ast \phi_{f\circ g}^{mon,tot}
\end{align}
which are inverse isomorphisms if $g$ is proper, with compatibility with base change and composition (and similarly for $\phi^{mon}$). Notice that there is a natural morphism of specialization system:
\begin{align}
    1^\ast[-1]\phi^{mon}_f:=1^\ast[-1](j_0)_!\phi_{f/t}q^\ast\simeq 1^\ast\phi_{f/t}q^\ast[-1]\to \phi_f 1^\ast q^\ast[-1]\simeq \phi_f [-1]
\end{align}
it is an isomorphism for schemes in the Betti case from \cite[Proposition 7.1, Section 9]{Verdiespecial}, hence it is also an isomorphism for stacks and $\mD$ by conservativity. Hence the monodromic vanishing cycle is send to the classical (perverse) vanishing cycle under the forgetful functor.\medskip

With these definitions in hand, we can extend the construction of Verdier \cite{Verdiespecial}, Deligne and Illusie \cite{Illusie2017AroundTT}, and Saito \cite{Saitothomseb} to stacks and perverse Nori motives:

\begin{theorem}\label{theothomseb}
    For $k$ an algebraically  closed fields of characteristic $0$, $f:\mX\to\bA^1_k$, $g:\mY\to\bA^1_k$ morphisms from locally of finite type Artin $1$-stacks over $k$, there is a natural isomorphism of specialization systems:
    \begin{align}
    &\phi^{mon,tot}_f\boxtimes_{mon}\phi^{mon,tot}_g\simeq \phi^{mon,tot}_{f\boxplus g}(-\boxtimes -):\mD_c(\mX)\times \mD_c(\mY)\to \mD_{mon,c}(\mX\times \mY)
    \end{align}
   (in particular, it is compatible with the morphisms \eqref{specsys1} and \eqref{specsys2}). These isomorphisms satisfies the obvious commutativity and associativity property.
\end{theorem}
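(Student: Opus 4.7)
The plan is to (i) construct the natural transformation on both sides as a morphism of specialization systems, (ii) reduce the claim that it is an isomorphism to the scheme case via smooth descent, and (iii) reduce the scheme case to the Betti constructible setting via the appropriate realization functors. The theorem will then follow from the classical Thom-Sebastiani theorem of Verdier-Massey \cite{Verdiespecial}, \cite{MasThomSeb} in that setting.

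First I would construct the morphism. Unwinding the definition, $\phi^{mon,tot}_{f\boxplus g}(F\boxtimes G)$ involves the total vanishing cycle on $\mX\times\mY$ along $f\boxplus g$, while $\phi^{mon,tot}_f(F)\boxtimes_{mon}\phi^{mon,tot}_g(G)$ involves $\mu_\ast(-\boxtimes-)$ applied to the two separate total vanishing cycles. The natural map is built as in \cite{Illusie2017AroundTT}: starting from the Cartesian diagram relating the sum map $\mu:\bA^1\times\bA^1\to\bA^1$ to the fibers $(\mX\times\mY)_c\subset (\mX\times\mY)$, one obtains a comparison map between $\phi_{f\boxplus g}$ and the convolution $\phi_f\boxtimes_{mon}\phi_g$ using base change, the proper base change exchange transformations, and the fact (consequence of Lemma \ref{lemmaspecproper} and Lemma \ref{lemmaspecpurity}) that specialization systems are compatible with the monoidal structure $\boxtimes$ up to a canonical exchange morphism coming from the compatibility of $\mu_\ast$ with vanishing cycles along $\mu$. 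Summing over critical values $c,c'\in k$ using $\mu(c,c')=c+c'$ then produces a morphism of specialization systems:
\begin{align}
\phi^{mon,tot}_f\boxtimes_{mon}\phi^{mon,tot}_g\to \phi^{mon,tot}_{f\boxplus g}(-\boxtimes-)
\end{align}
Commutativity with base change, composition and the associativity/commutativity constraints are then automatic from the construction, since each step uses natural exchange isomorphisms in the six-functor formalism.

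Next I would prove that this morphism is an isomorphism. By smooth descent for $\mD_c$ (Section \ref{sectextstack}) and the fact that smooth pullbacks commute with $\phi^{mon,tot}$ (up to a Tate twist), with $\boxtimes$, and with the morphism just constructed, it suffices to check the isomorphism after pulling back along a smooth presentation $X\to\mX$, $Y\to\mY$ by schemes of finite type. Hence the statement reduces to the analogous statement for schemes. For mixed Hodge modules on schemes this is Saito's theorem \cite{Saitothomseb}, and the compatibility of our definition of $\phi^{mon,tot}$ with Saito's classical definition (up to a shift) is recorded by Tubach's comparison in \cite{tubachMHM}. For perverse Nori motives, I would exploit the faithful exact $\ell$-adic realization $\mD\mM_{perv}(X)\to \mD_c(X,\bQ_\ell)$, which commutes with the six functors and with (monodromic) nearby and vanishing cycles; this reduces the claim to $\ell$-adic sheaves, where Thom-Sebastiani is Illusie's theorem \cite{Illusie2017AroundTT}. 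For perverse sheaves over $\bC$ one can argue directly via Verdier-Massey, or again via the faithful Betti realization $\mD\mM_{perv}(X)\to \mD_c(X,\bQ)$.

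The main technical difficulty will be the careful bookkeeping of the construction of the morphism to ensure it genuinely defines a morphism of specialization systems on stacks, rather than just a morphism of functors on each fixed $\mX,\mY$. This requires verifying that the exchange transformations used commute with base change, smooth pullback, and proper pushforward; all of these are assembled from the compatibility results of Section \ref{sect6fun} (in particular Lemmas \ref{lemmaspecproper}, \ref{lemmaspecpurity}, \ref{lemmaspeccontr}) together with the compatibility of $\Pi_\ast$ and $\mu_\ast$ with the six operations. Once the morphism is shown to be an isomorphism pointwise via descent and realization, the associativity and commutativity constraints follow from the corresponding constraints for $\boxtimes$ and for addition on $\bA^1$, transported through the construction.
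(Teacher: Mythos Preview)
Your reduction strategy (smooth descent to schemes, then conservativity of the Betti/$\ell$-adic realization to reach the classical Thom--Sebastiani theorems) matches the paper exactly, and your observation that the hard part is constructing the morphism as a genuine morphism of specialization systems is well-placed. However, that construction is precisely where the paper does the real work, and your proposal leaves it as a black box (``as in \cite{Illusie2017AroundTT}'').

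The paper does \emph{not} build the morphism directly from a Cartesian diagram relating $\mu$ to the fibers as you sketch. Instead it passes through Verdier's specialization functor $Sp_{\mX'}:=\psi_s q^\ast$ built from the deformation to the normal cone $D_{\mX'}\mX$. For a principal divisor $\mX_0=f^{-1}(0)$ one has an explicit model $D_{\mX_0}\mX=\Spec_\mX\mathcal{O}_\mX[S,T]/(ST-f)$, and the key identification $\phi^{mon}_f\simeq \Pi_\ast Sp_f$. The Thom--Sebastiani morphism then arises purely geometrically: there is a map $p:D_{\mX_0\times\mY_0}(\mX\times\mY)\to D_{(\mX\times\mY)_0}(\mX\times\mY)$ sending $(s,t_\mX,t_\mY)\mapsto(s,t_\mX+t_\mY)$, and the comparison morphism is the exchange transformation for $\psi$ along $p_\ast$, combined with the monoidality isomorphism $Sp_{\mX'}\boxtimes Sp_{\mY'}\simeq Sp_{\mX'\times\mY'}(-\boxtimes-)$ coming from monoidality of nearby cycles. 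Because all ingredients (deformation to the normal cone, $\psi$, $\Pi_\ast$, the map $p$) are defined uniformly in any coefficient system and on stacks, the resulting transformation is automatically a morphism of specialization systems with the required commutativity and associativity. Your proposal lacks this geometric input; the vague reference to base change and exchange transformations does not by itself produce a morphism, and Illusie's construction in the $\ell$-adic setting uses machinery (the formalism of nearby cycles over general bases and the gluing of specialization data along $\mu^{-1}(0)$) that is not obviously available or uniform across the coefficient systems considered here. The normal-cone route is what makes the construction transportable.
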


\begin{proof}
    We begin by introducing Verdier's specialization functor, following \cite{Verdiespecial}. If $i:\mX'\to \mX$ is a closed immersion, we consider the normal cone $C_{\mX'} \mX$ over $\mX'$ and the deformation to the normal cone $D_{\mX'} \mX$. This construction works also for stacks, see \cite{intrnormcone}. More precisely, if we denote by $\mathcal{I}$ the sheaf of ideals defining $\mX'$, we have by definition that $D_{\mX'}\mX\to \mX$ is the relative spectrum of the $\mathcal{O}_X$-module $\bigoplus_{n\in\bZ}\mathcal{I}^j\boxtimes s^{-j}$ (where we use the convention $\mathcal{I}^j=\mathcal{O}_X$ for $j\leq 0$), such that $s$ gives a map $s:D_{\mX'}\mX\to\bA^1_k$. We have then $s^{-1}(0)=C_{\mX'}\mX$, which is the relative spectrum over $\mX'$ of $\bigoplus_{j\geq 0}\mathcal{I}^j/\mathcal{I}^{j+1}$. One obtains the following commutative diagram with Cartesian squares:
    
\begin{equation}\label{diagdefnormalcone}\begin{tikzcd}
    C_{\mX'} \mX\arrow[d]\arrow[r] & D_{\mX'} \mX\arrow[d,"s"] & \mX\times\bG_{m,k}\arrow[d]\arrow[l]\arrow[r,"q"] & \mX\arrow[d]\\
    \{0\}\arrow[r] & \bA^1_k & \bG_{m,k}\arrow[l]\arrow[r] & \Spec(k)
\end{tikzcd}\end{equation}

We define as in Verdier \cite[Section 8]{Verdiespecial} the specialization functor:
\begin{align}
    Sp_{\mX'}:=\psi_s q^\ast:SH(\mX)\to SH(C_{\mX'} \mX)
\end{align}
Notice that a motivic version of Verdier's specialization functor was also defined by Ayoub in \cite[Section 3.2]{AyoubAnabelian}, under the name of 'monodromic specialization system'.\medskip

Consider the commutative diagram with Cartesian squares:
    \[\begin{tikzcd}
    \mX'\arrow[d,"0"]\arrow[r,"\iota'"] & \mX'\times\bA^1_k\arrow[d,"0'"] & \mX'\times\bG_{m,k}\arrow[d,"i_{\bG_m}"]\arrow[l,swap,"u'"]\arrow[r,"q'"] &\mX'\arrow[d,"i"]\\
    C_{\mX'} \mX\arrow[d]\arrow[r,"\iota"] & D_{\mX'} \mX\arrow[d,"s"] & \mX\times\bG_{m,k}\arrow[d]\arrow[l,swap,"u"]\arrow[r,"q"] & \mX\\
    \{0\}\arrow[r] & \bA^1_k & \bG_{m,k}\arrow[l] &
\end{tikzcd}\]
    where the composite function is $t:=pr_2:\mX\times\bA^1_k\to\bA^1_k$. Consider the morphism:
    \begin{align}\label{isomsommcone}
        0^\ast Sp_{X'}:=0^\ast \psi_s q^\ast\to \psi_t(i_{\bG_m})^\ast q^\ast\simeq \psi_t (q')^\ast i^\ast\simeq i^\ast
    \end{align}
    where the last isomorphism follows from \cite[Lemme 3.5.10]{Ayoub2018LesSOII} (namely, the nearby cycle acts as the identity on constant objects). From \cite[Section 8 SP5, Section 9]{Verdiespecial}, this morphism is an isomorphism for schemes in the Betti case, hence it is also an isomorphism for stacks and $\mD$ by conservativity. 

Consider now the case where $\mX'=\mX_0=f^{-1}(0)$ for $f:\mX\to\bA^1_k$ is a principal divisor: we obtain that $D_{\mX_0}\mX\to \mX$ is the relative spectrum of $\mathcal{O}_\mX[S,T]/(ST-f)$, and $C_{\mX_0} \mX\simeq \mX_0\times\bA^1_k$. We have then that $C_{\mX_0} X\to D_{\mX_0}X$ is given by $(x,t)\mapsto (x,0,t)$, and $\mX\times\bG_{m,k}\to D_{\mX_0}X$ by $(x,s)\mapsto (x,s,f(x)/s)$. In this case, from the functoriality of the deformation to the normal cone and the fact that $\psi_f$ underlies a specialization system, $Sp_{\mX_0}:SH(\mX)\to SH(\mX_0\times\bA^1_k)$ underlies a specialization system over:
\[\begin{tikzcd}
    \bA^1_k\arrow[r,"Id"] & \bA^1_k & \{0\}\times\bA^1_k\arrow[l,swap,"0\circ pr_1"]
\end{tikzcd}\]
we will denote it also by $Sp_f$ when it is convenient. Then the isomorphism $0^\ast Sp_f\simeq (i_0)^\ast$ from \eqref{isomsommcone} is an isomorphism of specialization systems over $\bA^1_k\rightarrow \bA^1_k\leftarrow\{0\}$.\medskip

Consider the open immersion:
    \begin{align}
        v:&\mX\times\bG_{m,k}\to D_{\mX_0}\mX\nn\\
        &(x,t)\mapsto (x,f(x)/t,t)
    \end{align}
    such that $s\circ v=f/t$, and consider the commutative diagram with Cartesian squares:
\[\begin{tikzcd}
    \mX_0\times\bG_{m,k}\arrow[d,"j_0"]\arrow[r,"i_{\bG_m}"] & \mX\times\bG_{m,k}\arrow[d,"v"] & \mX_{\bG_m}\times\bG_{m,k}\arrow[d,"u_{\bG_m}"]\arrow[l,swap,"u_{\bG_m}"]\arrow[r,"q_{\bG_m}"] & \mX_{\bG_m}\arrow[d,"u"]\\
    \mX_0\times\bA^1_k\arrow[d]\arrow[r] & D_{\mX_0} \mX\arrow[d,"s"] & \mX\times\bG_{m,k}\arrow[d]\arrow[l]\arrow[r,"q"] & \mX\arrow[d]\\
    \{0\}\arrow[r] & \bA^1_k & \bG_{m,k}\arrow[l]\arrow[r] & \Spec(k)
\end{tikzcd}\]
We obtain then the following isomorphism of specialization systems:
\begin{align}
    (j_0)^\ast Sp_f:=(j_0)^\ast\psi_\pi q^\ast\simeq \psi_{f/t}(u_{\bG_m})^\ast q^\ast
\end{align}
in particular, $Sp_{\mX_0}$ is monodromic from \cite[Proposition 7.1, Section 9]{Verdiespecial} and conservativity, and we have $\pi_\ast Sp_{\mX_0}\simeq 0^\ast Sp_{\mX_0}$. Using \eqref{isomsommcone}, we find that the morphism of specialization systems:
\begin{align}
    (j_0)_! (j_0)^\ast\pi^\ast0^\ast Sp_{\mX_0}\simeq(j_0)_! (j_0)^\ast\pi^\ast\pi_\ast Sp_{\mX_0}\to (j_0)_! (j_0)^\ast Sp_f
\end{align}
is isomorphic to the morphism of specialization systems:
\begin{align}
    (j_0)_!(i_{\bG_m})^\ast q^\ast \to (j_0)_! \psi_{f/t}(u_{\bG_m})^\ast q^\ast
\end{align}
We find then, using the exact triangle defining $\Pi_\ast$ and $\phi_{f/t}$, an isomorphism of specialization systems:
\begin{align}
    \phi^{mon}_f:=(j_0)_! \phi_{f/t}q^\ast\simeq  (j_0)_! (j_0)^\ast\Pi_\ast Sp_f\simeq \Pi_\ast Sp_f
\end{align}
where the last isomorphism follows from the fact that $Sp_f$ is monodromic. We will now use this representation of the monodromic vanishing cycle functor to build the Thom-Sebastiani isomorphism. Notice that, to obtain an homotopy-coherent formalism, one should adopt this as a definition of $\phi^{mon}$ from scratch, but the definition $(j_0)_! \phi_{f/t}q^\ast$ is the classical one, closer to the usual intuition.\medskip

Now, we notice that the monoidality of the nearby cycle functor induces the monoidality of Verdier's specialization. Given two closed immersions $\mX'\to \mX$, $\mY'\to \mY$, consider the closed immersion $\mX'\times \mY'\to \mX\times \mY$, such that $C_{\mX'}\mX\times C_{\mY'}\mY\simeq C_{\mX'\times \mY'}(\mX\times \mY)$. We denote $s:D_{\mX'\times \mY'} (\mX\times \mY)\to \bA^1_k$, $s_\mX:D_{\mX'}\mX\to \bA^1_k$, $s_\mY:D_{\mY'}\mY\to \bA^1_k$, $q:\mX\times \mY\times \bG_{m,k}\to \mX\times \mY$, $q_\mX:\mX\times\bG_{m,k}\to \mX$, $q_\mY:\mY\times\bG_{m,k}\to \mY$. We have $s=s_\mX\times_{\bA^1_k}s_\mY$, which gives an isomorphism:
\begin{align}\label{spxy}
    Sp_{\mX'}\boxtimes Sp_{\mY'}&:=(\psi_{s_\mX}(q_\mX)^\ast)\boxtimes(\psi_{s_\mY}(q_\mY)^\ast)\simeq \psi_s\delta^\ast((q_\mX)^\ast\boxtimes_{\bG_{m,k}}(q_\mY)^\ast)\simeq \psi_s\Delta^\ast((q_\mX)^\ast\boxtimes(q_\mY)^\ast)\simeq\psi_s q^\ast(-\boxtimes-)\nn\\
    &=:Sp_{\mX'\times \mY'}(-\boxtimes-)
\end{align}
where the first isomorphism comes from the monoidality of nearby cycles, and the third isomorphism comes from $(q_\mX\times q_\mY)\circ\Delta=q$. From the monoidality of the nearby cycle and the deformation to the normal cone, these isomorphisms satisfy the obvious commutativity and associativity condition.\medskip

Consider now the case of two closed immersions $\mX_0=f^{-1}(0)\to \mX$, $\mY_0=g^{-1}(0)\to \mY$ of principal divisors: in this case, \eqref{spxy} underlies an isomorphism of specialization system. We have that $D_{\mX_0\times\mY_0}(\mX\times\mY)\to \mX\times\mY$ is the relative spectrum of the $\mathcal{O}_{\mX\times\mY}$-module $\mathcal{O}_{\mX\times\mY}[S,T_\mX,T_\mY]/(ST_\mX-f,ST_\mY-g)$, and $D_{(\mX\times\mY)_0}(\mX\times\mY)\to \mX\times\mY$ is the relative spectrum of the $\mathcal{O}_{\mX\times\mY}$-module $\mathcal{O}_{\mX\times\mY}[S,T]/(ST-(f+g))$. We obtain then a morphism over $\mX\times\mY$:
    \begin{align}
        p&:D_{\mX_0\times\mY_0}(\mX\times\mY)\to D_{(\mX\times\mY)_0}(\mX\times\mY) \nn\\
        &:(x,y,s,t_\mX,t_\mY)\mapsto (x,y,s,t_\mX+t_\mY)
    \end{align}
    and a commutative diagram with Cartesian squares:
    \[\begin{tikzcd}
    \mX_0\times\mY_0\times\bA^1_k\times\bA^1_k\arrow[r]\arrow[d,"k\times\mu"] & D_{\mX_0\times\mY_0}(\mX\times\mY)\arrow[d,"p"]& \mX\times\mY\times\bG_{m,k}\arrow[l]\arrow[d,"Id"]\\
(\mX\times\mY)_0\times\bA^1_k\arrow[r]\arrow[d] & D_{(\mX\times\mY)_0}(\mX\times\mY)\arrow[d,"s"]& \mX\times\mY\times\bG_{m,k}\arrow[l]\arrow[d]\\
\{0\}\arrow[r] & \bA^1_k &\bG_{m,k} \arrow[l]  
\end{tikzcd}\]

We have then a natural morphism:
\begin{align}\label{morphspec}
      k^\ast Sp_{(\mX\times\mY)_0}:=k^\ast\psi_s q^\ast\to k^\ast(\mu\times k)_\ast\psi_{s\circ p}q^\ast=:k^\ast(\mu\times k)_\ast Sp_{\mX_0\times\mY_0}\simeq \mu_\ast (Sp_{\mX_0}\boxtimes Sp_{\mY_0})
\end{align}
which is a specialization system in each variables, where we have used \eqref{spxy} and $k^\ast k_\ast\simeq Id$ in the last isomorphism. From the commutativity of \eqref{spxy} and the deformation to the normal cone, this morphism is obviously commutative. If we consider furthermore a map $h:\mZ\to\bA^1_k$, have a commutative diagram of stack:
\[\begin{tikzcd}
D_{\mX_0\times\mY_0\times\mZ_0}(\mX\times\mY\times\mZ)\arrow[r]\arrow[d] & D_{(\mX\times\mY)_0\times\mZ_0}(\mX\times\mY\times\mZ)\arrow[d]\\
D_{\mX_0\times(\mY\times\mZ)_0}(\mX\times\mY\times\mZ)\arrow[r] & D_{(\mX\times\mY\times\mZ)_0}(\mX\times\mY\times\mZ)
\end{tikzcd}\]
Using the fact that the exchange morphism with respect to pushforward are compatible with composition for the specialization system $\Psi$, and the associativity property of \eqref{spxy}, we obtain that the morphism \eqref{morphspec} satisfies the obvious associativity property.\medskip

Using the monoidality of the morphism of six functor formalisms $\Pi_\ast$, we obtain a morphism:
\begin{align}\label{morphtomseb}
    k^\ast \phi_{f\boxplus g}^{mon}\simeq k^\ast \Pi_\ast Sp_{(\mX\times\mY)_0}\to\Pi_\ast\mu_\ast(Sp_{\mX_0}\boxtimes Sp_{\mY_0})\simeq (\Pi_\ast Sp_{\mX_0})\boxtimes_{mon}(\Pi_\ast Sp_{\mX_0})\simeq \phi_f^{mon}\boxtimes_{mon}\phi_g^{mon}
\end{align}
which underlies a morphism of specialization system, and satisfies commutativity and associativity. As this is a morphism of specialization system, it commutes with smooth pullbacks, hence, using the conservativity of smooth pullback and Betti realization, it suffices to prove that it is an isomorphism for schemes in the Betti case. This is then the morphism used by Saito in the preprint \cite{Saitothomseb}, which is proven there to be an isomorphism. Alternatively, in \cite[Appendix A]{Joycesymstab}, Shürmann shows that Saito's morphism is isomorphic to the morphism of \cite{MasThomSeb}, which is shown to be an isomorphism. Notice that this proof is topological: for a fully algebraic proof, which do not use conservativity of the Betti realization, but use resolution of singularities, see \cite{descombesThomSeb}.\medskip

The isomorphism of the Theorem is obtained directly by summing \eqref{morphtomseb} for each couple of critical values of $f,g$, and it inherits its functoriality properties.\end{proof}

\subsubsection{Square root of Tate twists and quadratic bundles}

The following is well known for mixed Hodge modules (see \cite[Example 2.23]{Joycesymstab}), we just show that it holds also for perverse Nori motives.

\begin{lemma}\label{lemsqroot}
    Given a choice $\sqrt{-1}$ of square root of $-1$ in $k$ (assumed to be algebraically closed), there is a canonical isomorphism:
    \begin{align}
\phi_{x^2}^{mon}\un_{\bA^1_k}\boxtimes_{mon}\phi_{x^2}^{mon}\un_{\bA^1_k}\simeq \un[-2](-1)\in \mA_{mon}(\Spec(k))[-2]
    \end{align}
    which gives a canonical square root of the Tate twist:
    \begin{align}
    -\{-1/2\}:=-[-1](-1/2):=-\boxtimes_{mon} 0^\ast\phi^{mon,tot}_{x^2}\un_{\bA^1_k}
\end{align}
\end{lemma}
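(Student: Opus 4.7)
The plan is to construct the isomorphism as the composition of three canonical identifications, each built at the motivic level; to verify that each is an isomorphism, I use the compatibility of perverse Nori motives with monodromic mixed Hodge modules under realization and the conservativity of the Betti realization, so that the whole statement reduces to the well-known Hodge-theoretic calculation.

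First, Theorem \ref{theothomseb} provides a canonical isomorphism
\[
\phi^{mon}_{x^2}\un_{\bA^1_k}\boxtimes_{mon}\phi^{mon}_{x^2}\un_{\bA^1_k}\simeq\phi^{mon}_{x^2+y^2}\un_{\bA^2_k},
\]
of monodromic objects supported at the origin. Next, using the chosen $\sqrt{-1}$, define the linear automorphism $L:\bA^2_k\to\bA^2_k$ by $L(u,v)=((u+v)/2,(u-v)/(2\sqrt{-1}))$, which satisfies $L^\ast(uv)=x^2+y^2$. Since $L$ is an étale isomorphism, the smooth pullback compatibility of the specialization system $\phi^{mon}$ yields a canonical isomorphism
\[
\phi^{mon}_{x^2+y^2}\un_{\bA^2_k}\simeq L^\ast\phi^{mon}_{uv}\un_{\bA^2_k}.
\]
This is the step at which the choice of $\sqrt{-1}$ enters; the opposite choice would produce the same isomorphism precomposed with the natural involution swapping the two factors of $\phi^{mon}_{x^2}\un_{\bA^1_k}$.

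Finally, I must exhibit a canonical motivic isomorphism $\phi^{mon}_{uv}\un_{\bA^2_k}\simeq\un[-2](-1)$ supported at the origin. The map $uv:\bA^2_k\to\bA^1_k$ is the multiplication map, smooth away from the origin, and its Milnor fibre at the origin is $\bG_{m,k}$, whose reduced cohomology is $\un(-1)[-1]$. I would resolve the zero locus of $uv$ via the blow-up of the origin: after pullback, $uv$ becomes a function whose vanishing locus is a simple normal crossings divisor, and the monodromic vanishing cycle of such a function admits an explicit motivic model (in the Denef--Loeser spirit), whose only non-trivial contribution in the present toric case yields $\un[-2](-1)$ with the expected trivial monodromy. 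The main obstacle is carrying out this Step 3 motivically, and not merely after realization: one assembles the morphism in $\mM_{perv}$ from the resolution data, then invokes conservativity of $rat_B$ together with the known MHM calculation of \cite{Saitothomseb} to conclude that the constructed morphism is an isomorphism. Composing the three canonical isomorphisms gives the result, and the definition $-\{-1/2\}:=-\boxtimes_{mon}0^\ast\phi^{mon,tot}_{x^2}\un_{\bA^1_k}$ is then forced to be a square root of the Tate twist.
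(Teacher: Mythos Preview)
Your three-step decomposition (Thom--Sebastiani, linear change of variables depending on $\sqrt{-1}$, then computing $\phi^{mon}_{uv}$) is exactly the paper's strategy. Steps 1 and 2 match the paper essentially verbatim.

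The difference is entirely in Step 3, the identification $\phi^{mon}_{uv}\un_{\bA^2_k}\simeq\un[-2](-1)$. You propose a resolution/blow-up model in the Denef--Loeser style and then an appeal to conservativity; this is correct in principle but, as you yourself flag, the actual construction of the motivic morphism from the resolution data is left vague. The paper takes a more direct route: it uses dimensional reduction along one of the coordinate axes. Concretely, with $i:\{0\}\hookrightarrow(xy)^{-1}(0)$, $\rho:(xy)^{-1}(0)\to\{0\}$, and $\eta:\bA^1_k\to\bA^2_k$, $x\mapsto(x,0)$, one writes
\[
i^\ast\phi^{mon}_{xy}\un_{\bA^2_k}\simeq\rho_!\phi^{mon}_{xy}\un_{\bA^2_k}\to\rho_!\phi^{mon}_{xy}\eta_!\eta^\ast\un_{\bA^2_k}\simeq\pi_!\phi^{mon}_0\un_{\bA^1_k}\simeq\pi_!\un_{\bA^1_k}\simeq\un[-2](-1),
\]
where the first isomorphism holds because the vanishing cycle is supported at the origin, the arrow is the adjunction counit for the closed immersion $\eta$, and $xy\circ\eta=0$. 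This gives an explicit morphism in the motivic category built only from the six operations; that it is an isomorphism is then checked by conservativity and \cite[Appendix A]{Dav13}. This bypasses any need for resolution of singularities and is both shorter and more explicit than what you sketch. Your Milnor-fibre heuristic ($\bG_m$ with reduced cohomology $\un(-1)[-1]$) is the right intuition, but the paper's dimensional reduction gives the morphism for free.
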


\begin{proof}
    Consider the closed immersion $i:\{(0,0)\}\to (xy)^{-1}(0)$, the projection $\rho:(xy)^{-1}(0)\to \{(0,0)\}$ and the closed immersion $\eta:\bA^1_k\to\bA^2_k$ which acts as $x\mapsto (x,0)$. We can define:
    \begin{align}\label{isomsquareroot}
     i^\ast\phi^{mon}_{xy}\un_{\bA^2_k}\simeq \rho_!\phi^{mon}_{xy}\un_{\bA^2_k}\to \rho_!\phi^{mon}_{xy}\eta_! \eta^\ast\un_{\bA^2_k}\simeq \pi_!\phi^{mon}_0\un_{\bA^1_k}\simeq \pi_!\un_{\bA^1_k}\simeq \un[-2](-1)
\end{align}
the first isomorphism holding because $\phi^{mon}_{xy}\un_{\bA^2_k}\in \mA_{mon}((xy)^{-1}(0))$ is supported on $(0,0)$, the second morphism is obtained by adjunction, the third isomorphism holds because $\eta$ is a closed immersion. From \cite[Appendix A]{Dav13}, the constructible complexes and mixed Hodge module version of this morphism is an isomorphism (it is a simple application of dimensional reduction), and then the monodromic and perverse Nori motive version is an isomorphism too, by conservativity.\medskip

We consider the closed immersion $i':\{(0,0)\}\to (x^2+y^2)^{-1}(0)$. Consider a choice of square root $\sqrt{-1}$ of $-1$, we can build an automorphism (recall that $k$ has characteristic $0\neq 2$):
\begin{align}
    \Phi:&\bA^2_k\to\bA^2_k\nn\\
    &(x,y)\mapsto(x+\sqrt{-1}y,x-\sqrt{-1}y)
\end{align}
Which gives:
\begin{align}
    i^\ast \phi^{mon}_{xy}\un_{\bA^2_k}\simeq (i')^\ast \Phi^\ast\phi^{mon}_{xy}\un_{\bA^2_k}\simeq (i')^\ast\phi^{mon}_{x^2+y^2}\Phi^\ast\un_{\bA^2_k}\simeq (i')^\ast\phi^{mon}_{x^2+y^2}\un_{\bA^2_k}  
\end{align}
Using Thom-Sebastiani, we obtain the desired isomorphism:
\begin{align}
\phi_{x^2}^{mon}\un_{\bA^1_k}\boxtimes_{mon}\phi_{x^2}^{mon}\un_{\bA^1_k}\simeq(i')^\ast \phi^{mon}_{x^2+y^2}\un_{\bA^2_k}\simeq i^\ast \phi^{mon}_{xy}\un_{\bA^2_k}\simeq \un[-2](-1)
\end{align}
Notice that $(-1/2)$ preserves the heart $\mA_{mon}$ from the exactness of $\phi^{mon}_f$.
\end{proof}

From now on, we fix once and for all a choice $\sqrt{-1}$ of square root of $-1$ in $k$, fixing a choice of square root of the Tate twist.\medskip

Consider a quadratic bundle $(\mE,q)$ on a stack $\mX$, by which we means the data of a locally free sheaf $\mE$ on $\mX$, and a non-degenerate quadratic form $q$ on $\mE^\vee$, inducing a regular function $q:\bV_\mX(\mE)\to\bA^1_k$. As before, we denote by $\pi:\bV_\mX(\mE)\to\mX$ the projection, and by $s:\mX\to\bV_\mX(\mE)$ the $0$-section. We consider $P_{(\mE,q)}$, the $\bZ/2\bZ$-principal bundle of orientation of $(\mE,q)$. By definition, its section over $f:\mX'\to\mX$ is the $\bZ/2\bZ$ torsor of orientations of $(\mE,q)$, \ie, trivializations $\det(f^\ast(\mE))\simeq \mathcal{O}_{\mX'}$ which are square root of the trivialization $\det(f^\ast(\mE))^2\simeq \mathcal{O}_{\mX'}$ induced by $\det(f^\ast(q))$.\medskip

We will use the following result, which is inspired by \cite[Theorem 4.4, Theorem 2.20]{Bussi2019OnMV}:

\begin{lemma}\label{lemactquadbun}
    Given a stack $\mX$ with a function $f:\mX\to\bA^1_k$ and a quadratic bundle $(\mE,q)$, there is an isomorphism of specialization system $\mD^b_c\to\mD^b_{mon,c}$ over $\mX\to\mX\leftarrow\mX$:
    \begin{align}
        \phi_f^{mon,tot}\otimes_{\bZ/2\bZ}P_{(\mE,q)}\{-d_\mE/2\}\simeq s^\ast\phi_{f\circ \pi+q}^{mon,tot}\pi^\ast
    \end{align}
    It is compatible with exterior tensor product and with direct sum of quadratic bundles. A trivialization $(\mE,q)\simeq (\mathcal{O}_\mX^n,\sum_{i=1}^n x_i^2)$, giving a section of $P_{(\mE,q)}$, identifies this morphism with the Thom-Sebastiani isomorphism:
    \begin{align}
        \phi_f^{mon,tot}\{-d_\mE/2\}:= \phi_f^{mon,tot}\boxtimes 0^\ast\phi^{mon,tot}_{\sum_{i=1}^n x_i^2}\un_{\bA^n_k}\simeq (Id\times 0)^\ast \phi_{f\boxplus \sum_{i=1}^n x_i^2}(pr_1)^\ast
    \end{align}
\end{lemma}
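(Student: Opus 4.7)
The plan is to construct the isomorphism étale-locally via the Thom-Sebastiani theorem (Theorem \ref{theothomseb}) applied to an orthogonal trivialization of $(\mE,q)$, then glue globally by descent, using a twist by the $\bZ/2\bZ$-bundle $P_{(\mE,q)}$ to absorb the sign ambiguity coming from the choice of orientation. This is the standard pattern for handling quadratic stabilization of critical charts in the formalism of Joyce and collaborators, and the indicated trivialization case is precisely how the final compatibility statement of the lemma is meant to pin down signs.

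First, for the trivial quadratic bundle $(\mE,q)\simeq(\mathcal{O}_\mX^n,\sum_{i=1}^n x_i^2)$, I build the isomorphism by iterating Theorem \ref{theothomseb} to obtain
\[
\phi^{mon,tot}_{f\boxplus\sum_i x_i^2}(-\boxtimes \un_{\bA^n}) \simeq \phi^{mon,tot}_f(-)\boxtimes_{mon}\left(\phi^{mon,tot}_{x^2}\un_{\bA^1}\right)^{\boxtimes_{mon} n},
\]
then pulling back along $s=(Id_\mX,0)$ and applying Lemma \ref{lemsqroot} $n$ times to identify $(0^\ast\phi^{mon,tot}_{x^2}\un_{\bA^1})^{\boxtimes_{mon} n}\simeq \un\{-n/2\}$. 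This yields a morphism of specialization systems because each constituent (Thom-Sebastiani, $\pi^\ast$, $s^\ast$, Lemma \ref{lemsqroot}) is one, and compatibility with $\boxtimes$ and with direct sum of trivial quadratic bundles is immediate from the analogous properties of Thom-Sebastiani.

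The key analytic step is to track how this local isomorphism depends on the chosen orthogonal trivialization. Two trivializations differ by some $g\in O(n)(\mX)$, and I must check that the induced change of the Thom-Sebastiani isomorphism is multiplication by $\det(g)\in\{\pm 1\}$. Since endomorphisms between objects in the perverse heart $\mA_{mon}$ form a discrete set and $SO(n)$ is connected, the $SO(n)$-action on the isomorphism is forced to be trivial. It then suffices to examine a single reflection; using the compatibility with direct sums of quadratic bundles (itself a consequence of the associativity of Thom-Sebastiani) one reduces to the one-dimensional involution $x\mapsto -x$, for which the sign is exactly the one built into the definition of the square root of the Tate twist in Lemma \ref{lemsqroot} through the choice of $\sqrt{-1}$.

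Given this $O(n)$-equivariance with $\det$-cocycle, the descent step is routine. The $O(n)$-torsor $\mathcal{F}\to\mX$ of orthonormal frames has reduction modulo $SO(n)$ equal to $P_{(\mE,q)}$, and the local isomorphisms become $SO(n)$-invariant, so they descend to a morphism on $P_{(\mE,q)}$ which is $\bZ/2\bZ$-equivariant with sign-cocycle. Twisting by $P_{(\mE,q)}$ (Lemma \ref{lemtwistbun}) and applying descent along the étale cover $P_{(\mE,q)}\to\mX$ (which has Nisnevich-local sections, so works for all coefficient systems considered) produces the claimed global isomorphism. Compatibility with exterior tensor product and direct sums of arbitrary quadratic bundles is then inherited from the corresponding compatibilities of Thom-Sebastiani and of $-\otimes_{\bZ/2\bZ}-$. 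The main obstacle is the sign computation under a reflection: one has to trace through Lemma \ref{lemsqroot}, in particular the role of $\Phi(x,y)=(x+\sqrt{-1}y,x-\sqrt{-1}y)$, carefully enough to confirm that the resulting $\bZ/2\bZ$-structure on the collection of local isomorphisms is indeed the one classified by the intrinsic orientation bundle $P_{(\mE,q)}$ and not an unrelated $\bZ/2\bZ$-bundle.
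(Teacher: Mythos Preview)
Your proposal is correct and follows essentially the same route as the paper: both construct the isomorphism smooth-locally via Thom--Sebastiani on a trivialization, track the orientation ambiguity as a $\det$-cocycle, and glue by twisting with $P_{(\mE,q)}$. The only organizational differences are that the paper first isolates the pure-quadratic piece $s^\ast\phi^{mon,tot}_q\un_{\bV_\mX(\mE)}\simeq\mathcal{L}_{P_{(\mE,q)}}\{-d_\mE/2\}$ globally before combining it with $f$ via a chain invoking Thom--Sebastiani for $f\boxplus q$ on $\mX\times\bV_\mX(\mE)$, and for the sign it appeals to the Milnor-sphere orientation in the Betti realization (plus faithfulness of $rat$) rather than your connectedness-of-$SO(n)$ plus explicit-reflection argument; both sign checks ultimately rely on the same faithfulness to lift from perverse sheaves to the motivic level.
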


\begin{proof}
We begin by defining a canonical isomorphism (this isomorphism is standard for mixed Hodge modules on schemes):
    \begin{align}\label{isomorientbun}
        s^\ast\phi^{mon,tot}_q\un_{\bV_\mX(\mE)}\simeq \mathcal{L}_{P_{(\mE,q)}}\{-d_\mE/2\}
    \end{align}
    such that:
    \begin{align}
        -\otimes_{mon}s^\ast\phi^{mon,tot}_q\un_{\bV_\mX(\mE)}\simeq -\otimes_{\bZ/2\bZ}P_{(\mE,q)}\{-d_\mE/2\}
    \end{align}
    underlies an isomorphism of involution of coefficient systems $\mD_{mon,c}\to\mD_{mon,c}$ over $\mX$, and is compatible with exterior products of quadratic bundles.\medskip
    
    First, notice that $\phi^{mon,tot}_q\un_{\bV_\mX(\mE)}$ is supported on the image of $s$, hence $s^\ast\phi^{mon,tot}_q\un_{\bV_\mX(\mE)}\in\mA_{mon,c}(\mX)$, as $\mathcal{L}_{P_{(\mE,q)}}\{-d_\mE/2\}$, hence it suffices to define smooth locally isomorphisms \eqref{isomorientbun}, and to ensure that they glue on overlaps. Consider a smooth morphism from a separated scheme of finite type $g:X\to\mX$ and a trivialization $g^\ast(\mE,q)\simeq (\mathcal{O}_X^d,\sum_{i=1}^d x_i^2)$. By smooth base change, we obtain an isomorphism:
    \begin{align}\label{isom1}
        g^\ast s^\ast\phi^{mon,tot}_q\un_{\bV_\mX(\mE)}\simeq \un_X\{-d/2\}
    \end{align}
    Considering the canonical orientation $\det(\mathcal{O}_X^d)\simeq \mathcal{O}_X$ of $(\mathcal{O}_X^d,\sum_{i=1}^d x_i^2)$, this gives a canonical section of $g^\ast(P_{(\mE,q)})$, hence an isomorphism $g^\ast\mathcal{L}_{P_{(\mE,q)}}\simeq\un_X$, which gives:
    \begin{align}\label{isom2}
        g^\ast s^\ast\phi^{mon,tot}_q\un_{\bV_\mX(\mE)}\simeq\mathcal{L}_{P_{(\mE,q)}}\{-d_\mE/2\} 
    \end{align}
    Given two such choices $g_i:X_i\to\mX$, $i=1,2$, two smooth morphisms $g'_i:X'\to X_i$ such that $g:=g'_1\circ g_1=g'_2\circ g_2:X'\to\mX$, we obtain two trivialization $g^\ast(\mE,q)\simeq (\mathcal{O}_X^d,\sum_{i=1}^d x_i^2)$, and the two isomorphisms:
    \begin{align}
        (g'_i)^\ast (g_i)^\ast s^\ast\phi^{mon,tot}_q\un_{\bV_\mX(\mE)}\simeq (g'_i)^\ast\un_{X_i}\{-d/2\}
    \end{align}
    that we need to compare are obviously the isomorphisms induced by those orientations. On the perverse sheaf realization, the isomorphism \eqref{isom1} only depends on the orientation on the Milnor sphere induced by the trivialization, hence the isomorphism \eqref{isom2} is independent of this choice. By faithfulness of the perverse sheaf realization, those two isomorphisms agree. By descent, one obtains a global isomorphism, that does not depends on any extra choice.\medskip

    If we consider $f:\mX'\to\mX$, and consider a smooth covering with trivialization $g:X\to\mX$, we can built \eqref{isomorientbun} for $\mX'$ by using the covering and trivialization $g':X'\to\mX'$ obtained by base change from $g$. Then \eqref{isomorientbun} induces by base change an isomorphism of involution of coefficient systems over $\mX$. If we consider two stacks $\mX_i$, $i=1,2$ and two quadratic bundles $(\mE_i,q_i)$ on $\mX_i$, we can built \eqref{isomorientbun} for the product $\mX_1\times\mX_2$ by using product of smooth covering and trivializations  $g_i:X_i\to\mX_i$. By Thom-Sebastiani, we obtain that \eqref{isomorientbun} is compatible with exterior tensor product.\medskip

    Consider the following commutative diagram:
    \[\begin{tikzcd}
        \mX\arrow[r,"s"] & \bV_\mX(\mE)\arrow[r,"\pi\times Id"]\arrow[dr,"f\circ \pi+q"] & \mX\times\bV_\mX(\mE)\arrow[r,"pr_1"]\arrow[d,"f\boxplus q"] & \mX\\
        & & \bA^1_k &
    \end{tikzcd}\]
    We have a natural sequence of (iso)morphisms:
    \begin{align}\label{isomlemorient}
        \phi_f^{mon,tot}\otimes_{\bZ/2\bZ}P_{(\mE,q)}\{-d_\mE/2\}&\simeq \phi_f^{mon,tot}\otimes_{mon}(s^\ast\phi_q^{mon,tot}\un_{\bV_{\mX}(\mE)})\nn\\
        &\simeq  \Delta^\ast (\phi_f^{mon,tot}\boxtimes_{mon}(s^\ast\phi_q^{mon,tot}))(pr_1)^\ast\nn\\
        &\simeq s^\ast(\pi\times Id)^\ast (\phi_f^{mon,tot}\boxtimes_{mon}\phi_q^{mon,tot})(pr_1)^\ast\nn\\
        &\simeq s^\ast(\pi\times Id)^\ast \phi_{f\boxplus q}^{mon,tot}(pr_1)^\ast\nn\\
        &\to s^\ast \phi_{f\circ \pi+q}^{mon,tot}(\pi\times Id)^\ast(pr_1)^\ast\nn\\
        &\simeq s^\ast\phi_{f\circ \pi+q}^{mon,tot}\pi^\ast
    \end{align}
    where the first line is \eqref{isomorientbun}, the second line comes from the definition of the internal tensor product, the third from $(Id\times s)\circ\Delta\simeq (p\times Id)\circ s$, the fourth from the Thom-Sebastiani isomorphism, the fifth by the functoriality of the specialization system $\phi^{mon,tot}$ in the above diagram, and the last one from $pr_1\circ (\pi\times Id)\simeq \pi$. As proven above, the first isomorphism underlies an isomorphism of specialization system and is compatible with exterior tensor products, it is also the case for the Thom-Sebastiani of the fourth line from Theorem \ref{theothomseb}, and for the other lines from the classical functoriality of the four functors, hence it is the case for \eqref{isomlemorient}. We mean that, given $g:\mX'\to\mX$, denoting $(\mE',q'):=g^\ast(\mE,q)$ and $f':=f\circ g$, the following squares of morphisms are commutative:
    \[\begin{tikzcd}
        g^\ast(\phi_f^{mon,tot}\otimes_{\bZ/2\bZ}P_{(\mE,q)}\{-d_\mE/2\})\arrow[d]\arrow[r] & g^\ast(s^\ast\phi_{f\circ \pi+q}^{mon,tot}\pi^\ast)\arrow[d]\\
        (\phi_{f'}^{mon,tot}\otimes_{\bZ/2\bZ}P_{(\mE',q')}\{-d_{\mE'}/2\})g^\ast\arrow[r] & ((s')^\ast\phi_{f'\circ \pi'+q'}^{mon,tot}(\pi')^\ast) g^\ast
    \end{tikzcd}\]
    \[\begin{tikzcd}
        g_!(\phi_{f'}^{mon,tot}\otimes_{\bZ/2\bZ}P_{(\mE',q')}\{-d_{\mE'}/2\})\arrow[d]\arrow[r] & g_!((s')^\ast\phi_{f'\circ \pi'+q'}^{mon,tot}(\pi')^\ast)\arrow[d]\\
        (\phi_f^{mon,tot}\otimes_{\bZ/2\bZ}P_{(\mE,q)}\{-d_\mE/2\})g_!\arrow[r] & (s^\ast\phi_{f\circ \pi+q}^{mon,tot}\pi^\ast) g_!
    \end{tikzcd}\]
    and similarly for $g^!,g_\ast$, where the direction of the vertical arrows are reversed. The compatibility with exterior tensor products says that, given two quadratic bundles $(\mE_i,q_i)$ on $\mX_i$ and $f_i:\mX_i\to\bA^1$ for $i=1,2$, considering the quadratic bundle $(\mE,q):=(\mE_1\boxplus\mE_2,q_1\boxplus q_2)$ on $\mX_1\times\mX_2$, the following diagram is commutative:
    \[\begin{tikzcd}[column sep=tiny]
        (\phi_{f_1}^{mon,tot}\otimes_{\bZ/2\bZ}P_{(\mE_1,q_1)}\{-d_\mE/2\})\boxtimes (\phi_{f_2}^{mon,tot}\otimes_{\bZ/2\bZ}P_{(\mE_2,q_2)}\{-d_\mE/2\})\arrow[r]\arrow[d,"\simeq"]  & ((s_1)^\ast\phi_{f_1\circ \pi_1+q_1}^{mon,tot}(p_1)^\ast)\boxtimes ((s_2)^\ast\phi_{f_2\circ \pi_2+q_2}^{mon,tot}(p_2)^\ast)\arrow[d,"\simeq"]\\
        \phi_{f_1\boxplus f_2}^{mon,tot}\otimes_{\bZ/2\bZ}P_{(\mE,q)}\{-d_\mE/2\}\arrow[r] & s^\ast\phi_{(f_1\oplus f_2)\circ\pi+q}^{mon,tot}p^\ast
    \end{tikzcd}\]
    where the vertical arrows are the Thom-Sebastiani isomorphisms.\medskip
    
    Given a trivialization $(\mE,q)\simeq (\mathcal{O}_\mX^n,\sum_{i=1}^n x_i^2)$, the above diagram becomes:
    \[\begin{tikzcd}
        \mX\arrow[r,"Id\times 0"] & \mX\times\bA^n_k\arrow[r,"\Delta\times Id"]\arrow[dr,"f\boxplus q"] & \mX\times\mX\times\bA^n_k\arrow[r,"pr_1"]\arrow[d,"f\boxplus0\boxplus q"] & \mX\\
        & & \bA^1_k &
    \end{tikzcd}\]
    hence the morphism of the Lemma becomes simply the Thom-Sebastiani morphism, hence is an isomorphism. In the general situation, given a smooth cover with trivialization $g:X\to\mX$, we obtain from the functoriality with respect to smooth pullbacks that $g^\ast$ applies to the morphism of the Lemma is a isomorphism, hence by conservativity of smooth pullbacks it is an isomorphism.\medskip

    Consider two quadratic bundles $(\mE_i,q_i)$ on the same stack $\mX$, and denote $(\mE,q):=(\mE_1\oplus\mE_2,q_1\oplus q_2)$. A choice of orientation of $(\mE_i,q_i)$ for $i=1,2$ gives a choice of orientation for $(\mE,q)$, which gives a natural isomorphism:
    \begin{align}
        P_{(\mE,q)}\simeq P_{(\mE_1,q_1)}\otimes_{\bZ/2\bZ}P_{(\mE_2,q_2)}
    \end{align}
    Consider the following square of isomorphisms:
    \[\begin{tikzcd}
        s^\ast\phi^{mon,tot}_{f\circ\pi+q}\pi^\ast\arrow[r,"\simeq"]\arrow[d,"\simeq"] & \phi_f^{mon,tot}\otimes_{\bZ/2\bZ}P_{(\mE,q)}\{-d_\mE/2\}\arrow[d,"\simeq"]\\
((s_1)^\ast\phi^{mon,tot}_{f\circ\pi_1+q_1}\pi^\ast)\otimes_{\bZ/2\bZ}P_{(\mE_2,q_2)}\{-d_{\mE_2}/2\}\arrow[r,"\simeq"] & \phi^{mon,tot}_f\otimes_{\bZ/2\bZ}P_{(\mE_1,q_1)}\{-d_{\mE_1}/2\}\otimes_{\bZ/2\bZ}P_{(\mE_2,q_2)}\{-d_{\mE_2}/2\}
    \end{tikzcd}\]
    where the left arrow is obtained by considering the quadratic bundle $(\pi_1)^\ast (\mE_2,q_2)$ on $\bV_\mX(\mE_1)$. The fact that it commutes can be checked on a smooth cover: using a smooth cover of $\mX$ such that both $(\mE_i,q_i)$ have a trivialization, giving a trivialization of $(\mE,q)$, this follows directly from the associativity of the Thom-Sebastiani isomorphism. Then the above square is commutative, meaning that the isomorphism of the Lemma is compatible with direct sum of quadratic bundles.
\end{proof}

\subsection{Grothendieck groups}\label{SectKgroup}

We want to consider consider $K(\mD^b_c(\mX))$ (resp $K(\mD^b_{mon,c}(\mX))$, the Grothendieck group of the triangulated category $\mD^b_c(\mX)$ (resp $\mD^b_{mon,c}(\mX)$), \ie the free group generated by isomorphism class of objects, divided by the relation $[B]=[A]+[C]$ for each exact triangle $A\to B\to C\to A[1]$. It is identified with the Grotendieck group of the perverse heart $K(\mA_c(\mX))$ (resp. $K(\mA_{mon,c}(\mX))$), and then, in the presence of a weight structures, because each object in the heart has a weight filtration, it is a direct sum over $w\in\bZ$ of the Grothendieck group of pure objects of weight $w$. We denote by $\mathbb{L}$ (resp. $\mathbb{L}^{1/2}$), called the Tate motive, or motive of the affine line $\bA^1$, the class of $\un[-2](-1)$ (resp. $\un[-1](-1/2)$).\medskip

There is a slight subtlety in dealing with Grothendieck group of objects over stacks, because (proper) pushforwards along non-representable maps (and in particular, taking the cohomology (with compact support)) does not preserve $\mD^b_c(-)$ (resp $\mD^b_{mon,c}(-)$). For any triangulated category $\mD_c$ with a Bondarko weight structure, we consider:
\begin{align}\label{defD-}
    &\mD_{c,-}:=\lim_{w\to +\infty}\mD_{c,\leq w}\nn\\
    &\mD_{c,+}:=\lim_{w\to +\infty}\mD_{c,\geq w}
\end{align}
We have then, from the functoriality of the four functors with weights and with objects that $f_!,f^\ast$, $\boxtimes$ preserves $\mD^-_{c,-}$ (and similarly $f_\ast,f^!$, $\boxtimes$ preserves $\mD^+_{c,+}$, and $\bD$ exchanges $\mD^-_{c,-}$ and $\mD^+_{c,+}$). Recall that for an object $F\in\mD_{c,w}$, we have that $^p\mathcal{H}^n(F)$ is pure of weight $w+n$. In particular, the weights of the perverse cohomology of an object of  $\mD^-_{c,-}$ (resp $\mD^+_{c,+}$) are bounded from above (resp bounded from below). Moreover, notice that any pure objects $F\in \mA_c(\mX)$ (resp $F\in \mA_{mon,c}(\mX)$) of weight $w$ can written as $F(\left\lfloor w/2\rfloor\right)(-\left\lfloor w/2\rfloor\right)$ where $F(\left\lfloor w/2\rfloor\right)$ is pure of weight $0$ or $1$ (resp as $F(w/2)(-w/2)$, where $F(-w/2)$ is pure of weight $0$). This means that the Grothendieck group of $\mD^-_{c,-}$ (resp $\mD^-_{mon,c,-}$) can be identified with the completed Grothendieck group $K(\mA_c[[\bL^{-1}]])$ (resp  $K(\mA_{mon,c}[[\bL^{-1/2}]]))$.\medskip

In particular, given a $k$-stack of finite type $p:\mX\to Spec(k)$, we have that $H_c(\mX):=p_!p^\ast\un_{pt}\in\mD^-_{c,-}$, which gives $[H_c(\mX)]\in K(\mA_c(pt))[[\bL^{-1}]])$. For constructible complexes, $D^b_c(pt,\bQ)$ is the bounded derived category of complexes of $\bQ$-vector space, hence $K(\mA_c(pt))=\bZ$ and $\bL=1$, and one find the Euler characteristic in the scheme case, but there is a problem in the stack case. This is a well known problem, see for example \cite{joyce2005MOTIVICIO}. This problem disappear when one uses a refinement, as we obtain then $\bL\neq 1$.\medskip

For mixed Hodge modules, $MHM(pt)$ is the Abelian category of mixed Hodge structure, and its Grothendieck group is isomorphic to $\bZ[q,y]$, with $q$ (resp $y$) the parameter taking into account the weight filtration $W$ (resp the Hodge filtration $F$). This isomorphism is given by the Hodge polynomial, which gives, for $M^\bullet$ a complex of mixed Hodge structure:
\begin{align}
    [M^\bullet]=\sum_{i,j,k}(-1)^k q^{i/2} y^j \dim(\Gr_i^W\Gr^j_F M^k)\in\bZ[q^{\pm 1/2},y^{\pm 1}]
\end{align}
in particular, $\bL$ is identified with $qy$. There are two usual one-parameter specializations the Hodge polynomial, the weight polynomial, for $y=1$, related count of points over finite fields via the Weil conjectures, and the $\chi_y$ genus, for $q=1$, which is topological for smooth and proper varieties from the degeneration of the Hodge to de Rahm spectral sequence. The Euler characteristic (in the scheme case) is then given by $y=q=1$. If the Hodge structures are of Tate type, hence the Hodge polynomial is a polynomial in $\bL=qy$ (as one obtains often in toric localization contexts), the weight polynomial and $\chi_y$ genus coincide, but one must take care that they are in general different.\medskip

The objects of the Abelian category of monodromic mixed Hodge modules $MMHM$ are mixed Hodge structures $M$ with a semisimple action $T_s$ and a nilpotent action $N:M\to M(-1)$. This nilpotent action is not visible in the Grothendieck group, as any object is the extension of objects with $N$ acting trivially. One can split $M\otimes_\bQ \bC=\bigoplus_{0\leq \lambda<1}M_\lambda$, where $T_s$ acts on $M_\lambda$ with eigenvalue $e^{2i\pi\lambda}$. Denoting $\mE_\lambda=0$ if $\lambda=1$ and $\mE_\lambda=1$ if $\lambda\neq 1$, for $M^\bullet$ a complex of mixed Hodge structure, the monodromic Hodge polynomial is defined by:
\begin{align}
    [M^\bullet]=\sum_{i,j,k,\lambda}(-1)^k q^{(i+\epsilon_\lambda)/2} y^{j-\lambda} \dim(\Gr_i^W\Gr^j_F M^k_\lambda)\in\bZ[q^{\pm 1/2},y^\bQ]
\end{align}
and gives a an isomorphism of ring $K(D^b(MMHM(pt))\simeq \bZ[q^{\pm 1/2},y^\bQ]$ from \cite[Definition 2.18]{Saitothomseb}. It is obviously compatible with the inclusion of mixed Hodgge structure into monodromic mixed Hodge structure. The specialization $q=1$ gives the Hodge spectrum $\chi_y^{mon}$.\medskip

$\mM_{perv}(Spec(k))$ is the Abelian category of Nori motives, and its Grothendieck group is by definition the group ofNori motives, and similarly the Grothendieck group of monodromic perverse Nori over $\Spec(k)$ is the group of monodromic Nori motives.\medskip

Consider $\mX$ an algebraic stack, $j:\mU\to \mX$ the inclusion of a Zariski-open subscheme, and $i:\mathcal{Z}\to X$ the inclusion of its closed complements. There is a natural exact triangle in $\mD(\mX)$:
\begin{align}\label{triangmotivic}
    j_! j^\ast \to Id\to i_! i^\ast\to j_! j^\ast[1]
\end{align}
Then, given $p:\mX\to \mS$, we have an equality in the Grothendieck group:
\begin{align}
    [p_!\un_\mX)]=[(p\circ j)_!\un_\mU]+[(p\circ i)_!\un_\mathcal{Z}]\in K(\mD^-_-(\mS))
\end{align}
using also homotopy invariance, we see that there is a natural map from the Grothendieck group of (monodromic) varieties over $\mS$ (with cut and paste relation) to $K(\mA_c(\mS)[[\bL^{-1}]])$ (resp over $K(\mA_{mon,c}(\mS)[[\bL^{-1/2}]])$).\medskip

For $X$ a separated scheme of finite type, there is from the work of Denef-Loeser \cite{Denef1998MotivicEI} and Looijenga \cite{Looijenga1999-2000} a Grothendieck group $\mM^{\bG_m}_{X\times\bG_m}$ of $\bG_m$-equivariant varieties on $X\times\bG_m$, and, for $f:X\to\bA^1_k$, nearby and vanishing cycles $\Psi_f,\Phi_f\in\mM^{\bG_m}_{X_0\times\bG_m}$. From \cite[Proposition 3.5.1]{Ivorra2021QuasiunipotentMA}, there are morphisms $\chi_{X,c}^{\bG_m}:\mM^{\bG_m}_{X\times\bG_m}\to K(\mD^b_{mon,c}(X))$. From \cite[Theorem 5.2.1]{Ivorra2021QuasiunipotentMA}, one has $\chi_{X,c}^{\bG_m}(\Psi_f)=[\psi^{mon}_f\un_{X_\eta}]$, and then directly $\chi_{X,c}^{\bG_m}(\Phi_f)=[\phi^{mon}_f\un_X]$, where one uses Ayoub definition of nearby cycles. This result was well known for the case of $D^b_c(-,\bQ)$ and $D^bMHM(-)$ before \cite{Ivorra2021QuasiunipotentMA}, but this was new for perverse Nori motives. The same follow for stacks by descent: in particular, the class in the Grothendieck group of Nori motives of the cohomology of the DT sheaf that wee will consider below will correspond to the Nori realization of the motivic DT invariants defined in \cite{Bussi2019OnMV}, as was known in the Hodge case and pointed in \cite[Appendix A]{Davison2019RefinedIO}.\medskip

Using the Mayer-Vietoris exact triangle, given $\mX$, a constructible subset $S$ of $\mX(k)$, and an element $F\in\mD(\mX)$, we can define uniquely its motive on $S$, that we denote by a slight abuse of notation $[\bH_c(S,F|_S)]$, and is additive under disjoint union (notice that, in general, $S$ is not a stack, hence $F|_S$ do not make sense, which explains why the notation is abusive). Indeed, writing $S$ as a disjoint union of locally closed substacks $S_i\to\mX$, it suffices to define:
\begin{align}
    [\bH_c(S,F|_S)]:=\sum_i [\bH_c(S_i,F|_{S_i})]
\end{align}
and this does not depends on the choice of the $S_i$ by Mayer-Vietoris. We have the useful "motivic decomposition formula":

\begin{lemma}\label{lemmotdec}
    Let $k$ be an algebraically closed field of characteristic $0$, and $\eta:\mX\to\mY $ be a geometric injection (resp. bijection) of algebraic $k$-stacks of finite type (\ie, such that $\eta(k):\mX(k)\to\mY (k)$ is a injection (resp. bijection) of groupoids). Then, for any $F\in\mD_{(mon)}(\mY)$:
    \begin{align}\label{geominj}
        [\bH_c(\mY,F)]=[\bH_c(\mX,\eta^\ast F)]+[\bH_c(\mY-\eta(\mX),F|_{\mY-\eta(\mX)})]
    \end{align}
    (resp. :
    \begin{align}\label{geombij}
        [\bH_c(\mY,F)]=[\bH_c(\mX,\eta^\ast F)]
    \end{align})
\end{lemma}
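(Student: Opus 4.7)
The plan is to reduce the statement to the assertion
\begin{align*}
[\bH_c(\mX,\eta^\ast F)]=[\bH_c(\eta(\mX),F|_{\eta(\mX)})]
\end{align*}
where $\eta(\mX)$ denotes the image $\eta(\mX(k))\subset\mY(k)$, which is constructible by Chevalley's theorem applied to the morphism of finite type $\eta$. The desired equality \eqref{geominj} then follows by combining with the additivity formula
\begin{align*}
[\bH_c(\mY,F)]=[\bH_c(\eta(\mX),F|_{\eta(\mX)})]+[\bH_c(\mY-\eta(\mX),F|_{\mY-\eta(\mX)})]
\end{align*}
which holds directly by stratifying $\mY$ into locally closed substacks compatibly with the decomposition $\mY=\eta(\mX)\sqcup(\mY-\eta(\mX))$ and applying the exact triangle \eqref{triangmotivic} and the definition of motives on constructible subsets discussed just before the statement. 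The bijective case \eqref{geombij} is then immediate because the complement term vanishes.

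To prove $[\bH_c(\mX,\eta^\ast F)]=[\bH_c(\eta(\mX),F|_{\eta(\mX)})]$, I would proceed by Noetherian induction on $\mX$. The inductive step rests on the following geometric claim: there exists a dense open substack $j:\mU\hookrightarrow\mX$ such that $\eta|_\mU$ factors as an isomorphism $\mU\xrightarrow{\sim}\mU'$ onto a locally closed substack $\mU'\subset\mY$. Granting this, let $i:\mathcal{Z}\hookrightarrow\mX$ be the closed complement. The localization triangle $j_!j^\ast\to\mathrm{Id}\to i_!i^\ast$ applied to $\eta^\ast F$ and pushed forward to $\Spec(k)$ yields
\begin{align*}
[\bH_c(\mX,\eta^\ast F)]=[\bH_c(\mU,(\eta\circ j)^\ast F)]+[\bH_c(\mathcal{Z},(\eta\circ i)^\ast F)].
\end{align*}
The first term equals $[\bH_c(\mU',F|_{\mU'})]$ via the isomorphism $\mU\simeq\mU'$. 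The restriction $\eta\circ i:\mathcal{Z}\to\mY$ is still a geometric injection, so by the Noetherian induction hypothesis the second term equals $[\bH_c(\eta(\mathcal{Z}),F|_{\eta(\mathcal{Z})})]$. Since $\eta(\mX)=\mU'\sqcup\eta(\mathcal{Z})$ as a disjoint decomposition of constructible subsets of $\mY(k)$, the additivity of the motive on constructible subsets gives the desired equality.

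The main obstacle is establishing the existence of the generic trivialization $\mU\xrightarrow{\sim}\mU'$. Here I would exploit that $k$ is algebraically closed of characteristic zero and that $\eta$ is of finite type: by generic flatness applied to a smooth presentation and by generic smoothness in characteristic zero, $\eta$ is generically smooth on some dense open of $\mX$; since $\eta$ is injective on the automorphism groups of $k$-points, $\eta$ is generically unramified, and a smooth unramified morphism is étale; finally, an étale morphism between finite-type stacks over $k$ which is injective on $k$-points is (after further shrinking to trivialize connected components of the fibers) an open immersion onto a locally closed substack. This produces the desired $\mU\hookrightarrow\mX$ and initializes the induction; the base case is when $\mX$ is empty, where everything is zero.
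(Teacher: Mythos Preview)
Your overall strategy—Noetherian induction on $\mX$, extracting a dense open on which $\eta$ is an isomorphism onto a locally closed substack, and using additivity over constructible pieces—is correct and is essentially the same skeleton as the paper's proof. The paper phrases things functorially (showing $[\eta_!\eta^\ast]=\mathrm{Id}$ and inducting on the number of strata rather than on $\mX$), and handles the bijection case first, deducing the injection case by completing to a bijection; but these are cosmetic differences.

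The substantive difference is in how the key geometric input is obtained. The paper does not prove the generic trivialization directly: it invokes \cite[Lemma 3.2]{BRIDGELAND2012102}, which states precisely that a geometric bijection of finite-type $k$-stacks admits a stratification by locally closed substacks on which the map is an isomorphism. Your attempt to prove this step via generic smoothness has gaps. Generic smoothness in characteristic zero requires the source to be smooth over $k$, which you have not arranged; you would first need to pass to reduced stacks (legitimate, by nil-invariance of the coefficient system, as the paper notes elsewhere), and even then the source need not be smooth. More seriously, ``injective on automorphism groups at $k$-points'' does not immediately yield ``unramified'': the relative inertia could a priori be a non-trivial infinitesimal group scheme, and the fibers of a flat map between reduced schemes need not be reduced. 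In characteristic zero these obstructions can be ruled out with additional work (Cartier's theorem for the inertia, and reducedness of the generic fiber for integral schemes), but your sketch does not supply it.

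The cleanest fix is simply to cite Bridgeland's lemma for the stratification, as the paper does. If you want to keep the argument self-contained, you should explicitly reduce to the reduced case, then to the case where both $\mX$ and a locally closed piece of the image are integral, and argue via the generic fiber.
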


\begin{proof}
    We begin by the case when $\eta$ is a geometric bijection. Consider an open immersion $u:\mU\to\mY$, its closed complement $z:\mZ\to\mY$, and the Cartesian diagram:
    \[\begin{tikzcd}
        \mU'\arrow[r,"u'"]\arrow[d,"\eta_U"] & \mX\arrow[d,"\eta"] & \mZ'\arrow[l,"z'"]\arrow[d,"\eta_Z"]\\
        \mU\arrow[r,"u"] & \mY & \mZ\arrow[l,"z"]
    \end{tikzcd}\]
    Applying $\eta_!$ to the right and $\eta^\ast$ to the left to the localization exact triangle of $\mU'\rightarrow \mX\leftarrow\mZ'$, we obtain:
    \begin{align}
        \eta_!(u')_!(u')^\ast\eta^\ast\to \eta_!\eta^\ast\to \eta_!(z')_!(z')^\ast\eta^\ast
    \end{align}
    which gives, using base change:
    \begin{align}
        u_!(\eta_U)_!(\eta_U)^\ast u^\ast\to\eta_!\eta^\ast\to z_!(\eta_Z)_!(\eta_Z)^\ast z^\ast
    \end{align}
    and then passing to the Grothendieck group:
    \begin{align}
        [\eta_!\eta^\ast]=[u_!(\eta_U)_!(\eta_U)^\ast u^\ast]+[z_!(\eta_Z)_!(\eta_Z)^\ast z^\ast]
    \end{align}
    notice that, if $\eta$ is a geometric bijection, then $\eta_U,\eta_Z$ are still geometric bijections. If $[(\eta_U)_!(\eta_U)^\ast]=Id$, $[(\eta_Z)_!(\eta_Z)^\ast]=Id$, then, using $[Id]=[u_!u^\ast]+[z_!z^\ast]$ from the localization exact triangle, one obtains $[\eta_!\eta^\ast]=Id$.\medskip
    
    The main tool used here is \cite[Lemma 3.2]{BRIDGELAND2012102} (notice that the arguments used here are entirely algebraic, and does not use $k=\bC$, they works for any algebraically closed field of characteristic $0$). It gives that, if $\eta$ is a geometric bijection, there is a collection $\mX_i$ (resp $\mY_i$) of locally closed substacks of $\mX$ (resp $\mY$) such that $\eta$ induces isomorphisms $\eta_i:\mX_i\to\mY_i$. Notice that the statement that $[\eta_!\eta^\ast]=Id$ is Zariski-local on $\mY$, hence we can suppose that $\mY$ is of finite type, which implies that the collection is finite. We show that $[\eta_!\eta^\ast]=Id$ by recursion on the number $n$ of strata, the case $n=1$ ($\eta$ is an isomorphism) being trivial. Take $\mY_1=\mU_1\cap\mathcal{Z}_1$. From the above reduction, it suffices to show $[(\eta')_!(\eta')^\ast]=Id$ for $\eta'$ the geometric bijections obtained by restricting to $\mU_1\cap\mathcal{Z}_1$, $(\mY-\mU_1)\cap\mathcal{Z}_1$, $\mU_1\cap(\mY-\mathcal{Z}_1)$ and $(\mY-\mU_1)\cap(\mY-\mathcal{Z}_1)$. But all of those four stacks have $n-1$ strata, hence we obtain $[\eta_!\eta^\ast]=Id$ by recursion, which gives in particular \eqref{geombij}.\medskip

    If $\eta$ is a geometric injection, we can write the subset $\mY(k)-\eta(\mX(k))$, which is constructible by Chevalley's theorem, as a disjoint union of locally closed immersions $\eta_i$ of substacks of $\mY$. We can then form a geometric bijection $\eta'$ from the disjoint union of $\eta$ and the $\eta_i$, and, applying the above result for $\eta'$, we obtain the claimed result for $\eta$.
\end{proof}

\section{Recollection on DT theory on stacks}\label{sectjoyce}

In this section, as in the preceding section, we work over an algebraically closed field $k$ of characteristic $0$. All our stacks are assumed to be quasi-separated Artin $1$-stacks locally of finite type over $k$, with affine stabilizers. We will recall the theory of d-critical stacks, as introduced in \cite{Joyce2013ACM}, and the gluing of the sheaf of vanishing cycles of d-critical stacks, as done in \cite{Joycesymstab} (scheme case) and \cite{darbstack} (stack case). In those articles, the theory is developed in two steps: first on schemes, by considering critical charts given by schemes, open/étale restrictions of them, and stabilization by adding a trivial quadratic factor, and then extending the construction to stacks by smooth descent. To glue our hyperbolic localization formula, we will have to work with more general critical charts, given by stacks, so we express the results of \cite{Joyce2013ACM}, \cite{Joycesymstab} and \cite{darbstack} in this more general setting, using smooth descent.

\subsection{D-critical stacks and stacky critical charts}

In \cite[Section 2.8]{Joyce2013ACM}, Joyce introduce the concept of d-critical stack, which is the main tool to glue vanishing cycles in cohomological DT theory. We will adapt slightly the theory developed in \cite[Section 2.8]{Joyce2013ACM}, in order to consider critical charts defined by functions on smooth stacks instead of smooth schemes. This extension is mostly straightforward, because the whole theory of \cite{Joyce2013ACM} behaves well with respect to smooth maps, hence we will obtains our  results directly by smooth descent from the results in the scheme case.\medskip

For any Artin stack $\mX$ over $k$, Joyce defines in \cite[Corollary 2.52]{Joyce2013ACM} a lisse-étale sheaf $\mS_{\mX}$ of $k$-algebras on $\mX$. We give the main characterization of $\mS_{\mX}$ that we will use:

\begin{lemma}\label{defSx}
\begin{enumerate}
    \item[$i)$] For any smooth map  $\mR\to \mX$ from a $k$-stack, and any closed embedding $i:\mR\hookrightarrow \mU$ into a smooth $k$-stack, denote by $I_{\mR,\mU}$ the sheaf of ideals in $i^{-1}(\mathcal{O}_\mU)$ of functions on $\mU$ near $i(\mR)$ which vanishes on $i(\mR)$. There is a natural exact sequence:
\[\begin{tikzcd}
    0\arrow[r] &\mS_\mX|_\mR\arrow[r,"\iota_{\mR,\mU}"] & \frac{i^{-1}(\mathcal{O}_\mU)}{I^2_{\mR,\mU}}\arrow[r,"d"] & \frac{i^{-1}(T^\ast_\mU)}{I_{\mR,\mU}\cdot i^{-1}(T^\ast_\mU)}
\end{tikzcd}\]
We denote sections of $\mS_X|_\mR$ by $f+I^2_{\mR,\mU}$, with $df|_\mR=0$.

\item[$ii)$] The construction of $\mS_{\mX}$ is functorial, namely, for $\phi:\mX\to\mY$, there is a naturally defined morphism of sheaves of $k$-algebras $\phi^\star:\phi^{-1}(\mS_\mY)\to\mS_\mX$, compatible with compositions, such that $Id^\star=Id$. Given a commutative diagram:
    \[\begin{tikzcd}
    \mX\arrow[d,"\phi"]& \mR\arrow[d,"\tilde{\phi}"]\arrow[l]\arrow[r,"i'"] & \mU\arrow[d]\\
    \mY & \mR'\arrow[r,"i"]\arrow[l] & \mU'
\end{tikzcd}\]
where the left horizontal arrows are smooth and the right horizontal arrows are closed immersions. The natural diagram with exact lines:
\[\begin{tikzcd}
    0\arrow[r] &\tilde{\phi}^{-1}(\mS_\mY|_{\mR'})\arrow[r,"\iota_{\mR',\mU'}"]\arrow[d,"\phi^\star"] & \tilde{\phi}^{-1}(\frac{(i')^{-1}(\mathcal{O}_{\mU'})}{I^2_{\mR',\mU'}})\arrow[r]\arrow[d] & \phi^{-1}(\frac{i^{-1}(T^\ast \mU')}{I_{\mR',\mU'}\cdot (i')^{-1}(T^\ast \mU')})\arrow[d]\\
    0\arrow[r] &\mS_\mX|_\mR\arrow[r,"\iota_{\mR,\mU}"] & \frac{i^{-1}(\mathcal{O}_\mU)}{I^2_{\mR,\mU}}\arrow[r] & \frac{i^{-1}(T^\ast \mU)}{I_{\mR,\mU}\cdot i^{-1}(T^\ast \mU)}
\end{tikzcd}\]
is commutative. In particular, taking $\phi=Id$ (hence $\phi^\star=Id$) but $\tilde{\phi}$ nontrivial, one obtains a stacky analogue of \cite[Theorem 2.1 ii)]{Joyce2013ACM} expressing how $\mS_\mX$ is glued.
\end{enumerate}
\end{lemma}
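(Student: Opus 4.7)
The plan is to reduce both parts to Joyce's scheme-level construction via smooth descent. Recall that for schemes, \cite[Theorem 2.1]{Joyce2013ACM} gives exactly the analogue of parts $i)$ and $ii)$ when all of $\mR,\mU,\mX,\mY$ are schemes, and $\mS_\mX$ is then defined in \cite[Corollary 2.52]{Joyce2013ACM} as a lisse-étale sheaf on the stack by descent: for any smooth scheme atlas $p:X_0\to\mX$, the pullback $p^{-1}(\mS_\mX)$ is Joyce's scheme-level sheaf $\mS_{X_0}$, and the gluing data comes from the scheme-level functoriality.

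For part $i)$, given a smooth map $\mR\to\mX$ from a stack and a closed embedding $i:\mR\hookrightarrow \mU$ into a smooth stack, the first step is to pick a smooth scheme atlas $q:U_0\to\mU$ and form $R_0:=\mR\times_\mU U_0$, a smooth scheme cover of $\mR$ equipped with a closed embedding $R_0\hookrightarrow U_0$ into a smooth scheme. Joyce's scheme-level theorem gives the desired exact sequence over $R_0$. Applying the scheme-level functoriality of the second part of \cite[Theorem 2.1]{Joyce2013ACM} to the two projections $R_0\times_\mR R_0\rightrightarrows R_0$ shows that this sequence carries descent data, and hence descends uniquely to an exact sequence on $\mR$ of the form stated in $i)$. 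Independence of the chosen atlas is obtained by passing to a common refinement and again invoking scheme-level functoriality.

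For part $ii)$, one constructs $\phi^\star$ by choosing compatible smooth atlases $p:X_0\to\mX$, $q:Y_0\to\mY$ and a lift $\phi_0:X_0\to Y_0$ of $\phi$; the scheme-level functoriality gives $\phi_0^\star:\phi_0^{-1}(\mS_{Y_0})\to\mS_{X_0}$, and smooth descent produces the required $\phi^\star$. Compatibility with composition and the unit axiom are inherited directly from the scheme-level statements. The commutativity of the diagram in the second half of $ii)$ reduces, by pulling back along a smooth scheme atlas of $\mR$ and trivialising $\mU,\mU'$ to smooth schemes, to the analogous commutativity at the scheme level, which is the corresponding assertion in \cite[Theorem 2.1]{Joyce2013ACM}. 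The main obstacle throughout is purely bookkeeping: verifying that each morphism produced from scheme-level data is independent of the chosen atlas so that smooth descent yields a well-defined map on the lisse-étale site of $\mR$. All of this reduces to compatibilities with smooth base change at the scheme level, already established in \cite[Section 2]{Joyce2013ACM}.
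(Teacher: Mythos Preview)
Your proposal is correct and follows essentially the same approach as the paper's proof: both reduce to Joyce's scheme-level results via smooth descent, using a scheme atlas of $\mU$ to obtain the exact sequence for part $i)$ and checking independence of atlas via the scheme-level functoriality, then building $\phi^\star$ by descent from scheme-level lifts for part $ii)$. One small point: in Joyce's original \cite[Theorem 2.1]{Joyce2013ACM} the horizontal map $R\to X$ is an \emph{open} immersion rather than a smooth map, and the functoriality $\phi^\star$ comes from \cite[Proposition 2.3]{Joyce2013ACM} rather than Theorem 2.1; the paper is explicit about this distinction, but your argument still goes through since $\mS_\mX|_{R_0}=\mS_{R_0}$ by the very definition in \cite[Corollary 2.52]{Joyce2013ACM}.
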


\begin{proof}
    In \cite[Theorem 2.1, Proposition 2.3]{Joyce2013ACM}, these are the defining property of $\mS_X$ and $\phi^\star$, for $X$ a scheme and $\phi:X\to Y$ a morphism of schemes, where one restricts to open immersions $R\to X$, and closed immersions of schemes $i:R\to U$. In \cite[Corollary 2.52]{Joyce2013ACM}, for $\mX$ a stack, $\mS_\mX$ is defined by smooth descent from the scheme case. Namely, by definition, for any smooth presentation $X\to\mX$ by a scheme, Joyce defines $(\mS_\mX)|_X:=\mS_X$, and for $\phi:X\to X'$ a morphism of smooth presentation by schemes, Joyce use the gluing morphism:
    \begin{align}\label{gluingmor}
        \phi^{-1}((\mS_\mX)|_{X'}):=\phi^{-1}(\mS_{X'})\overset{\phi^\star}{\to} \mS_X=:(\mS_\mX)|_X
    \end{align}
    The result of the lemma follows then formally by smooth descent.\medskip
    
    Consider a smooth map  $\mR\to \mX$ from a $k$-stack, and a closed embedding $i:\mR\hookrightarrow \mU$ into a smooth $k$-stack. Consider a smooth covering $U\to\mU$ by a scheme, and consider the closed immersion $\tilde{i}:R\to U$ obtained by base change. From the definition \cite[Corollary 2.52]{Joyce2013ACM}, $\mS_\mX|_R$ fits into an exact sequence:
    \[\begin{tikzcd}
    0\arrow[r] &\mS_\mX|_R\arrow[r,"\iota_{R,U}"] & \frac{\tilde{i}^{-1}(\mathcal{O}_{U})}{I^2_{R,U}}\arrow[r,"d"] & \frac{\tilde{i}^{-1}(T^\ast U_k)}{I_{R,U}\cdot \tilde{i}^{-1}(T^\ast U)}
\end{tikzcd}\]
    Consider now a smooth covering $U'\to U\times_\mU U$ by a (smooth) scheme, the induced closed immersion $i':R'\to U'$ of schemes, and the smooth morphism covering $R'\to R\times_\mR R$. Still from the definition, the exact sequence associated to $R\times_\mR R\to\mX$ is isomorphic to the pullbacks of the first exact sequence along the two projections $p_i:R\times_\mR R\to R$, hence, by smooth descent, one obtains an exact sequence as claimed. For two smooth cover $U,U'\to\mU$, using a smooth cover $U''\to U\times_\mU U'$ and a similar argument, one obtains that this does not depends on the choice of the smooth presentation.\medskip

    We build $\phi^\star$ by smooth descent: given smooth presentations $X\to\mX$, $Y\to \mY$ by schemes, with a morphism $\tilde{\phi}:X\to Y$ over $\phi$, we define:
\begin{align}
    \phi^\ast|_X:\tilde{\phi}^{-1}((\mS_\mY)|_Y):=\tilde{\phi}^{-1}(\mS_Y)\overset{\tilde{\phi}^\star}{\to} \mS_X=:(\mS_\mX)|_X
\end{align}
from the compatibility of $\phi^\star$ with composition in the scheme case (\cite[Proposition 2.3]{Joyce2013ACM}), one obtains that these are compatible with the gluing morphisms \eqref{gluingmor} hence defines a morphism $\phi^\star:\phi^{-1}(\mS_\mY)\to\mS_\mX$. The corresponding result in the scheme case implies the compatibility with compositions and the fact that $Id^\star=Id$.\medskip

Given a diagram as in the proposition, we consider a smooth morphism of schemes $\tilde{U}\to \tilde{U}'$ inducing smooth coverings of $\mU$ and $\mU'$. We define smooth covering $\tilde{R}'\to\mR'$, $\tilde{R}\to\mR$ by base change. it suffices to check the commutativity of the diagram of the Lemma after a pullback to $\tilde{R}$, but, using the definitions, it is the commutative diagram of introduced in \cite[Proposition 2.3]{Joyce2013ACM}for the schemes $\tilde{R},\tilde{U},\tilde{R}',\tilde{U}'$.
\end{proof}

In particular, there is a natural map $\mS_{\mX}\to\mathcal{O}_\mX$, sending $f$ to $f|_\mR$ for each $f\in \frac{i^{-1}(\mathcal{O}_\mU)}{I^2_{\mR,\mU}}$. Notice that each function $f\in i^{-1}(\mathcal{O}_\mU)$ satisfying $df|_{\mR}=0$ is locally constant on $\mR^{red}$ (but not necessarily on $\mR$). Hence, defining $\mS^0_{\mX}$ to be the kernel of $\mS_{\mX}\to\mathcal{O}_\mX\to\mathcal{O}_{\mX^{red}}$, there is a natural splitting $\mS_{\mX}=\mS^0_{\mX}\oplus k_{\mX}$, see \cite[Theo 2.1 a), Cor 2.52 b)]{Joyce2013ACM}. As in the scheme case (\cite[Proposition 2.3]{Joyce2013ACM}), one obtains directly that $\phi^\star$ maps $\phi^{-1}(\mS_\mY^0)\to \mS_\mX^0$.\medskip

We reformulate then Joyce's definition, which is obtained by combining \cite[Def 2.5, Def 2.53]{Joyce2013ACM}, (notice that we allows us to consider stacky critical charts, when Joyce consider only schematic ones, but our definition of d-critical stack is the same):

\begin{definition}
    \begin{enumerate}
        \item[$i)$] For $\mX$ a stack and a global section $s\in H^0(\mS^0)$, a (stacky) critical chart $(\mR,\mU,f,i)$ is the data of a smooth map $\mR\to \mX$, a closed immersion $i:\mR\to\mU$ into a smooth stack $\mU$, and a function $f:\mU\to\bA^1_k$ such that $f+I^2_{\mR,\mU}=s|_\mR$ and $\Crit(f)=\mR$ (notice that one has a priori by definition of $s$ that $\mR\subset\Crit(f)$).
        \item[$ii)$] Given a critical chart $(\mR,\mU,f,i)$ and a smooth map $\phi:\mU'\to\mU$, one obtains by base change a closed immersion $i':\mR'\to\mU'$ into a smooth stack $\mU'$, a smooth map $\mR'\to\mR\to\mX$, and a function $f':=f\circ \Phi:\mU'\to\bA^1_k$, giving a critical chart $(\mR',\mU',f',i')$, which is called a smooth restriction of the critical chart $(\mR,\mU,f,i)$.
        \item[$iii)$] $s$ is said to be a $d$-critical structure if there is a collection of (stacky) critical charts $(\mR,\mU,f,i)$ such that the smooth maps $\mR\to\mX$ are jointly surjective (one says in this case that these (stacky) critical charts cover $\mX$).
    \end{enumerate}
\end{definition}

\begin{lemma}
    The definition of a d-critical structure on a stack given above coincide with Joyce's definition \cite[Definition 2.53]{Joyce2013ACM}.
\end{lemma}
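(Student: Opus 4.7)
The plan is to show the equivalence by exhibiting how stacky critical charts can be refined to schematic ones (Joyce's original notion) and, conversely, by noting that schematic critical charts are tautologically stacky critical charts. Since the only difference between the two definitions lies in the class of critical charts allowed in the covering condition, what must really be checked is that the existence of a cover by stacky critical charts implies the existence of a cover by schematic ones (Joyce's definition); the converse inclusion is trivial.

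First, I would take a cover of $\mX$ by stacky critical charts $(\mR,\mU,f,i)$. For each such chart, choose a smooth cover $\pi:U\to\mU$ by a smooth scheme, and form the base change, yielding a Cartesian square
\[\begin{tikzcd}
R\arrow[r,"i'"]\arrow[d,"\tilde{\pi}"] & U\arrow[d,"\pi"]\\
\mR\arrow[r,"i"] & \mU
\end{tikzcd}\]
in which $R$ is an algebraic space (even a scheme, after Zariski refinement), $i'$ is a closed immersion, and $\tilde{\pi}$ is smooth and surjective. Composing $\tilde{\pi}$ with the given smooth map $\mR\to\mX$, we obtain a smooth map $R\to\mX$. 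Setting $f':=f\circ\pi:U\to\bA^1_k$, I claim $(R,U,f',i')$ is a schematic critical chart for $s$. The fact that $\Crit(f')=R$ is the statement that the critical locus is stable under smooth base change (which holds because $df'=\pi^\ast df$ and $\pi$ is smooth, so the vanishing locus of $df'$ is precisely $\pi^{-1}(\Crit(f))=R$). The identity $f'+I^2_{R,U}=s|_R$ is exactly the content of the commutative diagram in Lemma~\ref{defSx} $ii)$ applied with $\phi=\Id_\mX$ and $\tilde\phi=\tilde\pi$: the image of $f+I^2_{\mR,\mU}=\iota_{\mR,\mU}(s|_\mR)$ under the vertical map coincides with $\iota_{R,U}(s|_R)$, and it is exactly $f\circ\pi+I^2_{R,U}=f'+I^2_{R,U}$. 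Taking the collection of all such $R$ obtained from a schematic smooth cover of each $\mU$ produces a jointly surjective collection of smooth morphisms $R\to\mX$, i.e.\ a covering by schematic critical charts.

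Conversely, any schematic critical chart in Joyce's sense is already a stacky critical chart in the sense defined above, since a scheme is in particular a stack and a smooth morphism of schemes is a smooth morphism of stacks. Hence the two notions of $d$-critical structure describe exactly the same subsets of $H^0(\mS^0_\mX)$.

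The main (and essentially only) non-formal point is the identification $f'+I^2_{R,U}=s|_R$, which requires the naturality result of Lemma~\ref{defSx} $ii)$; everything else is immediate from smooth base change for critical loci and for smooth morphisms. I do not anticipate any serious obstacle: the lemma is really a bookkeeping statement ensuring that allowing stacky critical charts does not enlarge the class of $d$-critical structures, given that $\mS^0_\mX$ itself was defined in \cite{Joyce2013ACM} by smooth descent from the scheme case.
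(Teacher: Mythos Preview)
Your proposal is correct and follows essentially the same route as the paper: refine each stacky chart $(\mR,\mU,f,i)$ to a schematic one by choosing a smooth scheme cover $U\to\mU$ and pulling back, then observe the converse direction is trivial. The only point the paper makes slightly more explicit is the final step connecting ``a smooth cover by schematic critical charts'' to Joyce's actual Definition 2.53, which is phrased as ``$\phi^\star s$ is d-critical on $X$ for every smooth $X\to\mX$'': the paper notes that each scheme $R'$ obtained in the refinement is a global critical locus, so $(R',s|_{R'})$ is d-critical in Joyce's scheme sense, and then invokes \cite[Proposition 2.8]{Joyce2013ACM} to pass from a single smooth cover to Joyce's universal condition. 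You should add this one-line citation to close the argument cleanly.
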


\begin{proof}
    In \cite[Definition 2.53]{Joyce2013ACM}, a global section $s$ of $\mS^0_\mX$ on a $k$-Artin stack $\mX$ is said to be d-critical structure if, for each smooth map $X\to\mX$ from a $k$-scheme (or equivalently, for a single smooth cover, using \cite[Proposition 2.8]{Joyce2013ACM}), there is near each each point $x\in X(k)$ a Zariski-open subset $R$ containing $x$ and a critical chart $(R,U,f,i)$. Notice that it implies in particular that $\mX$ is covered by d-critical charts in our sense. Now, consider a covering of $\mX$ by d-critical charts $(\mR,\mU,f,i)$: we consider a cover $U'\to\mU$ by a scheme, and consider the smooth restriction $(R',U',f',i')$, such that the $\phi:R'\to\mX$ cover $\mX$. Then $R'$ is a global critical locus, $\phi^\star(s)$ is a d-critical structure on the scheme $R'$ in the sense of \cite[Definition 2.5]{Joyce2013ACM}, hence $s$ is a d-critical structure on $s$ in the sense of \cite[Definition 2.53]{Joyce2013ACM}.
\end{proof}

Moreover, the structure of $d$-critical stack is extremely powerful, and gives a way to build a lot of critical charts, thanks to this proposition:

\begin{proposition} (Joyce, \cite[Proposition 2.7, Proposition 2.8]{Joyce2013ACM})\label{proplocchart}
\begin{itemize}
    \item Consider a d-critical scheme $(R,s)$ with a closed immersion $i:R\to U$ into a smooth $k$-scheme $U$, a point $x\in R(k)$ with $\dim T_x R=\dim(U)$, and a function $f:U\to\bA^1_k$ such that $s=f+I_{R,U}^2$. There is a Zariski-open subset $R'$ containing $x$, a Zariski open subset $U'$ of $U$ containing $R'$, such that denoting $i':R'\to U'$ the closed embedding and $f'=f|_{R'}$, $(R',U',f',i')$ is a critical chart.
    \item Given a smooth morphism $\phi:\mX\to\mY$, if $(\mY,s)$ is a d-critical stack, then $(\mX,\phi^\star(s))$ is a d-critical stack (the converse being true if $\phi$ is moreover surjective). We call $\phi:(\mX,\phi^\star(s))\to (\mY,s)$ a smooth morphism of d-critical stacks, and we obtain a 2-category of d-critical stacks, with smooth morphisms.
\end{itemize}
    
\end{proposition}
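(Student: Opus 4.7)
The plan is to handle the two parts separately, both ultimately reducing to scheme-level statements that are then transported to stacks via the smooth-descent construction of $\mS^0$ from Lemma~\ref{defSx}.

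For part (i), the starting observation is that $s = f + I^2_{R,U}$ together with $s \in H^0(\mS^0)$ forces $df|_R = 0$ via the exact sequence in Lemma~\ref{defSx}~(i), so $R \subseteq \Crit(f)$ as schemes; the content is the reverse inclusion in a Zariski neighborhood of $x$. The hypothesis $\dim T_xR = \dim U$ translates, by Nakayama applied to the conormal sheaf $I_{R,U}/I^2_{R,U}$, to $I_{R,U} \subseteq \mathfrak{m}_{U,x}^2$ after Zariski-shrinking $U$, so the closed embedding $R \hookrightarrow U$ is ``tangentially full'' at $x$. Since $(R,s)$ is already d-critical, there exists some critical chart $(R_0,U_0,f_0,i_0)$ containing $x$, and I would invoke Joyce's embedding/stabilization comparison for critical charts to relate $(R_0,U_0,f_0)$ and $(R,U,f)$ near $x$: it produces, after shrinking, a closed immersion $\Phi: U_0 \hookrightarrow U$ with $f \circ \Phi = f_0$ plus a nondegenerate quadratic term on the normal bundle $N_\Phi$. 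The tangential-fullness condition forces $N_\Phi$ to vanish at $x$, so $\Phi$ is an open immersion near $x$, whence $\Crit(f)$ and $R_0$ agree as schemes on a common Zariski open $R' \ni x$.

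For part (ii), producing a d-critical structure on $\mX$ from one on $\mY$ reduces to producing enough critical charts on $\mX$ compatible with $\phi^\star(s)$. Take a covering family of critical charts $(\mR_\alpha,\mU_\alpha,f_\alpha,i_\alpha)$ for $\mY$ and form the base-changes $\mR'_\alpha := \mR_\alpha \times_\mY \mX$, which are jointly smooth-surjective onto $\mX$. To embed $\mR'_\alpha$ into a smooth $k$-stack, I would use that $\phi$ is smooth: smooth-locally on $\mX$ one may factor $\phi$ through a projection $\mY \times V \to \mY$ for some smooth affine $k$-scheme $V$, in which case $\mR'_\alpha$ becomes $\mR_\alpha \times V$ and embeds in the smooth stack $\mU_\alpha \times V$ with function $\tilde f_\alpha := f_\alpha \circ \pi_1$. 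Then $\Crit(\tilde f_\alpha) = \Crit(f_\alpha) \times V = \mR_\alpha \times V$, and the compatibility $\tilde f_\alpha + I^2 = \phi^\star(s)|_{\mR'_\alpha}$ follows from the commutative diagram in Lemma~\ref{defSx}~(ii). For the converse, when $\phi$ is smooth surjective, a critical chart $(\mR,\mU,f,i)$ on $(\mX,\phi^\star(s))$ also serves as a critical chart on $(\mY,s)$ via the composed smooth map $\mR \to \mX \to \mY$, because naturality of $\phi^\star$ identifies $f + I^2_{\mR,\mU}$ with the pullback of $s$; surjectivity of $\phi$ then provides enough such charts to cover $\mY$.

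The main obstacle I anticipate is the normal-bundle argument in (i): comparing two critical charts at a point requires carefully tracking the quadratic stabilization ambiguity inherent in the d-critical formalism, and the dimension hypothesis must be leveraged precisely to kill this stabilization. In contrast, (ii) is essentially formal once one has chosen good local product models for the smooth map $\phi$ and has the functoriality of Lemma~\ref{defSx} in hand; the only mild subtlety is that the product decomposition of $\phi$ exists only smooth-locally, which is harmless because the notion of a d-critical structure is by definition checkable on a smooth cover.
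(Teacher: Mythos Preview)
The paper's proof is minimal: part (i) is simply cited as \cite[Proposition 2.7]{Joyce2013ACM}, and for part (ii) it chooses smooth scheme presentations $X\to\mX$, $Y\to\mY$ with a smooth $\tilde\phi:X\to Y$ over $\phi$, invokes the scheme-level result \cite[Proposition 2.8]{Joyce2013ACM} for $\tilde\phi$, and uses that being d-critical is detected on any smooth scheme cover.

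Your sketch for part (i) has a circularity. The embedding/stabilization comparison results in Joyce (his Theorems 2.20, 2.22, Proposition 2.23) take as input two \emph{critical charts} and produce the closed immersion $\Phi:U_0\hookrightarrow U$ with $f\circ\Phi=f_0+q$. Here $(R,U,f,i)$ is not yet known to be a critical chart --- only $R\subseteq\Crit(f)$ is established --- so you cannot invoke those results to compare it with $(R_0,U_0,f_0,i_0)$. Joyce's actual proof of Proposition 2.7 instead uses a more primitive comparison (essentially at the level of the sheaf $\mS^0_R$) that does not presuppose the critical-chart property; the dimension hypothesis enters exactly there to force the two ambient smooth schemes to be \'etale-locally isomorphic in a way intertwining $f$ and $f_0$.

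Your forward-direction argument for part (ii) has a genuine gap: it is \emph{not} true that a smooth morphism $\phi:\mX\to\mY$ of stacks factors, smooth-locally on $\mX$, as a projection $\mY\times V\to\mY$. For instance, take $\phi:\Spec(k)\to B\bG_m$; any cover of $\Spec(k)$ of the form $B\bG_m\times V$ over $B\bG_m$ would force the universal $\bG_m$-torsor to become trivial after pullback along $\pi_1:B\bG_m\times V\to B\bG_m$, which it does not. Consequently the base-change $\mR'_\alpha=\mR_\alpha\times_\mY\mX$ need not be of the form $\mR_\alpha\times V$, and there is no obvious smooth stack into which to embed it. The paper sidesteps this entirely by passing to scheme presentations first and applying the scheme-level product-chart argument inside Joyce's Proposition 2.8.

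Your converse argument for part (ii), by contrast, is correct and rather clean: since the paper's stacky critical charts only require a smooth structure map $\mR\to\mX$, composing with the surjective $\phi$ gives $\mR\to\mY$ smooth, and Lemma~\ref{defSx}~(ii) identifies $f+I_{\mR,\mU}^2$ with $s|_\mR$. This exploits the paper's stacky-chart formalism more directly than the paper's own descent argument.
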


\begin{proof}
    The first part is \cite[Proposition 2.7]{Joyce2013ACM}. In \cite[Proposition 2.8]{Joyce2013ACM}, the second result is proven for schemes, but, given two smooth (resp. and surjective) presentation $X\to\mX$, $Y\to\mY$ and a smooth map $\tilde{\phi}:X\to Y$ over $\phi$, one has that $(X,\tilde{\phi}^\star(s|_Y)=\phi^\star(s)|_X)$ is a d-critical scheme if (resp. if and only if) $(Y,s|_Y)$ is a d-critical scheme. But, from the definition, $(\mX,\phi^\star(s))$ (resp $(\mY,s)$) is a d-critical stack if and only if $(X,\phi^\star(s)|_X)$ (resp. $(Y,s|_Y)$) is a d-critical scheme, which gives the claim.
\end{proof}

\subsection{Canonical bundle on d-critical stack}

Consider a smooth stack $\mU$, a function $f:\mU\to\bA^1_k$ with critical locus $i:\mR\to\mU$, and a quadratic bundle $(\mE,q)$ over $\mU$. The critical locus of $f\circ p+q:\bV_\mU(\mE)\to\bA^1_k$ is simply given by the $0$ section of $\mR$. Consider now a smooth morphism $\mR\to\mX$. We have obviously:
\begin{align}
    f\circ p+q\in f\circ p+(\mathcal{I}_{\mR,\bV_\mU(\mE)})^2
\end{align}
hence from Lemma \ref{defSx} $ii)$ applied to $p$, we have that, if $(\mR,\mU,f,i)$ is a critical chart for $(\mX,s)$, then $(\mR,\bV_\mU(\mE),f\circ p+q,s\circ i)$ is also a critical chart for $(\mX,s)$. We will call such operations stabilization by quadratic bundle.\medskip

For $\mX\to\mY$ a smooth morphism of stacks, we consider the canonical line bundle $K_{\mX/\mY}:=\det(\bL_{\mX/\mY})$. In \cite[Theo 2.28, Theorem 2.56]{Joyce2013ACM}, for $(\mX,s)$ a d-critical stack, Joyce builds also a line bundle $K_{\mX,s}$ on $\mX^{red}$, the orientation bundle, and defines an orientation data to be a choice of a square root $K_{\mX,s}^{1/2}$ (with a choice of isomorphism $(K_{\mX,s}^{1/2})^{\otimes 2}\simeq K_{\mX,s}$). We will use the following characterization:

\begin{lemma}\label{defcanbun}
  The line bundle $K_{\mX,s}$ on $\mX^{red}$ build in \cite[Theo 2.28, Theorem 2.56]{Joyce2013ACM} is uniquely characterized by the following properties:
\begin{enumerate}
    \item[$i)$]  For each critical chart $(\mR,\mU,f,i)$, there is a natural isomorphism:
\begin{align}\label{eqcanbun}
    K_{\mX,s}|_{\mR^{red}}\simeq i^\ast(K_\mU^{\otimes 2})|_{\mR^{red}}\otimes (K_{\mR/\mX})|^{\otimes -2}_{\mR^{red}}
\end{align}
Given an orientation data $K_{\mX,s}^{1/2}$, the $\bZ/2\bZ$ bundle $Q_{\mR,\mU,f,i}$ over $\mR^{red}$ is defined to be the bundle of local isomorphisms:
\begin{align}
    K_{\mX,s}^{1/2}|_{\mR^{red}}\simeq i^\ast(K_\mU)|_{\mR^{red}}\otimes (K_{\mR/\mX})|^{\otimes -1}_{\mR^{red}}
\end{align}
which are square roots of the above isomorphism.
\item[$ii)$] Given a smooth restriction of critical charts $(\mR',\mU',f',i')\to(\mR,\mU,f,i)$, the isomorphism \eqref{eqcanbun} for $(\mR',\mU',f,i)$ is obtained by smooth restriction from the isomorphism for $(\mR,\mU,f,i)$. In the presence of an orientation data $K_{\mX,s}^{1/2}$, this gives a canonical isomorphism:
\begin{align}
    Q_{\mR',\mU',f',i'}|_{(\mR')^{red}}\simeq Q_{\mR,\mU,f,i}
\end{align}

\item[$iii)$] Given a critical chart $(\mR,\mU,f,i)$ and a quadratic bundle $(\mE,q)$ on $\mU$, the following isomorphism:
\begin{align}
    i^\ast(K_\mU^{\otimes 2})|_{\mR^{red}}\otimes (K_{\mR/\mX})|^{\otimes -2}_{\mR^{red}}\simeq  K_{\mX,s}|_{\mR^{red}}\simeq (s\circ i)^\ast(K_{\bV_\mU(\mE)}^{\otimes 2})|_{\mR^{red}}\otimes (K_{\mR/\mX})|^{\otimes -2}_{\mR^{red}}
\end{align}
is the isomorphism induced by $\det(q):\det(\mE)^2\simeq \mathcal{O}_\mX$. In the presence of an orientation data $K_{\mX,s}^{1/2}$, this gives a canonical isomorphism:
\begin{align}\label{isomQstab}
    Q_{\mR,\bV_\mU(\mE),f\circ p+q,s\circ i}\otimes_{\bZ/2\bZ}P_{(\mE,q)}\simeq Q_{\mR,\mU,f,i}
\end{align}
\end{enumerate}
\end{lemma}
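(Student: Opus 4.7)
The strategy is to reduce to Joyce's existing construction on schemes and verify that it extends naturally to the stacky critical chart setting via smooth descent. Joyce's construction in \cite[Theorem 2.28]{Joyce2013ACM} gives, for a schematic critical chart $(R,U,f,i)$ on a $d$-critical scheme with $R \to X$ étale, a canonical isomorphism $K_{X,s}|_{R^{red}} \simeq i^\ast K_U^{\otimes 2}|_{R^{red}}$, and \cite[Theorem 2.56]{Joyce2013ACM} extends this to stacks via the rule $\phi^\star K_{\mX,s} \simeq K_{T,\phi^\star s} \otimes K_{T/\mX}^{\otimes -2}|_{T^{red}}$ for smooth $\phi: T \to \mX$ from a scheme. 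Combining these gives property $i)$ when $\mR$ is a scheme and $\mU$ a smooth scheme, since $R \to T$ étale means $K_{R/\mX}|_{R^{red}} \simeq \phi^\star K_{T/\mX}|_{R^{red}}$.

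For a general stacky critical chart $(\mR, \mU, f, i)$, I would pick a smooth cover $p: V \to \mU$ by a scheme; this gives a smooth restriction $(S, V, g, j)$ with $S := V \times_\mU \mR$, which is a closed subscheme of the scheme $V$, and the morphism $S \to \mX$ is smooth. The scheme case then yields
\begin{align}
K_{\mX,s}|_{S^{red}} \simeq j^\ast(K_V^{\otimes 2})|_{S^{red}} \otimes K_{S/\mX}^{\otimes -2}|_{S^{red}}.
\end{align}
Using the isomorphisms $K_V \simeq p^\ast K_\mU \otimes K_{V/\mU}$, $K_{S/\mR} \simeq j^\ast K_{V/\mU}$ (from the Cartesian property, since $p$ is smooth), and the composition isomorphism $K_{S/\mX} \simeq K_{S/\mR} \otimes (S \to \mR)^\star K_{\mR/\mX}$, the $K_{V/\mU}$ contributions cancel between the two factors, giving descent data along $S \to \mR$. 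The resulting isomorphism descends uniquely along the smooth surjection $S \to \mR$ to yield \eqref{eqcanbun}; independence from the choice of $V$ is checked by comparing two presentations via a common refinement $V''\to V\times_\mU V'$.

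Property $ii)$ is then essentially tautological: given a smooth restriction $(\mR', \mU', f', i')$ induced by $\phi:\mU'\to\mU$, both sides of \eqref{eqcanbun} pull back naturally, and choosing compatible smooth covers $V \to \mU$ and $V' := V\times_\mU\mU' \to \mU'$ by schemes, the isomorphism for $(\mR', \mU', f', i')$ is manifestly the smooth restriction of the isomorphism for $(\mR, \mU, f, i)$. For property $iii)$, I compute $K_{\bV_\mU(\mE)/\mU} \simeq \pi^\ast \det(\mE)$ for the zero section $s:\mU\to\bV_\mU(\mE)$, hence
\begin{align}
(s \circ i)^\ast (K_{\bV_\mU(\mE)}^{\otimes 2})|_{\mR^{red}} \simeq i^\ast(K_\mU^{\otimes 2})|_{\mR^{red}} \otimes i^\ast(\det(\mE)^{\otimes 2})|_{\mR^{red}}.
\end{align}
The isomorphism $\det(q): \det(\mE)^{\otimes 2} \simeq \mathcal{O}_\mU$ identifies this with $i^\ast(K_\mU^{\otimes 2})|_{\mR^{red}}$, and the claim that Joyce's isomorphism \eqref{eqcanbun} applied to the stabilized chart is indeed obtained by twisting via $\det(q)$ is the content of Joyce's compatibility with quadratic stabilization \cite[Propositions 2.13, 2.30]{Joyce2013ACM} in the scheme case, which transports directly to the stacky setting via the descent argument above.

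Finally, uniqueness follows because critical charts cover $\mX$, property $i)$ pins down the line bundle on each $\mR^{red}$, and property $ii)$ pins down the gluing data on overlaps $\mR\times_\mX\mR'$ (by considering the critical chart obtained from the product). The main subtlety I anticipate is ensuring that the isomorphism in $i)$ is genuinely independent of the choice of smooth cover $V \to \mU$; this amounts to checking that the cancellation of $K_{V/\mU}$ terms is cocycle-compatible across refinements, which should follow formally from the functoriality of Joyce's scheme-level construction under étale restriction of charts.
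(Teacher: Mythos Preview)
Your proposal is correct and follows essentially the same approach as the paper: both prove the lemma by choosing a smooth scheme cover of $\mU$, applying Joyce's scheme-level results \cite[Theorem 2.28, Theorem 2.56]{Joyce2013ACM}, and descending. The one noteworthy difference is in part $iii)$: the paper explicitly passes to a smooth cover $g_i:U_i\to\mU$ on which $(\mE,q)$ trivializes as $(\mathcal{O}^n,\sum x_i^2)$, then invokes \cite[Proposition 2.25 b), Theorem 2.28 ii)]{Joyce2013ACM} for the trivial-quadratic-form case, whereas you compute directly with $\det(q)$ and gesture at Joyce's stabilization compatibility; since Joyce's scheme-level stabilization result is stated only for trivial quadratic forms, the trivialization step is what actually lets you invoke it, so you should make that reduction explicit (and your citation of \cite[Propositions 2.13, 2.30]{Joyce2013ACM} is not quite the right reference for this).
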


\begin{proof}
Notice that \cite[Theo 2.28, Theorem 2.56]{Joyce2013ACM} characterize uniquely $K_{\mX,s}$ in terms of the above properties for schematic critical charts, hence those properties characterize $K_{\mX,s}$ too.

\begin{enumerate}
    \item[$i)$] Consider a critical chart $(\mR,\mU,f,i)$, and take a smooth presentation $\tilde{U}\to\mU$ by a smooth scheme, and consider the smooth restriction of critical charts $(\tilde{R},\tilde{U},\tilde{f},\tilde{i})\to(\mR,\mU,f,i)$. From \cite[Theorem 2.56 b), Theorem 2.28 i)]{Joyce2013ACM}, there are natural isomorphisms:
    \begin{align}
    K_{\mX,s}|_{\tilde{R}^{red}}\simeq K_{\tilde{R},s|_{\tilde{R}}}\otimes (K_{\tilde{R}/\mX})|^{\otimes -2}_{\tilde{R}^{red}}\simeq i^\ast(K_{\tilde{U}}^{\otimes 2})|_{\tilde{R}^{red}}\otimes (K_{\tilde{R}/\mX})|^{\otimes -2}_{R^{red}}
    \end{align}
    Consider a smooth covering of the diagonal $\tilde{U}'\to \tilde{U}\times_\mU \tilde{U}$: from \cite[Proposition 2.30]{Joyce2013ACM}, the pullbacks of this isomorphisms along the two projections $\tilde{R}'\to \tilde{R}$ agree with the similar isomorphism for $\tilde{R}'\to\mX$, hence by smooth descent one obtains an isomorphism \eqref{eqcanbun}. Considering two covering $\tilde{U},\tilde{U}'\to\mU$, using the same argument for a smooth covering $\tilde{U}''\to \tilde{U}\times_\mU \tilde{U}'$, one obtains that this isomorphism does not depends on the choice of the smooth presentation.\medskip

    \item[$ii)$] We wean that the isomorphism:
    \begin{align}\label{isomsmoothcanbun}
        K_{\mX,s}|_{(\mR')^{red}}&\simeq K_{\mX,s}|_{\mR^{red}}|_{(\mR')^{red}}\nn\\
        &\simeq i^\ast (K_\mU^{\otimes 2})|_{(\mR')^{red}}\otimes (K_{\mR/\mX})|^{\otimes -2}_{(\mR')^{red}}\nn\\
        &\simeq (i')^\ast (K_{\mU'}^{\otimes 2}\otimes K_{\mU'/\mU}^{\otimes -2})|_{(\mR')^{red}}\otimes K_{\mR/\mX}|_{(\mR')^{red}}\nn\\
        &\simeq (i')^\ast (K_{\mU'}^{\otimes 2})|_{(\mR')^{red}}\otimes K_{\mR'/\mR})|^{\otimes -2}_{(\mR')^{red}}\otimes K_{\mR/\mX}|^{\otimes -2}_{(\mR')^{red}}\nn\\
        &\simeq (i')^\ast (K_{\mU'}^{\otimes 2})|_{(\mR')^{red}}\otimes K_{\mR'/\mX}|^{\otimes -2}_{(\mR')^{red}}
    \end{align}
    (where the second line is the isomorphism of $i)$ for $(\mR,\mU,f,i)$) is the isomorphism of $i)$ for $(\mR',\mU',f',i')$. Consider a smooth cover $\tilde{U}\to \mU'$, which gives by composition a smooth morphism $\tilde{U}\to\mU$: we can build the isomorphism of $i)$ for $(\mR',\mU',f',i')$ and $(\mR,\mU,f,i)$ using those presentations, hence the result follows directly.\medskip

    \item[$iii)$] Consider a smooth cover by schemes $g_i:U_i\to\mU$ such that $(g_i)^\ast(\mE,q)$ is isomorphic to trivial quadratic forms (this can be done étale locally on scheme, and then smooth locally on stacks). The claimed result is obtained for $(g_i)^\ast(\mE,q)$ by \cite[Proposition 2.25 b), Theorem 2.28 $ii)$]{Joyce2013ACM}.  Building the isomorphism of $i)$ with the smooth covers $g_i$ and $g'_i:\bV_{U_i}((g_i)^\ast(\mE))\to\bV_\mU(\mE)$, we obtain the claimed result.
\end{enumerate}
\end{proof}

The above characterization gives directly that, for $f:(\mX,f^\star(s))\to(\mY,s)$ a smooth morphism of $d$-critical stacks, one obtains a functorial isomorphism $K_{\mX,f^\star(s)}\simeq K_{\mY,s}|_{\mX^{red}}\otimes K_{\mX/\mY}|_{\mX^{red}}^{\otimes 2}$. We obtain then a $2$-category of oriented d-critical stacks, whose objects are oriented d-critical stacks, 1-morphisms are smooth morphisms of d-critical stacks $\phi:(\mX,\phi^\star(s))\to (\mY,s)$ with the data of an isomorphism of square roots:
\begin{align}
    K_{\mX,f^\star(s)}^{1/2}\simeq K_{\mY,s}^{red}|_{\mX^{red}}\otimes K_{\mX/\mY}|_{\mX^{red}}
\end{align}
and 2-morphisms are 2-morphisms of the category of stacks, such that the two isomorphisms of square roots agree.

\begin{lemma}\label{lemsymmon}
    There is a symmetric monoidal product on the $2$-category of d-critical stacks with smooth morphisms, given by $(\mX_1,s_1)\times(\mX_2,s_2):=(\mX_1\times\mX_2,s_1\boxplus s_2)$, where $s\boxplus t:=(p_1)^\star s+(p_2)^\star t$. If $(\mR_i,\mU_i,f_i,i_i)$ are critical charts for $\mX_i$, $(\mR_1\times\mR_2,\mU_1\times\mU_2,f_1\boxplus f_2,i_1\times i_2)$ is a critical chart for $(\mX_1\times\mX_2,s_1\boxplus s_2)$. Moreover, there is a symmetric monoidal isomorphism $K_{\mX_1\times\mX_2,s_1\boxplus s_2}\simeq K_{\mX_1,s_1}\boxtimes K_{\mX_2,s_2}$, compatible with the isomorphisms \eqref{eqcanbun}. It gives a symmetric monoidal structure to the 2-category of oriented d-critical stacks with smooth morphisms. In particular, given oriented d-critical stacks $(\mX_i,s_i,K_{\mX_i,s_i}^{1/2})$, taking the product $(\mX_1\times\mX_2,s_1\boxplus s_2,K_{\mX_1,s_1}^{1/2}\boxtimes K_{\mX_2,s_2}^{1/2})$, there is a symmetric monoidal isomorphism:
    \begin{align}
        Q_{(\mR_1\times\mR_2,\mU_1\times\mU_2,f_1\boxplus f_2,i_1\times i_2)}\simeq Q_{\mR_1,\mU_1,f_1,i_1}\boxplus Q_{\mR_2,\mU_2,f_2,i_2}
    \end{align}
\end{lemma}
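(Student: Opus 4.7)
The plan is to proceed in four stages, exploiting the characterization of $\mS_\mX$ and $K_{\mX,s}$ via critical charts from Lemmas \ref{defSx} and \ref{defcanbun} and then gluing by smooth descent. First I would verify that $s_1\boxplus s_2$ is a well-defined section of $\mS^0_{\mX_1\times\mX_2}$: indeed, $(p_1)^\star$ and $(p_2)^\star$ are defined by Lemma \ref{defSx} $ii)$, and each $(p_i)^\star s_i$ kills $\mathcal{O}_{(\mX_1\times\mX_2)^{red}}$ under the splitting $\mS=\mS^0\oplus k$ because $s_i$ does. To see that $s_1\boxplus s_2$ is d-critical, the key observation is that if $(\mR_i,\mU_i,f_i,i_i)$ are critical charts for $(\mX_i,s_i)$, then $(\mR_1\times\mR_2,\mU_1\times\mU_2,f_1\boxplus f_2,i_1\times i_2)$ is a critical chart for $(\mX_1\times\mX_2,s_1\boxplus s_2)$: smoothness and closedness pass to products, the identity $\Crit(f_1\boxplus f_2)=\Crit(f_1)\times\Crit(f_2)=\mR_1\times\mR_2$ is immediate from $d(f_1\boxplus f_2)=p_1^\ast df_1+p_2^\ast df_2$, and the compatibility $f_1\boxplus f_2+I^2_{\mR_1\times\mR_2,\mU_1\times\mU_2}=(s_1\boxplus s_2)|_{\mR_1\times\mR_2}$ follows from applying Lemma \ref{defSx} $ii)$ to the two projection squares. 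Covering each $\mX_i$ by critical charts and taking products yields a jointly surjective family for $\mX_1\times\mX_2$.

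Next I would construct the canonical isomorphism $K_{\mX_1\times\mX_2,s_1\boxplus s_2}\simeq K_{\mX_1,s_1}\boxtimes K_{\mX_2,s_2}$. On each product critical chart, the characterization \eqref{eqcanbun} identifies the left-hand side with
\[
(i_1\times i_2)^\ast K_{\mU_1\times\mU_2}^{\otimes 2}\otimes K_{(\mR_1\times\mR_2)/(\mX_1\times\mX_2)}^{\otimes -2},
\]
and the right-hand side, via $\boxtimes$ of the isomorphisms \eqref{eqcanbun} for $(\mR_i,\mU_i,f_i,i_i)$, with the same expression using the canonical identifications $K_{\mU_1\times\mU_2}\simeq K_{\mU_1}\boxtimes K_{\mU_2}$ and $K_{(\mR_1\times\mR_2)/(\mX_1\times\mX_2)}\simeq K_{\mR_1/\mX_1}\boxtimes K_{\mR_2/\mX_2}$. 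This produces a local isomorphism on each product chart; to glue, I would check compatibility under smooth restriction of the product chart by $(\phi_1,\phi_2):\mU'_1\times\mU'_2\to\mU_1\times\mU_2$, which reduces, via Lemma \ref{defcanbun} $ii)$ and the monoidality of determinants, to compatibility of each factor separately. Smooth descent then produces the global isomorphism on $(\mX_1\times\mX_2)^{red}$, and its very construction makes it independent of the choice of charts.

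Thirdly, I would verify the symmetric monoidal axioms (associativity, symmetry, unit $(\Spec k, 0)$) at the level of charts: each is reduced by the above gluing procedure to the corresponding identity for products of smooth stacks and of functions, which is standard. Functoriality along smooth morphisms of d-critical stacks $\phi_i:(\mX_i,\phi_i^\star s_i)\to(\mY_i,s_i)$ follows because $(\phi_1\times\phi_2)^\star(s_1\boxplus s_2)=\phi_1^\star s_1\boxplus \phi_2^\star s_2$ by Lemma \ref{defSx} $ii)$ applied to the two projection squares, and the induced isomorphism of canonical bundles for $\phi_1\times\phi_2$ is the external product of those for $\phi_1,\phi_2$. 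Given orientations $K_{\mX_i,s_i}^{1/2}$, taking the exterior tensor $K_{\mX_1,s_1}^{1/2}\boxtimes K_{\mX_2,s_2}^{1/2}$ gives a square root of $K_{\mX_1\times\mX_2,s_1\boxplus s_2}$ via the isomorphism just built, and compatibility of isomorphisms of square roots upgrades the monoidal structure to the oriented 2-category.

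Finally, the symmetric monoidal $Q$-bundle isomorphism follows by unwinding definitions: $Q_{\mR_1\times\mR_2,\mU_1\times\mU_2,f_1\boxplus f_2,i_1\times i_2}$ is the $\bZ/2\bZ$-bundle of square roots of the product trivialization, so its tautological splitting into a product of square roots of each factor realizes the desired isomorphism with $Q_{\mR_1,\mU_1,f_1,i_1}\boxplus Q_{\mR_2,\mU_2,f_2,i_2}$; symmetry, associativity and unitality for this isomorphism are inherited from those of the underlying canonical bundle isomorphism. The main obstacle I anticipate is bookkeeping the compatibility of the chart-wise isomorphism with smooth restrictions (especially the sign issues hidden in the identification of determinants of cotangent complexes in a cofiber sequence), but this is precisely the kind of coherence already handled in the scheme case of \cite{Joyce2013ACM} and Lemma \ref{defcanbun}, and nothing essentially new beyond smooth descent is required here.
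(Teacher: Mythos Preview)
Your approach is essentially the same as the paper's: build the canonical bundle isomorphism locally on product critical charts via \eqref{eqcanbun} and the obvious identifications $K_{\mU_1\times\mU_2}\simeq K_{\mU_1}\boxtimes K_{\mU_2}$, $K_{(\mR_1\times\mR_2)/(\mX_1\times\mX_2)}\simeq K_{\mR_1/\mX_1}\boxtimes K_{\mR_2/\mX_2}$, then glue. The identification $\Crit(f_1\boxplus f_2)=\Crit(f_1)\times\Crit(f_2)$ and the chartwise construction are exactly what the paper uses.

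There is, however, a gap in your gluing step. You only check compatibility of the local isomorphism under product smooth restrictions $(\phi_1,\phi_2):\mU'_1\times\mU'_2\to\mU_1\times\mU_2$, invoking Lemma~\ref{defcanbun}~$ii)$. But the characterization of $K_{\mX_1\times\mX_2,s_1\boxplus s_2}$ in Lemma~\ref{defcanbun} also involves condition~$iii)$, stabilization by quadratic bundles, and two product charts over the same point with $\dim\mU_i\neq\dim\mU'_i$ cannot be compared by smooth restrictions alone. The paper handles this explicitly: it observes that any two schematic charts in each factor can be related by \'etale restrictions \emph{and} stabilizations by quadratic forms (from \cite[Section~2.3]{Joyce2013ACM}), so any two product charts can be related by products of these operations; it then notes that the conditions of Lemma~\ref{defcanbun} are monoidal with respect to products of critical charts, which covers both $ii)$ and $iii)$. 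Concretely, you must also verify that for a product stabilization by $(\mE_1,q_1)\boxplus(\mE_2,q_2)$ on $\mU_1\times\mU_2$, your local isomorphism is compatible with the transition of Lemma~\ref{defcanbun}~$iii)$; this amounts to the evident identity $\det(q_1\boxplus q_2)=\det(q_1)\boxtimes\det(q_2)$, so the fix is easy, but as written your descent argument is incomplete.
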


\begin{proof}
    In the scheme case, look at \cite[Proposition 2.11]{Joyce2013ACM} for a similar statement. Notice that, from the cofiber sequence $(f_1\times f_2)^\ast\bL_\mu\to \bL_{f_1}\boxtimes\bL_{f_2}\to\bL_{f_1\boxplus f_2}$, we have $\Crit(f_1\boxplus f_2)=\Crit(f_1)\times\Crit(f_2)$. Hence, if $(\mR_i,\mU_i,f_i,i_i)$ are critical charts for $(\mX_i,s_i)$, $(\mR_1\times\mR_2,\mU_1\times\mU_2,f_1\boxplus f_2,i_1\times i_2)$ is a critical chart for $(\mX_1\times\mX_2,s_1\boxplus s_2)$. This implies that $(\mX_1\times\mX_2,s_1\boxplus s_2)$ is a d-critical stack if $(\mX_i,s_i)$ are. The product is obviously monoidal.\medskip

    By definition, a $d$-critical stack is covered by schematic critical charts. From the results of \cite[Section 2.3]{Joyce2013ACM}, two schematic critical charts can be related by étale restrictions and stabilization by quadratic form. It means that $(\mX_1\times\mX_2,s_1\boxplus s_2)$ can be covered by products of critical charts on the $(\mX_i,s_i)$, and two such product can be related by products of smooth restrictions and stabilization by quadratic forms. It means the conditions of Lemma \ref{defcanbun} for $(\mX_1\times\mX_2,s_1\boxplus s_2)$ restricted to charts and stabilization by quadratic form being products of charts and stabilization by quadratic form for $K_{\mX_i,s_i}$ determines uniquely $K_{\mX_1\times\mX_2,s_1\boxplus s_2}\simeq K_{\mX_1,s_1}\boxtimes K_{\mX_2,s_2}$. But the conditions of Lemma \ref{defcanbun} are clearly monoidal with respect to product of critical charts, hence this defines canonically a monoidal isomorphism $K_{\mX_1\times\mX_2,s_1\boxplus s_2}\simeq K_{\mX_1,s_1}\boxtimes K_{\mX_2,s_2}$.
\end{proof}

\subsection{Shifted symplectic stacks and Darboux theorem}

The main motivation for the introduction of $d$-critical stacks is that, from the "Darboux theorem" of \cite{darbscheme}, \cite{darbstack}, they forms a classical truncation of $-1$ shifted symplectic stacks. We will recall this now.\medskip

In \cite[Definition 1.12]{shifsymp}, the authors define for each derived Artin $k$-stack $\ud{\mX}$, and integers $p,n$, the space $\mA^p_k(\ud{\mX},n)$ of $p$-forms of degree $n$ on $\ud{\mX}$, and the space $\mA^{p,cl}_k(\ud{\mX},n)$ of closed $p$-forms of degree $n$ on $\ud{\mX}$, and a natural map $\mA^{p,cl}_k(\ud{\mX},n)\to \mA^p_k(\ud{\mX},n)$ sending a closed form to the underlying form. In particular, in derived geometry, the fact of being closed is not a property, but an extra structure. By \cite[Proposition 1.14]{shifsymp}, there is a natural equivalence:
\begin{align}
    \mA^p_k(\ud{\mX},n)\simeq \Map_{L_{qcoh}(\ud{\mX})}(\mathcal{O}_{\ud{\mX}},\wedge^p\bL_{\ud{\mX}}[n])
\end{align}
In particular, a $2$-form $\omega$ of degree $n$ induces a map $\mathcal{O}_{\ud{\mX}}\to \wedge^2\bL_{\ud{\mX}}[n]$, \ie a morphism:
\begin{align}
    \mathbb{T}_{\ud{\mX}}\to \bL_{\ud{\mX}}[n]
\end{align}
and it is said to be nondegenerate if this is an isomorphism. By \cite[Definition 1.18]{shifsymp}, a $n$-shifted symplectic structure $\omega$ is a closed $2$-form of degree $n$ whose underlying $2$-form of degree $n$ is nondegenerate. In particular, a $-1$-shifted symplectic structure on a derived scheme induces a symmetric perfect obstruction theory on the classical truncation.\medskip

Consider a function $f:\mU\to\bA^1$ on a smooth stack $\mU$: one obtains from \cite[Corollary 2.11]{shifsymp} a canonical $-1$-shifted symplectic structure $\omega_{\ud{\Crit}(f)}$ on $\ud{\Crit}(f):=\mU\overset{h}{\times}_{0,T^\ast\mU,df}\mU$, obtained by considering it as an intersection of $0$-Lagrangians in the $0$-symplectic stack $T^\ast\mU$, and applying \cite[Corollary 2.10]{shifsymp}.\medskip

In \cite[Theorem 3.18]{darbstack} (based on \cite[Theorem 6.6]{darbscheme}), the authors build, for any derived Artin stack $\ud{\mX}$ with a $-1$-shifted symplectic structure $\omega$, a natural $d$-critical structure $s\in H^0(\mS^0_\mX)$ on its classical truncation $\mX$. From \cite[Theorem 2.10]{darbstack}, $\ud{\mX}$ is covered by smooth maps $\ud{\phi}:\ud{R}\to \ud{\mX}$, such that there is a smooth scheme $U$, a function $f:U\to\bA^1_k$ and a map $\ud{j}: \ud{R}\to\ud{\Crit(f)}$ inducing an isomorphism of classical schemes such that $\ud{j}^\ast(\omega_{\ud{\Crit(f)}})\sim \ud{\phi}^\ast(\omega)$. According to \cite[Theorem 3.18 a)]{darbstack}, denoting by $\phi:R\to\mX$ the classical truncation of $\ud{\phi}:\ud{R}\to \ud{\mX}$, $s$ is uniquely defined by the condition that $s|_R=f+I_{R,U}^2$ for any such data, \ie $(R,U,f,i)$ is a critical chart for $(\mX,s)$. According to \cite[Theorem 3.18 b)]{darbstack}, $K_{\mX,s}$ is naturally isomorphic to $\det(\bL_{\ud{\mX}})|_{\mX^{red}}$.\medskip

The structure of d-critical stack is sufficient to define the DT sheaf, hence we will mainly work at this level. However, for stacks appearing in moduli problems, in general it is more natural to give them a $-1$-shifted symplectic structure using general results of \cite{shifsymp}, \cite{Brav2018RelativeCS} (look for example at Section \ref{secStackComp}), and then applies the Darboux theorem to obtain a d-critical structure, then we will have to work a bit at the $-1$-shifted symplectic level to obtain some compatibility results.

\subsection{The DT sheaf on critical charts}

\subsubsection{Definition}

Consider a critical chart $(\mR,\mU,f,i)$, \ie a smooth stack $\mU$ with a function $f:\mU\to\bA^1_k$, and the closed immersion of the critical locus $i:\mR\to\mU$. Consider $\phi_f^{mon,tot}\un_\mU\in \mA_{mon,c}(\mU)[-2d_\mU]$: one can check using a smooth presentation and the Betti representation that it is supported on $\mR$, hence its restriction to $\mR$ is still in the shift of the heart of the perverse $t$-structure, we define then:
\begin{align}
    P_{\mU,f}:=i^\ast\{d_\mU/2\}\phi_f^{mon,tot}\un_\mU\in\mA_{mon,c}(\mR)
\end{align}

\begin{remark}\label{remarkstrat}
    We see from the definition that $P_{\mU,f}$ is obtained by applying successively three functors to $\un_{\Spec(k)}$: firstly, the functor $(\mU\to\Spec(k))^\ast$, secondly $\phi_f^{mon,tot}$, and lastly $i^\ast\{d_\mU/2\}$: let's denote these temporarily by $a_3,a_2,a_1$ for simplicity. We will build various compatibility isomorphisms, and check compatibility between these isomorphisms (namely, compatibility with respect to smooth pullbacks, products, stabilization and hyperbolic localization). We denote by $\alpha_{b,c}:bc\simeq cb$ an isomorphism of commutation between the functors $b$ and $c$.
    Each of these isomorphism will be expressed as the commutation between a functor $f$ and $a_1a_2a_3$ obtained by a sequence:
    \[\begin{tikzcd}
        ba_1a_2a_3\arrow[r,"\alpha_{b,a_1}a_2a_3"] & a_1ba_2a_3\arrow[r,"a_1\alpha_{b,a_2}a_3"] & a_1a_2ba_3\arrow[r,"a_1a_2\alpha_{b,a_3}"] & a_1a_2a_3b
    \end{tikzcd}\]
    We will show compatibility, namely that for $b,c$ two functors, the following square is commutative:
    \[\begin{tikzcd}
        & bca_1a_2a_3\arrow[r,"\simeq"] & ba_1a_2a_3c\arrow[d,"\simeq"]\\
        cba_1a_2a_3\arrow[ur,"\alpha_{c,b}a_1a_2a_3"]\arrow[d,"\simeq"] && a_1a_2a_3bc\\
        ca_1a_2a_3b\arrow[r,"\simeq"] & a_1a_2a_3cb\arrow[ur,"a_1a_2a_3\alpha_{c,b}"]
    \end{tikzcd}\]
    To prove that, it will suffice to prove that for each $1\leq i\leq 3$, the following diagram is commutative:
    \[\begin{tikzcd}
        & bca_i\arrow[r,"b\alpha_{c,a_i}"] & ba_ic\arrow[d,"\alpha_{b,a_i}c"]\\
cba_i\arrow[ur,"\alpha_{c,b}a_i"]\arrow[d,"\simeq"] && a_ibc\\
        ca_ib\arrow[r,"\alpha_{c,a_i}"] & a_icb\arrow[ur,"a_i\alpha_{c,b}"]
    \end{tikzcd}\]
    We say that the isomorphism of commutation between $a_i$ and $b$ and between $a_i$ and $c$ are compatible. In general, it will follows from the functoriality of the six functor formalism, or from functoriality results established in previous sections.
\end{remark}

\subsubsection{Smooth restrictions of critical loci}\label{sectisomliss}

Recall that smooth restriction of stacky critical loci $\phi:(\mU',f')\to(\mU,f)$ is the data the data of a smooth morphism $\phi:\mU'\to \mU$ such that $f'=f\circ\phi$. Because $\phi$ is smooth, we have a Cartesian square:
\[\begin{tikzcd}
\mR'\arrow[r,"i'"]\arrow[d,"\tilde{\phi}"] & \mU'\arrow[d,"\phi"]\\
\mR\arrow[r,"i"]& \mU
\end{tikzcd}\]
We have then a natural sequence of isomorphisms:
\begin{align}\label{natsmooth}
    \tilde{\phi}^\ast\{d_\phi/2\} P_{\mU,f}&:=\tilde{\phi}^\ast\{d_\phi/2\}i^\ast\{d_\mU/2\}\phi_f^{mon,tot}\un_\mU\nn\\
    &\simeq (i')^\ast\{d_{\mU'}/2\}\phi^\ast\phi_f^{mon,tot}\un_\mU\nn\\
    &\simeq (i')^\ast\{d_{\mU'}/2\}\phi_{f'}^{mon,tot}\phi^\ast\un_\mU\nn\\
    &\simeq (i')^\ast\{d_{\mU'}/2\}\phi_{f'}^{mon,tot}\un_{\mU'}\nn\\
    &=:P_{\mU',f'}
\end{align}
where the first and third isomorphism are obtained by functoriality of pullbacks, and the second one by functoriality of the specialization system $\phi^{mon,tot}$ with respect to smooth pullbacks. By functoriality of the pullback, and by compatibility of specialization system with respect to composition of smooth pullbacks, this isomorphism is compatible with composition of smooth restriction of equivariant critical loci. It means that for $\phi':(\mU'',f'')\to(\mU',f')$ a smooth restriction of critical loci, the following square  of isomorphisms is commutative:
\[\begin{tikzcd}
    (\tilde{\phi}')^\ast\{d_{\phi'}/2\}\tilde{\phi}^\ast \{d_\phi/2\}P_{\mU,f}\arrow[r,"\simeq"]\arrow[d,"\simeq"] & (\tilde{\phi}')^\ast \{d_{\phi'}/2\}P_{\mU',f'}\arrow[d,"\simeq"]\\
(\tilde{\phi\circ\phi'})^\ast\{d_{\phi\circ\phi'}/2\}P_{\mU,f}\arrow[r,"\simeq"] & P_{\mU'',f''}
\end{tikzcd}\]
\medskip

\subsubsection{Product of critical loci}\label{sectproductcritloc}

Given two critical loci $(\mU_1,f_1)$ and $(\mU_2,f_2)$, one consider their product $(\mU_1\times \mU_2,f_1\boxplus f_2)$. One has then a natural isomorphism:
\begin{align}\label{thomsebcritchart}
    P_{\mU_1\times \mU_2,f_1\boxplus f_2}:=&(i_1\times i_2)^\ast\{d_{\mU_1\times \mU_2}/2\}\phi^{mon,tot}_{f_1\boxplus f_2}\un_{\mU_1\times \mU_2}\nn\\
    \simeq &(i_1\times i_2)^\ast\{d_{\mU_1\times \mU_2}/2\}\phi^{mon,tot}_{f_1\boxplus f_2}(\un_{\mU_1}\boxtimes\un_{\mU_2})\nn\\
    \simeq &(i_1\times i_2)^\ast\{d_{\mU_1\times \mU_2}/2\}(\phi^{mon,tot}_{f_1}\un_{\mU_1})\boxtimes(\phi^{mon,tot}_{f_1}\un_{\mU_2})\nn\\
    \simeq &((i_1)^\ast\{d_{\mU_1}/2\}\phi_{f_1}^{mon,tot}\un_{\mU_1})\boxtimes((i_2)^\ast\{d_{\mU_2}/2\}\phi_{f_2}^{mon,tot}\un_{\mU_2})\nn\\
    =:&P_{\mU_1,f_1}\boxtimes P_{\mU_2,f_2}
\end{align}
where the first and third isomorphisms follows from functoriality of pullbacks with respect to exterior products, and the second isomorphism is the Thom-Sebastiani isomorphism of Theorem \ref{theothomseb}. From the functoriality of pullbacks with respect to exterior products, and the functoriality of the Thom-Sebastiani isomorphism of Theorem \ref{theothomseb} with respect to smooth pullbacks (coming from the fact that it is an isomorphism of specialization systems), we obtain that for any smooth restriction $\phi_i:(\mU'_i,f'_i)\to (\mU_i,f_i)$, the following square of isomorphisms commutes:
\[\begin{tikzcd}
    (\tilde{\phi_1\times\phi_2})^\ast\{d_{\phi_1\times\phi_2}/2\} P_{\mU_1\times \mU_2,f_1\boxplus f_2}\arrow[r,"\simeq"]\arrow[d,"\simeq"] & P_{\mU'_1\times \mU'_2,f'_1\boxplus f'_2}\arrow[d]\\
    ((\tilde{\phi}_1)^\ast\{d_{\phi_1}/2\} P_{\mU_1,f_1})\boxtimes((\tilde{\phi}_2)^\ast \{d_{\phi_2}/2\}P_{\mU_2,f_2})\arrow[r,"\simeq"]& P_{\mU'_1,f'_1}\boxtimes P_{\mU'_2,f'_2}
\end{tikzcd}\]
Moreover, because pullbacks are symmetric monoidal functors, and the Thom-Sebastiani isomorphism of Theorem \ref{theothomseb} satisfies commutativity and associativity, we obtain that the above morphism satisfies commutativity and associativity.

\subsubsection{Stabilization by quadratic bundles}\label{sectstabquad}

As said above, given a critical chart $(\mR,\mU,f,i)$, and a quadratic bundle $(\mE,q)$ on $\mU$, one consider the critical chart $(\mR,\bV_\mU(\mE),f\circ \pi+q,s\circ i)$ obtained by stabilization by $(\mE,q)$. We obtain the following sequence of isomorphism:
\begin{align}
    P_{\bV_\mU(\mE),f\circ p+q}&:=(s\circ i)^\ast \{d_{\bV_\mU(\mU)}/2\}\phi^{mon,tot}_{f\circ p+q}\un_{\bV_\mU(\mE)}\nn\\
    &\simeq 
    i^\ast\{d_\mU/2+d_\mE/2\} s^\ast\phi^{mon,tot}_{f\circ p+q}\un_{\bV_\mU(\mE)}\nn\\
    &\simeq i^\ast\{d_\mU/2+d_\mE/2\}(\phi^{mon,tot}_f\un_\mU\otimes_{\bZ/2\bZ}P_{(\mE,q)}\{-d_\mE/2\})\nn\\
    &\simeq i^\ast\{d_\mU/2\}\phi^{mon,tot}_f\un_\mU\otimes_{\bZ/2\bZ}P_{(\mE,q)})|_{\mR^{red}}\nn\\
    &=: P_{\mU,f}\otimes_{\bZ/2\bZ}P_{(\mE,q)}|_{\mR^{red}}
\end{align}
where the third line comes from Lemma \ref{lemactquadbun}. From the compatibility with smooth pullbacks and exterior tensor products obtained in Lemma \ref{lemactquadbun}, we obtain then that the above isomorphism commutes also with smooth restrictions and exterior products. Namely, given $\phi:(\mR',\mU',f',i')\to (\mR,\mU,f,i)$, denoting $(\mE',q'):=\phi^\ast(\mE,q)$, the following square of isomorphisms commutes:
\[\begin{tikzcd}
    \tilde{\phi}^\ast\{d_\phi/2\}P_{\bV_\mU(\mE),f\circ p+q}\arrow[d,"\simeq"]\arrow[r,"\simeq"] & \tilde{\phi}^\ast\{d_\phi/2\}(P_{\mU,f}\otimes_ {\bZ/2\bZ}P_{(\mE,q)}|_{\mR^{red}})\arrow[d,"\simeq"]\\
    P_{\bV_{\mU'(\mE')},f'\circ p'+q'}\arrow[r,"\simeq"] & P_{\mU',f'}\otimes_ {\bZ/2\bZ}P_{(\mE',q')|_{\mR'^{red}}}
\end{tikzcd}\]
given two critical charts $(\mR_i,\mU_i,f_i,i_i)$, $i=1,2$, and quadratic bundles $(\mE_i,q_i)$ on $\mU_i$, denoting the following square of isomorphisms is commutative:
\[\begin{tikzcd}
    P_{\bV_{\mU_1\times\mU_2}(\mE_1\boxplus\mE_2),(f_1\boxplus f_2)\circ (p_1\times p_2)+(q_1\boxplus q_2)}\arrow[r,"\simeq"]\arrow[d,"\simeq"] & P_{\mU_1\times\mU_2,f_1\boxplus f_2}\otimes_{\bZ/2\bZ}P_{(\mE_1\boxplus\mE_2,q_1\boxplus q_2)}|_{(\mR_1\times\mR_2)^{red}}\arrow[d,"\simeq"]\\
    P_{\bV_{\mU_1}(\mE_1),f_1\circ p_1+q_1}\boxtimes P_{\bV_{\mU_2}(\mE_2),f_2\circ p_2+q_2}\arrow[r,"\simeq"] & (P_{\mU_1,f_1}\otimes_{\bZ/2\bZ}P_{(\mE_1,q_1)}|_{\mR_1^{red}})\boxtimes (P_{\mU_2,f_2}\otimes_{\bZ/2\bZ}P_{(\mE_2,q_2)}|_{\mR_2^{red}})
\end{tikzcd}\]
The compatibility with direct sum of quadratic bundles gives that, given two quadratic bundles $(\mE_i,q_i)$ on the same stack $\mU$, denoting $(\mE,q):=(\mE_1\oplus\mE_2,q_1\oplus q_2)$, the following square of isomorphisms is commutative:
\[\begin{tikzcd}
    P_{\bV_\mU(\mE),f\circ \pi+q}\arrow[r,"\simeq"]\arrow[d,"\simeq"] & P_{\mU,f}\otimes_{\bZ/2\bZ}P_{(\mE,q)}|_{\mR^{red}}\arrow[d,"\simeq"]\\
    P_{\bV_\mU(\mE_1),f\circ\pi_1+q_1}\otimes_{\bZ/2\bZ}P_{(\mE_2,q_2)}|_{\mR^{red}}\arrow[r,"\simeq"] & P_{\mU,f}\otimes_{\bZ/2\bZ}P_{(\mE_1,q_1)}\otimes_{\bZ/2\bZ}P_{(\mE_2,q_2)}|_{\mR^{red}}
\end{tikzcd}\]
\ie this isomorphism is compatible with composition of stabilizations by quadratic form.

Moreover, given a trivialization $(\mE,q)\simeq (\mathcal{O}_\mX^n,\sum_{i=1}^n x_i^2)$, this isomorphism is from Lemma \ref{lemactquadbun} simply the isomorphism considered in \cite{Joycesymstab}, obtained from Thom-Sebastiani:
\begin{align}
    P_{\mU\times\bA^n_k,f\boxplus \sum_{i=1}^n x_i^2}\simeq P_\mU\boxtimes_{mon}P_{\bA^n_k,\sum_{i=1}^n x_i^2}\simeq P_\mU
\end{align}

\subsection{Gluing of the DT sheaf}

The next proposition (which is a formal consequence of \cite[Theorem 4.8]{darbstack}) ensure that one can work with stacky critical charts, and arbitrary stabilization by quadratic bundle in cohomological DT theory:

\begin{proposition}\label{proplocdefperv}
    Consider an oriented d-critical stack $(\mX,s,K_{\mX,s}^{1/2})$, the object $P_{\mX,s,K_{\mX,s}^{1/2}}$ of $\mA_{mon,c}(\mX)$ built in \cite[Theorem 4.8]{darbstack} (building from \cite[theorem 6.9]{Joycesymstab}) is uniquely characterized by the following properties:
    \begin{enumerate}
        \item[$i)$] For any critical chart $(\mR,\mU,f,i)$, there is a natural isomorphism:
        \begin{align}
            P_{\mX,s,K_{\mX,s}^{1/2}}|_{\mR}\{d_{\mR/\mU}/2\}\simeq P_{\mU,f}\otimes_{\bZ/2\bZ} Q_{(\mR,\mU,f,i)}
        \end{align}.
        \item[$ii)$] Given a smooth restriction of critical charts $\phi:(\mR',\mU',f',i')\to (\mR,\mU,f,i)$, the following square of isomorphisms is commutative:
        \[\begin{tikzcd}
            \tilde{\phi}^\ast\{d_\phi/2\} P_{\mX,s,K_{\mX,s}^{1/2}}|_{\mR}\{d_{\mR/\mU}/2\}\arrow[r,"\simeq"]\arrow[d,"\simeq"] & \tilde{\phi}^\ast\{d_\phi/2\} (P_{\mU,f}\otimes_{\bZ/2\bZ} Q_{(\mR,\mU,f,i)})\arrow[d,"\simeq"]\\
            P_{\mX,s,K_{\mX,s}^{1/2}}|_{\mR'}\{d_{\mR'/\mU}/2\}\arrow[r,"\simeq"] & P_{\mU',f}\otimes_{\bZ/2\bZ} Q_{(\mR',\mU',f',i')}
        \end{tikzcd}\]
        where the horizontal arrows are the isomorphisms from $i)$, and the right vertical arrow is the tensor product of the isomorphism of Section \ref{sectisomliss} and Lemma \eqref{defcanbun} $ii)$.
        \item[$iii)$]  Given a critical chart $(\mR,\mU,f,i)$, and a quadratic bundle $(\mE,q)$, the following diagram of isomorphism is commutative:
        \[\begin{tikzcd}
    P_{\mX,s,K_{\mX,s}^{1/2}}|_{\mR}\{d_{\mR/\mX}/2\}\arrow[r,"\simeq"]\arrow[d,"\simeq"] & P_{\mU,f}\otimes_{\bZ/2\bZ} Q_{(\mR,\mU,f,i)}\arrow[d,"\simeq"]\\
            P_{\bV_\mU(\mE),f\circ\pi+q}\otimes_{\bZ/2\bZ}Q_{(\mR,\bV_\mU(\mE),f\circ p+q,s\circ i)}\arrow[r,"\simeq"] & P_{\mU,f}\otimes_{\bZ/2\bZ}P_{\mE,q}\otimes_{\bZ/2\bZ} Q_{(\mR,\bV_\mU(\mE),f\circ p+q,s\circ i)}
        \end{tikzcd}\]
        where the upper horizontal and left vertical arrows arrows are the isomorphism of $i)$ for $(\mR,\mU,f,i)$ and its stabilization $(\mR,\bV_\mU(\mE),f\circ p+q,s\circ i)$, the lower horizontal arrow is the isomorphism from Section \ref{sectstabquad}, and the right vertical arrow the isomorphism of Lemma \ref{defcanbun} $iii)$.
    \end{enumerate}
\end{proposition}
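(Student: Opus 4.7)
The strategy is to reduce to the version of this proposition already proved in \cite{Joycesymstab}, \cite{darbstack}, which characterizes $P_{\mX,s,K_{\mX,s}^{1/2}}$ by properties $i)$, $ii)$, $iii)$ restricted to the special case where the critical charts are schematic and the quadratic bundles are trivial of the form $(\mathcal{O}_\mU^n,\sum_i x_i^2)$. Since that restricted characterization is already known to uniquely determine an object in $\mA_{mon,c}(\mX)$, uniqueness in our generalized formulation follows automatically: any object satisfying our $i)$--$iii)$ satisfies the restricted ones, so it agrees with $P_{\mX,s,K_{\mX,s}^{1/2}}$. The entire content of the proposition is therefore the existence of the isomorphisms in $i)$ for stacky critical charts, together with compatibilities $ii)$ and $iii)$, which we produce by smooth descent from the schematic case.

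First I would construct the isomorphism in $i)$ for a stacky critical chart $(\mR,\mU,f,i)$. Pick a smooth cover $\tilde{U}\to\mU$ by a scheme, forming by base change a schematic critical chart $(\tilde{R},\tilde{U},\tilde{f},\tilde{i})$ together with a smooth morphism $\tilde{R}\to\mR$. On $\tilde{R}$, the scheme version of $i)$ from \cite{darbstack} gives a canonical isomorphism $P_{\mX,s,K_{\mX,s}^{1/2}}|_{\tilde{R}}\{d_{\tilde{R}/\mX}/2\}\simeq P_{\tilde{U},\tilde{f}}\otimes_{\bZ/2\bZ}Q_{(\tilde{R},\tilde{U},\tilde{f},\tilde{i})}$. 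Choosing a smooth cover of $\tilde{U}\times_\mU\tilde{U}$ by a scheme $\tilde{U}'$ and setting $\tilde{R}':=\tilde{R}\times_\mR\tilde{R}$, the two pullbacks of the schematic isomorphism to $\tilde{R}'$ agree because the scheme version of $ii)$ is known from \cite{darbstack}, combined with functoriality of the smooth restriction isomorphism of Section \ref{sectisomliss} and of $Q_{(\cdot,\cdot,\cdot,\cdot)}$ from Lemma \ref{defcanbun} $ii)$. Since perverse sheaves, monodromic mixed Hodge modules, and monodromic perverse Nori motives satisfy smooth descent in the abelian category (up to the standard dimension shift), the isomorphism on $\tilde{R}$ descends canonically to $\mR$, producing the desired isomorphism $i)$. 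Independence of the smooth cover $\tilde{U}\to\mU$ is checked by the usual three-cover argument using a common refinement.

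Next I would verify $ii)$: given a smooth restriction $\phi:(\mR',\mU',f',i')\to(\mR,\mU,f,i)$ of stacky charts, choose a smooth cover $\tilde{U}\to\mU$ by a scheme and pull it back along $\phi$ to $\tilde{U}'\to\mU'$; this produces a commutative diagram of smooth restrictions of schematic charts. The square in $ii)$, after pullback to the smooth cover $\tilde{R}'$, is precisely the schematic version of $ii)$ from \cite{darbstack}, which is known to commute; by conservativity of smooth pullback at the level of the perverse heart, it commutes on $\mR'$. For property $iii)$, given a stacky critical chart $(\mR,\mU,f,i)$ and a quadratic bundle $(\mE,q)$ on $\mU$, pick a smooth cover $\tilde{U}\to\mU$ over which $(\mE,q)$ trivializes as $(\mathcal{O}_{\tilde{U}}^n,\sum_i x_i^2)$. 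On such a cover, the isomorphism from Section \ref{sectstabquad} reduces, by Lemma \ref{lemactquadbun}, to the Thom--Sebastiani isomorphism used in \cite{darbstack}, and the square in $iii)$ becomes the schematic version for a trivial quadratic form stabilization, which is commutative by \cite{darbstack}. Conservativity of smooth pullback again forces commutativity of $iii)$ on $\mR$, and independence of the choice of trivializing cover is handled by the standard refinement argument.

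The main technical obstacle is to ensure all the descent steps are truly canonical, i.e.\ that the two pullbacks of the scheme-level isomorphism of $i)$ along the two projections of a smooth groupoid presentation match on the nose, not merely up to an unspecified sign. This is controlled by the combination of three compatibilities established earlier: the compatibility of the comparison isomorphisms for $P_{\mU,f}$ under smooth restriction (Section \ref{sectisomliss}), under stabilization by quadratic bundles (Section \ref{sectstabquad}, using Lemma \ref{lemactquadbun}), and the compatibility of $Q_{(\mR,\mU,f,i)}$ under the same operations (Lemma \ref{defcanbun} $ii)$ and $iii)$). The tensor product of these two sets of compatibilities on the overlaps cancels the potential sign ambiguity from the orientation data, which is precisely the point of introducing the twist by $Q_{(\mR,\mU,f,i)}$. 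Once this cocycle compatibility is verified at the scheme level, the entire statement in the stacky and general-quadratic-bundle setting follows by a single application of smooth descent.
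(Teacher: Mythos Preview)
Your proposal is correct and follows essentially the same approach as the paper: reduce to the schematic case via smooth descent, constructing the isomorphism in $i)$ by picking a smooth cover by a scheme and checking the descent data using the known schematic compatibilities, then verifying $ii)$ and $iii)$ by pulling back to a cover (trivializing the quadratic bundle for $iii)$) and invoking conservativity of smooth pullback. The only minor stylistic difference is that for $ii)$ the paper chooses a single smooth cover $U''\to\mU'$ (which also maps to $\mU$) so that both horizontal isomorphisms are defined using the same presentation, making the pulled-back square literally the identity rather than the schematic instance of $ii)$; your version is equally valid.
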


\begin{proof}
In \cite[theorem 6.9]{Joycesymstab} (the scheme case), the authors works with perverse sheaves and mixed Hodge modules, whence in \cite[Theorem 4.8]{darbstack} (the stack case), the authors works with perverse sheaves. Thanks to the development of the formalism of mixed Hodge modules and perverse Nori motives on Artin stacks in \cite{tubachMHM}, and the developement of the formalism of monodromic perverse Nori motives and Thom-Sebastiani in Section \ref{sectmonothomseb}, this construction extends automatically to the level of monodromic mixed Hodge modules and monodromic perverse Nori motives on stacks (more precisely, perverse Nori motives satisfies all the properties required in \cite[Section 2.5]{Joycesymstab} for the construction to work).
    \begin{enumerate}
        \item[$i)$] consider a critical chart $(\mR,\mU,f,i)$ of $(\mX,s,K_{\mX,s}^{1/2})$, and a smooth cover $\phi:U'\to\mU$ by a scheme. Consider the smooth restriction $\phi:(R',U',f',i')\to(\mR,\mU,f,i)$. From \cite[Theorem 4.8 a)]{darbstack} and \cite[Theorem 6.9 i)]{Joycesymstab}, there is a natural isomorphism:
        \begin{align}
            P_{\mX,s,K_{\mX,s}^{1/2}}|_{R'}\{d_{R'/\mX}/2\}\simeq P_{U',f'}\otimes_{\bZ/2\bZ} Q_{(R',U',f',i')}
        \end{align}
        Consider any smooth cover $U''\to U'\times_\mU U'$: from \cite[Theorem 4.8 b), Proposition 4.5 a)]{darbstack}, the two pullbacks along $R''\to R'$ of the last isomorphism agree with the last isomorphism for $(R'',U'',f'',i'')$, hence are equal, which means that the last isomorphism descend to the isomorphism of $i)$. Given an other choice of smooth cover $\phi:\tilde{U}'\to\mU$, using a smooth cover $U''\to U'\times_\mU\tilde{U}'$, a similar argument shows that this isomorphism is independent of the choice of $U'\to\mU$.\medskip

        \item[$ii)$] Consider a smooth cover $U''\to\mU'$ by a scheme, and the induced smooth morphism $U''\to\mU$. By the definition of the isomorphism of $i)$, the pullback of the square of $ii)$ along $R''\to\mR'$ is the identity, hence by conservativity of smooth pullbacks the square of $ii)$ is commutative.\medskip
        
        \item[$iii)$] Consider a smooth morphism $\phi:U'\to\mU$ from a scheme and a trivialization of $(\mE',q'):=\phi^\ast(\mE,q)\simeq (\mathcal{O}_\mU^n,\sum_{i=1}^n x_i^2)$. The square of $iii)$ for $(R',U',f',i')$ and $(\mE',q')$ is then:
        \[\begin{tikzcd}
    P_{\mX,s,K_{\mX,s}^{1/2}}|_{\mR'}\{d_{\mR'/\mX}/2\}\arrow[r,"\simeq"]\arrow[d,"\simeq"] & P_{U',f'}\otimes_{\bZ/2\bZ} Q_{(R',U',f',i')}\arrow[d,"\simeq"]\\
            P_{U\times\bA^n_k,f\boxplus \sum_{i=1}^n x_i^2}\otimes_{\bZ/2\bZ}Q_{(R',U'\times\bA^n_k,f\boxplus \sum_{i=1}^n x_i^2,i'\times 0)}\arrow[r,"\simeq"] & P_{U'\times\bA^n_k,f'\boxplus \sum_{i=1}^n x_i^2}\otimes_{\bZ/2\bZ} Q_{(R',U',f',i')}
        \end{tikzcd}\]
        It is an isomorphism from \cite[Theorem 6.9 ii), Theorem 5.4 a)]{Joycesymstab}. From $ii)$, the left vertical and upper horizontal arrows are obtained from pullback along $R'\to\mR$ from the corresponding arrows of the square of $iii)$ for $(\mR,\mU,f,i)$ and $(\mE,q)$. From Section \eqref{sectstabquad}, the isomorphism of stabilization by quadratic forms commutes with smooth pullbacks, and, using the fact that the trivialization of $(\mE',q')$ gives a section of $P_{\mE',q'}$, we obtain that the lower horizontal and right vertical arrows are obtained by pullback from the corresponding arrows of the square of $iii)$. By covering $\mU$ by such trivialization, using faithfullness of smooth pullbacks, one obtains that the square of $iii)$ is commutative.
    \end{enumerate}
\end{proof}

From the above characterization, one obtains directly:

\begin{corollary}\label{corfunctjoyce}
    The construction of $P_\mX$ from \cite[Theorem 4.8]{darbstack} enhance naturally to a symmetric monoidal functor from the 2-category of oriented d-critical stacks with smooth morphisms. Namely, given a smooth morphism of oriented $d$-critical charts $\phi:(\mX,s,K_{\mX,s}^{1/2})\to (\mY,t,K_{\mY,t}^1/2)$, there is a natural isomorphism in $\mA_{mon,c}(\mX)$:
    \begin{align}
        \phi^\ast\{d_\phi/2\}P_{\mY,t,K_{\mY,t}^1/2}\simeq P_{\mX,s,K_{\mX,s}^{1/2}}
    \end{align}
    compatible with composition, and given oriented d-critical stacks $(\mX_i,s_i,K_{\mX_i,s_i}^{1/2})$, there is a natural isomorphism in $\mA_{mon,c}(\mX_1\times\mX_2)$:
    \begin{align}\label{isomoprodjoyce}
        P_{\mX_1\times\mX_2,s_1\times s_2,K_{\mX_1,s_1}^{1/2}\otimes K_{\mX_2,s_2}^{1/2}}\simeq P_{\mX_1,s_1,K_{\mX_1,s_1}^{1/2}}\boxtimes_{mon}P_{\mX_2,s_2,K_{\mX_2,s_2}^{1/2}}
    \end{align}
    satisfying the obvious commutativity, associativity, and compatibility with smooth pullbacks.
\end{corollary}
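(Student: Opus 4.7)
The plan is to deduce both statements from the uniqueness characterization of $P_{\mX,s,K_{\mX,s}^{1/2}}$ given by Proposition \ref{proplocdefperv}.

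For the smooth pullback isomorphism, given $\phi:(\mX,s,K_{\mX,s}^{1/2})\to (\mY,t,K_{\mY,t}^{1/2})$, I would verify that the object $\phi^\ast\{d_\phi/2\}P_{\mY,t,K_{\mY,t}^{1/2}}$ of $\mA_{mon,c}(\mX)$ satisfies conditions $i)$--$iii)$ of Proposition \ref{proplocdefperv} for $(\mX,s,K_{\mX,s}^{1/2})$. The key observation is that any critical chart $(\mR,\mU,f,i)$ for $(\mX,s)$ is automatically a critical chart for $(\mY,t)$ via the composed smooth map $\mR\to\mX\to\mY$, of relative dimension $d_{\mR/\mX}+d_\phi$: indeed, since $s=\phi^\star t$, Lemma \ref{defSx} $ii)$ ensures that $s|_\mR$ and $t|_\mR$ correspond to the same class $f+I^2_{\mR,\mU}$ in the common subsheaf $\mS_\bullet|_\mR\hookrightarrow i^{-1}\mathcal{O}_\mU/I^2_{\mR,\mU}$, while $\Crit(f)=\mR$ is independent of the choice of target. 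Applying condition $i)$ of Proposition \ref{proplocdefperv} for $(\mY,t)$ at this chart, and using the canonical orientation identification $Q^\mY_{(\mR,\mU,f,i)}\simeq Q^\mX_{(\mR,\mU,f,i)}$ built into the definition of $\phi$ as a morphism of oriented d-critical stacks, yields condition $i)$ for $\phi^\ast\{d_\phi/2\}P_\mY$. Conditions $ii)$ and $iii)$ then reduce, by functoriality of pullback and of the Tate twist, to the corresponding conditions for $P_\mY$, which hold by hypothesis. Uniqueness gives the desired isomorphism; compatibility with composition follows by the same argument applied to $(\phi\circ\phi')^\ast\{d_{\phi\circ\phi'}/2\}P_\mY$, using additivity of relative dimensions.

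For the product isomorphism, the strategy is analogous. By Lemma \ref{lemsymmon}, the product $(\mX_1\times\mX_2,s_1\boxplus s_2)$ is an oriented d-critical stack, product charts $(\mR_1\times\mR_2,\mU_1\times\mU_2,f_1\boxplus f_2,i_1\times i_2)$ cover it, and the orientation bundle decomposes monoidally as $Q\simeq Q_1\boxplus Q_2$. I would verify that $P_{\mX_1}\boxtimes_{mon}P_{\mX_2}$ satisfies conditions $i)$--$iii)$ of Proposition \ref{proplocdefperv} for the product. Condition $i)$ on a product chart combines the Thom--Sebastiani isomorphism of Section \ref{sectproductcritloc} with the monoidality of $Q$. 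Conditions $ii)$ and $iii)$ for product restrictions and for stabilizations by quadratic bundles on a factor are exactly the commutative squares established in Sections \ref{sectproductcritloc} and \ref{sectstabquad}. To handle non-product charts and arbitrary restrictions/stabilizations, I appeal to smooth descent along a cover by product charts (using that the proof of Proposition \ref{proplocdefperv} itself reduces the stacky characterization to the schematic one by smooth descent). Commutativity, associativity, and smooth-pullback compatibility of the product isomorphism are inherited from the corresponding properties of Thom--Sebastiani proved in Section \ref{sectproductcritloc} and from the monoidal structure on canonical bundles of Lemma \ref{lemsymmon}.

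The main obstacle is bookkeeping rather than geometry: verifying conditions $ii)$ and $iii)$ for the candidate objects $\phi^\ast\{d_\phi/2\}P_\mY$ and $P_{\mX_1}\boxtimes_{mon}P_{\mX_2}$ amounts to threading together the naturality squares already established in Sections \ref{sectisomliss}, \ref{sectproductcritloc}, and \ref{sectstabquad}, following the pattern sketched in Remark \ref{remarkstrat}. Each individual square has been shown commutative there, so the argument is a systematic composition of known coherences and invocation of uniqueness, rather than new input.
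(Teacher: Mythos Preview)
Your proposal is correct and follows essentially the same route as the paper: both parts are deduced from the uniqueness characterization of Proposition~\ref{proplocdefperv}, using that critical charts for $\mX$ become critical charts for $\mY$ via composition with $\phi$ in the smooth case, and that product charts together with the Thom--Sebastiani compatibilities of Sections~\ref{sectproductcritloc}--\ref{sectstabquad} suffice to characterize the product in the monoidal case. The paper phrases the product argument slightly differently (noting from Lemma~\ref{lemsymmon} that product charts and product comparisons already suffice for the uniqueness statement, rather than invoking smooth descent to non-product charts), but this is the same reduction.
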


\begin{proof}
    The functoriality with smooth morphisms follows directly from the unique characterization given above, as $\phi^\ast\{d_\phi/2\}P_{\mY,t,K_{\mY,t}^1/2}$ satisfies it with an obvious choice of isomorphism in $i)$, which is obviously compatible with composition. As recalled in the proof of Lemma \ref{lemsymmon}, $(\mX_1\times\mX_2,s_1\times s_2)$ can be covered by critical charts being products, and any two such critical charts can be compared by products of smooth restrictions and products of stabilization by quadratic bundles. Hence the restriction of $i),ii),iii)$ to such product characterize $P_{\mX_1\times\mX_2,s_1\times s_2,K_{\mX_1,s_1}^{1/2}\otimes K_{\mX_2,s_2}^{1/2}}$ uniquely. Using the isomorphism from Section \ref{sectproductcritloc}, one obtains that $P_{\mX_1,s_1,K_{\mX_1,s_1}^{1/2}}\boxtimes_{mon}P_{\mX_2,s_2,K_{\mX_2,s_2}^{1/2}}$ satisfies $i)$ with a natural choice of isomorphism, which is compatible with smooth restriction, hence satisfies $ii)$, from Section \ref{sectproductcritloc}, and is compatible with stabilization by quadratic forms, hence satisfies $iii)$, from Section \ref{sectstabquad}. It gives a natural choice of isomorphism \eqref{isomoprodjoyce}. From Section \ref{sectproductcritloc}, the isomorphisms of $i)$ are satisfies commutativity and associativity and are compatible with smooth pullbacks, hence \eqref{isomoprodjoyce} does it too.   
    \end{proof} 

Because the authors of \cite{Joycesymstab} and \cite{darbstack} have to build an object in a $1$-category, the proof contains three main steps, of increasing complexity:
\begin{itemize}
    \item Defining locally the perverse sheaf of vanishing cycles on a critical chart (with a careful treatment of orientation). This corresponds to point $i)$ of the above proposition
    \item Defining comparison isomorphisms locally on intersection of critical charts. This is done by comparing critical charts by étale (hence, smooth) restrictions and stabilization by quadratic forms (hence, stabilization by trivialized quadratic bundles), and then defining locally on intersections of critical charts gluing isomorphisms from the equations of point $ii)$ and $iii)$.
    \item Showing that these isomorphisms glue into global isomorphism on intersections of two critical charts, and that they satisfies cocycle condition on intersection of three critical charts.
\end{itemize}
The third step is certainly the harder and deeper part of the proof, using deep results on $\bA^1$-deformation of isomorphisms of critical charts, $\bA^1$-homotopy invariance, and where orientation data becomes crucial. Fortunately, once this is done, any construction or computation on cohomological DT theory can virtually be expressed as the construction of an (iso)-morphism between such objects, hence (as long as one manage to stay in the Abelian category $\mA_{mon,c}$) one has to consider only the two first steps, namely:
\begin{itemize}
    \item Defining locally the (iso)morphism, working on a critical chart.
    \item Showing that these (iso)morphisms agrees locally (hence globally, because they are morphisms in a $1$-category) on intersection of critical charts. it suffices to prove that they are compatible with smooth restrictions of critical charts, and stabilization by quadratic form.
\end{itemize}

Notice that the authors of \cite{Joycesymstab} and \cite{darbstack} works only with schematic critical charts. However, in order to study the $\Theta$ correspondence, this cannot works, as the $\Theta$-correspondence is trivial for schemes, hence the $\Grad$ of a d-critical stack cannot be covered by $\Grad$ of schematic critical charts. This is why we had to extend slightly the formalism of \cite{Joyce2013ACM} to consider stacky critical charts, and stabilization by quadratic bundles: see Proposition \ref{propcomparchart}. We could have consider only critical charts of the form $([R/\bG_m],[U/\bG_m],f,i)$, but we felt that adopting a more flexible formalism should be more natural and useful (for example, to use also étale critical charts of the quotient form, using étale-local structure results on stacks from \cite{Alper2015AL}, \cite{Alper2019TheL}). Notice that stabilization by quadratic bundles was also used in \cite{Bussi2019OnMV} to glue motives of vanishing cycles: for this, one has to work in the Nisnevich topology, and then one cannot trivialize quadratic bundles as in the Zariski topology. Stabilization by quadratic bundle was also used crucially in the homotopy-coherent formalism of \cite{Hennion2024GluingIO}.

\section{Hyperbolic localization in DT theory}\label{sectproof}

In this section, we work over an algebraically closed field $k$ of characteristic $0$. All our stacks are assumed to be quasi-separated Artin $1$-stacks locally of finite type over $k$, with affine stabilizers. Given an oriented d-critical stack $(\mX,s,K_{\mX,s}^{1/2})$, we will often abuse the notation by denoting $P_\mX$ for $P_{\mX,s,K_{\mX,s}^{1/2}}$, when the d-critical structure and the orientation are clear from the context.

\subsection{Hyperbolic localization on critical loci}

\subsubsection{Local definition of the isomorphism}

Consider a critical locus $(\mR,\mU,f,i)$. As recalled from Halpern-Leistner in Lemma \ref{lemcotanbun}, $\Grad(\mU)$ (resp. $\Filt(\mU)$) is also smooth, with cotangent complex $(\iota^\ast\bL_\mU)^0$ (resp. $(\eta^\ast\bL_\mU)^{\leq 0}$. We have the following result:

\begin{lemma}\label{lemcritgrad}
    $\Grad(i):\Grad(\mR)\to\Grad(\mU)$ is the closed immersion of the critical locus of $\Grad(f)$.
\end{lemma}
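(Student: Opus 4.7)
The plan is to combine two observations. First, since $i:\mR\to\mU$ is a closed immersion, \cite[Corollary 1.1.7]{HalpernLeistner2014OnTS} (already invoked in Lemma~\ref{lemhyploclosed}) gives $\Grad(\mR)\simeq \mR\times_\mU\Grad(\mU)$ and $\Grad(i)$ is a closed immersion into $\Grad(\mU)$. Moreover, $\bA^1_k$ is an algebraic space, so $\Grad(\bA^1_k)=\bA^1_k$ and the morphism $\Grad(f):\Grad(\mU)\to\bA^1_k$ is simply $f\circ\iota$.

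The next step is to identify $\Crit(\Grad(f))\subset\Grad(\mU)$. By Lemma~\ref{lemcotanbun}, $\bL_{\Grad(\mU)}\simeq (\iota^\ast\bL_\mU)^0$, and under this identification $d\Grad(f)=d(f\circ\iota)$ is the image of $\iota^\ast(df)$ under the projection $\iota^\ast(\bL_\mU)\to (\iota^\ast\bL_\mU)^0$ onto the weight-zero summand. The crucial observation is that this projection is already the identity on $\iota^\ast(df)$: at any point $x:B\bG_m\to\mU$ of $\Grad(\mU)$, the composition $f\circ x:B\bG_m\to\bA^1_k$ is a $\bG_m$-equivariant map into the trivial $\bG_m$-representation $\bA^1_k$, so $df_{\iota(x)}$ is a $\bG_m$-equivariant linear form on $\iota^\ast(\bT_\mU)_x$ valued in the trivial character, and must annihilate every nonzero weight summand of $\iota^\ast(\bT_\mU)_x$. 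Hence $\iota^\ast(df)$ already lives in $(\iota^\ast\bL_\mU)^0$ and equals $d\Grad(f)$.

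Combining, inside $\Grad(\mU)$ the closed substack $\Grad(\mR)\simeq\mR\times_\mU\Grad(\mU)$ is cut out by the vanishing of $\iota^\ast(df)$, which by the previous paragraph equals $d\Grad(f)$; this vanishing locus is by definition $\Crit(\Grad(f))$. The only conceptual point is the weight-zero statement for $df$, and I do not expect it to be the main obstacle; a clean alternative is to write $\mR=\mU\times_{T^\ast\mU}\mU$ (zero section and $df$) as a classical fiber product, use that $\Grad$ preserves fiber products as a mapping stack construction, and identify $\Grad(T^\ast\mU)$ with the total space of $(\iota^\ast\bL_\mU)^0$ in such a way that $\Grad(df)$ becomes $d\Grad(f)$ and $\Grad$ of the zero section becomes the zero section, yielding $\Grad(\mR)\simeq\Crit(\Grad(f))$ directly.
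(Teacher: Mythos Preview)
Your proposal is correct, and the ``alternative'' you sketch at the end is exactly the paper's proof: write $\Crit(f)=\mU\times_{0,T^\ast\mU,df}\mU$, use that $\Grad$ preserves fiber products, identify $\Grad(T^\ast\mU)\simeq T^\ast\Grad(\mU)$ so that $\Grad(0)$ becomes the zero section and $\Grad(df)$ becomes $d\Grad(f)$, and conclude $\Grad(\Crit(f))=\Crit(\Grad(f))$. The paper also records that the same argument works verbatim for the derived critical locus, giving $\ud{\Grad}(\ud{\Crit}(f))=\ud{\Crit}(\Grad(f))$.

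Your first approach is also valid and is really the same argument unpacked: the claim that $\iota^\ast(df)$ already lies in the weight-zero summand of $\iota^\ast\bL_\mU$ is equivalent to the factorization of $\Grad(df)$ through $T^\ast\Grad(\mU)\subset T^\ast\mU|_{\Grad(\mU)}$, which is precisely the content of the identification $\Grad(T^\ast\mU)\simeq T^\ast\Grad(\mU)$. Your pointwise justification of the weight-zero claim is fine on the smooth (hence reduced) stack $\Grad(\mU)$, but note that the cleaner global argument is immediate from the mapping-stack description: any morphism $\mU\to\mZ$ induces $\Grad(\mU)\to\Grad(\mZ)$, and applying this to $df:\mU\to T^\ast\mU$ lands by construction in $\Grad(T^\ast\mU)=\bV_{\Grad(\mU)}((\iota^\ast\bL_\mU^\vee)^0)$, which is exactly the weight-zero statement you want.
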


\begin{proof}
    Denote by $T^\ast\mU:=\bV_\mU(\bL_\mU^\vee)$ the cotangent stack of $\mU$. We have that $\Grad(T^\ast\mU)\simeq T^\ast \Grad(\mU)$, and the section $0,d\Grad(f):\Grad(\mU)\to T^\ast \Grad(\mU)$ identifies with $\Grad(0),\Grad(df):\Grad(\mU)\to \Grad(T^\ast\mU)$. Then, as $\Crit(f):=\mU\times_{0,T^\ast \mU,df}\mU$, we obtain directly, from the compatibility of the mapping construction with fiber product:
    \begin{align}
        \Grad(\Crit(f))=\Crit(\Grad(f))
    \end{align}
    Notice that the same reasoning, applied to the derived critical locus $\ud{\Crit}(f):=\mU\overset{h}{\times}_{0,T^\ast \mU,df}\mU$, gives also at the derived level:
    \begin{align}
        \ud{\Grad}(\ud{\Crit}(f))=\ud{\Crit}(\ud{\Grad}(f))
    \end{align}
\end{proof}

In particular, $(\Grad(\mR),\Grad(\mU),\Grad(f),\Grad(i))$ is a critical locus. We consider the locally constant function $\Ind_{\mU,f}:\Grad(\mR)\to \bZ$ defined by:
\begin{align}\label{inddplus}
    \Ind_{\mU,f}:=d^+_\mU|_{\Grad(\mR)}-d^-_\mU|_{\Grad(\mR)}
\end{align}
where $d^+_\mU$ (resp. $d^-_\mU$) denotes the virtual dimension of $\bL^{<0}_\mU$ (resp. $\bL^{>0}_\mU$) (see Section \ref{sectsmoothhyploc}). We choose this sign convention such that $d^+_\mU$ counts the virtual dimension of the positive weight part of the tangent stack. Notice that, when we consider the derived critical locus $\ud{\mR}$, $\Ind_{\mU,f}$ is the virtual dimension of $\bL_{\ud{\mR}}^{<0}$.\medskip

Given a smooth restriction of critical loci $\phi:(\mR',\mU',f',i')\to (\mR,\mU,f,i)$, we define similarly the locally constant function $\Ind_\phi:\Grad(\mR')\to \bZ$:
\begin{align}\label{inddplusphi}
    \Ind_\phi:=d^+_\phi|_{\Grad(\mR')}-d^-_\phi|_{\Grad(\mR')}
\end{align}
where $d^+_\phi$ (resp. $d^-_\phi$) denotes the virtual dimension of $\bL^{<0}_\phi$ (resp. $\bL^{>0}_\phi$). one obtains directly $d^\pm_{\mU'}=d^\pm_{\mU}\circ\tilde{\phi}+d^\pm_{\phi}$, which gives:
\begin{align}\label{Indsm}
    \Ind_{\mU',f'}=\Ind_{\mU,f}\circ\tilde{\phi}+\Ind_\phi
\end{align}
Given $(\mR_i,\mU_i,f_i,i_i)$, $i=1,2$, we have also:
\begin{align}\label{Indprod}
     \Ind_{\mU_1\times \mU_2,f_1\boxplus f_1}= \Ind_{\mU_1,f_1}\boxplus \Ind_{\mU_2,f_2}
\end{align}

\begin{proposition}\label{prophyploccrit}
    For any critical locus $(\mR,\mU,f,i)$, there is a canonical isomorphism in $\mA_{mon,c}(\mR)$:
    \begin{align}
     p_!\eta^\ast P_{\mU,f}\simeq P_{\Grad(\mU),\Grad(f)}\{-\Ind_{\mU,f}/2\}
    \end{align}
    It is compatible with products of critical loci, namely for two critical chart $(\mR_i,\mU_i,f_i,i_i)$, $i=1,2$, the following square of isomorphisms is commutative:
    \[\begin{tikzcd}[column sep=tiny]
     p_!\eta^\ast P_{\mU_1\times \mU_2,f_1\boxplus f_2}\arrow[r,"\simeq"]\arrow[d,"\simeq"] &P_{\Grad(\mU_1\times\mU_2),\Grad(f_1\boxplus f_2)}\{- \Ind_{\mU_1\times \mU_2,f_1\boxplus f_2}/2\}\arrow[d,"\simeq"]\\
((p_1)!(\eta_1)^\ast P_{\mU_1,f_1})\boxtimes((p_2)_!(\eta_2)^\ast P_{\mU_2,f_2})\arrow[r,"\simeq"] &(P_{\Grad(\mU_1),\Grad(f_1)}\{- \Ind_{\mU_1,f_1}/2\})\boxtimes (P_{\Grad(\mU_2),\Grad(f_2)}\{- \Ind_{\mU_2,f_2}/2\})
    \end{tikzcd}\]
    where the vertical arrows come from \eqref{thomsebcritchart}.
\end{proposition}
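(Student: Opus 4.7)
The strategy is to unfold the definition
\[
P_{\mU,f} = i^\ast\{d_\mU/2\}\,\phi_f^{mon,tot}\,\pi_\mU^\ast \un_{pt},
\]
where $\pi_\mU:\mU\to\Spec(k)$ is the structural morphism, and push $p_!\eta^\ast$ through each of the three non-trivial operations in turn, accumulating one commutation isomorphism at each step. Concretely: first I pull the Tate twist $\{d_\mU/2\}$ across $p_!\eta^\ast$ (both are monoidal, so this is immediate). Second, since $\phi_f^{mon,tot}\un_\mU$ is supported on $\mR=\Crit(f)$ (the vanishing cycles vanish off the critical locus), Lemma \ref{lemhyploclosed} yields an isomorphism $\Grad(i)^\ast p_{\mU,!}\eta_\mU^\ast \simeq p_{\mR,!}\eta_\mR^\ast i^\ast$ on this object; here I also use Lemma \ref{lemcritgrad} to identify $\Grad(\mR) = \Crit(\Grad(f))$ so that $\Grad(i)$ is the correct closed immersion. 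Third, Theorem \ref{commutspechyploc} gives that $p_!\eta^\ast$ commutes with the specialization system $\phi^{mon,tot}$, producing $\phi_{\Grad(f)}^{mon,tot}$ on the other side. Fourth, Proposition \ref{propsmoothBB} applied to the smooth morphism $\pi_\mU$ gives
\[
p_{\mU,!}\eta_\mU^\ast \pi_\mU^\ast \simeq \Sigma^{-\bL_\mU^{<0}}\,\pi_{\Grad(\mU)}^\ast,
\]
and orientability identifies the Thom twist $\Sigma^{-\bL_\mU^{<0}}$ with the Tate twist $\{-d^+_\mU\}$.

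Assembling the four steps yields an isomorphism $p_!\eta^\ast P_{\mU,f} \simeq \{d_\mU/2 - d^+_\mU\}\,\Grad(i)^\ast\phi_{\Grad(f)}^{mon,tot}\un_{\Grad(\mU)}$. A short bookkeeping check then identifies this with the desired right-hand side: Lemma \ref{lemcotanbun} gives $d_{\Grad(\mU)} = d_\mU - d^+_\mU - d^-_\mU$, and by definition $\Ind_{\mU,f} = d^+_\mU - d^-_\mU$, so
\[
\tfrac{d_\mU}{2} - d^+_\mU = \tfrac{d_{\Grad(\mU)}}{2} - \tfrac{\Ind_{\mU,f}}{2},
\]
and the twists match. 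No choices are made along the way, so the resulting isomorphism is canonical.

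For compatibility with products, the square to verify decomposes, along the four-step construction of the isomorphism, into four smaller commutative squares, each of which expresses that one of the ingredients is itself compatible with exterior products: monoidality of Tate and Thom twists, monoidality of the closed-pullback isomorphism in Lemma \ref{lemhyploclosed}, compatibility of hyperbolic localization with $\boxtimes$ (Lemma \ref{lemhyplocprod}) together with the Thom-Sebastiani isomorphism for $\phi^{mon,tot}$ (Theorem \ref{theothomseb}, which is an isomorphism of monoidal specialization systems), and compatibility of the smooth pullback isomorphism in Proposition \ref{propsmoothBB} with products. The sign/dimension counts match using \eqref{Indprod} and additivity of virtual dimensions $d^\pm_{\mU_1\times\mU_2} = d^\pm_{\mU_1}\boxplus d^\pm_{\mU_2}$. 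The step I expect to require the most care is the third: although Theorem \ref{commutspechyploc} asserts that hyperbolic localization is compatible with all specialization systems, and Theorem \ref{theothomseb} asserts that Thom–Sebastiani is an isomorphism of specialization systems, one must still pin down that the two compatibilities interact strictly, i.e.\ that the diagram mixing the exchange of $(p_!\eta^\ast,\phi^{mon,tot})$ with $\boxtimes_{mon}$ commutes on the nose. This follows from the framework set up in Section~\ref{secthyploc}, where all exchange isomorphisms are constructed to be mutually compatible (the schema of Remark \ref{remarkstrat}), but is the only non-formal verification in the argument.
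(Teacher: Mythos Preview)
Your proposal is correct and follows essentially the same approach as the paper: the isomorphism is built by successively commuting $p_!\eta^\ast$ with $i^\ast$ (Lemma~\ref{lemhyploclosed}, using support on $\mR$), with $\phi^{mon,tot}_f$ (Theorem~\ref{commutspechyploc}), and with $\pi_\mU^\ast$ (Proposition~\ref{propsmoothBB}), followed by the same dimension bookkeeping, and the product compatibility is verified step-by-step exactly as you outline. Your concern about the third step is addressed in the paper precisely as you anticipate: since Thom--Sebastiani is an isomorphism of specialization systems and the commutation of hyperbolic localization with a specialization system is built entirely from its exchange morphisms, the two compatibilities interact strictly.
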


\begin{proof}
We consider the following sequence of isomorphisms:
   \begin{align}
    p_!\eta^\ast P_{\mU,f}:=&(p_\mR)_!(\eta_\mR)^\ast i^\ast\{d_\mU/2\}\phi_f^{mon,tot}\un_\mU\nn\\
        \simeq &\Grad(i)^\ast\{d_\mU/2\}(p_\mU)_!(\eta_\mU)^\ast \phi_f^{mon,tot}\un_\mU\nn\\
        \simeq &\Grad(i)^\ast\{(d_{\Grad(\mU)}+d^+_\mU+d^-_\mU)/2\}\phi_{\Grad(f)}^{mon,tot}(p_\mU)_!(\eta_\mU)^\ast\un_\mU\nn\\
        \simeq &\Grad(i)^\ast\{(d_{\Grad(\mU)}-d^+_\mU+d^-_\mU)/2\}\phi_{\Grad(f)}^{mon,tot}\un_{\Grad(\mU)}\nn\\
    =:&P_{\Grad(\mU),\Grad(f)}\{-\Ind_{\mU,f}/2\}
    \end{align}
    where the second line follows from Lemma \ref{lemhyploclosed} applied to the closed immersion $i:\mR\to\mU$, using the fact that $\phi_f^{mon,tot}\un_\mU$ is supported on $\mR$, the third line from Theorem \ref{commutspechyploc} applied to the specialization system $\phi^{mon,tot}$, the fourth line from Proposition \ref{propsmoothBB}, and the last line from the definition, and the formula \eqref{inddplus}.\medskip

    Consider two critical loci $(\mR_i,\mU_i,f_i,i_i)$ for $i=1,2$. From the Lemma \ref{lemhyploclosed}, the commutation between the restriction to the critical locus and hyperbolic localization is compatible with products. From Theorem \ref{theothomseb}, the Thom-Sebastiani isomorphism is an isomorphism of specialization system, hence commutes with the morphism of commutation between monodromic vanishing cycles and hyperbolic localization, obtained from the specialization system exchange morphisms. From Proposition \ref{propsmoothBB}, the Białynicki-Birula isomorphism for smooth stacks is compatible with products. One obtains then that the above square is commutative. 
\end{proof}

\subsubsection{Hyperbolic localization and smooth restrictions}

Consider a smooth restriction of critical loci $\phi:(\mR',\mU',f',i')\to(\mR,\mU,f,i)$. From Proposition \ref{propsmoothBB}, there is a natural isomorphism $(p_{\mR'})_!(\eta_{\mR'})^\ast(\tilde{\phi})^\ast\simeq \Grad(\tilde{\phi})^\ast\{-2d^+_\phi\}(p_\mR')_!(\eta_\mR')^\ast$, which can be rewritten from definition \eqref{inddplus}:
\begin{align}\label{natsmoooth2}
    (p_{\mR'})_!(\eta_{\mR'})^\ast\tilde{\phi}^\ast\{d_\phi/2\}\simeq \Grad(\tilde{\phi})^\ast\{d^0_\phi/2\}\{-\Ind_\phi/2\}(p_\mR)_!(\eta_\mR)^\ast
\end{align}
We have the following compatibility lemma:

\begin{lemma}\label{lemsmoothresthyploc}
    The hyperbolic localization isomorphism of Proposition \ref{prophyploccrit} is compatible with smooth restriction of critical loci, namely for $\phi:(\mR',\mU',f',i')\to(\mR,\mU,f,i)$ a smooth restriction, the following diagram of isomorphisms is commutative:
    \[\begin{tikzcd}
 \Grad(\tilde{\phi})^\ast\{d^0_\phi/2\}\{-\Ind_\phi/2\}p_!\eta^\ast P_{\mU,f} \arrow[r,"\simeq"] &\Grad(\tilde{\phi})^\ast\{d^0_\phi/2\}\{-\Ind_\phi/2\} P_{\Grad(\mU),\Grad(f)}\{- \Ind_{\mU,f}/2\}\arrow[d,"\simeq"]\\
(p')_!(\eta')^\ast \tilde{\phi}^\ast \{d_\phi/2\}P_{\mU,f}\arrow[u,swap,"\simeq"]\arrow[d,"\simeq"] & P_{\Grad(\mU'),\Grad(f')}\{-\Ind_{\mU,f}/2\}\{-\Ind_\phi/2\}\\
(p')_!(\eta')^\ast P_{\mU',f'}\arrow[r,"\simeq"] & P_{\Grad(\mU'),\Grad(f')}\{-\Ind_{\mU',f'}/2\}\arrow[u,swap,"\simeq"]
    \end{tikzcd}\]
    where the horizontal arrows come from Proposition \ref{prophyploccrit}, and the vertical arrows from \eqref{natsmooth} and \eqref{natsmoooth2}.
\end{lemma}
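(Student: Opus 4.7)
The plan is to apply the strategy of Remark \ref{remarkstrat}. Factor the vanishing cycles sheaf as $P_{\mU,f} = a_1 a_2 a_3 \un_{\Spec(k)}$ with
\begin{align*}
a_1 = i^\ast\{d_\mU/2\}, \qquad a_2 = \phi_f^{mon,tot}, \qquad a_3 = (\mU\to\Spec(k))^\ast,
\end{align*}
and similarly for $(\mU',f')$. Both the natural smooth-restriction isomorphism \eqref{natsmooth} and the hyperbolic localization isomorphism of Proposition \ref{prophyploccrit} are assembled as a three-stage exchange, commuting $\tilde\phi^\ast\{d_\phi/2\}$ (respectively $p_!\eta^\ast$) successively through $a_1$, then $a_2$, then $a_3$. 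By the standard hexagon argument recalled in Remark \ref{remarkstrat}, the commutativity of the outer diagram reduces to checking that at each stage the smooth-pullback exchange and the hyperbolic-localization exchange are compatible.

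I would then verify these three compatibilities in turn. At stage $a_1$, the closed-immersion exchange of Lemma \ref{lemhyploclosed} is explicitly stated there to be compatible with smooth base change, which is exactly what is needed. At stage $a_3$, the smooth-pullback exchange is the obvious functoriality of $\ast$-pullback on $\un$, and the hyperbolic-localization exchange is Proposition \ref{propsmoothBB}; their compatibility is the statement in Proposition \ref{propsmoothBB} that the isomorphism is compatible with composition of smooth morphisms, applied here to $\mU'\to\mU\to\Spec(k)$. The bookkeeping of Thom twists works out: the discrepancy $d_{\mU'}/2-d_\mU/2=d_\phi/2$ absorbed into $a_1$ matches the twist in \eqref{natsmooth}, and the index additivity \eqref{Indsm} combined with the $\{d_\phi^0/2\}$ twist from Proposition \ref{propsmoothBB} reproduces the twist $\{-\Ind_{\mU',f'}/2\}$ on the right column.

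The main obstacle is stage $a_2$, where both exchanges have substantive content. The smooth-pullback exchange here is the invertible exchange $\tilde\phi^\ast\phi_f^{mon,tot}\simeq\phi_{f'}^{mon,tot}\phi^\ast$ coming from the specialization-system structure of $\phi^{mon,tot}$ together with smoothness of $\phi$, while the hyperbolic-localization exchange is Theorem \ref{commutspechyploc}, whose isomorphism is itself constructed, via the Braden--Drinfeld--Gaitsgory adjunction of Theorem \ref{theobradenstacks}, as the natural specialization-system exchange transported along the unit and counit. The compatibility of these two specialization-system exchanges is precisely what Lemma \ref{lemspecsmoothhyploc} establishes, applied to the specialization system $\sps=\phi^{mon,tot}$.

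Putting these three stage-wise compatibilities together via Remark \ref{remarkstrat} yields the commutativity of the diagram. Since both \eqref{natsmooth} and \eqref{natsmoooth2} are built entirely out of functorial exchange morphisms, no further coherence check is needed beyond these three, and the argument produces the claimed commutative diagram of isomorphisms in $\mA_{mon,c}(\Grad(\mR'))$.
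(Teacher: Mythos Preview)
Your proposal is correct and follows essentially the same approach as the paper: both decompose the compatibility check into the three stages of Remark~\ref{remarkstrat}, invoking Proposition~\ref{propsmoothBB} (composition) for the smooth-to-point stage, Lemma~\ref{lemspecsmoothhyploc} for the vanishing-cycles stage, and the smooth-base-change compatibility stated in Lemma~\ref{lemhyploclosed} for the closed-immersion stage. Your additional bookkeeping of Tate twists via \eqref{Indsm} is left implicit in the paper but is correct.
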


\begin{proof}
    From Proposition \ref{propsmoothBB}, the isomorphism of commutation between hyperbolic localization and smooth pullback is compatible with composition, hence the following diagram is commutative:
    \[\begin{tikzcd}[sep=small]
 \Grad(\phi)^\ast\{-2 d^+_\phi\} p_!\eta^\ast \un_\mU \arrow[r,"\simeq"] &\Grad(\phi)^\ast\{-2 d^+_\phi\}\un_{\Grad(\mU)}\{-2d^+_\mU\}\arrow[d,"\simeq"]\\
(p')_!(\eta')^\ast \phi^\ast\un_\mU\arrow[u,swap,"\simeq"]\arrow[d,"\simeq"] & \un_{\Grad(\mU')}\{-2d^+_\mU\}\{-2d^+_\phi\}\\
(p')_!(\eta')^\ast \un_{\mU'}\arrow[r,"\simeq"] & \un_{\Grad(\mU')}\{-2d^+_{\mU'}\}\arrow[u,swap,"\simeq"]
    \end{tikzcd}\]
    As $\phi^{mon,tot}$ is a specialization system, the morphisms of commutation between $\phi^{mon,tot}$, hyperbolic localization and smooth pullbacks are compatible from Lemma \ref{lemspecsmoothhyploc}. From Lemma \ref{lemhyploclosed}, the isomorphisms of commutation between smooth pullbacks, restriction to the critical locus, and hyperbolic localization, are compatible. Hence the result follows. 
\end{proof}

\subsubsection{Hyperbolic localization and quadratic bundles}

Consider a critical locus $(\mR,\mU,f,i)$, a quadratic bundle $(\mE,q)$ on $\mU$, and the critical locus $(\mR,\bV_\mU(\mE),f\circ \pi+q,s\circ i)$ obtained by stabilization by $(\mE,q)$. We have that $\iota^\ast\mE$ (resp $\eta^\ast\mE$) is naturally $\bZ$-graded (resp. $\bZ$-filtered), as recalled from \cite[Section 1]{HalpernLeistner2020DerivedA} in Section \ref{sectsmoothhyploc}. From the non-degenerate quadratic form $q$, one obtains a canonical splitting of the $\bZ$-filtration of $\eta^\ast\mE$, and then an isomorphism $\eta^\ast(\mE,q)^I\simeq p^\ast \iota^\ast(\mE,q)^I$. We denote:
\begin{align}
    (\Grad(\mE)^I,\Grad(q)^I)&:=\iota^\ast(\mE,q)^I\nn\\
    (\Filt(\mE)^I,\Filt(q)^I)&:=\eta^\ast(\mE,q)^I\simeq p^\ast (\Grad(\mE)^I,\Grad(q)^I)
\end{align}
In particular $\Grad(q)^0,\Filt(q)^0$ and $\Filt(q)$ are nondegenerate, \ie $(\Grad(\mE)^0,\Grad(q)^0),(\Filt(\mE)^0,\Filt(q)^0)$ and $(\Filt(\mE)^0,\Filt(q)^0)$ are quadratic bundles. A simple computation gives:
\begin{align}
    \Grad(\bV_\mX(\mE))&\simeq\bV_{\Grad(\mX)}(\Grad(\mE)^0)\nn\\
     \Filt(\bV_\mX(\mE))&\simeq\bV_{\Grad(\mX)}(\Filt(\mE)^{\leq 0})
\end{align}
We obtain that the $\Grad$ of $(\mR,\bV_\mU(\mE),f\circ \pi+q,s\circ i)$ is the critical chart:
\begin{align}
    (\Grad(\mR),\bV_{\Grad(\mU)}(\Grad(\mE)^0),\Grad(f)\circ\Grad(\pi)^0+\Grad(q)^0,\Grad(s)^0\circ\Grad(i))
\end{align} 
obtained by stabilization from $(\Grad(\mR),\Grad(\mU),\Grad(f),\Grad(i))$ by the quadratic bundle $(\Grad(\mE)^0,\Grad(q)^0)$. Moreover, $\Grad(q)$ (resp. $\Filt(q))$) induces a perfect pairing between $\Grad(\mE)^{<0}$ and $\Grad(\mE)^{>0}$ (resp. $\Filt(\mE)^{<0}$ and $\Filt(\mE)^{>0}$), in particular $d^-_\mE=d^+_\mE$, hence:
\begin{align}\label{indstab}
    \Ind_{\bV_\mU(\mE)}=\Ind_\mU+\Ind_\mE=\Ind_\mU
\end{align}
And
\begin{align}
    \det(\Grad(\mE))\simeq \det(\Grad(\mE)^0)\otimes \det(\Grad(\mE)^{<0})\otimes \det(\Grad(\mE)^{>0})\simeq \det(\Grad(\mE)^0)
\end{align}
where the last isomorphism is induced by the perfect pairing. It gives a canonical identification between orientations of $(\Grad(\mE),\Grad(q))$ and $(\Grad(\mE)^0,\Grad(q)^0))$, \ie an isomorphism:
\begin{align}\label{isomquadform}
   P_{(\Grad(\mE),\Grad(q))}\simeq P_{(\Grad(\mE)^0,\Grad(q)^0)}
\end{align}
and furthermore an isomorphism:
\begin{align}\label{isomhyplocorient}
    p_!\eta^\ast (-\otimes_{\bZ/2\bZ}P_{\mE,q})\simeq p_!(\eta^\ast\otimes_{\bZ/2\bZ}P_{\Filt(\mE),\Filt(q)}\simeq (p_!\eta^\ast)\otimes_{\bZ/2\bZ}P_{(\Grad(\mE),\Grad(q))}\simeq (p_!\eta^\ast)\otimes_{\bZ/2\bZ}P_{(\Grad(\mE)^0,\Grad(q)^0)}
\end{align}

The compatibility between hyperbolic localization and stabilization by quadratic forms is obtained as follows:

\begin{lemma}\label{lemcompathyplocstab}
    Given a critical locus $(\mR,\mU,f,i)$ and a quadratic bundle $(\mE,q)$ on $\mU$, the following square of isomorphisms is commutative:
    \[\begin{tikzcd}
        \tilde{p}_!\tilde{\eta}^\ast P_{\bV_\mU(\mE),f\circ\pi+q}\arrow[d,"\simeq"]\arrow[r,"\simeq"] & P_{\Grad(\bV_\mU(\mE)),\Grad(f\circ\pi+q)}\{-\Ind_{\bV_\mU(\mE)}/2\}\arrow[d,"\simeq"]\\
        \tilde{p}_!\tilde{\eta}^\ast (P_{\mU,f}\otimes_{\bZ/2\bZ}P_{\mE,q}|_{\mR^{red}})\arrow[r,"\simeq"] & P_{\Grad(\mU),\Grad(f)}\{-\Ind_\mU/2\}\otimes_{\bZ/2\bZ}P_{(\Grad(\mE)^0,\Grad(q)^0)}|_{\Grad(\mR)^{red}}
    \end{tikzcd}\]
    where the vertical arrows are obtained by the isomorphisms of Section \ref{sectstabquad}, and the horizontal arrows by the isomorphisms of Proposition \ref{prophyploccrit} and the isomorphism \eqref{isomhyplocorient}.
\end{lemma}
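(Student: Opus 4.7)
The plan is to decompose both paths around the square into their elementary constituents, then verify compatibility step by step, ultimately reducing to the already-established compatibility of hyperbolic localization with Thom-Sebastiani. Recall that the horizontal isomorphism of Proposition \ref{prophyploccrit} applied to the stabilized critical locus $(\mR,\bV_\mU(\mE),f\circ\pi+q,s\circ i)$ factors as: base change of hyperbolic localization past $(s\circ i)^\ast$ (Lemma \ref{lemhyploclosed}), commutation with $\phi^{mon,tot}_{f\circ\pi+q}$ as a specialization system (Theorem \ref{commutspechyploc}), and Białynicki-Birula for $\bV_\mU(\mE)\to\Spec(k)$ smooth (Proposition \ref{propsmoothBB}). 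The vertical stabilization isomorphism of Section \ref{sectstabquad} comes from Lemma \ref{lemactquadbun}, whose construction is itself the composition of the identification \eqref{isomorientbun} with Thom-Sebastiani applied to $f\boxplus q$, followed by base change along $(\pi\times\Id)\circ s$. The splitting of the filtration $\Filt(\mE)$ induced by $q$ identifies the pullbacks of orientations, giving \eqref{isomhyplocorient}; combined with $\Ind_{\bV_\mU(\mE)}=\Ind_\mU$ from \eqref{indstab}, this ensures that Tate twists on both sides of the square already match up.

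First, I would reduce to the case of a trivial quadratic bundle. Both the hyperbolic localization isomorphism (Lemma \ref{lemsmoothresthyploc}) and the stabilization isomorphism (Section \ref{sectstabquad}) are compatible with smooth restrictions in $\mU$, so by pulling back along a smooth cover of $\mU$ trivializing $(\mE,q)$ and using conservativity of smooth pullbacks on $\mA_{mon,c}$, it suffices to check commutativity when $(\mE,q)\simeq(\mathcal{O}_\mU^n,\sum x_i^2)$ with its canonical orientation. In this case the bundle $P_{(\mE,q)}$ is canonically trivial and, by Lemma \ref{lemactquadbun}, the vertical stabilization arrow becomes simply the Thom-Sebastiani isomorphism $P_{\mU\times\bA^n_k,f\boxplus\sum x_i^2}\simeq P_{\mU,f}\boxtimes P_{\bA^n_k,\sum x_i^2}$ restricted to the zero section, while \eqref{isomhyplocorient} reduces to the canonical trivialization of $P_{(\Grad(\mE)^0,\Grad(q)^0)}$.

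Having reduced to the trivial case, the square becomes an instance of compatibility between hyperbolic localization and the product construction of Section \ref{sectproductcritloc}, applied to the two critical loci $(\mR,\mU,f,i)$ and $(\{0\},\bA^n_k,\sum x_i^2,0)$. But this is exactly the commutativity statement in Proposition \ref{prophyploccrit}: the horizontal arrow for the product factors through the exterior-product decomposition, using the compatibility of $i^\ast$ with products (Lemma \ref{lemhyploclosed} monoidally), the monoidality of the specialization-system exchange morphism (Theorem \ref{theothomseb}), and the compatibility of Białynicki-Birula with exterior products (Proposition \ref{propsmoothBB} combined with Lemma \ref{lemhyplocprod}). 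The Białynicki-Birula factor for $\bA^n_k$ contributes the Tate twist $\{-n/2\}=\{-\Ind_{\bV_\mU(\mE)}/2+\Ind_\mU/2\}$, exactly accounting for the shift discrepancy.

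The main technical obstacle is the bookkeeping of orientation $\bZ/2\bZ$-bundles: one must verify that the canonical identifications $P_{\Filt(\mE),\Filt(q)}\simeq p^\ast P_{(\Grad(\mE)^0,\Grad(q)^0)}$ (used in \eqref{isomhyplocorient}) interact coherently with the Thom-Sebastiani isomorphism of specialization systems when one factor is a quadratic bundle. The key observation making this work is that both the pairing of $\Grad(\mE)^{<0}$ with $\Grad(\mE)^{>0}$ under $\Grad(q)$ (used to define \eqref{isomquadform}) and the gluing of $P_{\bA^n_k,\sum x_i^2}$ with its $\Grad$-version use the same data: the diagonal embedding of the trivial quadratic form under the involution $r$. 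Once the trivial case is verified and the $\bZ/2\bZ$-bookkeeping is checked by descent from a trivializing cover (where it is tautological), the diagram commutes globally by faithfulness of smooth pullback.
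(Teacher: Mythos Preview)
Your reduction strategy has a genuine gap at the very first step. You propose to pull back along a smooth cover of $\mU$ trivializing $(\mE,q)$, then invoke conservativity. But the square you are checking lives over $\Grad(\mR)$, so conservativity requires $\Grad(\tilde\phi):\Grad(\mR')\to\Grad(\mR)$ to be surjective, and an arbitrary smooth cover of $\mU$ need not satisfy this. The paper handles this by invoking Halpern-Leistner's Lemma \ref{covergrad} to reduce to $\mU=[U/\bG_m]$ with $U$ affine, so that the $\Grad$ of the cover remains surjective.

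Once you make that correction, a second and more serious issue appears. On $[U/\bG_m]$ the bundle $(\mE,q)$ becomes a $\bG_m$-\emph{equivariant} quadratic bundle, and a $\bG_m$-equivariant trivialization is not of the form $\sum x_i^2$ with all $x_i$ of weight zero: the nonzero-weight pieces pair hyperbolically, so after an \'etale extension one gets $\sum_i x_i^2+\sum_j y_jz_j$ with $x_i$ of weight $0$ and $y_j,z_j$ of opposite nonzero weights. Your product argument via Proposition \ref{prophyploccrit} handles the $\sum x_i^2$ part (where hyperbolic localization on the second factor is trivial), but it does \emph{not} touch the hyperbolic piece. For that piece the crux is to show that the hyperbolic localization isomorphism $(p^+)_!(\eta^+)^\ast P_{\bA^2,xy}\simeq\un$ (with $x,y$ of weights $\pm 1$) agrees on the nose with the isomorphism \eqref{isomsquareroot} used to \emph{define} the square root of the Tate twist. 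This is not bookkeeping: the two isomorphisms are constructed by entirely different routes (one via Braden's theorem and Bia\l ynicki-Birula, the other via dimensional reduction along $\eta:\bA^1\hookrightarrow(xy)^{-1}(0)$), and the paper performs an explicit diagram chase to match them. Your final paragraph gestures at the orientation bundles but dismisses the comparison as ``tautological after trivialization''; it is precisely here that the actual content of the lemma lies.
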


\begin{proof}
    Using Halpern-Leistner's Lemma \ref{covergrad}, one can cover $\mU$ by smooth maps $\phi:[U/\bG_m]\to\mU$ by quotients of affine scheme with $\bG_m$-action, such that $\Grad([U/\bG_m])\to\Grad(\mU)$ (and furthermore, $\Grad([R/\bG_m])\to\Grad(\mR)$) are jointly surjective. Then using compatibility of hyperbolic localization and the action of quadratic bundle with smooth pullbacks, one is reduced to the case where $\mU=[U/\bG_m]$, $U=\Spec(A)$, where $A$ is a $\bZ$-graded $k$-algebra $A$, and we have to work in the neighborhood of a point $u$ in $U$ which is $\bG_m$-fixed. Up to Zariski-local restriction, one can assume that $\mE$ is a free module generated by $\bZ$-graded elements. Now, $q\in \mE\otimes\mE$ induces a non-degenerate pairing between the positive and negative part, one can take a basis $e_i$ of the positive part, and the dual basis $f_j$ of the negative part, such that $q=\sum e_i\otimes f_i+q^0$, with $q^0$ of degree $0$. Up to taking an étale extension of $A^0$, one can diagonalize $q^0$, such that $q=\sum e_i\otimes f_i+\sum g_j\otimes g_j$.\medskip
    
    Then, we can assume that $\mU=[U/\bG_m]$ and that,  considering the smooth map $\phi':U\to[U/\bG_m]$:
    \begin{align}
        (\phi')^\ast(\mE,q)\simeq (\mathcal{O}_U^{n+2m},\sum_{i=1}^nx_i^2+\sum_{j=1}^my_jz_j)
    \end{align}
    where the $x_i$ (resp. $y_j$, resp. $z_j$) are homogeneous of weight $0$ (resp. $>0$, resp. $<0$). We have that $\Grad([U/\bG_m])=\bigsqcup_{\lambda\in\bZ}[U^0_\lambda/\bG_m]$. Up to replacing the $\bG_m$-action by some power, we just have to check that the pullback of the square of the Lemma along $R^0\to [R^0/\bG_m]$ commutes. Consider the following commutative diagram:
    \[\begin{tikzcd}[column sep=huge]
        U^0\times\bA^n\arrow[d] & U^+\times\bA^{n+m}\arrow[l,swap,"p:=p_U\times p_{\bA^{n+2m}}"]\arrow[r,"\eta:=\eta_U\times \eta_{\bA^{n+2m}}"]\arrow[d] & U\times\bA^{n+2m}\arrow[d]\nn\\
        U^0 & U^+\arrow[l,swap,"p_U"]\arrow[r,"\eta_U"] & U
    \end{tikzcd}\]
    Then, denoting $f':U\to\bA^1$, it follows from the last statement of Lemma \ref{lemactquadbun} that the pullback of the square of the Lemma to $R^0$ is the square of isomorphisms:
    \[\begin{tikzcd}[column sep=small]
        \tilde{p}_!\tilde{\eta}^\ast P_{U\times\bA^{n+2m},f'\boxplus \sum_ix_i^2+\sum_j y_jz_j}\arrow[r,"\simeq"]\arrow[d,"\simeq"] & P_{U^0\times\bA^{n},f'\boxplus \sum_i x_i^2}\{-\Ind_{U\times \bA^{n+2m}}/2\}\arrow[d,"\simeq"]\\
        ((\tilde{p}_U)_!(\tilde{\eta}_U)^\ast P_{U,f'})\boxtimes((\tilde{p}_{\bA^{n+2m}})_!(\tilde{\eta}_{\bA^{n+2m}})^\ast P_{\bA^{n+2m},\sum_ix_i^2+\sum_jy_jz_j})\arrow[r,"\simeq"]\arrow[d,"\simeq"] & P_{U^0,(f')^0}\{-\Ind_U/2\}\boxtimes P_{\bA^n,\sum_ix_i^2}\arrow[d,"\simeq"]\\
        ((\tilde{p_U})_!(\tilde{\eta_U})^\ast P_{U,f'}\arrow[r,"\simeq"] & P_{U^0,(f')^0}\{-\Ind_U/2\}
    \end{tikzcd}\]
    where the vertical arrows are the Thom-Sebastiani isomorphisms of subsection \ref{sectproductcritloc}, and the horizontal isomorphisms are obtained as those of Proposition \ref{prophyploccrit}. Then, from Proposition \ref{prophyploccrit}, the upper square commutes, and it suffices to prove that the following square commutes:
    \[\begin{tikzcd}
        (\tilde{p}_{\bA^{n+2m}})_!(\tilde{\eta}_{\bA^{n+2m}})^\ast P_{\bA^{n+2m},\sum_ix_i^2+\sum_jy_jz_j}\arrow[r,"\simeq"]\arrow[d,"\simeq"] &  P_{\bA^n,\sum_{i=1}^nx_i^2}\arrow[d,"\simeq"]\\
        \un_R\arrow[r,"="] & \un
    \end{tikzcd}\]
    where the upper horizontal arrow is obtained by hyperbolic localization, and the vertical arrows by Lemma \ref{lemsqroot}.
    Using Thom-Sebastiani again, it suffices to prove that the hyperbolic localization isomorphism $P_{\bA^2,xy}\simeq \un$ is the isomorphism \ref{isomsquareroot}. Consider the commutative diagram with Cartesian square:
    \[\begin{tikzcd}
        & \{(0,0)\}\arrow[r,"="]\arrow[dl,swap,"="]\arrow[d,"\hat{i}"] & \{(0,0)\}\arrow[d,swap,"i"]\\
        \{(0,0)\}\arrow[d,"="] & \bA^1\times\{0\}\arrow[d,"="]\arrow[l,swap,"p_0"]\arrow[r,"\eta_0"] & (xy)^{-1}(0)\arrow[d]\arrow[u,shift left=-2,swap, "\rho"]\\
        \{(0,0)\}\arrow[dr,"0"] & \bA^1\times\{0\}\arrow[d,"0"]\arrow[l,swap,"p"]\arrow[r,"\eta"] & \bA^2\arrow[dl,"xy"]\\
        & \bA^1 &
    \end{tikzcd}\]
    By definition, the isomorphism of commutation with hyperbolic localization of Proposition \ref{prophyploccrit}
    is given by (noticing that $0$ is the only singular value of $xy$):
    \begin{align}\label{isomhyploca2}
        i^\ast\phi_{xy}^{mon}\un_{\bA^2}\simeq (p_0)_!\hat{i}_!\hat{i}^!(\eta_0)^\ast\phi_{xy}^{mon}\un_{\bA^2}\overset{\simeq}{\leftarrow} (p_0)_!(\eta_0)^\ast\phi_{xy}^{mon}\un_{\bA^2}\overset{\simeq}{\to}\phi_0^{mon}p_!\eta^\ast\un_{\bA^2}\simeq \un[-2](-1)
    \end{align}
    Consider the following diagram:
    \[\begin{tikzcd}[column sep=small]
        i^\ast\phi_{xy}^{mon}\arrow[r,"\simeq"]\arrow[dr,"\simeq"]& \rho_!(\eta_0)_!\hat{i}_!\hat{i}^!(\eta_0)^\ast\phi_{xy}^{mon}\un_{\bA^2}\arrow[d,"\simeq"] & \rho_!(\eta_0)_!(\eta_0)^\ast\phi_{xy}^{mon}\un_{\bA^2}\arrow[l,swap,"\simeq"]\arrow[r] & \rho_!(\eta_0)_!\phi_{0}^{mon}\eta^\ast\un_{\bA^2}\arrow[r,"\simeq"]\arrow[d,"\simeq"]\arrow[dr,"\simeq"] &  \rho_!\phi_{0}^{mon}\eta_!\eta^\ast\un_{\bA^2}\arrow[d,"\simeq"]\\
        &\rho_!i_!i^\ast\phi_{xy}^{mon}\un_{\bA^2} & \rho_!\phi_{xy}^{mon}\un_{\bA^2}\arrow[l,swap,"\simeq"]\arrow[u,"\simeq"]\arrow[r] & \rho_!\phi_0^{mon}\eta_!\eta^\ast\un_{\bA^2} & \un[-2](-1)
    \end{tikzcd}\]
    Using $p_0=\rho\circ\eta_0$, the upper path corresponds to the hyperbolic localization isomorphism \eqref{isomhyploca2}; whence the bottom path corresponds to \ref{isomsquareroot}. Using $i=\hat{i}\circ\eta_0$, the left triangle commutes by functoriality and the left square commutes by compatibility of adjunctions morphisms with composition; the right triengle commutes by the compatiblity of the morphisms of specialization system with adjunctions, and the right square commutes because $\phi_0^{mon}=Id$. We obtained then the desired equality of isomorphisms. Notice that we could have instead defined directly \ref{isomsquareroot} using hyperbolic localization, but the definition using dimensional reduction is the usual one, closer to what is done in \cite[Exposé XV]{SGAVII}.
\end{proof}

\subsection{Existence and comparison of critical charts}

\subsubsection{D-critical version}\label{dcritchart}

A $d$-critical structure on a stack $\mX$ gives by definition a smooth covering of $\mX$ by critical charts $(R,U,f,i)$, where $R,U$ are schemes, and the results of \cite[Section 2.3]{Joyce2013ACM} gives a way to compare such charts, using stabilization by quadratic forms, still using critical charts given by schemes. However, the hyperbolic localization diagram for a scheme is trivial, so such 'd-critical atlas' is not suitable to study hyperbolic localization. One must be able to cover $\mX$ by critical charts $(\mR,\mU,f,i)$ such that the $\Grad(\mR)\to\Grad(\mX)$ are jointly surjective, and to be able to compare them in a similar way. Thanks to Halpern-Leistner's Lemma \ref{covergrad}, it suffices to build $\bG_m$-equivariant critical charts, and to compare them in a $\bG_m$-equivariant way. Torus-equivariant analogues of \cite[Section 2.3]{Joyce2013ACM} were partially given in \cite[Section 2.6]{Joyce2013ACM}, we just complete them here by following closely Joyce's arguments:

\begin{proposition}\label{propcompareqchart}(Joyce, \cite[Section 2.3, Section 2.6]{Joyce2013ACM})
    Consider an algebraic space $X$ with a torus action $\mu:T\times X\to X$, with a $T$-equivariant $d$-critical structure $s$ of weight $\chi:T\to\bG_m$ (\ie such that $\mu^\star s=\chi\cdot(p_2)^\star s\in H^0(\mathcal{S}^0_{T\times X})$).
    \begin{enumerate}
        \item[$i)$] $X$ is covered étale-locally by $T$-equivariant affine critical charts (\ie critical charts $(R,U,f,i)$ such that $R,U$ are affine with $T$-action, $R\to X$ is a $T$-equivariant open immersion, $i:R\to U$ is a $T$-equivariant closed immersion, and $f:U\to\bA^1$ is $T$-equivariant of weight $\chi$).
        \item[$ii)$] Given $(R_i,U_i,f_i,i_i)$, $i=1,2$ two étale affine $T$-equivariant critical charts and a $T$-fixed point $(x_1,x_2)\in R_1\times_X R_2$, there exist a diagram of $T$-equivariant critical charts:
        \[\begin{tikzcd}[column sep=tiny]
      &  & (R,U,f,i)\arrow[dl]\arrow[dr] &  &   \\
     (R'_1,U'_1,f'_1,i'_1)\arrow[d]\arrow[dotted,r] & (R'_1,U'_1\times E_1,f'_1\boxplus q_1,i'_1) &  &(R'_2,U'_2\times E_2,f'_2\boxplus q_2,i'_2\times0)&  (R'_2,U'_2,f'_2,i'_2)\arrow[d]\arrow[dotted,l] \\
     (R_1,U_1,f_1,i_1)&&&&(R_2,U_2,f_2,i_2)
\end{tikzcd}\]
with a $T$-fixed point $x\in R$ over $(x_1,x_2)$, where the dotted arrows corresponds to stabilization by $T$-equivariant quadratic forms (\ie, $E_i$ is an affine space with linear $T$-action, and $q_i$ is a $T$-equivariant non-degenerate quadratic form of weight $\chi$), and the plain arrows are $T$-equivariant étale morphisms of critical charts
    \end{enumerate}   
\end{proposition}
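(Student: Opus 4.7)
The plan is to adapt Joyce's non-equivariant constructions from \cite[Sections 2.3, 2.6]{Joyce2013ACM} to the torus-equivariant setting by exploiting throughout the linear reductivity of $T$: every step will use that $T$-representations split into isotypic components, allowing us to ``equivariantize'' Joyce's arguments component by component. Since both claims are étale local on $X$, and by translating along the $T$-action, I would reduce to producing and comparing charts in a $T$-equivariant étale neighborhood of a $T$-fixed point.

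For part $(i)$, starting from any (non-equivariant) affine critical chart $(R^\circ, U^\circ, f^\circ, i^\circ)$ around a $T$-fixed point $x$, I would proceed in three steps. First, construct a $T$-equivariant étale neighborhood $R$ of $x$ in $X$: choose generators of the maximal ideal $\mathfrak{m}_x \subset \mathcal{O}_{X,x}$ that are $T$-semi-invariant (possible since the $T$-action on $\mathfrak{m}_x/\mathfrak{m}_x^2$ splits into characters), yielding a $T$-equivariant étale map to affine space with linear $T$-action. Second, embed $R$ $T$-equivariantly into a smooth $T$-scheme $U$ by a similar isotypic-decomposition argument applied to the defining ideal of $R$ in a $T$-equivariant affine embedding. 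Third, decompose $f^\circ$ (pulled back to $U$) into $T$-isotypic components $f^\circ = \sum_\lambda f^\circ_\lambda$; the equivariance hypothesis $\mu^\star s = \chi \cdot p_2^\star s$ forces $f^\circ - f^\circ_\chi \in I^2_{R,U}$ modulo constants, so $f := f^\circ_\chi$ is $T$-equivariant of weight $\chi$ and represents the same d-critical structure $s|_R$. One then shrinks $U$ to a $T$-invariant open to ensure $\Crit(f) = R$.

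For part $(ii)$, the key object is the product chart $(R_1 \times_X R_2, U_1 \times U_2, f_1 \boxplus f_2, i_1 \times i_2)$, which is itself a $T$-equivariant critical chart admitting obvious equivariant projections to both factors. The strategy is to apply Joyce's comparison theorem from \cite[Section 2.3]{Joyce2013ACM} on each side: near the $T$-fixed point $(x_1, x_2)$, Joyce's argument presents $U_1 \times U_2$ étale-locally as the stabilization of $U_1$ by a vector space $E_2$ of dimension $\dim_{x_2} U_2 - \dim_{x_2} R_2$ endowed with the Hessian of $f_2$ at $x_2$, after an étale coordinate change trivializing the transverse quadratic part of $f_2$. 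Since $f_2$ has weight $\chi$ and $x_2$ is $T$-fixed, this Hessian is automatically $T$-equivariant of weight $\chi$ on the $T$-representation $T_{x_2} U_2 / T_{x_2} R_2$; the non-trivial content is to choose the étale coordinate change $T$-equivariantly. This is done by rerunning Joyce's inductive construction isotypic component by isotypic component: at each step one solves a linear equation in the completed local ring, and linear reductivity delivers a $T$-equivariant solution. The mirror argument on the other side gives the right-hand branch of the diagram, and the top chart $(R, U, f, i)$ is obtained by intersecting the two resulting étale neighborhoods of $(x_1, x_2)$ inside $R_1 \times_X R_2$.

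The principal technical obstacle is the $T$-equivariant version of Joyce's formal Morse-type lemma used to trivialize the quadratic part of a critical chart: the inductive change of coordinates that turns $f$ into $f' + q$ must be arranged at each step to respect the $T$-weight grading. The extra bookkeeping needed -- in particular, tracking the decomposition $E_i = E_i^+ \oplus E_i^-$ into complementary $T$-weight subspaces on which the weight-$\chi$ form $q_i$ pairs non-degenerately -- does not appear in Joyce's original argument and must be threaded consistently through the entire comparison diagram, while verifying that the resulting $T$-equivariant quadratic forms $q_1, q_2$ are non-degenerate of weight $\chi$ and that the central chart $(R, U, f, i)$ fits $T$-equivariantly into both legs.
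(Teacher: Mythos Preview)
Your approach to part $(ii)$ has a genuine gap: the object $(R_1\times_X R_2,\,U_1\times U_2,\,f_1\boxplus f_2,\,i_1\times i_2)$ is \emph{not} a critical chart. The critical locus of $f_1\boxplus f_2$ on $U_1\times U_2$ is $\Crit(f_1)\times\Crit(f_2)=R_1\times R_2$, which strictly contains $R_1\times_X R_2$ unless the two charts happen to coincide. Thus there is no ``product chart'' to serve as the hub of your comparison, and the plan of expressing $U_1\times U_2$ as a stabilization of $U_1$ by the normal bundle to $R_2\subset U_2$ does not get off the ground: you would be comparing a chart for $R_1\times_X R_2$ with a stabilization of a chart for $R_1$, but these have different underlying reduced schemes.

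The paper's route is structurally different. It first replaces $R_1\times_X R_2$ by an affine $T$-equivariant \'etale neighborhood $\tilde R$ of $(x_1,x_2)$ (via the Alper--Hall--Rydh slice theorem), then uses a $T$-equivariant version of the ``every \'etale map is standard \'etale'' lemma to lift the \'etale maps $\tilde R\to R_i\hookrightarrow U_i$ to \'etale maps $\check U_i\to U_i$ with $T$-equivariant closed immersions $\tilde R\hookrightarrow\check U_i$. This produces two critical charts $(\tilde R,\check U_i,\check f_i,\check i_i)$ with the \emph{same} $\tilde R$, which can then be compared: both are embedded (using Joyce's \cite[Prop.~2.44]{Joyce2013ACM}) as $T$-equivariant closed sub-charts of a single minimal chart $(\hat R,\hat U,\hat f,\hat i)$, and a $T$-equivariant analogue of Joyce's stabilization lemma \cite[Prop.~2.23]{Joyce2013ACM} (this is the paper's Lemma~\ref{lemcompareqquadform}) relates each side to a stabilization. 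The fiber product over $(\hat R,\hat U,\hat f,\hat i)$ then gives the apex $(R,U,f,i)$. The equivariant Morse lemma you correctly identify as the crux is indeed the heart of that auxiliary lemma, and the weight-pairing bookkeeping you describe is exactly what is needed there --- but it is applied to a closed embedding of charts, not to a product.

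For part $(i)$ your outline is plausible but the paper simply invokes the Alper--Hall--Rydh slice theorem to reduce to an affine with good $T$-action, then cites Joyce's already-equivariant \cite[Prop.~2.44]{Joyce2013ACM}; your claim that $f^\circ - f^\circ_\chi\in I_{R,U}^2$ is essentially the content of that proposition and would need justification.
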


\begin{proof}
    \begin{enumerate}
        \item[$i)$] Consider $x\in X(k)$: by \cite[Theorem 4.1]{Alper2015AL} (notice that the conditions are trivially satisfied, as $X$ is an algebraic space), there is a $T$-equivariant pointed étale map $\phi:(X',x')\to (X,x)$, where $X'$ is affine, in particular its $T$-action is automatically 'good' in the sense of \cite[Definition 2.41]{Joyce2013ACM}. Considering $s':=\phi^\star(s)$, one can applies \cite[Proposition 2.44]{Joyce2013ACM} to $(X',s')$ near $X'$ to obtain a Zariski $T$-equivariant critical chart for $(X',s')$ near $x'$, which gives by compositions an étale $T$-equivariant critical chart for $(X,s)$ over $x$.\medskip
        
        \item[$ii)$] This is a torus-equivariant analogue of the comparison given in \cite[Section 2.3]{Joyce2013ACM}. We first applies \cite[Theorem 4.1]{Alper2015AL} to $(R_1\times_XR_2,(x_1,x_2))$ to obtain a $T$-equivariant pointed étale map $(\tilde{R},\tilde{x}\to (R_1\times_XR_2,(x_1,x_2))$ with $\tilde{R}$ affine, which is stabilizer preserving at $\tilde{x}$, and consider the $T$-equivariant $d$-critical structure $\tilde{s}$ of weight $\chi$ on $\tilde{R}$ obtained by pullback, and the $T$-fixed point $\tilde{x}$.\medskip
        
        According to \cite[Tag 04B1, 04D1]{stacks-project}, given a smooth (resp. étale) morphism of affine scheme $\tilde{R}=\Spec(\tilde{A})\to R_i=\Spec(A_i)$, and a closed immersion $R_i\to U_i=\Spec(B_i)$, and $\tilde{x}\in \tilde{R}$ there exists a distinguished open neighborhood $\check{R}$ of $\tilde{x}$ (resp. one can take $\check{R}=\tilde{R}$) in $\tilde{R}$, an affine scheme $\check{U}_i=\Spec(\check{B}_i)$ with a smooth (resp. étale) morphism $\check{U}_i\to U_i$, and a closed immersion $\check{R}\to \check{U}$. One can check easily that a $T$-equivariant version of this Lemma holds. Indeed, $\tilde{A},A_i,B_i$ have a $\Gamma:=\Hom(T,\bG_m)$-grading corresponding to the $T$-action, and it suffices to give a version of the proof using only homogeneous elements. The proof of \cite[Tag 04B1, 04D1]{stacks-project} (every smooth affine map is locally standard smooth, resp. every étale map is standard smooth) can easily be adapted to the $\Gamma$-graded setting. It gives that $\tilde{R}$ is covered by open subsets $\check{R}=D(g)$, for $g\in \tilde{A}$ homogeneous (such that $D(g)$ is $T$-invariant), such that $\tilde{A}_g$ can be written as $A_i[x_1,...,x_n]/(\bar{f}_1,...,\bar{f}_c)$, with $x_1,...,x_n,\bar{f}_1,...,\bar{f}_c$ homogeneous (resp. with $c=n$), such that $\det((\partial x_i/\partial \bar{f}_i)_{1\leq i\leq c})$ is invertible in $\tilde{A}_g$. Now, one can choose homogeneous lifts $f_1,...,f_c\in B_i[x_1,...,x_n]$ of $\bar{f}_1,...,\bar{f}_c\in A_i[x_1,...,x_n]$. As the $f_i$ are homogeneous, $\Delta:=\det((\partial x_i/\partial f_i)_{1\leq i\leq c})$ is homogeneous: consider $x_{n+1}$, of weight opposite to $\Delta$, and the $\bZ$-graded ring $\check{B}_i:=B_i[x_1,...,x_{n+1}]/(f_1,...,f_c,x_{n+1}\Delta-1)$. Then, $\check{U}_i:=\Spec(\check{A}_i)$ is $T$-equivariant, $\phi_i:\check{U}_i\to U_i$ is smooth (resp. étale) and $T$-equivariant, and $\check{i}_i:\check{R}\to \check{U}_i$ is a $T$-equivariant closed immersion.\medskip

        We applies the étale version of the last paragraph, and define $\check{R}_i:=\check{R}=\tilde{R}$ and $\check{f}_i:=f_i\circ\phi_i$, and we obtain two Zariski critical charts $(\check{R}_i,\check{U}_i,\check{f}_i,\check{i}_i)$ for $(\tilde{R},\tilde{s})$, and a $T$-fixed point $\tilde{x}\in \check{R}_1\cap \check{R}_2$. Using \cite[Proposition 2.44]{Joyce2013ACM}, up to restricting $\check{U}_i$ near $\tilde{x}$, and then $\check{R}_i$, to a $T$-invariant open subset $U'_i$ (that can be chosen to be distinguished, hence affine), one has $T$-equivariant closed immersions of $T$-equivariant critical charts $\Phi:(R'_i,U'_i,f'_i,i'_i)\to (\hat{R},\hat{U},\hat{f},\hat{i})$ (\ie, $\Phi:U'_i\to \hat{U}$ is a $T$-equivariant closed immersion, such that $f'_i=\hat{f}\circ\Phi$, and such that $\Crit(\hat{f})=\Phi(\Crit(f'_i))$). As $\tilde{x}$ is $T$-fixed, one applies then the following Lemma \ref{lemcompareqquadform} twice to obtain $T$ equivariant étale maps $(\hat{R}_i,\hat{U}_i,\hat{f}_i,\hat{i}_i)\to (\hat{R},\hat{U},\hat{f},\hat{i})$ and $(\hat{R}_i,\hat{U}_i,\hat{f}_i,\hat{i}_i)\to (R'_i,U'_i\times E_i,f'_i\boxplus q_i,i'_i)$, with a $T$-fixed point $\hat{x}_i$ over $x_i$. We take then:
\begin{align}
    (R,U,f,i):=(\hat{R}_1\times_{\hat{R}}\hat{R}_2,\hat{U}_1\times_{\hat{U}}\hat{U}_2,\hat{f}_1\times_{\hat{f}}\hat{f}_2,\hat{i}_1\times_{\hat{i}}\hat{i}_2)
\end{align}
and the $T$-fixed point $x=(\hat{x}_1,\hat{x}_2)\in R$, which is over $(x_1,x_2)$. Schematically, this gives:
\[\begin{tikzcd}
   (U'_1\times E_1,f'_1\boxplus q_1)  &  & (U,f)\arrow[dl]\arrow[dr]&  & (U'_2\times E_2,f'_2\boxplus q_2)\\
    (U'_1,f'_1)\arrow[drr,dashed]\arrow[d]\arrow[u,dotted] & (\hat{U}_1,\hat{f}_1)\arrow[dr]\arrow[ul] & & (\hat{U}_2,\hat{f}_2)\arrow[dl]\arrow[ur] & (U'_2,f'_2)\arrow[dll,dashed]\arrow[d]\arrow[u,dotted]\\
    (U_1,f_1) & & (\hat{U},\hat{f}) & & (U_2,f_2)
\end{tikzcd}\]
where dashed arrows denotes $T$-equivariant closed embeddings, dotted arrows stabilization by $T$-equivariant quadratic forms, and plain arrows $T$-equivariant étale restriction of critical charts.
    \end{enumerate}
\end{proof}

\begin{lemma}(Equivariant analogue of \cite[Proposition 2.23]{Joyce2013ACM})\label{lemcompareqquadform}
    Consider a $T$-equivariant closed immersion $\Phi:(R_1,U_1,f_1,i_1)\to (R_2,U_2,f_2,i_2)$ of $T$-equivariant affine étale critical charts on a $T$-equivariant $d$-critical algebraic space $(X,s)$. Given a $T$-fixed point $x\in R_1=R_2$, there is a commutative diagram of $T$-equivariant critical charts:
    \[\begin{tikzcd}
       (U'_1,f'_1)\arrow[dd]\arrow[rr,dashed] & & (U'_2,f'_2)\arrow[dd]\arrow[dl]\\
         & (U_1\times E,f_1\boxplus q) &\\
        (U_1,f_1)\arrow[rr,dashed]\arrow[ur,dotted] && (U_2,f_2)
        \end{tikzcd}\]
    with a $T$-fixed point $x'$ in $R'_1=R'_2$ over $x$, where dashed arrows denotes $T$-equivariant closed embeddings, dotted arrows stabilization by $T$-equivariant quadratic forms, and plain arrows $T$-equivariant étale restriction of critical charts.
\end{lemma}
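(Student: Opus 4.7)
The plan is to adapt Joyce's proof of the non-equivariant comparison \cite[Proposition 2.23]{Joyce2013ACM} to the $T$-equivariant setting, exploiting reductivity of $T$ and the fact that $x$ is a $T$-fixed point so that all relevant tangent and normal spaces are naturally $T$-representations. The same overall strategy applies: identify étale-locally a normal direction to $U_1$ inside $U_2$, verify that $f_2$ restricted to it is, up to higher order, a non-degenerate quadratic form, and use a Morse-type argument to change coordinates so $f_2$ becomes literally $f_1 \boxplus q$.

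First, I would analyze the infinitesimal picture at $x$. Since $\Phi : U_1 \to U_2$ is a $T$-equivariant closed immersion of smooth affine schemes and $x$ is $T$-fixed, the conormal sequence $0 \to I_{U_1,U_2}/I_{U_1,U_2}^2 \to \Phi^\ast \Omega_{U_2} \to \Omega_{U_1} \to 0$ is a sequence of $T$-equivariant coherent sheaves which, at the fixed point $x$, becomes a short exact sequence of $T$-representations. Reductivity of $T$ splits this sequence, giving a $T$-representation $E := (I_{U_1,U_2}/I_{U_1,U_2}^2)_x^\vee$ and a $T$-equivariant decomposition $T_xU_2 \simeq T_xU_1 \oplus E$. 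Because $\Crit(f_2) = R_2 = R_1 = \Crit(f_1)$ set-theoretically, the Hessian of $f_2$ restricted to the $E$-direction at $x$ is non-degenerate; as $f_2$ has weight $\chi$ and $x$ is $T$-fixed, this Hessian is a $T$-equivariant non-degenerate quadratic form $q$ on $E$ of weight $\chi$.

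Second, I would produce a $T$-equivariant étale model. Using $T$-invariance and the affineness of $U_1,U_2$, I can choose $T$-homogeneous generators of $I_{U_1,U_2}$ to lift the splitting $T_xU_2 \simeq T_xU_1 \oplus E$ to a $T$-equivariant morphism $\alpha : U_1 \times E \to U_2$ (defined on a $T$-invariant affine open neighborhood of $(x,0)$) which is the identity on $U_1 \times \{0\} = \Phi(U_1)$ and induces the given splitting on tangent spaces at $(x,0)$. By $T$-equivariance of the Jacobian, $\alpha$ is étale on a $T$-invariant open neighborhood $W \subset U_1 \times E$ of $(x,0)$; restricting both sides to distinguished $T$-invariant affine opens gives a $T$-equivariant étale morphism that identifies a piece of $U_2$ with an open in $U_1 \times E$.

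Third, I would run the $T$-equivariant Morse lemma to modify $\alpha$ so that $\alpha^\ast f_2 = f_1 \boxplus q$. Writing $g := \alpha^\ast f_2 - (f_1 \boxplus q)$, which vanishes to order three along $U_1 \times \{0\}$ and is $T$-equivariant of weight $\chi$, the standard integration-of-vector-fields trick with the family $g_t := (f_1 \boxplus q) + tg$ produces a $T$-equivariant formal automorphism of $U_1 \times E$ over $U_1$ conjugating $f_1 \boxplus q$ to $\alpha^\ast f_2$; because $T$ is reductive, the $T$-action on the space of choices of primitive is split and we can average to keep everything equivariant, exactly as in the homogeneous-element argument of \cite[Section 2.3, Section 2.6]{Joyce2013ACM}. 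Algebraizing (pass to an étale neighborhood, using standard approximation for smooth morphisms) yields $T$-equivariant étale maps $U_1' \to U_1 \times E$ and $U_2' \to U_2$ with $U_1' \simeq U_2'$ and $f_1' = f_1|_{U_1'}$, $f_2' = (f_1 \boxplus q)|_{U_1'}$, which assembles into the required diagram. The main obstacle is ensuring all intermediate constructions stay $T$-equivariant — this is exactly where $T$-fixedness of $x$ and reductivity of $T$ are used repeatedly (equivariant splittings, averaging of vector fields, invariant distinguished opens containing $x$), replacing Joyce's scheme-theoretic Artin approximation argument by its $T$-equivariant analogue.
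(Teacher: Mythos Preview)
Your first two steps are broadly in line with the paper's setup (though constructing $\alpha:U_1\times E\to U_2$ directly is awkward; the paper instead builds an étale neighbourhood $\check j:\check U_2\to U_2$ together with an étale map $\check\alpha\times\check\beta:\check U_2\to U_1\times E$ via an explicit fibre product, which is essentially equivalent).

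The substantive divergence is Step 3. The paper does not use the Moser trick; its key observation is precisely that the Gram--Schmidt step in Joyce's original argument \emph{fails} to preserve $T$-equivariance. The fix is concrete and avoids any integration of vector fields: decompose the normal space $V$ by $T$-weight as $V=\bigoplus_{\psi\in\Lambda}(V_\psi\oplus V_{\chi-\psi})\oplus V_{\chi/2}$. For each dual pair $\psi\neq\chi-\psi$, the weight-$\chi$ form gives a perfect pairing $V_\psi\otimes V_{\chi-\psi}\to k$, and one rewrites that block as $\sum_b x_by_b$ by the $T$-equivariant substitution $y_b=\sum_c(Q^{bc})^{-1}\check z_c$ --- no diagonalisation or square roots needed. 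Only the self-dual block $V_{\chi/2}$ requires a genuine diagonalisation, but there the matrix entries $Q^{cd}$ have weight $\chi-\chi/2-\chi/2=0$, so they are $T$-invariant and Joyce's original procedure applies verbatim (square roots of $T$-invariant functions give $T$-equivariant étale covers).

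Your Moser-trick route is plausible in spirit but has gaps as written. Integration of time-dependent vector fields is a smooth-category operation; algebraically one must first work formally and then algebraize, and ``standard approximation for smooth morphisms'' does not specify what finite-type moduli problem Artin approximation is being applied to, nor how $T$-equivariance survives the approximation step (one would have to pass to the $T$-fixed locus of a scheme of Morse charts and argue it still has the required section). Your appeal to ``the homogeneous-element argument of \cite[Section 2.3, 2.6]{Joyce2013ACM}'' for the averaging is also misplaced: those arguments are explicit coordinate manipulations, not Moser-type flows with averaging. The paper's weight-decomposition approach sidesteps all of this and remains entirely constructive.
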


\begin{proof}
We begin by adapting the arguments of the proof of \cite[Proposition 2.23]{Joyce2013ACM} to the $T$-equivariant setting. Because $T$ is a torus, as proven in the proof of \cite[Proposition 2.44]{Joyce2013ACM}, one can, up to restricting $U_2$ to an affine $T$-invariant distinguished open neighborhood $\tilde{U}_2$ of $\Phi(x)$ (and denote $\tilde{U}_1=\Phi^{-1}(\tilde{U}_2)$), find a $T$-equivariant étale map $\tilde{\gamma}\times\tilde{\beta}:\tilde{U}_2\to E'\times E$ where $E',E$ are affine spaces with linear $T$-action, such that $\tilde{U}_1=\tilde{\beta}^{-1}(0)$, and then $\tilde{\gamma}\circ\Phi:\tilde{U}_1\to E'$ is étale, and there is a $T$-fixed point over $x$. Then we define $\check{j}:\check{U}_2\to U_2$ to be the $T$-equivariant étale neighborhood of $i_2(x)$ in $\tilde{U}_2$ given by $(\tilde{U}_1\times E')\times_{E\times E'}\tilde{U}_2$, which is then still affine. We denote by $\check{\alpha}:\check{U}_2\to U_1$ the composition:
\[\begin{tikzcd}
    \check{U}_2\arrow[r] & \tilde{U}_1\times E'\arrow[r,"p_1"] & \tilde{U}_1\arrow[r] & U_1
\end{tikzcd}
\]
and $\check{\beta}:\check{U}_2\to E\times E'\to E$: then $\check{\alpha}\times\check{\beta}:\check{U}_2\to U_1\times E$ is étale. We denote $\check{f}_1:=f_1\circ\alpha:\check{U}_2\to\bA^1_k$, $\check{f}_2:=f_2\circ\check{j}:\check{U}_2\to\bA^1_k$, and  $\check{h}:=\check{f}_2-\check{f}_1:\check{U}_2\to\bA^1_k$, which is $T$-equivariant of weight $\chi$ by construction. Denote by $(\check{z}_1,...,\check{z}_n)$ the étale coordinates coming from $\check{\beta}:\check{U}_2\to E=\Spec(V)$: as $T$ is a torus, we can assume that each $\check{z}_b$ corresponds to a one dimensional representation of $T$ of character $\chi_b$. As shown in the proof of \cite[Proposition 2.23]{Joyce2013ACM}, $\check{h}$ lies in the ideal $(\check{z}_1,...,\check{z}_n)^2$. Then, we can write $\check{h}=\sum_{b,c}Q^{b,c}\check{z}_b \check{z}_c$, where $Q^{b,c}=Q^{c,b}:\check{V}\to\bA^1_k$ has weight $\chi-\chi_b-\chi_c$. Ax $x$ is $T$-fixed, $q^{b,c}=Q^{b,c}(x)$ defines a $T$-equivariant quadratic form on $V$ of weight $\chi$. We have then a $T$-equivariant quadratic form $q$ of $V$ of weight $\chi$, which is non-degenerate by the argument below \cite[equation 5.10]{Joyce2013ACM}: we restrict to the open subset where $Q$ is still non-degenerate, which is automatically affine and $T$-invariant.\medskip

We cannot adapt directly the end of the proof of \cite[Proposition 2.23]{Joyce2013ACM}, because the 'Gram Schmidt algorithm' used here to diagonalize the quadratic form breaks the $T$-equivariance. Denote by $V=\bigoplus_\psi V_\psi$ the weight decomposition of $V$, where the sum is over the characters $\psi:T\to\bG_m$. Notice that $q$ induces a non-degenerate pairing between $V_\psi$ and $V_{\chi-\psi}$: for each couple $\{\psi,\chi-\psi\}$, with $\psi\neq\chi/2$, we choose a representative, giving a set of cocharacter $\Lambda$, such that $V=\bigoplus_{\psi\in\Lambda}(V_\psi\oplus V_{\chi-\psi})\oplus V_{\chi/2}$ where the last term can be empty (in particular, if $\chi$ is not the square of a character). For each $\psi\in \Lambda$, we consider the étale coordinates $x_b:=\check{z}_b$ of weight $\psi$, and replace the étale coordinates $\check{z}_b$ of weight $\chi-\psi$ by the étale coordinates $y_b:=\sum_c (Q^{bc})^{-1}x_b$, and keep unchanged the étale coordinates $\check{z}_c$ of weight $\chi/2$. We obtain then $h=\sum_b x_by_b+\sum_{c,d} Q^{cd} \check{z}_c\check{z}_d$, where the last terms contains only coordinates of weight $\chi/2$ (notice that, if $\chi$ is not divisible by $2$, there are no such coordinates), and the $Q^{cd}$ are $T$-invariant.\medskip

We can then applies the procedure of \cite[Proposition 2.23, 2.24]{Joyce2013ACM} to the étale coordinates $\check{z}_c$ of weight $\chi/2$, because the formulas involves elements of the same weight. This allows, up to taking an étale $T$-equivariant cover $j':U'_2\to\check{U}_2$ (in order to take square roots of the $T$-invariant functions $Q^{cc}$), to obtain new étale coordinates $z_c$ of weight $-\chi/2$ replacing the $\check{z}_c$, such that $h\circ j'=q\circ\beta=\sum_b x_by_b+\sum_c (z_c)^2$, and that there is a $T$-fixed point over $x$. Considering $\alpha,\beta,f'_2,f'_1$ obtained by composing $\check{\alpha},\check{\beta},\check{f}_2,\check{f}_1$ with $j'$, and $U'_1:=\beta^{-1}(0)$, with an $T$-equivariant étale map to $U_1$, one obtains the $T$-equivariant diagram of the Lemma.
\end{proof}

If $s$ is $T$-invariant, \ie $\chi=0$, one obtains by definition of $\mathcal{S}_{[X/T]}$ that $s$ descends to a d-critical structure $\bar{s}$ on $[X/T]$: in particular, we obtain an isomorphism:
\begin{align}
    K_{X,s}\simeq K_{[X/T],\bar{s}}|_{X^{red}}\otimes K_{X/ [X/T]}|_{X^{red}}^{\otimes 2}
\end{align}
which gives a $T$-equivariant structure on $K_{X,s}$. For future reference, it will be useful to notice that one can define also such an equivariant structure for $\chi\neq 0$:

\begin{corollary}\label{corcanbunTeq}
    Consider an algebraic space $X$ with a torus action $\mu:T\times X\to X$, with a $T$-equivariant $d$-critical structure $s$ of weight $\chi:T\to\bG_m$. Then $K_{X,s}$ enhance naturally to a $T$-equivariant line bundle, such that, on a $T$-equivariant étale affine critical chart $(R,U,f,i)$, one has a canonical isomorphism $K_{X,s}|_{R^{red}}\simeq i^\ast(K_{U}^{\otimes 2})|_{R^{red}}\otimes\chi^{-\dim(U)}$ upgrading the one of Lemma \ref{defcanbun} $i)$.
\end{corollary}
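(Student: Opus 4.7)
The strategy is to build the $T$-equivariant structure on $K_{X,s}$ chart-by-chart, using the displayed formula to upgrade the non-equivariant isomorphism of Lemma \ref{defcanbun} $i)$, and then to glue via the comparison data provided by Proposition \ref{propcompareqchart}. First, I would cover $X$ étale-locally by $T$-equivariant affine critical charts $(R,U,f,i)$ (Proposition \ref{propcompareqchart} $i)$). On each such chart, $K_U$ carries its natural $T$-equivariant structure as a line bundle on the $T$-space $U$, and $\chi^{-\dim(U)}$ is to be interpreted as the $1$-dimensional $T$-representation of weight $-\dim(U)\cdot \chi$, regarded as a $T$-equivariant trivial line bundle on $R^{red}$. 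Since $R\to X$ is étale, $K_{R/X}$ is trivial, so the non-equivariant isomorphism of Lemma \ref{defcanbun} $i)$ reads $K_{X,s}|_{R^{red}}\simeq i^\ast(K_U^{\otimes 2})|_{R^{red}}$; I would declare the $T$-equivariant structure on the left-hand side by transport along the twisted isomorphism $K_{X,s}|_{R^{red}}\simeq i^\ast(K_U^{\otimes 2})|_{R^{red}}\otimes \chi^{-\dim(U)}$.

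Next, I would check well-definedness by verifying compatibility with the two comparison operations from Proposition \ref{propcompareqchart} $ii)$. For a $T$-equivariant étale restriction of critical charts $\phi:(R',U',f',i')\to (R,U,f,i)$, one has $\tilde{\phi}^\ast K_U\simeq K_{U'}$ as $T$-equivariant line bundles (since $\phi$ is $T$-equivariant étale, $\bL_\phi=0$), and $\dim(U')=\dim(U)$, so the two twisted descriptions of $K_{X,s}|_{(R')^{red}}$ agree. For a stabilization by a $T$-equivariant non-degenerate quadratic bundle $(E,q)$ of weight $\chi$, the key calculation is that $\det(q)$, viewed $T$-equivariantly, furnishes a canonical isomorphism of $T$-equivariant line bundles between $(\det E)^{\otimes 2}$ and the character line $\chi^{-\dim(E)}$ (this is the standard fact that a non-degenerate weight-$\chi$ pairing on a rank-$d$ representation identifies $E^\vee$ with $E\otimes k_{(\chi)}$, hence induces $(\det E)^{\otimes 2}\simeq \chi^{-\dim E}$ after a determinant). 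Combined with $(s\circ i)^\ast K_{\bV_U(E)}^{\otimes 2}\simeq i^\ast K_U^{\otimes 2}\otimes i^\ast(\det E)^{\otimes 2}$ from $K_{\bV_U(E)/U}=\pi^\ast E$ (Lemma \ref{lemmaffinesmooth}), this exactly absorbs the difference between the twists $\chi^{-\dim(U)}$ and $\chi^{-\dim(\bV_U(E))}=\chi^{-\dim(U)-\dim(E)}$, yielding the required agreement of $T$-equivariant structures. This is precisely the $T$-equivariant refinement of the non-equivariant statement Lemma \ref{defcanbun} $iii)$ that the isomorphism is induced by $\det(q)$.

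With the two compatibilities in hand, the $T$-equivariant structures on $K_{X,s}|_{R^{red}}$ coming from any two $T$-equivariant étale charts agree on overlaps, since by Proposition \ref{propcompareqchart} $ii)$ any two such charts can be connected, at any $T$-fixed point of $R_1\times_X R_2$, by a zig-zag of $T$-equivariant étale restrictions and $T$-equivariant stabilizations by quadratic forms. Étale descent for $T$-equivariant line bundles then produces a unique $T$-equivariant structure on the line bundle $K_{X,s}$ on $X^{red}$ upgrading the isomorphism of Lemma \ref{defcanbun} $i)$ on every $T$-equivariant étale affine critical chart, as required. Finally, I would observe that when $\chi=0$ the character line $\chi^{-\dim(U)}$ is the trivial representation, so the construction recovers the canonical $T$-equivariant structure on $K_{X,s}$ descending from $[X/T]$, consistent with the discussion preceding the Corollary.

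The main obstacle is the bookkeeping in the stabilization step: one must correctly identify the $T$-equivariant avatar of $\det(q)$ and verify that the induced weight shift $(\det E)^{\otimes 2}\simeq \chi^{-\dim(E)}$ precisely offsets the change $\chi^{-\dim(U)}\rightsquigarrow \chi^{-\dim(U)-\dim(E)}$ in the twist. Everything else is formal gluing once this combinatorial identity has been verified and the étale-restriction case disposed of.
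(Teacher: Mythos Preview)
Your overall strategy (define the equivariant structure chart-by-chart via the twist $\chi^{-\dim(U)}$, then glue) matches the paper, and your stabilization bookkeeping—that $\det(q)$ furnishes a $T$-equivariant isomorphism $(\det E)^{\otimes 2}\simeq\chi^{-\dim(E)}$ absorbing the change in twist—is correct.

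The gap is in the gluing step. Proposition \ref{propcompareqchart} $ii)$ only produces a comparison zig-zag in an \'etale neighborhood of a $T$-\emph{fixed} point $(x_1,x_2)\in R_1\times_X R_2$. The overlap $R_1\times_X R_2$ need not be covered by such neighborhoods (components without $T$-fixed points can occur), so verifying compatibility with those zig-zags does not establish that the two equivariant structures on $K_{X,s}|_{(R_1\times_X R_2)^{red}}$ agree everywhere, and \'etale descent cannot yet be invoked.

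The paper sidesteps this by checking agreement directly at the stalk of \emph{every} point $x\in R(k)$, using the Hessian rather than the comparison data. The Hessian of $f$ at $x$ is a $T$-equivariant nondegenerate form of weight $\chi$ on $T_{U,x}/T_{R,x}$, so its determinant is a $T$-equivariant isomorphism $i^\ast(K_U^{\otimes 2})|_x\simeq K_R^{\otimes 2}|_x$ of weight $\chi^{\dim(U)-\dim(R)}$. Since $R\to X$ is \'etale this yields a $T$-equivariant identification
\[
i^\ast(K_U^{\otimes 2})|_x\otimes\chi^{-\dim(U)}\;\simeq\;K_X^{\otimes 2}|_x\otimes\chi^{-\dim(X)},
\]
whose right-hand side is visibly independent of the chart. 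Since $K_{X,s}$ is a line bundle on a \emph{reduced} space, this stalk-wise check at all points suffices to conclude that the locally defined equivariant structures glue. Your $\det(q)$ calculation is precisely this Hessian computation restricted to the stabilized directions, so the paper's argument both repairs the gap and subsumes the stabilization step.
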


\begin{proof}
    From Proposition \ref{propcompareqchart} $i)$, one obtains local definition of the equivariant structure on $K_{X,s}$ on an étale cover of $X^{red}$. As $K_{X,s}$ is a line bundle on a reduced algebraic space, it suffices to show that the equivariant structure at the stalk of each point is independent of the choice of $T$-equivariant critical chart. Given $x\in R(k)$, we have the isomorphism:
    \begin{align}
        K_{\mX,s}|_x\simeq i^\ast(K_U^{\otimes 2})|_x\simeq K_R^{\otimes 2}|_x\simeq K_X^{\otimes 2}|_x
    \end{align}
    where the second isomorphism comes from $\det(hess_f)$: as $f$, and then $hess_f$, is $T$-equivariant of weight $\chi$, $\det(hess_f)$ is $T$-equivariant of weight $\chi^{\dim(U)-\dim(R)}$, such that we have a $T$-equivariant isomorphism:
    \begin{align}
        K_{\mX,s}|_{R^{red}}|_x\simeq i^\ast(K_U^{\otimes 2})|_x\otimes \chi^{-\dim(U)}\simeq K_R^{\otimes 2}|_x\otimes \chi^{-\dim(R)}\simeq K_X^{\otimes 2}|_x\otimes \chi^{-\dim(X)}
    \end{align}
    where the right hand side is independent of the critical chart, hence the $T$-equivariant structures defined above glue.
\end{proof}

\begin{proposition}\label{propcomparchart}
    Consider a quasi-separated  d-critical Artin stack $(\mX,s)$, with affine stabilizer, locally of finite type over $k$.
    \begin{enumerate}
        \item[$i)$] $\mX$ is covered by critical charts of the form $(\mR,\mU,f,i)$, such that the smooth maps $\Grad(\mR)\to\Grad(\mX)$ are jointly surjective.
        \item[$ii)$] Given $(\mR_i,\mU_i,f_i,i_i)$, $i=1,2$ two critical charts, there is a family of diagrams of critical charts:
        \[\begin{tikzcd}
      &  & (\mU,f)\arrow[dl]\arrow[dr] &  &   \\
     (\mU'_1,f'_1)\arrow[d]\arrow[dotted,r] & (\bV_{\mU'_1}(\mE_1),f'_1\circ \pi_1+q_1) &  &(\bV_{\mU'_2}(\mE_2),f'_2\circ \pi_2+q_2) &  (\mU'_2,f'_2)\arrow[d]\arrow[dotted,l] \\
     (\mU_1,f_1)&&&&(\mU_2,f_2)
\end{tikzcd}\]
where the dotted arrows corresponds to stabilization by quadratic bundles, and the plain arrows are smooth morphisms of critical charts, such that the $\Grad(\mR)\to\Grad(\mR_1)\times_{\Grad(\mX)}\Grad(\mR_2)$ are jointly surjective.
    \end{enumerate}   
\end{proposition}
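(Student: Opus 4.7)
For part (i), the strategy is to combine Halpern-Leistner's covering Lemma \ref{covergrad} with the equivariant existence Proposition \ref{propcompareqchart}(i). Cover $\mX$ by smooth morphisms $\phi_\alpha:[X_\alpha/\bG_m]\to\mX$ from $\bG_m$-quotients of affine schemes whose $\Grad([X_\alpha/\bG_m])\to\Grad(\mX)$ are jointly surjective; the pulled-back d-critical structure on $X_\alpha$ is then $\bG_m$-invariant, i.e., of weight $\chi=0$. Proposition \ref{propcompareqchart}(i) supplies $\bG_m$-equivariant étale affine critical charts $(R,U,f,j)$ on $X_\alpha$ with $\bG_m$-invariant $f$, which descend to stacky critical charts $([R/\bG_m],[U/\bG_m],\bar f,[j/\bG_m])$ for $(\mX,s)$ via the smooth composition $[R/\bG_m]\to[X_\alpha/\bG_m]\to\mX$. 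Since $R\to X_\alpha$ is étale, so is $[R/\bG_m]\to[X_\alpha/\bG_m]$, and by \cite[Corollary 1.1.7]{HalpernLeistner2014OnTS} $\Grad$ commutes with étale base change; joint surjectivity of the $\Grad([R/\bG_m])\to\Grad(\mX)$ then follows from that of the $\Grad(\phi_\alpha)$.

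For part (ii), the plan is to pull both critical charts back to a common $\bG_m$-equivariant affine scheme, invoke Proposition \ref{propcompareqchart}(ii) there, and descend. Since $\Grad$ commutes with fibre products, $\Grad(\mR_1)\times_{\Grad(\mX)}\Grad(\mR_2)\simeq\Grad(\mR_1\times_\mX\mR_2)$. Apply Lemma \ref{covergrad} to $\mR_1\times_\mX\mR_2$ to cover it by smooth morphisms $[T/\bG_m]\to\mR_1\times_\mX\mR_2$ whose $\Grad$'s are jointly surjective onto the target, and consider the induced smooth projections $\psi_i:[T/\bG_m]\to\mR_i$. For each $i$, extend $\psi_i$ Zariski-locally on $\mU_i$ around the closed immersion $\mR_i\hookrightarrow\mU_i$ to a smooth morphism $\tilde\psi_i:[\tilde V_i/\bG_m]\to\mU_i$ of the same relative dimension whose base change to $\mR_i$ recovers $\psi_i$; this realises $([T/\bG_m],[\tilde V_i/\bG_m],f_i\circ\tilde\psi_i,\tilde j_i)$ as a smooth restriction of the original critical chart $(\mR_i,\mU_i,f_i,i_i)$, with common critical locus $[T/\bG_m]$.

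Now the pair $(T,\tilde V_i,f_i\circ\tilde\psi_i,\tilde j_i)$ consists of two $\bG_m$-equivariant affine critical charts for the same $\bG_m$-invariant d-critical structure (of weight $\chi=0$) on $T$. Applying Proposition \ref{propcompareqchart}(ii) at each $\bG_m$-fixed $k$-point lying over a point of $\Grad([T/\bG_m])$ produces the required $\bG_m$-equivariant comparison diagram, built from $\bG_m$-equivariant étale morphisms of critical charts and stabilisations by $\bG_m$-equivariant nondegenerate quadratic forms $(E_i,q_i)$ of weight $\chi=0$. Taking $\bG_m$-quotients, the linear $\bG_m$-representations $(E_i,q_i)$ descend to quadratic bundles $([E_i/\bG_m],q_i)$ on $[\tilde V_i'/\bG_m]$, whose total spaces are precisely $[\tilde V_i'\times E_i/\bG_m]\simeq\bV_{[\tilde V_i'/\bG_m]}([E_i/\bG_m])$, and $\bG_m$-equivariant étale morphisms of critical charts descend to smooth morphisms of stacky critical charts; thus the entire equivariant diagram descends to the desired diagram of stacky critical charts on $(\mX,s)$, and joint surjectivity onto $\Grad(\mR_1)\times_{\Grad(\mX)}\Grad(\mR_2)$ is inherited from the choice in the first step.

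The principal obstacle is the Zariski-local lifting step: extending a smooth morphism $\psi_i:[T/\bG_m]\to\mR_i\hookrightarrow\mU_i$ to a smooth $\tilde\psi_i:[\tilde V_i/\bG_m]\to\mU_i$ with the prescribed base change along the closed immersion $\mR_i\hookrightarrow\mU_i$. This is a $\bG_m$-equivariant stacky analogue of the standard étale-lifting result \cite[Tag 04D1]{stacks-project}, which can be established by the same $\Gamma$-graded analysis of standard smooth presentations used in the proof of Lemma \ref{lemcompareqquadform} (working with smooth presentations of $\mU_i$ by affine schemes and passing to $\bG_m$-quotients); only a local, rather than global, lift is needed, since the conclusion of the proposition explicitly permits a family of such comparison diagrams.
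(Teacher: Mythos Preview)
Part (i) is essentially the paper's argument and is fine.

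Part (ii) has a genuine gap in the ``principal obstacle'' paragraph. Your lifting step presupposes that the morphisms $\psi_i:[T/\bG_m]\to\mR_i$ can be treated as honest $\bG_m$-equivariant morphisms of affine schemes, so that the $\Gamma$-graded version of \cite[Tag 04B1, 04D1]{stacks-project} applies. But Lemma~\ref{covergrad} only produces a smooth morphism of \emph{stacks} $[T/\bG_m]\to\mR_1\times_\mX\mR_2$, and even after reducing $(\mR_i,\mU_i,f_i,i_i)$ to quotient form $([R_i/\bG_m],[U_i/\bG_m],f_i,i_i)$ (a preliminary step the paper carries out by a first application of Lemma~\ref{covergrad} to the $\mU_i$, which you omit), a morphism $[T/\bG_m]\to[R_i/\bG_m]$ need not come from a $\bG_m$-equivariant morphism $T\to R_i$. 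Equivalently, the two $\bG_m$-torsors on $[T/\bG_m]$ pulled back from source and target need not be isomorphic. Without this, there is no $\bZ$-grading on rings to run the $\Gamma$-graded lifting, and you cannot feed the resulting pair of charts into Proposition~\ref{propcompareqchart}(ii), which requires genuinely $T$-equivariant affine critical charts.

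The paper resolves this by working pointwise over $\Grad(\mR_1)\times_{\Grad(\mX)}\Grad(\mR_2)$: one first replaces the $\bG_m$-actions by suitable powers so that the chosen graded point corresponds to the identity cocharacter on both sides, ensuring that $[\tilde R/\bG_m]\to[R_i/\bG_m]$ induces an isomorphism of stabilizers at the underlying point. This gives an isomorphism of the two $\bG_m$-torsors over the residual gerbe, and since the $\mathrm{Isom}$-stack of these torsors is smooth and affine over $[\tilde R/\bG_m]$, one extends this section \'etale-locally via \cite[Theorem 7.18]{Alper2019TheL}. Only then does one obtain $\bG_m$-equivariant maps $\tilde R\to R_i$ of schemes, after which the $\Gamma$-graded lifting and Proposition~\ref{propcompareqchart}(ii) apply. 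This ``strictification'' argument is the real content missing from your outline; the lifting you flag as the obstacle is straightforward once it is in place.
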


\begin{proof}
    \begin{enumerate}
        \item[$i)$] Using Halpern-Leistner's Lemma \ref{covergrad}, one has a smooth cover of $\mX$ by quotients $[X/\bG_m]$ of affine scheme with $\bG_m$-action such that the $\Grad([X/\bG_m])$ cover $\Grad(\mX)$. Then $X$ inherits a $\bG_m$-equivariant $d$-critical structure (\ie, with weight $\chi=0$). Using \cite[Proposition 2.43]{Joyce2013ACM} (Proposition \ref{propcompareqchart} i) here), one can find $\bG_m$-equivariant critical charts $(R,U,f,i)$ covering $X$ for the Zariski topology (with $f$ $\bG_m$-invariant), which gives critical charts $([R/\bG_m],[U/\bG_m],f,i)$ covering $[X/\bG_m]$ for the Zariski topology, such that; from \cite[Corollary 1.1.7]{HalpernLeistner2014OnTS}, the $(\Grad([R/\bG_m])$ are covering $(\Grad([X/\bG_m])$, \ie also $\Grad(\mX)$, hence we are done.\medskip

        \item[$ii)$] Using Halpern-Leistner's Lemma \ref{covergrad} applied to $\mU_1,\mU_2$, one can find smooth restrictions of critical charts $([R_i/\bG_m],[U_i/\bG_m],f_i,i_i)\to (\mR_i,\mU_i,f_i,s_i)$ with $U_i$ affine covering $\mU_i$ such that the $(\Grad([R_i/\bG_m])\to \mR_i$ are jointly surjective, hence we can suppose that the two charts are of the form $([R_i/\bG_m],[U_i/\bG_m],f_i,i_i)$ with $U_i$ affine. Apply now Halpern-Leistner's Lemma \ref{covergrad} to $[R_1/\bG_m]\times_\mX[R_2/\bG_m]$ to cover it by some $[\tilde{R}/\bG_m]$ such that the $\Grad([\tilde{R}/\bG_m])$ cover $\Grad([R_1/\bG_m])\times_{\Grad(\mX)}\Grad([R_2/\bG_m])$.\medskip
        
        Consider $(y_1,y_2)\in \Grad([R_1/\bG_m])\times_{\Grad(\mX)}\Grad([R_2/\bG_m])(k)$, and take a point $\tilde{y}\in \Grad([\tilde{R}/\bG_m])(k)$ over $(y_1,y_2)$. One can replace the $\bG_m$-action on $R_i,\tilde{R}$ by some power of it, such that $y_i,\tilde{y}$ corresponds to the cocharacter $1$ (this leads us to replace the critical charts by an étale cover). It means in particular that the points $x_i:=\iota(y_i)\in[R_i/\bG_m](k),\tilde{x}:=\iota(\tilde{y})\in[\tilde{R}/\bG_m](k)$ underlying $y_i,\tilde{y}$ have stabilizer $\bG_{m,k}$, and that the smooth morphism $[\tilde{R}/\bG_m]\to[R_i/\bG_m]$ induces an isomorphism of stabilizers at $\tilde{x}\to x_i$. Notice that these morphisms do not have to come from a $\bG_{m,k}$-equivariant morphism $\tilde{R}\to R_i$, but we can argue as in the proof of \cite[Theorem 1.13]{Alper2024StructureRF}. Namely, the fact that $[\tilde{R}/\bG_m]\to [R_i/\bG_m]$ comes from a $\bG_m$-equivariant morphism $\tilde{R}\to R_i$ is equivalent to say that the following square is commutative:
\[\begin{tikzcd}
    {[\tilde{R}/\bG_m]}\arrow[r]\arrow[d] & B\bG_m\arrow[d,"="]\\
    {[R_i/\bG_m]}\arrow[r] & B\bG_m
\end{tikzcd}\]
denote by $p,q:[\tilde{R}/\bG_m]\to B\bG_m$ the two compositions morphisms, which corresponds to two $\bG_m$-torsors $\mPP,\mathcal{G}$ on $[\tilde{R}/\bG_m]$. Notice that, because $[\tilde{R}/\bG_m]\to [R_i/\bG_m]$ induces an isomorphism of stabilizer at $\tilde{x}$, there is an isomorphism $p\simeq q$ over the gerbe of $\tilde{x}$, which gives a section at the gerbe of $\tilde{y}$ of the stack of isomorphisms between the torsors $\mPP$ and $\mathcal{Q}$. But the latter is a smooth and affine stack (because $\bG_m$ is itself smooth and affine) over $[\tilde{R}/G]$: From \cite[Theorem 7.18]{Alper2019TheL}, there is then an extension of this section étale locally near $\tilde{x}$. In other term, up to precomposing by an étale map, we can assume that $[\tilde{R}/\bG_m]\to [R_i/G]$ comes from a $\bG_m$-equivariant map $\tilde{R}\to R_i$. Applying this procedure for $i=1$ and further for $i=2$, and shrinking $\tilde{R}$ étale locally near $\tilde{x}$, we have obtained an affine $\bG_m$-equivariant scheme $\tilde{R}$ with $\bG_m$-equivariant smooth maps $\tilde{R}\to R_i$ such that, considering the smooth morphisms $\Grad([\tilde{R}/\bG_m])\to \Grad([R_1/\bG_m])\times_{\Grad(\mX)}\Grad([R_2/\bG_m])$, there is a point $\tilde{y}$ over $(y_1,y_2)$, with image $\tilde{x}\in R_1$. We will use repeatedly below the fact that the lift of a $\bG_m$-fixed point along a $\bG_m$-equivariant map $Y\to Z$ is automatically $\bG_m$-fixed, and then gives a lift of the corresponding point along $\Grad(Y)\to \Grad(Z)$.\medskip

We can now use the smooth version of the torus-equivariant analog of \cite[Tag 04B1]{stacks-project} discussed in the proof of Proposition \ref{propcompareqchart} above, giving smooth $\bG_m$-equivariant maps $\check{R}:=\check{R}_1\cap \check{R}_2\to R_i$ with a $\bG_m$-fixed point $\check{x}$ lying over the $x_i$, such that the two composed maps $[\check{R}/\bG_m]\to [R_i/\bG_m]\to\mX$ are isomorphic, and $\bG_m$-equivariant closed embeddings into smooth schemes $\check{R}\to \check{U}_i$ over $R_i\to U_i$. Now, consider the critical charts $([\check{R}/\bG_m],[\check{U}_i/\bG_m],\check{f}_i,\check{i}_i)$ of $(\mX,s)$ induced by smooth restriction along $[\check{U}_i/\bG_m]\to [U_i/\bG_m]$: these are two critical charts for the $d$-critical structure induced on $[\check{R}/\bG_m]$ by pullback from those of $\mX$. In other terms, $(\check{R},\check{U}_i,\check{f}_i,\check{i}_i)$ are two $\bG_m$-equivariant critical charts for the $\bG_m$-invariant (\ie, with weight $0$) $d$-critical structure on $\check{R}$. Applying Proposition \ref{propcompareqchart} $ii)$, one finds the following diagram of critical charts of $\mX$:

 \adjustbox{scale=0.8,center}{\begin{tikzcd}[column sep=tiny]
      &  & ([U/\bG_m],f)\arrow[dl]\arrow[dr] &  &   \\
     ([U'_1/\bG_m],f'_1)\arrow[d]\arrow[dotted,r] & ([U'_1\times E_1/\bG_m],f'_1\circ\pi_1+ q_1) &  &([U'_2\times E_2/\bG_m],f'_2\circ\pi_2+ q_2)&  ([U'_2/\bG_m],f'_2)\arrow[d]\arrow[dotted,l] \\
     ([\check{U}_1/\bG_m],\check{f}_1)\arrow[d]&&&&([\check{U}_2/\bG_m],\check{f}_2)\arrow[d]\\
     ([U_1/\bG_m],f_1) &&&& ([U_2/\bG_m],f_2)
\end{tikzcd}}

with a point $x\in [R/\bG_m]$ lying over $\check{x}$, hence over $\tilde{x}$, where the vertical arrows from the first to the second line, and from the second line to the third line, are étale and representable. Hence, from the above remark there is a point $y\in\Grad([R/\bG_m])$ over $\tilde{y}\in\Grad([\tilde{R}/\bG_m])$, hence over $(y_1,y_2)\in\Grad(\mR_1)\times_{\Grad(\mX)}\Grad(\mR_2)$. Notice that the dotted arrows give a stabilization by quadratic bundle.
\end{enumerate}

\end{proof}

\subsubsection{Shifted symplectic version}\label{shifsympchart}

We begin by discussing $T$-equivariant $-1$-shifted symplectic structures. Consider a derived algebraic space $\ud{X}$ with the action of a torus $T$. We follow the discussion of \cite[Remark 1.4, Section 2.1.1]{Calaque2016ShiftedCS}. From \cite[Remark 2.4.8]{Calaque2015ShiftedPS}, there is a relative version of the De Rahm complex, such that for any morphism of derived stack $\ud{\mX}\to \ud{\mY}$, $\mathbf{DR}(\ud{\mX}/\ud{\mY})$ forms global section of a sheaf of mixed graded complexes $\mathcal{DR}(\ud{\mX}/\ud{\mY})$ on $\ud{\mY}$. In particular, $\mathcal{DR}([\ud{X}/T]/BT)$ is a sheaf of mixed graded complex on $BT$, whose pullback along $\Spec(k)\to BT$ is $\mathbf{DR}(\ud{X})$: it gives then an extra $\Gamma:=\Hom(T,\bG_m)$-grading on the mixed graded complex $\mathcal{DR}(\ud{X})$ of forms: we will then talk about forms of weight $\chi\in \mathcal{DR}(\ud{X})$. Applying the functor $NC^w$ of \cite[Section 1.1]{shifsymp}, we obtain a $\Gamma$-grading on the complex $NC^w(\ud{X})$ of closed forms. From the functoriality of the relative De Rahm complex, we obtain directly that pullbacks along $T$-equivariant morphisms respect this grading.\medskip

We define then a $-1$-shifted symplectic structure on $\ud{X}$ of weight $\chi$ to be a nondegenerate closed $2$-form $\omega$ of degree $-1$ in the weight $\chi$ part of $NC^w(\ud{X})$. Given a $-1$-shifted symplectic structure on $\ud{X}$ $\omega$, putting an equivariant structure of weight $\chi$ on it amount to give an isomorphism $\omega\sim \omega^\chi$, with $\omega^\chi$ with grading $\chi$. Given two such choice, the isomorphism $\omega^\chi\sim \omega (\omega^\chi)'$ does not have to have grading $\chi$, hence this is really a structure, and not a property. To avoid confusion, we denote by $\sim_\chi$ a homotopy in the $\chi$-graded part. Putting an equivariant structure of weight $0$ on $\omega$ amount to descend it to a closed $2$-form of degree $-1$ on $[\ud{X}/T]$ (which cannot be nondegenerate from the inspection of the cotangent complex). We quickly check that the Darboux theorem \cite[Theorem 5.18]{darbscheme} admits a $T$-equivariant version:

\begin{lemma}\label{lemdarbgmeq}
    Consider a $T$-equivariant $-1$-shifted symplectic algebraic space $(\ud{X},\omega)$ of weight $\chi\in \Hom(T,\bG_m)$ and a point $x\in X(k)=\ud{X}(k)$. There is a smooth affine scheme $U$ with $T$-action, a $T$-equivariant function of weight $\chi$ $f:U\to\bA^1_k$ and a $T$-equivariant étale map $\phi:\ud{R}:=\ud{\Crit(f)}\to \ud{X}$ over $x$ such that $f|_{R^{red}}=0$ and $\phi^\ast(\omega)\sim_\chi \omega_{\ud{\Crit(f)}}$. In particular, its classical truncation $(X,s)$ is $T$-equivariant of weight $\chi$, $(R,U,f,i)$ is a $T$-equivariant étale critical chart, and the isomorphism $\det(\bL_{\ud{X}})|_{X^{red}}\simeq K_{X,s}$ respects the $T$-equivariant structure from Corollary \ref{corcanbunTeq}.
\end{lemma}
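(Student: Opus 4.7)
The plan is to follow the proof of the non-equivariant Darboux theorem \cite[Theorem 5.18]{darbscheme} step by step, keeping track of the $T$-action throughout. The decisive feature is that $T$ is linearly reductive, so every $T$-representation splits as a direct sum of weight spaces, and all the auxiliary choices in the proof of \cite[Theorem 5.18]{darbscheme} can be carried out with homogeneous elements.

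First, I would use \cite[Theorem 4.1]{Alper2015AL} (as in the proof of Proposition \ref{propcompareqchart} $i)$) to replace $\ud{X}$ by a $T$-equivariant pointed étale neighborhood with affine classical truncation, so we may assume that $X$ is affine with a good $T$-action. Next, in the proof of \cite[Theorem 5.18]{darbscheme} one chooses a minimal set of generators of the cotangent complex of $\ud{X}$ at $x$ and lifts them to étale coordinates on an ambient smooth scheme $U$. Because $x$ is $T$-fixed (as $T$ is connected and fixes the point by hypothesis) the cotangent complex at $x$ decomposes into weight spaces, so one can pick the generators to be homogeneous. The $T$-equivariant analogue of \cite[Tag 04D1]{stacks-project} that was already established inside the proof of Proposition \ref{propcompareqchart} $ii)$ produces, after shrinking to a $T$-invariant distinguished open, a smooth affine $T$-scheme $U$ together with a $T$-equivariant étale map $\ud{X}\to U$ realizing these coordinates.

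The potential $f$ is then obtained by integrating the primitive of $\omega$ built in \cite{darbscheme} from the chosen coordinates. This construction is natural with respect to $T$-equivariant data: since $\omega$ has weight $\chi$ in $NC^w(\ud{X})$, the resulting function $f\colon U\to\bA^1_k$ is automatically $T$-equivariant of weight $\chi$ (equivalently, $\bA^1_k$ carries the weight-$\chi$ $T$-action). I would then invoke the $T$-equivariant analogue of \cite[Proposition 2.44]{Joyce2013ACM} used in the proof of Proposition \ref{propcompareqchart} to shrink $U$ so that $\Crit(f)=R$ and $f|_{R^{red}}=0$, which only requires $T$-invariant open restrictions and hence preserves equivariance.

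The main obstacle is the refined statement $\phi^\ast(\omega)\sim_\chi \omega_{\ud{\Crit}(f)}$ \emph{inside the weight-$\chi$ part} of the relative De Rham complex $\mathcal{DR}([\ud{X}/T]/BT)$, not merely up to unconstrained homotopy. In \cite{darbscheme} this equivalence is produced as an explicit cycle in the negative cyclic complex, built out of the chosen coordinates and the primitive. Running the same construction from our $T$-equivariant choices, every term of that cycle lies in the weight-$\chi$ component of $\mathcal{DR}(\ud{X})$, so the resulting homotopy is itself $\chi$-homogeneous, giving the desired $\sim_\chi$-equivalence. The three stated consequences then follow formally: the classical truncation $(X,s)$ inherits a $T$-equivariant $d$-critical structure of weight $\chi$ because $s$ is extracted from $f\bmod I_{R,U}^2$; $(R,U,f,i)$ is a $T$-equivariant étale critical chart by construction; and the isomorphism $\det(\bL_{\ud{X}})|_{X^{red}}\simeq K_{X,s}$ is, on the chart, built from $\det(\mathrm{hess}_f)$, which is $T$-equivariant of weight $\chi^{\dim(U)}$, matching precisely the $T$-equivariant structure fixed in Corollary \ref{corcanbunTeq}.
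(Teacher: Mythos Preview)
Your proposal is correct and follows the same strategy as the paper: reduce to an affine $T$-equivariant derived scheme via \cite[Theorem 4.1]{Alper2015AL}, then rerun the argument of \cite[Theorem 5.18]{darbscheme} making all auxiliary choices $\Gamma$-homogeneous (with $\Gamma=\Hom(T,\bG_m)$), so that the primitive $\Phi$ and the resulting homotopy lie in the weight-$\chi$ part of the cyclic complex. Two small inaccuracies are worth flagging. First, $x$ is \emph{not} assumed $T$-fixed; the reason one can choose homogeneous generators is that after the first reduction the cdga $\ud{R}$ is globally $\Gamma$-graded, not that the cotangent fibre at $x$ splits into weight spaces. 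Second, the paper's treatment of $f|_{R^{red}}=0$ is more precise than invoking \cite[Proposition 2.44]{Joyce2013ACM}: when $\chi\neq 0$ the locally constant function $\Phi|_{X^{red}}$ automatically vanishes (a nonzero constant cannot have nontrivial weight), while for $\chi=0$ one adjusts the lift of $\omega$ to $HC^{-2}(\ud{R})(1)$ by an element of $HP^{-4}(\ud{R})(2)$, which conveniently has weight $0$. The canonical-bundle check at the end matches the paper's computation that $\bL_{\ud{R}}\simeq[T_U\otimes\chi\to T^\ast U]$ as a $T$-equivariant complex, giving the twist by $\chi^{-\dim U}$ prescribed in Corollary~\ref{corcanbunTeq}.
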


\begin{proof}
    From \cite[Theorem 4.1]{Alper2015AL}, we have at the level of the classical truncation a $T$-equivariant étale pointed map $(X',x')\to (X,x)$, which gives an étale map $[X'/T]\to [X/T]$ over $BT$. By descent from \cite[Lemma 2.1.5]{Gaitsgory_Rozenblyum_2017}, there is a unique étale map $\ud{\mX}'\to[\ud{X}/T]$ over $BT$ extending this map. By denoting $\ud{X}'$ the pullback of $\ud{\mX}'$ along $\Spec(k)\to BT$, one obtains that $\ud{X}'$ is affine, as its classical truncation $X'$ is affine, and it has a $T$-action such that $\ud{X}'\to\ud{X}$ is $T$-equivariant. Replacing $(\ud{X},\omega)$ by $(\ud{X}',\omega|_{\ud{X}'})$, we can then assume that $\ud{X}=\ud{\Spec}(\ud{R})$ is an affine derived scheme with $T$-action, \ie $\ud{R}$ is a $\Gamma$-graded cdga of finite presentation over $k$.\medskip
    
    We can now do exactly the same operations than in \cite[Theorem 4.1]{darbscheme}, but in a way consistent with the $\bZ$-grading of $\ud{R}$: then, up to replacing $\ud{\Spec}(\ud{R})$ by a $T$-invariant Zariski open neighborhood of $x$, we can assume that $\ud{R}$ is a $\bZ$-graded standard form cdga, giving a minimal standard neighborhood of $x$. Then $\ud{R}(0)$ is smooth of dimension $m$, and $\ud{R}$ is freely generated over $\ud{R}(0)$ by $m$ homogeneous generators in degree $-1$. Moreover, as pullbacks by $T$-equivariant map preserves the grading, $\ud{\phi}^\ast(\omega)$ enhance to a $T$-equivariant closed $2$-form of degree $-1$ and weight $\chi$ on $\ud{\Spec}(\ud{R})$. The periodic cyclic complex and cyclic complex of \cite[Definition 5.5]{darbscheme}, and then their cohomology, admits an obvious $\Gamma$-graded enhancement, such that the exact sequence \cite[Proposition 5.6 a)]{darbscheme} respects this $\Gamma$-grading. The isomorphism \cite[Proposition 5.6 b)]{darbscheme} admits an obvious $\Gamma$-graded enhancement, such that $HP^{-4}(\ud{R})(2)$ has obviously weight $0$. We can then argue as in \cite[Proposition 5.7 a)]{darbscheme}, taking a lift of $\omega$ to $HC^{-2}(\ud{R})(1)$ of weight $\chi$, giving $\Phi\in \ud{R}^0=\ud{R}(0)$ and $\phi\in (\Omega^1_{\ud{R}})^{-1}$ of weight $\chi$, such that $d\boldsymbol{\Phi}=0$, $d_{dR}\boldsymbol{\Phi}+d\Phi=0$ and $\omega\sim_\chi (d_{dR}\Phi,0,0,...)$. We can adapt the arguments of \cite[Proposition 5.7 b)]{darbscheme}: if $\chi\neq 0$, obviously $\Phi_{X^{red}}=0$ as it is constant on $X^{red}$, and, if $\chi=0$, one can adjust the value of $\Phi_{X^{red}}$ to be $0$ by adding an element of $HP^{-4}(\ud{R})(2)$, which automatically has weight $0$. We can now argue as in \cite[Section 5.6, Step 1]{darbstack}: in \cite[Diagram 5.30]{darbstack}, the morphism $(V^{-1})^\ast\to V^0$ induced by $d_{dR}\Phi$ has $\Gamma$-weight $\chi$, hence the open subset where it is an isomorphism is $T$-invariant, hence we can localize around $x$.\medskip
    
    We can now argue as in \cite[Section 5.6, Step 2]{darbstack}: we localize at a $T$-invariant open neighborhood of $x$, and take homogeneous étale coordinates $x_1,...,x_m$ for $\ud{R}(0)$. We can then choose generators $y_1,...,y_m$ in degree $-1$, homogeneous of $\Gamma$-weights opposite to those of the $x_i$, such that the $d_{dR}y_i$ forms the dual basis of the $d_{dR}x_i$ under $(V^{-1})^\ast\simeq V^0$. As $\boldsymbol{\Phi}$ and $\Phi$ have $\Gamma$-weight $\chi$, one checks directly that all the operations of \cite[Section 5.6, Step 2]{darbstack} respects the $\Gamma$-grading (notice that the discussion on the master equation is irrelevant for $k=-1$). We obtain then a version with homogeneous coordinates of \cite[Theorem 5.18 a)]{darbstack}. Then, consider the smooth scheme with $T$-action $U:=\Spec(\ud{R}(0))$, the $T$-invariant function $f:U\to\bA^1_k$ corresponding to the element $\boldsymbol{\Phi}\in \ud{B}^0$ of $\Gamma$-weight $\chi$. We obtain that $\ud{\Spec}(\ud{R})=\ud{\Crit}(f)$ as a derived scheme with $T$-action, and that $\omega_{\ud{\Crit(f)}}\sim_\chi \omega|_{\ud{R}}$.\medskip

    From \cite[Theorem 6.6]{darbscheme}, for such a data, $(R,U,f,i)$ forms a critical chart for $(X,s)$. Then $(R\times T,U\times T,f\circ\mu,i)$ (resp. $(R\times T,U\times T,f\circ pr_2,i)$) give a critical chart for $(X\times T,\mu^\star(s))$ (resp. $(X\times T,(pr_2)^\star(s))$, and, as $f\circ\mu=\chi\cdot f\circ pr_2$, we have that $(\mu^\star(s))|_{R\times T}=\chi\cdot(pr_2)^\star(s)|_{R\times T}$. As such critical charts cover $R\times T$, we have $(\mu^\star(s))=\chi\cdot(pr_2)^\star(s)$, and then $s$ is $T$-equivariant of weight $\chi$. On such a critical chart, the isomorphism $\det(\bL_{\ud{X}})|_{R^{red}}\simeq K_{X,s}|_{R^{red}}$ is given in \cite[Theorem 6.6]{darbscheme} by the isomorphism:
    \begin{align}
        \det(\bL_{\ud{R}})|_{R^{red}}\simeq i^\ast (K_U^{\otimes 2})|_{R^{red}}
    \end{align}
    obtained by writing $\bL_{\ud{R}}$ as $T_U|_{R}\overset{hess_f}{\to} T^\ast U$. $f$, and then $hess_f$, have weight $\chi$, so, as a $T$-equivariant complex, $\bL_{\ud{R}}$ can be written as $T_U\otimes \chi|_{R}\overset{hess_f}{\to} T^\ast U$, which gives an isomorphism:
    \begin{align}
        \det(\bL_{\ud{R}})|_{R^{red}}\simeq i^\ast (K_U^{\otimes 2})|_{R^{red}}\otimes\chi^{-n}
    \end{align}
    giving that $\det(\bL_{\ud{X}})|_{R^{red}}\simeq K_{X,s}|_{R^{red}}$ is $T$-equivariant, hence $\det(\bL_{\ud{X}})|_{X^{red}}\simeq K_{X,s}$ is $T$-equivariant.
\end{proof}

The following will be the key point to compare the $\Grad$ construction at the $-1$-shifted symplectic level and at the $d$-critical level:

\begin{lemma}\label{lemdarbgmst}
    Consider a $-1$-shifted symplectic stack $(\ud{\mX},\omega)$ and a point $x\in \Grad(\mX)(k)$. There is a smooth map $\bar{\ud{\phi}}:([\ud{R}/\bG_{m,k}],y)\to (\ud{\mX},\iota(x))$, with $\ud{R}$ a derived scheme with $\bG_{m,k}$-action restricting to the map $B\bG_{m,k}\to \ud{\mX}$ classified by $x$, a smooth scheme $U$ with $\bG_m$-action and a $\bG_m$-invariant functions $f:U\to\bA^1$ and a map $\ud{j}:\ud{R}\to \ud{\Crit}(f)$ inducing an isomorphism on the classical truncation, such that $f|_{R^{red}}=0$ and, denoting $\ud{\phi}:\ud{R}\to\ud{\mX}$, $\ud{j}^\ast(\omega_{\ud{\Crit(f)}})\sim \ud{\phi}^\ast(\omega)$.
\end{lemma}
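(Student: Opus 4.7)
The strategy is to combine Halpern-Leistner's Lemma~\ref{covergrad} with the $\bG_m$-equivariant Darboux theorem for derived schemes (Lemma~\ref{lemdarbgmeq}), extending the argument of \cite[Theorem~2.10]{darbstack} to the $\bG_m$-equivariant setting at the graded point $x$.

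First I use Lemma~\ref{covergrad} to obtain a smooth representable morphism $[X/\bG_{m,k}] \to \mX$ with $X$ affine, such that $\Grad([X/\bG_{m,k}]) \to \Grad(\mX)$ hits $x$. By Proposition~\ref{prophlp}, $\Grad([X/\bG_{m,k}]) = \bigsqcup_{\lambda \in \bZ} [X^{0,\lambda}/\bG_{m,k}]$, so the preimage of $x$ lies in the $\lambda$-th component for some $\lambda$; after replacing the $\bG_m$-action on $X$ by its $\lambda$-th power (which leaves $[X/\bG_{m,k}]$ unchanged), I may assume $\lambda = 1$. The preimage then underlies a $\bG_m$-fixed point $x' \in X(k)$. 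Taking the derived fiber product $\ud{X} := X \times_{\mX}^h \ud{\mX}$ enhances this canonically to a smooth $\bG_m$-equivariant morphism $\ud{X} \to \ud{\mX}$ of derived stacks, so that $[\ud{X}/\bG_{m,k}] \to \ud{\mX}$ is smooth.

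Second, I run the derived Darboux procedure on $(\ud{X}, \omega|_{\ud{X}})$ near $x'$, $\bG_m$-equivariantly throughout. The pullback $\omega|_{\ud{X}}$ is $\bG_m$-invariant and closed of degree $-1$, but not symplectic: its degeneracy is controlled by $\bL_{\ud{X}/\ud{\mX}}$. Nevertheless, the argument of \cite[Theorem~2.10]{darbstack} handles exactly this presymplectic case by extracting a Lagrangian slice, producing a smooth scheme $U$, a function $f : U \to \bA^1_k$ with $f|_{R^{\mathrm{red}}} = 0$, and a map $\ud{j} : \ud{R} \to \ud{\Crit}(f)$ that is an isomorphism on classical truncations and satisfies $\ud{j}^\ast(\omega_{\ud{\Crit}(f)}) \sim \omega|_{\ud{R}}$. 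To carry this out $\bG_m$-equivariantly near $x'$, I follow the $\Gamma$-graded strategy of Lemma~\ref{lemdarbgmeq}: $\bG_m$-equivariant étale localization via Alper's theorem \cite[Theorem~4.1]{Alper2015AL}, a standard-form cdga presentation with $\bZ$-homogeneous generators, extraction of weight-$0$ cocycles $(\boldsymbol{\Phi}, \Phi, \phi)$ in the $\bZ$-graded cyclic complex, and diagonalization of the Hessian by pairing dual weight components (which is possible because the $\bG_m$-invariant $\omega$ induces a perfect pairing between weight-$w$ and weight-$(-w)$ parts of the relevant cotangent spaces, with the weight-$0$ part diagonalized after a further étale cover to extract square roots exactly as in the proof of Lemma~\ref{lemdarbgmeq}). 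The resulting $U, f, \ud{j}$ are $\bG_m$-equivariant with $f$ of weight $0$; composing $\ud{R} \to \ud{X} \to \ud{\mX}$ and descending to the quotient gives the desired $\bar{\ud{\phi}} : [\ud{R}/\bG_{m,k}] \to \ud{\mX}$, and $y \in \Grad([\ud{R}/\bG_{m,k}])$ is specified by the $\bG_m$-fixed point over $x'$ with the canonical cocharacter; compatibility $\bar{\ud{\phi}} \circ y = \iota(x)$ is immediate.

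\textbf{The main obstacle} is the $\bG_m$-equivariant refinement of the Lagrangian slice and Gram-Schmidt normalization step of \cite{darbstack} in the presymplectic regime: one must check that the non-degeneracy condition controlling the reduction to the Lagrangian slice (appearing in the analogue of \cite[Diagram~5.30]{darbstack}) holds on a $\bG_m$-invariant Zariski neighborhood of $x'$, and that the homogeneous pairing of weight components employed in Lemma~\ref{lemdarbgmeq} remains compatible with the slicing when the input form is only presymplectic. Both verifications reduce to the $\bG_m$-invariance of $\omega|_{\ud{X}}$ together with the fact that at the fixed point $x'$ the weight decompositions of $\bL_{\ud{X}/\ud{\mX}}|_{x'}$ and of $\bL_{\ud{X}}|_{x'}$ are canonical and split by $\omega$ into dual pairs of opposite weights.
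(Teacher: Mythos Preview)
Your overall strategy---obtain a smooth $\bG_m$-quotient presentation of $\ud{\mX}$ near $\iota(x)$, then run the $\bG_m$-equivariant Darboux procedure of Lemma~\ref{lemdarbgmeq} adapted to the presymplectic setting of \cite[Theorem~2.10]{darbstack}---matches the paper's, and your step~3 is essentially what the paper does. The genuine gap is in step~2. The formula $\ud{X} := X \times_{\mX}^h \ud{\mX}$ does not make sense: there is no map $\ud{\mX} \to \mX$, only the closed immersion $\mX \hookrightarrow \ud{\mX}$ of the classical truncation, and pulling back along that just returns $X$. What you actually need is a smooth map of \emph{derived} stacks $[\ud{X}/\bG_{m,k}] \to \ud{\mX}$ with $\ud{X}$ an affine derived scheme carrying a $\bG_m$-action, lifting your classical $[X/\bG_m] \to \mX$. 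Producing this is not formal: lifting a smooth map along the derived thickening $\mX \hookrightarrow \ud{\mX}$ is a deformation-theoretic problem, and even granting such a lift $\ud{\mY} \to \ud{\mX}$, you still need $\ud{\mY} \simeq [\ud{X}/\bG_m]$ for a derived \emph{scheme} $\ud{X}$, which requires lifting the $\bG_m$-action compatibly. The paper bypasses this entirely by invoking \cite[Theorem~1.13]{Alper2022ArtinAF} (a derived local structure theorem) applied to the smooth map $B\bG_{m,k} \to BG_{\iota(x)} \hookrightarrow \ud{\mX}$, which directly produces a smooth pointed map $([\ud{\Spec}(\ud{R})/\bG_{m,k}],y) \to (\ud{\mX},\iota(x))$ with $\ud{R}$ a $\bZ$-graded cdga and $\phi^{-1}(BG_{\iota(x)}) = B\bG_{m,k}$, giving both the derived affine presentation and the correct restriction to $x$ in one step.

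Two smaller points: your parenthetical ``replacing the $\bG_m$-action by its $\lambda$-th power leaves $[X/\bG_{m,k}]$ unchanged'' is false---it produces a $\mu_\lambda$-gerbe over the original quotient (the paper notes this gives an \'etale cover in the proof of Proposition~\ref{propcomparchart}~$ii)$); this doesn't break your argument but the claim is wrong. And your ``main obstacle'' paragraph conflates the Darboux procedure of Lemma~\ref{lemdarbgmeq} with the quadratic-form diagonalization of Lemma~\ref{lemcompareqquadform}: there is no Hessian Gram--Schmidt or square-root extraction in the Darboux step---the dual-weight pairing of generators comes directly from the nondegeneracy of the symplectic form on $(V^{-1})^\ast \simeq V^0$, and the degree-$-2$ generators $w_i$ (accounting for the presymplectic defect $\bL_{\ud{R}/\ud{\mX}}$) simply don't interfere, as the paper remarks.
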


\begin{proof}
    We will adapt the arguments of \cite[Theorem 2.10, Theorem 3.18]{darbstack}. Consider a point $x\in \Grad(\mX)(k)$, corresponding to a point $\iota(x)\in\mX(k)$, and a cocharacter $\bG_{m,k}\to G_{\iota(x)}$. Applying \cite[Theorem 1.13]{Alper2022ArtinAF} to the smooth map $B\bG_{m,k}\to BG_{\iota(x)}$, where $BG_{\iota(x)}$ is considered as a closed substack of $\ud{\mX}$, we obtain a smooth pointed map $\bar{\ud{\phi}}:([\ud{\Spec}(\ud{R})/\bG_{m,k}],y)\to (\ud{\mX},\iota(x))$, where $\ud{R}$ is a $\bZ$-graded cdga, and $\phi^{-1}(BG_{\iota(x)})=B\bG_{m,k}$. Then $\ud{\phi}:\ud{\Spec}(\ud{R})\to \ud{\mX}$ is smooth of dimension $\dim(G_{\iota(x)}):=n$, hence $\dim (T_{\ud{\Spec}(\ud{R}),y})=\dim(T_{\ud{\mX},\iota(x)}):=m$. We consider $\ud{\phi}^\ast(\omega)$, which is obtained by pullback from $\bar{\ud{\phi}}^\ast(\omega)$, and give then a closed $2$-form of degree $-1$ and weight $0$ on $\ud{\Spec}(\ud{R})$.\medskip
    
    As in the proof of \cite[Theorem 2.10 a)]{darbstack}, we follow closely the proof of Lemma \ref{lemdarbgmeq}, by first replacing $\ud{R}$ by a standard form $\bZ$-graded cdga, but where we have $n$ homogeneous generators $w_i$ in degree $-2$ that we fix. We follow then the operations of Lemma \ref{lemdarbgmeq}, with which the generators $w_i$ of degree $-2$ does not interfere. We obtain then a version with homogeneous coordinates of \cite[Theorem 2.10 a)]{darbstack}. As in \cite[Theorem 5.18 b)]{darbstack}, we denote by $\ud{B}\hookrightarrow\ud{R}$ the $\bZ$-graded sub-cdga generated by the $x_i$ and the $y_i$, and consider its dual $\ud{\Spec}(\ud{R})\to \ud{\Spec}(\ud{B})$, which is $\bG_m$-equivariant. Then, consider the smooth scheme with $\bG_m$-action $U:=\Spec(\ud{B}(0))$, the $\bG_m$-invariant function $f:U\to\bA^1_k$ corresponding to the element $\boldsymbol{\Phi}\in \ud{B}^0$ of $\bZ$-weight $0$. We obtain that $\ud{\Spec}(\ud{B})=\ud{\Crit}(f)$ as a derived scheme with $\bG_m$-action, and that $\ud{j}^\ast(\omega_{\ud{\Crit(f)}})\sim \ud{\phi}^\ast(\omega)$. By the construction of $\Phi$, we have also $f_{R^{red}}=0$.
\end{proof}

\subsection{Hyperbolic localization on d-critical stacks}

\subsubsection{D-critical stacks and graded points}

Consider a stack $\mX$. For a point $x\in\Grad(\mX)(k)$, corresponding to a map $g:B\bG_{m,k}\to\mX$, consider the Zariski tangent space $T_{\mX,\iota(x)}$, and the Lie algebra $\mathfrak{Iso}_{\iota(x)}(\mX)$ of the stabilizer group at $\iota(x)$. Both are obtained from the first cohomologies of $g^\ast\bT_\mX$, which provides them a $\bZ$-grading. Denote:
\begin{align}\label{defindmx}
    \Ind_\mX(x):=-\dim(\mathfrak{Iso}_{\iota(x)}(\mX)^{>0})+\dim((T_{\mX,\iota(x)})^{>0})-\dim((T_{\mX,\iota(x)})^{<0})+\dim(\mathfrak{Iso}_{\iota(x)}(\mX)^{<0})
\end{align}
When $\mX$ possesses an enhancement to a $-1$-shifted symplectic stack $(\ud{\mX},\omega)$, $\Ind_\mX(x)$ gives the signed count of negative weights in the tangent complex $(\ud{\iota}^\ast\bT_{\ud{\mX}})^{>0}$, see Lemma \ref{lemshifsymp} $ii)$. $\Ind_\mX$ can be thought as the virtual dimension of the fibers of the strata of $\Filt(\mX)$ flowing to a connected component of $\Grad(\mX)$. We obtain directly that, for two stacks $\mX,\mY$, one has $Ind_{\mX\times\mY}=Ind_\mX+Ind_\mY$. For a smooth morphism $\phi:\mX\to\mY$, the exact triangle of cotangent complexes gives an exact sequence of $\bZ$-graded vector spaces:
\begin{align}\label{exactseq}
    0\to \mathfrak{Iso}_{\iota(x)}(\mX/\mY)\to \mathfrak{Iso}_{\iota(x)}(\mX)\to \mathfrak{Iso}_{\phi(\iota(x))}(\mY)\to T_{\mX/\mY,\iota(x)}\to T_{\mX,\iota(x)}\to T_{\mY,\phi(\iota(x))}\to 0
\end{align}
which gives the formula $\Ind_\mX=\Ind_{\mY}\circ\phi +\Ind_\phi$.

\begin{proposition}\label{cordcritgraded}
    Consider $(\mX,s)$ a d-critical stack which is quasi-separated, locally of finite type, with reductive stabilizers and separated diagonal.
    \begin{enumerate}
        \item[$i)$] Denoting:
    \begin{align}
        \Grad(s):=\iota^\star(s)\in H^0(\Grad(\mX),\mS^0_{\Grad(\mX)})
    \end{align}
    (recalling that $\iota:\Grad(\mX)\to\mX$ is the natural morphism which 'forgets the gradation'), $(\Grad(\mX),\Grad(s))$ is a d-critical stack. A natural system of critical charts covering $(\Grad(\mX),\Grad(s))$ is given by the critical charts:
    \begin{align}
        \Grad(\mR,\mU,f,i):=(\Grad(\mR),\Grad(\mU),\Grad(f),\Grad(i))
    \end{align}
    for any critical chart $(\mR,\mU,f,i)$ of $(\mX,s)$. Then $\Grad$ enhance to a symmetric monoidal endofunctor of the 2-category of d-critical stacks (with smooth morphisms).
    \item[$ii)$] 
    $Ind_\mX:\Grad(\mX)\to\bZ$ is a locally constant function such that, for each critical chart $(\mR,\mU,f,i)$ of $(\mX,s)$, we have $Ind_\mX|_{\Grad(\mR)}=\Ind_{\mU,f}-\Ind_{\mR/\mX}$.
    \item[$iii)$] Given a d-critical stack $(\mX,s)$, there is a canonical square root $G_{\mX,s}$ of $\iota^\ast K_{\mX,s}\otimes K_{\Grad(\mX),\Grad(s)}^{\otimes-1}$, which allows to define an orientation $K_{\Grad(\mX),\Grad(s)}^{1/2}$ of $(\Grad(\mX),\Grad(s))$ from an orientation $K_{\mX,s}^{1/2}$ of ($\mX,s)$ by the formula:
    \begin{align}
        K_{\Grad(\mX),\Grad(s)}^{1/2}:=\iota^\ast K_{\mX,s}\otimes G_{\mX,s}^{\otimes -1}
    \end{align}
    Then $\Grad$ enhance to a symmetric monoidal endofunctor of the 2-category of oriented d-critical stacks (with smooth morphisms). With this choice of orientation, for each critical chart $(\mR,\mU,f,i)$, there is a natural isomorphism:
    \begin{align}\label{hyplocQ}
        \iota^\ast Q_{(\mR,\mU,f,i)}\simeq Q_{\Grad(\mR,\mU,f,i)}
    \end{align}
    which is compatible with smooth restriction and exterior product of critical charts. Given a stabilization by a quadratic form, the following square of isomorphisms is commutative:
    \[\begin{tikzcd}
        \iota^\ast( Q_{(\mR,\bV_\mU(\mE),f\circ\pi+q,s\circ i)}\otimes_{\bZ/2\bZ}P_{(\mE,q)}|_{\mR^{red}})\arrow[r,"\simeq"]\arrow[d,"\simeq"] &  \begin{tabular}{c}$Q_{(\Grad(\mR,\bV_{\mU}( \mE),f\circ\pi+q,s\circ i))}$\\$\otimes_{\bZ/2\bZ}P_{(\Grad(\mE)^0,\Grad(q)^0)}|_{\Grad(\mR)^{red}}$\end{tabular}\arrow[d,"\simeq"]\\
        \iota^\ast Q_{(\mR,\mU,f,i)}\arrow[r,"\simeq"] & Q_{\Grad(\mR,\mU,f,i))}
    \end{tikzcd}\]
    where the horizontal isomorphisms comes from \eqref{isomQstab} and \eqref{isomquadform}.
\end{enumerate}
\end{proposition}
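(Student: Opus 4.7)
The plan is to verify the three parts in sequence, drawing on the results established earlier.

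For part $i)$, for each critical chart $(\mR,\mU,f,i)$ of $(\mX,s)$ I show that $\Grad(\mR,\mU,f,i)$ is a critical chart for $(\Grad(\mX),\Grad(s))$: smoothness of $\Grad(\mU)$ is Lemma \ref{lemcotanbun}, closedness of $\Grad(i)$ follows from \cite[Corollary 1.1.7, Proposition 1.3.1 2)]{HalpernLeistner2014OnTS} (as used in Lemma \ref{lemhyploclosed}), the identity $\Crit(\Grad(f))=\Grad(\mR)$ is Lemma \ref{lemcritgrad}, and the condition $\Grad(f)+I^2_{\Grad(\mR),\Grad(\mU)}=\Grad(s)|_{\Grad(\mR)}$ follows immediately from the functoriality of $\mS^0$ along $\iota$ (Lemma \ref{defSx} $ii)$). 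Coverage of $\Grad(\mX)$ by such charts is exactly the content of Proposition \ref{propcomparchart} $i)$. The monoidality of $\Grad$ on oriented $d$-critical stacks follows from the monoidality of $\Map(B\bG_m,-)$, the analogous monoidality for $K_{-,-}$ recorded in Lemma \ref{lemsymmon}, and the functoriality of $\iota^\star$ on $\mS^0$.

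For part $ii)$, on a critical chart, writing $\bL_\mU$ at a graded point as a perfect complex in amplitude $[0,1]$ with $H^0=T_\mU^\ast$, $H^1=\mathfrak{Iso}_\mU^\ast$, the virtual ranks satisfy $d^\pm_\mU=\dim T_\mU^\pm-\dim\mathfrak{Iso}_\mU^\pm$ on weight pieces, so $\Ind_{\mU,f}$ agrees with the function $\Ind_\mU$ defined by \eqref{defindmx}. Since $T_{\mR}=\ker(hess_f)\subset T_\mU$ and $\mathfrak{Iso}_\mR=\mathfrak{Iso}_\mU$, and since $hess_f$ is a weight-$0$ symmetric pairing on $T_\mU$, symmetry forces $\mathrm{rk}(hess_f^{w})=\mathrm{rk}(hess_f^{-w})$, giving $\dim\ker(hess_f)^{>0}-\dim\ker(hess_f)^{<0}=\dim T_\mU^{>0}-\dim T_\mU^{<0}$; hence $\Ind_\mR=\Ind_{\mU,f}$ on $\Grad(\mR)$. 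Taking alternating sums of dimensions on weight pieces in the exact sequence \eqref{exactseq} applied to the smooth morphism $\phi:\mR\to\mX$ gives $\Ind_\mR=\Ind_\mX\circ\phi+\Ind_{\mR/\mX}$, and combining yields the stated formula. Local constancy of $\Ind_\mX$ then follows since each $\Ind_{\mU,f}$ and $\Ind_{\mR/\mX}$ are locally constant and the charts $\Grad(\mR)$ cover $\Grad(\mX)$ by part $i)$.

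For part $iii)$, on each critical chart I define
\begin{align*}
G_{\mX,s}|_{\Grad(\mR)^{red}}:=\Grad(i)^\ast\bigl(\det\iota^\ast\bL_\mU^{<0}\otimes\det\iota^\ast\bL_\mU^{>0}\bigr)\otimes\bigl(\det\iota^\ast\bL_\phi^{<0}\otimes\det\iota^\ast\bL_\phi^{>0}\bigr)^{\otimes -1}.
\end{align*}
Combining Lemma \ref{defcanbun} $i)$ and Lemma \ref{lemcotanbun}, this is tautologically a square root of $\iota^\ast K_{\mX,s}\otimes K_{\Grad(\mX),\Grad(s)}^{\otimes -1}$. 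The main task is to show these local definitions glue. By Proposition \ref{propcomparchart} $ii)$, any two critical charts of $(\mX,s)$ can be compared by chains of smooth restrictions and stabilizations by quadratic bundles. Compatibility with smooth restrictions is a direct functoriality check using the exact triangles of weight-graded cotangent complexes and the analogous compatibility for $K_{\mX,s}$ recorded in Lemma \ref{defcanbun} $ii)$. For stabilization by a quadratic bundle $(\mE,q)$, the nondegeneracy of the weight-$0$ form $q$ induces a perfect pairing between $\iota^\ast\mE^{w}$ and $\iota^\ast\mE^{-w}$, hence a canonical trivialization $\det\iota^\ast\mE^{<0}\otimes\det\iota^\ast\mE^{>0}\simeq\mathcal{O}$; together with the natural splitting $\bL_{\bV_\mU(\mE)}=\pi^\ast(\bL_\mU\oplus\mE)$, this shows $G$ is left invariant, matching exactly the change in $\iota^\ast K_{\mX,s}$ coming from Lemma \ref{defcanbun} $iii)$ (via $\det(q)$). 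The isomorphism \eqref{hyplocQ} and the stabilization square are then obtained by unfolding the definitions of the $\bZ/2\bZ$-torsors $Q$ and $P_{(\Grad(\mE),\Grad(q))}$, together with the identification \eqref{isomquadform}.

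The main obstacle is the gluing in part $iii)$: verifying that the isomorphisms relating $G_{\mX,s}$ under smooth restriction and stabilization by quadratic bundles are mutually compatible in a way parallel to (and compatible with) the gluing of $K_{\mX,s}$ itself in Lemma \ref{defcanbun}. The smooth-restriction compatibility reduces to a routine chase with cotangent triangles, but for stabilization one must carefully track the sign of the canonical trivialization $\det\mE^{<0}\otimes\det\mE^{>0}\simeq\mathcal{O}$ induced by $q$ against the $\bZ/2\bZ$-torsor $P_{(\mE,q)}$ of orientations, to ensure the induced orientation $K_{\Grad(\mX),\Grad(s)}^{1/2}$ matches \eqref{isomquadform} on the nose.
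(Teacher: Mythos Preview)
Your treatment of parts $i)$ and $ii)$ matches the paper's proof closely and is correct.

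For part $iii)$, your local definition of $G_{\mX,s}$ agrees with the paper's (the paper writes it as $G_{(\mR,\mU,f,i)}:=\Grad(i)^\ast K_\mU^{\neq 0}\otimes (K_{\mR/\mX}^{\neq 0})^{\otimes -1}$, which is exactly your formula), and your compatibility checks for smooth restrictions and stabilizations are the right elementary moves. However, you correctly identify but do not resolve the main obstacle: showing that the resulting transition isomorphisms satisfy the cocycle condition on triple overlaps. Proposition~\ref{propcomparchart} $ii)$ only gives you chains of comparisons between pairs of charts; it does not directly tell you that two different chains give the same isomorphism, which is what gluing a line bundle requires.

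The paper resolves this with a specific trick borrowed from \cite[Section~6]{Joyce2013ACM}: since $G_{(\mR,\mU,f,i)}$ is a line bundle on a \emph{reduced} stack, it suffices to compare stalks at each $k$-point. The paper defines an intrinsic (chart-independent) line
\[
K_{\mX,x}^{\neq 0}:=\det((T^\ast_{\mX,\iota(x)})^{\neq 0})\otimes\det(\mathfrak{Iso}_{\iota(x)}(\mX)^{\neq 0})
\]
and produces an isomorphism $G_{(\mR,\mU,f,i)}|_x\simeq K_{\mX,x}^{\neq 0}$ using the determinant of the Hessian of $f$ (which identifies $K_{\mU,i(x)}^{\neq 0}\simeq K_{\mR,x}^{\neq 0}$). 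One then checks that the elementary comparison isomorphisms for smooth restriction and stabilization are intertwined with this stalk identification; since the target $K_{\mX,x}^{\neq 0}$ is chart-independent, all cocycle conditions follow automatically. This stalk argument is also what makes the stabilization square at the end commute cleanly: the perfect pairing between $\Grad(\mE)^{<0}$ and $\Grad(\mE)^{>0}$ induced by $q$ is exactly the same pairing used both to glue $G$ under stabilization and to define \eqref{isomquadform}, so the two coincide by construction rather than by a sign chase. Without this device, your proposed verification of compatibility remains incomplete.
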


\begin{proof}
\begin{enumerate}
    \item[$i)$] Consider a critical chart $(\mR,\mU,f,i)$ of $(\mX,s)$, and the diagram:
    \[\begin{tikzcd}
        \Grad(\mX)\arrow[d] & \Grad(\mR)\arrow[l]\arrow[r]\arrow[d] & \Grad(\mU)\arrow[d]\arrow[dr,"\Grad(f)"] &\\
        \mX & \mR\arrow[l]\arrow[r] & \mU\arrow[r,"f"] & \bA^1_k
    \end{tikzcd}\]
    From the defining property of $\iota^\star$ given in Definition \ref{defSx}, $\Grad(s)|_{\Grad(\mR)}=\Grad(f)+I_{\Grad(\mR),\Grad(\mU)}^2$, and, from Lemma \ref{lemcritgrad}, $\Crit(\Grad(f))=\Grad(R)$, hence $(\Grad(\mR),\Grad(\mU),\Grad(f),\Grad(i))$ is a critical chart for $(\Grad(\mX),\Grad(s))$. Using Proposition \ref{propcomparchart} $i)$, such charts cover $\Grad(\mX)$, hence $(\Grad(\mX),\Grad(s))$ is a d-critical stack. Notice that the definition of $\Grad(s)$ is obviously functorial with smooth morphisms and symmetric monoidal, as this is the case for $f\mapsto f^\star$.\medskip
    
    \item[$ii)$] Consider a critical chart of $(\Grad(\mX),\Grad(s))$ of the form $(\Grad(\mR),\Grad(\mU),\Grad(f),\Grad(i))$. As $\mR\to\mX$ is smooth, we have proven above that $\Ind_\mX|_{\Grad(\mR)}=\Ind_\mR-\Ind_{\mR/\mX}$. But, given $x\in\Grad(\mR)(k)$, and the underlying point $\iota(x)\in\mR(k)$, the Hessian of $f$ gives a $\bG_m$-invariant quadratic form on $T_{\mU,\iota(x)}$, whose kernel is $T_{\mR,x'}$. We obtain then:
    \begin{align}
        \dim((T_{\mR,\iota(x)})^{>0})-\dim((T_{\mR,\iota(x)})^{<0})=\dim((T_{\mU,\iota(x)})^{>0})-\dim((T_{\mU,\iota(x)})^{<0})
    \end{align}
    As moreover $i$ is a closed immersion, we have $\mathfrak{Iso}_{\iota(x)}(\mU)=\mathfrak{Iso}_{\iota(x)}(\mR)$, which gives $\Ind_\mR(x)=\Ind_{\mU,f}(x)$, inducing the result. In particular, $\Ind_\mX|_{\Grad(\mR)}$ is locally constant. As $\Grad(\mX)$ is covered by critical charts of $(\Grad(\mX),\Grad(s))$ of the form $(\Grad(\mR,\mU,f,i))$, $\Ind_\mX$ is then locally constant on $\Grad(\mX)$.\medskip
    
    \item[$iii)$] For a smooth map $\mX\to\mY$ and a subset $I$ of $\bZ$, we consider the line bundle on $\Grad(\mX)^{red}$:
    \begin{align}\label{defKI}
        K_{\mX/\mY}^I:=\det((\iota^\ast \bL_{\mX/\mY})^I)|_{\Grad(\mX)^{red}}
    \end{align}
    Considering a critical chart $(\mR,\mU,f,i)$, we have:
    \begin{align}\label{eqsqrootorientloc}
        ((\iota_{\mX^{red}})^\ast K_{\mX,s})|_{\Grad(\mR)^{red}}\simeq& (\iota_{\mR^{red}})^\ast (K_{\mX,s}|_{\mR^{red}})\nn\\
        \simeq& (\iota_\mR)^\ast(i^\ast(K_\mU^{\otimes 2})\otimes K_{\mR/\mX}^{\otimes -2})\nn\\
        \simeq& \Grad(i)^\ast (\det((\iota_\mU)^\ast\bL_\mU)^{\otimes 2})\otimes \det((\iota_\mR)^\ast\bL_{\mR/\mX})^{\otimes -2})\nn\\
        \simeq& \Grad(i)^\ast (\det(((\iota_\mU)^\ast\bL_\mU)^0)^{\otimes 2})\otimes  \det(((\iota_\mR)^\ast\bL_{\mR/\mX})^0)^{\otimes -2})\nn\\
        & \otimes \Bigl( \Grad(i)^\ast \det((\iota_\mU)^\ast\bL_\mU)^{\neq 0}\otimes  \det(((\iota_\mR)^\ast\bL_{\mR/\mX})^{\neq 0})^{\otimes -1}\Bigr)^{\otimes 2}\nn\\
        \simeq& K_{\Grad(\mX),\Grad(s)}|_{\Grad(\mR)^{red}}\otimes \Bigl(\Grad(i)^\ast K_\mU^{\neq 0}\otimes (K_{\mR/\mX}^{\neq 0})^{\otimes -1}\Bigr)^{\otimes 2} 
    \end{align}
    (where, by a slight abuse of notation, we have omitted the restrictions to the reduced locus except from the first and last line for readability).
    We define then the line bundle on $\Grad(\mR)^{red}$:
    \begin{align}
        G_{(\mR,\mU,f,i)}:=\Grad(i)^\ast K_\mU^{\neq 0}\otimes (K_{\mR/\mX}^{\neq 0})^{\otimes -1}
    \end{align}
    and we will glue these into a line bundle $G_{\mX,s}$ on $\Grad(\mX)^{red}$, and glue the above isomorphisms into an isomorphism:
    \begin{align}\label{eqsqrootorient}
         \iota^\ast K_{\mX,s}\simeq K_{\Grad(\mX),\Grad(s)}\otimes G_{\mX,s}^{\otimes 2}
    \end{align}
    \medskip
    
    Given a smooth restriction of critical charts $(\mR',\mU',f',i')\to(\mR,\mU,f,i)$, we build an isomorphism as in \eqref{isomsmoothcanbun}:
    \begin{align}\label{isomsmoothrescanbunhyploc}
        G_{(\mR,\mU,f,i)}|_{\Grad(\mR')^{red}}&:=\Grad(i)^\ast K_\mU^{\neq 0}|_{\Grad(\mR')^{red}}\otimes K_{\mR/\mX}^{\neq 0}|^{\otimes -1}_{\Grad(\mR')^{red}}\nn\\
        &\simeq \Grad(i')^\ast(K_{\mU'}^{\neq 0}\otimes (K_{\mU'/\mU}^{\neq 0})^{\otimes -1})\otimes K_{\mR/\mX}^{\neq 0}|^{\otimes -1}_{\Grad(\mR')^{red}}\nn\\
        &\simeq \Grad(i')^\ast K_{\mU'}^{\neq 0}\otimes (K_{\mR'/\mR}^{\neq 0}\otimes K_{\mR/\mX}^{\neq 0}|_{\Grad(\mR')^{red}})^{\otimes -1}\nn\\
        &\simeq \Grad(i')^\ast K_{\mU'}^{\neq 0}\otimes (K_{\mR'/\mX}^{\neq 0})^{\otimes -1}\nn\\
        &=: G_{(\mR',\mU',f',i')}
    \end{align}
    By construction, this isomorphism is compatible with \eqref{eqsqrootorientloc}.\medskip
    
    Consider a critical chart $(\mR,\mU,f,i)$ and its stabilization by quadratic bundle stack $(\mR,\bV_\mX(\mE),f\circ\pi+q,s\circ i)$. We consider the isomorphism:
    \begin{align}
        K_{\bV_\mU(\mE)/\mU}^{\neq 0}\simeq K_{\bV_\mU(\mE)/\mU}^{< 0}\otimes K_{\bV_\mU(\mE)}^{> 0}\simeq \mathcal{O}_{\Grad(\mX)^{red}}
    \end{align}
    where the last isomorphism comes from the prefect pairing induced by $\det(q)$. We obtain from it the isomorphism:
    \begin{align}\label{isomstabquadcanbunhyploc}
        G_{(\mR,\bV_\mX(\mE),f\circ\pi+q,s\circ i)}&:= \Grad(s\circ i)^\ast K_{\bV_\mU(\mE)}^{\neq 0}\otimes (K_{\mR/\mX}^{\neq 0})^{\otimes -1}\nn\\
        &\simeq \Grad(s\circ i)^\ast (K_{\bV_\mU(\mE)/\mU})^{\neq 0}\otimes \Grad(i)^\ast K_\mU^{\neq 0}\otimes (K_{\mR/\mX}^{\neq 0})^{\otimes -1}\nn\\
        &\simeq G_{(\mR,\mU,f,i)}
    \end{align}
    By Lemma \ref{defcanbun} $iii)$, this isomorphism is compatible with \eqref{eqsqrootorientloc}.\medskip

    According to Proposition \ref{propcomparchart}, we can use these isomorphisms to glue the $G_{(\mR,\mU,f,i)}$ into an unique line bundle $G_{\mX,s}$, such that the isomorphisms \eqref{eqsqrootorientloc} glue into a unique isomorphism \eqref{eqsqrootorient} if we have checked some compatibility conditions. Namely, we need to check that, given two comparisons as in Proposition \ref{propcomparchart} $ii)$, the two induced isomorphisms agree locally, and, given three critical charts and a cycle of comparisons as in Proposition \ref{propcomparchart} $ii)$, the three isomorphisms satisfies the cocycle condition. We can use the same trick that in \cite[Section 6]{Joyce2013ACM}: namely, we want to compare isomorphisms between line bundles on reduced stacks (here, the reduced assumption is very important), hence we just have to check that they agree on the stalk at each point. For a stack $\mX$ (not necessarily smooth) and $x\in\Grad(\mX)(k)$, denote:
    \begin{align}
        K_{\mX,x}^I:=\det((T^\ast_{\mX,\iota(x)})^I)\otimes\det(\mathfrak{Iso}_{\iota(x)}(\mX)^I)
    \end{align}
    such that the notation is consistent with \ref{defKI} when $\mX$ is smooth, and, using \eqref{exactseq}, if $\phi:\mX\to\mY$ is smooth, one has:
    \begin{align}
        K_{\mX,x}^I\simeq K_{\mY,\phi(x)}^I\otimes K_{\mX/\mY}^I|_x
    \end{align}
    Consider a critical chart $(\mR,\mU,f,i)$: for $x\in\Grad(\mR)$, the Hessian of $f$ gives a $\bG_m$-invariant quadratic form on $T_{\mU,\iota\circ i(x)}$, whose kernel is $T_{\mR,\iota(x)}$, such that its determinant gives an isomorphism:
    \begin{align}
        K_{\mU,i(x)}^{\neq 0}\simeq K_{\mR,x}^{\neq 0}
    \end{align}
    giving a natural isomorphism:
    \begin{align}\label{eqstalkG}
        G_{\mR,\mU,f,i}|_x:=K_\mU^{\neq 0}|_{i(x)}\otimes (K_{\mR/\mX}^{\neq 0})^{\otimes -1}|_x\simeq K_{\mR,x}^{\neq 0}\otimes (K_{\mR/\mX}^{\neq 0})^{\otimes -1}|_x\simeq K_{\mX,x}^{\neq 0}
    \end{align}
    This isomorphism is obviously compatible with \eqref{isomsmoothrescanbunhyploc}. Because the Hessian of $f\circ \pi+q$ is identified with the direct sum of the Hessian of $f$ and $q$, this isomorphism is compatible with \eqref{isomstabquadcanbunhyploc}. Then the comparison isomorphisms between the $G_{\mR,\mU,f,i}$ agree on their overlap and satisfies cocycle condition,  which finishes the construction of $G_{\mX,s}$. As \eqref{eqsqrootorientloc} is compatible with \eqref{isomsmoothrescanbunhyploc} and \eqref{isomstabquadcanbunhyploc}, it glues into the isomorphism \eqref{eqsqrootorient}.\medskip

    By construction, the definition of $G_{\mX,s}$ is compatible with smooth morphisms of d-critical loci, in the sense that, given a smooth morphism $(\mX,s)\to(\mY,t)$, there is a functorial isomorphism:
    \begin{align}\label{isomsmoothG}
        G_{\mX,s}\simeq G_{\mY,t}|_{\mX^{red}}\otimes \det((\iota^\ast\bL_{\mX/\mY})^{\neq 0})|_{\mX^{red}}
    \end{align}
    compatible with \eqref{eqsqrootorient}. Moreover, for an exterior product of critical charts, one has a natural isomorphism:
    \begin{align}
        G_{(\mR_1\times \mR_2,\mU_1\times\mU_2,f_1\boxplus f_2,i_1\times i_2)}\simeq G_{(\mR_1,\mU_1,f_1,i_1)}\boxtimes G_{(\mR_2,\mU_2,f_2,i_2)}
    \end{align}
    compatible with the isomorphisms isomorphisms \eqref{isomsmoothrescanbunhyploc} and \eqref{isomstabquadcanbunhyploc}, which give monoidal and functorial isomorphisms:
    \begin{align}\label{isomprodG}
        G_{\mX_1,\times\mX_2,s_1\oplus s_2}\simeq G_{\mX_1,s_1}\boxtimes G_{\mX_2,s_2}
    \end{align}
    \medskip
    
    Given an orientation $K_{\mX,s}^{1/2}$ consider:
    \begin{align}
        \Grad(K_{\mX,s}^{1/2}):=\iota^\ast K_{\mX,s}^{1/2}\otimes G_{\mX,s}^{\otimes -1}
    \end{align}
    with the isomorphism $(\Grad(K_{\mX,s}^{1/2}))^{\otimes 2}\simeq K_{\Grad(\mX),\Grad(s)}$ obtained from \eqref{eqsqrootorient}. Then \eqref{isomsmoothG} and \eqref{isomprodG} allows to upgrade:
    \begin{align}
        (\mX,s,K_{\mX,s}^{1/2})\to (\Grad(\mX),\Grad(s),\Grad(K_{\mX,s}^{1/2}))
    \end{align}
    to a symmetric monoidal endofunctor of the 2-category of oriented d-critical Artin stacks with smooth morphisms.\medskip

    Given an isomorphism $K_{\mX,s}^{1/2}|_{\mR^{red}}\simeq i^\ast K_\mU\otimes K_{\mR/\mX}^{\otimes -1}$, one builds from \eqref{eqsqrootorientloc} an isomorphism
    \begin{align}
        \Grad(K_{\mX,s}^{1/2})|_{\Grad(\mR^{red})}\simeq \Grad(i)^\ast K_{\Grad(\mU)}\otimes K_{\Grad(\mR)/\Grad(\mX)}^{\otimes -1}
    \end{align}
    which provides the natural isomorphism \eqref{hyplocQ}. Its compatibility with smooth restriction and exterior products of critical charts is obvious from the definitions. Considering a stabilization by a quadratic form, the isomorphism \eqref{isomstabquadcanbunhyploc} used to glue $G_{\mX,s}$, hence giving the difference between the isomorphisms \eqref{hyplocQ} for $(\mR,\mU,f,i)$ and $(\mR,\bV_\mU(\mE),f\circ\pi+q,s\circ i)$, is built using the perfect pairing between $K_{\bV_\mU(\mE)}^{<0}$ and $K_{\bV_\mU(\mE)}^{>0}$ induced by $\det(q)$, exactly as \eqref{isomquadform}, hence the square of the proposition commutes as claimed.
\end{enumerate}
    
\end{proof}

\subsubsection{Shifted symplectic version}

\begin{lemma}\label{lemorientgm}\begin{enumerate}
    \item[$i)$]Given a $-1$ shifted symplectic stack $(\ud{\mX},\omega_{\ud{\mX}})$, the closed $2$ form of degree $-1$ $\Grad(\omega_{\ud{\mX}}):=\ud{\iota}^\ast(\omega_{\ud{\mX}})$ is nondegenerate, with pairing
    \begin{align}
\bL_{\ud{\Grad}(\ud{\mX})}^\vee\simeq \bL_{\ud{\Grad}(\ud{\mX})}[-1]
    \end{align}
    obtained by applying $(\ud{\iota}^\ast -)^0$ to the nondegenerate pairing of $2$-form of degree $1$ underlying $\omega_{\ud{\mX}}$:
    \begin{align}
        \bL_{\ud{\mX}}^\vee\simeq \bL_{\ud{\mX}}[-1]
    \end{align}
    \ie it gives a natural $-1$-shifted symplectic structure on $\ud{\Grad}(\ud{\mX})$. Equivalently, $\Grad(\omega_{\ud{\mX}})$ can be obtained by applying the mapping construction \cite[Theorem 2.5]{shifsymp} using the natural $\mathcal{O}$-orientation of degree $0$ (in the sense of \cite[Definition 2.4]{shifsymp}) on $B\bG_{m,k}$ given by $\Gamma(B\bG_{m,k},\mathcal{O}_{B\bG_{m,k}})\simeq k$.
    \item[$ii)$] Given a $\bG_m$-invariant $-1$ shifted symplectic algebraic space $(\ud{X},\omega_{\ud{X}})$, the closed $2$ form of degree $-1$ $\omega_{\ud{X}}^0:=\ud{\iota}^\ast(\omega_{\ud{\mX}})$ is nondegenerate, with pairing
    \begin{align}
\bL_{\ud{X}^0}^\vee\simeq \bL_{\ud{\mX}^0}[-1]
    \end{align}
    obtained by applying $(\ud{\iota}^\ast -)^0$ to the nondegenerate pairing of $2$-form of degree $1$ underlying $\omega_{\ud{X}}$, \ie it gives a natural $-1$-shifted symplectic structure on $\ud{X}^0$.
\end{enumerate}
\end{lemma}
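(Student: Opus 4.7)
The plan is to handle both parts in parallel, with $i)$ being the essential content and $ii)$ following by the same argument specialized to the case where the target is an algebraic space. For part $i)$, the key input is the identification $\bL_{\ud{\Grad}(\ud{\mX})} \simeq (\ud{\iota}^\ast \bL_{\ud{\mX}})^0$ from \cite[Lemma 1.5.5]{HalpernLeistner2020DerivedA}, already invoked in Lemma \ref{lemcotanbun}. Since $\ud{\iota}$ is a morphism of derived stacks, the pullback $\ud{\iota}^\ast$ acts functorially on the space of closed forms, so $\Grad(\omega_{\ud{\mX}}) := \ud{\iota}^\ast(\omega_{\ud{\mX}})$ is automatically a closed $2$-form of degree $-1$ on $\ud{\Grad}(\ud{\mX})$; only the nondegeneracy of its underlying $2$-form requires checking.

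For nondegeneracy, observe that $\ud{\iota}^\ast \bL_{\ud{\mX}}$ carries a natural $\bZ$-grading coming from the $B\bG_{m,k}$-action on $\ud{\Grad}(\ud{\mX})$, exactly as exploited throughout Section \ref{sectsmoothhyploc}. The pullback of the nondegenerate pairing $\bL_{\ud{\mX}} \xrightarrow{\sim} \bL_{\ud{\mX}}^\vee[-1]$ underlying $\omega_{\ud{\mX}}$ is an equivalence of $\bZ$-graded complexes of weight $0$ on $\ud{\Grad}(\ud{\mX})$. Using the identity $(\mE^\vee)^w = (\mE^{-w})^\vee$ for a $\bZ$-graded complex $\mE$, restricting to the weight-$0$ part on both sides yields an equivalence
\begin{equation*}
(\ud{\iota}^\ast \bL_{\ud{\mX}})^0 \xrightarrow{\sim} ((\ud{\iota}^\ast \bL_{\ud{\mX}})^0)^\vee[-1],
\end{equation*}
which is precisely the pairing induced by the underlying $2$-form of $\Grad(\omega_{\ud{\mX}})$ under the cotangent complex identification. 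For the alternative characterization via \cite[Theorem 2.5]{shifsymp}, I apply the mapping construction with source $B\bG_{m,k}$ (whose $\mathcal{O}$-orientation of degree $0$ comes from the fact that $\Gamma(B\bG_{m,k}, \mathcal{O}) \simeq k$ with no higher cohomology in characteristic $0$) and target $\ud{\mX}$. The constructed form is the transgression of $\omega_{\ud{\mX}}$ along the universal evaluation $\mathrm{ev}: \ud{\Grad}(\ud{\mX}) \times B\bG_{m,k} \to \ud{\mX}$; because the orientation has degree $0$ and is the tautological identification of global sections, the integration step reduces to pullback along the unit section $\ud{\Grad}(\ud{\mX}) \to \ud{\Grad}(\ud{\mX}) \times B\bG_{m,k}$, which identifies the constructed form with $\ud{\iota}^\ast(\omega_{\ud{\mX}})$.

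For part $ii)$, the argument is formally identical once we establish the analogous cotangent complex identification $\bL_{\ud{X}^0} \simeq (\ud{\iota}^\ast \bL_{\ud{X}})^0$, which follows by the same deformation-theoretic arguments as in the proof of Lemma \ref{lemcotanbun} applied to the fixed point functor $\ud{X}^0 = \ud{\Map}^{\bG_{m,k}}(\Spec(k), \ud{X})$; alternatively it can be extracted by viewing $[\ud{X}^0/\bG_{m,k}]$ as the trivial-cocharacter component of $\ud{\Grad}([\ud{X}/\bG_{m,k}])$ and transferring the identification from part $i)$ (noting that $[\ud{X}/\bG_{m,k}]$ need not be shifted symplectic, but the cotangent complex identification and the descended closed $2$-form still exist). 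The nondegeneracy of $\omega^0_{\ud{X}} := \ud{\iota}^\ast \omega_{\ud{X}}$ then follows by the same weight-$0$ restriction argument. The main obstacle in this proof is less the nondegeneracy step, which is a clean consequence of the grading, than the verification that the two constructions in $i)$ genuinely agree at the level of closed forms (not merely underlying $2$-forms); this reduces to a careful inspection of the mapping construction of \cite{shifsymp} in the particular case of a degree-$0$ orientation, where the transgression machinery collapses to ordinary pullback.
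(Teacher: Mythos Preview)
Your nondegeneracy argument in $i)$ is correct and in fact slightly more direct than the paper's: you extract the weight-$0$ component of the graded equivalence $\ud{\iota}^\ast\bL_{\ud{\mX}} \simeq \ud{\iota}^\ast\bL_{\ud{\mX}}^\vee[-1]$ and identify it with the pairing underlying $\ud{\iota}^\ast\omega$ via the cotangent identification $\bL_{\ud{\Grad}(\ud{\mX})}\simeq(\ud{\iota}^\ast\bL_{\ud{\mX}})^0$. The paper instead first sets up the PTVV mapping construction (checking that $B\bG_{m,k}$ is $\mathcal{O}$-compact and that $\eta:\Gamma(B\bG_{m,k},\mathcal{O})\simeq k$ is an $\mathcal{O}$-orientation of degree $0$), so that nondegeneracy comes for free from \cite[Theorem 2.5]{shifsymp}, and then reads off the pairing from that description. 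Both routes yield the same identification.

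Where your proposal is genuinely incomplete is the ``equivalently'' claim in $i)$, namely that the PTVV transgression $\Map(B\bG_{m,k},\omega_{\ud{\mX}})$ agrees with $\ud{\iota}^\ast\omega_{\ud{\mX}}$ as \emph{closed} forms. Your assertion that ``the integration step reduces to pullback along the unit section'' is the desired conclusion, not an argument, and you acknowledge this yourself. The paper carries out exactly the computation you defer: it unwinds the transgression map $(Id\times\eta)\circ\kappa_{B\bG_{m,k}}$ at the level of mixed graded complexes, computes $\mathbf{DR}(\mathcal{O}_{B\bG_{m,k}})\simeq\Sym_k^\ast k[0]$ using $\bL_{B\bG_{m,k}}\simeq k[-1]$ in weight $0$, and shows (by left Kan extension from affines) that this composite coincides with pullback along the section $\Spec(k)\to B\bG_{m,k}$. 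Without this, the lemma's ``equivalently'' statement is unproven.

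For $ii)$ your outline matches the paper's: both pass through the (degenerate) closed $2$-form $\bar{\omega}$ on $[\ud{X}/\bG_{m,k}]$, note that the mapping/pullback analysis of $i)$ applies verbatim to degenerate forms, and then recover nondegeneracy on $\ud{X}^0$ by the weight-$0$ argument after pulling back along $\ud{X}^0\to[\ud{X}^0/\bG_{m,k}]$.
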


\begin{proof}
    \begin{enumerate}
    \item[$i)$] We first define the  $\mathcal{O}$-orientation of degree $0$ on $B\bG_{m,k}$. For any affine derived scheme $\ud{\Spec}(\ud{R})$, consider $B\bG_{m,\ud{R}}:=B\bG_m\times \ud{\Spec}(\ud{R})$. Then $\QCoh(B\bG_{m,\ud{R}})$ is the stable category of $\bZ$-graded $\ud{R}$-complexes, hence it is obviously strictly $\mathcal{O}$-compact over $\ud{R}$ following \cite[Definition 2.1]{shifsymp}, hence $B\bG_{m,k}$ is $\mathcal{O}$-compact from the same definition. Under this identification, $\mathcal{O}_{B\bG_{m,k}}$ is just $k$ with $\bZ$-grading $0$, and we have an obvious isomorphism $\eta:\Gamma(B\bG_{m,k},\mathcal{O}_{B\bG_{m,k}})\simeq k$. For an affine derived scheme $\ud{\Spec}(\ud{R})$ and a perfect complex $\mathcal{E}$ on $B\bG_{m,\ud{R}}$, \ie a $\bZ$-graded perfect $\ud{R}$-complex, we have $\Gamma(B\bG_{m,\ud{R}},E)=E^0$. Then the pairing:
    \begin{align}
        \Gamma(B\bG_{m,\ud{R}},E)\otimes_A \Gamma(B\bG_{m,\ud{R}},E^\vee)\to  \Gamma(B\bG_{m,\ud{R}},\mathcal{O}_{B\bG_{m,\ud{R}}})=\ud{R}
    \end{align}
    induced by $\eta$ is the pairing $E^0\otimes_{\ud{R}}(E^0)^\vee\to \ud{R}$, which is obviously nondegenerate. Then $\eta$ is an $\mathcal{O}$-orientation of degree $0$ (in the sense of \cite[Definition 2.4]{shifsymp}) on $B\bG_{m,k}$.\medskip
    
    Consider an $\ud{R}$-point $x$ of $\ud{Map}(B\bG_{m,k},\ud{\mX})$, corresponding to a map $f:B\bG_{m,\ud{R}}\to\ud{\mX}$. Using the identification:
    \begin{align}
        \bL_{\ud{\Grad}(\ud{\mX})}|_x\simeq \Gamma(B\bG_{m,\ud{R}},f^\ast\bL_{\ud{\mX}})=(f^\ast\bL_{\ud{\mX}})^0
    \end{align}
    from the description given at the end of the proof of \cite[Theorem 2.5]{shifsymp}, the pairing obtained from $\ud{\Grad}(\omega_{\ud{\mX}})$ is clearly the one described in the Lemma.\medskip

    We will now explain why $Map(B\bG_{m,k},\omega_{\ud{\mX}})\sim\ud{\iota}^\ast(\omega_{\ud{\mX}})$. The former is defined in \cite[Section 2.1]{shifsymp} from the sequence of morphisms (where $NC^w$ denotes the graded complex of closed forms):
    \begin{align}
        &NC^w(\ud{\mX})\overset{(\ud{ev})^\ast}{\to }NC^w(\ud{\Grad}(\ud{\mX})\times B\bG_{m,k})\nn\\
        \overset{\kappa_{B\bG_{m,k}}}{\to}& NC^w(\ud{\Grad}(\ud{\mX})\times\Gamma(B\bG_{m,k},\mathcal{O}_{B\bG_{m,k}})\overset{Id\times\eta}{\to}NC^w(\ud{\Grad}(\ud{\mX})
    \end{align}
    it suffices then to show that $(Id\times\eta)\circ \kappa_{B\bG_{m,k}}$ is isomorphic with the pullback along $\ud{\Grad}(\ud{\mX})\to \ud{\Grad}(\ud{\mX})\times B\bG_{m,k}$. By left Kan extension, it suffices to build a functorial isomorphism for $\ud{\mX}=\ud{\Spec}(\ud{R})$. The map $(Id\times\eta)\circ\kappa_{B\bG_m}$ is build in \cite[Page 30]{shifsymp} (where we denote global section by $\Gamma$) by applying $NC^-$ to:
    \begin{align}
        &\mathbf{DR}(B\bG_{m,\ud{R}})\simeq \Gamma(B\bG_{m,k},\mathbf{DR}(\ud{R})\otimes_k\mathbf{DR}(\mathcal{O}_{B\bG_{m,k}}))\nn\\
        \to &\Gamma(B\bG_{m,k},\mathbf{DR}(\ud{R})\otimes_k\mathcal{O}_{B\bG_{m,k}})\simeq \mathbf{DR}(\ud{R})\otimes_k \Gamma(B\bG_{m,k},\mathcal{O}_{B\bG_{m,k}})\simeq \mathbf{DR}(\ud{R})
    \end{align}
    where the arrow comes from the natural projection to the component of weight $0$. We interpret now objects on $B\bG_{m,k}$ as objects with an extra $\bZ$-grading, and recall that $\Gamma$ takes the part of homogeneous grading $0$, and pullbacks along $\Spec(k)\to B\bG_{m,k}$ forgets the homogeneous grading. Then, following the discussion in \cite[Page 26]{shifsymp}, as $k\simeq k^\vee$ is the Lie algebra of $\bG_{m,k}$, $\bL_{B\bG_{m,k}}\simeq k[-1]$ with homogeneous grading $0$, and so $\mathbf{DR}(\mathcal{O}_{B\bG_{m,k}})$ is simply $\Sym_k^\ast k[0]$ with homogeneous grading $0$. Then the above map is simply:
    \begin{align}
        \mathbf{DR}(B\bG_{m,\ud{R}})\simeq \mathbf{DR}(\ud{R})\otimes_k\mathbf{DR}(B\bG_{m,k})\to \mathbf{DR}(\ud{R})\otimes_k\mathbf{DR}(\Spec(k))\simeq \mathbf{DR}(\ud{R})
    \end{align}
    where the arrow comes from the pullback along $\Spec(k)\to B\bG_{m,k}$, which proves the result.\medskip

    \item[$ii)$] The data of the $\bG_m$-invariant closed $2$-form $\omega_{\ud{X}}$ of degree $-1$ is equivalent to the data of a closed $2$-form $\bar{\omega}_{\ud{X}}$ of degree $-1$ on $[\ud{X}/\bG_{m,k}]$, which is degenerate (this explains why $ii$ is not just a formal consequence of $i$). Notice that we can apply the mapping construction of \cite[Theorem 2.5]{shifsymp} also to degenerate forms, only the proof of nondegeneracy will not carry on. We obtain then a degenerate $2$-form $Map(\bG_{m,k},\bar{\omega}_{\ud{X}})$ of degree $-1$ on $\ud{\Grad}([\ud{X}/\bG_{m,k}])$, and the arguments above (which do not use nondegeneracy) show that it is equivalent to $\ud{\iota}^\ast(\bar{\omega}_{\ud{X}})$: it gives then by pullback a $\bG_m$-invariant closed $2$-form $\ud{\iota}^\ast(\omega_{\ud{X}})$ on $\ud{X}^0$. The computation of the map:
    \begin{align}
\bL_{\ud{\Grad}(\ud{\mX})}^\vee\to \bL_{\ud{\Grad}(\ud{\mX})}[-1]
    \end{align}
    induced by $\ud{\iota}^\ast(\bar{\omega}_{\ud{X}})$ in the proof of \cite[Theorem 2.5]{shifsymp} carries over in the degenerate case, giving that it is obtain by applying $(\ud{\iota}^\ast -)^0$ to the map induced by $\bar{\omega}_{\ud{X}}$: in particular, pulling back to $\ud{X}^0$, we obtain that the map induced by $\ud{\iota}^\ast(\omega_{\ud{X}})$ is the one described in the Lemma, in particular it is an isomorphism, \ie $\ud{\iota}^\ast(\omega_{\ud{X}})$ is a $-1$-shifted symplectic form on $\ud{X}^0$.
    \end{enumerate}
\end{proof}

In particular, by functoriality and symmetric monoidality of the pullback of closed forms, we obtain that $(\ud{\mX},\omega_{\ud{\mX}})\mapsto (\ud{\Grad}(\ud{\mX}),\Grad(\omega_{\ud{\mX}}))$ enhances naturally to a symmetric monoidal endofunctor of the $(\infty,1)$-category of $-1$-shifted symplectic stacks, with étale morphisms. Notice that $-1$-shifted symplectic stacks behaves well only with étale morphisms, when d-critical stacks behaves well with smooth morphisms.\medskip

If $(\mX,s)$ is the classical truncation of a $-1$-shifted symplectic stack $(\ud{\mX},\omega_{\ud{\mX}})$, we reinterpret the constructions of Proposition \ref{cordcritgraded}:

\begin{lemma}\label{lemshifsymp}
 Consider a $-1$ shifted symplectic stack $(\ud{\mX},\omega_{\ud{\mX}})$, with its underlying $d$-critical stack $(\mX,s)$ built from \cite[Theorem 3.18]{darbstack}
 \begin{enumerate}
    \item[$i)$] The $d$-critical stack $(\Grad(\mX),\Grad(s))$ from Proposition \ref{cordcritgraded} $i)$ is the d-critical stack obtained from the $-1$-shifted symplectic stack $(\ud{\Grad}(\ud{\mX}),\ud{\Grad}(\omega_{\ud{\mX}}))$ using the Darboux theorem \cite[Theorem 3.18]{darbstack} (and the similar statement holds for $\bG_m$-invariant $-1$-shifted symplectic algebraic spaces).
     \item[$ii)$] The locally constant function $\Ind_{\mX,s}:\mX^{red}\to\bZ$ built in Proposition \ref{cordcritgraded} $ii)$ is the signed dimension of $(\ud{\iota}^\ast\bT_{\ud{\mX}})^{>0}|_{\mX^{red}}$ (and the similar statement holds for $\bG_m$-invariant $-1$-shifted symplectic algebraic spaces).
     \item[$iii)$] Using the nondegenerate pairing induced by $\omega_{\ud{\mX}}$, we obtain:
    \begin{align}\label{eqsqrootshif}
        \iota^\ast\det(\bL_{\ud{\mX}})|_{\mX^{red}}&\simeq \det(\bL_{\ud{\Grad}(\ud{\mX})})|_{\Grad(\mX)^{red}}\otimes \det((\ud{\iota}^\ast\bL_{\ud{\mX}})^{\neq 0})|_{\Grad(\mX)^{red}}\nn\\
        &\simeq \det(\bL_{\ud{\Grad}(\ud{\mX})})|_{\Grad(\mX)^{red}}\otimes \det((\ud{\iota}^\ast\bL_{\ud{\mX}})^{<0})|^{\otimes 2}_{\Grad(\mX)^{red}}
    \end{align}
there is a natural isomorphism between $\det((\ud{\iota}^\ast\bL_{\ud{\mX}})^{<0})|_{\Grad(\mX)^{red}}$ and the square root $G_{\mX,s}$ built in Proposition \ref{cordcritgraded} $iii)$, compatible with étale maps and products of $-1$-shifted symplectic stacks, such that the orientation on $(\Grad(\mX),\Grad(s))$ is given by:
\begin{align}
\Grad(K_{\mX,s}^{1/2}):=\iota^\ast(K_{\mX,s}^{1/2})\otimes\det((\ud{\iota}^\ast\bL_{\ud{\mX}})^{<0})|_{\Grad(\mX)^{red}}^{\otimes -1}
\end{align}
 \end{enumerate}
 (and the similar statement holds for $\bG_m$-invariant $-1$-shifted symplectic algebraic spaces)
\end{lemma}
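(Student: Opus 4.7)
The plan is to prove all three parts smooth-locally, using the covers provided by Lemma \ref{lemdarbgmst} (or Lemma \ref{lemdarbgmeq} in the algebraic space case). These give smooth morphisms $\bar{\ud{\phi}}\colon[\ud{R}/\bG_m]\to \ud{\mX}$ (respectively $\ud{X}^\prime\to \ud{X}$) with $\ud{R}\simeq\ud{\Crit}(f)$ for a $\bG_m$-invariant function $f\colon U\to\bA^1_k$ on a smooth $\bG_m$-scheme $U$, and with $\ud{j}^{\ast}\omega_{\ud{\Crit(f)}}\sim \bar{\ud{\phi}}^{\ast}\omega$. The central observation, analogous to Lemma \ref{lemcritgrad} at the derived level, is that $\ud{\Grad}$ preserves homotopy fibre products, so $\ud{\Grad}(\ud{\Crit}(f))=\ud{\Crit}(\Grad(f))$. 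Combined with Lemma \ref{lemcotanbun}, which identifies $\bL_{\ud{\Grad}(\bar{\ud{\phi}})}$ with $(\bar{\ud{\phi}}^{\ast}\bL_{\bar{\ud{\phi}}})^{0}$ so that $\ud{\Grad}(\bar{\ud{\phi}})$ is smooth, this reduces the comparisons to a computation for the explicit critical chart $(\Grad(\mR),\Grad(\mU),\Grad(f),\Grad(i))$.

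For part $i)$, we apply $\ud{\Grad}$ to the Darboux datum. By naturality of pullbacks of closed forms, $\ud{\Grad}(\bar{\ud{\phi}})^{\ast}\ud{\iota}^{\ast}\omega_{\ud{\mX}}\sim \ud{\iota}^{\ast}\bar{\ud{\phi}}^{\ast}\omega_{\ud{\mX}}\sim \ud{\iota}^{\ast}\ud{j}^{\ast}\omega_{\ud{\Crit(f)}}$, which under $\ud{\Grad}(\ud{\Crit}(f))=\ud{\Crit}(\Grad(f))$ is the canonical symplectic form on $\ud{\Crit}(\Grad(f))$. Hence the Darboux theorem \cite[Theorem 3.18]{darbstack} for $(\ud{\Grad}(\ud{\mX}),\Grad(\omega_{\ud{\mX}}))$ produces exactly the critical chart $(\Grad(\mR),\Grad(\mU),\Grad(f),\Grad(i))$, whose associated $d$-critical structure is $\Grad(f)+I^{2}_{\Grad(\mR),\Grad(\mU)}$. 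By Lemma \ref{defSx} $ii)$ applied to $\iota$, this equals $\iota^{\star}(s)|_{\Grad(\mR)}=\Grad(s)|_{\Grad(\mR)}$. Since such charts cover $\Grad(\mX)$ by Proposition \ref{propcomparchart} $i)$ combined with Lemma \ref{lemdarbgmst}, the two $d$-critical structures agree.

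For part $ii)$, Proposition \ref{cordcritgraded} $ii)$ gives $\Ind_{\mX}|_{\Grad(\mR)}=\Ind_{\mU,f}-\Ind_{\mR/\mX}$. The key computational input is that at a $\bG_m$-fixed point $u$ of $U$, the Hessian of $f$ (of weight $0$, since $f$ is $\bG_m$-invariant) induces an isomorphism between weight $w$ and weight $-w$ pieces of $T_U|_u$, which is exactly what makes $\bT_{\ud{\Crit}(f),u}=[\bT_{U,u}\xrightarrow{\mathrm{Hess}(f)}\bT^{\ast}_{U,u}]$ self-dual under the $-1$-shifted symplectic structure. Summing signed dimensions of strictly positive weights therefore gives $\dim\bT^{>0}_{U,u}-\dim\bT^{<0}_{U,u}=\Ind_{\mU,f}$. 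Passing through the smooth morphism $\bar{\ud{\phi}}$ via the exact triangle of tangent complexes, and noting that $\bT_{\bar{\ud{\phi}}}$ contributes precisely $\Ind_{\mR/\mX}$ to the signed count of positive weights (via Lemma \ref{lemcotanbun} applied to $\bar{\ud{\phi}}$), one obtains the claimed equality.

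For part $iii)$, the construction of $G_{\mX,s}$ in Proposition \ref{cordcritgraded} $iii)$ is, on a critical chart, $G_{(\mR,\mU,f,i)}=\Grad(i)^{\ast}K_{\mU}^{\neq 0}\otimes (K_{\mR/\mX}^{\neq 0})^{\otimes -1}$. On the other side, using $\bL_{\ud{\Grad}(\ud{\mX})}=(\ud{\iota}^{\ast}\bL_{\ud{\mX}})^{0}$ and the self-duality $\bT_{\ud{\mX}}^{w}\simeq \bL_{\ud{\mX}}^{-w}[-1]$ induced by $\omega_{\ud{\mX}}$, we get a canonical splitting $\det((\ud{\iota}^{\ast}\bL_{\ud{\mX}})^{\neq 0})\simeq \det((\ud{\iota}^{\ast}\bL_{\ud{\mX}})^{<0})^{\otimes 2}$ giving \eqref{eqsqrootshif}. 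Chart-locally, the triangle $\bT_{\bar{\ud{\phi}}}\to \bT_{[\ud{R}/\bG_m]}\to \bar{\ud{\phi}}^{\ast}\bT_{\ud{\mX}}$, combined with the explicit form of $\bT_{\ud{\Crit}(f)}$ and the weight $w$ versus weight $-w$ pairing from the Hessian, produces a canonical isomorphism $\det((\ud{\iota}^{\ast}\bL_{\ud{\mX}})^{<0})|_{\Grad(\mR)^{red}}\simeq \Grad(i)^{\ast}K_{\mU}^{<0}\otimes (K_{\mR/\mX}^{<0})^{\otimes -1}$, and the analogous iso for weight~$>0$ parts; combining these via the pairing identifies the product with $G_{(\mR,\mU,f,i)}$. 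The main obstacle will be checking that these local isomorphisms are compatible with the comparison isomorphisms \eqref{isomsmoothrescanbunhyploc} and \eqref{isomstabquadcanbunhyploc} used to glue $G_{\mX,s}$, and with étale morphisms and products of $-1$-shifted symplectic stacks; however this compatibility can be verified at the level of stalks using the pointwise formula \eqref{eqstalkG} of Proposition \ref{cordcritgraded} $iii)$, since on reduced substacks equalities of line bundle maps can be checked fibrewise.
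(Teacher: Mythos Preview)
Your overall strategy matches the paper's: use the $\bG_m$-equivariant Darboux data from Lemma \ref{lemdarbgmst} to reduce to an explicit critical locus, and verify each claim there. However, there is one genuine step you pass over in part $i)$.

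You assert that $\ud{\iota}^{\ast}\ud{j}^{\ast}\omega_{\ud{\Crit}(f)}$, under the identification $\ud{\Grad}(\ud{\Crit}(f))\simeq\ud{\Crit}(\Grad(f))$, ``is the canonical symplectic form on $\ud{\Crit}(\Grad(f))$.'' This is not automatic: the $-1$-shifted symplectic form on a derived critical locus is built in \cite[Corollary 2.11]{shifsymp} from the Lagrangian intersection construction, using isotropic structures $h_1\colon 0^\ast\omega_{T^\ast U}\sim 0$ and $h_2\colon (df)^\ast\omega_{T^\ast U}\sim 0$, and one must check that pulling back the resulting loop via $\ud{\iota}$ agrees with the loop for $\Grad(f)$ (equivalently, for $f^0$). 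The paper handles this by observing that the space of closed $p$-forms of degree $0$ on a smooth classical scheme is discrete (\cite[page 25]{shifsymp}), so the isotropic structures $h_i$ are unique; hence $\iota^\ast h_i$ is automatically the isotropic structure for $f^0$, which forces $\iota^\ast\omega_{\ud{\Crit}(f)}\sim\omega_{\ud{\Crit}(f^0)}$. Without this argument, your chain of homotopies terminates at $\ud{\iota}^\ast\ud{j}^\ast\omega_{\ud{\Crit}(f)}$ and does not reach the form produced by the Darboux theorem for $\ud{\Grad}(\ud{\mX})$.

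For part $ii)$, your chart-based argument via the exact triangle of tangent complexes is correct, but the paper takes a shorter route that avoids charts entirely: it computes pointwise, using that $H^{-1},H^0$ of $(\ud{\iota}^\ast\bT_{\ud{\mX}})|_x$ are $\mathfrak{Iso}_{\iota(x)}(\mX)$ and $T_{\mX,\iota(x)}$ with their $\bZ$-gradings, and that the symplectic pairing identifies $H^2$ with $(H^{-1})^\vee$ and $H^1$ with $(H^0)^\vee$, swapping positive and negative weights. Summing signed dimensions over $i=-1,0,1,2$ then reproduces the formula \eqref{defindmx} directly. For part $iii)$, your outline matches the paper's argument closely; the paper likewise verifies agreement on stalks, recording that under \eqref{eqstalkG} the candidate isomorphism is induced by the pairing $(\tau^{\leq 0}\bL_{\ud{\mX}})^\vee\simeq(\tau^{>1}\bL_{\ud{\mX}})[1]$ coming from $\omega_{\ud{\mX}}$, which is manifestly chart-independent.
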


\begin{proof}
\begin{enumerate}
    \item[$i)$] We have seen in the proof of Lemma \ref{lemcritgrad} that, given a function $\bar{f}:\mU\to\bA^1_k$ on a smooth stack, $\ud{\Grad}(\ud{\Crit}(f))=\ud{\Crit}(\ud{\Grad}(f))$: in particular, given a $\bG_{m,k}$-invariant function $f:U\to\bA^1_k$ on a smooth scheme with $\bG_{m,k}$-action $U$, we have $(\ud{\Crit}(f))^0=\ud{\Crit}(f^0)$. We will now show that the same formula holds for the $-1$-shifted symplectic structures. Given a function $f$ on a smooth space, one obtains two Lagrangian $0,df:U\to T^\ast U$, and by definition $\ud{\Crit}(f):=U\times^h_{0,T^\ast U,df}U$. According to \cite[Page 25]{shifsymp}, the space of closed forms of degree $0$ on $U$ is discrete, and is just the classical space of $p$-form. So the usual symplectic structure $\omega_{T^\ast U}$ on $T^\ast U$ is a $0$-shifted symplectic structure. One have then a unique choice of isotropic structures $h_1:0^\ast(\omega_{T^\ast U})\sim 0$ and $h_2:(df)^\ast(\omega_{T^\ast U})\sim 0$, such that $0,df$ enhance uniquely to $0$-shifted Lagrangian in the sense of \cite[Definition 2.8]{shifsymp}. Then $\omega_{\ud{\Crit}(f)}$ is then define in \cite[Corollary 2.11]{shifsymp} by using the construction of \cite[Corollary 2.10]{shifsymp}. Namely, $\omega_{\ud{\Crit}(f)}$ is the closed $2$-form of degree $-1$ given by:
    \begin{align}
        0\overset{(\ud{pr_1})^\ast (h_1)}{\sim} (\ud{pr_1})^\ast0^\ast(\omega_{T^\ast U})\sim(\ud{pr_2})^\ast(df)^\ast(\omega_{T^\ast U})\overset{(\ud{pr_2})^\ast (h_2)}{\sim}0
    \end{align}
    Consider now $f$ which is $\bG_m$-invariant as above: one obtains then $\iota^\ast(\omega_{T^\ast U})=\omega_{T^\ast U^0}$, and, as they is a single choice for $h_i$, $\iota^\ast(0^\ast(\omega_{T^\ast U})\sim 0)=(0^\ast(\omega_{T^\ast U^0})\sim 0)$ and $\iota^\ast((df)^\ast(\omega_{T^\ast U})\sim 0)=((df)^\ast(\omega_{T^\ast U^0})\sim 0)$. Then, by pulling back the above definition along $\ud{\iota}^\ast$, one obtains:
    \begin{align}
        \omega_{\ud{\Crit}(f)}^0:=\ud{\iota}^\ast\omega_{\ud{\Crit}(f)}\sim \omega_{\ud{\Crit}(f^0)}
    \end{align}
    \medskip
    
    Consider a point $x\in \Grad(\mX)(k)=\ud{\Grad}(\ud{\mX})(k)$, and a data as in Lemma \ref{lemdarbgmst}. In particular, from \cite[Theorem 3.18 a)]{darbstack}, $(R,U,f,i)$ is a critical chart for $(\mX,s)$, and then $([R/\bG_{m,k}],[U/\bG_{m,k}],\bar{f},\bar{i})$ is a critical chart for $(\mX,s)$. As $\omega\mapsto\Grad(\omega)$ and $\omega\mapsto \omega^0$ are defined by pullback along $\ud{\iota}$, they are compatible with pullbacks. We obtain then, denoting $\ud{\phi}^0:\ud{R}^0\to[\ud{R}^0/\bG_{m,k}]\to  \ud{\Grad}(\ud{\mX})$:
    \begin{align}
        (\ud{\phi}^0)^\ast \ud{\Grad}(\omega_{\ud{\mX}})&\sim (\ud{\phi}^\ast\omega_{\ud{\mX}}))^0\nn\\
        &\sim (\ud{j}^\ast\omega_{\ud{\Crit}(f)})^0\nn\\
        &\sim (\ud{j}^0)^\ast(\omega_{\ud{\Crit}(f)})^0\nn\\
        &\sim (\ud{j}^0)^\ast(\omega_{\ud{\Crit}(f^0)}) 
    \end{align}
    We have then a smooth map $\ud{\phi}^0:\ud{R}^0\to  \ud{\Grad}(\ud{\mX})$, a smooth scheme $U^0$, a function $f^0:U^0\to\bA^1_k$ and a map $\ud{j}^0:\ud{R}^0\to \ud{\Crit}(f^0)$ such that $f^0_{(R^0)^{red}}=0$  and $(\ud{\phi}^0)^\ast\ud{\Grad}(\omega_{\ud{\mX}})\sim(\ud{j}^0)^\ast(\omega_{\Crit(f^0)})$. Then, from \cite[Theorem 3.18 a)]{darbstack}, $(R^0,U^0,f^0,i^0)$ gives a critical chart for the $d$-critical stack obtained from the classical truncation of $(\ud{\Grad}(\ud{\mX}),\ud{\Grad}(\omega_{\ud{\mX}}))$. Using this for each cocharacter of $\bG_m$, we obtain that $(\Grad([R/\bG_{m,k}]),\Grad([U/\bG_{m,k}]),\Grad(\bar{f}),\Grad(\bar{i}))$ gives a critical chart for this $d$-critical structure. Then, by Proposition \ref{cordcritgraded} $i)$, this d-critical structure coincide with the d-critical structure $\Grad(s):=\iota^\star(s)$. Notice that d-critical structures on a stack forms a set, hence there is no need to check any compatibility with product or étale maps of d-critical stacks.\medskip
    
    Notice that, using Lemma \ref{lemdarbgmeq}, we obtain similarly that, for $(\ud{X},\omega)$ a $\bG_m$-equivariant shifted symplectic algebraic space, with underlying $d$-critical algebraic space $(X,s)$, the $d$-critical structure $s$ is $\bG_m$-invariant, and the $d$-critical algebraic space underlying $(\ud{X}^0,\omega^0)$ is $(X^0,s^0:=\iota^\ast(s))$, as expected. This does not follows directly from the result of the Lemma, as $([\ud{X}/\bG_m],\bar{\omega})$ is not $-1$-shifted symplectic.\medskip

    \item[$ii)$] For $x\in\Grad(\mX)(k)=\ud{\Grad}(\ud{\mX})(k)$, with underlying point $\iota(x)\in\mX(k)$, we have (recalling that we use the chomological conventions, as usual):
    \begin{align}
        &H^{-1}((\ud{\iota}^\ast\bT_{\ud{\mX}})|_x)=H^{-1}((\iota^\ast\bT_{\mX})|_x)=\mathfrak{Iso}_{\iota(x)}(\mX)\nn\\
&H^0((\ud{\iota}^\ast\bT_{\ud{\mX}})|_x)=H^0((\iota^\ast\bT_{\mX})|_x)=T_{\mX,\iota(x)}
    \end{align}
    with their $\bZ$-grading. The $-1$ shifted symplectic form induces moreover isomorphisms:
    \begin{align}
        H^2((\ud{\iota}^\ast\bT_{\ud{\mX}})|^{>0}_x)&\simeq H^{-1}((\ud{\iota}^\ast\bT_{\ud{\mX}})|^{<0}_x)^\vee\nn\\
        H^1((\ud{\iota}^\ast\bT_{\ud{\mX}})|^{>0}_x)&\simeq H^0((\ud{\iota}^\ast\bT_{\ud{\mX}})|^{<0}_x)^\vee
    \end{align}
    and the other $H^i$'s vanishes by duality. Then, one can rewrite from the definition \eqref{defindmx}: 
    \begin{align}
    \Ind_\mX(x)&:=-\dim(\mathfrak{Iso}_{\iota(x)}(\mX)^{>0})+\dim((T_{\mX,\iota(x)})^{>0})-\dim((T_{\mX,\iota(x)})^{<0})+\dim(\mathfrak{Iso}_{\iota(x)}(\mX)^{<0})\nn\\
    &=-\dim(H^{-1}((\ud{\iota}^\ast\bT_{\ud{\mX}})|^{>0}_x))+\dim(H^0((\ud{\iota}^\ast\bT_{\ud{\mX}})|^{>0}_x))-\dim(H^1((\ud{\iota}^\ast\bT_{\ud{\mX}})|^{>0}_x))+\dim(H^2((\ud{\iota}^\ast\bT_{\ud{\mX}})|^{>0}_x))
\end{align}
which gives the result.\medskip

We obtain similarly that, for $\bG_m$-invariant $-1$-shifted symplectic algebraic spaces $(\ud{X},\omega)$ with classical truncation the $\bG_m$-invariant $d$-critical algebraic space $(X,s)$, $\Ind_X$ is the signed dimension of $\bT_X|_{X^0}^{>0}$ (notice that it does not follows directly from this Lemma as said above).\medskip

    \item[$iii)$] Consider again a smooth map $\ud{\phi}:\ud{R}\to\ud{\mX}$ from a derived scheme, a smooth scheme $U$, a function $f:U\to\bA^1_k$ and a map $\ud{j}:\ud{R}\to\ud{\Crit}(f)$ inducing an isomorphism of classical schemes such that $\ud{\phi}^\ast\omega\sim \ud{j}^\ast\omega_{\ud{\Crit}(f)}$, such that $(R,U,f,i)$ is a critical chart for $(\mX,s)$. Consider the two exact triangles:
    \begin{align}\label{exactri}
        &\ud{\phi}^\ast \bL_{\ud{\mX}}\to\bL_{\ud{R}}\to\bL_{\ud{\phi}}\nn\\
        &\ud{j}^\ast \bL_{\ud{\Crit}(f)}\to\bL_{\ud{R}}\to\bL_{\ud{j}}
    \end{align}
    We obtain an isomorphism:
    \begin{align}
        \det(\bL_{\ud{\mX}})|_{R^{red}}&\simeq \det(\bL_{\ud{\Crit}(f)})|_{R^{red}}\otimes \det(\bL_{\ud{j}})|_{R^{red}}\otimes \det(\bL_{\ud{\phi}})|^{\otimes -1}_{R^{red}}\nn\\
        &\simeq \det(\bL_U)|_{R^{red}}^{\otimes 2}\otimes\det(\bL_{\phi})|^{\otimes -2}_{R^{red}}\nn\\
        &\simeq K_{\mX,s}|_{R^{red}}
    \end{align}
    where the second line comes from $\bL_{\ud{j}}\simeq \bL_{\ud{\phi}}^\vee[2]$ (\cite[Equation 2.14]{darbstack}), and from the fact that $\bL_{\ud{\Crit}(f)}$ is quasi-isomorphic to $hess^\vee:\bL_U^\vee\to\bL_U$. These isomorphisms are glued in the proof of \cite[Theorem 3.18 b)]{darbstack} into a global isomorphism $\det(\bL_{\ud{\mX}})|_{\mX^{red}}\simeq K_{\mX,s}$. 
    
    Consider a data as in  Lemma \ref{lemdarbgmeq}. Applying the functor $\det((\iota^\ast-)^{<0})$ to the two exact triangles \eqref{exactri}, using the notations in the proof of Proposition \ref{cordcritgraded} $iii)$, we find moreover an isomorphism:
    \begin{align}\label{eqcomparG}
\det((\iota^\ast\bL_{\ud{\mX}})^{<0})|_{(R^0)^{red}}&\simeq\det((\ud{\iota}^\ast\bL_{\ud{\Crit}(f)})^{<0})|_{(R^0)^{red}}\otimes \det((\ud{\iota}^\ast\bL_{\ud{j}})^{<0})|_{(R^0)^{red}}\otimes \det((\ud{\iota}^\ast\bL_{\ud{\phi}}^{<0})|^{\otimes -1}_{(R^0)^{red}}\nn\\
        &\simeq K_U^{\neq 0}|_{(R^0)^{red}}\otimes K_{R/X}^{\neq 0}|_{(R^0)^{red}}\nn\\
        &\simeq G_{\mX,s}|_{(R^0)^{red}}
    \end{align}
    where we have used in the second line the fact that taking the dual reverses the $\bZ$-weight, hence one obtains the weights $>0$. This isomorphism is by construction compatible with \eqref{eqsqrootorientloc} and \eqref{eqsqrootshif}. As those are isomorphisms of line bundles on reduced stacks, to show that they glue into an isomorphism:
    \begin{align}
        \det((\iota^\ast\bL_{\ud{\mX}})^{<0})|_{\Grad(\mX)^{red}}\simeq G_{\mX,s}
    \end{align}
    of square roots, it suffices to show that their restriction to the stalk of an intersection point of critical charts agree. Consider the composed isomorphism:
    \begin{align}\label{eqstalkGshif}
\det((\iota^\ast\bL_{\ud{\mX}})^{<0})|_x\simeq G_{\mX,s}|_x\simeq \det((\iota^\ast\tau^{\leq 0}\bL_\mX)^{\neq 0})|_x
    \end{align}
    where the first isomorphism is the stalk of \eqref{eqcomparG} and the second one is \eqref{eqstalkG}. Unwrapping the definition, this isomorphism is the natural one coming from the quasi-isomorphism $\tau^{\leq 0}\bL_{\ud{\mX}})^\vee\simeq (\tau^{>1}\bL_{\ud{\mX}})[1]$ coming from $\omega_{\ud{\mX}}$, hence is independent from the critical chart, which completes the proof.\medskip

    We obtain a similar formula for $\bG_m$-invariant $-1$-shifted symplectic algebraic spaces $(\ud{X},\omega)$ by using the same arguments (notice that it does not follows directly from this Lemma as said above).
\end{enumerate}

\end{proof}

\subsubsection{Hyperbolic localization of the DT sheaf}

\begin{theorem}\label{theohyploc}
    Consider $(\mX,s,K_{\mX,s}^{1/2})$ an oriented d-critical stack which is quasi-separated, locally of finite type, with reductive stabilizers and separated diagonal. Consider the oriented d-critical stack:
    \begin{align}
        (\Grad(\mX),\Grad(s), K_{\Grad(\mX),\Grad(s)}^{1/2})
    \end{align}
    from Proposition \ref{cordcritgraded}. There is a natural isomorphism in $\mA_{mon,c}(\mX)$:
    \begin{align}\label{isomhyplocjoyce}
        p_!\eta^\ast P_{\mX,s,K_{\mX,s}^{1/2}}\simeq P_{\Grad(\mX),\Grad(s),\Grad(K_{\mX,s}^{1/2})}\{-\Ind_{\mX}/2\}
    \end{align}
    This isomorphism is compatible with the isomorphism of smooth pullbacks and exterior product of d-critical stacks from Corollary \ref{corfunctjoyce}.
\end{theorem}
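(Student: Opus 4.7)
The plan is to apply the uniqueness characterization of Proposition \ref{proplocdefperv} to the oriented d-critical stack $(\Grad(\mX),\Grad(s),\Grad(K_{\mX,s}^{1/2}))$. That is, I would show that the object $p_!\eta^\ast P_{\mX,s,K_{\mX,s}^{1/2}}\{\Ind_\mX/2\}$, equipped with suitable local identifications, satisfies conditions $i)$, $ii)$ and $iii)$ of Proposition \ref{proplocdefperv} for $\Grad(\mX)$. Then by uniqueness it must be canonically isomorphic to $P_{\Grad(\mX),\Grad(s),\Grad(K_{\mX,s}^{1/2})}$, giving the desired isomorphism \eqref{isomhyplocjoyce}.

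First I would construct the local isomorphism for a critical chart $(\mR,\mU,f,i)$ of $(\mX,s)$, where Proposition \ref{cordcritgraded} $i)$ tells us that $\Grad(\mR,\mU,f,i)$ is a critical chart of $(\Grad(\mX),\Grad(s))$. Starting from the isomorphism
\[
P_{\mX,s,K_{\mX,s}^{1/2}}|_\mR\{d_{\mR/\mX}/2\}\simeq P_{\mU,f}\otimes_{\bZ/2\bZ}Q_{(\mR,\mU,f,i)}
\]
of Proposition \ref{proplocdefperv} $i)$, I would apply $p_!\eta^\ast$, then combine Proposition \ref{prophyploccrit} (local hyperbolic localization on critical loci), Lemma \ref{lemtwisthyploc} (commutation of hyperbolic localization with $\bZ/2\bZ$-twists), Proposition \ref{cordcritgraded} $ii)$ (which identifies $\Ind_\mX|_{\Grad(\mR)}=\Ind_{\mU,f}-\Ind_{\mR/\mX}$), and the orientation comparison isomorphism \eqref{hyplocQ} to obtain a natural isomorphism
\[
(p_!\eta^\ast P_{\mX,s,K_{\mX,s}^{1/2}})|_{\Grad(\mR)}\{\Ind_\mX/2 + d_{\Grad(\mR)/\Grad(\mX)}/2\}\simeq P_{\Grad(\mU),\Grad(f)}\otimes_{\bZ/2\bZ}Q_{\Grad(\mR,\mU,f,i)}.
\]
This furnishes condition $i)$ for $\Grad(\mX)$.

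The two remaining steps are the compatibility verifications, and this is where the work lies. For smooth restrictions $\phi:(\mR',\mU',f',i')\to(\mR,\mU,f,i)$, I have to glue together the commutative squares from Lemma \ref{lemsmoothresthyploc} (hyperbolic localization versus smooth restriction of critical loci), Proposition \ref{proplocdefperv} $ii)$ for $(\mX,s)$ and $(\Grad(\mX),\Grad(s))$, the compatibility of \eqref{hyplocQ} with smooth restrictions from Proposition \ref{cordcritgraded} $iii)$, Lemma \ref{lemcotanbun} (which gives $\bL_{\Grad(\phi)}\simeq(\iota^\ast\bL_\phi)^0$ and handles the degree shifts), and the naturality of the $\bZ/2\bZ$-twist isomorphism from Lemma \ref{lemtwisthyploc}. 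For stabilization by a quadratic bundle $(\mE,q)$, the analogous diagram combines Lemma \ref{lemcompathyplocstab}, Proposition \ref{proplocdefperv} $iii)$, the commuting square of Proposition \ref{cordcritgraded} $iii)$ relating $Q_{(\mR,\bV_\mU(\mE),f\circ\pi+q,s\circ i)}\otimes_{\bZ/2\bZ}P_{(\mE,q)}$ to $Q_{\mR,\mU,f,i}$ (and their $\Grad$-versions, using the decomposition $\mE=\mE^{<0}\oplus\mE^0\oplus\mE^{>0}$ with the perfect pairing induced by $q$), and the identity $\Ind_{\bV_\mU(\mE)}=\Ind_\mU$ from \eqref{indstab}. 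The hard part will be keeping track of all the degree shifts and $\bZ/2\bZ$-twists simultaneously so that each square genuinely commutes on the nose; each individual square is a consequence of results already established, but assembling them correctly is the delicate combinatorial task.

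Finally, to apply Proposition \ref{proplocdefperv} to conclude, I need that $\Grad(\mX)$ admits enough critical charts of the form $\Grad(\mR,\mU,f,i)$ and that any two such can be related by composites of $\Grad$ applied to smooth restrictions and to stabilizations by quadratic bundles. This is exactly the content of Proposition \ref{propcomparchart}: the smooth maps $\Grad(\mR)\to\Grad(\mX)$ are jointly surjective, and any two critical charts admit a common roof obtained by $\Grad$ of the diagram in Proposition \ref{propcomparchart} $ii)$ (using that $\Grad$ is symmetric monoidal, preserves smooth morphisms by Lemma \ref{lemcotanbun}, and that $\Grad$ of a stabilization by $(\mE,q)$ is a stabilization by $(\Grad(\mE)^0,\Grad(q)^0)$). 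Combining all of this with the uniqueness clause of Proposition \ref{proplocdefperv} produces the global isomorphism \eqref{isomhyplocjoyce}. Compatibility with smooth pullbacks and exterior products follows from the corresponding compatibilities of each ingredient (Proposition \ref{propsmoothBB}, Lemma \ref{lemhyplocprod}, the monoidal and functorial character of $\Grad$ established in Proposition \ref{cordcritgraded}, and Corollary \ref{corfunctjoyce}), applied chart by chart and glued by the same uniqueness argument.
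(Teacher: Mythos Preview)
Your proposal is correct and follows essentially the same route as the paper: the paper builds the local isomorphism \eqref{isomhyplocjoyceloc} on each $\Grad(\mR)$ (using Proposition~\ref{propsmoothBB}, Proposition~\ref{proplocdefperv}~$i)$, Lemma~\ref{lemtwisthyploc} with \eqref{hyplocQ}, and Proposition~\ref{prophyploccrit}), checks compatibility with smooth restrictions and stabilizations via Lemmas~\ref{lemsmoothresthyploc} and~\ref{lemcompathyplocstab}, and then glues by smooth descent using Proposition~\ref{propcomparchart}. The only cosmetic difference is that the paper phrases the conclusion as ``local isomorphisms glue by smooth descent'' rather than ``uniqueness in Proposition~\ref{proplocdefperv}'', and it makes the use of Proposition~\ref{propsmoothBB} explicit as the first step of the local isomorphism (which you invoke only implicitly when passing from $P_\mX|_\mR$ to $(p_!\eta^\ast P_\mX)|_{\Grad(\mR)}$).
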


\begin{proof}
    Consider a critical chart $(\mR,\mU,f,i)$ and the smooth morphism $\phi:\mR\to\mX$. We consider the isomorphism:
    \begin{align}\label{isomhyplocjoyceloc}
         \Grad(\phi)^\ast\{d_{\Grad(\phi)}/2\}(p_\mX)_!(\eta_\mX)^\ast P_{\mX,s,K_{\mX,s}^{1/2}}&\simeq (p_\mR)_!(\eta_\mR)^\ast \phi^\ast\{d_\phi/2\}P_\mX\{\Ind_\phi/2\}\nn\\
         &:=(p_\mR)_!(\eta_\mR)^\ast (P_{\mU,f}\otimes_{\bZ/2\bZ} Q_{(\mR,\mU,f,i)})\{\Ind_\phi/2\}\nn\\
         &\simeq (p_\mR)_!(\eta_\mR)^\ast P_{\mU,f}\otimes_{\bZ/2\bZ} Q_{\Grad(\mR,\mU,f,i))}\{\Ind_\phi/2\}\nn\\
         &\simeq P_{\Grad(\mU),\Grad(f)}\{-\Ind_{\mU,f}/2\}\otimes_{\bZ/2\bZ} Q_{(\Grad(\mR,\mU,f,i))}\{\Ind_\phi/2\}\nn\\
         &=: \Grad(\phi)^\ast\{d_{\Grad(\phi)}/2\}P_{\Grad(\mX),\Grad(s),\Grad(K_{\mX,s}^{1/2})}\{-\Ind_\mX/2\}
    \end{align}
    were the first isomorphism comes from Proposition \ref{propsmoothBB}, the second by Proposition \ref{proplocdefperv} $i)$, the third from \eqref{hyplocQ} and Lemma \ref{lemtwisthyploc}, the fourth by Proposition \ref{prophyploccrit}, and the last by Proposition \ref{proplocdefperv} $i)$ and Proposition \ref{cordcritgraded} $ii)$.\medskip

    Consider a smooth restriction of critical charts $\phi':(\mR',\mU',f',i')\to(\mR,\mU,f,i)$, and the square of isomorphisms (were we have removed the dependency in $s$ and $K_{\mX,s}^{1/2}$ for readability):
    
    \begin{tikzcd}[column sep=tiny]
        \begin{tabular}{c}$\Grad(\tilde{\phi}')^\ast\{d_{\Grad(\tilde{\phi}')}/2\}$\\$\Grad(\phi)^\ast\{d_{\Grad(\phi)}/2\}(p_\mX)_!(\eta_\mX)^\ast P_{\mX}$\end{tabular}\arrow[r,"\simeq"]\arrow[d,"\simeq"] & \begin{tabular}{c}$\Grad(\tilde{\phi}')^\ast\{d_{\Grad(\tilde{\phi}')}/2\}$\\$\Grad(\phi)^\ast\{d_{\Grad(\phi)}/2\}P_{\Grad(\mX)}\{-\Ind_\mX/2\}$\end{tabular}\arrow[d,"\simeq"]\\
        \Grad(\phi\circ\tilde{\phi}')^\ast\{d_{\Grad(\phi\circ\tilde{\phi}')}/2\}P_{\mX}\arrow[r,"\simeq"] & \Grad(\phi\circ\tilde{\phi}')^\ast\{d_{\Grad(\phi\circ\tilde{\phi}')}/2\}P_{\Grad(\mX)}\{-\Ind_\mX/2\}
    \end{tikzcd}
    
    where the horizontal arrows comes from \eqref{isomhyplocjoyceloc} respectively for $(\mR,\mU,f,i)$ and $(\mR',\mU',f',i')$. To show that this square commutes, it suffices to decompose it into five little square, according to the decomposition of the isomorphism \eqref{isomhyplocjoyceloc}. The first square commutes from the compatibility with composition stated in Proposition \ref{propsmoothBB}, the second one by Proposition \ref{proplocdefperv} $ii)$, the third one by the compatibility with smooth pullbacks stated below \eqref{hyplocQ} and in Lemma \ref{lemtwisthyploc}, the fourth by Lemma \ref{lemsmoothresthyploc}, and the last one by Proposition \ref{proplocdefperv} $ii)$ again.\medskip

    Consider a critical chart $(\mR,\mU,f,i)$ and its stabilization $(\mR,\bV_\mU(\mE),f\circ\pi+q,s\circ i)$ by the quadratic bundle $(\mE,q)$ on $\mU$. Consider the square of isomorphisms:
    \[\begin{tikzcd}[column sep=tiny]
        \Grad(\phi)^\ast\{d_{\Grad(\phi)}/2\}(p_\mX)_!(\eta_\mX)^\ast P_{\mX}\arrow[r,"\simeq"]\arrow[d,"\simeq"] & \Grad(\phi)^\ast\{d_{\Grad(\phi)}/2\}P_{\Grad(\mX)}\{-\Ind_\mX/2\}\arrow[d,"\simeq"]\\
        \Grad(\phi)^\ast\{d_{\Grad(\phi)}/2\}(p_\mX)P_{\mX}\arrow[r,"\simeq"] & \Grad(\phi)^\ast\{d_{\Grad(\phi)}/2\}P_{\Grad(\mX)}\{-\Ind_\mX/2\}
    \end{tikzcd}\]
    where the horizontal arrows comes from the \eqref{isomhyplocjoyceloc} respectively for $(\mR,\bV_\mU(\mE),f\circ\pi+q,s\circ i)$ and $(\mR,\mU,f,i)$. To show that this square commutes, it suffices to decompose it into five little square, according to the decomposition of the isomorphism \eqref{isomhyplocjoyceloc}. The first square trivially commutes, the second one by Proposition \ref{proplocdefperv} $iii)$, the third is the commutative square giving the compatibility with stabilization stated below \eqref{hyplocQ}, the fourth one by Lemma \ref{lemcompathyplocstab}, and the last one by Proposition \ref{proplocdefperv} $iii)$ again.\medskip

    By Proposition \ref{propcomparchart} $i)$, $\Grad(\mX)$ is covered by critical charts of the form $(\Grad(\mR),\Grad(\mU),\Grad(f),\Grad(i))$, hence \eqref{isomhyplocjoyceloc} gives a smooth local definition of the isomorphism \eqref{isomhyplocjoyce}. By the above, these isomorphisms are compatible with the $\Grad$ of smooth restrictions and stabilization of critical charts of $\mX$, hence they agree smooth locally from \ref{propcomparchart} $ii)$, \ie by smooth descent they glue into a unique isomorphism \eqref{isomhyplocjoyce}.\medskip

    The compatibility with the isomorphisms of smooth pullbacks from Corollary \ref{corfunctjoyce} is direct from the definition, as, given a smooth morphism $\phi:(\mX,s)\to(\mY,t)$, the critical charts of $(\mX,s)$ forms a subset of the critical charts of $(\mY,t)$, hence the isomorphism \eqref{isomhyplocjoyce} for $(\mX,s,K_{\mX,s}^{1/2})$ is defined locally by applying $\phi^\ast\{d_\phi/2\}$ to the isomorphisms giving the local definition of isomorphism \eqref{isomhyplocjoyce} for $(\mY,t,K_{\mY,t}^{1/2})$.\medskip

    We want to show the compatibility with the isomorphism of exterior product from \ref{corfunctjoyce}, namely that, given two oriented d-critical stacks $(\mX_i,s_i,K_{\mX_i,s_i}^{1/2})$ for $i=1,2$, the following square of isomorphisms is commutative (were we have removed the dependency in $s$ and $K_{\mX,s}^{1/2}$ for readability):
    \[\begin{tikzcd}
        (p_1\times p_2)_!(\eta_1\times\eta_2)^\ast P_{\mX_1\times\mX_2}\arrow[r,"\simeq"]\arrow[d,"\simeq"] & P_{\Grad(\mX_1\times\mX_2)}\{-\Ind_{\mX_1\times\mX_2}/2\}\arrow[d,"\simeq"]\\
        (p_1)_!(\eta_1)^\ast P_{\mX_1}\boxtimes (p_2)_!(\eta_2)^\ast P_{\mX_2}\arrow[r,"\simeq"] & P_{\Grad(\mX_1)}\{-\Ind_{\mX_1}/2\}\boxtimes P_{\Grad(\mX_2)}\{-\Ind_{\mX_2}/2\}
    \end{tikzcd}\]
    it suffices to check that smooth locally, \ie that, for critical charts $(\mR_i,\mU_i,f_i,i_i)$ of $(\mX_i,s_i)$ with $\phi_i:\mR_i\to\mX_i$, the square obtained by applying $\Grad(\phi_1\times\phi_2)^\ast\{d_{\phi_1\times\phi_2}/2\}$ to the latter one, with horizontal arrows given by \eqref{isomhyplocjoyceloc}, commutes. To show that this square commutes, it suffices to decompose it into five little square, according to the decomposition of the isomorphism \eqref{isomhyplocjoyceloc}. The first square commutes from the compatibility with exterior product in Proposition \ref{propsmoothBB}, the second one by the definition of the isomorphism in Corollary \ref{corfunctjoyce}, the third one by the compatibility with exterior products stated below \eqref{hyplocQ} and in Lemma \ref{lemtwisthyploc}, the fourth one by the compatibility with exterior products in Proposition \ref{prophyploccrit}, and the last one by the definition of the isomorphism in Corollary \ref{corfunctjoyce} again.
\end{proof}
    
\subsubsection{Composition of hyperbolic localizations}\label{secttau}

Consider an Artin $k$-stack $\mX$ which is quasi-separated, locally of finite type, with affine diagonal. We have:
\begin{align}
    \Grad(\Grad(\mX)):=Map(B\bG_{m,k},Map(B\bG_{m,k},\mX))\simeq Map(B\bG_{m,k}\times B\bG_{m,k},\mX)
\end{align}
There is then a natural involution $\tau$ of $\Grad(\Grad(\mX))$ obtained by swapping the two $B\bG_{m,k}$.

\begin{lemma}\label{lemtauorientdcrit}
\begin{itemize}
    \item[$i)$] Given an oriented $d$-critical stack $(\mX,s,(K_{\mX,s}^{1/2}))$, $\tau$ enhances to an involution of the (oriented) d-critical stack $(\Grad(\Grad(\mX)),\Grad(\Grad(s)),(\Grad(\Grad(K_{\mX,s}^{1/2})))$. This enhancement is compatible with smooth morphisms and products of (oriented) d-critical stacks.
    \item[$ii)$] Consider a $-1$-shifted symplectic stack $(\ud{\mX},\omega)$ with classical truncation $(\mX,s)$. Using the isomorphism $K_{\mX,s}\simeq \det(\bL_{\mX})|_{\mX^{red}}$ and Lemma \ref{lemshifsymp} $iii)$, consider the isomorphism:
    \begin{align}
\Grad(\Grad(K_{\mX,s}^{1/2})):=&\Grad(\iota^\ast(K_{\mX,s}^{1/2})\otimes\det((\ud{\iota}^\ast\bL_{\ud{\mX}})^{<0})|^{\otimes -1}_{\Grad(\Grad(\mX))^{red}})\nn\\
        :=&\Grad(\iota)^\ast\iota^\ast(K_{\mX,s}^{1/2})\otimes \det((\ud{\Grad}(\ud{\iota})^\ast\bL_{\ud{\Grad}(\ud{\mX})})^{<0})|^{\otimes -1}_{\Grad(\Grad(\mX))^{red}}\nn\\
        &\otimes \det(\ud{\Grad}(\ud{\iota})^\ast(\ud{\iota}^\ast\bL_{\ud{\mX}})^{<0})|^{\otimes -1}_{\Grad(\Grad(\mX))^{red}}\nn\\
        \simeq& \Grad(\iota)^\ast\iota^\ast(K_{\mX,s}^{1/2})\otimes  (\ud{\Grad}(\ud{\iota})^\ast\ud{\iota}^\ast\bL_{\ud{\mX}})^{H_1}|^{\otimes -1}_{\Grad(\Grad(\mX))^{red}}
    \end{align}
    where $H_1$ is the subset of $\bZ^2$ of $(w_1,w_2)$ such that $w_1<0$ or $w_2=0$ and $w_1<0$, and denote by $H_2$ its image by swapping the two factors, \ie the subset of $\bZ^2$ of $(w_1,w_2)$ such that $w_2<0$ or $w_1=0$ and $w_2<0$. The isomorphism of $i)$ is identified with:
    \begin{align}\label{isomtaushifsymp}
        \tau^\ast\Grad(\Grad(K_{\mX,s}^{1/2}))&\simeq\tau^\ast\Grad(\iota)^\ast\iota^\ast(K_{\mX,s}^{1/2})\otimes  \ud{\tau}^\ast((\ud{\Grad}(\ud{\iota})^\ast\ud{\iota}^\ast\bL_{\ud{\mX}})^{H_1})|^{\otimes -1}_{\Grad(\Grad(\mX))^{red}}\nn\\
        &\simeq \Grad(\iota)^\ast\iota^\ast(K_{\mX,s}^{1/2})\otimes  (\ud{\Grad}(\ud{\iota})^\ast\ud{\iota}^\ast\bL_{\ud{\mX}})^{H_2}|^{\otimes -1}_{\Grad(\Grad(\mX))^{red}}\nn\\
        &\simeq \Grad(\iota)^\ast\iota^\ast(K_{\mX,s}^{1/2})\otimes  (\ud{\Grad}(\ud{\iota})^\ast\ud{\iota}^\ast\bL_{\ud{\mX}})^{H_1}|^{\otimes -1}_{\Grad(\Grad(\mX))^{red}}\nn\\
        &\simeq \Grad(\Grad(K_{\mX,s}^{1/2}))
    \end{align}
    where the isomorphism of the second line comes by applying the isomorphism:
    \begin{align}\label{eqtildeLweight}
        \ud{\Grad}(\ud{\iota})^\ast\ud{\iota}^\ast(\omega):(\ud{\Grad}(\ud{\iota})^\ast\ud{\iota}^\ast\bL_{\ud{\mX}})^{w_1,w_2}\simeq (\ud{\Grad}(\ud{\iota})^\ast\ud{\iota}^\ast\bL_{\ud{\mX}})^{-w_1,-w_2}
    \end{align}
    to the weights $(w_1\geq 0,w_2<0)$.
\end{itemize}
\end{lemma}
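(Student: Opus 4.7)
The plan is the following. Write $\iota_1 := \iota_{\Grad(\mX)}$ and $\iota_2 := \Grad(\iota_\mX)$ for the two natural maps $\Grad(\Grad(\mX)) \to \Grad(\mX)$, and $J := \iota_\mX \circ \iota_1 = \iota_\mX \circ \iota_2 : \Grad(\Grad(\mX)) \to \mX$. Under the identification $\Grad(\Grad(\mX)) \simeq \Map(B\bG_{m,k} \times B\bG_{m,k}, \mX)$, the map $J$ corresponds to the canonical point $\Spec(k) \to B\bG_{m,k} \times B\bG_{m,k}$, which is $\tau$-invariant, so $J \circ \tau = J$; meanwhile $\tau$ exchanges $\iota_1$ and $\iota_2$, and hence $\iota_1 \circ \tau = \iota_2$.

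For part $i)$, by functoriality of $\phi \mapsto \phi^\star$ from Lemma \ref{defSx}, we have $\Grad(\Grad(s)) = \iota_1^\star \iota_\mX^\star s = J^\star s$, and hence $\tau^\star \Grad(\Grad(s)) = (J \circ \tau)^\star s = J^\star s$, which settles the d-critical part of the involution. Unwinding the construction of Proposition \ref{cordcritgraded} $iii)$ yields
\begin{align*}
\Grad(\Grad(K_{\mX,s}^{1/2})) \simeq J^\ast K_{\mX,s}^{1/2} \otimes \iota_1^\ast G_{\mX,s}^{\otimes -1} \otimes G_{\Grad(\mX), \Grad(s)}^{\otimes -1},
\end{align*}
and applying $\tau^\ast$ replaces $\iota_1$ by $\iota_2$ in the middle factor and $G_{\Grad(\mX),\Grad(s)}$ by $\tau^\ast G_{\Grad(\mX),\Grad(s)}$. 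It thus suffices to build a canonical isomorphism $\tau^\ast G_{\Grad(\mX),\Grad(s)} \otimes \iota_2^\ast G_{\mX,s} \simeq G_{\Grad(\mX),\Grad(s)} \otimes \iota_1^\ast G_{\mX,s}$ of line bundles on $\Grad(\Grad(\mX))^{red}$, and to check that its square matches the canonical isomorphism induced by $\tau$-invariance of $\Grad(\Grad(s))$. Since both sides are line bundles on a reduced stack, I would verify this stalk-wise as in the proof of Proposition \ref{cordcritgraded} $iii)$: using formula \eqref{eqstalkG}, both stalks become the same determinant of a $\bZ^2$-graded piece of the tangent and isotropy spaces, and the desired isomorphism is the tautological symmetry swapping the two $\bZ$-weights. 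Compatibility with smooth morphisms and products then follows from the functoriality and monoidality of $J$, $G$, and $K^{1/2}$ already established in Proposition \ref{cordcritgraded}.

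For part $ii)$, substituting the shifted symplectic identification $G_{\mX,s} \simeq \det((\ud{\iota}^\ast \bL_{\ud{\mX}})^{<0})|_{\Grad(\mX)^{red}}$ from Lemma \ref{lemshifsymp} $iii)$ into the expression above, and grouping the two determinantal factors into a single determinant of the $H_1$-weight piece of $J^\ast \bL_{\ud{\mX}}$ (using the iterated cotangent description for $\bL_{\ud{\Grad}(\ud{\Grad}(\ud{\mX}))}$) produces the promised form. Under $\tau^\ast$ the bigrading is swapped, yielding the $H_2$-piece. The symmetric difference $(H_1 \setminus H_2) \sqcup (H_2 \setminus H_1) = \{w_1 < 0, w_2 > 0\} \sqcup \{w_1 > 0, w_2 < 0\}$ is closed under $(w_1, w_2) \mapsto (-w_1, -w_2)$, so the nondegenerate pairing \eqref{eqtildeLweight} induces an isomorphism between these two weight pieces, and hence between their determinants, which is precisely the middle isomorphism in \eqref{isomtaushifsymp}.

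The main obstacle will be to verify that this explicit pairing-induced isomorphism coincides with the canonical isomorphism from $i)$, which was defined abstractly via uniqueness of square roots. I expect this to be checkable by iterating Lemma \ref{lemdarbgmst}: étale-locally near any point of $\Grad(\Grad(\mX))$ one can assume $\ud{\mX} = \ud{\Crit}(f)$ for a function $f$ on a doubly $\bG_m$-equivariant smooth scheme $U$, in which case the shifted symplectic pairing becomes the Hessian pairing of $f$ and the identification reduces to an explicit linear-algebra computation on the bigraded cotangent space of $U$.
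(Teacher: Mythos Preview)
Your proposal is correct and takes essentially the same approach as the paper. The paper organizes part $i)$ by working directly on critical charts rather than through the bundle $G_{\mX,s}$: it constructs the local isomorphism via the observation that $K_{\mX/\mY}^{\bZ^2 \setminus \{(0,0)\}}$ is manifestly $\tau$-invariant (your ``tautological symmetry swapping the two $\bZ$-weights''), then verifies chart-independence at stalks exactly as you suggest; for part $ii)$ the paper likewise reduces to stalks via a doubly $\bG_m$-equivariant Darboux chart and the identification \eqref{eqstalkGshif}, matching your proposed verification.
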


\begin{proof}
\begin{enumerate}
    \item[$i)$] 
    Given a d-critical stack $(\mX,s)$, using the natural 2-isomorphism:
    \begin{align}\label{iotatau}
        \iota_\mX\circ\iota_{\Grad(\mX)}\simeq \iota_\mX\circ\iota_{\Grad(\mX)}\circ \tau
    \end{align}
    one finds that $\tau^\star(\Grad(\Grad(s)))=\Grad(\Grad(s))$, hence $\tau$ gives an automorphism of the $d$-critical stack $(\Grad(\Grad(\mX)),\Grad(\Grad(s)))$.\medskip
    
    Consider an orientation $K_{\mX,s}^{1/2}$ on $(\mX,s)$, we have to build a natural isomorphism:
    \begin{align}\label{eqgradgradcanbun}
        \tau^\ast\Grad(\Grad(K_{\mX,s}^{1/2}))\simeq \Grad(\Grad(K_{\mX,s}^{1/2}))
    \end{align}
    which is a square root of $\tau^\ast\Grad(\Grad(K_{\mX,s}))\simeq \Grad(\Grad(K_{\mX,s}))$. We will define it smooth-locally on critical charts, and check that their stalks at each $k$-point is independent from the choice of critical chart.\medskip

    The two actions of $B\bG_{m,k}$ on $\Grad(\Grad(\mX))$ give two $\bZ$-grading on quasi-coherent complexes on $\Grad(\Grad(\mX))$, with $\tau^\ast$ swapping the two factors. Denote by $\mE^{Z}$ the component of $\mE$ whose weights are in $Z\subset\bZ^2$. We have then, for a smooth map $\mX\to\mY$:
    \begin{align}
        (\Grad(\iota)^\ast \bL_{\Grad(\mX)/\Grad(\mY)})^{\neq 0}&\simeq ((\iota\circ \Grad(\iota))^\ast \bL_{\mX/\mY})^{\{0\}\times\bZ^\ast}\nn\\
        \Grad(\iota)^\ast (\iota^\ast\bL_{\mX/\mY})^{\neq 0}&\simeq ((\iota\circ \Grad(\iota))^\ast \bL_{\mX/\mY})^{\bZ^\ast\times\bZ}
    \end{align}
    Recall the notation of \eqref{defKI}, and, for $Z\subset\bZ^2$, define similarly:
    \begin{align}
        K_{\mX/\mY}^Z:=\det((\iota\circ\Grad(\iota))^\ast \bL_{\mX/\mY})^Z)|_{\Grad(\Grad(\mX))^{red}}
    \end{align}
    we obtain:
    \begin{align}
        K_{\Grad(\mX)/\Grad(\mY)}^{\neq 0}
        \otimes\Grad(\iota)^\ast (K_{\mX/\mY}^{\neq 0})\simeq K_{\mX/\mY}^{\bZ^2-\{(0,0)\}}
    \end{align}
    and we obtain then, using $\iota_\mX\circ\iota_{\Grad(\mX)}\simeq \iota_\mX\circ\iota_{\Grad(\mX)}\circ \tau$ and the fact that $\bZ^2-\{(0,0)\}$ is invariant under exchange of the two factors, a natural isomorphism:
    \begin{align}\label{eqtauKsmooth}
        \tau^\ast (K_{\Grad(\mX)/\Grad(\mY)}^{\neq 0}
        \otimes\Grad(\iota)^\ast K_{\mX/\mY}^{\neq 0})\simeq  K_{\Grad(\mX)/\Grad(\mY)}^{\neq 0}
        \otimes\Grad(\iota)^\ast K_{\mX/\mY}^{\neq 0}
    \end{align}
    which is compatible with exterior product, and composition of smooth morphisms. Given a critical chart $(\mR,\mU,f,i)$ of $(\mX,s)$, recall that:
    \begin{align}
        \Grad(K_{\mX,s}^{1/2})|_{\Grad(\mR)^{red}}:=\iota^\ast K_{\mX,s}^{1/2}|_{\Grad(\mR)}\otimes(\Grad(i)^\ast K_\mU^{\neq 0})^{\otimes -1}\otimes K_{\mR/\mX}^{\neq 0}
    \end{align}
    Using \eqref{eqtauKsmooth} for $\mR\to\mX$ and $\mU\to\Spec(k)$, we obtain then a natural isomorphism: \begin{align}\label{eqgradgradcanbunloc}
\tau^\ast\Grad(\Grad(K_{\mX,s}^{1/2}))|_{\Grad(\Grad(\mR))^{red}}\simeq \Grad(\Grad(K_{\mX,s}^{1/2}))|_{\Grad(\Grad(\mR))^{red}}
    \end{align}
    compatible with exterior products of critical charts.\medskip

    Consider a point $x\in\Grad(\Grad(k))$. Using the notation in the proof of Proposition \ref{cordcritgraded} $iii)$, under the isomorphism \eqref{eqstalkG}, using the determinant of the Hessian of the $\bG_m$-invariant function $f$, one find directly that the stalk of \eqref{eqgradgradcanbunloc} comes from:
    \begin{align}\label{eqstalktau}
        \Grad(\Grad(K_{\mX,s}^{1/2}))|_x&\simeq \Grad(K_{\mX,s}^{1/2})|_{\Grad(\iota)(x)}\otimes (K_{\Grad(\mX)}|_x^{\neq 0})^{\otimes -1}\nn\\
        &\simeq K_{\mX,s}^{1/2}|_{\iota\circ\Grad(\iota)(x)}\otimes (K_\mX|_{\Grad(\iota)(x)}^{\neq 0})^{\otimes -1}\otimes (K_{\Grad(\mX)}|_x^{\neq 0})^{\otimes -1}\nn\\
        &\simeq K_{\mX,s}^{1/2}|_{\iota\circ\Grad(\iota)(x)}\otimes (K_\mX|_x^{\bZ^2-\{(0,0)\}})^{\otimes -1}
    \end{align}
    then the stalk of \eqref{eqgradgradcanbunloc} is independent from the critical chart.\medskip

     We can now conclude as in Proposition \ref{cordcritgraded}. Indeed, \cite[Lemma 4.4.6]{HalpernLeistner2014OnTS} applies also to $\Grad\circ\Grad$, hence applying Proposition \eqref{propcompareqchart} $i)$ to the two dimensional torus $\bG_{m,k}\times\bG_{m,k}$ one obtains an analogue of Proposition \ref{propcomparchart} $i)$ saying that $\Grad(\Grad(\mX))$ is covered by $\Grad\circ\Grad$ of critical charts of $(\mX,s)$. As we want to define an isomorphism of line bundles on a reduced stack, and the stalk of \eqref{eqgradgradcanbunloc} at each $k$-point is independent of the critical chart, one  obtain directly that they glue into a global isomorphism \eqref{eqgradgradcanbun}. The compatibility with smooth morphisms come for free, and the compatibility with products from the fact that \eqref{eqgradgradcanbunloc} is compatible with products.\medskip

     Consider a critical chart $(\mR,\mU,f,i)$, and the square of isomorphisms:
    \[\begin{tikzcd}
        \tau^\ast\Grad(\iota)^\ast\iota^\ast Q_{(\mR,\mU,f,i)}\arrow[r,"\simeq"]\arrow[d,"\simeq"] & \Grad(\iota)^\ast\iota^\ast Q_{(\mR,\mU,f,i)}\arrow[d,"\simeq"]\\
\tau^\ast\Grad(\iota)^\ast Q_{\Grad((\mR,\mU,f,i))}\arrow[d,"\simeq"] & \Grad(\iota)^\ast Q_{\Grad((\mR,\mU,f,i))}\arrow[d,"\simeq"]\\
\tau^\ast Q_{\Grad(\Grad((\mR,\mU,f,i)))}\arrow[r,"\simeq"] & Q_{\Grad(\Grad((\mR,\mU,f,i)))}
    \end{tikzcd}\]
    where the vertical isomorphisms are \eqref{hyplocQ}, the upper horizontal is \eqref{iotatau}, and the lower left comes from the smooth restriction $\tau:\Grad(\Grad((\mR,\mU,f,i)))\to \Grad(\Grad((\mR,\mU,f,i)))$. Notice that we have:
    \begin{align}\label{eqtau}
       &\Grad(\Grad(K_{\mX,s}^{1/2}))|_{\Grad(\Grad(\mR))}\nn\\
       \simeq& (\iota\circ\Grad(\iota))^\ast K_{\mX,s}^{1/2}|_{\Grad(\Grad(\mR))}\otimes(\Grad(\Grad(i))^\ast K_\mU^{\bZ^2-\{(0,0)\}})^{\otimes -1}\otimes (K_{\mR/\mX}^{\bZ^2-\{(0,0)\}})
    \end{align}
    And the vertical isomorphisms associate, to an isomorphism:
    \begin{align}
    K_{\mX,s}^{1/2}|_{\mR^{red}}\simeq i^\ast(K_\mU)|_{\mR^{red}}\otimes (K_{\mR/\mX})|^{\otimes -1}_{\mR^{red}}
\end{align}
the isomorphism:
\begin{align}
&\Grad(\Grad(K_{\mX,s}^{1/2})|_{\Grad(\Grad(\mR))^{red}}\nn\\
\simeq& (\Grad(\Grad(i)))^\ast K_{\mU}^{\{(0,0)\}}|_{\Grad(\Grad(\mR))^{red}}\otimes (K_{\mR/\mX}^{\{(0,0)\}})^{\otimes -1}|_{\Grad(\Grad(\mR))^{red}}\nn\\
=:& (\Grad(\Grad(i)))^\ast(K_{\Grad(\Grad(\mU)))}|_{\Grad(\Grad(\mR))^{red}}\otimes (K_{\Grad(\Grad(\mR))/\Grad(\Grad(\mX))})|^{\otimes -1}_{\Grad(\Grad(\mR))^{red}}
\end{align}
defined in the obvious way from \eqref{eqtau}, and this is compatible with the action of $\tau$ swapping the two factors of $\bZ$, hence the above diagram of isomorphisms commutes.\medskip

\item[$ii)$] We will show that these isomorphisms agree at the stalk of each $k$ point $x$ of $\Grad(\Grad(\mX))$. We can argue as in Lemma \ref{lemdarbgmst}, adapting Lemma \ref{lemdarbgmeq} to find a $\bG_m\times\bG_m$-equivariant data as in Lemma \ref{lemdarbgmeq} over $x$, and follow then Lemma \ref{lemshifsymp} $iii)$. Under \eqref{eqstalkGshif}, which is induced by the pairing $\tilde{\omega}$, one obtains directly that the stalk of \eqref{isomtaushifsymp} identifies with the stalk of \eqref{eqgradgradcanbun}, coming from \eqref{eqstalktau}. Then \eqref{isomtaushifsymp} and \eqref{eqgradgradcanbun} are two isomorphisms of line bundle on reduced stacks agreeing on the stalk of each $k$-point, hence they are equal.
\end{enumerate}

\end{proof}

Consider the commutative diagram:
\[\begin{tikzcd}
    B\bG_{m,k}\times B\bG_{m,k}\arrow[r] & \Theta_k\times B\bG_{m,k} & \Spec(k)\times B\bG_{m,k}\arrow[l]\\
    B\bG_{m,k}\times B\bG_{m,k}\arrow[d]\arrow[r]\arrow[u] & B\bG_{m,k}\times\Theta_k\arrow[d]\arrow[u] & B\bG_{m,k}\times \Spec(k)\arrow[d]\arrow[l]\arrow[u] \\
    \Theta_k\times B\bG_{m,k}\arrow[r] & \Theta_k\times\Theta_k & \Theta_k\times \Spec(k)\arrow[l]\\
    \Spec(k)\times B\bG_{m,k}\arrow[u]\arrow[r] & \Spec(k)\times\Theta_k\arrow[u] & \Spec(k)\times \Spec(k)\arrow[u]\arrow[l]
\end{tikzcd}\]
where the arrows from the second to the first line are obtained by swapping the factors, and the two upper squares the right central square, and the left lower square are Cocartesian. Using mapping stacks, we obtains the following commutative diagram:

\begin{equation}\label{diagcomphyploc}\begin{tikzcd}
\Grad(\Grad(\mX))\arrow[d,"\tau"] & \Filt(\Grad(\mX))\arrow[r,"\eta_{\Grad(\mX)}"]\arrow[l,swap,"p_{\Grad(\mX)}"]\arrow[d,"\tau'"] & \Grad(\mX)\arrow[d,"Id"]\\
    \Grad(\Grad(\mX)) & \Grad(\Filt(\mX))\arrow[r,"\Grad(\eta_\mX)"]\arrow[l,swap,"\Grad(p_\mX)"] & \Grad(\mX)\\
    \Filt(\Grad(\mX))\arrow[u,swap,"p_{\Grad(\mX)}"]\arrow[d,"\eta_{\Grad(\mX)}"] & \Filt(\Filt(\mX))\arrow[r,"\Filt(\eta_\mX)"]\arrow[l,swap,"\Filt(p_\mX)"]\arrow[u,swap,"p_{\Filt(\mX)}"]\arrow[d,"p_{\Filt(\mX)}"] & \Filt(\mX)\arrow[u,swap,"p_\mX"]\arrow[d,"\eta_\mX"]\\
    \Grad(\mX) & \Filt(\mX)\arrow[r,"\eta_\mX"]\arrow[l,swap,"p_\mX"] & \mX
\end{tikzcd}\end{equation}

where the two upper squares the right central square, and the left lower square are Cartesian. Using functoriality and base change in this diagram, we obtain a natural isomorphism:
\begin{align}\label{isomtau}
\tau^\ast(p_{\Grad(\mX)})_!(\eta_{\Grad(\mX)})^\ast(p_\mX)_!(\eta_\mX)^\ast\simeq (p_{\Grad(\mX)})_!(\eta_{\Grad(\mX)})^\ast(p_\mX)_!(\eta_\mX)^\ast
\end{align}

The following proposition is the key to prove the associativity of the CoHA: it gives some kind of operadic properties on the hyperbolic localization isomorphism.

\begin{lemma}\label{lemcomposHyploc}
    Given an oriented d-critical Artin stack $(\mX,s,K_{\mX,s}^{1/2})$, the following diagram of isomorphisms is commutative:
    \[\begin{tikzcd}
        \tau^\ast(p_{\Grad(\mX)})_!(\eta_{\Grad(\mX)})^\ast(p_\mX)_!(\eta_\mX)^\ast P_\mX\arrow[r,"\simeq"]\arrow[d,"\simeq"] & (p_{\Grad(\mX)})_!(\eta_{\Grad(\mX)})^\ast(p_\mX)_!(\eta_\mX)^\ast P_\mX\arrow[d,"\simeq"]\\
\tau^\ast(p_{\Grad(\mX)})_!(\eta_{\Grad(\mX)})^\ast P_{\Grad(\mX)}\{-\Ind_\mX/2\}\arrow[d,"\simeq"] & (p_{\Grad(\mX)})_!(\eta_{\Grad(\mX)})^\ast P_{\Grad(\mX)}\{-\Ind_\mX/2\}\arrow[d,"\simeq"]\\
\tau^\ast P_{\Grad(\Grad(\mX))}\{-\Ind_\mX/2-\Ind_{\Grad(\mX)}/2\}\arrow[r,"\simeq"] & P_{\Grad(\Grad(\mX))}\{-\Ind_\mX/2-\Ind_{\Grad(\mX)}/2\}
    \end{tikzcd}\]
    where the upper horizontal arrow comes from \eqref{isomtau}, and the lower horizontal arrow by the functoriality of Corollary \ref{corfunctjoyce} with respect to the isomorphism of oriented d-critical stack $\tau$.
\end{lemma}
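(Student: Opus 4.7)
The plan is to verify this identity of isomorphisms between objects of the Abelian category $\mA_{mon,c}(\Grad(\Grad(\mX)))$ by checking it smooth-locally on suitable critical charts. By the argument used in the proof of Theorem \ref{theohyploc}, the two-torus analogue of \cite[Lemma 4.4.6]{HalpernLeistner2014OnTS} combined with Proposition \ref{propcompareqchart} $i)$ applied to $\bG_m \times \bG_m$ implies that $\Grad(\Grad(\mX))$ admits a smooth cover by stacks of the form $\Grad(\Grad(\mR))$ for critical charts $(\mR,\mU,f,i)$ of $(\mX,s)$. Since both sides are morphisms in an Abelian category, it suffices to check the diagram commutes after pullback along each such smooth cover.

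On such a $\Grad(\Grad(\mR,\mU,f,i))$, both vertical arrows are, by construction of the hyperbolic localization isomorphism of Theorem \ref{theohyploc}, expressed by applying \eqref{isomhyplocjoyceloc} twice: once for the critical chart $(\mR,\mU,f,i)$ of $(\mX,s)$ and once for the critical chart $\Grad(\mR,\mU,f,i)$ of $(\Grad(\mX),\Grad(s))$. The resulting composite factors as an iterated combination of (i) the smooth base-change isomorphism of Proposition \ref{propsmoothBB} and its compatibility with composition, (ii) the local DT isomorphism of Proposition \ref{proplocdefperv} $i)$, (iii) the hyperbolic localization of $\bZ/2\bZ$-twists (Lemma \ref{lemtwisthyploc} and \eqref{hyplocQ}), and (iv) the critical-chart hyperbolic localization of Proposition \ref{prophyploccrit}. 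The Thom twists combine to $\{-\Ind_\mX/2-\Ind_{\Grad(\mX)}/2\}$ using the formula $\Ind_{\Grad(\mX)} + \Ind_\mX|_{\Grad(\Grad(\mX))} = \Ind_{\mU,f} + \Ind_{\Grad(\mU),\Grad(f)} - \Ind_{\mR/\mX} - \Ind_{\Grad(\mR)/\Grad(\mX)}$ from Proposition \ref{cordcritgraded} $ii)$, and this integer function is visibly invariant under $\tau$ because the two $\bG_m$-actions play symmetric roles in $(\iota\circ\Grad(\iota))^\ast \bL_\mX$.

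It then remains to check, piece by piece, that each of the four constituent isomorphisms commutes with $\tau^\ast$ when we apply it on the left instead of the right. For the base-change and functoriality pieces this follows by functoriality applied to the commutative cube \eqref{diagcomphyploc} (whose horizontal swap symmetry is exactly $\tau$ at the top level and $\tau'$ at the middle level). For the critical-chart piece of Proposition \ref{prophyploccrit}, the identification $\Grad(\Grad(\Crit(f))) = \Crit(\Grad(\Grad(f)))$ of Lemma \ref{lemcritgrad} combined with the identities $\Grad(\Grad(f))\circ \tau = \Grad(\Grad(f))$ and $\tau^\ast(\phi_{\Grad(\Grad(f))}^{mon,tot}) \simeq \phi_{\Grad(\Grad(f))}^{mon,tot}\circ \tau^\ast$ give the compatibility. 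For the $\bZ/2\bZ$-twist piece, the compatibility is precisely the content of Lemma \ref{lemtauorientdcrit}: the $\tau$-equivariance isomorphism $\tau^\ast Q_{\Grad(\Grad(\mR,\mU,f,i))} \simeq Q_{\Grad(\Grad(\mR,\mU,f,i))}$ built there is the one coming from the two iterations of \eqref{hyplocQ} composed with the canonical isomorphism $\tau^\ast(\iota\circ \Grad(\iota))^\ast K_{\mX,s}^{1/2} \simeq (\iota\circ \Grad(\iota))^\ast K_{\mX,s}^{1/2}$.

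The main obstacle is the compatibility check for the $\bZ/2\bZ$-twist, since the two applications of \eqref{hyplocQ} produce an isomorphism of orientations that must be matched with the isomorphism \eqref{eqgradgradcanbun} of Lemma \ref{lemtauorientdcrit} $i)$. This is done exactly as in the proof of Lemma \ref{lemtauorientdcrit} $i)$: both are isomorphisms of line bundles on the reduced stack $\Grad(\Grad(\mR))^{red}$, and by the computation at $k$-points in \eqref{eqstalktau} the stalks agree, so the two isomorphisms are equal. With this step in hand, assembling the four commuting squares yields the commutativity of the diagram of the lemma smooth-locally, and hence globally by smooth descent.
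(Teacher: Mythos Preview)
Your overall strategy is the same as the paper's: reduce to critical charts via the two-torus version of Halpern--Leistner's covering lemma, and then check the five constituent isomorphisms of \eqref{isomhyplocjoyceloc} are each compatible with the swap $\tau$ through \eqref{isomtau}. Your treatment of the $\bZ/2\bZ$-twist piece (iii) and of the ``definitional'' pieces is fine and matches the paper.

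There is, however, a genuine gap in your handling of piece (iv), the critical-chart isomorphism of Proposition~\ref{prophyploccrit}. You write that the compatibility follows from the identities $\Grad(\Grad(f))\circ\tau = \Grad(\Grad(f))$ and $\tau^\ast\phi^{mon,tot}_{\Grad(\Grad(f))}\simeq\phi^{mon,tot}_{\Grad(\Grad(f))}\circ\tau^\ast$. But these only tell you that the \emph{endpoints} of the two paths around the square are canonically identified; they do not show that the two composite isomorphisms agree. The isomorphism of Proposition~\ref{prophyploccrit} is a specific composite built from three ingredients: commutation of $p_!\eta^\ast$ with the closed pullback $i^\ast$ (Lemma~\ref{lemhyploclosed}), commutation with the specialization system $\phi^{mon,tot}$ (Theorem~\ref{commutspechyploc}), and the Bia\l ynicki--Birula isomorphism for $\un_\mU$ (Proposition~\ref{propsmoothBB}). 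You must check that \emph{each} of these three is compatible with \eqref{isomtau} after iteration. The paper does this explicitly before turning to the five squares: for Theorem~\ref{commutspechyploc} the argument is that the exchange isomorphisms of a specialization system are built from base-change transformations, hence are compatible with base change in the Cartesian diagram \eqref{diagcomphyploc}; for Proposition~\ref{propsmoothBB} and Lemma~\ref{lemhyploclosed} the argument is a uniform ``morphisms of correspondences'' computation showing that the relevant adjunction morphism for $\pi$ (namely $\pi_!\pi^\ast\simeq\Sigma^{-\mE}$ in the affine-bundle case, and $Id\to\pi_!\pi^\ast$ in the closed-immersion case) is compatible with composition of correspondences, so that the two ways of factoring the diagonal of \eqref{diagcomphyploc} as a composite correspondence give the same answer.

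Your one-line appeal to ``functoriality applied to the commutative cube \eqref{diagcomphyploc}'' for piece (i) is gesturing at the right diagram but is too vague to constitute an argument; it is precisely this morphisms-of-correspondences computation that is needed there as well, and the paper spells it out.
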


\begin{proof}
    The isomorphism of commutation between specialization systems (like $\phi^{mon,tot}$) and hyperbolic localization from Theorem \ref{commutspechyploc} is built from the natural transformations of the specialization system which are compatible with base change. By base change in the commutative diagram with Cartesian square \eqref{diagcomphyploc}, we obtain that it is compatible isomorphism \eqref{isomtau} in the obvious sense.\medskip

    We will sketch now why \eqref{isomtau} is compatible with the isomorphisms of Proposition \ref{propsmoothBB} and Lemma \ref{lemhyploclosed}. Both are built using the same principle: one consider morphisms of correspondence:
    \[\begin{tikzcd}
        \mX_3'\arrow[d,"\phi_3"] & \mX_2'\arrow[d,"\phi_2"]\arrow[l,"p'"]\arrow[r,"\eta'"] & \mX_1'\arrow[d,"\phi_1"]\arrow[l]\\
        \mX_3 & \mX_2\arrow[l,"p"]\arrow[r,"\eta"] & \mX_1
    \end{tikzcd}\]
    with special properties on the morphism $\pi:\mX'_2\to\mX_2\times_{\mX_3}\mX'_3$. In Proposition \ref{propsmoothBB}, one ask that $\pi$ is smooth vector bundle stack modeled on $\mE$, which gives a natural adjunction isomorphism $\pi_!\pi^\ast\simeq\Sigma^{-\mE}$, and in Lemma \ref{lemhyploclosed}, one ask that $\pi$ is closed, which gives a natural adjunction morphism $Id\to \pi_!\pi^\ast$. The morphism is then obtained by base change. Consider now the composition of two such correspondences, and consider the following commutative diagram:
    \[\begin{tikzcd}[row sep=small]
        && \arrow[ddl]\arrow[d]\arrow[dr] &&\\
        &&\arrow[dl]\arrow[d]\arrow[dr] & \arrow[ddl]\arrow[d]\arrow[dr]&\\
        & \arrow[ddl]\arrow[d]\arrow[dr]& \arrow[dl]\arrow[d] &\arrow[dl]\arrow[dd]& \arrow[ddd]\\
        & \arrow[dl]\arrow[d] & \arrow[dl]\arrow[dd]\arrow[dr] & & \\
        \arrow[d] & \arrow[dl]\arrow[dr] & & \arrow[dl]\arrow[dr] &\\
       \; && \; && \;
    \end{tikzcd}\]
    in which the squares parallel to the left plane are Cartesian. we obtain that the arrow $\pi$ for the composed correspondence is the composition of base change of the arrow $\pi$ for the two correspondence. As noticed in Proposition \ref{propsmoothBB} and Lemma \ref{lemhyploclosed}, the adjunction morphisms for $\pi$ are compatible with composition of $\pi$, and base change of $\pi$, hence  we obtain by base change in the above diagram that the composition of the morphisms for the two correspondences is the morphism for the composed correspondence. Noticing that the correspondence given by the diagonal of the lower big square of diagram \ref{diagcomphyploc} can be seen as the composition of correspondences in two ways, and that the isomorphisms of Proposition \ref{propsmoothBB} and Lemma \ref{lemhyploclosed} are compatible with smooth pullbacks, in particular pullbacks along the upper rectangle of diagram \ref{diagcomphyploc}, we obtain the claimed compatibility.\medskip

    it suffices to show that the diagram of the Lemma is commutative smooth locally, hence we can work on a critical chart $(\mR,\mU,f,i)$ and with the morphism \eqref{isomhyplocjoyceloc} (as $\Grad(\Grad(\mR,\mU,f,i))$ cover $\Grad(\Grad(\mX))$ as remarked in the proof of Lemma \ref{lemtauorientdcrit} $i)$). We reproduce this isomorphism here for convenience:
    \begin{align}
         \Grad(\phi)^\ast\{d_{\Grad(\phi)}/2\}(p_\mX)_!(\eta_\mX)^\ast P_{\mX,s,K_{\mX,s}^{1/2}}&\simeq (p_\mR)_!(\eta_\mR)^\ast \phi^\ast\{d_\phi/2\}P_\mX\{\Ind_\phi/2\}\nn\\
         &:=(p_\mR)_!(\eta_\mR)^\ast (P_{\mU,f}\otimes_{\bZ/2\bZ} Q_{(\mR,\mU,f,i)})\{\Ind_\phi/2\}\nn\\
         &\simeq (p_\mR)_!(\eta_\mR)^\ast P_{\mU,f}\otimes_{\bZ/2\bZ} Q_{\Grad(\mR,\mU,f,i))}\{\Ind_\phi/2\}\nn\\
         &\simeq P_{\Grad(\mU),\Grad(f)}\{-\Ind_{\mU,f}/2\}\otimes_{\bZ/2\bZ} Q_{(\Grad(\mR,\mU,f,i))}\{\Ind_\phi/2\}\nn\\
         &=: \Grad(\phi)^\ast\{d_{\Grad(\phi)}/2\}P_{\Grad(\mX),\Grad(s),\Grad(K_{\mX,s}^{1/2})}\{-\Ind_\mX/2\}
    \end{align}
    it suffices then to show that the five squares, obtained from the five isomorphisms composing this isomorphism, commutes. The commutativity of the first square follows from the compatibility with the isomorphism of Proposition \ref{propsmoothBB}. The commutativity of the second and last square is trivial from the definition. By base change in the diagram \ref{diagcomphyploc}, the isomorphism of Lemma \ref{lemtwisthyploc} is compatible with composition of hyperbolic localizations, and the commutativity of the diagram at the end of the proof of Lemma \ref{lemtauorientdcrit} give that the third square is commutative. The compatibility of \eqref{isomtau} with the isomorphism of Theorem \ref{commutspechyploc}, Proposition \ref{propsmoothBB} and Lemma \ref{lemhyploclosed} give directly the compatibility with the isomorphism of Proposition \ref{prophyploccrit}, \ie the commutativity of the fourth square.\end{proof}

\section{Applications}\label{sectapp}

\subsection{Bialynicki-Birula decomposition}

Consider now a $\bG_m$-invariant $d$-critical algebraic space $(X,s)$: recall from Section \ref{dcritchart} that we define this to be an algebraic space with $\bG_m$-action $X$, and a $d$-critical structure $s$ on $X$ that descends to a d-critical structure $\bar{s}$ on $[X/\bG_m]$, \ie such that $\mu^\star(s)=(pr_2)^\star (s)$ (as $s$ lives in a set, $\bG_m$-invariance is a property and not an extra structure). From Lemma \ref{lemdarbgmeq}, given a $\bG_m$-invariant $-1$-shifted symplectic algebraic space $(\ud{X},\omega)$ (\ie a derived algebraic space $\ud{X}$ with $\bG_m$-action, and a nondegenerate closed $2$-form of degree $-1$ $\bar{\omega}$ of weight $0$, or equivalently a closed $2$-form of degree $-1$ $\bar{\omega}$ on $[\ud{X}/\bG_m]$ whose pullback $\omega$ to $\ud{X}$ is nondegenerate), its classical truncation $(X,s)$ is a $\bG_m$-invariant d-critical algebraic space. $K_{X,s}$ has then a natural $\bG_m$-invariant structure coming from the formula:
\begin{align}\label{eqstructK}
    K_{X,s}\simeq K_{[X/\bG_m],\bar{s}}|_{X^{red}}\otimes K_{X/[X/\bG_m]}|_{X^{red}}^{\otimes 2}
\end{align}
(an alternative description that works also for nonzero weights is given in Corollary \ref{cordcritgraded}). When $(X,s)$ enhance to a $\bG_m$-invariant $-1$-shifted symplectic algebraic space $(\ud{X},\omega)$, the isomorphism $K_{X,s}\simeq \det(\bL_{\ud{X}})|_{X^{red}}$ respects this $\bG_m$-equivariant structure. Given $\bG_m$-invariant $d$-critical algebraic space $(X,s)$, we define a $\bG_m$-equivariant orientation to be a $\bG_m$-equivariant square root of $K_{X,s}$ (\ie a $\bG_m$-equivariant line bundle $K_{X,s}^{1/2}$ on $X^{red}$ with a $\bG_m$-equivariant isomorphism $K_{X,s}^{1/2}\otimes K_{X,s}^{1/2}\simeq K_{X,s}$). Using \eqref{eqstructK}, it is equivalent to the data of an orientation on $([X/\bG_m],\bar{s})$.\medskip

Classically, one obtains a $\bG_m$-invariant oriented d-critical algebraic space if one consider the intersection of two $\bG_m$-invariant Lagrangian with $\bG_m$-invariant spin structure in a $\bG_m$-invariant algebraic symplectic variety, or the moduli space of stable objects for a Bridgeland stability conditions on a CY3 with an action of $\bG_m$ leaving the CY3 form invariant, or the moduli space of stable representations of a quiver with potential with an action of $\bG_m$ leaving the potential invariant.\medskip

Consider the Bialynicki-Birula correspondence:
\[\begin{tikzcd}
    X^0:=Map^{\bG_m}(\Spec(k),X)\arrow[rr,bend right=15,"\iota"] & X^+:=Map^{\bG_m}((\bA^1_k)^+,X)\arrow[l,swap,"p^+"]\arrow[r,"\eta^+"] & X
\end{tikzcd}\]
where $\bA^1$ is provided with its natural $\bG_m$-action. It is a smooth cover of a connected component of the $\Theta$-correspondence for $[X/\bG_m]$. In particular, the formula $s^0:=\iota^\star(s)$ gives a $d$-critical structure on $X^0$, pulled back from those of $[X^0/\bG_m]$ from Proposition \ref{cordcritgraded} $i)$. When $X$ enhance to a $\bG_m$-invariant $-1$-shifted symplectic stack, this is also the case for $(X^0,s^0)$ from Lemma \ref{lemorientgm} $ii)$ and Lemma \ref{lemshifsymp} $i)$. Similarly, Proposition \ref{cordcritgraded} $iii)$ allow to define a canonical orientation on $(X^0,s^0)$ from an orientation on $(X,s)$. When $X$ enhance to a $\bG_m$-invariant $-1$-shifted symplectic stack $(\ud{X},\omega)$, from \ref{lemshifsymp} $iii)$, this is simply given by the formula:
\begin{align}
    (K_{X,s}^{1/2})^0:=(\iota^\ast K_{X,s}^{1/2})\otimes  \det((\ud{\iota}^\ast\bL_{\ud{X}})^{<0})|^{\otimes -1}_{\Grad(\mX)^{red}}
\end{align}
Given a point $x\in X^0$, $T_{X,x}$ is provided with a $\bZ$-grading, and consider
\begin{align}
    \Ind_X(x):=\dim(T_X|_{X^0}^{>0})-\dim(T_X|_{X^0}^{<0})
\end{align}
Notice that, considering $x$ as a point of $[X^0/\bG_m]$, $\mathfrak{Iso}_{x'}([X/\bG_m])$ has weight $0$, which gives, using \eqref{defindmx}:
\begin{align}
    \Ind_X(x)=\Ind_{[X/\bG_m]}(x)
\end{align}
hence this definition is consistent with the definition for stacks, and $\Ind_X$ is locally constant on $X^0$ from Proposition \ref{cordcritgraded} $ii)$. From Lemma \ref{lemshifsymp} $ii)$, when $(X,s)$ enhance to a $\bG_m$-invariant $-1$-shifted symplectic algebraic space $(\ud{X},\omega)$, $\Ind_X(x)$ is the signed dimension of $(\ud{\iota}^\ast\bT_{\ud{X}})^{>0}$, and can be generally thought as the virtual dimension of the fibers of the strata of $X^+$ flowing to a connected component of $X^0$. Consider now $X^0=\bigsqcup_{\mathbf{c}\in \Gamma} X^0_{\mathbf{c}}$ the decomposition into connected components, and consider $X^+=\bigsqcup_{\mathbf{c}\in\Gamma} X^+_{\mathbf{c}}$ with $X^+_{\mathbf{c}}:=X^+\times_{X^0}X^0_{\mathbf{c}}$ the decomposition into strata flowing to them (notice that it is also a decomposition into connected components of $X^+$), and denote by $\Ind_{\mathbf{c}}$ the value of $\Ind_X$ on $X^0_{\mathbf{c}}$.

\begin{theorem}\label{theoBB}
    Consider a $\bG_m$-invariant oriented $d$-critical algebraic space $(X,s,K_{X,s}^{1/2})$ (as always, assumed to be locally of finite type over an algebraically closed field $k$ of characteristic $0$). Then, considering the oriented $d$-critical algebraic space $(X^0,s^0,(K_{X,s}^{1/2})^0)$, we have a canonical isomorphism:
    \begin{align}
        (p^+)_!(\eta^+)^\ast P_{X,s,K_{X,s}^{1/2}}\simeq P_{X^0,s^0,(K_{X,s}^{1/2})^0}\{-\Ind_X/2\}
    \end{align}
    Suppose moreover that $X$ is separated of finite type, then $\eta$ is a geometrically injective, and we have the following equality in the Grothendieck ring of monodromic Nori motives (and then monodromic mixed Hodge structures):
    \begin{align}
[\bH ^T_c(X,P_X)]=\sum_{\mathbf{c}\in \Gamma}\mathbb{L}^{\Ind_{\mathbf{c}}/2}[\bH _c(X^0_{\mathbf{c}},P_{X^0_{\mathbf{c}}})]+[\bH ^T_c(X-\eta(X^+),P_X|_{X-\eta(X^+)})]
    \end{align}
    In particular, if $X$ is proper, $\eta$ is geometrically bijective, and we obtain the simpler formula:
    \begin{align}
[\bH ^T_c(X,P_X)]=\sum_{\mathbf{c}\in\Gamma}\mathbb{L}^{\Ind_{\mathbf{c}}/2}[\bH _c(X^0_{\mathbf{c}},P_{X^0_{\mathbf{c}}})]
    \end{align}
\end{theorem}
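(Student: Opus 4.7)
The plan is to reduce the sheaf-theoretic isomorphism to a direct consequence of Theorem \ref{theohyploc} applied to the quotient stack $\mX := [X/\bG_m]$. Indeed, by Proposition \ref{prophlp}, the component of the $\Theta$-correspondence for $\mX$ corresponding to the standard cocharacter is $[X^0/\bG_m] \xleftarrow{p^+_\mX} [X^+/\bG_m] \xrightarrow{\eta^+_\mX} \mX$, and the classical hyperbolic localization diagram for $X$ is obtained from it by smooth base change along the $\bG_m$-torsor $X \to \mX$. By the functoriality of the $\Grad$ construction for oriented d-critical stacks (Proposition \ref{cordcritgraded}), the orientation $(K_{X,s}^{1/2})^0$ and d-critical structure $s^0$ on $X^0$ agree with the smooth pullback of those supplied by $\Grad(\bar{s}, \bar K^{1/2})$ on $[X^0/\bG_m]$; and because the $\bG_m$-stabilizer Lie algebra at a point of $X^0$ is concentrated in weight zero, the definition \eqref{defindmx} forces $\Ind_\mX|_{[X^0/\bG_m]}$ to pull back to $\Ind_X$ on $X^0$.

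Applying Theorem \ref{theohyploc} to $\mX$ restricted to this component yields
\begin{equation*}
(p^+_\mX)_!(\eta^+_\mX)^\ast P_\mX \simeq P_{[X^0/\bG_m]}\{-\Ind_\mX/2\}.
\end{equation*}
Pulling back along the smooth morphism $\phi \colon X^0 \to [X^0/\bG_m]$ of relative dimension one, the right-hand side becomes $P_{X^0}\{-1/2\}\{-\Ind_X/2\}$ via Corollary \ref{corfunctjoyce}, while base change for the hyperbolic localization functor (Lemma \ref{basechangehyploc} combined with Theorem \ref{commutspechyploc}) followed by smooth functoriality of $P_{(-)}$ turns the left-hand side into $(p^+)_!(\eta^+)^\ast P_X\{-1/2\}$. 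The $\{-1/2\}$ shifts cancel, producing the desired isomorphism $(p^+)_!(\eta^+)^\ast P_X \simeq P_{X^0}\{-\Ind_X/2\}$.

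For the Grothendieck group formula, assume $X$ is separated of finite type. Since $X$ is separated, the limit $\lim_{t\to 0} t\cdot x$ of a $\bG_m$-orbit is unique when it exists, so $\eta^+ \colon X^+(k) \to X(k)$ is injective. The cut-and-paste Lemma \ref{lemmotdec} applied to this geometric injection gives
\begin{equation*}
[\bH_c(X, P_X)] = [\bH_c(X^+, (\eta^+)^\ast P_X)] + [\bH_c(X - \eta(X^+), P_X|_{X - \eta(X^+)})].
\end{equation*}
Factoring $X^+ \to \mathrm{pt}$ as $X^+ \xrightarrow{p^+} X^0 \to \mathrm{pt}$, the first summand equals $[\bH_c(X^0, (p^+)_!(\eta^+)^\ast P_X)]$; substituting the isomorphism just established, splitting $X^0 = \bigsqcup_\mathbf{c} X^0_\mathbf{c}$ along connected components where $\Ind_X$ takes the constant value $\Ind_\mathbf{c}$, and interpreting the half-Tate twist via Lemma \ref{lemsqroot}, one obtains $\sum_\mathbf{c} \mathbb{L}^{\Ind_\mathbf{c}/2}[\bH_c(X^0_\mathbf{c}, P_{X^0_\mathbf{c}})]$. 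When $X$ is moreover proper, every $\bG_m$-orbit has a limit as $t \to 0$, so $\eta^+$ is a geometric bijection and the residual term vanishes by the bijective case of Lemma \ref{lemmotdec}.

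The deep input is Theorem \ref{theohyploc}; everything else is formal bookkeeping. The main subtleties are the cancellation of the relative-dimension shifts under the $\bG_m$-torsor $X \to [X/\bG_m]$ (on both sides of the pulled-back isomorphism) and the identification of the orientations and index function between the stack and the space, both handled by Proposition \ref{cordcritgraded} and Corollary \ref{corfunctjoyce}. The separatedness of $X$ enters only to guarantee injectivity of $\eta^+$ on $k$-points, which is exactly the hypothesis needed to invoke Lemma \ref{lemmotdec}.
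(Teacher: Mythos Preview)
Your proof is correct and follows essentially the same route as the paper: apply Theorem \ref{theohyploc} to $[X/\bG_m]$, pull back along the $\bG_m$-torsors $X\to[X/\bG_m]$ and $X^0\to[X^0/\bG_m]$ using Corollary \ref{corfunctjoyce} and Cartesian base change, then invoke Lemma \ref{lemmotdec} after the limit-uniqueness argument for separated (resp.\ proper) $X$. The only quibble is that the base change you need here is ordinary $Ex^\ast_!$ in the Cartesian squares of the $\bG_m$-quotient diagram, not Lemma \ref{basechangehyploc}/Theorem \ref{commutspechyploc} (which concern base change of the ambient base $S$); this is cosmetic and does not affect the argument.
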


\begin{proof}
    As $[X^0/\bG_m]$ is a component of $\Grad([X/\bG_m])$, by Theorem \ref{theohyploc}, we have a canonical isomorphism:
    \begin{align}
        p_!\eta^\ast P_{[X/\bG_m]}\simeq P_{[X^0/\bG_m]}
    \end{align}
    Considering the smooth morphisms $\phi:X\to [X/\bG_m]$ and $\phi^0:X^0\to [X^0/\bG_m]$, enhancing by definition to smooth morphisms of oriented d-critical stacks, we obtain:
    \begin{align}
        (p^+)_!(\eta^+)^\ast P_X&\simeq (p^+)_!(\eta^+)^\ast\phi^\ast\{1\} P_{[X/\bG_m]}\nn\\
        &\simeq (\phi^0)^\ast\{1\}p_!\eta^\ast P_{[X/\bG_m]}\nn\\
        &\simeq  (\phi^0)^\ast\{1\}P_{[X^0/\bG_m]}\{-\Ind_{[X/\bG_m]}/2\}\nn\\
        &\simeq P_{X^0}\{-\Ind_X/2\}
    \end{align}
    where the first and last line come from Corollary \ref{corfunctjoyce}, and the second line by base change, giving the first claim.\medskip

    Suppose now that $X$ is separated of finite type. In particular, $X^0$ is of finite type too, so the cohomology with compact support is constructible, and the above isomorphism gives:
    \begin{align}
        \bH_c(X^+,(\eta^+)^\ast P_X)\simeq \bH_c(X^0,(p^+)_!(\eta^+)^\ast P_X)\simeq \bigoplus_{\mathbf{c}\in\Gamma}\bH_c(X^0_{\mathbf{c}},P_{X^0_{\mathbf{c}}})\{-\Ind_{\mathbf{c}}/2\}
    \end{align}
    Given a $k$ point $x\in X(k)$, the orbit gives a $\bG_m$-equivariant map $\bG_{m,k}\to X$. Consider an extension of this map to $f:\bA^1_k\to X$, and denote by $x_0\in X(k)$ the image of $\{0\}$: by considering $t\cdot f:\bA^1_k\to X$ for any $t\in\bG_{m,k}$, one obtains by separation of $X$ that $t\cdot x_0=x_0$, \ie $x_0\in X^0$, and $f$ is $\bG_m$-equivariant. Then points of $X^+(k)$ above $x\in X(k)$ are in bijection with extensions $f:\bA^1_k\to X$ as above. Then, because $X$ is separated, $\eta$ is injective on $k$-point, and it is furthermore bijective on $k$-point when $X$ is proper. We obtain then the result using Lemma \ref{lemmotdec}.
\end{proof}

\begin{remark}
    Notice that, even in the smooth case treated in \cite{BiaynickiBirula1973SomeTO}, the $X^+_{\mathbf{c}}$ do not give a stratification of $X$ in the usual sense, see the counterexample of \cite{konarski}. However, in the smooth case, even without assuming the existence of an equivariant projectivization, the restriction of $\eta$ to each connected component is a locally closed immersion as proven in \cite{BiaynickiBirula1973SomeTO}. In the singular case, it is no longer true without this assumption: consider for example the nodal curve $X$ obtained by gluing $0$ and $\infty$ in $\bP^1$, with the quotient of the canonical $\bG_m$-action on $\bP^1$. one has $\bA^1=X^+\to X$ is not a locally closed immersion. See \cite[Remark 1.4.4, Remark 5.3.9]{HalpernLeistner2014OnTS}, \cite[Section 1.6]{Drinfeld2013ONAT}. One obtains a true stratification if one can embeds $X$ in a projective space with linear action.
\end{remark}

\subsection{Motivic formula for Theta-stratifications}\label{sectthetastrat}

We recall the definition of a $\Theta$-stratification from \cite[Definition 2.1.2]{HalpernLeistner2014OnTS}:

\begin{definition}
    Consider a quasi-separated, locally of finite type Artin $k$-stack $\mX$ with affine stabilizers. A $\Theta$-stratification of $\mX$ is the data of:
    \begin{itemize}
        \item a totally ordered set $\Gamma$ with a minimal element $0$ and a collection of open substacks $\mX_{\leq {\mathbf{c}}}$ for ${\mathbf{c}}\in\Gamma$ such that $\mX_{\leq {\mathbf{c}}}\subseteq \mX_{\leq {\mathbf{c}}'}$ for ${\mathbf{c}}<{\mathbf{c}}'$ and $\mX=\bigcup_{\mathbf{c}\in\Gamma} \mX_{\leq {\mathbf{c}}}$ (We call $\mX^{ss}:=\mX_{\leq 0}$ the semistable locus.);
        \item For each ${\mathbf{c}}\in\Gamma$, a $\Theta$-stratum $\Theta_{\mathbf{c}}$ of $\mX_{\leq {\mathbf{c}}}$ - \ie an union of open components of $\Filt(\mX_{\leq {\mathbf{c}}})$ such that $\eta|_{\Theta_{\mathbf{c}}}:\Theta_{\mathbf{c}}\to\mX_{\leq {\mathbf{c}}}$ is a closed immersion and:
        \begin{align}
            \mX_{\leq {\mathbf{c}}}-\eta(\Theta_{\mathbf{c}})=\mX_{<{\mathbf{c}}}:=\bigcup_{{\mathbf{c}}'<{\mathbf{c}}}\mX_{\leq {\mathbf{c}}'}
        \end{align}
        \item for every point $x\in\mX(k)$, the set $\{{\mathbf{c}}\in\Gamma|x\in\mX_{\leq {\mathbf{c}}}\}$ has a minimal element ${\mathbf{c}}(x)$. The unique $y\in\Theta_{{\mathbf{c}}(x)}$ such that $\eta(y)=x$ is called the Harder-Narasimhan filtration of $x$.
    \end{itemize}
    We define the center $\mathcal{Z}_{\mathbf{c}}:=\tilde{\iota}^{-1}(\Theta_{\mathbf{c}})$ of the $\Theta$-stratum $\Theta_{\mathbf{c}}$.
\end{definition}

Notice that from the definition,  $\Theta_{\mathbf{c}}$ is open in $\Filt(\mX_{\leq {\mathbf{c}}})$, which is open in $\Filt(\mX)$, hence $\mathcal{Z}_{\mathbf{c}}$ is open in $\Grad(\mX)$. We have moreover (it is noticed in \cite[Page 42]{HalpernLeistner2014OnTS}: 

\begin{lemma}\label{lemcartstrat}
    Given a $\Theta$-stratification of a quasi-separated, locally of finite type $k$ Artin stack $\mX$, we have for any stratum $\Theta_{\mathbf{c}}=p^{-1}(\mathcal{Z}_{\mathbf{c}})$.
\end{lemma}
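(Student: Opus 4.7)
My plan is to use the monoid structure on $\Theta = [\bA^1_k/\bG_{m,k}]$ to construct, for each $f \in \Filt(\mX)$, a canonical morphism $\sigma_f : \Theta \to \Filt(\mX)$ with $\sigma_f(1) = f$ and $\sigma_f(0) = \tilde\iota(p(f))$, and then to deduce both inclusions from connectedness of $\Theta$ together with the fact that the closed point $0 \in \Theta$ has no proper open neighborhood.

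The multiplication $\mu : \Theta \times_k \Theta \to \Theta$, induced by $(s,t) \mapsto st$ on $\bA^1_k$ (the equivariance using the group homomorphism $\mathrm{mult} : \bG_{m,k}^2 \to \bG_{m,k}$), gives by the mapping-stack adjunction a map $\sigma : \Filt(\mX) = \Map(\Theta, \mX) \to \Map(\Theta \times \Theta, \mX) = \Map(\Theta, \Filt(\mX))$, $f \mapsto f \circ \mu$. Since $\mu(-, 1) = \mathrm{id}_\Theta$ one gets $\sigma_f(1) = f$; since $\mu(-, 0) : \Theta \to \Theta$ factors through $B\bG_{m,k} \hookrightarrow \Theta$, one gets $\sigma_f(0) = \tilde\iota(p(f))$. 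The crucial factorization statement I need is: $\sigma_f$ factors through the open substack $\Filt(\mX_{\leq \mathbf{c}}) = \Map(\Theta, \mX_{\leq \mathbf{c}}) \subseteq \Filt(\mX)$ if and only if $f$ does. By the adjunction, this is equivalent to ``$f \circ \mu$ factors through $\mX_{\leq \mathbf{c}}$ iff $f$ does'', which follows from the existence of the section $t \mapsto (t, 1)$ of $\mu$.

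For the inclusion $p^{-1}(\mathcal{Z}_\mathbf{c}) \subseteq \Theta_\mathbf{c}$, take $f$ with $p(f) \in \mathcal{Z}_\mathbf{c}$; then $\sigma_f(0) = \tilde\iota(p(f))$ lies in $\Theta_\mathbf{c} \subseteq \Filt(\mX_{\leq \mathbf{c}})$, so the open substack $\sigma_f^{-1}(\Filt(\mX_{\leq \mathbf{c}}))$ of $\Theta$ contains the closed point $0$. But the only $\bG_{m,k}$-invariant opens of $\bA^1_k$ are $\emptyset$, $\bG_{m,k}$, and $\bA^1_k$, so the only open substack of $\Theta$ containing $0$ is $\Theta$ itself; hence $\sigma_f$, and therefore $f$, factors through $\Filt(\mX_{\leq \mathbf{c}})$. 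Since $\Theta$ is connected (indeed irreducible), the morphism $\sigma_f : \Theta \to \Filt(\mX_{\leq \mathbf{c}})$ has image in a single connected component; since $\sigma_f(0) \in \Theta_\mathbf{c}$ and $\Theta_\mathbf{c}$ is a union of connected components of $\Filt(\mX_{\leq \mathbf{c}})$, this component lies in $\Theta_\mathbf{c}$, whence $f = \sigma_f(1) \in \Theta_\mathbf{c}$. The reverse inclusion $\Theta_\mathbf{c} \subseteq p^{-1}(\mathcal{Z}_\mathbf{c})$ is the same connectedness argument applied directly: for $f \in \Theta_\mathbf{c} \subseteq \Filt(\mX_{\leq \mathbf{c}})$, the morphism $\sigma_f : \Theta \to \Filt(\mX_{\leq \mathbf{c}})$ connects $f$ to $\tilde\iota(p(f))$, forcing $\tilde\iota(p(f)) \in \Theta_\mathbf{c}$ and $p(f) \in \mathcal{Z}_\mathbf{c}$. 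Since both $\Theta_\mathbf{c}$ and $p^{-1}(\mathcal{Z}_\mathbf{c})$ are open substacks of $\Filt(\mX)$, the equality on underlying point sets yields equality as open substacks.

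The only delicate step I expect is verifying the identification $\sigma_f(0) = \tilde\iota(p(f))$: intuitively it says that scaling a filtration by $0$ collapses it to its associated graded, but as a statement about the monoid map $\mu$ on $\Theta$ it requires carefully tracking the $\bG_{m,k}$-equivariance of $\bA^1_k \times \bA^1_k \to \bA^1_k$ through the quotient. Once that identification is in hand, everything else is a formal consequence of the connectedness of $\Theta$ combined with the trivial but key observation that the closed point $0$ has no proper open neighborhood.
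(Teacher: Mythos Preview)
Your proof is correct. The underlying idea is the same as the paper's, but the presentation differs: the paper quotes three results from \cite{HalpernLeistner2014OnTS} --- Corollary~1.1.7 and Proposition~1.3.1(3) for the behavior of $\Filt$ and $\Grad$ under open immersions, and Lemma~1.3.8 for the key fact that $(\tilde\iota\circ p)^{-1}$ fixes each connected component of $\Filt(\mX_{\leq\mathbf{c}})$ --- and then deduces $\Theta_{\mathbf{c}}=p^{-1}(\tilde\iota^{-1}(\Theta_{\mathbf{c}}))=p^{-1}(\mathcal{Z}_{\mathbf{c}})$ in one line. Your argument unpacks the content of that Lemma~1.3.8: the monoid multiplication $\mu$ on $\Theta$ yields the path $\sigma_f$ from $f$ to $\tilde\iota(p(f))$, and connectedness of $\Theta$ together with the fact that $0$ has no proper open neighborhood does the rest. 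The factorization step through $\Filt(\mX_{\leq\mathbf{c}})$ that you prove via the section $t\mapsto(t,1)$ of $\mu$ is what the paper obtains from the cited open-immersion compatibilities. Your version is more self-contained; the paper's is shorter but leans on external references. The identification $\sigma_f(0)=\tilde\iota(p(f))$ that you flag as delicate is fine: $\mu(-,0):\Theta\to\Theta$ is the composite $\Theta\to B\bG_{m,k}\xrightarrow{0}\Theta$, so $f\circ\mu(-,0)$ is exactly $\tilde\iota$ applied to $f|_{B\bG_{m,k}}=p(f)$.
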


\begin{proof}
    Because $\mX_{\leq\mathbf{c}}\to \mX$ is an open immersion we have from \cite[Corollary 1.1.7, Proposition 1.3.1 3)]{HalpernLeistner2014OnTS} that $\Grad(\mX_{\leq \mathbf{c}})=\tilde{\iota}^{-1}(\Filt(\mX_{\leq \mathbf{c}})$, and $\Filt(\mX_{\leq\mathbf{c}})=p^{-1}(\Grad(\mX_{\leq\mathbf{c}})$. But $\Theta_{\mathbf{c}}$ is a connected component of $\Filt(\mX_{\leq\mathbf{c}})$, by \cite[lemma 1.3.8]{HalpernLeistner2014OnTS} $(i_{\mX_{\leq\mathbf{c}}}\circ p_{\mX_{\leq\mathbf{c}}})^{-1}(\Theta_{\mathbf{c}})=\Theta_{\mathbf{c}}$, which means that $\Theta_{\mathbf{c}}=p^{-1}(\mathcal{Z}_{\mathbf{c}})$. 
\end{proof}

The following theorem allows to computes the cohomological DT invariants of $\mX$ in terms of those of the center:

\begin{theorem}\label{theothetastrat}
    Consider $(\mX,s,K_{\mX,s}^{1/2})$ an $d$-critical stack which is quasi-separated, of finite type with affine stabilizers over an algebraically closed field $k$ of characteristic $0$, with an isomorphism class of orientation. Consider a $\Theta$-stratification on $\mX$ (in particular, because $\mX$ is of finite type, it has a finite number of nonempty strata). The center $\mathcal{Z}_{\mathbf{c}}$ of the $\Theta$-strata have a natural oriented d-critical structure, and, up to refining the $\Theta$-stratification, we can assume that the locally constant function $\Ind_\mX$ have a constant value $\Ind_{\mathbf{c}}$ on $\mathcal{Z}_{\mathbf{c}}$. We have then the following equality in the completed Grothendieck ring of monodromic Nori motives or monodromic mixed Hodge structures:
    \begin{align}
        [\bH _c(\mX,P_\mX)]=\sum_{\mathbf{c}\in\Gamma}\mathbb{L}^{\Ind_{\mathbf{c}}/2}[\bH _c(\mathcal{Z}_{\mathbf{c}},P_{\mathcal{Z}_{\mathbf{c}}})]
    \end{align}
\end{theorem}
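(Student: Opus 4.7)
The plan is to combine a motivic cut-and-paste along the $\Theta$-stratification with a base-change computation using the stacky hyperbolic localization isomorphism of Theorem \ref{theohyploc}.

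First I would set up the oriented $d$-critical structures on the centers. Since $\mX_{\leq \mathbf{c}} \hookrightarrow \mX$ is an open immersion (hence smooth) the restriction $s_{\leq\mathbf{c}}$ of $s$ together with the restricted orientation makes $\mX_{\leq \mathbf{c}}$ into an oriented $d$-critical stack with $P_{\mX_{\leq\mathbf{c}}} \simeq P_\mX|_{\mX_{\leq\mathbf{c}}}$ by Corollary \ref{corfunctjoyce}. The centre $\mathcal{Z}_{\mathbf{c}}$ is open in $\Grad(\mX_{\leq\mathbf{c}})$, which carries the canonical oriented $d$-critical structure $(\Grad(s_{\leq\mathbf{c}}),\Grad(K_{\mX_{\leq\mathbf{c}}, s_{\leq\mathbf{c}}}^{1/2}))$ from Proposition \ref{cordcritgraded}; taking the open restriction gives $\mathcal{Z}_{\mathbf{c}}$ its natural oriented $d$-critical structure, with DT sheaf $P_{\mathcal{Z}_{\mathbf{c}}} \simeq P_{\Grad(\mX_{\leq\mathbf{c}})}|_{\mathcal{Z}_{\mathbf{c}}}$. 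Since $\Ind_\mX$ is locally constant on $\Grad(\mX)$ by Proposition \ref{cordcritgraded} $ii)$, and $\mathcal{Z}_{\mathbf{c}}$ has finitely many connected components (as $\mX$ is of finite type), we may refine the $\Theta$-stratification by splitting each $\Theta_{\mathbf{c}}$ into its connected components (preserving the ordering lexicographically) so that $\Ind_\mX$ has a constant value $\Ind_{\mathbf{c}}$ on each $\mathcal{Z}_{\mathbf{c}}$.

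Next I would set up the cut-and-paste. By definition, for each $\mathbf{c}$ the map $\eta|_{\Theta_{\mathbf{c}}} : \Theta_{\mathbf{c}} \to \mX_{\leq\mathbf{c}}$ is a closed immersion with open complement $\mX_{<\mathbf{c}}$, and hence is a geometric injection. Applying Lemma \ref{lemmotdec} yields
\begin{align*}
[\bH_c(\mX_{\leq\mathbf{c}}, P_\mX|_{\mX_{\leq\mathbf{c}}})] = [\bH_c(\mX_{<\mathbf{c}}, P_\mX|_{\mX_{<\mathbf{c}}})] + [\bH_c(\Theta_{\mathbf{c}}, (\eta|_{\Theta_{\mathbf{c}}})^\ast P_\mX)].
\end{align*}
Since $\Gamma$ is finite (as $\mX$ is of finite type) and totally ordered, a straightforward induction starting from the empty open $\mX_{<0} = \emptyset$ collapses this recursion to
\begin{align*}
[\bH_c(\mX, P_\mX)] = \sum_{\mathbf{c}\in\Gamma}[\bH_c(\Theta_{\mathbf{c}}, (\eta|_{\Theta_{\mathbf{c}}})^\ast P_\mX)].
\end{align*}

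Finally I would compute each summand via hyperbolic localization. By Lemma \ref{lemcartstrat} the square
\[\begin{tikzcd}
\Theta_{\mathbf{c}} \arrow[r, hook]\arrow[d, "p_{\mathbf{c}}"] & \Filt(\mX)\arrow[r, "\eta"]\arrow[d, "p"] & \mX\\
\mathcal{Z}_{\mathbf{c}} \arrow[r, hook, "j_{\mathbf{c}}"] & \Grad(\mX) &
\end{tikzcd}\]
is Cartesian with $j_{\mathbf{c}}$ an open immersion. Base change together with Theorem \ref{theohyploc} gives
\begin{align*}
(p_{\mathbf{c}})_!(\eta|_{\Theta_{\mathbf{c}}})^\ast P_\mX \simeq j_{\mathbf{c}}^\ast p_! \eta^\ast P_\mX \simeq j_{\mathbf{c}}^\ast P_{\Grad(\mX)}\{-\Ind_\mX/2\} \simeq P_{\mathcal{Z}_{\mathbf{c}}}\{-\Ind_{\mathbf{c}}/2\},
\end{align*}
so that $[\bH_c(\Theta_{\mathbf{c}}, (\eta|_{\Theta_{\mathbf{c}}})^\ast P_\mX)] = \mathbb{L}^{\Ind_{\mathbf{c}}/2}[\bH_c(\mathcal{Z}_{\mathbf{c}}, P_{\mathcal{Z}_{\mathbf{c}}})]$ after passing to Grothendieck groups. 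Summing over $\mathbf{c}$ produces the claimed formula in the completion at $\mathbb{L}^{-1/2}$ (where the powers $\mathbb{L}^{\Ind_{\mathbf{c}}/2}$ make sense). The main technical point to be careful about is the compatibility between $P_\mX|_{\mX_{\leq\mathbf{c}}}$ and the DT sheaf of the open substack (and likewise $P_{\Grad(\mX)}|_{\mathcal{Z}_{\mathbf{c}}}$ and $P_{\mathcal{Z}_{\mathbf{c}}}$), but both follow from the smooth-pullback functoriality of Corollary \ref{corfunctjoyce} applied to open immersions; no further gluing work is needed since hyperbolic localization is applied globally on $\mX$.
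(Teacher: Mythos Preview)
Your proposal is correct and follows essentially the same route as the paper: both use Lemma \ref{lemcartstrat} to identify $\Theta_{\mathbf{c}}=p^{-1}(\mathcal{Z}_{\mathbf{c}})$, pull back the global hyperbolic localization isomorphism of Theorem \ref{theohyploc} along the open immersion $\mathcal{Z}_{\mathbf{c}}\hookrightarrow\Grad(\mX)$ (with Corollary \ref{corfunctjoyce} for the restriction of the DT sheaf), and then invoke Lemma \ref{lemmotdec}. The only cosmetic difference is that the paper applies Lemma \ref{lemmotdec} once to the geometric bijection $\bigsqcup_{\mathbf{c}}\eta_{\mathbf{c}}:\bigsqcup_{\mathbf{c}}\Theta_{\mathbf{c}}\to\mX$, whereas you run the same thing as a finite induction on the open filtration $\mX_{\leq\mathbf{c}}$; these are equivalent.
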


\begin{proof}

Notice that, as the Grothendieck ring form a set, and not a category, any choice of orientation in the isomorphism class will give the same object. Consider the following commutative diagram:
\[\begin{tikzcd}
    \Grad(\mX) & \Filt(\mX)\arrow[l,swap,"p"]\arrow[r,"\eta"] & \mX\\
    \mathcal{Z}_{\mathbf{c}}\arrow[u,"i_{\mathbf{c}}"] & \Theta_{\mathbf{c}}\arrow[l,swap,"p_{\mathbf{c}}"]\arrow[ur,"\eta_{\mathbf{c}}"]\arrow[u]& 
\end{tikzcd}\]
where the left square is Cartesian from Lemma \ref{lemcartstrat}, and the vertical arrows are open immersions. We have from Theorem \ref{theohyploc} a canonical isomorphism:
\begin{align}
    p_!\eta^\ast P_\mX\simeq P_{\Grad(\mX)}\{-\Ind_{\mX}\}
    \end{align}
Then, pulling back along the open immersion $i_{\mathbf{c}}:\mathcal{Z}_{\mathbf{c}}\to \Grad(\mX)$, we obtain:
\begin{align}\label{eqloctheta}
    (p_{\mathbf{c}})_!(\eta_{\mathbf{c}})^\ast P_\mX&\simeq(i_{\mathbf{c}})^\ast p_!\eta^\ast P_\mX\nn\\
    &\simeq (i_{\mathbf{c}})^\ast P_{\Grad(\mX)}\{-\Ind_{\mX}\}\nn\\
    &\simeq P_{\mathcal{Z}_{\mathbf{c}}}\{-\Ind_{\mathbf{c}}\}
\end{align}
where the first line comes from functoriality and base change, the second from Theorem \ref{theohyploc}, and the last from the compatibility of the DT construction with open immersions given in Corollary \ref{corfunctjoyce}. Using proper pushforward to a point we obtain:
\begin{align}
    [\bH _c(\Theta_{\mathbf{c}},(\eta_{\mathbf{c}})^\ast P_\mX)]\simeq \mathbb{L}^{\Ind_{\mathbf{c}}/2}\bH _c(\mathcal{Z}_{\mathbf{c}},P_{\mathcal{Z}_{\mathbf{c}}})
\end{align}
Notice now that, by assumption, $\bigsqcup_{\mathbf{c}\in\Gamma}\eta_{\mathbf{c}}:\Theta_{\mathbf{c}}\to \mX$ is a geometric bijection, and then Lemma \ref{lemmotdec} gives the result.
\end{proof}

\subsection{Recollection on moduli stacks of complexes}\label{secStackComp}

\subsubsection{Moduli of objects in dg categories}

We follow here \cite{Toen2005ModuliOO}, \cite{Brav2018RelativeCS}: in particular, we adopt the conventions of \cite{Brav2018RelativeCS}. Consider a presentable dg-category $\mC$ over $k$ (\ie a category enriched over $k$-complexes, with arbitrary colimits, generated by a set of compact objects) and denote by $\mC_c$ its (small) dg-subcategory of compact objects, \ie objects $E$ such that $\Hom(E,-)$ commutes with filtered colimits (such objects are also sometimes called objects of finite presentation). We always assume $\mC$ to be of finite type according to \cite[Definition 2.4]{Toen2005ModuliOO}, \ie it is equivalent to the category of module of a dg-algebra which is homotopically of finite presentation. It is in particular smooth, following \cite[Proposition 2.14]{Toen2005ModuliOO}. Recall that a continuous (\ie, colimit preserving) functor $F$ between two presentable dg-categories always has a right adjoint $F^r$, which is itself continous iff $F$ preserves compact objects.\medskip

A dg-category is said to be proper if, for any $c,c'$ compact, $\Hom(c,c')\in Vect_k$ is compact, \ie is a homotopically finite dimensional complex. A smooth and proper dg category is of finite type from \cite[Corollary 2.13]{Toen2005ModuliOO}. If $\mC$ is not necessarily proper, an object $c\in\mC$ is said to be left-proper if $\Hom(c,c')$ is homotopically of finite dimension for every $c'\in\mC_c$, and we denote by $\mC_{lp}$ the full subcategory of left-proper objects. When $\mC$ is smooth, left proper objects are in particular compact from \cite[Lemma 2.8 2)]{Toen2005ModuliOO}, and if $\mC$ is proper, then compact objects are proper by definition.\medskip

Typically, if $X$ is a separated scheme of finite type and $\mC=\QCoh(X)$, we obtain that $\mC_c=\Perf(X)$, and $\mC$ is smooth iff $X$ is smooth from \cite[Proposition 3.13]{Lunts2009CategoricalRO}, in which case $\mC_c=\Perf(X)=\Coh(X)$ and $\mC$ is of finite type from \cite[Theorem 3.1.1]{BonVan03}. Similarly, $\mC$ is proper iff $X$ is proper from \cite[Proposition 3.30]{Orlov2014SMOOTHAP}. Taking now any separated scheme of finite type $X$, $\mC=\IndCoh(X)$, $\mC_c=D^b\Coh(X)$, $\mC$ is of finite type from \cite{effimov}: in particular, it is smooth, as proven earlier in \cite[Theorem 6.3]{Lunts2009CategoricalRO} (even without any smoothness assumption on $X$!).\medskip

Given a $k$ dg-algebra $\ud{R}$, an object $E$ of $\mC\otimes_k \QCoh(\ud{R})$ is said to be left proper over $\ud{R}$ if $\Hom_{\ud{R}}(E,-):\mC\otimes_k \QCoh(\ud{R})\to \QCoh(\ud{R})$ maps compact objects, \ie objects of $\mC\otimes_k \Perf(\ud{R})$, to compact objects, \ie objects of $\Perf(\ud{R})$. As $\mC$ is smooth, from \cite[Lemma 2.8 2)]{Toen2005ModuliOO}, left-proper objects are compact, \ie lies in $\mC_c\otimes_k \Perf(\ud{R})$. As $\mC$ is smooth, from \cite[Proposition 2.4, Corollary 2.5]{Brav2018RelativeCS}, there is a natural continuous involution:
\begin{align}
Id_{\mC\otimes_k\QCoh(\ud{R})/\QCoh(\ud{R})}:\mC\otimes_k\QCoh(\ud{R})\simeq \mC\otimes_k\QCoh(\ud{R})
\end{align}
such that, for $E$-left proper, there is a natural isomorphism of functors:
\begin{align}
    \Hom_{\ud{R}}(E,-)\simeq \Hom_{\ud{R}}(-,(Id_{\mC\otimes_k \QCoh(\ud{R})/\QCoh(\ud{R})}^!)^{-1}(E))^\vee:\mC\otimes_k \QCoh(\ud{R})\to \QCoh(\ud{R})
\end{align}
called the relative Serre functor (it agrees with the classical Serre functor for $\mC=\IndCoh(X)$). \medskip

In \cite{Toen2005ModuliOO}, a higher derived stack $\ud{\mM}$ classifying left-proper objects of $\mC$ was defined, and was shown in \cite[Theorem 3.6]{Toen2005ModuliOO} to be locally Artin (\ie it is an union of Artin $n$-stacks for increasing $n$) and locally of finite presentation over $k$, when $\mC$ is of finite type (which we will always assume). In \cite[Example 3.7]{Brav2018RelativeCS}, it is described by the moduli functor:
\begin{align}\label{toenfunctpoin}
    \ud{\mM}(\Spec(\ud{R}))=\Map_{dgcat}(\mC_c,\Perf(\ud{R}))
\end{align}
sending an affine derived scheme $\Spec(\ud{R})$ to the space of exact functors from the small dg-category of compact objects $\mC_c$ to the small dg-category $\Perf(\ud{R})$ of perfect $\ud{R}$-modules (\ie, perfect complexes on $\Spec(\ud{R})$). The functoriality comes here from pullback of perfect modules. Those corresponds to continuous adjunction:
\begin{align}
    f:\mC\rightleftharpoons \QCoh(\ud{R}):f^r
\end{align}
and so, as $\mC$ is smooth, such a data corresponds from \cite[Corollary 2.6]{Brav2018RelativeCS} to the data of a left proper objects $E$ of $\mC\otimes \QCoh(\ud{R})$, such that:
\begin{align}
    f=\Hom_{\ud{R}}(E,-\boxtimes_k\ud{R}):\mC\to \QCoh(\ud{R})
\end{align}
In particular, every $k$-points $f$ is then corepresented by a left proper object $E$ of $\mC$, \ie $f\simeq \Hom(E,-)$, but the usual convention is to use the relative Serre duality functor and to identify the $k$-point with the right proper object $F$ of $\mC$ representing $f$, \ie such that $f\simeq \Hom(-,F)^\vee$.\medskip

By construction, one have a universal functor $F_\mC:C_c\to \Perf(\ud{\mM}))$, which corresponds from \cite[Corollary 2.6]{Brav2018RelativeCS} to a left proper object $\mathcal{U}$ of $\mC_c\times \Perf(\ud{\mM})$, called the universal complex. $\ud{\mM}$ has a cotangent complex $\bL_{\ud{\mM}}$, and a tangent complex $\bT_{\ud{\mM}}:=\bL_{\ud{\mM}}^\vee\in \Perf(\ud{\mM})$, given from \cite[Proposition 3.3]{Brav2018RelativeCS} by the formula:
\begin{align}\label{eqText}
    \bT_{\ud{\mM}}\simeq \Hom_{\ud{\mM}}(\mathcal{U},\mathcal{U})[1]
\end{align}
(notice that we consider $\bT_{\ud{\mM}}=\bL_{\ud{\mM}}^\vee$ as living in $\Perf({\ud{\mM}})$). In particular, as proven in \cite[Corollary 3.17]{Toen2005ModuliOO}, given an compact object $E$ of $\mC$, seen as a $k$-point of $\ud{\mM}$, one have a natural isomorphism:
\begin{align}
    \bT_{\ud{\mM},E}\simeq \mC(E,E)[1]
\end{align}
in other terms, $\Ext^1(E,E)$ gives the Zariski tangent space, $Ext^{\geq 2}(E,E)$ the obstructions, $\Ext^0(E,E)$ the Lie algebra of the isotropy group, and $\Ext^{<0}(E,E)$ the higher stacky part (which explains why one has to consider Artin $n$-stacks for $n$ arbitrarily large).\medskip

Given a $n$-dimensional Calabi-Yau structure on $\mC$ (also called a left Calabi-Yau structure in \cite[Definition 3.5]{Brav2016RelativeCS}), \cite[Theorem 5.5 1)]{Brav2018RelativeCS} gives a natural $n-2$-shifted symplectic structure $\omega_\mC$ on $\ud{\mM}$. Informally a $n$-dimensional Calabi-Yau structure gives a functorial nondegenerate paring:
\begin{align}
    \Hom(E,F)\simeq \Hom(F,E)^\vee[n]
\end{align}
giving a version of Serre duality for $\mC$, inducing a nondegenerate pairing:
\begin{align}
    \bT_{\ud{\mM},E}\simeq \bL_{\ud{\mM},E}[n-2]
\end{align}
that glue into the nondegenerate pairing induced by the $2$ form of degree $n-2$ underlying $\omega$. More precisely, from \cite[Corollary 2.5, Proposition 5.3]{Brav2018RelativeCS}, the class of the Hochschild homology $HH(\mC)$ of degree $n$ underlying the Calabi-Yau structure induces an isomorphism of functors:
\begin{align}\label{pairhochs}
    \Hom_{\ud{\mM}}(\mathcal{U},-)\simeq \Hom_{\ud{\mM}}(-,\mathcal{U})^\vee[n]
\end{align}
from which one obtains the nondegenerate pairing:
\begin{align}
\bT_{\ud{\mM}}\simeq \bL_{\ud{\mM}}[n-2]
\end{align}
from \eqref{eqText}. Of course, one need also an extra structure, that will define the closed $2$ form of degree $n-2$ $\omega$, we refer to \cite[Section 3.2]{Brav2016RelativeCS}, \cite[Section 5]{Brav2018RelativeCS} or the proof of Lemma \ref{lemgradprod} below for more details. We specialize here to the case $n=3$, where one obtains a $-1$ shifted symplectic structure. From \cite[Proposition 5.11]{Brav2016RelativeCS}, for $X$ a Gorenstein scheme of dimension $n$, the data of a Calabi-Yau structure of dimension $n$ on $\mC_c=D^b\Coh(X)$ is equivalent to the data of a trivialisation of the cotangent bundle $\omega_X\simeq\mathcal{O}_X$.\medskip

\begin{remark}
    We will now follow and adapt the discussion of \cite[Section 3]{JOYCEorient} on orientations on CY3 categories. Remark that here that $\mC_c$ is not assumed to be $D^b\Coh(X)$ for a CY threefold $X$, so we have to adapt a bit the notations. We consider the CY3 case as in \cite[Section 4.1]{JOYCEorient}, so $m=3$ and the spin structure $J$ is trivial. The classical truncation $\mM$ of $\ud{\mM}$ was denoted by $\bar{\mM}$ in \cite[Section 3]{JOYCEorient}, and the authors denote there by $\mM$ the classical open $1$-substack of objects in the Abelian heart $Coh(X)$: we will below denote the analogue of this by $\mM_\mA$, where $\mA$ will be the Abelian heart of a $t$-structure on $\mC$, restricting to a $t$-structure on $\mC_c$. The perfect complexes $\mC^\bullet$ and $\mathcal{D}^\bullet$ of \cite[Section 3]{JOYCEorient} will be denoted below by $\bT_{\ud{\mM}}|_{\mM}[-1]$ and $\tilde{\bT}_{\ud{\mM}}|_{\mM\times \mM}[-1]$, and our $K_{\mM},L_{\mM}$ will be their $K_{\bar{\mM}},L_{\bar{\mM}}$. Our $\oplus_2$ will be their $\Phi$, and we denote the projections by $pr_i$ instead of $\pi_i$, as in the rest of this article. For clarity, given a morphism $f:\ud{\mX}\to\ud{\mY}$, we will denote the functor $(Id_{\mC_c}\otimes_k f^\ast):\mC_c\otimes_k\Perf(\ud{\mY})\to \mC_c\otimes_k\Perf(\ud{\mY})$ simply by $f^\ast$, when no confusion can happen.
\end{remark}

Direct sum gives a natural structure of commutative monoid $\ud{\oplus_n}:\ud{\mM}^n\to \ud{\mM}$, $0:\Spec(k)\to \ud{\mM}$, given on $\ud{\Spec}(\ud{R})$ points by:
\begin{align}
    &(f_i:\mC_c\to\Perf(\ud{R}))_{1\leq i\leq n}\mapsto (\bigoplus f_i:\mC_c\to\Perf(\ud{R}))\nn\\
    &(\ud{\Spec}(\ud{R})\to \ud{\Spec}(k))\mapsto (0:\mC_c\to\Perf(\ud{R}))
\end{align}
As the support of a perfect complex of $\ud{R}$-modules is open, $0:\Spec(k)\to \ud{\mM_{Vect_k}}$ is an open immersion, hence, choosing an affine generator homotopically of finite type of $\mC$, an considering the map $\ud{\mM}\to \ud{\mM_{Vect_k}}$ of \cite[Section 3.2]{Toen2005ModuliOO}, we find that $0:\Spec(k)\to \ud{\mM}$ is an open immersion. One obtains from the definition a coherent system of isomorphisms:
\begin{align}
(\ud{\oplus_n})^\ast(\mathcal{U})\simeq\bigoplus_{1\leq i\leq n}(\ud{pr_i})^\ast(\mathcal{U})
\end{align}
Denote by:
\begin{align}\label{isom}   \tilde{\bT}_{\ud{\mM}}:=\Hom_{\ud{\mM}\times \ud{\mM}}((pr_1)^\ast(\mathcal{U}),(pr_2)^\ast(\mathcal{U}))[1]\in \Perf(\ud{\mM}\times \ud{\mM})
\end{align}
and $\tilde{\bL}_{\ud{\mM}}:=\tilde{\bT}_{\ud{\mM}}^\vee$, such that $\bL_{\ud{\mM}}\simeq(\Delta_{\ud{\mM}})^\ast(\tilde{\bL}_{\ud{\mM}})$. Notice that, considering the commutativity isomorphism $\Sigma_{\ud{\mX}}:\ud{\mX}\times \ud{\mX}\to\ud{\mX}\times \ud{\mX}$, one obtains from \eqref{pairhochs} the nondegenerate pairing:
\begin{align}\label{dualtildeL}
\Sigma_{\ud{\mM}}^\ast(\tilde{\bT}_{\ud{\mM}})\simeq \tilde{\bL}_{\ud{\mM}}[n-2]
\end{align} 
Using \eqref{eqText}, we find a coherent system of isomorphisms:
\begin{align}\label{isomiotaL}  (\ud{\oplus_n})^\ast(\bL_{\ud{\mM}})&\simeq \Hom_{\ud{\mM}^n}((\ud{\oplus_n})^\ast(\mathcal{U}),(\ud{\oplus_n})^\ast(\mathcal{U}))^\vee[-1]\nn\\
&\simeq \Hom_{\ud{\mM}^n}(\bigoplus_{1\leq i\leq n}(\ud{pr_i})^\ast(\mathcal{U}),\bigoplus_{1\leq j\leq n}(\ud{pr_j})^\ast(\mathcal{U}))^\vee[-1]\nn\\
&\simeq \bigoplus_{1\leq i\leq n}(\ud{pr_i})^\ast(\bL_{\ud{\mM}})\oplus\bigoplus_{1\leq i\neq j\leq n}(\ud{pr}_{ij})^\ast(\tilde{\bL}_{\ud{\mM}})
\end{align}
We denote now:
\begin{align}
K_{\mM}&:=\det(\bL_{\ud{\mM}})|_{\mM}\nn\\
L_{\mM}&:=\det(\tilde{\bL}_{\ud{\mM}})|_{\mM\times\mM}
\end{align}
Such that we obtain $K_{\mM}\simeq (\Delta_{L_{\mM}})^\ast L_{\mM}$, and we obtain from \eqref{dualtildeL} an isomorphism:
\begin{align}\label{eqsigmaL}
(\Sigma_{\mM})^\ast(L_{\mM})\simeq L_{\mM}
\end{align}
whose restriction along the diagonal is $Id:K_{\mM}\simeq K_{\mM}$. Combining this with the determinant of \eqref{isomiotaL}, we obtain as in \cite[(3.16)]{JOYCEorient} an isomorphism:
\begin{align}\label{isomdirectsum}
    (\oplus_2)^\ast (K_{\mM})\simeq (pr_1)^\ast (K_{\mM})\oplus (pr_2)^\ast (K_{\mM})\oplus L_{\mM}^{\otimes 2}
\end{align}
There are moreover from \cite[(3.17),(3.18)]{JOYCEorient} natural isomorphisms, derived with the same kinds of computation:
\begin{align}\label{isomdirectsumL}
    (\oplus_2\times Id_{\mM})^\ast L_{\mM}&\simeq (pr_{13})^\ast (L_{\mM})\oplus (pr_{23})^\ast (L_{\mM})\nn\\
    (Id_{\mM}\times \oplus_2)^\ast L_{\mM}&\simeq (pr_{12})^\ast (L_{\mM})\oplus (pr_{13})^\ast (L_{\mM})
\end{align}

Considering the square giving the associativity of the monoid structure $\oplus_n$:
\[\begin{tikzcd}
    \mM\times \mM\times \mM\arrow[r,"\oplus_2\times Id_{\mM}"]\arrow[d,"Id_{\mM}\times\oplus_2"] & \mM\times \mM\arrow[d,"\oplus_2"]\nn\\
    \mM\times \mM\arrow[r,"\oplus_2"] & \mM
\end{tikzcd}\]
There is from \cite[(3.19)]{JOYCEorient} a commutative square of isomorphisms:

\begin{equation}\label{diagjoyceorient}\begin{tikzcd}[column sep=tiny]
    \begin{tabular}{c}$(pr_1)^\ast (K_{\mM})\otimes (pr_2)^\ast (K_{\mM})\otimes (pr_3)^\ast (K_{\mM})$\\$\otimes (pr_{12})^\ast (L_{\mM}^{\otimes 2})\otimes (pr_{13})^\ast (L_{\mM}^{\otimes 2})\otimes (pr_{23})^\ast (L_{\mM}^{\otimes 2})$\end{tabular}\arrow[r,"\simeq"]\arrow[d,"\simeq"] & (\oplus_2\times Id_{\mM})^\ast((pr_1)^\ast (K_{\mM})\otimes (pr_2)^\ast (K_{\mM})\otimes  L_{\mM}^{\otimes 2})\arrow[d,"\simeq"]\\
(Id_{\mM}\times\oplus_2)^\ast((pr_1)^\ast (K_{\mM})\otimes (pr_2)^\ast (K_{\mM})\otimes L_{\mM}^{\otimes 2})\arrow[r,"\simeq"] & \begin{tabular}{c}$(\oplus_2\times Id_{\mM})^\ast(\oplus_2)^\ast (K_{\mM})$\\$\simeq(Id_{\mM}\times\oplus_2)^\ast(\oplus_2)^\ast (K_{\mM})$\end{tabular}
\end{tikzcd}\end{equation}
where the arrows come from \eqref{isomdirectsum} and \eqref{isomdirectsumL}.

We give the definition of orientation data (resp. strong orientation data) from \cite[Definition 4.2]{JOYCEorient}, extended straightforwardly from $D^b\Coh(X)$ to any CY3 category $\mC_c$. More precisely, we give the definition of spin structure compatible with direct sum (resp. strong spin structure compatible with direct sum) from \cite[Definition 3.4]{JOYCEorient} (resp. \cite[Definition 3.7]{JOYCEorient}), which give are equivalent data from \cite[Proposition 4.3]{JOYCEorient} (the strong case follows from the same $\bA^1$-contractibility arguments). Notice that we will work with strong spin structure because our strategy to prove Kontsevich-Soibelman wall crossing formula and build the CoHA is different from the one sketched in \cite[Section 4.3]{JOYCEorient}, and do not use Joyce's conjecture from \cite[Conjecture 1.1]{joyce2019lagrangian}.

\begin{definition}(\cite[Definition 3.4, Definition 3.7]{JOYCEorient})\label{defOrientdg}
    Consider a presentable CY3 category $\mC$ of finite type:
    \begin{enumerate}
        \item[$i)$] An orientation data on $\mC_c$ is the data of an isomorphism class $[K_{\mM}^{1/2}]$ of squares root of $K_{\mM}$, such that, for one (and then, any) representative $K_{\mM}^{1/2}$, there is an isomorphism:
        \begin{align}
    (\oplus_2)^\ast (K_{\mM}^{1/2})\simeq (pr_1)^\ast (K_{\mM}^{1/2})\otimes (pr_2)^\ast (K_{\mM}^{1/2})\otimes L_{\mM}
        \end{align}
        which is a square root of \eqref{isomdirectsum}.
        \item[$ii)$] A strong orientation data on $\mC_c$ is the data of a square root $K_{\mM}^{1/2}$ of $K_{\mM}$, with the data of a square root of \eqref{isomdirectsum} as above, such that the diagram:
        
        \begin{equation}\label{diagjoyceorientsq}
        \begin{tikzcd}[column sep=tiny]
    \begin{tabular}{c}$(pr_1)^\ast (K_{\mM}^{1/2})\otimes  (pr_2)^\ast (K_{\mM}^{1/2})\otimes  (pr_3)^\ast (K_{\mM}^{1/2})$\\$\otimes (pr_{12})^\ast (L_{\mM})\otimes (pr_{13})^\ast (L_{\mM})\otimes (pr_{23})^\ast (L_{\mM})$\end{tabular}\arrow[r,"\simeq"]\arrow[d,"\simeq"] & (\oplus_2\times Id_{\mM})^\ast((pr_1)^\ast (K_{\mM}^{1/2})\otimes  (pr_2)^\ast (K_{\mM}^{1/2})\otimes  L_{\mM})\arrow[d,"\simeq"]\\
(Id_{\mM}\times\oplus_2)^\ast((pr_1)^\ast (K_{\mM}^{1/2})\otimes  (pr_2)^\ast (K_{\mM}^{1/2})\otimes  L_{\mM}^{\otimes 2})\arrow[r,"\simeq"] & \begin{tabular}{c}$(\oplus_2\times Id_{\mM})^\ast(\oplus_2)^\ast (K_{\mM}^{1/2})$\\$\simeq(Id_{\mM}\times\oplus_2)^\ast(\oplus_2)^\ast (K_{\mM}^{1/2})$\end{tabular}
\end{tikzcd}\end{equation}
obtained from it and \eqref{isomdirectsumL}, which is a square root of \eqref{diagjoyceorient}, commutes.
    \end{enumerate}
\end{definition}
Notice that this definition depends only on the nondegenerate pairing induced by the CY3 structure.\medskip

\begin{remark}
    Canonical orientation data for $\mC_c=D^b\Coh(X)$, for $X$ a smooth projective Calabi-Yau threefold, were build in \cite[Theorem 4.4]{JOYCEorient}, and for $X$ a smooth quasiprojective Calabi-Yau threefold with the data of a spin compactification in \cite[Theorem 4.8]{JOYCEorient} (but it is, to our knowledge, not known to be independent of this compactification). In \cite[Theorem 3.10]{JOYCEorient}, the authors describes the topological obstruction to lift this orientation data to a strong orientation data. From the arguments of \cite[Section 5]{JOYCEorient} (Proposition below), giving a (strong) orientation data on $\mC$ is equivalent to giving a (strong) orientation data on the Abelian heart of a $t$-structure. Also, one can define a strong orientation data when $\mC_c$ is equivalent to the dg-category of a quiver with potential (indeed, in this case, $\ud{\mM}$ is a global critical chart, hence one has a trivial orientation), but, to our knowledge, one does not known if it is independent of the choice of quiver with potential, nor if the underlying orientation data coincide with the one of \cite[Theorem 4.8]{JOYCEorient} in the presence of a spin compactification. 
\end{remark}

Consider $E,F\in \mC_{lp}$, and consider:
\begin{align}
    \langle E,F\rangle:=\sum_{i\in\bZ}(-1)^{i+1}\dim(\Ext^i(E,F))
\end{align}
which is finite, as $E$ is left proper and $F$ is in particular compact. The CY3 pairing gives $\langle E,F\rangle=-\langle F,E\rangle$.\medskip

Denote by $(\ud{\mM}^{(\bZ)},\omega_\mC^{(\bZ)})$ the $n-2$-shifted symplectic stack obtained by taking the colimit of the products $\prod_{i\in\{-n,n\}}(\ud{\mM},\omega_\mC)$, where the maps are defined by:
\begin{align}
    0\times Id\times 0:\prod_{i\in\{-n,n\}}(\ud{\mM},\omega_\mC)\hookrightarrow\prod_{i\in\{-(n+1),n+1\}}(\ud{\mM},\omega_\mC)
\end{align}
which are open and closed immersions, as $0$ is isolated. It is the stack of $\bZ$-graded left-proper objects of $\ud{\mM}$: notice that it is a locally Artin stack, locally of finite presentation. Given a line bundle $L$ on $\mM$, denote by $L^{(\bZ)}$ the line bundle on $\mM^{(\bZ)}$ obtained by the colimit of $\boxtimes_{-n\leq i\leq n}L$.\medskip

The point $i)$ of the following is standard for classical stacks without shifted symplectic structure (see \cite[Section 6.3]{HalpernLeistner2014OnTS}, \cite[Section 7.2]{Alper2018ExistenceOM}, and it will be the only place where we will have to unwrap the construction of \cite[Theorem 5.5]{Brav2018RelativeCS}. The discussion about $\tau$, following the one of Section \ref{secttau}, will be important for the proof of the associativity of the CoHA.

\begin{lemma}\label{lemgradprod}
    Consider a dg-category of finite type $\mC$ with a Calabi-Yau structure of dimension $3$.
    \begin{enumerate}
        \item[$i)$] There is a natural isomorphism of $-1$-shifted symplectic stacks:
    \begin{align}
(\ud{\Grad}(\ud{\mM}),\Grad(\omega_\mC))\simeq (\ud{\mM}^{(\bZ)},\omega_\mC^{(\bZ)})
    \end{align}
    (in particular, the left hand side is locally Artin, and locally of finite presentation), such that $\ud{\iota}$ is identified with the colimit of the $\ud{\oplus_n}$. Under this identification, the involution:
    \begin{align}
        \ud{\tau}:\ud{\Grad}(\ud{\Grad}(\ud{\mM}))\simeq \ud{\Grad}(\ud{\Grad}(\ud{\mM}))
    \end{align}
    obtained by swapping the two $B\bG_{m,k}$ factors
 is identified with the involution $(\ud{\mM}^{(\bZ)})^{(\bZ)}\simeq (\ud{\mM}^{(\bZ)})^{(\bZ)}$ obtained by swapping the indices. 
 \item[$ii)$] Under the identification of $i)$, the isomorphism \eqref{isomiotaL} gives the weight decomposition:
\begin{align}\label{computiotaL}
    \ud{\iota}^\ast \bL_{\ud{\mM}}=&(\ud{\iota}^\ast \bL_{\ud{\mM}})^0\oplus(\ud{\iota}^\ast \bL_{\ud{\mM}})^{<0}\oplus(\ud{\iota}^\ast \bL_{\ud{\mM}})^{>0}\simeq (\bL_{\ud{\mM}})^0\oplus((\ud{\iota}^\ast \bL_{\ud{\mM}})^{<0})^{\otimes 2} \nn\\
    \simeq &\bigoplus_{i\in\bZ}(pr_i)^\ast\bL_{\ud{\mM}}\oplus\bigoplus_{i<j\in\bZ}(pr_{ij})^\ast\tilde{\bL}_{\ud{\mM}}\oplus\bigoplus_{i>j\in\bZ}(pr_{ij})^\ast\tilde{\bL}_{\ud{\mM}}\simeq \bigoplus_{i\in\bZ}(pr_i)^\ast\bL_{\ud{\mM}}\oplus\bigoplus_{i<j\in\bZ}((pr_{ij})^\ast\tilde{\bL}_{\ud{\mM}})^{\otimes 2}
\end{align}
where the term corresponding to $i,j$ has weight $i-j$, and the last isomorphism comes from \eqref{eqsigmaL}. In particular, the signed dimension of $(\ud{\iota}^\ast \bL_{\ud{\mM}})^{>0}$ at the point $(E_i)_{i\in\bZ}$ is given by the classical formula $\sum_{i<j}\langle E_i,E_j\rangle$.
\item[$iii)$] Suppose now that $\mC_c$ has an orientation data $[K_{\mM}^{1/2}]$ (resp. a strong orientation data $K_{\mM}^{1/2}$). Extending straightforwardly the definitions of Lemma \ref{lemshifsymp} $ii)$ and Lemma \ref{lemtauorientdcrit} $iii)$ and using the identification of $i)$, there is a (resp. canonical) isomorphism:
\begin{align}
    \Grad(K_{\mM}^{1/2})\simeq (K_{\mM}^{1/2})^{(\bZ)}
\end{align}
(resp. such that the isomorphism:
\begin{align}
    \tau^\ast\Grad(\Grad(K_{\mX,s}^{1/2}))\simeq \Grad(\Grad(K_{\mX,s}^{1/2}))
\end{align}
and the isomorphism:
\begin{align}
    \tau^\ast ((K_{\mM}^{1/2})^{(\bZ)})^{(\bZ)}\simeq ((K_{\mM}^{1/2})^{(\bZ)})^{(\bZ)}
\end{align}
obtained by swapping the indices agree when restricted on the open substack:
\begin{align}
(\mM)_{(0,0)}\times(\mM)_{(1,0)}\times (\mM)_{(1,1)}
\end{align})

\end{enumerate}
    \end{lemma}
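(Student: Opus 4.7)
The plan for the proof runs in three stages, matching the three parts of the lemma.

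For part $i)$, I would use the functor-of-points description \eqref{toenfunctpoin} together with the adjunction between the mapping stack $\ud{\Grad}(-)$ and the product with $B\bG_{m,k}$. For a dg-algebra $\ud{R}$, this gives
\begin{align*}
\ud{\Grad}(\ud{\mM})(\ud{\Spec}(\ud{R})) \simeq \Map_{dgcat}(\mC_c,\Perf(\ud{R}\otimes_k B\bG_{m,k})).
\end{align*}
Since $\Perf(\ud{R}\otimes_k B\bG_{m,k})$ is equivalent to the dg-category of $\bZ$-graded perfect $\ud{R}$-modules, with only finitely many nonzero graded pieces on compact objects, this identifies the groupoid of $\ud{R}$-points with $\colim_n\prod_{i\in\{-n,n\}}\ud{\mM}(\ud{R})$, giving the isomorphism of derived stacks. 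The map $\ud{\iota}$ forgets the grading, which on each component $\prod_{i\in\{-n,n\}}\ud{\mM}$ is precisely the direct sum $\ud{\oplus_{2n+1}}$. Iterating, one obtains a similar description of $\ud{\Grad}(\ud{\Grad}(\ud{\mM}))$ as the stack of $\bZ^2$-graded left-proper objects, and the involution $\ud{\tau}$ swaps the two factors. For the symplectic form, I would use Lemma \ref{lemorientgm} $i)$ to identify $\Grad(\omega_\mC)$ with the construction obtained by applying \cite[Theorem 2.5]{shifsymp} with the canonical $\mathcal{O}$-orientation on $B\bG_{m,k}$; unwrapping this construction gives the product symplectic structure on $\ud{\mM}^{(\bZ)}$.

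For part $ii)$, the weight decomposition of $\ud{\iota}^\ast\bL_{\ud{\mM}}$ follows from the computation in \eqref{isomiotaL}, applied to the map $\ud{\iota}$ viewed as $\colim\ud{\oplus_n}$. Indeed, $\ud{\iota}^\ast\mathcal{U} \simeq \bigoplus_{i\in\bZ}(\ud{pr_i})^\ast\mathcal{U}$ where $(\ud{pr_i})^\ast\mathcal{U}$ has $B\bG_{m,k}$-weight $i$, so
\begin{align*}
\ud{\iota}^\ast\bT_{\ud{\mM}}\simeq\Hom\bigl(\bigoplus_i(\ud{pr_i})^\ast\mathcal{U},\bigoplus_j(\ud{pr_j})^\ast\mathcal{U}\bigr)[1]\simeq\bigoplus_{i,j}(\ud{pr_{ij}})^\ast\tilde{\bT}_{\ud{\mM}}
\end{align*}
where the $(i,j)$ term has weight $j-i$ (and agrees with $(\ud{pr_i})^\ast\bT_{\ud{\mM}}$ when $i=j$). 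Dualizing, $(\ud{pr_{ij}})^\ast\tilde{\bL}_{\ud{\mM}}$ has weight $i-j$, which yields the first two isomorphisms; the final one comes from pairing each positive-weight summand with its negative-weight partner via \eqref{eqsigmaL}. Evaluating at $(E_i)_{i\in\bZ}$, the signed dimension of $(\ud{pr_{ij}})^\ast\tilde{\bL}_{\ud{\mM}}$ is $-\langle E_i,E_j\rangle$, and summing over the positive-weight indices $i>j$ gives $-\sum_{i>j}\langle E_i,E_j\rangle = \sum_{i<j}\langle E_i,E_j\rangle$ using antisymmetry of the CY3 pairing.

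For part $iii)$, I combine part $ii)$ with the definition of $\Grad(K_{\mM}^{1/2})$ from Lemma \ref{lemshifsymp} $iii)$, namely
\begin{align*}
\Grad(K_{\mM}^{1/2})\simeq \iota^\ast(K_{\mM}^{1/2})\otimes\det((\ud{\iota}^\ast\bL_{\ud{\mM}})^{<0})^{\otimes-1}.
\end{align*}
By part $ii)$, $\det((\ud{\iota}^\ast\bL_{\ud{\mM}})^{<0})$ is $\bigotimes_{i<j}(pr_{ij})^\ast L_{\mM}$, while the orientation data property applied iteratively to $\ud{\oplus_{2n+1}}$ gives
\begin{align*}
(\oplus_{2n+1})^\ast K_{\mM}^{1/2}\simeq \bigotimes_i(pr_i)^\ast K_{\mM}^{1/2}\otimes\bigotimes_{i<j}(pr_{ij})^\ast L_{\mM}.
\end{align*}
Cancelling the $L_{\mM}$ factors produces the desired isomorphism with $(K_{\mM}^{1/2})^{(\bZ)}$. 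For the strong case, the compatibility with $\tau$ on the substack $(\mM)_{(0,0)}\times(\mM)_{(1,0)}\times(\mM)_{(1,1)}$ reduces, after unwrapping both definitions of $\tau^\ast\Grad(\Grad(K_{\mM}^{1/2}))$, to the commutativity of the associativity square \eqref{diagjoyceorientsq} applied to a triple $(E_0,E_1,E_2)$ with bigradings $(0,0),(1,0),(1,1)$ — this is precisely the triple whose two associativity conventions encode the two orderings of weights swapped by $\tau$.

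The main obstacle will be part $i)$: matching the symplectic structure $\Grad(\omega_\mC)$ coming from the pullback construction of Lemma \ref{lemorientgm} $i)$ with the product symplectic structure on $\ud{\mM}^{(\bZ)}$ coming from $\omega_\mC^{(\bZ)}$. This requires tracing through the mapping-stack construction of \cite[Theorem 2.5]{shifsymp} applied to the canonical orientation on $B\bG_{m,k}$, and checking that the resulting pairing on $\ud{\iota}^\ast\bL_{\ud{\mM}}$ coming from the Hochschild class \eqref{pairhochs} is the weight-$0$ part of the Serre-duality pairing on the direct-sum decomposition given in part $ii)$. The other parts are then essentially bookkeeping.
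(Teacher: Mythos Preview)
Your proposal is correct and follows essentially the same approach as the paper. The one point worth noting is in part~$i)$: rather than invoking Lemma~\ref{lemorientgm} and the PTVV mapping construction, the paper works directly with the Hochschild/negative-cyclic description of $\omega_\mC$ from \cite{Brav2018RelativeCS}, verifying a commutative diagram that shows $(i_n)^\ast\ud{\iota}^\ast\omega_\mC\sim\boxplus_{-n\leq i\leq n}\omega_\mC$ at the level of closed forms (not just underlying pairings); this is exactly the obstacle you identify, and your sketch of checking the pairing should be upgraded to the full $HC^-$ comparison. For part~$iii)$, the paper's argument for the $\tau$-compatibility is slightly more specific than ``reducing to \eqref{diagjoyceorientsq}'': the associativity square is used to make the iterative construction of \eqref{eqiotaK} canonical, and then one observes that on the chosen substack all the $L_{\mM}$-factors have $\bZ^2$-weights lying in $H_1\cap H_2$, so the correction \eqref{eqtildeLweight} from Lemma~\ref{lemtauorientdcrit}~$ii)$ is trivial and the $\tau$-isomorphism is the obvious index-swap.
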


 \begin{proof}
     \begin{enumerate}
         \item[$i)$] By definition, $\ud{\Grad}(\ud{\mM})$ is defined as the prestack:
     \begin{align}
         \ud{\Spec}(\ud{R})\mapsto \ud{\mM}(B\bG_{m,\ud{R}})
     \end{align}
     by descent, the latter is identified with the space of objects of:
     \begin{align}
    \mC\otimes_k\QCoh(B\bG_{m,\ud{R}})\simeq \mC\otimes_k\QCoh(\ud{R})\otimes_k\QCoh(B\bG_{m,k})
     \end{align}
     whose pullback to $\mC\otimes_k\QCoh(\ud{R})$ are left-proper. From \cite[Section 7.2]{Alper2018ExistenceOM}, $\QCoh(B\bG_{m,k})$ is the dg-category of $\bZ$-graded complexes, hence the latter is identified as the dg-category of $\bZ$-graded objects $(E_n)_{n\in\bZ}$ of $\mC\otimes_k\QCoh(\ud{R})$, where the pullbacks takes the direct sum. If a $\bZ$-graded object $\bigoplus_{n\in\bZ}E_n$ is left proper, it is in particular compact, hence a finite number of $E_n$ must be nonzero. But a finite sum of objects is left proper iff each object is left proper. It gives then the equivalence of spaces:
     \begin{align}
         \ud{\Grad}(\ud{\mM})(\ud{\Spec}(\ud{R}))\simeq (\colim_{n\to\infty}\prod_{-n\leq i\leq n}\ud{\mM}(\ud{\Spec}(\ud{R})))
     \end{align}
     which is obviously functorial in $\ud{R}$, and gives then a natural isomorphism:
     \begin{align}
         \ud{\Grad}(\ud{\mM})\simeq (\ud{\mM}^{(\bZ)})
     \end{align}
     such that $\ud{\iota}$ is the colimit of the $\ud{\oplus_n}$. Notice that the map:
     \begin{align}
         i_n:\prod_{-n\leq i\leq n}\ud{\mM}\to \ud{\Grad}(\ud{\mM})
     \end{align}
     is simply identified with the functor that sends $2n+1$ continuous adjunctions:
     \begin{align}
         f_i:\mC\rightleftharpoons \QCoh(\ud{R}):(f_i)^r
     \end{align}
     to the continuous adjunction:
     \begin{align}
    \mC\rightleftharpoons \QCoh(B\bG_{m,\ud{R}})\simeq \QCoh(\ud{R})^{\bZ}
     \end{align}
     obteined by taking the $i$-th component to be $(f_i,(f_i)^r)$ for $-n\leq i\leq n$, and the other one to be $(0,0)$.\medskip
     
     Using the above description, and the fact that $\QCoh((B\bG_m\times B\bG_m)_{\ud{R}})$ is the dg-category of $\bZ\times\bZ$-graded objects of $\QCoh(\ud{R})$, we find the claim about $\ud{\tau}$.\medskip

     We identify now the shifted symplectic structures. Fortunately, the definition of $\omega_\mC$ in \cite[Proposition 5.2]{Brav2018RelativeCS} is pretty transparent, the hard part of \cite[Theorem 5.5]{Brav2018RelativeCS} being the fact that it is nondegenerate. In \cite[Section 4.2]{Brav2018RelativeCS}, the authors define, for any presentable dg-category $\mC'$, a graded mixed complex $HH(\mC')$ (a $S^1$-equivariant version of the Hochschild homology), which is functorial with respect to continuous adjunctions, \ie given a continuous adjunction $f:\mC'\rightleftharpoons \mC''$, there is a functorial morphism $HH(\mC')\to HH(\mC'')$. It is moreover symmetric monoidal functor, as it is defined in \cite[Section 4.2]{Brav2018RelativeCS} by applying the symmetric monoidal functor of $S^1$ equivariant traces from \cite[Theorem 2.14]{HOYOIS201797} to the symmetric monoidal functor $\mC\to (\mC,Id_\mC)$. Applying the negative cycle complex functor $HC^-$ from graded mixed complexes to complexes described in \cite[Section 5.1]{Brav2018RelativeCS} (which is also symmetric monoidal, as can be seen using its definition as a pushforward), one obtains a graded complex, denoted by the shorthand notation $HC^-(\mC')$, functorial with continuous adjunctions and symmetric monoidal.\medskip
     
     Using the naturality of the symmetric monoidal functor of \cite[Theorem 2.14]{HOYOIS201797} applied to the symmetric monoidal functor of $2$-categories:
     \begin{align}
         QCoh:Corr(Aff)\to (DGCat^2_{con})^{2-op}
     \end{align}
     from \cite[5.5.3]{Gaitsgory_Rozenblyum_2017}, the authors obtain in \cite[Theorem 4.6]{Brav2018RelativeCS} an isomorphism:
     \begin{align}
         \Gamma(L\ud{\Spec}(\ud{R}),\mathcal{O}_{L\ud{\Spec}(\ud{R})})\simeq HH(\QCoh(\ud{R}))
     \end{align}
     which is functorial and symmetric monoidal in $\ud{R}$. The space of closed $p$-form of degree $n$ on an affine scheme is defined by:
     \begin{align}
         \mA^{p,cl}(\ud{\Spec}(\ud{R}),n):=|HC_w^-(\Gamma(L\ud{\Spec}(\ud{R}),\mathcal{O}_{L\ud{\Spec}(\ud{R})}))(p)[n-p]|
     \end{align}
     Using descent from the affine case, the authors define in \cite[Section 5.2]{Brav2018RelativeCS}, for any stack $\ud{\mX}$ and any $p\in\bZ$, a morphism:
     \begin{align}\label{eqHKR}
             \chi_{\ud{\mX}}:|HC^-(\Ind(\Perf(\ud{\mX})))[-n]|\to \mA^{p,cl}(\ud{\mX},p-n)
     \end{align}
     which is functorial, and symmetric monoidal in $\ud{\mX}$ (this is a derived version of the Hochschild-Kostant-Rosenberg theorem).\medskip
     
     A Calabi-Yau structure of dimension $n$ is defined to be a map $k[n]\to HC^-(\mC)$, \ie the space of Calabi-Yau structure of dimension $n$ is given by $|HC^-(\mC)[-n]|$. Using the universal continuous adjunction:
     \begin{align}
        \mathcal{F}_\mC:\mC\rightleftharpoons \Ind(\Perf(\ud{\mM})):\mathcal{F}_\mC^r
     \end{align}
     the map from the space of Calabi-Yau structure of dimension $n$ to the space of closed $2$ form of degree $n-2$ $\omega$ is then defined in \cite[Proposition 5.2]{Brav2018RelativeCS} as the composition:
     \begin{align}
         |HC^-(\mC)[-n]|\overset{HC^-(\mathcal{F}_\mC)}{\to} |HC^-(\Ind(\Perf(\ud{\mM})))[-n]|\overset{\chi_{\ud{\mM}}}{\to}\mA^{2,cl}_k(\ud{\mM},2-n)
     \end{align}
     and its image is shown in \cite[Theorem 5.5]{Brav2018RelativeCS}. to lie in the subspace of nondegenerate closed $2$ forms of degree $2-n$, \ie $2-n$-shifted symplectic structures.\medskip

    Consider the continuous adjunction of smooth dg-categories:
     \begin{align}
         \pi_n:\mC\rightleftharpoons\prod_{-n\leq i\leq n}\mC:(\pi_n)^r
     \end{align}
    where $\pi_n$ sends an objects to $2n+1$ copies of it, and $(\pi_n)^r$ sends $2n+1$ objects of $\mC$ to their direct sum. For a functor $G$ on the category of stacks to a category of objects with an additive structure $\oplus$ and a finite set $I$, we denote:
    \begin{align}
        \boxplus:=((g_i)_{i\in I}\mapsto \bigoplus_{i\in I}(pr_i)^\ast g_i):\prod_{i\in I}G(\ud{\mX}_i)\to G(\prod_{i\in I}\ud{\mX}_i)
    \end{align}
    We consider now the following diagram of spaces:
     \[\begin{tikzcd}[column sep=small]
         {|HC^-(\mC)[-n]|}\arrow[r,"HC^-(F_\mC)"]\arrow[dd,"HC^-(\pi_n)"] & {|HC^-(\Ind(\Perf(\ud{\mM})))[-n]|}\arrow[r,"\chi_{\ud{\mM}}"]\arrow[d,"HC^-(\ud{\iota}^\ast)"] & \mA^{2,cl}_k(\ud{\mM},2-n)\arrow[d,"\ud{\iota}^\ast"]\\
         & {|HC^-(\Ind(\Perf(\ud{\Grad}(\ud{\mM})))[-n]|}\arrow[d,"HC^-((i_n)^\ast)"]\arrow[r,"\chi_{\ud{\Grad}(\ud{\mM})}"{yshift=3pt}] & \mA^{2,cl}_k(\ud{\Grad}(\ud{\mM}),2-n)\arrow[d,"(i_n)^\ast"]\\
         {|HC^-(\prod_{-n\leq i\leq n}\mC)[-n]|}\arrow[r,"HC^-(\boxplus\circ\prod_i F_\mC)"{yshift=3pt}] & {|HC^-(\Ind(\Perf(\prod_{-n\leq i\leq n}\ud{\mM})))[-n]|}\arrow[r,"\chi_{\prod_i\ud{\mM}}"{yshift=3pt}] & \mA^{2,cl}_k(\prod_{-n\leq i\leq n}\ud{\mM},2-n)\\
         {\prod_{-n\leq i\leq n}|HC^-(\mC)[-n]|}\arrow[r,"\prod_iHC^-(F_\mC)"{yshift=3pt}]\arrow[u,swap,"\boxplus"]& {\prod_{-n\leq i\leq n}|HC^-(\Ind(\Perf(\ud{\mM})))[-n]|}\arrow[r,"\prod_i\chi_{\ud{\mM}}"{yshift=3pt}]\arrow[u,swap,"\boxplus"] & \prod_{-n\leq i\leq n}\mA^{2,cl}_k(\ud{\mM},2-n)]\arrow[u,swap,"\boxplus"]
     \end{tikzcd}\]
     We have to prove that it is commutative: it will build an equivalence $(\ud{i_n})^\ast\ud{\iota}^\ast\omega_{\ud{\mM}}\sim \prod_{-n\leq i\leq n}\omega_{\ud{\mM}}$ which is exactly what we want to prove. Notice that this equivalence will be obviously a compatible system of equivalence for the system $i_n$, but we will be at the end only interested in comparing the d-critical structures, which lives in a set, so we are not so interested in these questions. The four little squares are commutative from fact that $HC^-$ and \eqref{eqHKR} are functorial with continuous adjunction and symmetric monoidal. To show that the left rectangle commutes, it suffices to prove:
     \begin{align}\label{eq3}
         (i_n)^\ast\iota^\ast\circ F_\mC=\boxplus\circ(\prod_{-n\leq i\leq n}F_\mC)\circ \pi_n
     \end{align}
     we will do this by identifying the prestacks. Beware that we will work with the restriction of these functors for the small dg-categories of compact objects. Consider a point $f=(f_{-n},...,f_n):\ud{\Spec}(\ud{R})\to \prod_{-n\leq i\leq n}\ud{\mM}$, which gives $(2n+1)$ exact continuous maps $f_i:\mC_c\to \Perf(\ud{\Spec}(\ud{R}))$ with continuous right inverse. Then $i_n\circ f:\ud{\Spec}(\ud{R})\to\ud{\Grad}(\ud{\mM})$ is the map:
     \begin{align}
         (...,0,f_{-n},...,f_n,...,0):\mC_c\to \Perf(B\bG_{m,\ud{R}})
     \end{align}
     and so $\iota\circ i_n\circ f:\ud{\Spec}(\ud{R})\to\ud{\mM}$ is the map:
     \begin{align}
         \bigoplus_{-n\leq i\leq n}f_i:\mC_c\to \Perf(\ud{R})
     \end{align}
     such that by definition:
     \begin{align}\label{eq1}
         f^\ast ((i_n)^\ast \iota^\ast \circ F_c):=\bigoplus_{-n\leq i\leq n}f_i:\mC_c\to \Perf(\ud{R})
     \end{align}
     On the other hand, we have by definition:
     \begin{align}
         f^\ast\boxplus:=((g_{-n},...,g_n)\mapsto \bigoplus_{-n\leq i\leq n}(f_i)^\ast g_i:\prod_{-n\leq i\leq n}\Perf(\ud{\mM})\to \Perf(\ud{R})
     \end{align}
     such that, still by definition:
     \begin{align}
         f^\ast\boxplus\circ(\prod_{-n\leq i\leq n}F_\mC):= ((g_{-n},...,g_n)\mapsto \bigoplus_{-n\leq i\leq n}f_i(g_i)):\prod_{-n\leq i\leq n}\mC_c\to \Perf(\ud{R})
     \end{align}
     and finally, by definition of $\pi_n$:
     \begin{align}\label{eq2}
         f^\ast(\boxplus\circ(\prod_{-n\leq i\leq n}F_\mC)\circ\pi_n):= \bigoplus_{-n\leq i\leq n}f_i:\mC_c\to \Perf(\ud{R})
     \end{align}
     So \eqref{eq1} and \eqref{eq2} gives \eqref{eq3}, and then the compatibility of the big left rectangle of the above diagram, which finishes the proof.\medskip

     \item[$ii)$] To prove the claim about $\ud{\iota}^\ast \bL_{\ud{\mM}}$, we need to consider a point $f:B\bG_{m,\ud{R}}\to \ud{\mM}$, \ie a functor $f:\mC_c\to\Perf(B\bG_{m,\ud{R}})$ corepresented by $(g_i)_{i\in\bZ}\in \mC_c\otimes \Perf(B\bG_{m,\ud{R}})$, and describe the $\bZ$-grading on $f^\ast\bL_{\ud{\mM}}$. But, by \eqref{eqText}:
     \begin{align}
        f^\ast\bT_{\ud{\mM}}\simeq \Hom_{B\bG_{m,\ud{R}}}((g_i)_{i\in\bZ},(g_j)_{j\in\bZ})[1]=\bigoplus_{i,j}\Hom_{\ud{R}}(g_i,g_j)[1]
     \end{align}
     where the $i,j$ term has degree $j-i$: passing to the dual, the $i,j$ term of $f^\ast\bL_{\ud{\mM}}$ has weight $i-j$, which proves the claim. The last claim follows straightforwardly from the definition of $\langle-,-\rangle$.\medskip

     \item[$iii)$] Consider first the case of an orientation data $[K_{\mM}^{1/2}]$, and fix an orientation $K_{\mM}^{1/2}$ and a (noncanonical) isomorphism:
     \begin{align}\label{isomsqrootorient}
    \zeta_2:(\oplus_2)^\ast (K_{\mM}^{1/2})\simeq (pr_1)^\ast (K_{\mM}^{1/2})\oplus (pr_2)^\ast (K_{\mM}^{1/2})\oplus L_{\mM}
        \end{align}
        which is a square root of:
        \begin{align}
    \phi_2:(\oplus_2)^\ast (K_{\mM})\simeq (pr_1)^\ast (K_{\mM})\oplus (pr_2)^\ast (K_{\mM})\oplus L_{\mM}^{\otimes 2}
        \end{align}
    Denote by $\phi_n$ the analogue of $\phi_2$ for $\oplus_n$. Writing $\oplus_3\simeq \oplus_2\times Id_{\mM}$ and using the isomorphisms \eqref{isomsquareroot}, as in the upper and right arrow of the diagram \eqref{diagjoyceorientsq}, we obtain a square root $\zeta_3$ of $\phi_3$, which restricts to \eqref{isomsqrootorient} on $\mM\times\mM\times\{0\}$. Working iteratively, we write an inductive system $\sigma_n$ of square roots $\phi_n$, whose colimit gives an isomorphism:
    \begin{align}\label{eqiotaK}
\iota^\ast(K_{\mM}^{1/2})\simeq\bigotimes_{(i,i')\in\bZ^2}(pr_i)^\ast(K_{\mM}^{1/2})\otimes \bigotimes_{i<j\in\bZ}(pr_{ij})^\ast(L_{\mM}) 
    \end{align}
    which is a square root of the determinant of \eqref{computiotaL}. From the description of the weights in $ii)$ and the formula of Lemma \ref{lemshifsymp} $iii)$, we have:
    \begin{align}\label{eqK}
    \Grad(K_{\mM}^{1/2}):=\iota^\ast (K_{\mM}^{1/2})\otimes \det((\ud{\iota}^\ast\bL_{\ud{\mM}})^{<0})^{\otimes-1}\simeq \bigotimes(pr_i)^\ast(K_{\mM}^{1/2})=:(K_{\mM}^{1/2})^{(\bZ)}
    \end{align}
    Which proves the claim. Notice that this construction depends on the initial choice of $\zeta_2$, but also on the choices that we made to write $\oplus_n$ in terms of $\oplus_2$.\medskip

    Consider now a strong orientation data $K_{\mM}^{1/2}$, with a fixed choice of isomorphism \eqref{isomsqrootorient}. We do the same construction to define an isomorphism $\Grad(K_{\mM}^{1/2})\simeq(K_{\mM}^{1/2})^{(\bZ)}$, and we notice this time that, from the commutation of the diagram \eqref{diagjoyceorientsq} in the definition, this construction does not depends on the choice that we made to write $\oplus_n$ in terms of $\oplus_2$, \ie this isomorphism is canonical. Consider now the open substack $\mX:=
(\mM)_{(0,0)}\times(\mM)_{(1,0)}\times (\mM)_{(1,1)}$, one obtains then from the construction of \eqref{eqiotaK} a commutative diagram:

\adjustbox{scale=0.9,center}{\begin{tikzcd}[column sep=tiny]
    \begin{tabular}{c}$\tau^\ast\bigl((pr_{(0,0)})^\ast (K_{\mM}^{1/2})\otimes  (pr_{(0,1)})^\ast (K_{\mM}^{1/2})\otimes  (pr_{(1,1)})^\ast (K_{\mM}^{1/2})$\\$\otimes (pr_{(0,0),(0,1)})^\ast (L_{\mM})\otimes (pr_{(0,0),(1,1)})^\ast (L_{\mM})\otimes (pr_{(0,1),(1,1)})^\ast (L_{\mM})\bigr)$\end{tabular}\arrow[r,"\simeq"]\arrow[d,"\simeq"] & \begin{tabular}{c}$\Bigl(\tau^\ast\Grad(\iota)^\ast\bigl((pr_0)^\ast (K_{\mM}^{1/2})$\\$\otimes  (pr_1)^\ast (K_{\mM}^{1/2})\otimes  (pr_{0,1})^\ast L_{\mM})\Bigr)|_\mX$\end{tabular}\arrow[d,"\simeq"]\\
    \begin{tabular}{c}$(pr_{(0,0)})^\ast (K_{\mM}^{1/2})\otimes  (pr_{(1,0)})^\ast (K_{\mM}^{1/2})\otimes  (pr_{(1,1)})^\ast (K_{\mM}^{1/2})$\\$\otimes (pr_{(0,0),(1,0)})^\ast (L_{\mM})\otimes (pr_{(0,0),(1,1)})^\ast (L_{\mM})\otimes (pr_{(1,0),(1,1)})^\ast (L_{\mM})$\end{tabular}\arrow[d,"\simeq"] & \Bigl(\tau^\ast\Grad(\iota)^\ast\iota^\ast(K_{\mM}^{1/2})\Bigr)|_\mX\arrow[d,"\simeq"]\\
    \Bigl(\Grad(\iota)^\ast\bigl((pr_0)^\ast (K_{\mM}^{1/2})\otimes  (pr_1)^\ast (K_{\mM}^{1/2})\otimes  (pr_{0,1})^\ast L_{\mM})\Bigr)|_\mX\arrow[r,"\simeq"] & \Bigl(\Grad(\iota)^\ast\iota^\ast(K_{\mM}^{1/2})\Bigr)|_\mX
\end{tikzcd}}

From the definition of the isomorphism:
\begin{align}
    \tau^\ast ((K_{\mM}^{1/2})^{(\bZ)})^{(\bZ)}\simeq ((K_{\mM}^{1/2})^{(\bZ)})^{(\bZ)}
\end{align}
in Lemma \ref{lemtauorientdcrit} $ii)$, one finds then directly that, under the isomorphism \eqref{eqK}, its restriction to $\mX$ is simply the natural isomorphism:
\begin{align}
    &\tau^\ast\bigl((pr_{(0,0)})^\ast (K_{\mM}^{1/2})\otimes  (pr_{(0,1)})^\ast (K_{\mM}^{1/2})\otimes  (pr_{(1,1)})^\ast (K_{\mM}^{1/2})\nn\\
    \simeq &(pr_{(0,0)})^\ast (K_{\mM}^{1/2})\otimes  (pr_{(1,0)})^\ast (K_{\mM}^{1/2})\otimes  (pr_{(1,1)})^\ast (K_{\mM}^{1/2})
\end{align}
as claimed. Indeed, in this case, the $\bZ^2$-weights of the $L$ factors are respectively $(-1,0)$, $(-1,-1)$ and $(0,-1)$, which are both in $H_1\cap H_2$, hence there is no need to use \eqref{eqsigmaL} to do the step \eqref{eqtildeLweight} in Lemma \ref{lemtauorientdcrit} $ii)$.\medskip

By similar arguments, one can show that the result of $iii)$ holds without having to restrict to $\mX$ if one further assumes that $\zeta_2$ is symmetric under $\Sigma_{\mM}$, \ie that the following square of isomorphisms is commutative:
\[\begin{tikzcd}
    (\Sigma_{\mM})^\ast(\oplus_2)^\ast (K_{\mM}^{1/2})\arrow[r,"(\Sigma_{\mM})^\ast(\zeta_2)"{yshift=3pt}]\arrow[d,"\simeq"]& (\Sigma_{\mM})^\ast\bigl((pr_1)^\ast (K_{\mM}^{1/2})\oplus (pr_2)^\ast (K_{\mM}^{1/2})\oplus L_{\mM}\bigr)\arrow[d,"\simeq"]\\
    (\oplus_2)^\ast (K_{\mM}^{1/2})\arrow[r,"\zeta_2"]& (pr_1)^\ast (K_{\mM}^{1/2})\oplus (pr_2)^\ast (K_{\mM}^{1/2})\oplus L_{\mM}
\end{tikzcd}\]
where the left vertical arrow uses the commutativity isomorphism of $\oplus_2$, and the right vertical arrow  uses the isomorphism \eqref{eqsigmaL}. Notice that, as this is the square root of a commutative diagram, this is always true up to a locally constant sign on $\mM\times \mM$. As this will not be needed to prove the associativity of the CoHA, and this is not part of the initial definition of strong orientation data in \cite{JOYCEorient}, we didn't want to impose this condition.
\end{enumerate}     
 \end{proof}

 We give now a description of $\Filt(\ud{\mM})$, adapted from \cite[Corollary 7.13]{Alper2018ExistenceOM}:

\begin{lemma}\label{lemFiltdg}
    Given a dg-category of finite type $\mC$, $\Filt(\ud{\mM})$ is the stack classifying finite extensions of left-proper objects of $\mC$. More precisely, given a $k$ dg-algebra $\ud{R}$, $\Filt(\ud{\mM})(\ud{R})$ is the space of objects $(E_n)_{n\in\bZ}$ with $E_n\in (\mC\otimes_k \QCoh(\ud{R}))_{lp}$ with a map of degree $1$ $x:E_{n+1}\to E_n$, which stabilize to some left-proper object $E$ when $n\ll 0$, and to $0$ when $n\gg 0$. Then $\eta$ takes the total object $E$, and $p$ the $\bZ$-graded object $(\Cofib(E_{n+1}\overset{x}{\to} E_n))_{n\in\bZ}$.
\end{lemma}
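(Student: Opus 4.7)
The strategy will follow the template of Lemma \ref{lemgradprod} $i)$, replacing $B\bG_m$ by $\Theta_k = [\bA^1_k/\bG_{m,k}]$. By definition, $\Filt(\ud{\mM})(\ud{\Spec}(\ud{R}))$ is the space of maps $\Theta_{\ud{R}} \to \ud{\mM}$. Using the functor of points description \eqref{toenfunctpoin} together with descent along smooth cover of $\Theta_{\ud{R}}$ by affine schemes, this identifies with the space of objects of $\mC \otimes_k \QCoh(\Theta_{\ud{R}})$ whose pullbacks to $\mC \otimes_k \QCoh(\ud{R})$ along any affine chart are left-proper.

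The next step is to identify $\QCoh(\Theta_{\ud{R}}) \simeq \QCoh(\ud{R}) \otimes_k \QCoh(\Theta_k)$ with the category of $\bZ$-filtered complexes of $\ud{R}$-modules. The classical description of $\QCoh([\bA^1_k/\bG_{m,k}])$ as the category of $\bZ$-graded $k[t]$-modules (with $t$ of weight $1$) gives, upon the standard Rees-type reindexing, the presentation $(E_n)_{n \in \bZ}$ with a map $x : E_{n+1} \to E_n$ of weight $1$. Under this presentation, the pullback along the point $1 : \ud{\Spec}(\ud{R}) \to \Theta_{\ud{R}}$ corresponds to inverting $t$, producing the total object $E = \colim_n E_n$ (i.e.\ the stable value for $n \ll 0$), while the pullback along $0 : B\bG_{m,\ud{R}} \to \Theta_{\ud{R}}$ corresponds to quotienting by $t$, producing the associated $\bZ$-graded object $(\Cofib(E_{n+1} \overset{x}{\to} E_n))_{n \in \bZ}$. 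These two reductions respectively identify $\eta$ and $p$ with the operations stated in the Lemma.

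It remains to check that the left-properness conditions combined with the requirement of belonging to $\ud{\mM}$ force the stabilization behaviour described. Since $0 : \Spec(k) \to \ud{\mM}$ is an open immersion (as noted just before Proposition \ref{prophlp}, using that the support of a perfect complex is open), the locus in $\bA^1_{\ud{R}}$ where the pulled-back universal object vanishes is open; combined with left-properness of the total object $E$ (inherited from the condition that the pullback of our object via $1$ lies in $\ud{\mM}$), this forces $E_n = 0$ for $n \gg 0$ and the transition maps $x$ to be isomorphisms for $n \ll 0$, so that $E_n \simeq E$ for such $n$. The smoothness of $\mC$ then guarantees, as in the proof of Lemma \ref{lemgradprod} $i)$, that left-properness of the finitely many nontrivial graded pieces $\Cofib(E_{n+1} \to E_n)$ is equivalent to left-properness of each $E_n$ individually.

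The main technical obstacle will be making the stabilization argument fully rigorous: one needs to control that a filtered object whose underlying colimit is left-proper (hence compact) has only finitely many nontrivial associated graded pieces. This will follow from the openness of the vanishing locus of a perfect complex, applied to the universal family on $\bA^1_{\ud{R}}$, exactly as in the analogous step in Halpern-Leistner's \cite[Lemma 6.3.1]{HalpernLeistner2014OnTS} and \cite[Corollary 7.13]{Alper2018ExistenceOM}.
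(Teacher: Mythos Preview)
Your first two paragraphs are fine and match the paper: descent plus the Rees description of $\QCoh(\Theta_k)$ identifies $\Filt(\ud{\mM})(\ud{R})$ with filtered objects whose pullback to the smooth cover is left-proper, and $\eta$, $p$ are identified as claimed.

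The gap is in your stabilization argument. You write that openness of the zero locus in $\bA^1_{\ud{R}}$ ``forces $E_n = 0$ for $n \gg 0$''. But the filtration index $n$ is the $\bG_m$-weight, not the $\bA^1$-coordinate: every nonzero point $t \in \bG_m$ sees the same total object $E$, and only $t=0$ sees the associated graded. So the vanishing locus in $\bA^1_{\ud{R}}$ is either empty or all of $\bA^1_{\ud{R}}$ when $E \neq 0$ or $E = 0$; it carries no information about individual $E_n$.

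The paper proceeds differently. First, since the restriction along $0:B\bG_{m,\ud{R}}\to\Theta_{\ud{R}}$ lands in $\Grad(\ud{\mM})$, which has already been identified with $\ud{\mM}^{(\bZ)}$ in Lemma \ref{lemgradprod} $i)$, the cofibers $\Cofib(E_{n+1}\to E_n)$ are left-proper and only finitely many are nonzero. This immediately gives stabilization on both ends, to $E$ for $n\ll 0$ and to some $F$ for $n\gg 0$. The nontrivial step is showing $F=0$, and this is done by a compactness argument: one takes a compact generator $K$ of $\mC\otimes_k\QCoh(\ud{R})$, builds a sequence $K_r$ of objects of $\mC\otimes_k\QCoh(\ud{R}[x])$ with $\colim_r K_r \simeq 0$, and uses compactness of $((E_n)_n,x)$ to conclude $\Hom_{\ud{R}}(F,K)=0$, whence $F=0$. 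Your proposal does not supply this step or an alternative to it. You also omit the (easy) converse direction: an object as in the statement is built by finitely many extensions from left-proper pieces, hence is itself left-proper.
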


 \begin{proof}
    By descent, objects of $\Filt(\ud{\mM})(\ud{R})$ are objects of $\mC\otimes_k \QCoh(\ud{R})\otimes_k \QCoh(\Theta_k)$ whose pullback along $\Spec(R[x])\to \Theta_{\ud{R}}$ is left proper. By Rees construction, $\QCoh(\Theta_k)$ is the dg-category of graded complexes with a map $x$ of degree $1$, \ie $\mC\otimes_k \QCoh(\ud{R})\otimes_k \QCoh(\Theta_k)$ is the dg-category of $\bZ$-graded objects $(E_n)$ of $\mC\otimes_k \QCoh(\ud{R})$ with a map $x$ of degree $1$. The same arguments that \cite[Corollary 7.13]{Alper2018ExistenceOM} gives that the restriction along $B\bG_{m,\ud{R}}\to\Theta_{\ud{R}}$ is $((E_n)_n,x)\mapsto (\Cofib(x:E_{n+1}\to E_n))_n$, and the restriction along $\Spec(\ud{R})\to\Theta_{\ud{R}}$ is $((E_n)_n,x)\mapsto \colim(\cdots E_{n+1}\overset{x}{\to}E_n\cdots)$.\medskip
     
     If $(E_n)_n,x)\in \Filt(\ud{\mM})(\ud{R})$, its restriction along $B\bG_{m,\ud{R}}\to\Theta_{\ud{R}}$ is in $\Grad(\mM)(\ud{R})$, \ie the $\Cofib(x:E_{n+1}\to E_n)$ are left-proper, and only a finite number of them is nonvanishing: in particular $E_n$ stabilize for $n\ll 0$ to $E=\eta(((E_n)_n,x))$, and it stabilize for $n\gg 0$ to some object $F$, that we claim to be zero. Indeed, consider a compact generator $K$ of $\mC\otimes_k \QCoh(\ud{R})$, and consider the sequence $K_r\in \mC\otimes_k \QCoh(\ud{R}[x])$ of objects: $\cdots K\overset{Id}{\to} K\overset{0}{\to}0\overset{0}{\to}0\cdots$ where the last $K$ is in degree $r$, such that $\colim_r K\simeq 0$. As $((E_n)_n,x)$ is compact, we have $\colim_r \Hom_{R[x]}(((E_n)_n,x),K_r)\simeq 0$, but, for $n\gg 0$, the former if $\Hom_{\ud{R}}(F,K)$, hence, as $K$ is a compact generator, $F\simeq 0$. In particular, each $E_n$ is left-proper, as it is a finite extension of left-proper objects. Hence each object of $\Filt(\ud{\mM})(\ud{R})$ is of the claimed form.\medskip
     
     On the other hand, an object as in the Lemma is obtained by a finite number of extensions from the left-proper objects $\Cofib(x:E_{n+1}\to E_n)\otimes_{\ud{R}} \ud{R}[x]\langle -n\rangle$ (where $\langle -n\rangle$ denote a shift of the grading), hence it is left proper, \ie is an object of $\Filt(\ud{\mM})(\ud{R})$.\medskip
 \end{proof}

\subsubsection{Moduli of object in the heart of a $t$-structure}

We consider now the case where $X$ is a smooth and projective scheme, and $\mC=\QCoh(X)$, $\mC_c=\Perf(X)=D^b\Coh(X)$. The results presented here could surely be obtained for smooth and proper dg-category $\mC$, by generalizing existing techniques, but we stick to this case to keep this section short. In the non-proper case, there are some difficulties linked with the fact that left-proper objects do not generates $\mC$, an the fact that one cannot extend a left-proper object along an open immersion (look at the diagonal family $\mathcal{O}_{\Delta}\in D^b\Coh(\bG_m)\otimes_k\Perf(\bG_m)$, that cannot be extended to a left-proper object of $D^b\Coh(\bG_m)\otimes_k\Perf(\bA^1)$). We leave in particular the case of a smooth quasi-projective variety $X$ for future works. As $\QCoh(X)$ is smooth and proper, the small dg-subcategory $D^b\Coh(X)$ of compact objects is also the subcategory of left-proper objects. Similarly,  from \cite[Lemma 2.8]{Toen2005ModuliOO}, for any dg-algebra $\ud{R}$, $\mM(\ud{R})$ classifies left-proper objects of $\QCoh(X_{\ud{R}})=\QCoh(X)\otimes_k\QCoh(\ud{R})$, which are exactly the compact objects, \ie objects of $\Perf(X_{\ud{R}})=D^b\Coh(X)\otimes_k\Perf(\ud{R})$.\medskip

Consider now a $t$-structure $(D^b\Coh(X)^{\leq 0},D^b\Coh(X)^{\geq 0})$ on $D^b\Coh(X)$, with Abelian heart denoted by $\mA_c$, which is assumed to be nondegenerate, \ie:
\begin{align}
    \bigcup_{n\in\bZ}D^b\Coh(X)^{\leq n}=\bigcup_{n\in\bZ}D^b\Coh(X)^{\geq-n}=D^b\Coh(X)
\end{align}
(by definition, this condition is always satisfied for any heart from a Bridgeland stability condition). We denote by $(\QCoh(X)^{\leq 0},\QCoh(X)^{\geq 0})$ its extension to a an accessible $t$-structure on $\QCoh(X)$ built from \cite[Prop. 1.4.11]{ha}, \ie $\QCoh(X)^{\leq 0}$ is the presentable subcategory of $\QCoh(X)$ generated under colimits and extensions by $D^b\Coh(X)^{\leq 0}$. From \cite[Lemma 2.1.1]{Pol07}, we have:
\begin{align}
    D^b\Coh(X)^{[a,b]}= \QCoh(X)^{[a,b]}\cap D^b\Coh(X)
\end{align}
hence the truncation functors preserves $D^b\Coh(X)$, and preserves filtered colimits by \cite[Prop. 1.4.13]{ha}, and $\mA_c=\mA\cap D^b\Coh(X)$. The objects of $\mA_c$ are compacts in $\mC$, and then furthermore compacts in $\mA$, they are stable by extensions and retracts in $\mA$, and, as the truncation functors preserve the compact objects, they generates $\mA$: it gives that $\mA_c$ is the Abelian subcategory of compact objects of $\mA$. In particular, we will use the results of \cite[Section 7.2]{Alper2018ExistenceOM} (where compact objects are called finitely presented).\medskip

A small Abelian category is said to be Noetherian if every object is Noetherian (\ie any sequence of subobjects stablilize). A presentable Abelian category $\mA$ is said to be locally Noetherian if it has a set of Noetherian generators: in this case, from \cite[Proposition B1.3]{Artin2001AbstractHS}, Noetherian and compact objects of $\mA$ coincide. In particular, $\mA$ is locally Noetherian iff $\mA_c$ is Noetherian. From \cite[Proposition 5.0.1]{AbramovichPolishchuk}, this is satisfies for any heart of a Bridgeland stability condition for which $Z(\Gamma)\in\bQ+i\bQ$, \ie for a dense subset in the space of stability conditions.

We will now explain how to define a substack $\mM_\mA$ of $\mM$ classifying objects of $\mA_c$, following and slightly adapting \cite[Section 6.2]{HalpernLeistner2014OnTS} and \cite[Section 7]{Alper2018ExistenceOM}. Namely, in \cite[Section 6]{HalpernLeistner2014OnTS}, Halpern-Leistner, build $\mM_\mA$ as a substack of the classical 1-stack of perfectly gluable complexes from \cite{Lieblich2005ModuliOC}: we will consider instead their stack as a substack of the stack $\mM$ from \cite{Toen2005ModuliOO} which will gives us a derived enhancement. In \cite[Section 7]{Alper2018ExistenceOM}, the authors works with locally Noetherian Abelian categories and build a functor classifying compact objects in them, which is proven in \cite[Proposition 6.2.7]{HalpernLeistner2014OnTS} to agree with $\mM_\mA$.\medskip

For $R$ a $k$-algebra, we have that $\QCoh(X_R)$ is naturally identified with the $R$-algebra objects of $\QCoh(X)$. In \cite[Definition 6.1.1]{HalpernLeistner2014OnTS}, Halpern-Leistner defines a $t$-structure on $\QCoh(X_R)$ whose heart corresponds from \cite[Proposition 6.1.7]{HalpernLeistner2014OnTS} to the Abelian subcategory $\mA_R$ of $R$-algebra objects of $\mA$. An object $E\in \QCoh(X_R)$ is said to be flat if $E\otimes_R-:\QCoh(X_R)\to \QCoh(R)$ is exact with respect to this $t$-structure.

\begin{definition}(\cite[Definition 6.2.4]{HalpernLeistner2014OnTS}, \cite[Prob. 3.5.1]{AbramovichPolishchuk})\label{defgenflat}
    Given a smooth and projective scheme $X$, a $t$-structure on $D^b\Coh(X)$ is said to satisfy generic flatness if, given a domain $R$ of finite type over $k$ with fraction field $K$ and an object $E\in D^b(X_R)$ such that $E|_K\in\mA_K$, there is an $f\in R$ such that $E|_{R_f}$ is flat.
\end{definition}

We define then $\mM_\mA(R)\subset\mM$ to be the subspace of flat objects of $\Perf(X_R)$. We will then use this result, using the results of \cite{AbramovichPolishchuk}, \cite{Pol07} in a fundamental way:

\begin{proposition}(\cite[Proposition 6.2.7]{HalpernLeistner2014OnTS})\label{propheart}
    Consider a smooth and projective scheme $X$, and a nondegenerate $t$-structure on $D^b\Coh(X)$, satisfying generic flatness, with Noetherian heart $\mA_c$. Then $\mM_\mA$ is an open substack of $\mM$: in particular, it is an Artin $1$-stack, locally of finite presentation, with affine diagonal. Moreover, $\mM_\mA$ coincide with the stack of \cite[Definition 7.8]{Alper2018ExistenceOM}, \ie the stack of compacts objects $E$ of $\mA_R$ such that $H^0(E\boxtimes-):R-Mod\to \mA_R$ is exact. When $R$ is of finite type, $\mM_\mA(R)$ coincide with the objects $E\in \mM$ such that $E|_x\in\mA_c$ for each closed point $x\in\Spec(R)$.
\end{proposition}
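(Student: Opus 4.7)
The plan is to establish the three assertions in sequence: openness of $\mM_\mA \subset \mM$, the Artin $1$-stack structure with affine diagonal, and the identification with the functor of \cite{Alper2018ExistenceOM} together with the pointwise characterization. The main task is to reduce to \cite[Proposition 6.2.7]{HalpernLeistner2014OnTS}, which handles the classical story, while verifying compatibility with the derived enhancement coming from Toën's moduli stack \cite{Toen2005ModuliOO}.

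First I would prove openness. Given $\Spec(R) \to \mM$ classifying $E \in \Perf(X_R)$, I want to show that the locus of $s \in \Spec(R)$ where $E|_s$ lies in the heart $\mA_{\kappa(s)}$ is open. By a standard limit argument, I reduce to $R$ of finite type over $k$, and then combine generic flatness (Definition \ref{defgenflat}) with Noetherian induction on $\Spec(R)$: on each integral component, generic flatness provides a dense open on which $E$ is flat for the extended $t$-structure of \cite[Definition 6.1.1]{HalpernLeistner2014OnTS}, and dimension induction yields a locally closed stratification of $\Spec(R)$ by flatness loci. To upgrade this constructible description to an open subset, I would observe that non-flatness at a point $s$ is detected by the non-vanishing of the cohomologies $H^{\neq 0}(E|_s)$ computed in the extended heart, which are coherent sheaves on $\Spec(R)$ by the Noetherian assumption on $\mA_c$, and whose supports are therefore closed. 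This step is the main technical obstacle, and ultimately rests on the inputs from \cite{HalpernLeistner2014OnTS} and \cite{AbramovichPolishchuk}.

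Second, the Artin $1$-stack structure follows formally once openness is known. Since $\mM$ is locally Artin and locally of finite presentation by \cite[Theorem 3.6]{Toen2005ModuliOO}, these properties pass to the open substack $\mM_\mA$. The truncation to a $1$-stack follows from formula \eqref{eqText}: for $E \in \mA_c$ one has $\Ext^{<0}(E,E) = 0$ since $\mA$ is the heart of a $t$-structure, hence $H^{<-1}(\bT_{\ud{\mM}, E}) = 0$, ruling out higher stackiness at any $k$-point. The diagonal of $\mM$ is affine because automorphisms of a perfect complex $E$ over $R$ form an open subscheme of the affine $R$-scheme $\Spec(\mathrm{End}_R(E))$, and this property is inherited by the open substack $\mM_\mA$.

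Third, I would identify $\mM_\mA$ with the functor of \cite[Definition 7.8]{Alper2018ExistenceOM}, whose $R$-points are compact objects $E \in \mA_R$ with $H^0(E \otimes_R -): R\text{-Mod} \to \mA_R$ exact. Under the Noetherian hypothesis on $\mA_c$, compact objects of $\mA_R$ agree with Noetherian ones by \cite[Proposition B1.3]{Artin2001AbstractHS}, and exactness of $H^0(E \otimes_R -)$ is precisely the flatness condition of \cite[Definition 6.2.4]{HalpernLeistner2014OnTS}; the identification of the two functors at the classical level is then \cite[Proposition 6.2.7]{HalpernLeistner2014OnTS}, and derived compatibility is automatic since $\mM_\mA \subset \mM$ is open. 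Finally, the pointwise characterization over finite-type $R$ is a direct consequence of openness: if $E|_x \in \mA_c$ at every closed point $x$, then the flatness locus contains all closed points of the Jacobson scheme $\Spec(R)$ and therefore equals $\Spec(R)$, giving $E \in \mM_\mA(R)$; the converse direction is immediate from the definition.
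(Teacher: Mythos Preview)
Your overall strategy matches the paper's: both reduce to \cite[Proposition 6.2.7]{HalpernLeistner2014OnTS} for openness and the identification with the functor of \cite{Alper2018ExistenceOM}, and deduce the rest. The paper routes the openness through an intermediate step you omit: it observes that \cite[Proposition 6.2.7]{HalpernLeistner2014OnTS} proves $\mM_\mA$ is open in Lieblich's stack of universally gluable perfect complexes \cite{Lieblich2005ModuliOC}, and then uses \cite[Remark 3.30]{Toen2005ModuliOO} to identify Lieblich's stack with the open locus of $\mM$ where $\Ext^{<0}_R(E,E)=0$. Your direct argument via $H^{<-1}(\bT_{\ud{\mM},E})=0$ for $E\in\mA_c$ accomplishes the same $1$-truncation, just without naming the intermediate stack.

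There is one genuine error. Your affine diagonal argument asserts that the diagonal of the full $\mM$ is affine, inherited by $\mM_\mA$. This is false: $\mM$ is a higher stack and its diagonal is not affine (indeed $\mathrm{End}_R(E)$ is a complex, not a commutative ring, so $\Spec(\mathrm{End}_R(E))$ makes no sense as written). Affineness of the diagonal must be established for $\mM_\mA$ directly, using that $\mA_R$ is abelian; the paper cites \cite[Lemma 7.20]{Alper2018ExistenceOM} for this.

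There is also a subtlety you do not address. The hypotheses of \cite[Proposition 6.2.7]{HalpernLeistner2014OnTS} include that the $t$-structure be bounded with respect to the standard $t$-structure on $\Coh(X)$; the paper proves a separate lemma showing this follows automatically from nondegeneracy, generic flatness, and the Noetherian assumption. Your openness sketch hits precisely this point when you claim that non-flatness is detected by closed supports of certain cohomologies: the objects $H^{\neq 0}(E|_s)$ live in $\mA_{\kappa(s)}$, not in $\Coh(\Spec R)$, and passing to something with closed support on $\Spec(R)$ requires a semicontinuity argument that uses boundedness. Since you already defer this step to \cite{HalpernLeistner2014OnTS} and \cite{AbramovichPolishchuk}, you should also record that the boundedness hypothesis needed there is not vacuous and explain how it is met.
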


Indeed, \cite[Proposition 6.2.7]{HalpernLeistner2014OnTS} establish that $\mM_\mA$ is an open substack of the $1$-stack of universally gluable relative perfect complexes from \cite{Lieblich2005ModuliOC}, which from \cite[Remark 3.30]{Toen2005ModuliOO}, coincide with the open substack of $\mM$ of complexes $E\in\Perf(X_R)$ such that $Ext^i_R(E,E)=0$ for $i<0$. The statement about affineness of the diagonal follows from \cite[Lemma 7.20]{Alper2018ExistenceOM}. The condition that the $t$ structure is bounded with respect to the usual $t$-structure is automatic, thanks to the following lemma (which is maybe known to experts):

\begin{lemma}
    Given a smooth and projective scheme $X$, any nondegenerate $t$-structure on $D^b\Coh(X)$ satisfying generic flatness, with Noetherian heart, is bounded with respect to the usual $t$-structure.
\end{lemma}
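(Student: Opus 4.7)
The claim is that there exist integers $a \le b$ with $\mA_c \subseteq D^{[a,b]}_{cl}\Coh(X)$, after which boundedness in the usual sense follows (every object of $D^b\Coh(X)$ is then a finite extension of shifts of objects of $\mA_c$, each classically bounded). My plan is to obtain the upper and lower bounds separately, invoking different hypotheses for each.

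For the upper bound, I would exploit nondegeneracy and projectivity. Fix a very ample line bundle $\mathcal{O}(1)$. Nondegeneracy places each $\mathcal{O}(m)$ into some $D^{[p_m,q_m]}_{\mathcal{A}}$, so $\mathcal{O}(m)$ admits a new-truncation filtration with successive quotients $H^k_{\mathcal{A}}(\mathcal{O}(m)) \in \mA_c$. For $E \in \mA_c$, the resulting spectral sequence computes $\Ext^j(\mathcal{O}(-n),E) \simeq H^j_{cl}(X,E(n))$ from groups $\Ext^{j-k}(H^k_{\mathcal{A}}(\mathcal{O}(-n)),E)$ between objects of the heart $\mA_c$, which vanish for $j-k<0$. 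Combined with Serre vanishing $H^{>0}_{cl}(X,E(n))=0$ for $n\gg 0$ and Serre duality $H^i_{cl}(E)^\vee \simeq \Ext^{\dim X-i}(E,\omega_X)$ applied to $E(-n)$ for $n\gg 0$, this should produce a uniform upper bound $b$ on the classical amplitude of $E \in \mA_c$.

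For the lower bound I would argue by contradiction. If no such bound existed, there would be a sequence $E_n \in \mA_c$ with $\min\{i : H^i_{cl}(E_n) \ne 0\} \to -\infty$. Using Serre duality once more, this sequence produces arbitrarily large $\Ext^{>0}(E_n,\omega_X)$. The generic flatness hypothesis then lets one spread a suitable subfamily built from $\{E_n\}$ into a flat family over a Noetherian base $\Spec(R)$; by Proposition~\ref{propheart} the fibers over closed points remain in $\mA_c$, and constructibility of classical cohomology along flat families gives a nontrivial filtration that, combined with the unbounded-$\Ext$ input, should produce a strictly ascending chain of subobjects in some object of $\mA_c$, contradicting the Noetherian hypothesis.

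The main obstacle I expect is the lower-bound argument. Generic flatness is a per-family spreading property rather than a uniform statement across $\mA_c$, and converting it into a uniform classical-amplitude bound requires careful choice of base and family, together with constructibility properties along flat families over Noetherian bases. The cleanest execution likely runs through Polishchuk's framework of constant families of $t$-structures from \cite{Pol07} and \cite{AbramovichPolishchuk}, rather than through direct amplitude estimates; indeed this is essentially the content of their foundational results, which is why the lemma is labelled as known to experts.
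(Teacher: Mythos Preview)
Your approach has a genuine gap: uniformity. Both your upper and lower bounds require a classical amplitude bound holding for all $E\in\mA_c$ simultaneously, but the tools you invoke are per-object. Serre vanishing chooses $n$ depending on $E$, and your lower-bound argument tries to build a family out of a hypothetical unbounded sequence $(E_n)$, which is exactly the situation where generic flatness gives you nothing uniform. You recognise this yourself, but the fix you gesture at (Polishchuk's constant-families machinery) does not by itself turn a countable sequence of objects into a single flat family over a finite-type base.

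The paper's argument sidesteps this cleanly by applying generic flatness to a \emph{single} family that already lives over a quasi-compact base: the diagonal $\mathcal{O}_\Delta\in\Perf(X\times X)$, viewed as an $X$-family of objects of $D^b\Coh(X)$ with fibres the skyscrapers $\mathcal{O}_x$. One first shows, using generic flatness and Noetherian induction exactly as in the proof that $\mM_\mA$ is open, that for each interval $[a,b]$ the locus of $x$ with $\mathcal{O}_x\in D^{[a,b]}_{\mA}$ is open in $X$. Nondegeneracy plus quasi-compactness of $X$ then gives a single $[a,b]$ with $\mathcal{O}_x\in D^{[a,b]}_{\mA}$ for every closed point. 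Now for $E\in\mA_c$: orthogonality of the $t$-structure gives $\Ext^i(E,\mathcal{O}_x)=0$ for $i<a$, and Serre duality gives $\Ext^i(E,\mathcal{O}_x)=\Ext^{\dim X-i}(\mathcal{O}_x,E)^\vee=0$ for $i>b+\dim X$, both uniformly in $E$ and $x$. Bridgeland's criterion \cite[Proposition 5.4]{Bridgeland1999FourierMukaiTF} converts Ext-vanishing against all skyscrapers into a classical amplitude bound, giving $E\in D^{[a,b+\dim X]}_{cl}$.

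The missing idea, then, is to test against skyscrapers rather than line bundles, and to extract uniformity by running generic flatness on the diagonal family rather than on families built from objects of $\mA_c$.
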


\begin{proof}
    Consider the substack $\mM_{\QCoh(X)^{[a,b]}}$ of $\mM$ such that, for $R$ of finite type, $\mM_{[a,b]}$ is the subspace of $E\in\Perf(X_R)$ such that $E|_x\in \QCoh(X_R)^{[a,b]}$ for any closed point $x$ (one can define it, as $\mM$ is locally of finite presentation, hence it is determined by its value on $k$-agebras of finite type). Given a domain $R$ of finite type over $k$ with fraction field $K$ and an object $E\in D^b(X_R)$ such that $E|_K\in D^b(X_R)^{[a,b]}$, using generic flatness and the fact that restriction to an open subset is $t$-exact, there is an $f\in R$ such that $H^i(E)|_{R_f}=H^i(E|_{R_f})$ is flat for $a\leq i\leq b$. Then, for each closed point $x\in\Spec(R_f)$, $H^i(E|_{R_f})|_x\in \mA_c$: but $E|_{R_f}$ is generated by the $H^i(E|_{R_f})[i]$ under extensions, hence $E|_x$ is generated by the $H^i(E|_{R_f})|_x[i]$ for $a\leq i\leq b$, then $E|_x\in \QCoh(X_R)^{[a,b]}$. Using the fact that the truncation functors preserves $D^b(X_R)$ for $R$ of finite type from \cite[Theorem 3.3.6]{Pol07}, we find as in \cite[Corollary 6.2.3]{HalpernLeistner2014OnTS} that, for any $R$ of finite type and $E\in \mM(R)$, the set:
    \begin{align}
        U=\{\mathfrak{p}\in\Spec(R)|E|_{R_{\mathfrak{p}}\in }D^b(X_{R_{\mathfrak{p}}})^{[a,b]}\}
    \end{align}
    is open in $\Spec(R)$. Then, one argue by Noetherian induction as in \cite[Proposition 6.2.7]{HalpernLeistner2014OnTS}, giving that $\mM_{[a,b]}$ is an open substack of $\mM$.\medskip

    Consider the diagonal family $\mathcal{O}_\Delta\in \Perf(X\times X)$, seen as an object of $\mM(X)$. As $X$ is of finite type, from the above, the set of closed points $x\in X(k)$ such that $\mathcal{O}_x\in D^b(X)^{[a,b]}$ is open. As the $t$-structure is nondegenerate, and $X$ is quasi-compact, there is some $a,b$ such that $\mathcal{O}_x\in D^b(X)^{[a,b]}$ for any $x\in X(k)$. We use then a well-known argument from \cite[Lemma 10.1]{Bridgeland2003StabilityCO}. Consider $E\in \mA_c$ and $x\in X(k)$: from the orthogonality of the $t$-structure, for any $i<a$, $\Ext^i(E,\mathcal{O}_x)=0)$, and, for any $i>b+n$, $\Ext^i(E,\mathcal{O}_x)=\Ext^{n-i}(\mathcal{O}_x,E)^\vee=0$ by Serre duality. Then, from \cite[Proposition 5.4]{Bridgeland1999FourierMukaiTF}, $E$ has perfect amplitude in $[a,b+n]$ with respect to the usual $t$-structure, and this range does not depend on $E\in \mA_c$, \ie the $t$ structure $(D^b\Coh(X)^{\leq 0},D^b\Coh(X)^{\geq 0})$ is bounded with respect to the usual $t$-structure.
\end{proof}

We have moreover:

\begin{lemma}(\cite[Lemma 6.3.1]{HalpernLeistner2014OnTS}, \cite[Proposition 7.12, Corollary 7.13,Lemma 7.17]{Alper2018ExistenceOM})\label{lemmaFiltAb}
    Consider a smooth and projective scheme $X$, and a nondegenerate $t$-structure on $D^b\Coh(X)$, with Noetherian heart $\mA$, satisfying generic flatness. Then $\Grad(\mM_\mA)$ is the stack of $\bZ$-graded points of $\mA_c$ (\ie $\Grad(\mM_\mA)(R)$ is the stack of $\bZ$-graded objects $(E_n)_{n\in\bZ}$ of $\mA_R$, with each $E_n$ compact and $E_n=0$ unless for a finite number of $n$), and the map $\iota:\Grad(\mM_\mA)\to \mM_\mA$ takes the direct sum. $\Filt(\mM_\mA)$ is the stack of $\bZ$-filtered points of $\mA_c$ (\ie $\Filt(\mM_\mA)(R)$ is the stack of $\bZ$-filtered objects $(E_n)_{n\in\bZ}$ of $\mA_R$, with each $E_n$ compact and $E_n/E_{n+1}=0$ unless for a finite number of $n$), the map $\eta:\Filt(\mM_\mA)\to \mM_\mA$ takes the total object, and the map $p:\Filt(\mM_\mA)\to \Grad(\mM_\mA)$ takes the associated graded $(E_n/E_{n+1})_{n\in\bZ}$. Moreover, $\mM_\mA$ is $\Theta$-reductive, \ie $\eta:\Filt(\mM_\mA)\to\mM_\mA$ satisfies the valuative criterion for properness with respects to any DVR.
\end{lemma}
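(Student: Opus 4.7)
The plan is to deduce the descriptions of $\Grad(\mM_\mA)$ and $\Filt(\mM_\mA)$ from Lemmas \ref{lemgradprod}(i) and \ref{lemFiltdg} combined with the openness of $\mM_\mA\hookrightarrow\mM$ from Proposition \ref{propheart}, adapting the classical arguments of \cite[Lemma 6.3.1]{HalpernLeistner2014OnTS} and \cite[Proposition 7.12, Corollary 7.13]{Alper2018ExistenceOM} to the derived setting. By Lemma \ref{lemgradprod}(i), an $R$-point of $\Grad(\mM)$ is a bounded $\bZ$-graded family of compact objects $(E_n)_{n\in\bZ}$ in $\Perf(X_R)$ under which $\iota$ takes the direct sum. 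Since $\mA_R$ is closed under finite direct summands (being the heart of a $t$-structure on $D^b(X_R)$), one has $\bigoplus_n E_n\in\mA_R$ iff each $E_n\in\mA_R$, identifying $\Grad(\mM_\mA)=\iota^{-1}(\mM_\mA)$ with the stack of bounded $\bZ$-graded compact objects of $\mA_R$ as claimed.

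For $\Filt(\mM_\mA)$, Lemma \ref{lemFiltdg} identifies an $R$-point of $\Filt(\mM)$ with a chain $(E_n,x\colon E_{n+1}\to E_n)_{n\in\bZ}$ of compact objects of $\QCoh(X_R)$ stabilizing to $E=\eta((E_n,x))$ for $n\ll 0$ and to $0$ for $n\gg 0$, with $p((E_n,x))=(\Cofib(x))_n$. A morphism $\Theta_R\to\mM$ factors through the open substack $\mM_\mA$ iff it does on each geometric point; since $\Theta_R=[\bA^1_R/\bG_{m,R}]$ is covered by the two orbits $\{0\}/\bG_m\simeq B\bG_{m,R}$ and $\bG_{m,R}/\bG_m\simeq\Spec R$, giving respectively the restrictions $p((E_n,x))$ and $\eta((E_n,x))=E$, this reduces to requiring $E\in\mA_R$ and each $\Cofib(x\colon E_{n+1}\to E_n)\in\mA_R$. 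Applying the long exact sequence of $t$-cohomology to the exact triangles $E_{n+1}\to E_n\to\Cofib(x)$ and inducting starting from $E_{-N}=E\in\mA_R$ for $N$ large, one obtains that each $E_n\in\mA_R$, each $x$ is a monomorphism in $\mA_R$ with cokernel $\Cofib(x)$, and the $E_n$ form a decreasing filtration of $E$ by compact subobjects; conversely, any such filtration gives a $\Theta_R$-point of $\mM_\mA$. The descriptions of $\eta$ and $p$ follow directly from those of Lemma \ref{lemFiltdg}.

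For $\Theta$-reductivity, given a DVR $R$ with fraction field $K$ and a commutative square consisting of $\bar E\colon\Spec R\to\mM_\mA$ and a generic filtration of $\bar E|_K$ by compact subobjects in $\mA_K$, the plan is to extend each generic subobject $E_n|_K\hookrightarrow\bar E|_K$ to a compact subobject $\tilde E_n\hookrightarrow\bar E$ in $\mA_R$, following \cite[Lemma 7.17]{Alper2018ExistenceOM}. Uniqueness follows from the affineness (hence separatedness) of the diagonal of $\mM_\mA$ provided by Proposition \ref{propheart}, which makes $\eta$ separated. The main obstacle is verifying that the extended subobjects remain $R$-flat and land in $\mA_R$: this is precisely the role of the generic flatness assumption on the $t$-structure (Definition \ref{defgenflat}), which ensures that a coherent extension whose generic fiber lies in $\mA_K$ is automatically flat over an open of $\Spec R$, while Noetherianity of $\mA_R$ lets one extend uniquely across the closed point. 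Once this is in place, a finite induction on the length of the filtration, using that the zero subobject extends to the zero subobject, concludes the argument.
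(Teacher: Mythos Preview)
Your descriptions of $\Grad(\mM_\mA)$ and $\Filt(\mM_\mA)$ are correct and in fact more explicit than the paper, which simply cites \cite[Lemma 6.3.1]{HalpernLeistner2014OnTS} and \cite[Proposition 7.12, Corollary 7.13]{Alper2018ExistenceOM}. Your route via Lemmas \ref{lemgradprod}(i) and \ref{lemFiltdg} together with the openness of $\mM_\mA\hookrightarrow\mM$ from Proposition \ref{propheart} is a clean way to obtain these statements inside the paper's own framework. One minor point: once you know $\mM_\mA$ is open, $\Filt(\mM_\mA)=p^{-1}(\Grad(\mM_\mA))$ already, so requiring $E\in\mA_R$ in addition to $\Cofib(x)\in\mA_R$ is redundant (though harmless).

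For $\Theta$-reductivity your argument has a genuine gap. You invoke generic flatness (Definition \ref{defgenflat}) and Noetherianity of $\mA_R$ to extend subobjects over an \emph{arbitrary} DVR $R$, but generic flatness is only formulated for $R$ of finite type over $k$, and Noetherianity of $\mA_R$ is not established beyond that case; your appeal to ``flat over an open of $\Spec R$'' is also vacuous for a DVR, whose only proper open is the generic point. The reference \cite[Lemma 7.17]{Alper2018ExistenceOM} you cite likewise only treats DVRs essentially of finite type. The paper closes this gap by a different mechanism: since $\mM_\mA$ is locally of finite presentation, any $\Spec R\to\mM_\mA$ factors through an affine scheme $\Spec A$ of finite type; the base change of $\eta$ along $\Spec A\to\mM_\mA$ is a morphism of algebraic spaces (as $\eta$ is representable), and then \cite[Lemma A.11]{Alper2018ExistenceOM} upgrades the valuative criterion from essentially-of-finite-type DVRs to arbitrary ones. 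You should either insert this approximation step, or restrict your direct subobject-extension argument to essentially-of-finite-type DVRs and then invoke the same reduction.
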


\begin{proof}
    Except for $\Theta$-reductivity, this is \cite[Lemma 6.3.1]{HalpernLeistner2014OnTS}, \cite[Proposition 7.12, Corollary 7.13]{Alper2018ExistenceOM}. In \cite[Lemma 7.17]{Alper2018ExistenceOM}, the author shows that $\eta$ satisfies the valuative criterion for properness with respects to any DVR which is essentially of finite type. As $\mM_\mA$ is locally of finite presentation, we will argue as in \cite[Proposition 3.18]{Alper2018ExistenceOM} to show that $\mM_\mS$ is $\Theta$-reductive. Consider any DVR $R$ with fraction field $K$, and a map $f:\Spec(R)\to\mM_\mA$, and $g:\Spec(K)\to\Filt(\mM_\mA)$, with $\eta\circ g\simeq f|_K$. As $\mM_\mA$ is locally of finite presentation, $f$ factorize through an affine scheme of finite type $\Spec(A)$: the base change of $\eta$ along $\Spec(A)\to\mM_\mS$ is then a morphism algebraic space as $\eta$ is representable, which implies from \cite[Lemma A.11]{Alper2018ExistenceOM} that it satisfies the valuative criterion with respect to any DVR, which implies that $f,g$ extends uniquely to a morphism $\Spec(R)\to \mM_\mS$, which proves the claim.
\end{proof}

\begin{remark}\label{remarkorient}
    From \cite[Proposition 2.1]{Schrg2011DerivedAG}, there is a unique open immersion $\ud{j}:\ud{\mM_\mA}\to\ud{\mM}$ with classical truncation $j$, giving a derived enhancement $\ud{\mM_\mA}$ of $\mM_\mA$. In particular, given a Calabi-Yau structure $\omega_X\simeq \mathcal{O}_X$ on $X$ of dimension $n$ on $X$, there is a canonical Calabi-You structure of dimension $n$ on $\QCoh(X)$ from \cite[Proposition 5.12]{Brav2016RelativeCS}, which gives from \cite[Theorem 5.5 1)]{Brav2018RelativeCS} a canonical $n-2$-shifted symplectic form on $\ud{\mM}$, and then on $\ud{\mM_\mA}$. When $n=3$, we obtain from \cite[Theorem 3.18]{darbstack} a canonical $d$-critical structure $s$ on $\mM_\mA$, with canonical bundle:
\begin{align}
    K_{\mM_\mA,s}\simeq \det(\bL_{\ud{\mM}})|_{(\mM_\mA)^{red}}=:K_{\mM}|_{(\mM_\mA)^{red}}
\end{align}
with the preceding notation. In particular, an the orientation data on $\mM$ from \cite{JOYCEorient} gives an isomorphism class of orientation on $(\mM_\mA,s)$ compatible with direct sum, and a strong orientation data on $\mM$ would give a canonical orientation on $(\mM_\mA,s)$, compatible with direct sum.
\end{remark}

For $E,F\in D^b\Coh(X)$, recall the pairing:
\begin{align}
    \langle E,F\rangle:=\sum_{i\in\bZ}(-1)^{i+1}\dim(\Ext^i(E,F))
\end{align}
which is antisymmetric when $X$ is CY3, by Serre duality. This pairing descends to an pairing on the Grothendieck group $K(X)=K(D^b\Coh(X))$. As in \cite{bridgeland_stability}, we denote by $K^{num}(X)$ the quotient of $K(X)$ by the kernel of this pairing: by Grothendieck-Hirzebruch-Riemann-Roch, this is a finite dimensional lattice. We argue now as in \cite[Lemma 6.4.1]{HalpernLeistner2014OnTS}. For $\gamma\in K^{num}(X)$, and $R$ is of finite type, we consider the subspace $\mM_{\QCoh(X),\gamma}(R)\subset \mM(R)$ of objects $E\in \Perf(X_R)$ such that $[E|_x]=\gamma$ for every closed point of $\Spec(R)$. Consider now $E\in \mM(R)$, and $F\in D^b\Coh(X)$: as $E$ is left-proper, $Hom_R(E,F\otimes_k R)\in\Perf(R)$, \ie $\langle E|_x,F\rangle $ is locally constant on $\Spec(R)$, \ie the class $[E|_x]\in K^{num}(X)$ is locally constant on $\Spec(R)$. As $\mM$ is locally of finite presentation, it defines an open and closed substack $\mM_{\QCoh(X),\gamma}$ of $\mM$, such that $\mM=\bigsqcup_{\gamma\in K^{num}(X)}\mM_{\QCoh(X),\gamma}$, and $\oplus_n$ restricts to:
\begin{align}
    \oplus_n:\prod_{1\leq i\leq n}\mM_{\QCoh(X),\gamma_i}\to \mM_{\QCoh(X),\sum_{1\leq i\leq n}\gamma_i}
\end{align}
Now, the intersection with the open substack $\mM_\mA$ gives open and closed substacks $\mM_{\mA,\gamma}$ with the same properties.

\subsection{Kontsevich-Soibelman wall crossing formula}\label{sectKSWCF}

We consider now the space of Bridgeland stability condition $\Stab(X)$ on a smooth and projective scheme $X$, from \cite{bridgeland_stability}. More precisely, we consider the space of numerical Bridgeland stability conditions on $D^b\Coh(X)$ satisfying the support property from \cite{ks}.

\begin{definition}\label{defstab}
A stability condition $\sigma=(Z,(\mPP(\phi)))$ in $\Stab(X)$ is the data of a group homomorphism $Z:K^{num}(X)\to\bC$ called a central charge, and full additive subcategories $\mPP(\phi)\subset D^b\Coh(X)$ for each $\phi\in\mathbb{R}$, whose objects (resp simple objects) are called semistable(resp stable) of phase $\phi$, satisfying:
\begin{enumerate}
    \item[$i)$] If $E\in\mPP(\phi)$, then $Z(E)=m(E)e^{i\pi\phi}$ for some $m(E)\in\mathbb{R}_{>0}$ (we write $Z(E):=Z([E])$).
    \item[$ii)$] For all $\phi\in\mathbb{R}$, $\mPP(\phi+1)=\mPP(\phi)[1]$.
    \item[$iii)$] If $\phi_1>\phi_2$ and $A_j\in\mPP(\phi_j)$ then $\Ext^0_\mathcal{D}(A_1,A_2)=0$.
    \item[$iv)$]  For each nonzero object $E\in D^b\Coh(X)$ there are a finite sequence of real numbers $\phi_1>\phi_2>...>\phi_n$ and a collection of triangles:
    \begin{equation}\begin{tikzcd}[column sep=tiny]
    0=E_{0}\arrow[rr]&&E_{1}\arrow[rr]\arrow[dl]&&E_{2}\arrow[r]\arrow[dl]&...\arrow[r]&E_{n-1}\arrow[rr]&&E_{n}=E\arrow[dl]\\
    &A_{1}\arrow[ul,dashed]&&A_{2}\arrow[ul,dashed]&&&&A_{n}\arrow[ul,dashed]
\end{tikzcd}\end{equation}
    with $A_j\in\mPP(\phi_j)$ for all $j$. This decomposition (which is unique from $iii)$) is called the Harder-Narasimhan decomposition.
    \item[$v)$] (support property) For $\|\cdot\|$ a norm on $K^{num}(X)$, there is a constant $C>0$ such that $\|\gamma\|\leq C|Z(\gamma)|$ for $\gamma\in K^{num}(X)$ such that there is a semistable object of classs $\gamma$ (this condition is independent from the norm, as $K^{num}(X)$ is finite dimensional). Equivalently, there is quadratic form $Q$ on $K^{num}(X)$ such that $Q|_{\ker(Z)}=0$, and, if there is an object $E\in\mPP(\phi)$ of numerical class $\gamma$, then $Q(\gamma)\geq 0$ (it suffices to take $Q(\gamma)=-\|\gamma\|^2+C^2|Z(\gamma)|^2$).
\end{enumerate}
\end{definition}

Recall that, from \cite{bridgeland_stability}, $\Stab(X)$ has a natural structure of metric space (where the metric is given by comparing the extremal slopes of Harder-Narasimhan filtrations), such that $Z:\Stab(X)\to Hom(K^{num}(X),\bC)$ is a local homeomorphism, giving furthermore a structure of complex analytic space on $\Stab(X)$.\medskip

For any interval $I$, we denote as usual by $\mPP(I)$ the full subcategory of objects $E\in D^b\Coh(X)$ whose direct factors have slope in $I$ (equivalently, it is the full subcategory generated by extensions from the $\mPP(\phi)$ for $\phi\in I$. In particular, for $\phi\in\bR$, $\mPP([\phi,\phi+1))$ and $\mPP((\phi,\phi+1])$ are the heart of nondegenerate $t$-structures on $D^b\Coh(X)$. We consider then $\mM_I^\sigma$ (resp. $\mM_{I,\gamma}^\sigma$), the substack of $\mM$ such that, for $R$ of finite type, $\mM_I^\sigma(R)$ (resp. $\mM_{I,\gamma}^\sigma(R)$) is the subspace of $E\in\mM(R) $ such that, for any closed point $x\in \Spec(R)$, $E|_x\in \mPP(I)$ (resp. and $[E|_x]=\gamma$). Notice that $\mM_{I,\gamma}^\sigma$ is an open and closed substack of $\mM_I^\sigma$. For an interval of $I$ length $<1$, we consider the associated strict sector $V:=\{me^{i\pi\phi}|m>0\;\rm{and}\;\}$ of $\bC$. Notice that, given a strict sector $V$, the corresponding interval $I$ is only well defined up to a shift of $2$: in particular, the corresponding stacks $\mM^\sigma_{I+2k}$ are isomorphic as stacks, but different as substacks of $\mM$ (the embedding differ by a shift of $[2k]$).  For $\phi\in\bR$, consider the half line $l_\phi:=\{me^{i\pi\phi}|m\geq 0\}$, such that $\mM_\phi^\sigma=\bigsqcup_{\gamma\in Z^{-1}(l_\phi)}\mM_{\phi,\gamma}^\sigma$, and, for $\gamma\in K^{num}(X)-\{0\}$, we denote also by $l_\gamma$ the half-line containing $Z(\gamma)$.\medskip

We recall the terminology of \cite{Toda2007ModuliSA}, \cite{PiyaratneToda+2019+175+219}:

\begin{definition}\label{defgood}
    Consider a smooth and projective scheme $X$, and a stability condition $\sigma\in\Stab(X)$.
    \begin{itemize}
        \item We say that $\sigma$ is algebraic if $Z(K^{num}(X))\in\bQ+i\bQ$. As $K^{num}(X)$ is finite dimensional, such stability conditions are dense in $\Stab(X)$, and, by \cite[Proposition 5.0.1]{AbramovichPolishchuk}, for an algebraic stability condition, $\mPP((0,1])$ is Noetherian.
        \item We say that $\sigma$ is bounded if, for any $\phi\in (0,1]$ and $\gamma\in K^{num}(X)$, $\mM_{\phi,\gamma}^\sigma$ is bounded, \ie admits a surjection from a scheme of finite type.
        \item We say that $\sigma$ satisfies generic flatness if the $t$-structure defining the heart $\mPP((0,1])$ satisfies it (see Definition \ref{defgenflat}).
        \end{itemize}
\end{definition}

The main result about this is the following:

\begin{proposition}(\cite[Proposition 4.12, Corollary 4.21]{PiyaratneToda+2019+175+219})\label{propPiyaTod}
    Consider a connected component $\Stab^\ast(X)$ of $\Stab(X)$. Then, if an algebraic stability condition of $\Stab^\ast(X)$ satisfies boundedness and generic flatness, then any algebraic stability condition of $\Stab^\ast(X)$ satisfies boundedness and generic flatness (we will call such a component a good component). When $X$ is a Calabi-Yau threefold, the connected components containing the stability conditions from \cite{Bayer2011BridgelandSC} (using strong Bogomolov-Gieseker inequalities) are good.
\end{proposition}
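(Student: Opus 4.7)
\noindent The strategy I propose is to show that the locus of algebraic stability conditions in $\Stab^\ast(X)$ that satisfy boundedness and generic flatness is both open and closed in the algebraic locus, and then to invoke density. Since $\Stab^\ast(X)$ is connected and the algebraic stability conditions are dense in it (the image under $Z$ lies in a lattice in $\Hom(K^{num}(X),\bC)$ which is locally homeomorphic to $\Stab(X)$), any two algebraic $\sigma_0,\sigma_1\in\Stab^\ast(X)$ can be joined by a path that can then be approximated by a chain of algebraic stability conditions with pairwise small Bridgeland distance. Thus I reduce to showing: if $\sigma$ is algebraic, bounded, satisfies generic flatness, and $\tau$ is algebraic and sufficiently close to $\sigma$, then $\tau$ also satisfies both properties.

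\noindent The central input will be the support property. Fix the quadratic form $Q$ associated to $\sigma$, and take a small neighborhood $U\ni\sigma$ such that $Q$ remains a support-property quadratic form for every $\tau\in U$ (this is standard from \cite{ks} and is what controls semistable classes). For $\tau\in U$ close enough, the hearts $\mPP_\sigma((0,1])$ and $\mPP_\tau((0,1])$ are tilts of one another by finite torsion pairs, and every $\tau$-semistable object $E$ of numerical class $\gamma$ admits a Harder--Narasimhan filtration with respect to $\sigma$ whose factors have classes $\gamma_i$ with $Q(\gamma_i)\geq 0$ and $\sum Z_\sigma(\gamma_i)=Z_\sigma(\gamma)$. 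Because $Q$ is definite modulo $\ker(Z_\sigma)$, the set of decompositions $\gamma=\sum\gamma_i$ compatible with the support cone and the phase window dictated by closeness of $\sigma$ to $\tau$ is finite. Boundedness of $\sigma$ then gives that each $\mM^\sigma_{\phi_i,\gamma_i}$ appearing as a possible HN factor is bounded, and $\mM^\tau_{\phi,\gamma}$ sits inside finitely many iterated extensions of those, so is bounded. This is the step I expect to require the most care, as it is where the interaction between the support property, the finiteness of possible HN classes, and the heart structure all come together.

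\noindent Once boundedness is established for $\tau$, generic flatness is obtained by the Abramovich--Polishchuk machinery of \cite{AbramovichPolishchuk}: boundedness of the hearts and Noetherianity (automatic from algebraicity, by \cite[Proposition 5.0.1]{AbramovichPolishchuk}) imply that $\mPP_\tau((0,1])$ is the heart of a bounded $t$-structure on $D^b\Coh(X\times S)$ for every finite type $S$, compatibly with base change over a dense open, which is exactly the content of generic flatness in the sense of Definition \ref{defgenflat}. The closed (limit) direction of the open--closed argument is dealt with by the same comparison in reverse, since the role of $\sigma$ and $\tau$ is symmetric within $U$. Combining the open and closed parts with path connectedness of the algebraic locus then yields that boundedness and generic flatness hold for every algebraic stability condition in $\Stab^\ast(X)$.

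\noindent For the second statement, when $X$ is a Calabi--Yau threefold, it suffices to verify boundedness and generic flatness at a single algebraic stability condition in the component containing the Bayer--Macr\`i--Toda construction \cite{Bayer2011BridgelandSC}. The hearts they produce are double tilts of $\Coh(X)$ along torsion pairs defined by slope inequalities involving the twisted Chern character and a strong Bogomolov--Gieseker inequality; both tilting steps preserve Noetherianity of the heart and boundedness of moduli, essentially by construction, and generic flatness is verified via their explicit description of $\sigma$-semistable objects as iterated extensions in the tilted heart. Applying the first part of the proposition then upgrades this to goodness of the whole connected component.
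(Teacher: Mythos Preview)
The paper does not give its own proof of this proposition: it is stated with an attribution to \cite[Proposition 4.12, Corollary 4.21]{PiyaratneToda+2019+175+219} and used as a black box. There is therefore no proof in the paper to compare your proposal against.

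Your sketch is a plausible outline of how the Piyaratne--Toda argument proceeds (deformation along a path of algebraic stability conditions, control of HN factors via the support property, and verification at a large-volume point for the second claim), but several steps you mark as routine are in fact the substantive content of their paper. In particular, deducing generic flatness for $\tau$ from boundedness is not a formal consequence of the Abramovich--Polishchuk machinery in the way you suggest; Piyaratne--Toda establish it by a more delicate analysis that again uses the support property and the tilting relationship between nearby hearts. Likewise, the verification at the Bayer--Macr\`i--Toda point requires specific arguments about the tilted hearts rather than being immediate from their construction. If you intend to supply a self-contained proof rather than cite the reference, you would need to fill in these steps with genuine arguments; otherwise the appropriate response here is simply to cite \cite{PiyaratneToda+2019+175+219} as the paper does.
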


We follow now the discussion of \cite[Section 2]{ks}, \cite[Section 2.3]{kontsevich_wall}. Consider the ring $N_{mon}$ of monodromic Nori motives completed at $\bL^{-1/2}$ from Section \ref{SectKgroup}, and the motivic quantum torus:
\begin{align}\label{defQuantTor}
    G:=N_{mon}\langle (x^\gamma)_{\gamma\in K^{num}(X)}\rangle/((x^\gamma\cdot x^{\gamma'}-\bL^{\langle \gamma,\gamma'\rangle/2}x^{\gamma+\gamma'})_{\gamma,\gamma'\in K^{num}(X)},x^0-1)
\end{align}
We see $G$ as a multiplicative group, with graded Lie algebra $\mathfrak{g}:=\bigoplus_{\gamma\in\Gamma}$, with Lie bracket $[x^\gamma,x^{\gamma'}]=(\bL^{\langle \gamma,\gamma'\rangle/2}-\bL^{-\langle \gamma,\gamma'\rangle/2})x^{\gamma+\gamma'}$.\medskip

Consider a quadratic form $Q$ on $K^{num}(X)$ as in $v)$ of Definition \ref{defstab}. For a strict sector $V$ of $\bC$, consider then the convex cone of $K^{num}(X)$:
\begin{align}
    C(V,Z,Q)=\{\gamma\in K^{num}(X)|Z(\gamma)\in V,m>0,\phi\in I)\;\rm{and}\; Q(\gamma)\geq 0\}
\end{align}
it is a strict convex cone, \ie its closure does not contain any line through the origin. By the support property, we have:
\begin{align}
    \mM_I^\sigma=\bigsqcup_{\gamma\in C(V,Z,Q)}\mM_{I,\gamma}^\sigma
\end{align}
We consider then $\bigoplus_{\gamma\in C(V,Z,Q)} \mathfrak{g}_\gamma$, and its completion $\mathfrak{g}_{V,Z,Q}:=\prod_{\gamma\in C(V,Z,Q)} \mathfrak{g}_\gamma$, which is a pro-nilpotent Lie algebra, as $C(V,Z,Q)$ is a strict cone. One consider then the pro-nilpotent group $G_{V,Z,Q}:=\exp(\mathfrak{g}_{V,Z,Q})$. Consider triangles $\Delta$ cut out from the strict sector $V$ by a line, $\mathfrak{g}_\Delta:=\oplus_{\gamma\in Z^{-1}(\Delta)}\mathfrak{g}_\gamma$ (which contains a finite number of elements as $C(V,Z,Q)$ is strict). The pro-nilpotent topology is given explicitly by $\mathfrak{g}_{V,Z,Q}=\lim_{\Delta}\mathfrak{g}_\Delta$, and, denoting $G_\Delta:=\exp(\mathfrak{g}_\Delta)$, we have $G_{V,Z,Q}=\lim_{\Delta}G_\Delta$.\medskip

We prove then the Kontsevich-Soibelman wall crossing formula from \cite{ks}, applying Theorem \ref{theothetastrat} to the $\Theta$-stratification built by Halpern-Leistner in \cite[Theorem 5.3]{HalpernLeistner2014OnTS}:

\begin{theorem}\label{theoKSwcf}
    Consider a smooth and projective Calabi-Yau threefold $X$, and a good connected component $\Stab^\ast(X)$ of the space of stability condition.
    \begin{enumerate}
        \item[$i)$] Consider $\sigma\in \Stab^\ast(X)$ and an interval $I$ of length $<1$ with associated strict sector $V$. Then $\mM_{I,\gamma}$, $\mM_{\phi,\gamma}$ are oriented $d$-critical Artin $1$-stacks of finite type with affine diagonals, for any choice of quadratic form $Q$ in the support property, $\mM_{I,\gamma}=\emptyset$ if $\gamma\not\in C(V,Z,Q)$, and we have the following equality in $G(V,Z,Q)$ (the Kontsevich-Soibelman wall crossing formula):
    \begin{align}
        A^\sigma_{V,Q}:=\sum_{\gamma\in C(V,Z,Q)}[\bH_c(\mM_{I,\gamma},P_{\mM_{I,\gamma}})]x^\gamma=\prod_{\phi\in I}^\to\sum_{\gamma\in C(l_\phi,Z,Q)}[\bH_c(\mM_{\phi,\gamma},P_{\mM_{\phi,\gamma}})]x^\gamma=:\prod_{l\subset V}^\to A^\sigma_l
    \end{align}
    where the symbol $\prod_{\phi\in I}^\to$ (resp. $\prod_{l\in V}^\to$) denotes an oriented product on increasing $\phi\in I$ (resp on the half-lines $l\subset V$ in the trigonometric order).
    \item[$ii)$] The family of stability data $(Z,(\log(A^\sigma_{l_\gamma})_\gamma)_{\gamma\in K^{num}_Z}))$ defines a continuous family of stability data on $\Stab^\ast(X)$ in the sense of \cite[Definition 3]{ks} (in particular, it defines a wall crossing structure on $\Stab^\ast(X)$ in the sense of \cite[Definition 2.2.1]{kontsevich_wall}).
    \end{enumerate}
\end{theorem}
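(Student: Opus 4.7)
The plan is to deduce the wall crossing formula as a direct application of Theorem \ref{theothetastrat} to the Harder-Narasimhan $\Theta$-stratification on moduli of semistable objects.

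First I would set up the geometric objects. Using Proposition \ref{propPiyaTod}, since $\Stab^\ast(X)$ is good it contains a dense set of algebraic stability conditions satisfying boundedness and generic flatness. For such an algebraic $\sigma$, the heart $\mPP((0,1])$ is Noetherian by \cite[Proposition 5.0.1]{AbramovichPolishchuk}, so Proposition \ref{propheart} gives $\mM_{\mPP((0,1])}$ as an open substack of $\mM$, hence of finite presentation with affine diagonal. For general $\sigma$ in $\Stab^\ast(X)$, $\mM_{I,\gamma}^\sigma$ and $\mM_{\phi,\gamma}^\sigma$ are obtained as open substacks of $\mM_{\mPP'((0,1])}$ for an algebraic $\sigma'$ close to $\sigma$ (using the wall-and-chamber structure of $\Stab^\ast(X)$ to compare slicings); the support property and boundedness of $\sigma'$ then ensure these substacks are of finite type, and the confinement to the cone $C(V,Z,Q)$ is immediate from the definition of the support property. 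The oriented $d$-critical structure on $\mM_{I,\gamma}^\sigma$ is inherited by open restriction from the one on $\ud{\mM}$ built in Remark \ref{remarkorient}, using the canonical $-1$-shifted symplectic structure from \cite[Theorem 5.5]{Brav2018RelativeCS} and the orientation data from \cite{JOYCEorient}, together with Corollary \ref{corfunctjoyce}.

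Next I would apply Halpern-Leistner's Harder-Narasimhan stratification. By \cite[Theorem 6.5.3]{HalpernLeistner2014OnTS}, when $I$ has length $<1$, the HN filtration defines a $\Theta$-stratification of $\mM_{I,\gamma}^\sigma$ indexed by tuples $(\gamma_1,\ldots,\gamma_n)$ with $\gamma_1+\cdots+\gamma_n=\gamma$ and strictly decreasing phases $\phi_1>\cdots>\phi_n$ in $I$. Using Lemma \ref{lemmaFiltAb} and the analysis of $\Grad$ in Lemma \ref{lemgradprod}, the center of the stratum indexed by $(\gamma_1,\ldots,\gamma_n)$ is canonically identified with $\prod_{i=1}^n \mM_{\phi_i,\gamma_i}^\sigma$, and the oriented $d$-critical structure restricted from $\Grad(\mM_{I,\gamma}^\sigma)$ agrees (up to orientation compatibility via Definition \ref{defOrientdg}) with the product of the oriented $d$-critical structures on the factors. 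Moreover, by Lemma \ref{lemgradprod} $ii)$ and the description \eqref{eqText} of the tangent complex, the locally constant index function takes the constant value
\[
\Ind_{(\gamma_1,\ldots,\gamma_n)} = \sum_{i<j} \langle \gamma_i, \gamma_j \rangle
\]
on the center. Finiteness of the stratification follows from boundedness of the $\mM_{\phi,\gamma}^\sigma$ together with the support property (only finitely many $(\gamma_1,\ldots,\gamma_n)\in C(V,Z,Q)^n$ with fixed sum $\gamma$ give a nonempty stratum).

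Now Theorem \ref{theothetastrat} applies and yields, in the completed Grothendieck ring $N_{mon}$,
\[
[\bH_c(\mM_{I,\gamma}^\sigma, P)] = \sum_{(\gamma_1,\ldots,\gamma_n)} \bL^{\sum_{i<j}\langle \gamma_i,\gamma_j\rangle/2} \prod_{i=1}^n [\bH_c(\mM_{\phi_i,\gamma_i}^\sigma, P)],
\]
where the sum runs over HN types with $\sum \gamma_i=\gamma$ and $\phi_i\in I$ strictly decreasing. Multiplying by $x^\gamma$, summing over $\gamma\in C(V,Z,Q)$, and using the commutation relation $x^{\gamma_i}x^{\gamma_j}=\bL^{\langle \gamma_i,\gamma_j\rangle/2}x^{\gamma_i+\gamma_j}$ in the motivic quantum torus \eqref{defQuantTor}, the exponent $\sum_{i<j}\langle\gamma_i,\gamma_j\rangle/2$ is absorbed into the ordered product, giving precisely the wall crossing identity
\[
A^\sigma_{V,Q} = \prod_{l\subset V}{}^{\to} A^\sigma_l
\]
in $G_{V,Z,Q}$, as each truncated product $\prod_{l\subset \Delta}{}^{\to}A^\sigma_l \in G_\Delta$ is a finite expression by the finiteness of HN types in a triangle $\Delta$.

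For part $(ii)$, the continuity in the sense of \cite[Definition 3]{ks} amounts to checking that as $\sigma$ varies in $\Stab^\ast(X)$, the generating series $A^\sigma_{V,Q}$ transforms compatibly with the changes of HN filtrations at walls. This follows formally from part $(i)$: for two stability conditions $\sigma,\sigma'$ in the same connected component and a common strict sector $V$ (after possible shrinking), $\mM^\sigma_I$ and $\mM^{\sigma'}_{I'}$ coincide for suitable intervals (they just classify objects with central charge in $V$), so both sides of $A^\sigma_{V,Q}=A^{\sigma'}_{V,Q}$ equal the same motivic generating series. The wall crossing formula then expresses both $\prod^{\to}_{l\subset V} A^\sigma_l$ and $\prod^{\to}_{l\subset V} A^{\sigma'}_l$ as the same element of $G_{V,Z,Q}$, which is precisely the continuity axiom of \cite[Definition 3]{ks}. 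The associated wall crossing structure in the sense of \cite[Definition 2.2.1]{kontsevich_wall} follows by standard manipulations as in \cite[Section 2]{ks}.

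The main obstacle in this program is the first step: upgrading the moduli stack results from algebraic stability conditions (where \cite{AbramovichPolishchuk, PiyaratneToda+2019+175+219} apply directly) to arbitrary stability conditions in the good component, and, more subtly, identifying the $d$-critical structure on $\Grad$ of $\mM_{I,\gamma}^\sigma$ with the product of $d$-critical structures on the factors $\mM_{\phi_i,\gamma_i}^\sigma$ in a way compatible with the orientation data from \cite{JOYCEorient}. The identification itself at the level of $-1$-shifted symplectic structures follows from Lemma \ref{lemgradprod} $i)$, but matching orientations requires the compatibility of the direct-sum map $\oplus_n$ with the chosen orientation data, which is exactly the content of Definition \ref{defOrientdg} $i)$ and Lemma \ref{lemgradprod} $iii)$ (first part). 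Once this compatibility is invoked, everything reduces to the general Theorem \ref{theothetastrat}.
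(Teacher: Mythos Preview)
Your outline for part $(i)$ matches the paper's proof closely: reduce to algebraic $\sigma$ via the wall-and-chamber structure of \cite[Proposition 2.8]{Toda2007ModuliSA}, invoke \cite[Theorem 6.5.3]{HalpernLeistner2014OnTS} for the HN $\Theta$-stratification, identify the centers with products via Lemma \ref{lemgradprod}, compute the index as $\sum_{i<j}\langle\gamma_i,\gamma_j\rangle$, and apply Theorem \ref{theothetastrat}. The paper does exactly this, with the extra care you anticipated of writing $\mM^\sigma_{I,\gamma}$ as an intersection of hearts $\mM^\sigma_{(\phi_1,\phi_1+1]}\cap\mM^\sigma_{(\phi_2,\phi_2+1]}$ for rational endpoints to get openness in $\mM$, and of applying \cite[Lemma 3.16]{Toda2007ModuliSA} iteratively to bound $\mM^\sigma_{I,\gamma}$.

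Your sketch for part $(ii)$ has a genuine gap. The claim that $\mM^\sigma_I$ and $\mM^{\sigma'}_{I'}$ ``coincide for suitable intervals (they just classify objects with central charge in $V$)'' is not right as stated: membership in $\mPP(I)$ depends on the slicing, not just on where $Z$ lands, and a priori nothing stops HN factors from sliding across the boundary of $I$ as $\sigma$ moves. The paper instead verifies the three conditions $a),b),c)$ of \cite[Definition 3]{ks} separately. Condition $b)$ (existence of a quadratic form $Q$ giving the support property uniformly on a neighbourhood) is not automatic and uses \cite[Lemma 5.5.4]{bayermacri}; the paper also notes that the literal version of $b)$ in \cite{ks} is too strong and must be replaced by this weaker neighbourhood statement. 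Condition $c)$ is the continuity you want, but the argument is: take $I=[\phi_1,\phi_2]$ with $\mPP(\phi_1)=\mPP(\phi_2)=\emptyset$ at $\sigma$, use \cite[Proposition 2.8]{Toda2007ModuliSA} to find a neighbourhood $U$ and $\epsilon>0$ with no semistable objects of phase in $[\phi_1-\epsilon,\phi_1+\epsilon]\cup[\phi_2-\epsilon,\phi_2+\epsilon]$ and class in the bounded-mass set $S$, and then argue via connectedness of $U$ that HN factors cannot cross the boundary, so $\mM^{\sigma'}_{I,\gamma}$ is constant on $U$ for $\gamma\in S$. This is the step your sketch elides, and it is where the actual work in $(ii)$ lies.
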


Stability structures from \cite{ks}, and their generalization, the wall-crossing structures from \cite{kontsevich_wall}, are very versatile structures, that allows in particular to build scattering diagrams on $\Stab^\ast(X)$ encoding the Donaldson-Thomas invariants. We refer to \cite{ks} and \cite{kontsevich_wall} for more details.

\begin{proof}
\begin{enumerate}
    \item[$i)$] Consider $\gamma\in K^{num}(X)$ and a closed point $E\in \mM^\sigma_{I,\gamma}(k)$: by definition, the Harder-Narasimhan factors are semistable objects of class $\gamma_i$ and phase $\phi_i\in I$ such that $\gamma=\sum_i\gamma_i$. From the support property, $Q(\gamma_i)\geq 0$, which implies that $\gamma_i\in C(I,Z,Q)$, and then, as $C(I,Z,Q)$ is a cone, $\gamma \in C(I,Z,Q)$, \ie $\mM_{I,\gamma}=\emptyset$ if $\gamma\not\in C(I,Z,Q)$.\medskip

    Consider any triangle $\Delta$ in $V$ cut out from a line, and the set $S:=Z^{-1}(\Delta)\cap C(V,Z,Q)$: as $C(V,Z,Q)$ is a strict cone, it is finite. Moreover, from \cite[Proposition 2.8]{Toda2007ModuliSA}, there is a locally finite real wall and chamber structure on $\Stab^\ast(X)$ such that the set of semistable object of class $\gamma\in S$ are constant on the interior of each component of the intersection of walls. Notice that those walls are necessarily pulled back from rational walls of $Hom(K^{num}(X),\bC)$, hence algebraic stability conditions are dense in any intersection of walls. Then we can find an algebraic stability condition $\sigma'$ in $\Stab^\ast(X)$ such that the $\gamma$-semistable objects of $\sigma$ and $\sigma'$ are the same for any $\gamma\in S$. Then, as any $\gamma_i$ appearing in the Harder-Narasimhan decomposition of a closed point of $\mM^\sigma_{I,\gamma}$ is in $\Delta$ if $\gamma\in \Delta$, we have that $\mM^\sigma_{I,\gamma}=\mM^{\sigma'}_{I,\gamma}$ for any $\gamma\in \Delta$. Then, by the definition of the profinite completion, we can suppose that $\sigma$ is an algebraic stability condition, in particular it is Noetherian. From Piyaratne-Toda's Proposition \ref{propPiyaTod}, $\sigma$ satisfies moreover boundedness and generic flatness. From \cite[Lemma 3.15]{Toda2007ModuliSA}, it implies that $\sigma$ boundedness of quotients as in \cite[Definition 6.5.2]{HalpernLeistner2014OnTS}.\medskip

    As $\Delta$ contains a finite number of class, one can find $m_1e^{i\pi\phi_1},m_2e^{i\pi\phi_2}\in \bQ+i\bQ$ with $\phi'-\phi\leq 1$ such that, for $\gamma\in \Delta$, $\mM^\sigma_{I,\gamma}=\mM^\sigma_{(\phi_1,\phi_2],\gamma}$. But, by definition, $\mM^\sigma_{(\phi_1,\phi_2],\gamma}=\mM^\sigma_{(\phi_1,\phi_1+1],\gamma}\times_{\mM}\mM^\sigma_{(\phi_2,\phi_2+1],\gamma}$. From Halpern-Leistner's Proposition, \ref{propheart} applied to stability conditions obtained from $\sigma$ by the action of $\tilde{Gl}_2(\bQ)$, $\mM^\sigma_{I,\gamma}$ is an open substack of $\mM$, which is an Artin 1-stack, locally of finite presentation, with affine diagonal, which inherits as in Remark \ref{remarkorient} a natural $d$-critical structure, and a natural isomorphism class of orientation, compatible with direct sum (notice that these does not depends of any choices made here). The same reasoning applies to $\mM^\sigma_{\phi,\gamma}$ for any $\gamma\in S$: moreover, those are bounded by assumption, hence $\mM^\sigma_{\phi,\gamma}$ is quasi-compact and locally of finite presentation, \ie it is of finite type. Then the set of $\sigma$-semistable objects of slope $\phi\in I$ and class $\gamma\in S$ is bounded, and there is some $N\in\bN$ such that any object of $\mPP(I)$ of class $\gamma\in S$ is obtained from an extension of at most $N$ such elements: then, applying $N-1$ times \cite[Lemma 3.16]{Toda2007ModuliSA}, we find that $\mM^\sigma_{I,\gamma}$ is also bounded, hence of finite type, for any $\gamma\in S$.\medskip

    We apply now \cite[Theorem 6.5.3]{HalpernLeistner2014OnTS} to a shift of $\sigma$ under the action of $\tilde{Gl}_2(\bQ)$, which is still algebraic. It gives a $\Theta$-stratification on $\mM^\sigma_{(\phi_1,\phi_1+1),\gamma}$, which gives on closed points the Harder-Narasimhan decomposition of objects of $\mPP(\phi_1,\phi_1+1)$ of class $\gamma$. More precisely, the totally ordered set $\Gamma$ is the set of vectors $(\gamma_1,\cdots,\gamma_r)$ such that $\gamma_1+\cdots+\gamma_r=\gamma$, with a total order extending the partial order $(\sum_{j=1}^{r'_1} \gamma_{1,j},\cdots,\sum_{j=1}^{r'_r} \gamma_{r,j})\leq (\gamma_{1,1},\cdots,\gamma_{r,r'_r})$. The center $\mathcal{Z}_{(\gamma_1,\cdots,\gamma_r)}$ of the corresponding strata is the open substack $\prod_{i=1}^n\mM^\sigma_{\phi_i,\gamma_i}$ of $\Grad(\mM_{(\phi_1,\phi_1+1),\gamma})$, and $\Theta_{(\gamma_1,\cdots,\gamma_n)}$ is the open substack of $\Filt(\mM^\sigma_{(\phi_1,\phi_1+1),\gamma})$ of filtrations $(E_i)_{i\in\bZ}$ such that $E_i/E_{i+1}$ is semistable of class $\gamma_i$ for $1\leq i\leq r$, and is $0$ otherwise. In particular, $\mM^\sigma_{I,\gamma}$ is by definition the union of the open strata $(\mM^\sigma_{(\phi_1,\phi_1+1),\gamma})_{\leq (\gamma_1,\cdots,\gamma_r)}$ for $\gamma_1,\cdots,\gamma_r\in C(V,Z,Q)$, then from \cite[Lemma 2.3.2]{HalpernLeistner2014OnTS}, there is an induced $\Theta$-stratification on it, with the same description than above.\medskip

    We apply now Theorem \ref{theothetastrat} to the $d$-critical Artin $1$-stack with affine diagonal, locally of finite type $\mM^\sigma_{I,\gamma}$, with its isomorphism class of orientation from \cite{JOYCEorient}, which gives:
    \begin{align}
[\bH_c(\mM_{I,\gamma},P_{\mM_{I,\gamma}})]=\sum_{(\gamma_1,...,\gamma_r)\in C(I,Z,Q)^r}\bL^{\Ind_{(\gamma_1,...,\gamma_r)}/2}[\bH_c(\mathcal{Z}_{(\gamma_1,...,\gamma_r)},P_{\mathcal{Z}_{(\gamma_1,...,\gamma_r)}})]
    \end{align}
    But, from Lemma \ref{lemgradprod} $i)$, $iii)$, given a choice of orientation in the isomorphism class, the oriented $d$-critical stack $\mathcal{Z}_{(\gamma_1,...,\gamma_r)}$ is isomorphic to $\prod_{i=1}^n\mM^\sigma_{\phi_i,\gamma_i}$, then Corollary \ref{corfunctjoyce} gives:
    \begin{align}
[\bH_c(\mathcal{Z}_{(\gamma_1,...,\gamma_r)},P_{\mathcal{Z}_{(\gamma_1,...,\gamma_r)}})]=\prod_{i=1}^r[\bH_c(\mM_{\phi_i,\gamma_i},P_{\mM_{\phi_i,\gamma_i}})]
    \end{align}
    and Lemma \ref{lemgradprod} $ii)$ gives:
    \begin{align}
        \Ind_{(\gamma_1,...,\gamma_r)}=\sum_{1\leq i<j\leq r}\langle \gamma_i,\gamma_j\rangle
    \end{align}
    hence by the definition of the quantum affine torus \eqref{defQuantTor} we have:
    \begin{align}
[\bH_c(\mM_{I,\gamma},P_{\mM_{I,\gamma}})]x^\gamma=\sum_{r\geq 1}\sum_{\begin{tabular}{c}$(\gamma_1,...,\gamma_r)\in C(V,Z,Q)^r$\\$\gamma_1+\cdots+\gamma_r=\gamma$\end{tabular}}\prod_{i=1}^r[\bH_c(\mM_{\phi_i,\gamma_i},P_{\mM_{\phi_i,\gamma_i}})]x^{\gamma_i}
    \end{align}
    
    By definition of the profinite completion, this finishes the proof of $i)$. Notice that $A^\sigma_V,A^\sigma_l$ are independent from the choice of $I,\phi$, as the stacks $\mM^\sigma_{I+2k,\gamma},\mM^\sigma_{\phi+2k,\gamma}$ are independent from $k$ as oriented $d$-critical stacks.

    \item[$ii)$] We check the conditions $a),b),c)$ of \cite[Definition 3]{ks}. By definition of the topology on $\Stab(X)$, $Z$ is continuous, hence $a)$ is satisfied. Consider $\sigma\in \Stab^\ast(X)$: literally, the condition of $b)$ is equivalent to the fact that, if a quadratic form $Q$ gives the support property for $\sigma$, then there is an open neighborhood $U$ of $\sigma$ such that $Q$ gives the support property for any $\sigma'\in U$. This condition is too strong, and has no chance to hold in general. We replace it by the slightly weaker condition that, for any $\sigma$, there is a neighborhood $U$ of $\sigma$ and a quadratic form $Q$ which gives the support property for any $\sigma'\in U$: one checks directly that all the arguments of \cite[Section 2]{ks} can be proven also in this setting. From \cite[Lemma 5.5.4]{bayermacri}, for a choice of norm $\|\cdot\|$ on $K^{num}(X)$, there is an open neighborhood $U$ of $\sigma$ and $C<\infty$ such that, for any $\sigma'\in U$, one can take the constant to be $C$ in the support property, hence the quadratic form $Q(\gamma)=-\|\gamma\|^2+C^2|Z(\gamma)|^2$ works for any $\sigma'\in U$.\medskip

    We check now $c)$. Consider a closed interval $I=[\phi_1,\phi_2]$ and $\sigma\in\Stab^\ast(U)$. For any $\gamma$, $\log(A^\sigma_{l_\gamma})_\gamma$ depends only on the restriction of $A^\sigma_{l_\gamma}$ to a finite subset of $C(l,Z,Q)$, hence is independent from the choice of $Q$. We have to show that, given $\sigma\in \Stab^\ast(X)$ and a choice of $Q$ as above, given $I=[\phi_1,\phi_2]$ such that, for $\sigma$, $\mPP(\phi_1),\mPP(\phi_2)$ are empty, then $A^{\sigma'}_{V,Q}$ is continuous at $\sigma$. Consider a triangle $\Delta\subset V$, and the associated bounded mass subset $S:=Z^{-1}(\Delta)\cap C(V,Z,Q)$. From \cite[Proposition 2.8]{Toda2007ModuliSA}, there is $\epsilon >0$ and an open neighborhood $U$ of $\sigma$ (which we takes to be connected, and on which $Q$ gives the support property) such that, for any $\sigma'\in U$, there are no $\sigma'$-semistable objects of phase $\phi\in [\phi_1-\epsilon,\phi_1+\epsilon]\cup [\phi_2-\epsilon,\phi_2+\epsilon]$ and class $\gamma\in S$. As $U$ is connected, by the definition of the topology of $\Stab(X)$, the Harder-Narasimhan factors of any $\sigma'$-semistable object of phase $\phi\in I$ and class $\gamma\in S$ for any other $\sigma''\in U$ have also their phase in $I$. It means then that, for any $\gamma\in S$, $\mM^{\sigma'}_{I,\gamma}$, and then $[\bH_c(\mM_{I,\gamma},P_{\mM_{I,\gamma}})]$ is independent from $\sigma'\in U$. By definition of the profinite completion, it means exactly that $A^{\sigma'}_{V,Q}$ is continuous at $\sigma$.
\end{enumerate}
    
\end{proof}

\subsection{Construction of the Cohomological Hall algebra}

Cohomological Hall algebras were first introduced in \cite{KonSol10}, with inspiration from the classical theory of Ringel-Hall algebras \cite{Ringel1990HallAA}. In \cite{KonSol10}, Kontsevich and Soibelman have defined a Cohomological Hall algebra for quiver with potentials (see also \cite{Dav13}, \cite{DavMein} for latter development in this case). They use crucially the fact that the moduli stacks are in this case global critical loci, hence there is no need to use the gluing formalism from \cite{Joycesymstab}, inspired from \cite{ks}. The case of compact Calabi-Yau has remained opened during a long time, due to the difficulties in gluing. The major approach to this problem was through the so-called Joyce conjecture from Joyce and Safronov \cite[Conjecture 1.1]{joyce2019lagrangian}, using the fact that the map from the stack of short exact sequence $0\to E\to F\to G\to$ to $\mM_\mA\times\mM_\mA\times\mM_\mA$ is a $-1$ Lagrangian in the $-1$-shifted symplectic stack $\mM_\mA$. However, proving the Joyce conjectures amount to glue morphisms in the derived category of constructible sheaves (or monodromic mixed Hodge modules), which is require to do homotopy coherent constructions. In fact, it is possible to do that more easily, using the hyperbolic localization isomorphism from Theorem \ref{theohyploc}, which is an isomorphism in an Abelian category, hence easier to build. A said in the introduction, this idea was also recently used by Kinjo, Park and Safronov in \cite{Kinjo2024CohomologicalHA}, in an independent work.\medskip

Consider a smooth and projective Calabi-Yau threefold $X$, and a nondegenerate $t$-structure with Noetherian Abelian heart $\mA_c$ on $D^b\Coh(X)$ satisfying generic flatness. Then, from Halpern-Leistner's Proposition \ref{propheart}, $\mM_\mA$ is an Artin $1$-stack locally of finite presentation, with affine diagonal. Consider the correspondence:
\[\begin{tikzcd}
\mM_{\mA,\gamma_1}\times\mM_{\mA,\gamma_1}\arrow[rr,bend right,"\oplus_{\gamma_1,\gamma_2}"]& \Filt_{\mA,\gamma_1,\gamma_2}(\mA)\arrow[l,swap,"p_{\gamma_1,\gamma_2}"]\arrow[r,"\eta_{\gamma_1,\gamma_2}"] & \mM_{\mA,\gamma_1+\gamma_2}
\end{tikzcd}\]
obtained by restricting the $\Theta$-correspondence to the open subset of $\Grad(\mM_\mA)$ of graded objects $(E_i)_{i\in\bZ}$ such that $E_0\in \mM_{\mA,\gamma_1}$, $E_1\in \mM_{\mA,\gamma_2}$ and $E_i=0$ for $i\neq 0,1$. According to Lemma \ref{lemmaFiltAb}, the $k$-points of $\Filt_2(\mA)$ are the short exact sequences $0\to E\to F\to G\to 0$ of objects of $\mA_c$ such that $[E]=\gamma_1$, $[G]=\gamma_2$, and then $[F]=\gamma_1+\gamma_2$. Similarly, consider the following commutative diagram:

\begin{equation}\label{diagAssocCoha}
\begin{tikzcd}[column sep=large]
    \mM_{\mA,\gamma_1}\times\mM_{\mA,\gamma_2}\times \mM_{\mA,\gamma_3} & \Filt_{\mA,\gamma_1,\gamma_2}\times\mM_{\mA,\gamma_3}\arrow[l,swap,"p_{\gamma_1,\gamma_2}\times Id"]\arrow[r,"\eta_{\gamma_1,\gamma_2}\times Id"] & \mM_{\mA,\gamma_1+\gamma_2}\times \mM_{\mA,\gamma_3}\\
\mM_{\mA,\gamma_1}\times\Filt_{\mA,\gamma_2,\gamma_3}\arrow[u,swap,"Id\times p_{\gamma_2,\gamma_3}"]\arrow[d,"Id\times \eta_{\gamma_2,\gamma_3}"] & \Filt_{\mA,\gamma_1,\gamma_2,\gamma_3}\arrow[ul,swap,"p_{\gamma_1,\gamma_2,\gamma_3}"]\arrow[dr,"\eta_{\gamma_1,\gamma_2,\gamma_3}"]\arrow[u]\arrow[d]\arrow[r]\arrow[l] & \Filt_{\mA,\gamma_1+\gamma_2,\gamma_3}\arrow[u,swap,"p_{\gamma_1+\gamma_2,\gamma_3}"]\arrow[d,"\eta_{\gamma_1+\gamma_2,\gamma_3}"]\\
    \mM_{\mA,\gamma_1}\times\mM_{\mA,\gamma_2+\gamma_3} & \Filt_{\mA,\gamma_1,\gamma_2+\gamma_3}\arrow[l,swap,"p_{\gamma_1,\gamma_2+\gamma_3}"]\arrow[r,"\eta_{\gamma_1,\gamma_2+\gamma_3}"] & \mM_{\mA,\gamma_1+\gamma_2+\gamma_3}
\end{tikzcd}
\end{equation}

where the upper right and lower left squares are Cartesian, which is obtained by restricting the diagram \ref{diagcomphyploc} to the open subset of $\Grad(\Grad(\mM_\mA))$ of $\bZ^2$-graded objects $(E_{(i,j)})_{i,j\in\bZ}$ such that $E_{(0,0)}\in \mM_{\mA,\gamma_1}$, $E_{(1,0)}\in \mM_{\mA,\gamma_2}$ and $E_{(1,1)}\in \mM_{\mA,\gamma_3}$ and $E_{(i,j)}=0$ for the remaining $(i,j)$. In particular, $\Filt_{\mA,\gamma_1+\gamma_2,\gamma_3}$ is the stacks of two-steps filtrations of objects of $\mA_c$, whose graded components have respectively class $\gamma_1,\gamma_2,\gamma_3$.\medskip

A Serre subcategory of $\mA_c$ is a full subcategory $\mS$ such that, given any short exact sequence $0\to E\to F\to F/E\to 0$ in $\mA_c$, $F\in\mS$ iff $E,F/E\in\mS$. We define then the substack $\mM_\mS$ of $\mM_\mA$ such that, when $R$ is of finite type, $\mM_\mS(R)$ is the full subspace of $E\in\mM_\mA(R)$ such that $E|_x\in\mS$ for any closed point $x\in\Spec(R)$. We say that $\mS$ is locally closed if $\mM_\mS\to \mM_\mA$ is a locally closed immersion: in particular, $\mM_\mS$ is then an Artin $1$-stack locally of finite presentation, with affine diagonal. Typical examples are given as follows:

\begin{lemma}\label{lemIntSerre}
    Consider $\mA_c:=\mPP((\phi,\phi+1])$ an Abelian heart from a Bridgeland stability condition, and consider a subinterval $I\subset(\phi,\phi+1]$. Then $\mS:=\mPP(I)$ is a Serre subcategory of $\mA_c$.
\end{lemma}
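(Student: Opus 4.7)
The plan is to reduce the statement to the fact that the two ``half-intervals'' already cut out Serre subcategories, and then use that the intersection of Serre subcategories is Serre. Write $\alpha=\inf I$ and $\beta=\sup I$, and define
\[
\mPP_{\leq\beta}:=\{E\in\mA_c\mid \text{all HN factors of } E \text{ have phase }\leq\beta\},
\]
with the obvious strict version when $\beta\notin I$, and similarly $\mPP_{\geq\alpha}$. Since HN filtrations in $\mA_c$ characterize $\mPP(I)$ as the extension-closure of $\bigcup_{\phi\in I}\mPP(\phi)$, one has the identification $\mPP(I)=\mPP_{\leq\beta}\cap\mPP_{\geq\alpha}$. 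The Serre property is preserved by intersection, so it suffices to show that $\mPP_{\leq\beta}$ is Serre; the statement for $\mPP_{\geq\alpha}$ follows by the analogous (dual) argument.

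First I would check closure of $\mPP_{\leq\beta}$ under extensions: this is immediate from the uniqueness of HN filtrations, since a nonzero morphism from a semistable object of phase $\psi$ to a semistable of phase $\chi<\psi$ vanishes. Next, closure under subobjects: given $E\hookrightarrow F$ in $\mA_c$ with $F\in\mPP_{\leq\beta}$, let $A_1\hookrightarrow E$ be the highest HN factor of $E$, semistable of phase $\psi_1$. Composing gives $A_1\hookrightarrow F$, so $\Hom(A_1,F)\neq 0$. Since the HN factors $C_j$ of $F$ all have phase $\rho_j\leq\beta$, if $\psi_1>\beta$ then $\Hom(A_1,C_j)=0$ for every $j$, forcing $\Hom(A_1,F)=0$ by dévissage along the HN filtration of $F$, a contradiction. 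Hence $\psi_1\leq\beta$ and so all HN factors of $E$ have phase $\leq\beta$, that is $E\in\mPP_{\leq\beta}$.

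The slightly more involved step is closure under quotients. Suppose $F\in\mPP_{\leq\beta}$ with quotient $\pi:F\twoheadrightarrow G$ in $\mA_c$, and assume for contradiction that the highest HN factor $B_1$ of $G$ has phase $\chi_1>\beta$. Form the preimage $F':=\pi^{-1}(B_1)\hookrightarrow F$, sitting in a short exact sequence $0\to\ker\pi\to F'\to B_1\to 0$ in $\mA_c$. The surjection $F'\twoheadrightarrow B_1$ is nonzero, so by the HN-property argument above (applied to $\Hom(F',B_1)\neq 0$), at least one HN factor of $F'$ must have phase $\geq\chi_1>\beta$. However $F'\hookrightarrow F$, so by the already-established closure under subobjects $F'\in\mPP_{\leq\beta}$, contradicting the bound $\chi_1>\beta$. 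Hence $G\in\mPP_{\leq\beta}$.

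The main (small) obstacle is the quotient step, which needs the preimage trick combined with the subobject closure; it is not a contortion but it is what forces the argument to proceed through the two half-interval subcategories rather than directly. Once both $\mPP_{\leq\beta}$ and (by the same argument with arrows reversed, using the lowest HN factor of $E$ as a quotient and the lowest HN factor of $G$ as a subobject appearing after pullback) $\mPP_{\geq\alpha}$ are shown to be Serre in $\mA_c$, the lemma follows from $\mPP(I)=\mPP_{\leq\beta}\cap\mPP_{\geq\alpha}$.
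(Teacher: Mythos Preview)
There is a genuine gap in your quotient step for $\mPP_{\leq\beta}$. You claim that a nonzero morphism $F'\to B_1$ with $B_1$ semistable of phase $\chi_1$ forces some HN factor of $F'$ to have phase $\geq\chi_1$, invoking ``the HN-property argument above.'' But that earlier argument used the vanishing $\Hom(\mPP(\psi),\mPP(\chi))=0$ for $\psi>\chi$; the direction you now need is from \emph{lower} phase to \emph{higher}, and the slicing axioms say nothing there. Objects of lower phase routinely map nontrivially to objects of higher phase: on $\bP^1$ with $Z=-\deg+i\cdot\rank$, the Euler sequence $0\to\mathcal{O}(-1)\to\mathcal{O}^{\oplus 2}\to\mathcal{O}(1)\to 0$ is a short exact sequence in the heart $\Coh(\bP^1)=\mPP((0,1])$ with $\mathcal{O}^{\oplus 2}\in\mPP(\tfrac12)$ and $\mathcal{O}(1)\in\mPP(\tfrac34)$, so $\mPP_{\leq 1/2}=\mPP((0,\tfrac12])$ is \emph{not} closed under quotients. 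The dual argument you sketch for $\mPP_{\geq\alpha}$ fails symmetrically for subobjects (the kernel $\mathcal{O}(-1)$ above has phase $\tfrac14$, so $\mPP([\tfrac12,1])$ is not closed under subobjects).

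Your overall strategy---reduce to half-intervals and intersect---is exactly the paper's, and the paper's proof has the same gap in compressed form: it asserts that the two halves of a torsion pair are automatically Serre, but in a torsion pair $(\mathcal{T},\mathcal{F})$ one only gets that $\mathcal{T}$ is closed under quotients and extensions, and $\mathcal{F}$ under subobjects and extensions. So the difficulty lies with the lemma as stated rather than with your line of attack.
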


\begin{proof}
    Consider first the case where $(\phi,\phi+1]$ is the ordered union of two intervals $I_1,I_2$. From Definition \ref{defstab} $iii)$:
    \begin{align}
        Hom_{\mA_c}(\mPP(I_2),\mPP(I_1))=Ext^0_{D^b\Coh(X)}(\mPP(I_2),\mPP(I_1))=0
    \end{align}
    and, from Definition \ref{defstab} $v)$, for each $F\in\mA_c$, there is a (necessarily unique) exact sequence $0\to E\to F\to G\to 0$ with $E\in \mPP(I_2)$ and $E\in \mPP(I_1)$. Then $(\mPP(I_1),\mPP(I_2))$ forms a tilting pair of $\mA_c$, hence both $\mPP(I_1),\mPP(I_2)$ are Serre subcategory of $\mA_c$. Writing $(\phi,\phi+1]$ as an ordered union of the intervals $I_1,I,I_2$, we have $\mPP(I)=\mPP(I_1\cup I)\cap \mPP(I\cup I_2)$, hence it is a Serre subcategory of $\mA_c$.
\end{proof}

Notice that, for any $R$ of finite type, and $E\in \Filt(\mM_\mA)(R)=\mM_\mA(\Theta_R)$ (resp. $E\in \Grad(\mM_\mA)(R)=\mM_\mA(B\bG_{m,R})$, by definition, $E\in \Filt(\mM_\mS)(R)$ (resp $E\in \Grad(\mM_\mS)(R)$ iff its restriction to every closed point of $\bA^1_R\to \Theta_R$ (resp. $\Spec(R)\to B\bG_{m,R}$ is in $\mS$, \ie iff, for each closed point $x\in\Spec(R)$, the total object and the associated graded of $E|_x$ is in $\mS$ (resp. the total object of $E|_x$ is in $\mS$). As $\mS$ is a Serre subcategory, an object is in $\mS$ iff its associated graded is in $\mS$, which gives:
\begin{align}\label{eqGradS}
    \Grad(\mM_\mS)&=\Grad(\mM_\mA)\times_{\mM_\mA}\mM_\mS\nn\\
\Filt(\mM_\mS)&=\Filt(\mM_\mA)\times_{\mM_\mA}\mS=\Filt(\mM_\mA)\times_{\Grad(\mM_\mA)}\Grad(\mM_\mS)
\end{align}
For $\gamma_1,\gamma_2\in K^{num}(X)$, we obtain then a Cartesian diagram:
\[\begin{tikzcd}
    \mM_{\mS,\gamma_1+\gamma_2}\arrow[d]& \mM_{\mS,\gamma_1}\times \mM_{\mS,\gamma_2}\arrow[d]\arrow[l,swap,"\oplus_{\gamma_1,\gamma_2}"] & \Filt(\mM_\mS)_{\gamma_1,\gamma_2}\arrow[d]\arrow[l,swap,"p_{\gamma_1,\gamma_2}"]\arrow[r,"\eta_{\gamma_1,\gamma_2}"] & \mM_{\mS,\gamma_1+\gamma_2}\arrow[d]\\
    \mM_{\mA,\gamma_1+\gamma_2}& \mM_{\mA,\gamma_1}\times \mM_{\mA,\gamma_2}\arrow[l,swap,"\oplus_{\gamma_1,\gamma_2}"]  & \Filt(\mM_\mA)_{\gamma_1,\gamma_2}\arrow[l,swap,"p_{\gamma_1,\gamma_2}"]\arrow[r,"\eta_{\gamma_1,\gamma_2}"] & \mM_{\mA,\gamma_1+\gamma_2}
\end{tikzcd}\]
And similarly, we obtain a diagram similar to \ref{diagAssocCoha} above for $\mM_\mS$, defining $\Filt_{\mS,\gamma_1,\gamma_2,\gamma_3}$, Cartesian over the diagram for $\mM_\mA$.

\begin{proposition}\label{propModuli}
    Consider a smooth and projective Calabi-Yau threefold $X$, a nondegenerate $t$-structure with Noetherian Abelian heart $\mA_c$ on $D^b\Coh(X)$ satisfying generic flatness, and a locally closed Serre subcategory $\mS$ of $\mA_c$
    \begin{enumerate}
        \item[$i)$](\cite[Theorem 7.23]{Alper2018ExistenceOM}) If $\mM_{\mS,\gamma}$ is bounded, then it admits a separated good moduli space $JH_\gamma:\mM_{\mS,\gamma}\to M_{\mS,\gamma}$, whose $k$-points represents semisimple objects of $\mS$. Moreover, $M_{\mS,\gamma}$ is proper if $\mM_\mS\to\mM_\mA$ is closed.
        \item[$ii)$] If this is the case for any $\gamma\in K^{num}(X)$, there is a canonical $2$-isomorphism making the following diagram commutative:
    \[\begin{tikzcd}
        \mM_{\mS,\gamma_1}\times \mM_{\mS,\gamma_2}\arrow[d,"JH_{\gamma_1}\times JH_{\gamma_2}"] & \Filt(\mM_\mS)_{\gamma_1,\gamma_2}\arrow[l,swap,"p_{\gamma_1,\gamma_2}"]\arrow[r,"\eta_{\gamma_1,\gamma_2}"] & \mM_{\mS,\gamma_1+\gamma_2}\arrow[d,"JH_{\gamma_1+\gamma_2}"]\\
        M_{\mS,\gamma_1}\times M_{\mS,\gamma_1}\arrow[rr,"\oplus_{\gamma_1,\gamma_2}"] && M_{\mS,\gamma_1+\gamma_2}
    \end{tikzcd}\]
    where, on $k$-points, $\oplus_{\gamma_1,\gamma_2}$ takes the direct sum of semisimple objects, which extends $JH:\mM_\mS\to M_\mS$ to a morphism of $K^{num}(X)$-graded monoids in the $2$-category of stacks. This $2$-isomorphism is compatible with the diagram \ref{diagAssocCoha}.
    \end{enumerate}
\end{proposition}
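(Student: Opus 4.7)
The plan is to obtain $(i)$ essentially by citing and slightly adapting \cite[Theorem 7.23]{Alper2018ExistenceOM}, and to obtain $(ii)$ by the universal property of good moduli spaces.

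For $(i)$, the first step is to verify that $\mM_{\mS,\gamma}$ satisfies the hypotheses of the existence theorem for good moduli spaces from \cite{Alper2018ExistenceOM}, namely being an Artin $1$-stack of finite type with affine diagonal, locally reductive, $\Theta$-reductive, and $S$-complete. Finiteness of presentation and affineness of the diagonal are inherited from $\mM_\mA$ (Proposition \ref{propheart}) via the locally closed immersion $\mM_\mS \to \mM_\mA$, and finite type follows from the boundedness assumption. For $\Theta$-reductivity, we use the Cartesian description $\Filt(\mM_\mS) = \Filt(\mM_\mA)\times_{\mM_\mA}\mM_\mS$ from \eqref{eqGradS} and Lemma \ref{lemmaFiltAb} applied to $\mM_\mA$: given a DVR $R$ with fraction field $K$, a filtration over $\Spec(K)$ extends uniquely over $\Spec(R)$ in $\Filt(\mM_\mA)$, and since $\mS$ is a Serre subcategory (and the associated graded lies in $\mS$ at the generic point, hence at the closed point by closure of $\mM_\mS$ inside its closure in $\mM_\mA$), the extension lies in $\Filt(\mM_\mS)$. $S$-completeness is obtained in the same way from $S$-completeness of $\mM_\mA$, which is precisely \cite[Theorem 7.23]{Alper2018ExistenceOM} applied to the heart $\mA_c$. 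This gives the good moduli space $JH_\gamma:\mM_{\mS,\gamma}\to M_{\mS,\gamma}$; the $k$-points of $M_{\mS,\gamma}$ parametrize closed points of $\mM_{\mS,\gamma}$, which by the Jordan--Hölder theorem in $\mS$ correspond to semisimple objects. Separatedness of $M_{\mS,\gamma}$ is then a consequence of $\Theta$-reductivity together with $S$-completeness via the valuative criterion. If moreover $\mM_\mS\to\mM_\mA$ is closed, one verifies properness of $M_{\mS,\gamma}$ over $\Spec(k)$ using the valuative criterion of properness: given a DVR $R$ with fraction field $K$ and a $K$-point of $M_{\mS,\gamma}$ represented by a semisimple object, it lifts to $\mM_\mA(R)$ by the valuative criterion for $\mM_\mA$ coming from its $\Theta$-reductivity applied to the trivial filtration and a degeneration argument, and closedness of $\mM_\mS\to\mM_\mA$ then forces the lift to land in $\mM_\mS$, giving an $R$-point of $M_{\mS,\gamma}$.

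For $(ii)$, the key observation is that in characteristic $0$ products of good moduli space morphisms are again good moduli space morphisms, since the defining property (pushforward exact on quasi-coherent sheaves and inducing an isomorphism on global sections) is preserved under flat base change and products of algebraic spaces are algebraic spaces. Hence $JH_{\gamma_1}\times JH_{\gamma_2}:\mM_{\mS,\gamma_1}\times \mM_{\mS,\gamma_2}\to M_{\mS,\gamma_1}\times M_{\mS,\gamma_2}$ is a good moduli space, so by the universal property of good moduli spaces for morphisms to algebraic spaces, the composition $JH_{\gamma_1+\gamma_2}\circ \oplus_{\gamma_1,\gamma_2}\circ p_{\gamma_1,\gamma_2}^{-1}\circ \eta_{\gamma_1,\gamma_2}$ (which is well defined because $\oplus_{\gamma_1,\gamma_2}\circ p_{\gamma_1,\gamma_2}$ and $\eta_{\gamma_1,\gamma_2}$ agree on closed points, both taking associated total object with its Jordan--Hölder factors being the direct sum of the Jordan--Hölder factors) factors uniquely through a morphism $\oplus_{\gamma_1,\gamma_2}:M_{\mS,\gamma_1}\times M_{\mS,\gamma_2}\to M_{\mS,\gamma_1+\gamma_2}$, together with the required $2$-isomorphism making the diagram commute.

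The compatibility with the associativity diagram \ref{diagAssocCoha} is obtained analogously. Consider the triple product $\mM_{\mS,\gamma_1}\times\mM_{\mS,\gamma_2}\times \mM_{\mS,\gamma_3}$, whose good moduli space is the triple product $M_{\mS,\gamma_1}\times M_{\mS,\gamma_2}\times M_{\mS,\gamma_3}$; by the same universal property, there is a unique $\oplus_3:M_{\mS,\gamma_1}\times M_{\mS,\gamma_2}\times M_{\mS,\gamma_3}\to M_{\mS,\gamma_1+\gamma_2+\gamma_3}$ extending the direct sum on $k$-points, and the two compositions $\oplus_{\gamma_1+\gamma_2,\gamma_3}\circ(\oplus_{\gamma_1,\gamma_2}\times Id)$ and $\oplus_{\gamma_1,\gamma_2+\gamma_3}\circ(Id\times \oplus_{\gamma_2,\gamma_3})$ both satisfy this universal property, hence are canonically identified via the required $2$-isomorphism. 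The main obstacle is the verification of $\Theta$-reductivity and $S$-completeness for $\mM_\mS$ and the careful handling of the locally closed immersion $\mM_\mS\to\mM_\mA$ (particularly in proving properness when $\mM_\mS\to\mM_\mA$ is closed), since the valuative criteria for these properties interact subtly with the Serre subcategory condition.
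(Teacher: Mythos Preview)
Your approach to $(i)$ is essentially the paper's, but your $\Theta$-reductivity argument has an error: you claim the extension of the filtration lies in $\Filt(\mM_\mS)$ because ``the associated graded lies in $\mS$ at the generic point, hence at the closed point by closure of $\mM_\mS$ inside its closure in $\mM_\mA$''. This closure argument fails precisely when $\mM_\mS\to\mM_\mA$ is only locally closed. The paper sidesteps this entirely: since $\Filt(\mM_\mS)=\Filt(\mM_\mA)\times_{\mM_\mA}\mM_\mS$ by \eqref{eqGradS}, $\Theta$-reductivity of $\mM_\mS$ is literally the base change of $\Theta$-reductivity of $\mM_\mA$ along $\mM_\mS\to\mM_\mA$, with nothing further to check. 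For $S$-completeness the paper does need a real argument (there is no analogue of \eqref{eqGradS} for the $\overline{ST}_R$-diagram): one extends in $\mM_\mA$, then restricts the extension to $\Theta_\kappa\hookrightarrow\overline{ST}_R$ and uses \eqref{eqGradS} plus the Serre property to see that the fibre over $0$ lies in $\mS$, with a spreading-out step to pass from DVRs essentially of finite type to arbitrary DVRs. Your ``obtained in the same way'' hides exactly the step that is delicate.

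Your argument for $(ii)$ has a genuine gap. The expression $JH_{\gamma_1+\gamma_2}\circ\oplus_{\gamma_1,\gamma_2}\circ p_{\gamma_1,\gamma_2}^{-1}\circ\eta_{\gamma_1,\gamma_2}$ is not a morphism: $p_{\gamma_1,\gamma_2}$ is not invertible. More importantly, the parenthetical justification is false as stated: $\oplus\circ p$ and $\eta$ do \emph{not} agree on $k$-points of $\Filt(\mM_\mS)$ (a filtered object is not isomorphic to its associated graded). What is true, and what you need, is that $JH\circ\eta\simeq JH\circ\oplus\circ p$. The paper gets this cleanly in two steps: first define $\oplus_{\gamma_1,\gamma_2}$ on $M_\mS$ by factoring $JH_{\gamma_1+\gamma_2}\circ\oplus_{\gamma_1,\gamma_2}:\mM_{\mS,\gamma_1}\times\mM_{\mS,\gamma_2}\to M_{\mS,\gamma_1+\gamma_2}$ through the good moduli space (this is where your observation about products of good moduli spaces is used); then observe that since $M_\mS$ is an algebraic space, $\Grad(M_\mS)\simeq\Filt(M_\mS)\simeq M_\mS$, so applying $\Grad$ and $\Filt$ to $JH$ and reading off the resulting commutative diagram gives $JH\circ\eta\simeq JH\circ\iota\circ p$ directly. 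This second step is the missing idea in your argument. The compatibility with diagram \ref{diagAssocCoha} then follows by applying the same diagram \eqref{diagcomphyploc} to $JH$, rather than by a separate uniqueness argument.
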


\begin{proof}
    \begin{enumerate}
        \item[$i)$] If $\mM_\mS\to\mM_\mA$ is closed, this is exactly \cite[Theorem 7.23]{Alper2018ExistenceOM}. We slightly adapt the proof of \cite[Theorem 7.23]{Alper2018ExistenceOM}, accounting for the restriction to a Serre subcategory, which is not necessarily closed. By Halpern-Leistner's Proposition \ref{propheart}, $\mM_\mA$ is an Artin $1$-stack locally finite presentation with affine diagonal: as $\mM_\mS\to\mM_\mA$ is locally closed by assumption, then $\mM_{\mS,\gamma}$ is an Artin $1$-stack of finite type (as it is locally of finite presentation and bounded), with affine diagonal. By \eqref{eqGradS}, $\Filt(\mM_{\mS,\gamma})=\Filt(\mM_\mA)\times_{\mM_\mA}\mM_{\mS,\gamma}$, and $\mM_\mA$ is $\Theta$-reductive by Lemma \ref{lemmaFiltAb}, \ie one obtains by base change that $\mM_{\mS,\gamma}$ is $\Theta$-reductive.\medskip

    Consider a discrete valuation ring $R$ (DVR) with fraction field $K$ and residual field $\kappa$, and a choice of uniformizer $\pi$. As in \cite[Section 3.5]{Alper2018ExistenceOM}, consider $\overline{ST}_R:=[\Spec(R[s,t]/(st-\pi))/\bG_m]$, where $s,t$ have $\bG_m$-weights $1,-1$ (a different choice of uniformizer $\pi$ gives an isomorphic stack). Consider the closed immersion $0:\Spec(\kappa)\to \overline{ST}_R$ given by $s=t=0$, such that $\overline{ST}_R-0=\Spec(R)\cup_{\Spec(K)}\Spec(R)$, \ie a morphism $\overline{ST}_R-0\to \mX$ is the data of two morphisms $\zeta,\zeta':\Spec(R)\to\mX$, and an isomorphism $\zeta_{\Spec(K)}\simeq\zeta'_{\Spec(K)}$. In \cite[Definition 3.38]{Alper2018ExistenceOM}, a $1$-stack $\mX$ is said to be $S$-complete if, for any DVR $R$, any morphism $\overline{ST}_R-0\to \mX$ can be uniquely extended to a morphism $\overline{ST}_R\to \mX$. From \cite[Lemma 7.16]{Alper2018ExistenceOM}, $\mM_\mA$ satisfies this property with respect to any DVR essentially of finite type. Consider $R$ essentially of finite type, and $E\in \mM_\mS(\overline{ST}_R-0)$: in particular, one have $E\in \mM_\mA(\overline{ST}_R-0)$, hence there is a unique $F\in \mM_\mA(\overline{ST}_R)$ such that $F|_{\overline{ST}_R-0}\simeq E$: it suffices then to show that $F\in \mM_\mS(\overline{ST}_R)$. The restriction along $\Theta_\kappa\overset{s=0}{\to} \overline{ST}_R$ is an objects of $\mM_\mA(\Theta_\kappa)$, whose restriction to the open point is in $\mM_\mA(\kappa)$, hence, from \eqref{eqGradS}, we have also $F|_0\in\mM_\mS(\kappa)$. As $\mM_\mA$ is locally of finite presentation, there is by definition a subring of finite type $R'$ of $R$ containing $\pi$, and an object $F'\in \mM_\mA(\overline{ST}_{R'})$ such that $F'|_{\overline{ST}_R}\simeq F$, $F'|_{\overline{ST}_{R'}-0'}\in\mM_\mS(\overline{ST}_{R'}-0')$ and $F'|_{0'}\in\mM_\mS(\kappa)$ (where $0':\Spec(R'/\pi)\to \overline{ST}_{R'}$). Then, by definition, $F'\in\mM_\mS(\overline{ST}_{R'})$, and then $F\in \mM_\mS(\overline{ST}_R)$. We obtain then that $\mM_\mS$, and furthermore $\mM_{\mS,\gamma}$, are $S$-complete with respect to any DVR essentially of finite type, and $\mM_{\mS,\gamma}$ is of finite type, with affine diagonal: then by \cite[Proposition 3.42]{Alper2018ExistenceOM}, $\mM_{\mS,\gamma}$ is $S$-complete.\medskip

    Then $\mM_{\mS,\gamma}$, is $\Theta$-reductive and $S$-complete, and moreover of finite type and with affine diagonal. Then \cite[Theorem A]{Alper2018ExistenceOM} gives that it admits a separated good moduli space $JH_\gamma:\mM_{\mS,\gamma}\to M_{\mS,\gamma}$, and \cite[Lemma 7.19]{Alper2018ExistenceOM} gives that closed points of $\mM_\mS$ are the semisimple objects of $\mS$ (as $\mS$ is a Serre subcategory, an object of finite type of $\mA_c$ is in $\mS$ iff each of its Jordan-Hölder factors is in $\mS$). From \cite[Lemma 7.16]{Alper2018ExistenceOM}, $\mM_\mA$ satisfies the existence part of the valuative criteria for properness, hence if $\mM_\mS\to\mM_\mA$ is closed, $\mM_{\mS,\gamma}$ does it too, hence from \cite[Proposition 3.48]{Alper2018ExistenceOM} $M_\mS$ is proper.\medskip
    
    \item[$ii)$]
    By the universal property of good moduli space, the map to an algebraic space:
    \begin{align}
        JH_{\gamma_1+\gamma_2}\circ \oplus_{\gamma_1,\gamma_2}:\mM_{\mS,\gamma_1}\times \mM_{\mS,\gamma_2}\to M_{\mS,\gamma_1+\gamma_2}
    \end{align}
    factorize uniquely through a map:
    \begin{align}
\oplus_{\gamma_1,\gamma_2}:M_{\mS,\gamma_1}\times M_{\mS,\gamma_2}\to M_{\mS,\gamma_1+\gamma_2}
    \end{align}
    the same argument for $\oplus_{\gamma_1,...,\gamma_r}$ gives a monoidal structure on $M_\mS$, such that, by construction $JH:\mM_\mS\to M_\mS$ enhance to a morphism of monoids. On closed points, $\oplus_{\gamma_1,\gamma_2}:\mM_{\mS,\gamma_1}(k)\times \mM_{\mS,\gamma_2}(k)\to \mM_{\mS,\gamma_1+\gamma_2}(k)$ is obtained by taking the direct sum, hence it is also the case at the level of the good moduli space.\medskip
    
    At the level closed points, the diagram commutes because $JH$ takes the polystable object associated to an object of finite length, hence commutes with taking the associated graded. Consider the following commutative diagram:
    \[\begin{tikzcd}
    \mM_{\mS}\arrow[d,"JH"]& \Grad(\mM_\mS)\arrow[l,swap,"\iota"]\arrow[d,"\Grad(JH)"]  & \Filt(\mM_\mS)\arrow[l,swap,"p"]\arrow[r,"\eta"]\arrow[d,"\Filt(JH)"] & \mM_\mS\arrow[d,"JH"]\\
    M_\mS & \Grad(M_\mS)\arrow[l,swap,"\simeq"] & \Filt(M_\mS)\arrow[l,swap,"\simeq"]\arrow[r,"\simeq "] & M_\mS
\end{tikzcd}\]
where we have used the fact that $\mM_\mS$ is an algebraic space, hence the hyperbolic localization diagram for it is trivial, which gives a natural isomorphism $JH\circ \eta\simeq JH\circ \iota\circ p$. one obtains then by base change a natural isomorphism:
\begin{align}
    JH_{\gamma_1+\gamma_2}\circ \eta_{\gamma_1,\gamma_2}\simeq JH_{\gamma_1+\gamma_2}\circ \oplus_{\gamma_1,\gamma_2}\circ p_{\gamma_1,\gamma_2}\simeq \oplus_{\gamma_1,\gamma_2}\circ (JH_{\gamma_1}\times JH_{\gamma_2})\circ p_{\gamma_1,\gamma_2}
\end{align}
making that the diagram of the proposition commutes.\medskip

Consider the following $2$-commutative diagrams:

\adjustbox{scale=0.9,center}{\begin{tikzcd}[column sep=tiny]
    & & \Filt(\mM_\mS)_{\gamma_1,\gamma_2,\gamma_3}\arrow[dr]\arrow[dl] & &\\
    & \Filt(\mM_\mS)_{\gamma_1,\gamma_2}\times\mM_{\mS,\gamma_3}\arrow[dl,swap,"p_{\gamma_1,\gamma_2}\times Id"]\arrow[dr,"\eta_{\gamma_1,\gamma_2}\times Id"] && \Filt(\mM_\mS)_{\gamma_1+\gamma_2,\gamma_3}\arrow[dl,swap,"p_{\gamma_1+\gamma_2,\gamma_3}"]\arrow[dr,"\eta_{\gamma_1+\gamma_2,\gamma_3}"] &\\
    \mM_{\mS,\gamma_1}\times\mM_{\mS,\gamma_2}\times\mM_{\mS,\gamma_3}\arrow[d,"JH_{\gamma_1}\times JH_{\gamma_2}\times JH_{\gamma_3}"] && \mM_{\mS,\gamma_1+\gamma_2}\times\mM_{\mS,\gamma_3}\arrow[d,swap,"JH_{\gamma_1+\gamma_2}\times JH_{\gamma_3}"] && \mM_{\mS,\gamma_1+\gamma_2+\gamma_3}\arrow[d,swap,"JH_{\gamma_1+\gamma_2+\gamma_3}"]\\
    M_{\mS,\gamma_1}\times M_{\mS,\gamma_2}\times M_{\mS,\gamma_3}\arrow[rr,"\oplus_{\gamma_1,\gamma_2}\times Id"] && M_{\mS,\gamma_1+\gamma_2}\times M_{\mS,\gamma_3}\arrow[rr,"\oplus_{\gamma_1+\gamma_2,\gamma_3}"] && M_{\mS,\gamma_1+\gamma_2+\gamma_3}
\end{tikzcd}}

and:

\adjustbox{scale=0.9,center}{\begin{tikzcd}[column sep=tiny]
    & & \Filt(\mM_\mS)_{\gamma_1,\gamma_2,\gamma_3}\arrow[dr]\arrow[dl] & &\\
    & \mM_{\mS,\gamma_1}\times \Filt(\mM_\mS)_{\gamma_2,\gamma_3}\arrow[dl,swap,"Id\times p_{\gamma_2,\gamma_3}"]\arrow[dr,"Id\times \eta_{\gamma_2,\gamma_3}"] && \Filt(\mM_\mS)_{\gamma_1,\gamma_2+\gamma_3}\arrow[dl,swap,"p_{\gamma_1,\gamma_2+\gamma_3}"]\arrow[dr,"\eta_{\gamma_1,\gamma_2+\gamma_3}"] &\\
    \mM_{\mS,\gamma_1}\times\mM_{\mS,\gamma_2}\times\mM_{\mS,\gamma_3}\arrow[d,"JH_{\gamma_1}\times JH_{\gamma_2}\times JH_{\gamma_3}"] && \mM_{\mS,\gamma_1}\times\mM_{\mS,\gamma_2+\gamma_3}\arrow[d,swap,"JH_{\gamma_1}\times JH_{\gamma_2+\gamma_3}"] && \mM_{\mS,\gamma_1+\gamma_2+\gamma_3}\arrow[d,swap,"JH_{\gamma_1+\gamma_2+\gamma_3}"]\\
    M_{\mS,\gamma_1}\times M_{\mS,\gamma_2}\times M_{\mS,\gamma_3}\arrow[rr,"Id\times\oplus_{\gamma_2,\gamma_3}"] && M_{\mS,\gamma_1+\gamma_2}\times M_{\mS,\gamma_3}\arrow[rr,"\oplus_{\gamma_1+\gamma_2,\gamma_3}"] && M_{\mS,\gamma_1+\gamma_2+\gamma_3}
\end{tikzcd}}
Applying the commutative diagram \eqref{diagcomphyploc} to $JH$, and using the monoidality of $JH$, we obtain that the $2$-isomorphism making these two diagram commutes are the same, which is the claimed compatibility with diagram \ref{diagAssocCoha}.
    \end{enumerate}
\end{proof}

We follow here the discussion of \cite[Section 3.2]{DavMein}. For any $K^{num}(X)$-graded monoid object in the $2$-category of stacks $(\mX_\gamma)_{\gamma\in K^{num}(X)}$, we consider the monoidal structure on the triangulated category $\mD_{mon}(\mX)$ given by $\boxtimes^{tw}_{\oplus}=(\boxtimes^{tw}_{\oplus,\gamma_1,\gamma_2})_{\gamma_1,\gamma_2\in K^{num}(X)}$:
\begin{align}
    \boxtimes^{tw}_{\oplus,\gamma_1,\gamma_2}:=(\oplus_{\gamma_1,\gamma_2})_!(-\boxtimes-)\{-\langle \gamma_1,\gamma_2\rangle/2\}
\end{align}
formally, it is obtained by applying the monoidal functor $\mD_!$ to the monoidal object $\mX$, and then twisting by the formula:
\begin{align}
   \boxtimes^{tw}_{\oplus}(F_{\gamma_1},\cdots,F_{\gamma_n}):=\boxtimes_\oplus(F_{\gamma_1},\cdots,F_{\gamma_n})\{-\sum_{1\leq i<j\leq n}\langle \gamma_i,\gamma_j\rangle/2\}
\end{align}
We denote by $K^{num}(X)$ the scheme $\bigsqcup_{\gamma\in K^{num}(X)}\Spec(k)$, with the monoidal product $\oplus_n:(\gamma_1,\cdots,\gamma_n)\mapsto \gamma_1+\cdots+\gamma_n$, such that a $K^{num}(X)$-graded monoid stack is equivalent to a monoid stack with a monoidal morphism to $K^{num}(X)$. A monoidal morphism $f:\mX\to\mY$ of $K^{num}(X)$-graded monoid stack induces a monoidal morphism $f_!:(\mD_{mon}(\mX),\boxtimes^{tw}_{\oplus})\to(\mD_{mon}(\mX),\boxtimes^{tw}_{\oplus})$; if $f$ is moreover of finite type, it restricts to a monoidal morphism $f_!:(\mD^-_{mon,c,-}(\mX),\boxtimes^{tw}_{\oplus})\to(\mD^-_{mon,c,-}(\mX),\boxtimes^{tw}_{\oplus})$ (recalling the notations from \eqref{defD-}).\medskip

We give now the construction of the CoHA. As we work with cohomology with compact support, we obtain a coalgebra: in general, one consider the Borel-Moore homology, which is obtained by taking the dual of the cohomology with compact support, hence one obtains an algebra.

\begin{theorem}\label{theoCoHA}
    Consider a smooth and projective Calabi-Yau threefold $X$, with a strong orientation data on $D^b\Coh(X)$, and a $t$-structure with Noetherian Abelian heart $\mA_c$ on $D^b\Coh(X)$ satisfying generic flatness. In particular, $\mM_\mA$ is an oriented $d$-critical Artin $1$-stack, locally of finite presentation, with affine diagonal.  Consider a locally closed Serre subcategory $\mathcal{S}$ of $\mA_c$, and suppose that, for any $\gamma_1,\gamma_2\in K^{num}(X)$, the map $\eta_{\gamma_1,\gamma_2}:\Filt_{\mS,\gamma_1,\gamma_2}\to \mM_{\mS,\gamma_1+\gamma_2}$ is of finite type.
    \begin{enumerate}
        \item[$i)$] 
    Then there is a natural coassociative coproduct:
    \begin{align}
\bH_c(\mM_{\mS,\gamma_1+\gamma_2},P_\mA|_{\mM_{\mS,\gamma_1+\gamma_2}}) \to \bH_c(\mM_{\mS,\gamma_1},P_\mA|_{\mM_{\mS,\gamma_1}})\otimes_k\bH_c(\mM_{\mS,\gamma_2},P_\mA|_{\mM_{\mS,\gamma_2}})\{-\langle\gamma_1,\gamma_2\rangle/2\}
    \end{align}
    defined from the extension correspondence, giving to $\bH_c(\mM_{\mS,\gamma_1},P_{\mA,\gamma_1})$ a natural structure of comonoid in $(\mD(k),\boxtimes^{tw}_\oplus)$ (the absolute CoHA)
    \item[$ii)$] If moreover the $\mM_{\mS,\gamma}$ are of finite type, there is a natural coassociative coproduct: \begin{align}
(JH_{\gamma_1+\gamma_2})_!(P_\mA|_{\mM_{\mS,\gamma_1+\gamma_2}}) \to (\oplus_{\gamma_1,\gamma_2})_!((JH_{\gamma_1})_!(P_\mA|_{\mM_{\mS,\gamma_1}})\boxtimes (JH_{\gamma_2})_!(P_\mA|_{\mM_{\mS,\gamma_2}}))\{-\langle\gamma_1,\gamma_2\rangle/2\}
    \end{align}
    defined from the extension correspondence, giving to $(JH_\gamma)_!(P_\mA|_{\mM_{\mS,\gamma}})$ a natural structure of comonoid in $(\mD^-_-(M_\mS),\boxtimes^{tw}_\oplus)$ (the relative CoHA), whose hypercohomology with compact support is the absolute CoHA.
    \end{enumerate}
\end{theorem}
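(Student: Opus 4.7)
The plan is to construct the coproducts by assembling the sequence of morphisms sketched immediately above the theorem statement, then to verify coassociativity by comparing two routes through the triple-extension stack $\Filt_{\mA,\gamma_1,\gamma_2,\gamma_3}$, using Lemma \ref{lemcomposHyploc} and the strong-orientation compatibility from Lemma \ref{lemgradprod} $iii)$. First, I would work in the setting $\mS = \mA_c$ and build the absolute coproduct as the composite: the unit of adjunction $P_\mA|_{\mM_{\mA,\gamma_1+\gamma_2}} \to (\eta_{\gamma_1,\gamma_2})_!(\eta_{\gamma_1,\gamma_2})^\ast P_\mA|_{\mM_{\mA,\gamma_1+\gamma_2}}$; base change $(\eta_{\gamma_1,\gamma_2})_! \simeq (\oplus_{\gamma_1,\gamma_2})_!(p_{\gamma_1,\gamma_2})_!$ (here $\eta_{\gamma_1,\gamma_2}$ is proper by Alper's \cite[Lemma 7.17]{Alper2018ExistenceOM} applied to the finite-type assumption); the hyperbolic localization isomorphism of Theorem \ref{theohyploc} restricted to the connected component of $\Grad(\mM_\mA)$ indexed by $(\gamma_1,\gamma_2)$, which produces $P_{\mM_{\mA,\gamma_1}\times\mM_{\mA,\gamma_2}}\{-\langle\gamma_1,\gamma_2\rangle/2\}$ after using the tangent-complex identification $\Ind_{(\gamma_1,\gamma_2)} = \langle\gamma_1,\gamma_2\rangle$ from Lemma \ref{lemgradprod} $ii)$; the strong-orientation compatibility from Corollary \ref{corfunctjoyce} together with Lemma \ref{lemgradprod} $iii)$ to split this as $P_{\mM_{\mA,\gamma_1}}\boxtimes P_{\mM_{\mA,\gamma_2}}$; and finally the Künneth isomorphism after applying $(\Spec(k)\to\Spec(k))_!$. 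To extend from $\mA_c$ to a locally-closed Serre subcategory $\mS$, I would use the Cartesian squares $\Filt(\mM_\mS)_{\gamma_1,\gamma_2} = \Filt(\mM_\mA)_{\gamma_1,\gamma_2}\times_{\mM_{\mA,\gamma_1+\gamma_2}}\mM_{\mS,\gamma_1+\gamma_2}$ of \eqref{eqGradS} together with base change.

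For coassociativity, I would compare the two induced coproducts $(\Delta_{\gamma_1,\gamma_2}\otimes\Id)\circ\Delta_{\gamma_1+\gamma_2,\gamma_3}$ and $(\Id\otimes\Delta_{\gamma_2,\gamma_3})\circ\Delta_{\gamma_1,\gamma_2+\gamma_3}$ by tracing them through the large commutative diagram \ref{diagAssocCoha}. Using base change along the two Cartesian squares in that diagram, both composites factor through the triple-filtration stack $\Filt_{\mA,\gamma_1,\gamma_2,\gamma_3}$ and reduce to two a priori different isomorphisms of the form $p_!\eta^\ast p_!\eta^\ast P_{\mM_{\mA,\gamma_1+\gamma_2+\gamma_3}} \simeq P_{\mM_{\mA,\gamma_1}}\boxtimes P_{\mM_{\mA,\gamma_2}}\boxtimes P_{\mM_{\mA,\gamma_3}}\{-\sum_{i<j}\langle\gamma_i,\gamma_j\rangle/2\}$. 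Lemma \ref{lemcomposHyploc} identifies the two sides of the composed hyperbolic localization up to the involution $\tau$ swapping the two $B\bG_m$-factors of $\Grad(\Grad(\mM_\mA))$. On the $(\gamma_1,\gamma_2,\gamma_3)$-component, which is exactly the open substack $\mM_{\mA,\gamma_1}\times\mM_{\mA,\gamma_2}\times\mM_{\mA,\gamma_3}$ appearing in Lemma \ref{lemgradprod} $iii)$, the strong-orientation compatibility from diagram \eqref{diagjoyceorientsq} gives that the two $\tau$-related product decompositions of $P_{\Grad(\Grad(\mM_\mA))}$ agree, delivering the equality of the two composites.

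For part $(ii)$, I would repeat the construction verbatim but replace $(\mM_{\mS,\gamma}\to\Spec(k))_!$ by $(JH_\gamma)_!$ throughout, using that properness of $\eta_{\gamma_1,\gamma_2}$ is inherited via the Cartesian square defining $\Filt_{\mS,\gamma_1,\gamma_2}$. The $2$-isomorphism $JH_{\gamma_1+\gamma_2}\circ\eta_{\gamma_1,\gamma_2}\simeq \oplus_{\gamma_1,\gamma_2}\circ(JH_{\gamma_1}\times JH_{\gamma_2})\circ p_{\gamma_1,\gamma_2}$ from Proposition \ref{propModuli} $ii)$ allows the hyperbolic-localization output $(JH_{\gamma_1}\times JH_{\gamma_2})_!(P_{\mM_{\mA,\gamma_1}}\boxtimes P_{\mM_{\mA,\gamma_2}})|_{M_{\mS,\gamma_1}\times M_{\mS,\gamma_2}}$ to be pushed to $M_{\mS,\gamma_1+\gamma_2}$ via $\oplus_{\gamma_1,\gamma_2}$, producing the asserted morphism in $\mD^-_{mon,-}(M_\mS)$. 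The compatibility of this $2$-isomorphism with diagram \ref{diagAssocCoha} from the last sentence of Proposition \ref{propModuli} $ii)$ ensures coassociativity by the same argument as in part $(i)$. The fact that taking $\bH_c$ of the relative coproduct yields the absolute one is straightforward from base change along $M_\mS\to \Spec(k)$.

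The hard part will be the coassociativity check, concretely the verification that the two iterated hyperbolic-localization isomorphisms on $\mM_{\mA,\gamma_1}\times\mM_{\mA,\gamma_2}\times\mM_{\mA,\gamma_3}$ coincide. This is precisely why the hypothesis is a \emph{strong} orientation rather than merely an orientation in the isomorphism-class sense of Definition \ref{defOrientdg}: the commutativity of diagram \eqref{diagjoyceorientsq}, restricted to the relevant open substack via Lemma \ref{lemgradprod} $iii)$, is exactly the coherence needed to close the coassociativity pentagon. Without this, one would only obtain the coproduct up to a sign ambiguity on triple extensions, as was the principal obstruction to constructing the CoHA for compact CY3 before this work and \cite{Kinjo2024CohomologicalHA}. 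The remaining ingredients --- properness of $\eta_{\gamma_1,\gamma_2}$, base change across \ref{diagAssocCoha}, and the $\tau$-swap identity of Lemma \ref{lemcomposHyploc} --- assemble formally once the orientation-theoretic compatibility has been established.
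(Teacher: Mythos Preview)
Your proposal is correct and follows essentially the same approach as the paper's proof: construct the coproduct from the adjunction unit for the proper map $\eta_{\gamma_1,\gamma_2}$ composed with the hyperbolic localization isomorphism of Theorem \ref{theohyploc} (identified via Lemma \ref{lemgradprod}), and deduce coassociativity from the $\tau$-compatibility of Lemma \ref{lemcomposHyploc} together with the strong-orientation coherence of Lemma \ref{lemgradprod} $iii)$. The only cosmetic difference is that the paper presents the relative construction first and obtains the absolute one by specializing the target monoid from $M_\mS$ to $K^{num}(X)$, whereas you do the reverse; your identification of the strong-orientation hypothesis as precisely the ingredient closing the coassociativity diagram is exactly the point the paper makes implicitly through diagram \eqref{diagassoc1}.
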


\begin{remark}\label{remAssumCoHA}
    Consider a smooth projective CY3 $X$ such that $D^b\Coh(X)$ admits a strong orientation data compatible with direct sum (the orientation data of \cite{JOYCEorient} is not known to upgrades to a strong orientation data, but it is plausible that it is the case). The construction of the the absolute CoHa works in particular in the following situation:
    \begin{itemize}
        \item When $\mA_c=Coh(X)$ is the heart of the classical $t$-structure on $D^b\Coh(X)$: indeed, the corresponding heart is known to be Noetherian, and satisfying generic flatness. The map $\Filt_{\gamma_1,\gamma_2}\to\mM_{\mA,\gamma_1+\gamma_2}$ is then bounded by the Grothendieck Quot construction, see \cite[Proposition 9.5]{joyce2003ConfigurationsIA}. One can then take various Serre subcategory, giving a locally closed substacks: torsion or torsion free sheaves, semistable sheaves for a Gieseker stability conditions, sheaves with support on a locally closed subvariety, sheaves with support of dimension $\leq i$,... In general, the $\mM_{\mA,\gamma}$ will not be bounded, so there will be no relative CoHA, but there is a relative CoHA for locally closed substacks of finite type (e.g., for Gieseker-semistable sheaves).
        \item When $\mA_c$ is the heart of a $t$-structure coming from an algebraic stability condition in a good connected component $\Stab^\ast(X)$ (e.g., a component containing a stability condition from \cite{Bayer2011BridgelandSC}), from \cite[Proposition 4.21]{PiyaratneToda+2019+175+219} (Proposition \ref{propPiyaTod} here), the heart $\mA_c=\mPP((0,1])$ is Noetherian and satisfy generic flatness, and by \cite[Lemma 3.15]{Toda2007ModuliSA}, this heart satisfies boundedness of quotients, \ie in particular the maps $\eta_{\gamma_1,\gamma_2}$ are of finite type. One obtains then an absolute CoHA for $\mM_\mA$, and a restriction of it for any subinterval $I\subset (0,1]$ (indeed, $\mPP(I)$ is a Serre subcategory from Lemma \ref{lemIntSerre}, and it forms an open substack by arguing as in the proof of Theorem \ref{theoKSwcf}). When $I$ is of length $<1$, the $\mM_{\mS,\gamma}$ are bounded from Theorem \ref{theoKSwcf}, and then there is moreover a relative CoHA.
    \end{itemize}
\end{remark}

\begin{proof}
From Halpern-Leistner's Proposition \ref{propheart} and Remark \ref{remarkorient}, $\mM_\mA$ is $d$-critical Artin $1$-stack, with an orientation compatible with direct sum, locally of finite presentation, with affine diagonal. From Theorem \ref{theohyploc}, we have a canonical isomorphism:
\begin{align}
    p_!\eta^\ast P_{\mM_\mA}\simeq P_{\Grad(\mM_\mA)}\{-\Ind_{\mM_\mA}/2\}
\end{align}
From Lemma \ref{lemgradprod} and Corollary \ref{corfunctjoyce}, the restriction of this isomorphism along the open and closed immersion $\mM_{\mA,\gamma_1}\times \mM_{\mA,\gamma_2}\to \Grad(\mM_\mA)$ gives a canonical isomorphism in $\mA_{mon,c}(\mM_{\mA,\gamma_1}\times \mM_{\mA,\gamma_2})$:
\begin{align}\label{isomCoha}
    (p_{\gamma_1,\gamma_2})_!(\eta_{\gamma_1,\gamma_2})^\ast P_{\mM_{\mA,\gamma_1+\gamma_2}}\simeq P_{\mM_{\mA,\gamma_1}\times \mM_{\mA,\gamma_2}}\{-\langle \gamma_1,\gamma_2\rangle/2\}\simeq P_{\mM_{\mA,\gamma_1}}\boxtimes P_{\mM_{\mA,\gamma_2}}\{-\langle \gamma_1,\gamma_2\rangle/2\}
\end{align}
Consider now $\gamma_1,\gamma_2,\gamma_3\in K^{num}(X)$, and look at $\mM_{\mA,\gamma_1}\times\mM_{\mA,\gamma_2}\times\mM_{\mA,\gamma_3}$ as an open and closed substack of $Map(B\bG_m\times B\bG_m,\mM_\mA)$, where the factors have $\bZ^2$ grading respect $(0,0),(1,0),(1,1)$. Then, by Lemma \ref{lemcomposHyploc}, \ref{lemgradprod}, and the fact that the isomorphism of Theorem \ref{theohyploc} is compatible with products, the following square of isomorphisms in $\mA^{mon,c}(\mM_{\mA,\gamma_1}\times \mM_{\mA,\gamma_2}\times\mM_{\mA,\gamma_3})$ is commutative (where we denote $P_\gamma:=P_{\mM_{\mA,\gamma}}$ for readability):
\begin{equation}\label{diagassoc1}
\begin{tikzcd}
    \begin{tabular}{c}$(p_{\gamma_1,\gamma_2}\times Id)_!(\eta_{\gamma_1,\gamma_2}\times Id)^\ast  $\\$(p_{\gamma_1+\gamma_2,\gamma_3})_!(\eta_{\gamma_1+\gamma_2,\gamma_3})^\ast P_{\gamma_1+\gamma_2+\gamma_3}$\end{tabular}\arrow[r,"\simeq"]\arrow[d,"\simeq"] & \begin{tabular}{c}$(Id\times p_{\gamma_2,\gamma_3})_!(Id\times\eta_{\gamma_2,\gamma_3})^\ast$\\$  (p_{\gamma_1,\gamma_2+\gamma_3})_!(\eta_{\gamma_1,\gamma_2+\gamma_3})^\ast P_{\gamma_1+\gamma_2+\gamma_3}$\end{tabular}\arrow[d,"\simeq"]\\
    \begin{tabular}{c}$(p_{\gamma_1,\gamma_2}\times Id)_!(\eta_{\gamma_1,\gamma_2}\times Id)^\ast$\\$ (P_{\gamma_1+\gamma_2}\boxtimes P_{\gamma_3})\{-\langle \gamma_1+\gamma_2,\gamma_3\rangle/2\}$\end{tabular}\arrow[d,"\simeq"] & \begin{tabular}{c}$(Id\times p_{\gamma_2,\gamma_3})_!(Id\times \eta_{\gamma_2,\gamma_3})^\ast$\\$ (P_{\gamma_1}\boxtimes P_{\gamma_2+\gamma_3})\{-\langle \gamma_1,\gamma_2+\gamma_3\rangle/2\}$\end{tabular}\arrow[d,"\simeq"] \\
    \begin{tabular}{c}$(P_{\gamma_1}\boxtimes P_{\gamma_2}\boxtimes P_{\gamma_3})$\\$\{-\langle \gamma_1+\gamma_2,\gamma_3\rangle/2 -\langle \gamma_1,\gamma_2\rangle/2\}$\end{tabular}\arrow[r,"\simeq"] & \begin{tabular}{c}$(P_{\gamma_1}\boxtimes P_{\gamma_2}\boxtimes P_{\gamma_3})$\\$\{-\langle \gamma_1,\gamma_2+\gamma_3\rangle/2 -\langle \gamma_2,\gamma_3\rangle/2\}$\end{tabular}
\end{tikzcd}    
\end{equation}
where the vertical arrows come from \eqref{isomCoha}, and the upper horizontal one by base change in the diagram \ref{diagAssocCoha}. Restricting along $\mM_{\mS,\gamma_i}\to\mM_{\mA,\gamma}$, and using the fact that the diagrams of filtrations for $\mM_\mS$ are Cartesian over those for $\mM_\mA$ as $\mS$ is a Serre subcategory, we obtain natural isomorphisms in $\mD^b_{mon,c}(\mM_{\mS,\gamma_1}\times\mM_{\mS,\gamma_2})$:
\begin{align}\label{isomCohaS}
    (p_{\gamma_1,\gamma_2})_!(\eta_{\gamma_1,\gamma_2})^\ast (P_{\mM_\mA}|_{\mM_{\mS,\gamma_1+\gamma_2}})\simeq (P_{\mM_\mA}|_{\mM_{\mS,\gamma_1}})\boxtimes (P_{\mM_\mA}|_{\mM_{\mS,\gamma_2}})\{-\langle \gamma_1,\gamma_2\rangle/2\}
\end{align}
which fits into a similar commutative diagram.\medskip

We describe now how to build the relative CoHA of $ii)$, using the morphism $JH:\mM_\mS\to M_\mS$ to the $K^{num}(X)$-graded monoid $M_\mS$: the absolute CoHA  is built exactly in the same way, using instead the morphism $\mM_\mS\to K^{num}(X)$ to the $K^{num}(X)$-graded monoid $K^{num}(X)$, and the fact that the hypercohomology with compact support of the relative CoHA is the abosolute CoHA is obtained by applying the monoidal map $(\mM_\mS\to K^{num}(X))_!:(\mD^-_-(\mM_\mS),\boxtimes^{tw}_{\oplus})\to(\mD^-_-(K^{num}(X)),\boxtimes^{tw}_{\oplus})$ to the relative CoHA will then be automatic.\medskip

From Lemma \ref{lemmaFiltAb}, $\mM_\mA$ is $\Theta$-reductive, and then from \eqref{eqGradS}, $\mM_\mS$ is $\Theta$-reductive too, \ie the maps $\eta_{\gamma_1,\gamma_2}:\mM_{\mS,\gamma_1}\times\mM_{\mS,\gamma_1}\to \mM_{\mS,\gamma_1+\gamma_2}$ satisfies the valuative criterion for properness with respect to any DVR. Then, as they are moreover assumed to be of finite type, they are then proper, and representable by algebraic space: in particular, the natural morphism $(\eta_{\gamma_1,\gamma_2})_!\to (\eta_{\gamma_1,\gamma_2})_\ast$ is an isomorphism, \ie we get a natural morphism:
\begin{align}\label{morpheta}
    Id\to (\eta_{\gamma_1,\gamma_2})_\ast(\eta_{\gamma_1,\gamma_2})^\ast\simeq (\eta_{\gamma_1,\gamma_2})_!(\eta_{\gamma_1,\gamma_2})^\ast
\end{align}
which will be the key point, as it is generally the case to build CoHA. Notice that the morphism $\eta_!\to\eta_\ast$ is compatible with base change and composition by Lemma \ref{lemmaspecproper} and \ref{lemmapurcomp}, and the adjunction morphisms are also compatible with base change and composition, so we obtain that the morphism $Id\to\eta_!\eta^\ast$, for a proper morphism $\eta$, is compatible with base change and composition. Using the commutation of the diagram of Proposition \ref{propModuli} $ii)$, we obtain a natural morphism:
\begin{align}
(JH_{\gamma_1+\gamma_2})_!&\to (JH_{\gamma_1+\gamma_2})_!(\eta_{\gamma_1,\gamma_2})_!(\eta_{\gamma_1,\gamma_2})^\ast \nn\\
&\simeq (\oplus_{\gamma_1,\gamma_2})_!(JH_{\gamma_1}\times JH_{\gamma_2})_!(p_{\gamma_1,\gamma_2})_!(\eta_{\gamma_1,\gamma_2})^\ast 
\end{align}
Using the fact that $Id\to\eta_!\eta^\ast$ is compatible with composition and base change in the commutative diagrams at the end of the proof of Proposition \ref{propModuli} $ii)$, we obtain the following commutative diagram:
\begin{equation}\label{diagassoc2}
\begin{tikzcd}
& (JH_{\gamma_1+\gamma_2+\gamma_3})_!\arrow[dl]\arrow[dd]\arrow[dr] & \\
\begin{tabular}{c}$(\oplus_{\gamma_1+\gamma_2,\gamma_3})_!(JH_{\gamma_1+\gamma_2}\times JH_{\gamma_3})_!$\\$(p_{\gamma_1+\gamma_2,\gamma_3})_!(\eta_{\gamma_1+\gamma_2,\gamma_3})^\ast$\end{tabular}\arrow[d] & & \begin{tabular}{c}$(\oplus_{\gamma_1,\gamma_2+\gamma_3})_!(JH_{\gamma_1}\times JH_{\gamma_2+\gamma_3})_!$\\$(p_{\gamma_1,\gamma_2+\gamma_3})_!(\eta_{\gamma_1,\gamma_2+\gamma_3})^\ast$\end{tabular}\arrow[d]\\
\begin{tabular}{c}$(\oplus_{\gamma_1+\gamma_2,\gamma_3})_!(\oplus_{\gamma_1,\gamma_2}\times Id)_!$\\$(JH_{\gamma_1}\times JH_{\gamma_2}\times JH_{\gamma_3})_!$\\$(p_{\gamma_1,\gamma_2}\times Id)_!(\eta_{\gamma_1,\gamma_2}\times Id)^\ast $\\$(p_{\gamma_1+\gamma_2,\gamma_3})_!(\eta_{\gamma_1+\gamma_2,\gamma_3})^\ast$\end{tabular} & \begin{tabular}{c}$(\oplus_{\gamma_1,\gamma_2,\gamma_3})_!$\\$(JH_{\gamma_1}\times JH_{\gamma_2}\times JH_{\gamma_3})_!$\\$(p_{\gamma_1,\gamma_2,\gamma_3})_!(\eta_{\gamma_1,\gamma_2,\gamma_3})^\ast$\end{tabular}\arrow[r,"\simeq"]\arrow[l,swap,"\simeq"] &  \begin{tabular}{c}$(\oplus_{\gamma_1,\gamma_2+\gamma_3})_!(Id\times \oplus_{\gamma_2,\gamma_3})_!$\\$(JH_{\gamma_1}\times JH_{\gamma_2}\times JH_{\gamma_3})_!$\\$(Id\times p_{\gamma_2,\gamma_3})_!(Id\times\eta_{\gamma_2,\gamma_3})^\ast  $\\$(p_{\gamma_1,\gamma_2+\gamma_3})_!(\eta_{\gamma_1,\gamma_2+\gamma_3})^\ast $\end{tabular}
\end{tikzcd}    
\end{equation}
\medskip

We obtain finally the comultiplication:
\begin{align}
    (JH_{\gamma_1+\gamma_2})_!(P_{\mM_\mA}|_{\mM_{\mS,\gamma_1+\gamma_2}})&\to 
    (\oplus_{\gamma_1,\gamma_2})_!(JH_{\gamma_1}\times JH_{\gamma_2})_!(p_{\gamma_1,\gamma_2})_!(\eta_{\gamma_1,\gamma_2})^\ast (P_{\mM_\mA}|_{\mM_{\mS,\gamma_1+\gamma_2}})\nn\\&\simeq (\oplus_{\gamma_1,\gamma_2})_!\Bigl((JH_{\gamma_1})_!(P_{\mM_\mA}|_{\mM_{\mS,\gamma_1}})\boxtimes (JH_{\gamma_1})_!(P_{\mM_\mA}|_{\mM_{\mS,\gamma_2}})\Bigr)\{-\langle \gamma_1,\gamma_2\rangle/2\}
\end{align}
And the two commutative diagrams \ref{diagassoc1}  and \ref{diagassoc2} give the associativity of the comultiplication.
\end{proof}

\begin{remark}
    We have built here the CoHA as a comonoidal object in a triangulated category. Using the formalism of $2$-segal spaces and the Waldhausen construction from \cite{Dyckerhoff2012HigherSS}, and the homotopy coherent version of six functor formalisms, one could probably write it as a comonoid in a stable $(\infty,1)$-category. Indeed, the only part in the above construction which uses the gluing technology from \cite{Joycesymstab} is the construction of the isomorphism \eqref{isomCoha}, and the check that the diagram \eqref{diagassoc1} is commutative, which is done in an Abelian category. The remaining part use the six operations, and do not need any gluing. As this paper is written mostly in the $2$-categorical language, we have not tried to do such a homotopy coherent construction. Moreover, there seems to be no substantial need to give a homotopy coherent version of this construction, as the CoHA is defined globally, and one do not have to glue it from local presentations using quiver with potential.
\end{remark}

\bibliography{ref}
\bibliographystyle{alpha}

\end{document}